\tikzstyle{box}=[fill=white, draw=black, shape=rectangle, minimum width=1cm, minimum height=2cm]
\tikzstyle{big box}=[fill=white, draw=black, shape=rectangle, minimum width=2cm, minimum height=4cm]
\tikzstyle{little box}=[fill=white, draw=black, shape=rectangle, minimum width=1cm, minimum height=1cm]
\tikzstyle{smaller box}=[fill=white, draw=black, shape=rectangle, minimum width=0.5cm, minimum height=0.5cm]
\tikzstyle{black node}=[fill=black, draw=black, shape=circle]
\tikzstyle{white node}=[fill=white, draw=black, shape=circle]
\tikzstyle{big triangle}=[fill=white, draw=black, shape=regular polygon, regular polygon sides=3, shape border rotate=30, minimum size=4cm]
\tikzstyle{triangle}=[fill=white, draw=black, shape=regular polygon, regular polygon sides=3, shape border rotate=30, minimum size=2cm]
\tikzstyle{little triangle}=[fill=white, draw=black, shape=regular polygon, regular polygon sides=3, shape border rotate=30, minimum size=1cm]
\tikzstyle{inv big triangle}=[fill=white, draw=black, shape=regular polygon, regular polygon sides=3, minimum size=4cm, shape border rotate=90]
\tikzstyle{inv little triangle}=[fill=white, draw=black, shape=regular polygon, regular polygon sides=3, shape border rotate=90, minimum size=1cm]
\tikzstyle{circle}=[fill=white, draw=black, shape=circle]
\tikzstyle{dash}=[-, dashed]
\tikzstyle{arrow}=[->]
\tikzstyle{new edge style 0}=[->, dashed]
\tikzstyle{blue edge}=[-, fill=blue]
\tikzstyle{blue arrow}=[->, fill=blue]
\tikzstyle{new edge style 1}=[-, fill=red]
\tikzstyle{new edge style 2}=[-, fill={rgb,255: red,191; green,191; blue,191}]
\tikzstyle{new edge style 3}=[-, fill={rgb,255: red,255; green,128; blue,0}]
\tikzstyle{new edge style 4}=[-, fill=green]
\tikzstyle{new edge style 5}=[-, fill=yellow]
\title{Bifibrations of polycategories and classical multiplicative linear logic}
\author{Nicolas Blanco}
\date{March 2023}
\begin{document}
\EnableBpAbbreviations

\maketitle

\frontmatter
\tableofcontents

\chapter{Acknowledgements/Remerciements}

First, I would like to thank my supervisors Paul Levy and Noam Zeilberger.
I am blessed to have been advised by such wonderful people.
Paul has been incredibly helpful, in particular for all the administrative purposes.
But also by providing great advice during the writing stage and the organisation and practice of the viva.
Noam has been a fantastic advisor.
He let me explored my own research interests while providing guidance and suggesting new areas of study.
During my least productive periods, when the guilt and impostor syndrome where lurking around, he was always supportive and I never felt pressured.
When I was on the contrary prolific, he would carefully read what I wrote and provide detailed feedback.
I am also grateful to Benoit Valiron, my Master advisor for his kindness and his guidance.
The work done under his supervision greatly influenced this thesis.

I would also like to thanks the members of my jury, the chair Mart\'{i}n Escard\'{o}, the internal examiner Uday Reedy and the external examiner Richard Garner.
Thanks to their kindness, the viva has been very friendly and enjoyable.
They also provided precious comments and advice about this thesis and my research in general.

Of course, my PhD studies has not been limited to writing this thesis and defending it.
It was a journey through the wonderful land of category theory and I made a lot of invaluable friendships along the way.
First, the theory (bois + man) in Birmingham: Alex, Anna Laura, Calin, George, Todd and Tom, that made my stay there enjoyable.
Then the Mathematicians in Paris, in particular Giti, Morgan, Roman and Will, that offered me opportunities to talk about research while far away from my official lab.
Along the way I also met a lot of amazing people that I now consider friends, amongst which: Greta, Chad, Federico, R\'{e}mi and Nathan.
And a lot of others that made this journey so fun: Alexander, Anupam, Axel, Aurore, Cipriano, Cole, Erwan, Fabrizio, Gabriel, Gabriele, Ivan, Joe, Matteo, Matteo (they are two of them indeed), Ned, Ralph, Valentin, and so many others.

These past few years have not been all about Mathematics and Computer Science.
 have meet a lot of people that helped made these years unforgettable.
There are the people from my vegan group, the Cheeky Vegans in Paris, that got me to discover new restaurants and to sharpen my organisational skills: Anirudh, Clémence, Kate, Mallory...
The Paris LazyBookworms with whom I could discuss books while drinking golden and blue latte at the Unicorners: B\'{e}reng\`{e}re, Sara, Alice, Batoul, Camille, Ali, Ram, Simal, Vitalia...
My roommates with whom I have shared a place, but also tasty meals and enriching conversations: Cl\'{e}ment, Eric and Carolin.
And all the other people that I am grateful to have met this past few years: Alanna, Caroline, Katherine, Paula...

Bien entendu ces remerciements ne seraient pas complets si je ne mentionnais pas ma famille et mes ami·e·s de longue date.
Ces ami·e·s que j'ai rencontré sur les bancs du collège ou du lycée à Cannes et que je connais maintenant depuis plus de vingt ans : Th\'{e}o, BB, Le Suisse, Jerem', Bout'Bout', Benjourde, Momo, MatMouth, B-Lee, et toustes celleux qui m'ont accompagné toutes ces années durant.
Mes ami·e·s parisien·ne·s qui ont fait de Paname une deuxième maison, ma ville de cœur : Raph', Aurel', Charlot, Marie-Lou, Thomas, le Ptiot et les Belettes, Antoine, Chewbaca, Emilie, Sofiane et tant d'autres.

Et pour finir ma famille qui a toujours été là pour moi.
Ma mamie Colette que j'embrasse très fort ; mes tantes et oncles qui m'ont toujours accueilli avec amour : tatie Maryline et tonton Pierre, tonton Claude, tonton Fred et tonton Alain et tatie Corinne qui me manquent ; mes cousins, cousines avec qui j'ai grandi et qui sont ce que j'ai de plus proche d'une fratrie : Juliette et Carole et leur pitchounes respectifs qui poussent si vite, Romain, Adeline et Scotty.
J'ai aussi une très forte pensée pour mon papa, parti trop tôt, et qui, je le sais, serait fier.
Et bien sûr, ma maman à qui je dédie cette thèse.
Je n'en serais pas là aujourd'hui sans son amour et son soutien inconditionnels.
Elle est pour moi un modèle de force, de volonté et de douceur.
Elle m'a appris la bonté, la tolérance, la curiosité et la persévérance, des valeurs qui m'animent en tant qu'être humain mais aussi en tant que chercheur.

I have the chance of being born in an incredible loving family and of having met fantastic people in my life.
I am very grateful to all of them, those that I mentioned here, and also those that I forgot.

And to the reader that is considering reading this thesis, thank you for your interest.
I hope you will enjoy it.

\chapter{Abstract}

In this thesis, we develop the theory of bifibrations of polycategories.

We start by studying how to express certain categorical structures as universal properties by generalising the shape of morphism.
We call this phenomenon representability and look at different variations, namely the correspondence between representable multicategories and monoidal categories, birepresentable polycategories and $\ast$-autonomous categories, and representable virtual double categories and double categories.

We then move to introduce (bi)fibrations for these structures.
We show that it generalises representability in the sense that these structures are (bi)representable when they are (bi)fibred over the terminal one.
We show how to use this theory to lift models of logic to more refined ones.
In particular, we illustrate it by lifting the compact closed structure of the category of finite dimensional vector spaces and linear maps to the (non-compact) $\ast$-autonomous structure of the category of finite dimensional Banach spaces and contractive maps by passing to their respective polycategories.
We also give an operational reading of this example, where polylinear maps correspond to operations between systems that can act on their inputs and whose outputs can be measured/probed and where norms correspond to properties of the systems that are preserved by the operations.

Finally, we recall the B\'{e}nabou-Grothendieck correspondence linking fibrations to indexed categories.
We show how the B-G construction can be defined as a pullback of virtual double categories and we make use of fibrational properties of vdcs to get properties of this pullback.
Then we provide a polycategorical version of the B-G correspondence.  


\mainmatter

\part{Introduction}

\chapter*{Introduction}
\label{ch:intro}

\section{What is a multicategory?}
The notion of multicategory was introduced by Lambek in \cite{Lambek1969}.
As recounted in \cite{Lambek1989}, he drew inspiration from his earlier work in algebra and linguistics.
In \cite{Lambek1958}, he introduced a sequent calculus, adapted from Gentzen's original sequent calculus, that he used to establish a decision procedure for grammaticality of sentences.
In this sequent calculus, there are some type formers for syntactic expressions that behave like logical connectives.
\[ B \vdash A \backslash C \Leftrightarrow A \cdot B \vdash C \Leftrightarrow A \vdash C/B \]

Lambek and Findlay noticed that a similar relationship is true for two-sided ideals $A,B,C$ of a unital ring $R$:
\[ B \subseteq A \backslash C \Leftrightarrow A \cdot B \subseteq C \Leftrightarrow A \subseteq C/B \]
where $A \cdot B := \set{\sum_{i \leq n} a_i b_i}{a_i \in A, b_i \in B}$ is the product ideal, $A \backslash C := \set{r \in R}{\forall a\in A, ar \in C}$ is the left residual quotient and $C/B := \set{r \in R}{\forall b \in B, rb \in C}$ is the right residual quotient.

They also noticed that a similar relation exists for bimodules.
Namely, that for bimodules ${\ }_{R}A_S$, ${\ }_{S}B_T$ and ${\ }_{R}C_T$ over unital rings $R,S,T$ there are natural isomorphisms
\[Hom(B,A\multimap C) \simeq Hom(A \otimes B, C) \simeq Hom(A, C \multimapinv B) \]
where $A \otimes B$ is the $(R,T)$-bimodule $A \otimes_S B$, $A \multimap C$ is $Hom_R(A,C)$ considered as a $(S,T)$-bimodule and $C \multimapinv B$ is $Hom_T(B,C)$ as a $(R,S)$-bimodule.
One way of abstracting these notions is by asking for a monoidal biclosed category, i.e. a category equipped with a tensor product functor admitting some right adjoints.
There the tensor product is given as data while the internal homs $\multimap, \multimapinv$ verify a universal property.
What Lambek realised is that Gentzen's sequent calculus provides a way of reasoning about tensors and internal homs similar to Bourbaki's method of multilinear maps.
For ease, let us focus on $(R,R)$-bimodules for a fixed ring $R$. Given bimodules $A,B,C$ there is a canonical bilinear map $A, B \to A \otimes B$ which induces a natural isomorphism between bilinear maps $A,B \to C$ and linear maps $A\otimes B \to C$.
This can be taken as a definition of the tensor product: $A \otimes B$ has the universal property of linearising bilinear maps.
Moreover, it can be extended by adjoining contexts on the left and right: there is a natural isomorphism between multilinear maps $\Gamma_1, A, B, \Gamma_2 \to C$ and multilinear maps $\Gamma_1, A \otimes B, \Gamma_2 \to C$. 
This principle can be put in parallel with the left rule for conjunction in Gentzen's sequent calculus, an analogy which led Lambek to interpret the sequent calculus in multilinear algebra.
Then, he generalised multialgebra to the abstract notion of multicategory, a mathematical structure akin to a category but where the notion of morphism is generalised to a notion of multimap whose domain is a finite list of objects.

In the general context of multicategories, one can characterise $\otimes, \multimap, \multimapinv$ universally, via the existence of multimaps $A,B \to A \otimes B$, $A, A \multimap B, B$ and $B \multimapinv A, A \to B$ inducing natural isomorphisms
\[Hom(A,B; C) \simeq Hom(B;A\multimap C) \simeq Hom(A \otimes B; C) \simeq Hom(A; C \multimapinv B) \]
An advantage of considering multicategories over monoidal categories is that the tensor product in the former is defined through a universal property rather than being given as data.
Furthermore, any monoidal category induces a multicategory called its underlying multicategory that inherits tensor products from the monoidal structure, and any monoidal biclosed category induces a multicategory with tensor products and internal homs.
However, there are also multicategories that do not possess tensor products of all objects.
So the notion of multicategory subsumes that of monoidal category.

Another use for multicategories is for defining some internal structured objects.
For example, the notion of monoid object internal to a monoidal category can be extended to the setting of multicategories.
A monoid in a multicategory \cM is an object $M$ equipped with a binary map, the multiplication, $M, M \to M$ and a nullary map, the unit $\cdot \to M$ satisfying unitality and associativity.
Interestingly, the category of multicategories is cartesian and its terminal object, called the terminal multicategory, is also the free multicategory containing a monoid.
This means that the data of a monoid internal to a multicategory \cM can equivalently be described as a functor $\one \to \cM$.
In this sense, monoids can be thought of as the generalised elements of a multicategory.
This makes multicategory theory a natural setting to study monoids.
Other structured objects can be represented internally to a multicategory, for example monoid actions, i.e. the data of an internal monoid and an object, the internal module, equipped with an action of the monoid.
The free multicategory containing a monoid action can also be easily described.

\vskip 0.5cm
In this thesis, I will be interested in different directions in which the notion of multicategory can be extended and how these interact together.

\section{Polycategories: the more outputs the merrier}

First, there is the notion of polycategory introduced by Szabo in \cite{Szabo1975}.
Szabo was a student of Lambek and the question that he got interested in is the following.
Can the correspondence between Gentzen's intuitionistic sequent calculus and multicategories be extended to classical sequent calculus?
In Gentzen's original sequent calculus, the sequents have multiple hypotheses and multiple conclusions.
Gentzen noticed that to get intuitionistic sequent calculus out of the classical one, one just has to restrict the sequents to those with a unique conclusion.
Szabo went backwards and tried to define a model of classical sequent calculus by considering a mathematical structure similar to categories and multicategories but where the morphisms have multiple inputs and multiple outputs.
He called these polycategories.
When considering maps with many inputs and many outputs, one is led to consider what kind of composition is appropriate for the structure considered.
For modeling logic, composition should correspond to the cut rule in logic:

\[\AXC{$\Gamma \vdash \Delta_1, A, \Delta_2$}
\AXC{$\Gamma_1, A, \Gamma_2 \vdash \Delta$}
\BIC{$\Gamma_1, \Gamma, \Gamma_2 \vdash \Delta_1, \Delta, \Delta_2$}
\DP
\]

To reflect that, in a polycategory, the notion of composition is restricted to only be allowed along one object at a time.

Polycategories offer a categorical setting in which to interpret classical calculi.
Tensor products and internal homs can be defined in a similar way as they are in a multicategory, interpreting conjunction and implication in logic.
Furthermore, other operators can be defined: a par product or cotensor product that interpret the disjunction and a dual that interpret the negation.
Both are also defined using universal properties.
A polycategory that has all these universal objects - actually there is a lot of redundancy there, and it would be enough to ask only for the existence of some of them to infer the others - corresponds to a $\ast$-autonomous category as defined by Barr in \cite{Barr1979}.
The relationship between polycategories and $\ast$-autonomous categories is analogous to the one between multicategories and monoidal categories, or more exactly between multicategories and monoidal biclosed categories.
Although, $\ast$-autonomous categories are often taken as the right notion for a categorical semantics of multiplicative linear logic (see \cite{Barr1991,Mellies2009}), I would argue that the polycategorical approach is more natural, just as multicategories offer a more natural setting for interpreting intuitionistic MLL than symmetric monoidal closed categories.

Furthermore, in a polycategory, the universal properties that characterise the different connectives are not only similar, they are all examples of a general notion of universal property.
This is the first contribution of this thesis: a unified notion of universal object in a polycategory of which all the connectives of multiplicative linear logic are instances.
More than that, these universal objects capture exactly the propositions of MLL, in the sense that any universal object can be described using tensor and par products and duals \footnote{Here and forward when talking about tensor and par products, I have generally in mind an $n$-ary version of those, in particular, their units is included}.
This led us to an ``unbiased'' notion of $\ast$-autonomous category.
Here by unbiased, we mean that we have all the connectives defined in a unified way instead of making arbitrary choices for based connectives from which other might be derived.
In the usual notion is biased in two ways, first by having some connectives as primitive ones, e.g. tensors, pars and duals, and also by choosing some specific arities for those, in general the binary connective and its unit, the nullary one.
From a precise analysis of the role of duality/negation, Cockett, Seely and coauthors derived the notion of linearly distributive categories (previously know as weakly distributive categories), see \cite{CockettSeely1997, BluteCockettSeelyTrimble1993}.
These capture the models of classical MLL without negation.
Their analysis is tightly connected to polycategories and they prove that linearly distributive categories correspond to certain polycategories.
In fact, they correspond to polycategories in which certain universal objects exist.
Another contribution of this thesis is a definition of unbiased linearly distributive category.
A linearly distributive category has two monoidal structures $(\otimes, I)$ and $(\parr, \bot)$ to interpret the conjunction and disjunction, together with some natural transformations representing their interaction, called the distributivity laws, all subjected to some coherence of course.
The distributivity laws have the following types $A \otimes (B \parr C) \to (A \otimes B) \parr C$ and $(A \parr B) \otimes C \to A \parr (B \otimes C)$.
In the unbiased case, we have $n$-ary tensor and par products, and the distributivity laws are replaced by an unbiased one with type $\otimes(\Gamma_1, \parr(\Delta_1, A, \Delta_2),\Gamma_2) \to \parr(\Delta_1, \otimes(\Gamma_1,A,\Gamma_2),\Delta_2)$.
This is precisely what is needed to define a polycategory from a (unbiased) linearly distributive category.
In fact, one can relax the monoidal structures by directing their laws and still be able to extract a polycategory from it.
This notion of lax linearly distributive category, where $\otimes$ is oplax monoidal and $\parr$ is lax monoidal is introduced and explored in this thesis.
A correspondence between lax linearly distributive categories and certain polycategories, where the tensor and par products have a weaker universal property, is considered.

As mentioned above another interesting aspect of monoidal categories and multicategories is that it is possible to define internal structured objects.
Linearly distributive categories, $\ast$-autonomous categories and polycategories offer even more latitude for that, since polymaps with multiple outputs can encode new operations, such as co-multiplication.
An important example that we will discuss in more depth in this thesis is the concept of Frobenius monoid that generalises the one in a monoidal category by asking for a monoid structure with respect to the $\otimes$ monoidal product and a comonoid structure with respect to the $\parr$ product.
Frobenius monoids for linearly distributive categories have been studied in \cite{Egger2010}.

Some examples of polycategories, $\ast$-autonomous categories and linearly distributive categories that we will consider arise from linear algebra and functional analysis.
\Vect{} and \FVect, the categories are arbitrary/finite dimensional vector spaces and linear maps are often considered as some prototypical models for linear logic.
\Vect{} is a linearly distributive category while \FVect{} is a $\ast$-autonomous category.
$\otimes$ is given by the usual tensor product of vector spaces, while $(-)^\ast$ is given by the the dual of a vector space.
However, in this model $\parr$ is also interpreted as the tensor product of vector spaces.
$\ast$-autonomous categories where tensor and par products coincide have been largely studied in the literature under the name compact closed categories, a concept introduced in \cite{Kelly1972}.
When interpreting logic the duality in these categories is too strong.
Not only are $\otimes$ and $\parr$ identified, but also if a compact closed category has finite products then these are biproducts, see \cite{Houston2008}.
Closely related categories are \Banc{} and \FBanc, the categories of arbitrary/finite dimensional Banach spaces and contractive maps, i.e., maps that do not expand the norm.
These maps inherit the linearly distributive and $\ast$-autonomous structures of \Vect{} and \FVect.
However, their $\otimes$ and $\parr$ are distinct.
This example is already treated in the seminal paper on $\ast$-autonomous categories \cite{Barr1979}.
In these categories the tensor and par products of two Banach spaces both have as an underlying vector space the tensor product of the underlying vector spaces, however the norms that are assigned to them are different.
The question of what norms can be assigned to the tensor products of Banach spaces have been tackled by Grothendieck during his PhD thesis (see \cite{Grothendieck1954}).
There are two extremal norms amongst all the well-behaved ones that can be put on a tensor product.
These correspond precisely to the ones for $\otimes$ and $\parr$.

It is natural to consider \Vect{} and \FVect{} as multicategories, but it is also possible to consider them as polycategories.
A polylinear map $f \colon A_1,\dots, A_m \to B_1, \dots, B_n$ can be defined as a linear map $A_1 \otimes \dots \otimes A_m \to B_1 \otimes \dots \otimes B_n$.
A more concrete and operational understanding of a polylinear map can be given: $f \colon A_1,\dots, A_m \to B_1, \dots, B_n$ can be thought as an operation that can be fed some states $a_i \in A_i$ (equivalently linear maps $a_i \colon \mathbb{K} \to A_i$ where $\mathbb{K}$ is the base field considered as a vector space) and that can be probed/measured by effects $\varphi_j \in B_j^\ast$ (equivalently linear maps $\varphi_j \colon B_j \to \mathbb{K}$) in a linear way.
So it assigns to any states $a_i$ and effects $\varphi_j$ a scalar $(\varphi_1,\dots,\varphi_n)f(a_1,\dots,a_m) \in \mathbb{K}$ such that $(\dots, \lambda \varphi_j + \varphi_j',\dots)f(\dots) = \lambda (\dots, \varphi_j,\dots,)f(\dots) + (\dots, \varphi_j',\dots)f(\dots)$ and $(\dots)f(\dots, \lambda a_i + a_i',\dots) = \lambda (\dots)f(\dots,a_i\dots) + (\dots)f(\dots,a_i',\dots)$.
We can extend this operational reading of polylinear maps to define contractive polylinear maps.
Under this perspective, a norm can be thought of as a way to specify a subset of vectors: its unit ball.
Then, a contractive linear map is one that preserves the unit ball, in the sense that the image of the unit ball on the domain is included in the unit ball on the codomain.
Similarly, a contractive polylinear map is one such that for any subunital states $a_i$ (i.e. elements of the unit balls of the $(A_i,||-||_{A_i})$), and any subunital effects $\varphi_j$ (i.e. elements of the unit balls of the dual spaces with the dual norm) $(\varphi_1,\dots,\varphi_n)f(a_1,\dots,a_m)$ is a subunit of $\mathbb{K}$, i.e. $|(\varphi_1,\dots,\varphi_n)f(a_1,\dots,a_m)| \leq 1$.
This defines polycategories \Banc{} and \FBanc{} which are precisely the underlying polycategories of the linearly distributive and $\ast$-autonomous categories described above.
This gives an operational reading of the injective and projective norms (the extremal norms on the tensor product mentioned above).
Closely related to this example is the causal structures construction of \cite{KissingerUijlen2017lics}.
In this article, the authors define a construction that takes a compact closed category (with some extra conditions) and produce a $\ast$-autonomous category whose objects are those of the compact closed category together with a subset of its states, the causal states, and whose maps are the one preserving the causal states.
This construction can be extended to a polycategorical one with the same operational reading as the one for Banach spaces.

\section{Functors: refinement and parametrisation}

In addition to considering individual multicategories, we can consider functors between multicategories.
There is a sense in which the theory of representable multicategories can be extended by considering functors.
This is achieved by considering opfibrations $p \colon \cE \to \cB$.
An opfibration of multicategories is a functor such that any list of objects $(A_i)_{i\leq n}$ in \cE and any morphism $f \colon p(A_1),\dots, p(A_n) \to B$ in \cB induces a universal object $\otimes_f(A_1,\dots,A_n)$ called the pushforward of the $A_i$s along $f$.
As the notation suggests, the pushforward can be understood as a parametrised version of the tensor product.
This is illustrated by the fact that if $f\colon p(A_1),\dots, p(A_n) \to \otimes(p(A_1),\dots, p(A_n))$ is the universal multimap of a tensor in \cB, $\otimes_f(A_1,\dots,A_n)$ defines a tensor product in \cE.
In particular, if the unique multimap $p \colon \cE \to \one$ into the terminal multicategory is an opfibration then $\cE$ is representable.
This characterises representable multicategories.
This correspondence between fibrations and tensors in multicategories has been worked out first by Hermida in \cite{Hermida2000,Hermida2004}.

A main contribution of this thesis, as emphasised by the title, is to introduce and study a notion of fibration of polycategories.
A bifibration of polycategories has for any finite lists of objects in \cE $\Pi, \Sigma_1, \Sigma_2$ and any polymap $f \colon p(\Pi) \to p(\Sigma_1), A, p(\Sigma_2)$, a pushforward of $\Pi$ along $f$ in context $\Sigma_1$, $\Sigma_2$.
Dually, for any finite lists of objects in \cE $\Pi_1, \Pi_2, \Sigma$ and any polymap $f \colon p(\Pi_1), A, p(\Pi_2) \to p(\Sigma)$, there is a pullback of $\Sigma$ along $f$ in context $\Pi_1$, $\Pi_2$.
Furthermore, like in the case of multicategories, the pushforwards along unary polymaps, i.e. polymaps with only one output, are parametrised tensor products, while pullbacks along counary polymaps are parametrised par product.
Maybe more surprising is the fact that all pullbacks and pushforwards can be understood as parametrised connectives for MLL.
That is, when considering functors into the terminal polycategory, the pushforwards and pullbacks are generated by $\otimes$, $\parr$ and 
$(-)^\ast$.
So polycategories that are models of MLL correspond exactly to the ones that are bifibred over $\one$.

This can be used to refine models of MLL.
Starting with a functor of polycategories $p \colon \cE \to \cB$ such that \cB{} is a model of MLL, if $p$ is a bifibration, or more generally if it has enough pullbacks and pushforwards, then we can lift the connectives of \cB to make \cE into a model of MLL.
The forgetful functor $\cU \colon \Banc \to \Vect$ (and the finite dimensional versions) offers a good illustration of this.
It is possible to define pullback and pushforward norms.
Under the operation perspective, the pullback norm is the smallest norm that can be put on the object of the domain we are pulling into that makes the polylinear map we pull along contractive.
Dually, the pushforward is the biggest norm that can be put on the object of the codomain we are pushing into that makes the polylinear map contractive.
This is in the spirit of pullback as weakest precondition and pushforward as strongest postcondition.
And it also explains why the tensor and par norms are extremal: they are defined as some pushforward and pullback norms.

\section{Virtual double categories: colouring with categories}

Virtual double categories offer yet another direction to extend multicategories.
They correspond to multicategories coloured by a category.
By coloured here, we mean that virtual double categories extend multicategories in a similar way that categories do for monoids.
A monoid can be understood as a one-object category.
The monoid is encoded in the hom-set of the object with multiplication given by composition.
A small categories then generalise monoids by letting considering sets of objects instead of a singular one.
We say that a category is a monoid coloured by a set.
Similarly part of the data of a virtual double category is a category called its vertical category.
For the trivial case where it is the terminal category, the rest of the data form a multicategory.
So a multicategory can be encoded in a one-object-one-vertical-morphism virtual double category. 
More precisely, it is well studied that monoidal categories can be horizontally categorified by bicategories, a notion introduced by B\'{e}nabou see \cite{Benabou1967}.
That is, a one-object bicategory is equivalent to a monoidal category.
One could want to perform a similar categorification for multicategories.
This will give a structure with objects, morphisms between them and 2-morphisms $\alpha \colon p_1,\dots, p_n \Rightarrow q$ of the following shape:
\[\begin{tikzcd}
	& {A_1} & \dots & {A_{n-1}} \\
	{A_0} &&&& {A_n}
	\arrow["{p_1}"{description}, from=2-1, to=1-2]
	\arrow["{p_n}"{description}, from=1-4, to=2-5]
	\arrow[""{name=0, anchor=center, inner sep=0}, "q"{description}, from=2-1, to=2-5]
	\arrow["\alpha"{description}, shorten >=3pt, Rightarrow, from=1-3, to=0]
\end{tikzcd}\]
This notion can be generalised further by asking for another type of morphisms that we will call vertical, while the one above will be called horizontal, in such a way that objects and vertical morphisms form a category and that 2-morphisms have the following shape:
\[\begin{tikzcd}
	{A_0} & {A_1} & \dots & {A_{n-1}} & {A_n} \\
	{B_0} &&&& {B_1}
	\arrow["{p_1}"{description}, from=1-1, to=1-2]
	\arrow["{p_n}"{description}, from=1-4, to=1-5]
	\arrow[""{name=0, anchor=center, inner sep=0}, "q"{description}, from=2-1, to=2-5]
	\arrow["{f_0}"{description}, from=1-1, to=2-1]
	\arrow["{f_1}"{description}, from=1-5, to=2-5]
	\arrow["\alpha"{description}, shorten >=3pt, Rightarrow, from=1-3, to=0]
\end{tikzcd}\]
where $f_0$ and $f_1$ are vertical morphisms.
So instead of coloring a multicategory by a set of objects, we do it by a category of objects and vertical morphisms.
Then a multicategory is a one-object-one-vertical-morphism virtual double categories.
Similarly to how multicategories offer a great environment to internalise algebraic structures, virtual double categories offer one to internalise categorical structures.
For example, monads can be defined internally to any virtual double category by horizontally categorifying monoids internal to a multicategory.
Similarly, the tensor product gets categorified into a notion of composition of horizontal morphisms.
A virtual double category with all composites (of horizontal morphisms) is a double category.
Virtual double categories have been considered a lot, particularly in the context of formal category theory, see for example \cite{Koudenburg2019}.
It is also a good framework to define generalisation of multicategories, see \cite{CruttwellShulman2009}.
In this paper, generalised multicategories are considered, where the domain of a morphism can take other shapes than a single object or a list, e.g. a tree, a (finite) set, a multiset...
Although this subject is out of the scope of this thesis we might mention it sometimes.
Generalised multicategories are a framework flexible enough to encompass examples as different as topological spaces and virtual double categories.
The string diagram calculus for multicategories can be extended straightforwardly to one for virtual double categories by adding colors. It has been used for example in \cite{Myers2016,Myers2020}.

In this thesis, I also define a notion of (op)fibration of virtual double categories.
It has a notion of pushforward of horizontal morphisms along a 2-cell that is a parametrised version of composition of horizontal morphisms.
I use it when studying the B\'{e}nabou-Grothendieck construction.
It is often said in the literature that the Grothendieck construction can be defined as a pullback:
\[\begin{tikzcd}
	{\int F} && {\Cat_\ast} \\
	\\
	\cB && \Cat
	\arrow["F"{description}, from=3-1, to=3-3]
	\arrow["\cU"{description}, from=1-3, to=3-3]
	\arrow[from=1-1, to=1-3]
	\arrow[from=1-1, to=3-1]
	\arrow["\lrcorner"{anchor=center, pos=0.125}, draw=none, from=1-1, to=3-3]
\end{tikzcd}\]
where $\Cat_\ast$ is the 2-category of pointed categories.
It is not always clear however in which ambient setting this pullback takes place.
One would want it to be in a category of bicategories and pseudofunctors.
But, this will only define $\int F$ as a bicategory equipped with a pseudofunctor into \cB.
In this thesis, I answer this question by defining it in the context of virtual double categories.
I prove that for functors of virtual double categories, the pullback always exists:
\[\begin{tikzcd}
	{\mathbb{A}\times_{\mathbb{C}}\mathbb{B}} && {\mathbb{B}} \\
	\\
	{\mathbb{A}} && {\mathbb{C}}
	\arrow[from=1-1, to=3-1]
	\arrow[from=3-1, to=3-3]
	\arrow[from=1-3, to=3-3]
	\arrow[from=1-1, to=1-3]
	\arrow["\lrcorner"{anchor=center, pos=0.125}, draw=none, from=1-1, to=3-3]
\end{tikzcd}\]
Then, I refine it.
The first question is what conditions do we need for $\mathbb{A}\times_{\mathbb{C}}\mathbb{B}$ to have composition of horizontal morphisms.
We want to encode the categories and 2-categories considered in the B\'{e}nabou-Grothendieck construction as virtual double categories with trivial vertical morphisms and the morphisms of the category/2-category as horizontal ones.
So the first question is what conditions do we need for $\mathbb{A}\times_{\mathbb{C}}\mathbb{B}$ to have composition of horizontal morphisms.
This is where opfibrations enter the picture: if $\mathbb{A}, \mathbb{B},\mathbb{C}$ have composite of horizontal morphisms and $\mathbb{B} \to \mathbb{C}$ is an opfibration, then $\mathbb{A}\times_{\mathbb{C}}\mathbb{B}$ has composite of horizontal morphisms (and also $\mathbb{A}\times_{\mathbb{C}}\mathbb{B} \to \mathbb{A}$ is strict, i.e. preserves composite on the nose).
Then, I give further conditions for ensuring that for $\mathbb{A}$ a category and $\mathbb{B},\mathbb{C}$ bicategories, with $\mathbb{A} \to \mathbb{C}$ a lax normal/pseudofunctor, $\mathbb{A}\times_{\mathbb{C}}\mathbb{B} \to \mathbb{A}$ is a functor of categories.
I then prove that it is the case when pulling $\Dist_\ast \to \Dist$ or $\Cat_\ast \to \Cat$ along a lax normal/pseudo functor to give the B\'{e}nabou-Grothendieck construction.

\section{Outline}

This thesis is split into three parts.

The first part is about representability, i.e. the idea that certain concepts that are usually expressed as structures on a category can be recovered through a universal property when considering more general morphisms.
This is illustrated on three examples, each treated in a different chapter.

In the first chapter, I consider multicategories.
I recall the notion of monoidal category, both the usual biased definition and an unbiased one that makes the connection to multicategories easier to establish.
Then, I define multicategories and show how any monoidal category induces an underlying multicategory. I prove that this induces a 2-fully-faithful functor from a bicategory of monoidal categories to a bicategory of multicategories.
After that, representable multicategories are considered, i.e. multicategories that have tensor products.
A string diagram for representable multicategories is introduced, and a 2-equivalence between a 2-category of representable multicategories and a 2-category of monoidal categories is established.
Finally closed multicategories are considered.
None of the material in chapter 1 is new.

The second chapter generalises this to polycategories.
I start by revisiting $\ast$-autonomous categories.
The usual definitions are first stated.
Then, I introduce a notion of unbiased $\ast$-autonomous category.
To do so, I define unbiased linearly distributive categories, and also their lax/weak versions.
This is new material.
Then, I define polycategories and show that any lax unbiased linearly distributive category induces an underlying polycategory.
This generalises the previously established fact that any linearly distributive category has an underlying polycategory.
Then, I characterise polycategories that are the underlying polycategory of a lax unbiased linearly distributive category by introducing weak tensor and par products, that have a weaker version of the universal properties of tensor and par products in a polycategory.
Then, I show how this characterisation lifts to the usual ones between two-tensor polycategories and linearly distributive categories and two-tensor polycategories with duals and $\ast$-autonomous categories.
After that, I introduce a general notion of universal object subsuming tensors, pars and duals.
I prove that birepresentable polycategories, i.e. the ones with all universal objects, are equivalent to two-tensor polycategories with duals, hence to $\ast$-autonomous categories.
This contribution was already in our paper \cite{BlancoZeilberger2020}.
Finally, I look at several examples of birepresentable polycategories, focusing in particular on Banach spaces and contractive polylinear maps exploiting the operational perspective on polycategories mentioned above.
String diagrams are considered for polycategories with weak products, with strong products, with duals and with universal objects.
It is shown how they relate to proof nets in linear logic.

The last chapter of the first part is about virtual double categories.
In this context, representability corresponds to the existence of composition for horizontal maps.
It is established that representable virtual double categories are double categories and their relation to bicategories is discussed.

The second part of this thesis is about fibrations.

The first chapter of this part considers bifibrations of polycategories.
This is the main contribution of my PhD and is explored in our paper \cite{BlancoZeilberger2020}.
An important result is that polycategories bifibred over $\one$ correspond to birepresentable polycategories, i.e. models of classical MLL.
This result has been recently generalised to full LL by Shulman in \cite{Shulman2021} via the notion of bifibred LNL-polycategory.

The second chapter is about fibrations of virtual double categories.
Two applications are considered.
First how virtual double categories  fibred over $\one$ corresponds to representable virtual double categories, i.e. double categories.
Then, how pulling back a fibration of representable virtual double categories lets us define a representable structure on the pullback.
This is used in the next part on B\'{e}nabou-Grothendieck correspondences.

The last part considers the B\'{e}nabou-Grothendieck correspondences that makes formal the idea that fibred structures correspond to indexed structures.
In the first chapter, I recall the categorical B\'{e}nabou-Grothendieck correspondences that establishes 2-equivalences between:
\begin{itemize}
\item functors $\cE \to \cB$ and lax normal functors $\cB \to \Dist$
\item fibrations $\cE \to \cB$ and pseudofunctors $\cB \to \Cat^\op$
\item opfibrations $\cE \to \cB$ and pseudofunctors $\cB \to \Cat$
\item bifibrations $\cE \to \cB$ and pseudofunctors $\cB \to \Adj$
\end{itemize}
I show that the reconstruction of the functor/fibration from the indexed data can be performed via a pullback in the category of virtual double categories by exploiting the fact that the forgetful functor $\Dist_\ast \to \Dist$ from pointed distributors (similarly for pointed categories) is a fibration of virtual double categories.
Then, in a last chapter, I consider a polycategorical version of the B\'{e}nabou-Grothendieck correspondences.
I show how it builds a bridge between different approaches for modeling classical MLL: $\ast$-autonomous categories, birepresentable polycategories, polycategories bifibred over $\one$, and Frobenius pseudomonoid in the polycategory \MVar{} of multivariable adjonctions.

\part{Representability}

\chapter{Multicategories and monoidal~closed~categories}

Monoidal closed categories are used to model intuitionistic multiplicative linear logic, linear functional programming languages and more generally any resource theory that need to accommodate higher-order processes - such as intuitionistic implication whose terms are proofs or functional programming languages where programs can take other programs in input.
This correspond to the ``closed'' part.
The monoidal part allows one to model the formation of complex systems from simpler ones.
In particular, any monoidal category comes with a monoidal product that can be used to model multivariable morphisms by considering morphisms whose inputs are joined together by the monoidal product.
An example would be the cartesian product in categories like sets and functions, topological spaces and continuous maps, vector spaces and linear maps.
Although, in the latter case, the cartesian product is the direct sum and the notion of multivariable morphism that we get that way is a multivariable function that is linear in all of its variables at once.
One might want to replace the direct sum there by the tensor product of vector spaces, so that a multivariable morphism is a multilinear map, i.e. a function linear in each of its variable independently.
This is a typical example of monoidal closed category.
A lot of those arise in practice by having in mind a notion of multivariable morphism and finding the right monoidal product to model it.
For these examples the process feels backwards.
For in this perspective, one will have to define first what is the tensor of vector spaces (constructing it explicitly, e.g. by taking a quotient of the free vector space on a product of vector spaces) to have access to multilinear maps.
In algebraic lectures, it is usually done in the other way, where a multilinear map is defined to be a multivariable function linear in each of its inputs, and the tensor product is given by its universal property of linearising multilinear maps.
Multicategories abstract this idea.
A multicategory is akin to a category but where multimorphisms have any number of inputs.
In a multicategory, one can introduce a notion of tensor product in a similar way that it is done for  vector spaces.
Then a multicategory with tensor products - called a representable multicategory - is a monoidal category, and any monoidal category arises this way.
So the two notions of representable multicategories and of monoidal categories coincide, although the perspective is different, in the former the multivariable morphisms are given as data and the tensor product is derived while in the latter it is the other way around.
Furthermore, not every multicategory is representable, so it is more general.
In particular, one can define a notion of closed multicategory without the need to talk about tensor product.

In this section we will recall the notion of monoidal closed category.
We will then carry on to define multicategories, (bi)representable ones and the 2-equivalence between these and monoidal (closed) categories.

This section mainly serves as providing background material for the rest of the thesis.
Nothing in it is new.
A treatment of monoidal categories can for example be found in Mac Lane's Categories for the Working Mathematician \cite{MacLane1998}.
A great account of multicategories is given in Leinster's Higher Operads, Higher Categories \cite{Leinster2004}.
Most of the material from this section is adapted from it.
The connection between closed categories and closed multicategories can be found in Manzyuk's paper \cite{Manzyuk2009}.

\section{Monoidal closed categories}

A monoidal category is a category where objects and morphisms can be composed together in parallel, using a so-called monoidal product.
There are multiple ways of defining it.
One can give the nullary and binary monoidal product and how they interact to form monoidal product of other arities or one can give all the monoidal products as data with some coherence laws.
These are usually referred to as the biased and unbiased definition.
Note that in the biased definition, one does not have to commit to defining nullary and binary monoidal products but could give $n$-ary ones for all n in a set $\Sigma$ containing at least 0 and one arity greater or equal to 2, but we will stick to the nullary+binary case.
The reader interested in the case for an arbitrary $\Sigma$ and the equivalence of the notions will find all the details in \cite{Leinster2004}.

\begin{definition}
A \emph{biased monoidal category} $(\cC,\otimes,I,\alpha, \lambda, \rho)$ is the data of:
\begin{itemize}
\item a category \cC
\item a functor $- \otimes - \colon \cC \times \cC \to \cC$
\item a functor $I \colon \one \to \cC$, that is an object of \cC, we will write $I$ for the object $I(\ast)$
\item a natural isomorphism $\alpha \colon (- \otimes -) \otimes - \Rightarrow - \otimes (- \otimes -)$ called the associator
\item natural isomorphisms $\lambda \colon I \otimes - \Rightarrow -$ and $\rho \colon - \otimes I \Rightarrow -$ called the left and right unitors respectively
\end{itemize}
subject to the coherence laws:
\begin{itemize}
\item the following pentagon diagram commutes for any $A,B,C,D \in \Ob{\cC}$:
\[\begin{tikzcd}[ampersand replacement=\&]
	{((A \otimes B) \otimes C) \otimes D} \&\& {(A \otimes B) \otimes (C \otimes D)} \\
	\&\&\& {A \otimes (B \otimes (C \otimes D))} \\
	{(A \otimes (B \otimes C)) \otimes D} \&\& {A \otimes ((B \otimes C) \otimes D)}
	\arrow["{\alpha_{A \otimes B, C, D}}"{description}, from=1-1, to=1-3]
	\arrow["{\alpha_{A,B, C\otimes D}}"{description}, from=1-3, to=2-4]
	\arrow["{\alpha_{A,B,C}\otimes D}"{description}, from=1-1, to=3-1]
	\arrow["{\alpha_{A, B \otimes C, D}}"{description}, from=3-1, to=3-3]
	\arrow["{A \otimes \alpha_{B,C,D}}"{description}, from=3-3, to=2-4]
\end{tikzcd}\]
\item the following triangle diagram commutes for any $A,B$:
\[\begin{tikzcd}[ampersand replacement=\&]
	{(A \otimes I) \otimes B} \&\& {A \otimes (I \otimes B)} \\
	\\
	\&\& {A \otimes B}
	\arrow["{\alpha_{A,I,B}}"{description}, from=1-1, to=1-3]
	\arrow["{A \otimes \lambda_B}"{description}, from=1-3, to=3-3]
	\arrow["{\rho_A \otimes B}"{description}, from=1-1, to=3-3]
\end{tikzcd}\]
\end{itemize}
\end{definition}

\begin{figure}
\[\scalebox{0.75}{
\begin{tikzcd}[ampersand replacement=\&]
	\& {\bigotimes_p(\bigotimes_{n_i}(\bigotimes_{m_{i,j}}(a_{i,j,k})_{1 \leq k \leq m_{i,j}})_{1 \leq j \leq n_i})_{1 \leq i \leq p}} \\
	{\bigotimes_{\sum_i n_i}(\bigotimes_{m_{i,j}}(a_{i,j,k})_{1 \leq k \leq m_{i,j}})_{1 \leq i \leq p,1 \leq j\leq n_i}} \&\& {\bigotimes_p(\bigotimes_{\sum_j}(a_{i,j,k})_{1 \leq j \leq n_i,1 \leq k \leq m_{i,j}})_{1 \leq i \leq p}} \\
	\& {\bigotimes_{\sum_i \sum_{j} m_{i,j}}(a_{i,j,k})_{1 \leq i \leq p, 1 \leq j \leq n_i, 1\leq k \leq m_{i,j}}}
	\arrow["{\alpha_{((\bigotimes_{m_{i,j}} (a_{i,j,k})_k)_j)_i}}"{description}, from=1-2, to=2-1]
	\arrow["{\alpha_{((a_{i,j,k})_k)_{(i,j)}}}"{description}, from=2-1, to=3-2]
	\arrow["{\otimes_p(\alpha_{((a_{i,j,k})_k)_j})_i}"{description}, from=1-2, to=2-3]
	\arrow["{\alpha_{((a_{i,j,k})_{(j,k)})_i}}"{description}, from=2-3, to=3-2]
\end{tikzcd}}\]

\[\scalebox{0.75}{\begin{tikzcd}[ampersand replacement=\&]
	{\otimes_n(a_i)_{1 \leq i \leq n}} \&\& {\otimes_n(\otimes_1(a_i))_{1 \leq i \leq n}} \&\& {\otimes_n(a_i)_{1 \leq i \leq n}} \&\& {\otimes_1(\otimes_n(a_i)_{1 \leq i \leq n})} \\
	\&\& {\otimes_n(a_i)_{1 \leq i \leq n}} \&\&\&\& {\otimes_n(a_i)_{1 \leq i \leq n}}
	\arrow["{\otimes_n(\iota_{a_i})_{1 \leq i \leq n}}"{description}, from=1-1, to=1-3]
	\arrow["{\alpha_{((a_i))_{1 \leq i \leq n}}}"{description}, from=1-3, to=2-3]
	\arrow[Rightarrow, no head, from=1-1, to=2-3]
	\arrow["{\iota_{\otimes_n(a_i)_{1 \leq i \leq n}}}"{description}, from=1-5, to=1-7]
	\arrow["{\alpha_{((a_i)_{1 \leq i \leq n})}}"{description}, from=1-7, to=2-7]
	\arrow[Rightarrow, no head, from=1-5, to=2-7]
\end{tikzcd}}\]
\caption{Coherence law for an unbiased monoidal category}
\label{fig:unbiased-monoidal-coherence}
\end{figure}
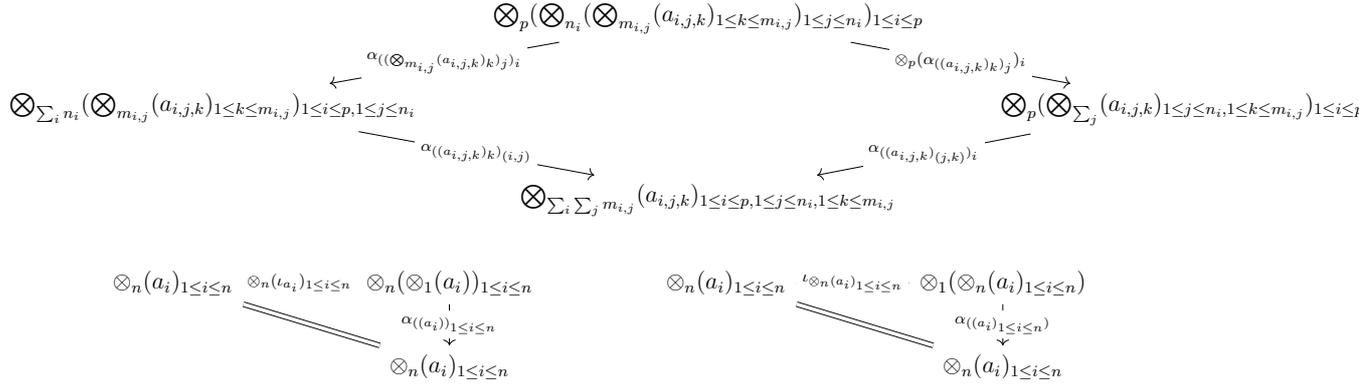

\begin{definition}
An \emph{unbiased monoidal category} $(\cC,(\bigotimes_n)_{n\in \N}, (\alpha^{n,(m_k)_{ 1 \leq k \leq n}})_{n \in \N}, \iota)$ is the data of:
\begin{itemize}
\item a category \cC
\item for each arity $n$, a functor $\bigotimes_n \colon \cC \times \dots \times \cC \to \cC$ from $n$ copies of \cC
\item for each pair of arity $m,n$, a natural isomorphism: \[\alpha^{n,(m_k)_{1 \leq k \leq n}} \colon \bigotimes_n(\bigotimes_{m_1}(-), \dots, \bigotimes_{m_n}(-)) \Rightarrow \bigotimes_{\sum_k m_k} (-)\], where we will usually drop the arity and write $\alpha$
\item a natural isomorphism $\iota \colon - \Rightarrow \otimes_1 -$
\end{itemize}
subject to the following coherence laws:
\begin{itemize}
\item 
\[\begin{tikzcd}
	& {\bigotimes_p\bigotimes_{n_i}\bigotimes_{m_{i,j}}a_{i,j,k}} \\
	{\bigotimes_{\sum_i n_i}\bigotimes_{m_{i,j}}a_{i,j,k}} && {\bigotimes_p\bigotimes_{\sum_j}a_{i,j,k}} \\
	& {\bigotimes_{\sum_i \sum_{j} m_{i,j}}a_{i,j,k}}
	\arrow["\alpha"{description}, from=1-2, to=2-1]
	\arrow["\alpha"{description}, from=2-1, to=3-2]
	\arrow["{\otimes_p\alpha}"{description}, from=1-2, to=2-3]
	\arrow["\alpha"{description}, from=2-3, to=3-2]
\end{tikzcd}\]
\item
\[\begin{tikzcd}
	{\otimes_na_i} && {\otimes_n\otimes_1a_i} && {\otimes_na_i} && {\otimes_1\otimes_na_i} \\
	&& {\otimes_na_i} &&&& {\otimes_na_i}
	\arrow["{\otimes_n \iota}"{description}, from=1-1, to=1-3]
	\arrow["\alpha"{description}, from=1-3, to=2-3]
	\arrow[Rightarrow, no head, from=1-1, to=2-3]
	\arrow["\iota"{description}, from=1-5, to=1-7]
	\arrow["\alpha"{description}, from=1-7, to=2-7]
	\arrow[Rightarrow, no head, from=1-5, to=2-7]
\end{tikzcd}\]
\end{itemize}
see figure \ref{fig:unbiased-monoidal-coherence} for the full diagrams
\end{definition}

\begin{rk}
We take the convention that no copy of \cC correspond to the terminal category $\one$.
In particular, $\otimes_0 \colon \one \to \cC$.
\end{rk}

\begin{definition}
A \defin{strict} biased/unbiased monoidal category is one where the coherence isomorphisms are equalities.
\end{definition}

\begin{prop}
There is a correspondence between biased and unbiased monoidal categories.
\end{prop}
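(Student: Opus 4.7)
The plan is to establish the correspondence by constructing functors in both directions and showing they are mutually inverse (up to isomorphism of monoidal categories).

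\textbf{From unbiased to biased.} Given an unbiased monoidal category $(\cC,(\bigotimes_n)_n,\alpha,\iota)$, I would extract the biased data by setting $A\otimes B := \bigotimes_2(A,B)$ and $I := \bigotimes_0(\ast)$. The associator $\alpha_{A,B,C} \colon (A\otimes B)\otimes C \to A\otimes(B\otimes C)$ is obtained by composing two instances of the unbiased $\alpha$: one comparing $\bigotimes_2(\bigotimes_2(A,B),\bigotimes_1(C))$ with $\bigotimes_3(A,B,C)$ (after inserting $\iota$ on the third factor), and the inverse instance comparing $\bigotimes_3(A,B,C)$ with $\bigotimes_2(\bigotimes_1(A),\bigotimes_2(B,C))$. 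The unitors $\lambda,\rho$ come from the unbiased $\alpha$ relating $\bigotimes_2(\bigotimes_0,\bigotimes_1)$ and $\bigotimes_2(\bigotimes_1,\bigotimes_0)$ with $\bigotimes_1$, composed with $\iota^{-1}$. The pentagon then follows from the hexagonal coherence diagram for $\alpha$ instantiated with $p=2$, $n_1=n_2=2$, all $m_{i,j}=1$ (after padding with $\iota$), and the triangle follows from the unit coherence diagrams.

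\textbf{From biased to unbiased.} Given a biased monoidal category $(\cC,\otimes,I,\alpha,\lambda,\rho)$, I would define $\bigotimes_n$ by induction using a fixed convention (say left-bracketing): $\bigotimes_0 := I$, $\bigotimes_1(A) := A$, and $\bigotimes_{n+1}(A_1,\dots,A_{n+1}) := \bigotimes_n(A_1,\dots,A_n)\otimes A_{n+1}$. The generalised associators $\alpha^{n,(m_k)}$ and $\iota$ then need to be defined as canonical composites of the biased $\alpha,\lambda,\rho$. The key tool here is Mac Lane's coherence theorem: any two formal composites of $\alpha,\lambda,\rho$ between the same source and target in a free monoidal category are equal, so the canonical composite is well-defined and unique. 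This gives the required components, and both unbiased coherence diagrams commute automatically because both sides represent the unique coherence isomorphism between the same formal bracketing.

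\textbf{Round trips.} For the composite biased-to-unbiased-to-biased, the extracted $\otimes_2$ is literally $-\otimes -$, $\otimes_0$ is $I$, and the extracted associator/unitors are the canonical composites, which coincide with the original $\alpha,\lambda,\rho$ by construction and Mac Lane coherence. For the composite unbiased-to-biased-to-unbiased, each $\bigotimes_n$ gets replaced by its left-bracketed version built from $\otimes_2$, and one needs a canonical isomorphism back; this is supplied by iterated application of the original unbiased $\alpha$ and $\iota$, and naturality plus the two coherence laws ensure this is a monoidal isomorphism.

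\textbf{Main obstacle.} The hard part is invoking Mac Lane's coherence theorem cleanly to define the generalised $\alpha^{n,(m_k)}$ in the biased-to-unbiased direction and to verify the unbiased coherence laws; once coherence is in hand, all the diagrams collapse to the statement that two canonical maps with the same source and target agree. A careful treatment would either appeal to the version of coherence proved in Leinster \cite{Leinster2004}, or give the inductive construction explicitly and check the two unbiased axioms by induction on $p$ and the $n_i$.
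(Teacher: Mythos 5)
Your proposal is correct and follows essentially the same route as the paper: extract $\otimes_2,\otimes_0$ and build the biased associator/unitors from instances of the unbiased $\alpha$, define the unbiased $\bigotimes_n$ by left-bracketed induction in the other direction, and defer the verification of the coherence axioms to a coherence/strictification theorem (the paper cites Leinster for this, you cite Mac Lane). The only cosmetic difference is that the paper spells out the nested induction defining $\alpha^{n,(m_i)}$ explicitly where you invoke coherence to produce the canonical composite, and you additionally sketch the round-trip comparisons, which the paper leaves implicit.
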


\begin{proof}
The idea is that starting from an unbiased monoidal category $(\cC, \otimes_n, \alpha_n)$ we define:
\begin{itemize}
\item $\otimes := \otimes_2$
\item $I := \otimes_0$
\item $\alpha' \colon (- \otimes -) \otimes - \xrightarrow{\alpha^{2,(2,1)}} \otimes_3(-,-,-) \xrightarrow{(\alpha^{2,(1,2)})^{-1}} - \otimes (- \otimes -)$
\item $\lambda \colon I \otimes - \xrightarrow{\alpha^{2,(0,1)}} -$
\item $\rho \colon - \otimes I \xrightarrow{\alpha^{2,(1,0)}} -$
\end{itemize}

And conversely, starting from a biased monoidal category we define recursively
\begin{itemize}
\item $\otimes_0() := I$
\item $\otimes_1(A) := A$
\item $\otimes_{n+1}(A_i)_{1 \leq i \leq n+1} := \otimes_n(A_i)_{1 \leq i \leq n} \otimes A_{n+1}$
\item $\alpha^{n,(m_i)_{1 \leq i \leq n}}$ is defined by a nested induction, first an induction on $n$
\begin{itemize}
\item $\alpha^{0} \colon \otimes_0() \to \otimes_0()$ is the identity of $\otimes_0() = I$
\item $\alpha^{1,m_1} \colon \otimes_1(\otimes_{m_1}(-)) \to \otimes_{m_1}(-)$ is given by the identity since  by definition $\otimes_1(\otimes_{m_1}(A)) = \otimes_{m_1}(A)$
\item supposing that we have $\alpha^{n,(m_i)_{1 \leq i \leq n}}$, we want to define \[\alpha^{n+1,(m_i)_{1 \leq i \leq n+1}} \colon \otimes_{n+1}(\otimes_{m_i}(A_{i,j})) \to \otimes_{\sum\limits_{i \leq n+1} m_i}(A_{i,j})\]
By definition $\otimes_{n+1}(\otimes_{m_i}(A_{i,j})) = \otimes_n(\otimes_{m_i}(A_{i,j})) \otimes\ \otimes_{m_{n+1}}(A_{n+1,j})$ so we need to define \[\alpha^{n+1,(m_i)_{1 \leq i \leq n+1}} \colon \otimes_n(\otimes_{m_i}(A_{i,j})) \otimes\ \otimes_{m_{n+1}}(A_{n+1,j}) \to \otimes_{\sum\limits_{i \leq n} m_i + m_{n+1}}(A_{i,j})\]
We proceed by induction on $m_{n+1}$:
\begin{itemize}
\item for $m_{n+1} = 0$ we want \[\alpha^{n+1,(m_i)_{1 \leq i \leq n+1}} \colon \otimes_n(\otimes_{m_i}(A_{i,j})) \otimes\ \otimes_0() \to \otimes_{\sum\limits_{i \leq n} m_i + 0}(A_{i,j})\] which by definition gives \[\alpha^{n+1,(m_i)_{1 \leq i \leq n+1}} \colon \otimes_n(\otimes_{m_i}(A_{i,j})) \otimes I \to \otimes_{\sum\limits_{i \leq n} m_i}(A_{i,j})\]
We do so by first using the right unitor and then the induction hypothesis of the first induction $\alpha^{n+1,(m_i)_{1 \leq i \leq n+1}} := \alpha^{n,(m_i)_{1 \leq i \leq n}} \circ \rho$.
\item for $m_{n+1} = 1$ we want \[\alpha^{n+1,(m_i)_{1 \leq i \leq n+1}} \colon \otimes_n(\otimes_{m_i}(A_{i,j})) \otimes\ \otimes_1(A_{n+1,j}) \to \otimes_{\sum\limits_{i \leq n} m_i + 1}(A_{i,j})\] which by definition of $\otimes_1$ and $\otimes_{\sum\limits_{i \leq n} m_i + 1}$ gives \[\alpha^{n+1,(m_i)_{1 \leq i \leq n+1}} \colon \otimes_n(\otimes_{m_i}(A_{i,j})) \otimes A_{n+1,1} \to \otimes_{\sum\limits_{i \leq n} m_i}(A_{i,j}) \otimes A_{n+1,1}\]
So we can just use \[\alpha^{n+1,(m_i)_{1 \leq i \leq n+1}} := \alpha^{n,(m_i)_{1 \leq i \leq n}} \otimes id\]
\item now, suppose that we have $\alpha^{n+1,(m_i)_{1 \leq i \leq n+1}}$ for some $m_n$ and we want to define it for $m_{n+1}$.
So we need \[\alpha^{n+1,(m_i)_{1 \leq i \leq n+1}} \colon \otimes_n(\otimes_{m_i}(A_{i,j})) \otimes\ \otimes_{m_{n+1}+1}(A_{n+1,j}) \to \otimes_{\sum\limits_{i \leq n} m_i + m_{n+1}+1}(A_{i,j})\]
By definition we need a map $\alpha^{n+1,(m_i)_{1 \leq i \leq n+1}}$ of the type \[\otimes_n(\otimes_{m_i}(A_{i,j})) \otimes (\otimes_{m_{n+1}}(A_{n+1,j}) \otimes A_{n+1,m_n+1}) \to \otimes_{\sum\limits_{i \leq n} m_i + m_{n+1}}(A_{i,j}) \otimes A_{n+1,m_n+1}\]
We can first use the inverse of the associator of the biaised monoidal category to change the order of the bracketing and then use our induction hypothesis.
\end{itemize}
\end{itemize}
\end{itemize}

Instead of proving that the coherence laws of biased and unbiased imply one another, we can first prove coherence theorems stating that any biased/unbiased monoidal category is equivalent to a strict one.
See \cite{Leinster2004} for more details.
\end{proof}

\begin{definition}
A \defin{biased left closed monoidal category} $(\cC, \otimes, I, \multimap)$ is a biased monoidal category such that the functor $A \otimes - \colon \cC \to \cC$ has a right adjoint $A \multimap -$ for any $A \in \cC$.

A \defin{biased right closed monoidal category} $(\cC, \otimes, I, \multimapinv)$ is a biased monoidal category such that the functor $- \otimes A \colon \cC \to \cC$ has a right adjoint $- \multimapinv A$ for any $A \in \cC$.

A \defin{biased biclosed monoidal category}, is a biased monoidal category that is both left and right closed.
\end{definition}

\begin{definition}
An \defin{unbiased biclosed monoidal category}, is an unbiased monoidal category, such that the functor $\otimes_{m + 1 + n}((a_i)_{1 \leq i \leq m}, -, (a_j')_{1 \leq j \leq n})$ has a right adjoint $(a_i)_{1 \leq i \leq m} \multimap - \multimapinv (a_j')_{1 \leq j \leq n}$ for any lists of objects $(a_i)_i, (a_j')j$.
\end{definition}

\begin{prop}
Biased and unbiased biclosed monoidal categories correspond.
\end{prop}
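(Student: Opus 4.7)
The plan is to extend the previously-established correspondence between biased and unbiased monoidal categories to the biclosed setting. Since biclosedness is a property (the existence of right adjoints) rather than extra structure, the task reduces to showing that one family of internal homs exists iff the other does, with the two recipes agreeing up to canonical iso under the biased/unbiased monoidal correspondence.

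From biased to unbiased: given a biased biclosed $(\cC, \otimes, I, \multimap, \multimapinv)$, I first invoke the previous proposition to obtain the associated unbiased products $(\otimes_n)_{n \in \N}$. For lists $(a_i)_{1 \leq i \leq m}$ and $(a'_j)_{1 \leq j \leq n}$ I define
\[
(a_i)_i \multimap - \multimapinv (a'_j)_j \;:=\; \bigl(\otimes_m(a_i)_i\bigr) \multimap \bigl( - \multimapinv \otimes_n(a'_j)_j \bigr).
\]
Combining the natural iso $\otimes_{m+1+n}((a_i)_i, x, (a'_j)_j) \cong (\otimes_m(a_i)_i) \otimes x \otimes (\otimes_n(a'_j)_j)$ that comes from the biased/unbiased correspondence with the two given binary adjunctions produces a chain
\[
\cC\bigl(\otimes_{m+1+n}((a_i)_i, x, (a'_j)_j), b\bigr) \cong \cC\bigl((\otimes_m(a_i)_i) \otimes x \otimes (\otimes_n(a'_j)_j), b\bigr) \cong \cC\bigl(x, (a_i)_i \multimap b \multimapinv (a'_j)_j\bigr),
\]
natural in $x$ and $b$, which witnesses the required $n$-ary adjunction.

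Conversely, from an unbiased biclosed structure, the derived binary product is $A \otimes B = \otimes_2(A,B) = \otimes_{1+1+0}((A), B, ())$, so its right adjoint in $B$ is provided directly by the given unbiased hom: set $A \multimap - := (A) \multimap - \multimapinv ()$, and dually $- \multimapinv A := () \multimap - \multimapinv (A)$. Both biased adjunctions are then immediate specialisations of unbiased ones.

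The main obstacle is to verify that these two passages are mutually inverse up to canonical natural isomorphism. I would handle this by a strictification argument: by the coherence theorem one may replace each monoidal structure by a strict equivalent, in which case $\otimes_{m+1+n}((a_i)_i, x, (a'_j)_j)$ and $(\otimes_m(a_i)_i) \otimes x \otimes (\otimes_n(a'_j)_j)$ coincide on the nose, the degenerate $n=0$ and $m=0$ cases collapse using $\otimes_1(a) = a$ and $\otimes_0() = I$, and the two recipes become literal inverses; the general statement then transports back along the strictification equivalence, using that right adjoints are preserved by monoidal equivalences and are unique up to unique iso.
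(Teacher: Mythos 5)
Your argument is essentially the paper's: the unbiased-to-biased direction is the immediate specialisation of the $n$-ary hom to the binary case, and the biased-to-unbiased direction rebuilds the $n$-ary hom by composing the two binary right adjoints along the canonical isomorphism $\otimes_{m+1+n}((a_i)_i, x, (a'_j)_j) \cong (\otimes_m(a_i)_i) \otimes x \otimes (\otimes_n(a'_j)_j)$, which is exactly the paper's ``right adjoints compose.'' Your additional strictification argument for checking the two passages are mutually inverse is sound and fills in a round-trip verification the paper leaves implicit.
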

\begin{proof}
From unbiased to biased it follows from the definition of biased category obtained from an unbiased one.
For the other way around we use in addition that right adjoints compose.
\end{proof}

\begin{definition}
A \defin{(lax) monoidal functor} between biased monoidal categories $(\cC, \otimes_\cC, I_\cC)$ and $(\cD,\otimes_\cD, I_\cD)$ is a functor $F \colon \cC \to \cD$ with a morphism $F_0 \colon I_\cD \rightarrow F(I_\cC)$ and a natural transformation $F_2 \colon F(-) \otimes_\cD F(--) \Rightarrow F(- \otimes_\cC --)$ subject to coherence laws:
\begin{itemize}
\item 
\[\begin{tikzcd}[ampersand replacement=\&]
	\& {(F(A) \otimes F(B)) \otimes F(C)} \\
	{F(A) \otimes (F(B) \otimes F(C))} \&\& {F(A \otimes B) \otimes F(C)} \\
	\\
	{F(A) \otimes F(B \otimes C)} \&\& {F((A \otimes B) \otimes C)} \\
	\& {F(A \otimes (B \otimes C))}
	\arrow["{F_2(A,B) \otimes F(C)}"{description}, from=1-2, to=2-3]
	\arrow["{\alpha_{F(A),F(B),F(C)}}"{description}, from=1-2, to=2-1]
	\arrow["{F(A) \otimes F_2(B,C)}"{description}, from=2-1, to=4-1]
	\arrow["{F_2(A \otimes B, C)}"{description}, from=2-3, to=4-3]
	\arrow["{F_2(A,B\otimes C)}"{description}, from=4-1, to=5-2]
	\arrow["{F(\alpha_{A,B,C})}"{description}, from=4-3, to=5-2]
\end{tikzcd}\]
\item 
\[\begin{tikzcd}[ampersand replacement=\&]
	{I \otimes F(A)} \&\& {F(I) \otimes F(A)} \&\& {F(A) \otimes I} \&\& {F(A) \otimes F(I)} \\
	\\
	{F(A)} \&\& {F(I \otimes A)} \&\& {F(A)} \&\& {F(A \otimes I)}
	\arrow["{F_0 \otimes F(A)}"{description}, from=1-1, to=1-3]
	\arrow["{F_2(I,A)}"{description}, from=1-3, to=3-3]
	\arrow["{\lambda_{F(A)}}"{description}, from=1-1, to=3-1]
	\arrow["{F(\lambda_A^{-1})}"{description}, from=3-1, to=3-3]
	\arrow["{F(A) \otimes F_0}"{description}, from=1-5, to=1-7]
	\arrow["{F_2(A,I)}"{description}, from=1-7, to=3-7]
	\arrow["{F(\rho_{A}^{-1})}"{description}, from=3-5, to=3-7]
	\arrow["{\rho_{F(A)}}"{description}, from=1-5, to=3-5]
\end{tikzcd}\]
\end{itemize}

It is called \defin{strong} if the morphism and the natural transformation are isomorphisms and strict if they are identities.
\end{definition}

\begin{definition}
A \defin{(lax) monoidal functor} between unbiased monoidal categories $(\cC, \otimes_n^\cC)$ and $(\cD, \otimes_n^\cD)$ is a functor $F \colon \cC \to \cD$ with natural transformations $F_n \colon \otimes_n^\cD (F(-)) \Rightarrow F(\otimes_n^\cC (-))$ subject to the following coherence laws:
\begin{itemize}
\item 
\[\begin{tikzcd}[ampersand replacement=\&]
	{\otimes_n(\otimes_{m_i}(F(A_{i,j})_{j \leq m_i})_{i \leq n}} \&\& {\otimes_{\sum_i m_i} F(A_{i,j})} \\
	\&\&\&\& {F(\otimes_{\sum_i m_i}(A_{i,j}))} \\
	{\otimes_n(F(\otimes_{m_i} (A_{i,j})))} \&\& {F(\otimes_n(\otimes_{m_i}(A_{i,j})))}
	\arrow["{\otimes_n(F_{m_i}(A_{i,j}))}"{description}, from=1-1, to=3-1]
	\arrow["{F_n(\otimes_{m_i}(A_{i,j}))}"{description}, from=3-1, to=3-3]
	\arrow["{F_n(\alpha_{(A_{i,j})})}"{description}, from=3-3, to=2-5]
	\arrow["{\alpha_{F(A_{i,j})}}"{description}, from=1-1, to=1-3]
	\arrow["{F_{\sum_i m_i}(A_{i,j})}"{description}, from=1-3, to=2-5]
\end{tikzcd}\]
\item 
\[\begin{tikzcd}[ampersand replacement=\&]
	{F(A)} \&\& {\otimes_1(F(A))} \\
	\&\& {F(\otimes_1(A))}
	\arrow["{\iota_{F(A)}}"{description}, from=1-1, to=1-3]
	\arrow["{F_1(A)}"{description}, from=1-3, to=2-3]
	\arrow["{F(\iota_A)}"{description}, from=1-1, to=2-3]
\end{tikzcd}\]
\end{itemize}

It is called strong/strict if the $F_n$ are isomorphisms/equalities.
\end{definition}

\begin{definition}
A \defin{monoidal transformation} $\gamma \colon (\cC, \otimes_\cC, I_\cC) \Rightarrow (\cD, \otimes_\cD, I_\cD)$ between biased monoidal functors is a natural transformation $\gamma \colon F \Rightarrow G$ such that:
\[\begin{tikzcd}[ampersand replacement=\&]
	{F(A) \otimes F(B)} \&\& {F(A \otimes B)} \&\& I \&\& {F(I)} \\
	\\
	{G(A) \otimes G(B)} \&\& {G(A \otimes B)} \&\&\&\& {G(I)}
	\arrow["{\gamma_A \otimes \gamma_B}"{description}, from=1-1, to=3-1]
	\arrow["{F_2(A,B)}"{description}, from=1-1, to=1-3]
	\arrow["{\gamma_{A \otimes B}}"{description}, from=1-3, to=3-3]
	\arrow["{G_2(A,B)}"{description}, from=3-1, to=3-3]
	\arrow["{F_0}"{description}, from=1-5, to=1-7]
	\arrow["{\gamma_I}"{description}, from=1-7, to=3-7]
	\arrow["{G_0}"{description}, from=1-5, to=3-7]
\end{tikzcd}\]
\end{definition}

\begin{definition}
A \defin{monoidal transformation} between unbiased monoidal functors is a natural transformation $\gamma \colon F \Rightarrow G$, such that:
\[\begin{tikzcd}[ampersand replacement=\&]
	{\otimes_nF(A_i)} \&\& {F(\otimes_n A_i)} \\
	\\
	{\otimes_n G(A_i)} \&\& {G(\otimes_n A_i)}
	\arrow["{F_n(A_i)}"{description}, from=1-1, to=1-3]
	\arrow["{\gamma_{\otimes_n A_i}}"{description}, from=1-3, to=3-3]
	\arrow["{\otimes_n \gamma_{A_i}}"{description}, from=1-1, to=3-1]
	\arrow["{G_n(A_i)}"{description}, from=3-1, to=3-3]
\end{tikzcd}\]
\end{definition}

All of these notions form strict 2-categories.
We will write \MonCat{} ( resp. \MonCatPs{} and \MonCatSt ) for the 2-category of biased monoidal categories, monoidal functors (resp. strong/pseudo-monoidal functors and strict monoidal functors) and monoidal natural transformations.
We will write \UMonCat , \UMonCatPs{} and \UMonCatSt{} for their unbiased analogs.

\begin{prop}
There is a 2-equivalence of 2-categories
\[ \UMonCat \simeq \MonCat\]
and similarly for the strong and strict cases.
\end{prop}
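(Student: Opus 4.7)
The plan is to promote the object-level bijection from the previous proposition to a 2-equivalence by constructing 2-functors $U \colon \MonCat \to \UMonCat$ and $B \colon \UMonCat \to \MonCat$ and verifying their composites are 2-naturally isomorphic to the identity 2-functors. On objects I reuse the assignments from the preceding proof. On 1-cells, from a biased monoidal functor $(F, F_0, F_2)$ I build comparison maps $F_n \colon \otimes_n(F(A_i)) \to F(\otimes_n(A_i))$ by induction mirroring the recursive definition of $\otimes_n$: take $F_1 = \mathrm{id}$ and $F_{n+1}((A_i)_{i \leq n+1}) := F_2(\otimes_n(A_i)_{i \leq n}, A_{n+1}) \circ (F_n((A_i)_{i \leq n}) \otimes \mathrm{id}_{F(A_{n+1})})$. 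Going the other way, from an unbiased monoidal functor I simply extract the nullary and binary components as $F_0$ and $F_2$. On 2-cells there is no data beyond the underlying natural transformation, so the assignments are the identity on 2-cells; one must only check that the monoidality squares transfer back and forth, which is immediate in the unbiased-to-biased direction and follows by induction on $n$ in the other direction from the inductive construction of $F_n$.

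Verifying that $U$ and $B$ are strict 2-functors then reduces to showing compatibility of the inductive construction with identities and composition of monoidal functors: the formula $(G \circ F)_n = G_n(F(-)) \circ G(F_n(-))$ produced by the biased-to-unbiased recipe is checked by a short induction on $n$. The pseudo and strict subcases are preserved automatically because the recursive formula for $F_n$ is built by composing instances of $F_2$ and $F_k$ for $k < n$, so it is an isomorphism (resp.\ an identity) whenever $F_0$ and $F_2$ are isomorphisms (resp.\ identities), while the reverse direction extracts $F_0$ and $F_2$ unchanged.

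The main obstacle is verifying the full family of unbiased coherence diagrams given only the pentagon and triangle, and dually. Rather than attack these infinitely many diagrams head on, I would follow the strategy hinted at in the previous proof and invoke the coherence theorems for biased and unbiased monoidal categories as proved in \cite{Leinster2004}: both are 2-equivalent to their strict analogues via strictification 2-functors, and the biased/unbiased distinction collapses in the strict setting since every coherence isomorphism is an identity, making $B$ and $U$ 2-isomorphisms between $\UMonCatSt$ and $\MonCatSt$. Chasing the square formed by $B$, $U$ and the two strictification 2-equivalences then forces $B$ and $U$ to be 2-equivalences themselves. The pseudo and strict variants follow by restricting this argument along the fully faithful inclusions into the lax versions, using the preservation property of the inductive construction mentioned above.
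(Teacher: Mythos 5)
The paper gives no argument of its own here --- its ``proof'' is the single line ``See \cite{Leinster2004}'' --- so your proposal is not so much a different route as an actual route where the paper supplies none, and it is essentially the standard one (and the one Leinster's book carries out in spirit). Your construction of the two 2-functors is sound: the inductive formula $F_{n+1} = F_2(\otimes_n(A_i), A_{n+1}) \circ (F_n \otimes \mathrm{id})$ type-checks against the paper's recursive definition of $\otimes_{n+1}$, the extraction of $F_0, F_2$ in the other direction is correct, and deferring the infinite family of coherence checks to the coherence/strictification theorem is exactly the move the paper itself endorses in the proof of the preceding proposition. Two soft spots worth tightening. First, a notational clash: in this paper $\MonCatSt$ denotes biased monoidal categories with \emph{strict monoidal functors}, not strict monoidal categories, so the 2-categories appearing in your strictification square are not the paper's $\MonCatSt$ and $\UMonCatSt$ and should be named separately. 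Second, the ``chase the square'' step is where the real content hides: to apply 2-out-of-3 you must first know that $B$ and $U$ are well-defined 2-functors (which already requires the unbiased coherence axioms to hold for $U(\cC)$ and the functor coherence squares to hold for the inductively built $F_n$) and that the square commutes up to 2-natural equivalence. The honest justification for the former is Mac Lane's coherence theorem in the form ``every formal diagram built from $\alpha,\lambda,\rho$ commutes,'' since the unbiased $\alpha_{n,(m_i)}$ are such formal composites; once that is granted, your 2-natural isomorphisms $UB \cong \mathrm{id}$ and $BU \cong \mathrm{id}$ (the latter being an equality of associators on the nose, the former only an isomorphism rebracketing $\otimes_n$) go through, and the pseudo and strict variants restrict as you say.
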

\begin{proof}
See \cite{Leinster2004}.
\end{proof}

Since \UMonCat{} is a 2-category, one can defined a notion of equivalence internal to it.
We will talk about monoidal equivalence.

\begin{prop}
For unbiased monoidal categories \cC, \cD, the following are equivalent:
\begin{itemize}
\item \cC and \cD are monoidally equivalent, i.e. there are monoidal functors $F \colon \cC \to \cD$ and $G \colon \cD \to \cC$ and invertible monoidal transformations $\eta \colon \id \Rightarrow G \circ F$ and $\epsilon \colon F \circ F \Rightarrow \id$ 
\item \cC and \cD are equivalent and one of the functor of the equivalence is monoidal
\item there is a monoidal functor $F \colon \cC \to \cD$ that is full, faithful and essentially surjective on objects.
\end{itemize}
\end{prop}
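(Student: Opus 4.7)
The plan is to establish the cycle of implications $(1) \Rightarrow (2) \Rightarrow (3) \Rightarrow (1)$, paralleling the classical characterization of equivalences of ordinary categories.

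The first implication is immediate: an invertible monoidal transformation is in particular an invertible natural transformation, so forgetting the monoidal structure on $G$, $\eta$, $\epsilon$ yields an equivalence of underlying categories in which $F$ retains its monoidal structure. The second implication is the standard fact that any functor participating in an equivalence is fully faithful and essentially surjective on objects, and by hypothesis we can take the distinguished functor to be the monoidal one.

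The content lies in $(3) \Rightarrow (1)$. Given a monoidal $F : \cC \to \cD$ that is fully faithful and essentially surjective, I would first construct a pseudo-inverse by the standard recipe: using essential surjectivity, pick for each $d \in \cD$ an object $G(d) \in \cC$ together with an isomorphism $\epsilon_d : FG(d) \to d$; extend $G$ to morphisms by letting $G(g)$ be the unique preimage under $F$ of $\epsilon_{d'}^{-1} \circ g \circ \epsilon_d$. Full faithfulness of $F$ gives both existence and uniqueness, so functoriality of $G$ and naturality of $\epsilon$ are automatic; a natural isomorphism $\eta : \id_\cC \Rightarrow GF$ is defined dually via $F(\eta_c) = \epsilon_{F(c)}^{-1}$. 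As a preliminary lemma I would establish that $F_n$ is invertible (so $F$ is in fact strong monoidal): both source and target of $F_n$ lie in $\cD$, essential surjectivity allows one to exhibit the source as isomorphic to $F(c^\ast)$ for some $c^\ast \in \cC$, and then full faithfulness together with the monoidal coherence of $F$ forces invertibility of the corresponding morphism in $\cC$.

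To upgrade the equivalence to a monoidal one, I would equip $G$ with coefficients
\[ G_n : \bigotimes_n^\cC G(d_i) \longrightarrow G\bigl(\bigotimes_n^\cD d_i\bigr) \]
defined as the unique lift through $F$ of the composite
\[ F\bigl(\bigotimes_n^\cC G(d_i)\bigr) \xrightarrow{F_n^{-1}} \bigotimes_n^\cD FG(d_i) \xrightarrow{\bigotimes_n \epsilon_{d_i}} \bigotimes_n^\cD d_i \xrightarrow{\epsilon_{\bigotimes_n d_i}^{-1}} FG\bigl(\bigotimes_n^\cD d_i\bigr). \]
The main obstacle is then verifying the monoidal coherence axioms for $G$ and the monoidal transformation axioms for $\eta$ and $\epsilon$. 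The strategy in each case is uniform: apply $F$ to the diagram in question, use faithfulness to reduce commutativity in $\cC$ to commutativity in $\cD$, and observe that the resulting diagram in $\cD$ commutes by the monoidal coherence of $F$ together with naturality of $\epsilon$. The work is bureaucratic but, because every new piece of structure has been defined as a forced lift through $F$, the diagrams are essentially built to commute.
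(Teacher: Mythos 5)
The paper offers no proof of this proposition at all---it simply defers to \cite{Leinster2004}---so your argument has to stand on its own, and it contains one genuinely false step. Your ``preliminary lemma'' asserting that a fully faithful, essentially surjective lax monoidal functor automatically has invertible coherence maps $F_n$ is not true, and the sketch you give (write the source of $F_n$ as $F(c^\ast)$ and transport along full faithfulness) merely produces a morphism $c^\ast \to \bigotimes_n A_i$ in $\cC$ with no reason to be invertible. Concretely, take $\cC$ and $\cD$ both to be the poset $([0,\infty],\ge)$ regarded as a category, with $\bigotimes_n^{\cC}(x_i) = \max_i x_i$ and $\bigotimes_n^{\cD}(x_i) = \sum_i x_i$ (both are strict unbiased monoidal structures with unit $0$). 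The identity functor carries a lax monoidal structure $F_n \colon \sum_i x_i \to \max_i x_i$, since $\sum_i x_i \ge \max_i x_i$ and all coherence diagrams commute trivially in a poset; its underlying functor is an isomorphism of categories, yet $F_2$ is not invertible (take $x=y=1$). This step is load-bearing for you, because $F_n^{-1}$ appears explicitly in your definition of $G_n$, and it cannot be patched: in this example no monoidal equivalence exists at all, since a lax monoidal $G$ in the other direction would have to be an order-automorphism satisfying $\max(G(x),G(y)) \ge G(x+y) \ge G(\max(x,y)) = \max(G(x),G(y))$, forcing $x+y=\max(x,y)$ for all $x,y$. So the implication $(3)\Rightarrow(1)$ is simply false if ``monoidal functor'' is read as lax.

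What this reveals is that the proposition must be read with ``monoidal functor'' meaning \emph{strong} monoidal in item (3); that is the hypothesis in Leinster's version, and it is all the paper ever uses (the strictification comparison $F \colon st(\cC) \to \cC$ in the following proposition has invertible $F_n$ by construction). Under that reading the rest of your argument is correct and is the standard one: building the pseudo-inverse $G$ by choices of $\epsilon_d$, defining $G_n$ as the unique lift through $F$ of $\epsilon^{-1}\circ(\bigotimes_n \epsilon)\circ F_n^{-1}$, and verifying every coherence axiom by applying the faithful $F$ is precisely the doctrinal-adjunction-style transport of structure across an adjoint equivalence. Two smaller points you should still record: the check that $\eta$ and $\epsilon$ so defined satisfy the monoidal-transformation squares (easy but not free), and the observation that in item (1) invertibility of the monoidal transformations already forces $F_n$ and $G_n$ to be invertible (from $(GF)_n$ and $(FG)_n$ being isomorphisms one extracts that each $F_n$ is split epi and, up to essential surjectivity of $G$, split mono), so the three conditions are only mutually consistent under the strong reading.
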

\begin{proof}
%
See \cite{Leinster2004} for a proof.
\end{proof}

\begin{prop}
Every unbiased monoidal category is monoidally equivalent to a strict one.
\end{prop}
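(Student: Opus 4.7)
The plan is to reduce to Mac Lane's classical strictification theorem for biased monoidal categories, using the 2-equivalence $\UMonCat \simeq \MonCat$ just established in the preceding proposition.

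Given an unbiased monoidal category $\cC$, first transport it through the 2-equivalence to obtain a biased monoidal category $\cC^b$. Then apply the classical coherence theorem to produce a strict biased monoidal category $\cD^b$ together with a monoidal equivalence $\cC^b \simeq \cD^b$ in $\MonCatPs$. The standard construction takes $\cD^b$ to be the essential image of the ``left-tensoring'' embedding $\cC^b \hookrightarrow [\cC^b, \cC^b]$ into the strict monoidal category of endofunctors under composition, where composition of endofunctors gives a strictly associative tensor with strict unit $\id$; the embedding sends $A \mapsto A \otimes -$ and is pseudo-monoidal, fully faithful, and essentially surjective onto its image, hence a monoidal equivalence by the last proposition.

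Next, transport $\cD^b$ back through the 2-equivalence to obtain an unbiased monoidal category $\cD$. The crucial point is that the biased-to-unbiased construction (whose explicit recursive shape was given in the proof of the correspondence above) builds each higher-arity tensor $\otimes_n$ and each coherence isomorphism $\alpha^{n,(m_i)}$ by iterating the binary tensor, the unit, and the biased coherence isomorphisms $\alpha, \lambda, \rho$. When these biased data are all identities, each recursively defined $\otimes_n$ becomes a strictly associative $n$-fold tensor and each $\alpha^{n,(m_i)}$ reduces to an identity; the isomorphism $\iota$ is likewise the identity since $\otimes_1(A) = A$ by construction. Hence $\cD$ is strict as an unbiased monoidal category.

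Finally, since a 2-equivalence preserves and reflects internal equivalences, the monoidal equivalence $\cC^b \simeq \cD^b$ in $\MonCatPs$ lifts to a monoidal equivalence $\cC \simeq \cD$ in $\UMonCatPs$. The main subtlety is the verification that strictness is preserved by the biased-to-unbiased direction of the correspondence, which is essentially bookkeeping given the explicit recursive definitions but does require checking the base cases involving $\otimes_0$ and $\otimes_1$ where the unitors and $\iota$ interact. Once this is in hand, the result follows immediately from composing the three steps.
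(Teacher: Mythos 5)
Your argument is correct in outline, but it takes a genuinely different route from the paper. The paper proves the proposition by a direct, self-contained construction: it defines $st(\cC)$ to have finite lists of objects of $\cC$ as objects, morphisms $(a_i) \to (b_j)$ given by morphisms $\otimes_m(a_i) \to \otimes_n(b_j)$ in $\cC$, and monoidal structure given by concatenation of lists, which is strictly associative and unital on the nose; the evaluation functor $st(\cC) \to \cC$ sending a list to its tensor is then shown to be monoidal, fully faithful and essentially surjective, hence a monoidal equivalence. You instead reduce to Mac Lane's biased coherence theorem by transporting across the 2-equivalence $\UMonCat \simeq \MonCat$, strictifying there, and transporting back, with the key verification being that the biased-to-unbiased translation sends strict data to strict data (which does hold: inspecting the paper's recursive definition of the $\alpha^{n,(m_i)}$, every clause is built from $\rho$, $\alpha$, $\otimes$ of identities and earlier cases, so all collapse to identities when the biased structure is strict, and $\iota$ is the identity since $\otimes_1(A)=A$ by construction). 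What the paper's approach buys is independence from the biased theory: $st(\cC)$ works directly and, indeed, in Leinster's development the unbiased strictification is used as an ingredient in establishing the biased/unbiased comparison, so your route carries a latent risk of circularity depending on how the 2-equivalence you invoke was itself proved. What your approach buys is economy if one already accepts Mac Lane's theorem and the 2-equivalence as black boxes; but you should at least flag the dependency order, and the strictness-preservation check you defer as ``bookkeeping'' is precisely the content that makes the reduction legitimate, so it deserves to be written out.
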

\begin{proof}
Let $(\cC, \otimes_n, \alpha_n, \iota)$ be an unbiased monoidal category.

Define $st(\cC)$ to be the category with:
\begin{itemize}
\item objects $(a_1,\dots, a_n)$ are finite lists of objects in \cC
\item morphisms $f \colon (a_1, \dots, a_m) \to (b_1, \dots, b_n)$ are morphisms $f \colon \otimes_m(a_i) \to \otimes_n(b_j)$
\item identities $\id_{(a_i)} := \otimes_n(\id_{a_i})$ 
\item composition is composition in \cC
\end{itemize}

Now define a monoidal structure on $st(\cC)$ by concatenation:
\begin{itemize}
\item $\otimes_0() := ()$
\item $\otimes_1((a_i)_i) := (a_i)_i$
\item $\otimes_n((a_{i,j})_j)_i := (a_{i,j})_{(i,j)}$ with the lexicographic order
\end{itemize}
This extends naturally to morphisms by using the $\alpha$'s.

Then $(a_i)_i = \otimes_1((a_i)_i)$ and $\otimes_n(\otimes_{m_i}((a_{i,j,k})_k)_j)_i = \otimes_n((a_{i,j,k})_{(j,k)})_i = (a_{i,j,k})_{(i,j,k)} = \otimes_{\sum_i m_i}((a_{i,j,k})_k)_{(i,j)}$.
So we can take identities as coherence morphisms, i.e. $st(\cC)$ is a strict unbiased monoidal category.

There is a functor $F \colon st(\cC) \to \cC$ defined by $F((a_i)_i) := \otimes_n(a_i)_i$ and $F(f) := f$.

$F$ is monoidal with natural transformations:
\[F_n((a_{i,j})) := \alpha_{n,(m_i)} \colon \otimes_n^\cC (F(a_{i,j})_j)_i = \otimes_n (\otimes_{m_i}(a_{i,j})_j)_i \to F(\otimes_n^{st(\cC)} (a_{i,j})_{(i,j)}) = \otimes_{\sum_i m_i}(a_{i,j})\]

The fact that it defines a weak functor follows from the coherence laws for the monoidal structure of \cC.

Since $F$ is the identity on morphisms, it is full and faithful.
Furthermore we have isomorphisms $a \simeq (a)$ so it is essentially surjective on objects.
\end{proof}

Of course, the same is true for biased monoidal categories.
This is the coherence theorem that can be found for example in Mac Lane.

\begin{definition}
A \defin{braided} monoidal category, $(\cC, \otimes, I, \gamma)$ is a monoidal category together with a natural isomorphism $\gamma_{A,B} \colon A \otimes B \to B \otimes A$ called the braiding such that the following diagrams commute:
\[\begin{tikzcd}
	& {(A \otimes B) \otimes C} \\
	{(B \otimes A) \otimes C} && {A \otimes (B \otimes C)} \\
	{B \otimes (A \otimes C)} && {(B \otimes C) \otimes A} \\
	& {B \otimes (C \otimes A)} \\
	& {A \otimes (B \otimes C)} \\
	{A \otimes (C \otimes B)} && {(A \otimes B) \otimes C} \\
	{(A \otimes C) \otimes B} && {C \otimes (A \otimes B)} \\
	& {(C \otimes A) \otimes B}
	\arrow["{\alpha_{A,B,C}}"{description}, from=1-2, to=2-3]
	\arrow["{\gamma_{A,B\otimes C}}"{description}, from=2-3, to=3-3]
	\arrow["{\gamma_{A,B} \otimes C}"{description}, from=1-2, to=2-1]
	\arrow["{\alpha_{B,A,C}}"{description}, from=2-1, to=3-1]
	\arrow["{B \otimes \gamma_{A,C}}"{description}, from=3-1, to=4-2]
	\arrow["{\alpha_{B,C,A}}"{description}, from=3-3, to=4-2]
	\arrow["{\alpha_{A,B,C}^{-1}}"{description}, from=5-2, to=6-3]
	\arrow["{\gamma_{A \otimes B,C}}"{description}, from=6-3, to=7-3]
	\arrow["{\alpha_{C,A,B}^{-1}}"{description}, from=7-3, to=8-2]
	\arrow["{A \otimes \gamma_{B,C}}"{description}, from=5-2, to=6-1]
	\arrow["{\alpha_{A,C,B}^{-1}}"{description}, from=6-1, to=7-1]
	\arrow["{\gamma_{A,B} \otimes B}"{description}, from=7-1, to=8-2]
\end{tikzcd}\]
\end{definition}

\begin{definition}
A \emph{symmetric monoidal category} is a braided monoidal category where the brainding is a symmetry, i.e. $\gamma_{A,B}^{-1} = \gamma_{B,A}$ for any $A,B$.
\end{definition}

\begin{rk}
Instead of giving the same definition twice, we have chosen to talk about braided and symmetric monoidal categories without making any reference to biased or unbiased variant since it does not change anything here.

One could imagine a notion of unbiased braided and symmetric monoidal categories with $n$-ary braiding.
\end{rk}

A braided/symmetric monoidal functor is just one that respect the braiding.

\begin{definition}
A \defin{braided monoidal functor} between braided monoidal categories is a monoidal functor such that the following diagram commute:
\[\begin{tikzcd}[ampersand replacement=\&]
	{F(A) \otimes F(B)} \&\& {F(B) \otimes F(A)} \\
	\\
	{F(A \otimes B)} \&\& {F(B \otimes A)}
	\arrow["{\gamma_{F(A),F(B)}}"{description}, from=1-1, to=1-3]
	\arrow["{F_2(B,A)}"{description}, from=1-3, to=3-3]
	\arrow["{F_2(A,B)}"{description}, from=1-1, to=3-1]
	\arrow["{F(\gamma_{A,B})}"{description}, from=3-1, to=3-3]
\end{tikzcd}\]

A \defin{symmetric monoidal functor} between symmetric monoidal categories is just a braided monoidal functor.
\end{definition}

No extra condition are required for monoidal transformations.

There are 2-categories $\mathbf{(/U)(Brd/Sym)MonCat_{/ps/st}}$ of unbiased/biased braided/symmetric monoidal categories, lax/strong/strict braided/symmetric monoidal functors and monoidal transformations.
The respective biased and unbiased ones are equivalent.
Furthermore, Mac Lane's strictification/coherence theorem
extends to this setting, i.e. the functor $\cC \to st(\cC)$ is a braided/symmetric monoidal functor when $\cC$ is braided/symmetric.
So every braided/symmetric monoidal category is braided/symmetric monoidally equivalent to a strict braided/symmetric one, where the strictness only involves the monoidal structure, the braiding need not be an equality.

\begin{rk}
For symmetric monoidal categories, left and right closure coincide.
So any symmetric monoidal category that is left/right closed is biclosed.
We will talk about symmetric monoidal closed categories when it is the case.
\end{rk}

\begin{example}
There are symmetric monoidal categories \Vect{} and \FVect{} whose objects are vector spaces (on a given field $\mathbb{K}$) and finite dimensional vector spaces respectively, morphisms are linear applications and monoidal products and units are tensor products of vector spaces and the field $\mathbb{K}$ considered as a vector space.
They are closed with $A \multimap B$ the vector space of linear maps from A to B.

There are also categories \Ban{} and \FBan{} of Banach spaces and continuous maps, and \Banc{} and \FBanc{} of Banach spaces and contractive maps.
There is a huge variety of (symmetric) monoidal products that can be put on these, even if one wants the underlying tensor product of vector spaces to be the usual one, i.e. for the forgetful functor into \Vect{} or \FVect{} to be strictly monoidal.
\end{example}

\begin{example}
The usual algebraic structures such as monoids, abelian groups, rings, etc... all forms monoidal categories with their structure-preserving maps and their usual tensor products.
Some are closed with $A \multimap B$ the set of structure-preserving maps from $A$ to $B$ with the algebraic structure inherited from the one on $B$, e.g. for commutative monoids or abelian groups.
The non-commutative ones are not.
Most of those are symmetric.

An example of a non-symmetric monoidal category involving algebraic structures is the monoidal category of $(R,R)$-bimodules over a ring $R$.
There is a group isomorphism between $A\otimes_R B$ and $B \otimes_R A$.
But since in the first case the left action is given by the action on $A$ and the right by the action on $B$ and in the second case it is reverse, the group isomorphism does not respect the actions and is not an isomorphism of bimodules. 
\end{example}

\begin{example}
Any cartesian category is monoidal with its monoidal product given by the cartesian product.
They are closed when they are as a cartesian category.
So the category of sets and functions or topological spaces and continuous maps, with the cartesian product are monoidal categories.
So are \Vect, \FVect, \Ban, \FBan{} and most of the algebraic examples with the direct sum.
This shows that a monoidal product is a structure on a category while the cartesian product is a property, meaning that there can be multiple monoidal products on a category while the cartesian product is define uniquely up to unique isomorphism.
\end{example}

\begin{example}
Models of propositional intuitionistic multiplicative linear logic are precisely symmetric monoidal closed categories.

Given the sequent calculus for IMLL, one can define its syntactic category, which is roughly given by:
\begin{itemize}
\item objects are propositions of IMLL
\item morphisms are derivation of proofs up-to $\beta$-reduction and $\eta$-expansion
\item identity is the axiom rule,
\item composition is the cut rule
\end{itemize}
It forms a symmetric monoidal closed category where the monoidal structure comes from the multiplicative conjunction $\otimes$ and its unit $1$, the internal hom comes from the implication $\multimap$ and the symmetries from the exchange law.
Furthermore, it is initial among symmetric monoidal closed categories on the given set of atomic propositions, and a model of IMLL correspond to a symmetric monoidal functor out of it.
\end{example}

\section{Multicategories}

\subsection{Definition}

As mentioned above a monoidal product is a structure on a category while cartesian product or the internal homs (the $\multimap, \multimapinv$ in a closed monoidal category) are properties.
Once a monoidal product is given on a monoidal category, one can define a notion of multimap $f \colon A_1, \dots, A_n \to B$ to be a map $f \colon A_1 \otimes \dots \otimes A_n \to B$ or $f \colon I \to B$ for the case $n=0$.
If the monoidal category is closed, then, by definition, $\cC(A_1 \otimes \dots \otimes A_n, B) \simeq \cC(A_n, A_{n-1} \multimap (\dots \multimap B))$.
For example, in the case of algebraic structures with their usual tensor products, multimaps act as maps that preserve the structure in each variable independently.
This is quite useful since it lets us talk about multilinear maps from a categorical point of view in the case of vector spaces or modules for example.
However, someone used to linear algebra might find it odd to have to define the notion of tensor products to get access to multilinear maps.
Indeed, in linear algebra the notion of multilinear map is usually more primitive and the tensor product can be define through the universal property of ``linearising multilinear maps'', i.e. the fact that multilinear maps correspond to linear maps out of the tensor product.
This is similar in the case of the other usual tensor products on algebraic structures: they let one transform multivariable functions preserving the structure in each variable independently to structure-preserving functions out of the tensor product.

So depending on the point of view one would prefer to adopt, one could start with a monoidal structure on a category and deduce a notion of multimap or one could ask for multimaps and deduce monoidal products.
Multicategories are the categorical structure needed to axiomatise the second point of view.
They are analogous to categories except that the morphisms are replaced by multimaps.

\begin{definition}
A \defin{multicategory} \cM is the data of:
\begin{itemize}
\item a collection of objects $\Ob{\cM}$
\item for any (possibly empty) finite list of objects $\Gamma$ and any object $A$, a collection of multimaps $\cM(\Gamma; A)$, we will write $f \colon \Gamma \to A$ in lieu of $f \in \cM(\Gamma;A)$
\item for any object $A$, an identity unary multimap $\id_A \colon A \to A$
\item for any multimap $f \colon \Gamma \to A$, $g \colon \Gamma_1', A, \Gamma_2' \to B$, a multimap $g \circ_{i+1} f \colon \Gamma_1', \Gamma, \Gamma_2' \to B$ where $i+1$ is the position of $A$ in the domain of $g$, i.e. the size of $\Gamma_1'$ is $|\Gamma_1'| = i$
\end{itemize}
subject to associativity, unitality and exchange law:
\begin{itemize}
\item for any $f \colon \Gamma_1, A, \Gamma_2 \to B$, $f \circ_{i + 1} \id_A = f = \id_A \circ_1 f$
\item for any $f \colon \Gamma \to A$, $g \colon \Gamma_1', A, \Gamma_2' \to B$ and $h \colon \Gamma_1'', B, \Gamma_2'' \to C$, 

$(h \circ_{j + 1} g) \circ_{i + j + 1} f = h \circ_{j + 1} (g \circ_{i + 1} f)$
\item for any $f_i \colon \Gamma_i \to A_i$ and $g \colon \Gamma_1', A_1, \Gamma_2', A_2, \Gamma_3' \to B$, 

$(g \circ_{i_1' + 1} f_1) \circ_{i_1' + i_1 + i_2' + 1} f_2 = (g \circ_{i_1' + 1 + i_2' + 1} f_2) \circ_{i_1' + 1} f_1$
\end{itemize}
\end{definition}

We will drop the index of the composition or replace it by the name of the object instead of its place when it is well-defined.

Instead of a composition in one specific component, we could have defined composition in all components at once.

\begin{definition}
A \emph{multicategory with parallel composition} \cM is the data of:
\begin{itemize}
\item a collection of objects $\Ob{\cM}$
\item for any (possibly empty) finite list of objects $\Gamma$ and any object $A$, a collection of multimaps $\cM(\Gamma; A)$, we will write $f \colon \Gamma \to A$ in lieu of $f \in \cM(\Gamma;A)$
\item for any object $A$, an identity unary multimap $\id_A \colon A \to A$
\item for any multimap $g \colon A_1, \dots, A_n \to B$ and any family of multimaps $f_i \colon \Gamma_i \to A_i$, a multimap $g \circ (f_1,\dots, f_n) \colon \Gamma_1, \dots, \Gamma_n \to B$
\end{itemize}
subject to associativity and  unitality:
\begin{itemize}
\item for any $f \colon A_1, \dots, A_n \to B$, 

$f \circ(\id_{A_1},\dots,\id_{A_n}) = f = \id_A \circ (f)$
\item for any $h \colon B_1, \dots, B_n \to C$, any $g_i \colon A_{i,1}, \dots, A_{i,m_i} \to B_i$ and any $f_{i,j} \colon \Gamma_{i,j} \to A_{i,j}$, 

$(h \circ (g_1,\dots, g_n)) \circ (f_{1,1}, \dots, f_{n,m_n}) = h \circ (g_1 \circ (f_{1,1}, \dots, f_{1,m_1}), \dots, g_n \circ (f_{n,1}, \dots, f_{n,m_n}))$
\end{itemize}
\end{definition}

We will sometimes drop $\circ$ and write $g(f_1,\dots,f_n)$ and other times we will drop the parenthesis for a list of one element $g \circ f$.

The two definitions coincide: from a partial composition we can define a parallel composition by $g \circ (f_1, \dots, f_n) := (g \circ_1 f_1) \circ_2 \dots \circ_n f_n$ which is well-defined because of associativity and exchange.
Associativity and unitality follows from associativity, unitality and exchange of the partial composition.
Conversely, from a parallel composition we can define a partial composition $g \circ_{i + 1} f := g \circ (\id_{A_1}, \dots, \id_{A_{i}}, f, \id_{A_{i + 2}}, \dots, \id_{A_n})$.
Associativity, unitality and exchange follows from associativity and unitality of the parallel composotion.

Notice that in the definition of the parallel composition from the partial one, we could choose to compose with morphisms $f_i$ in any order thanks to the exchange law.

We will represent multimaps as string diagrams.
For example parallel composition will be represented by:
\[\tikzfig{parallel-comp}\]
while the exchange law for partial composition assert the following:
\[\tikzfig{multicat-exchange}\]

\begin{rk}
It is possible to define symmetric multicategories.
It is usually done by asking for an action of the symmetric group on the set of multimaps that is respected by composition.
Most of the theory developed in this thesis can be extended straightforwardly to the symmetric case, e.g. by asking that a notion preserves the symmetry.
\end{rk}

\begin{definition}
A \emph{functor of multicategories} $F \colon \cM \to \cN$ is the data of:
\begin{itemize}
\item for each object $A$ in \cM, an object $F(A)$ in \cN
\item for each multimap $f \colon A_1, \dots, A_n \to B$ in \cM, one $F(f) \colon F(A_1), \dots, F(A_n) \to F(B)$ in \cN
\end{itemize}
verifying functoriality:
\begin{itemize}
\item $F(\id_A) = \id_{F(A)}$
\item $F(g \circ_i f) = F(g) \circ_i F(f)$ for partial composition 
\item or $F(g \circ (f_1,\dots,f_n)) = F(g) \circ (F(f_1),\dots,F(f_n))$ for parallel composition
\end{itemize}
\end{definition}

\begin{definition}
A \emph{natural transformation} between functors of multicategories, written $F \xRightarrow{\alpha} G \colon \cC \to \cD$, is a family of multimaps $\alpha_A \colon F(A) \to G(A)$ such that for any $f \colon A_1, \dots, A_n \to B$ in \cC, we have
\[\alpha_{B} \circ F(f) = G(g) \circ (\alpha_{A_1},\dots, \alpha_{A_n})\]
Graphically,
\[\tikzfig{multicat-transformation}\]
\end{definition}

We get a 2-category \MultiCat{} of multicategories, functors and natural transformations.
There is a 2-functor $\Cat \to \MultiCat$ that treats any category as a multicategory with only unary multimaps.

\subsection{Examples}

\begin{example}
There are multicategories $\Vect$, $\FVect$, $\Ban$, $\FBan$, $\Banc$ and $\FBanc$ of (finite dimensional) (normed) vector spaces and (continuous/contractive) multilinear maps.
\end{example}

\begin{example}
We can consider a lot of algebraic structures, e.g. monoids, abelian groups, rings, and form a multicategory whose maps are functions preserving the arguments in each independent variable.
\end{example}

For all of these examples, there is a monoidal category associated to it.
In fact we can do something similar for any monoidal category.

\begin{example}
Given a monoidal category $\cC$, one can define its underlying multicategory $\cU(\cC)$. 
Its objects are the objects of \cC.
Its multimaps are morphisms $f \colon \otimes_n(A_1, \dots, A_n) \to B$ in \cC.
We will see later that this always form a multicategory.
\end{example}

However, there are some multicategories that do not correspond to monoidal categories.

\begin{example}
Given a ring $R$ we can form the multicategory of (left) modules and multilinear maps between those.
This is always well-defined.
It is done by asking linearity of each variable independently.
Notice however that the order in which we use linearity of different variables shouldn't matter.
So for bilinear maps for example, we have
\[(r'r)\cdot f(a,a') = r \cdot f(a,r' \cdot a') = f(r \cdot a, r' \cdot a') = r' \cdot f(r \cdot a, a') = (r'r) \cdot f(a,a')\]
So in the non-commutative case multilinear maps ``forget'' that the ring is non-commutative \footnote{See \url{https://math.stackexchange.com/questions/1027478/correct-definition-of-bilinearmultilinear-maps-over-noncommutative-rings} for a discussion of multilinear maps over noncommutative rings}.
When the ring is commutative (so that a left module is also a right module), given two modules $A$ and $B$ and with action $\cdot_A \colon A \otimes R \to A$ and $\cdot_B \colon R \otimes B \to B$, one can define a the coequaliser of the action of $R$ on $A$ and $B$:
\[\begin{tikzcd}[ampersand replacement=\&]
	{A \otimes R \otimes B} \&\&\& {A \otimes B}
	\arrow["{\cdot_A \otimes B}"{description}, shift left=3, from=1-1, to=1-4]
	\arrow["{A \otimes \cdot_B}"{description}, shift right=3, from=1-1, to=1-4]
\end{tikzcd}\]
to get a module $A \otimes_R B$.
With this tensor product, a multilinear map of modules then correspond to a module morphism out of this tensor product.

However, we have used the fact that the ring is commutative to mix left and right actions.
For a non-commutative ring, we can also define a tensor product corresponding to multilinear maps.
Then we would have $rs\cdot (a \otimes b) = r \cdot (a \otimes s \cdot b) = (r \cdot a \otimes s \cdot b) = s \cdot (r \cdot a \otimes b) = sr \cdot (a \otimes b)$ so the tensor product also ``forgets about'' the non-commutativity of $R$.

When generalising to module over monoids internal to any monoidal category the situation is worst.
If the monoidal category is not symmetric, it is not even possible to define the two actions as parallel morphisms, so we cannot take their coequaliser.
\end{example}

\begin{example}
There is a multicategory $\one$ with:
\begin{itemize}
\item a single object $\ast$
\item for any arity $n$, a unique multimap $\underline{n} \colon \ast, \dots \ast \to \ast$ from n copies of $\ast$
\end{itemize}
It is called the terminal multicategory, and it is the terminal object in category of (small) multicategories.
\end{example}

The notion of monoid internal to a monoidal category can be extend to multicategories.
As often in this thesis, we will prefer to consider an unbiased notion.

\begin{example}
Given a multicategory \cM, a monoid in \cM $(M,(m^k \colon M^k \to M)_{k \in \N})$ is an object in \cM, equipped with a family of $k$-ary multimaps $(m^k)_k$ closed under composition, i.e., $m^l(m^{k_1},\dots, m^{k_l}) = m^{\sum_i k_i}$.
We will often refer to the multimaps $m^k$ as the multiplication of the monoid.

For \cM symmetric, a monoid multimap $f \colon (M_1, (m^k_1)_k),\dots, (M_l, (m_l^k)_k) \to (N, (n^k)_k)$ is a multimap in \cM, $f \colon M_1, \dots M_l \to N$ that respects the monoid multiplication, represented graphically in \ref{fig:multicat-monoid-multimap} where the white dots are the multiplication of the monoid.
These form a symmetric multicategory $\Mon{\cM}$.
\end{example}

\begin{figure}
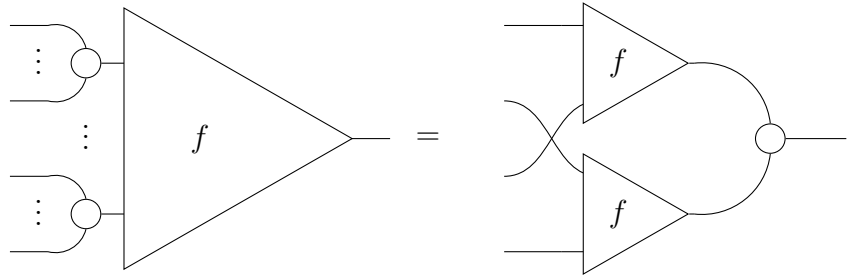

\centering
\tikzfig{multicat-monoid-multimap}
\caption{A monoid multimap}
\label{fig:multicat-monoid-multimap}
\end{figure}

The terminal multicategory $\one$ is the free multicategory containing a monoid.
So a monoid in \cM amounts to the same data as a functor $\one \to \cM$.

Once one has defined a monoid, it is possible to talk about a module or a monoid action.

\begin{example}
Given a multicategory \cM, a monoid action $(M,C)$ consists of a monoid $(M, (m^k)_k)$ and an object $C$ in \cM equipped with a family of k-ary multimaps called the action of $M$ on $C$, $\alpha^k \colon M^k, C \to C$, compatible with the multiplication of $M$, $\alpha^l(m^{k_1},\dots,m^{k_l}) = \alpha^{\sum_i k_i}$.
$C$ is sometimes called a $M$-module or a module over $M$.

For \cM symmetric, an action multimap $(f,g) \colon (M_1,C_1), \dots, (M_l,C_l) \to (N,D)$ is a pair of a monoid multimap $f \colon M_1,\dots, M_l \to N$ and a multimap $g \colon C_1,\dots, C_l \to D$ such that the following diagram commutes:
\[\begin{tikzcd}[ampersand replacement=\&]
	{M_1,C_1,\dots,M_l,C_l} \&\& {C_1,\dots,C_l} \\
	{M_1,\dots,M_l,C_1,\dots,C_l} \\
	{N,D} \&\& D
	\arrow["{\alpha_1,\dots,\alpha_l}"{description}, from=1-1, to=1-3]
	\arrow["g"{description}, from=1-3, to=3-3]
	\arrow["\simeq"{description}, from=1-1, to=2-1]
	\arrow["{f,g}"{description}, from=2-1, to=3-1]
	\arrow["\beta"{description}, from=3-1, to=3-3]
\end{tikzcd}\]

These form a symmetric multicategory.
\end{example}

\begin{example}
\Act{} is the multicategory with two objects $\ast,\star$, exactly one multimap $m^k \colon \ast^k \to \ast$ for each arity $k$ and exactly one multimap $\alpha^k \colon \ast^k,\star \to \star$ for each arity $k$.
\end{example}

\Act{} is the free multicategory containing an action.
An action internal to a multicategory \cM is the same as a functor $\Act \to \cM$.

\begin{example}
Given a monoid $M$ in a multicategory \cM, there is a multicategory $\cM/M$ whose objects are pairs $(A, \varphi_A \colon A \to M)$ of an object of $M$ and unary multimap from this object to $M$.
We will often abbreviate an object to its unary multimap part $\varphi_A$.
A multimap in $\cM/M$ $f \colon \varphi_{A_1},\dots,\varphi_{A_l} \to \varphi_{B}$ is a multimap $A_1,\dots,A_l \to B$ in \cM such that the following diagram commutes:
\[\begin{tikzcd}[ampersand replacement=\&]
	{A_1,\dots,A_l} \&\& B \\
	\\
	{M,\dots,M} \&\& M
	\arrow["f"{description}, from=1-1, to=1-3]
	\arrow["{\varphi_B}"{description}, from=1-3, to=3-3]
	\arrow["{\varphi_{A_1},\dots,\varphi_{A_l}}"{description}, from=1-1, to=3-1]
	\arrow["{m^l}"{description}, from=3-1, to=3-3]
\end{tikzcd}\]
\end{example}

$\Cat/\Set$ is the multicategory of presheaves.
We will sometimes call $\cM/M$ the multicategory of $M$-presheaves in \cM.

\subsection{The underlying multicategory of a monoidal category}

\begin{prop}
There is a 2-functor $\cU \colon \UMonCat \to \MultiCat$.

It sends an unbiased monoidal category $(\cC, \otimes_n, \alpha_n, \iota)$ to the multicategory $\cU (\cC)$ with:
\begin{itemize}
\item objects those of \cC
\item multimaps $f \colon A_1, \dots, A_n \to B$, morphisms $f \colon \otimes_n(A_1,\dots, A_n) \to B$ in \cC
\item identities, $\iota_A^{-1} \colon \otimes_1(A) \to A$
\item composition of $g \colon \otimes_n B_i \to C$ and $f_i \colon \otimes_{m_i} A_{i,j} \to B_i$ given by $(g \circ \otimes_n(f_i)) \circ \alpha^{-1}_{n,(m_i)_i}$:
\[\begin{tikzcd}[ampersand replacement=\&]
	{\otimes_{\sum_i m_i} A_{i,j}} \&\& {\otimes_n \otimes_{m_i} A_{i,j}} \&\& {\otimes_n B_i} \&\& C
	\arrow["{\alpha_{((A_{i,j})_j)_i}^{-1}}", from=1-1, to=1-3]
	\arrow["{\otimes_n f_i}", from=1-3, to=1-5]
	\arrow["g", from=1-5, to=1-7]
\end{tikzcd}\]
\end{itemize}

A monoidal functor $(F, F_n) \colon \cC \to \cD$ is sent to the functor of multicategories $\cU (F)$ given by:
\begin{itemize}
\item $\cU (F)(A) = F(A)$
\item for $f \colon \otimes_n(A_i) \to B$ in $\cU (\cC)$, $\cU (F)(f) \colon \otimes_n(F(A_i)) \xrightarrow{F_n} F(\otimes_n(A_i)) \xrightarrow{F(f)} F(B)$
\end{itemize}

A monoidal transformation $\alpha \colon F \Rightarrow G$ is sent to the natural transformation 

$\cU (\alpha) \colon \cU (F) \Rightarrow \cU (G)$ given by $\cU (\alpha)_A := \alpha_A \circ \iota_A^{-1}$.
\end{prop}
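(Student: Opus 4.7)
The plan is to verify in turn (i) that $\cU(\cC)$ really is a multicategory, (ii) that $\cU(F)$ really is a multicategory functor, (iii) that $\cU(\alpha)$ really is a natural transformation, and finally (iv) that these three assignments are compatible with the identities and with vertical/horizontal composition of 1- and 2-cells. Throughout, the work is bookkeeping with the coherence isomorphisms $\alpha$ and $\iota$; no genuinely new idea is required once the definitions of unbiased monoidal category and of (lax) monoidal functor are unfolded.

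For (i), unit laws are immediate from the coherence axioms relating $\iota$ and $\alpha$ stated in the definition of unbiased monoidal category: the triangle involving $\otimes_n \iota$ (resp.\ $\iota$ on the outside) composed with the appropriate $\alpha^{n,(1,\dots,1)}$ (resp.\ $\alpha^{1,n}$) shows that $\iota_A^{-1}$ acts as a two-sided identity for the composition $(g \circ \otimes_n(f_i)) \circ \alpha^{-1}$. Associativity of composition reduces to the pentagon-style coherence for $\alpha$: given a three-layer tower of multimaps $h$, $(g_i)_i$, $(f_{i,j})_{i,j}$, both associations produce the same morphism $\otimes_{\sum\sum}A_{i,j,k} \to D$ after one uses naturality of $\alpha$ to move the $\otimes_n(\otimes_{m_i}(f_{i,j}))$ past the re-bracketing isomorphism and then invokes the coherence diagram of Figure~\ref{fig:unbiased-monoidal-coherence}. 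Exchange is a special case of associativity together with functoriality of $\otimes_n$ (since swapping the order of two independent $\circ_i$'s only permutes tensor factors in different slots, and the $\alpha$'s are natural in each argument).

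For (ii), that $\cU(F)$ preserves identities follows from the $\iota$-coherence axiom for a lax monoidal functor ($F_1 \circ \iota_{F(A)} = F(\iota_A)$), which makes $F(\iota_A^{-1}) \circ F_1 = \iota_{F(A)}^{-1}$, hence $\cU(F)(\id_A) = \id_{F(A)}$. Preservation of composition is a diagram-chase that stacks: first the naturality square for the $F_{m_i}$'s against the $f_i$'s, then the coherence square for $F$ relating $F_n \circ \otimes_n F_{m_i}$ to $F_{\sum m_i}$ up to an $\alpha$. This is exactly the hexagonal coherence axiom in the definition of a lax monoidal functor, and it matches on the nose the composition formula in $\cU(\cC)$ and $\cU(\cD)$. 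For (iii), the naturality square of $\cU(\alpha)$ at a multimap $f \colon \otimes_n A_i \to B$ decomposes into (a) the naturality of the underlying $\alpha \colon F \Rightarrow G$ at $f$, (b) the monoidal-transformation axiom relating $\otimes_n \alpha_{A_i}$ with $\alpha_{\otimes_n A_i}$ via $F_n, G_n$, and (c) the obvious triangle for the $\iota^{-1}$ that we prepended to turn $\id_A$ into a multimap.

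For (iv), 2-functoriality is automatic once (i)--(iii) are in place: $\cU$ sends identity functors to identity functors and identity transformations to identity transformations by inspection; preservation of composition of monoidal functors uses the inductive definition of the composite's structure maps, namely $(GF)_n = G(F_n) \circ (G_n)_{F(-)}$, which matches the two-step application of the composition formula for $\cU$; vertical composition of monoidal transformations is preserved by componentwise composition; horizontal composition (or, equivalently, whiskering) is handled by combining the naturality of the whiskered transformation with the coherence already used in (ii). The main obstacle in the whole proof is the associativity check in (i), because this is where one has to unpack a full nested instance of the unbiased pentagon coherence; every other clause is a comparatively direct consequence of naturality of the $\alpha_n$'s, $F_n$'s, or $\iota$ together with one of the stated coherence laws.
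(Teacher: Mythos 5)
Your proposal is correct and follows essentially the same route as the paper's proof: check the multicategory axioms for $\cU(\cC)$ via the unit and associativity coherence laws of the unbiased monoidal structure together with naturality of $\alpha$ and $\iota$, check functoriality of $\cU(F)$ via the lax monoidal functor coherence hexagon and naturality of the $F_n$, check naturality of $\cU(\alpha)$ via the monoidal transformation axiom, and then observe that 2-functoriality is routine. The only difference is that the paper carries out the diagram chases explicitly (and works with parallel composition throughout, so the exchange law need not be treated separately), whereas you sketch them; the ingredients you invoke at each step are exactly the ones the paper uses.
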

\begin{proof}
First let prove that $\cU (\cC)$ is a multicategory for any monoidal category \cC.
Taking $g \colon \otimes_n B_i \to C$ and the identities, $\iota_{B_i}^{-1}$, and composing them we get
$(g \circ \otimes_n (\iota_{B_i}^{-1})) \circ \alpha_{n,(1)_i}^{-1}$ which is $g$ using the coherence law of an unbiased monoidal category:
\[\begin{tikzcd}[ampersand replacement=\&]
	{\otimes_n B_i} \&\& {\otimes_n B_i} \&\& C \\
	\\
	{\otimes_n\otimes_1(B_i)}
	\arrow["{\alpha_{((B_i))_{i \leq n}}^{-1}}"{description}, from=1-1, to=3-1]
	\arrow["{\otimes_n(\iota_{B_i}^{-1})}"{description}, from=3-1, to=1-3]
	\arrow["g"{description}, from=1-3, to=1-5]
	\arrow[Rightarrow, no head, from=1-1, to=1-3]
\end{tikzcd}\]

Now composing the identity $\iota_B^{-1}$ with $f \colon \otimes_m A_j \to B$ we get $(\iota_B^{-1} \circ \otimes_1(f)) \circ \alpha_{1,(m)}^{-1}$ which gives $f$ by naturality of $\alpha^{-1}$ and coherence law of an unbiased monoidal category:
\[\begin{tikzcd}[ampersand replacement=\&]
	\&\& {\otimes_1(\otimes_m A_j)} \&\& {\otimes_1(B)} \\
	{\otimes_mA_j} \&\& B \&\&\&\& B
	\arrow["{\alpha_{\otimes_m A_j}^{-1}}"{description}, from=2-1, to=1-3]
	\arrow["{\otimes_1(f)}"{description}, from=1-3, to=1-5]
	\arrow["{\iota_B^{-1}}"{description}, from=1-5, to=2-7]
	\arrow["f"{description}, from=2-1, to=2-3]
	\arrow["{\alpha_{(B)}^{-1}}"{description}, from=2-3, to=1-5]
	\arrow[Rightarrow, no head, from=2-3, to=2-7]
\end{tikzcd}\]

Finally for associativity of the parallel composition, let take $h \colon C_i \to D$, $g_i \colon B_{i,j} \to C_i$ and $f_{i,j} \colon A_{i,j,k} \to B_{i,j}$, then the proof of the associativity is given by the following commutative diagram:

\resizebox{\hsize}{!}{
\begin{tikzcd}[ampersand replacement=\&]
	\& {\otimes_{\sum_i m_i} \otimes_{l_{i,j}}A_{i,j,k}} \&\&\& {\otimes_{\sum_i m_i} B_{i,j}} \&\&\& {\otimes_n \otimes_{m_i} B_{i,j}} \&\&\& {\otimes_n C_i} \\
	\&\&\& {(2)} \\
	{\otimes_{\sum_{i,j} l_{i,j}} A_{i,j,k}} \& {(1)} \& {\otimes_n \otimes_{m_i} \otimes_{l_{i,j}} A_{i,j,k}} \&\&\&\& {(3)} \&\&\&\& {(4)} \&\& D \\
	\\
	\& {\otimes_n\otimes_{m_i}A_{i,j,k}} \&\&\&\&\&\&\&\&\& {\otimes_n C_i}
	\arrow["h"{description}, from=1-11, to=3-13]
	\arrow["{\otimes_n(g_i)}"{description}, from=1-8, to=1-11]
	\arrow["{\alpha^{-1}}"{description}, from=1-5, to=1-8]
	\arrow["{\otimes_{\sum_i m_i} (f_{i,j})}"{description}, from=1-2, to=1-5]
	\arrow["{\alpha^{-1}}"{description}, from=3-1, to=1-2]
	\arrow["{\otimes_n (\alpha^{-1})}"{description}, from=5-2, to=3-3]
	\arrow["{\alpha^{-1}}"{description}, from=3-1, to=5-2]
	\arrow["h"{description}, from=5-11, to=3-13]
	\arrow["{\otimes_n(g_i \circ \otimes_{m_i}(f_{i,j}) \circ \alpha^{-1})}"{description}, from=5-2, to=5-11]
	\arrow["{\alpha^{-1}}"{description}, from=1-2, to=3-3]
	\arrow["{\otimes_n\otimes_{m_i}(f_{i,j})}"{description}, from=3-3, to=1-8]
	\arrow["{\otimes_n(g_i)}"{description}, from=1-8, to=5-11]
\end{tikzcd}
}

where the square (1) is a coherence law for an unbiased monoidal category, (2) is naturality of $\alpha^{-1}$, (3) is functoriality of $\otimes_n$ and (4) is an equality.

Now if $(F,F_n)$ is a monoidal functor, let prove that $\cU (F)$ is a functor of multicategories.
The image of the identity is $\cU(F)(\iota_{A}^{-1}) =  F(\iota_A^{-1}) \circ F_1(A) = \iota_{F(A)}^{-1}$ by one of the coherence law of a monoidal functor.

Then for $g \colon B_i \to C$ and $f_i \colon A_{i,j} \to B_i$, we have functorality:

\resizebox{\hsize}{!}{
\begin{tikzcd}[ampersand replacement=\&]
	{\otimes_{\sum_i m_i} F(A_{i,j})} \&\& {F(\otimes_{\sum_i m_i}A_{i,j})} \&\&\&\& {F(C)} \\
	\& {(1)} \&\&\& {(2)} \\
	\&\&\& {F(\otimes_n \otimes_{m_i} A_{i,j})} \\
	\&\&\& {(3)} \\
	{\otimes_n\otimes_{m_i} F(A_{i,j})} \&\& {\otimes_n F(\otimes_{m_i}A_{i,j})} \&\& {\otimes_nF(B_i)} \&\& {F(\otimes_n B_i)}
	\arrow["{F(\alpha^{-1})}"{description}, from=1-3, to=3-4]
	\arrow["{F(g \circ \otimes_n(f_i) \circ \alpha^{-1})}"{description}, from=1-3, to=1-7]
	\arrow["{F_n}"{description}, from=5-5, to=5-7]
	\arrow["{F_{\sum_i m_i}}"{description}, from=1-1, to=1-3]
	\arrow["{\otimes_n F_{m_i}}"{description}, from=5-1, to=5-3]
	\arrow["{\otimes_n F(f_i)}"{description}, from=5-3, to=5-5]
	\arrow["{\alpha^{-1}}"{description}, from=1-1, to=5-1]
	\arrow["{F(g)}"{description}, from=5-7, to=1-7]
	\arrow["{F_n}"{description}, from=5-3, to=3-4]
	\arrow["{F(\otimes_n(f_i))}"{description}, from=3-4, to=5-7]
\end{tikzcd}}

where (1) is the coherence law of the monoidal functor $F$, (2) is functoriality of $F$, and (3) is naturality of $F_n$.

Then given a monoidal transformation $\gamma \colon F \Rightarrow G$ we need to prove that $\cU (\gamma)$, i.e. $\gamma \circ \iota_A^{-1}$, is a natural transformation.
It is given by the following diagram:

\resizebox{\hsize}{!}{
\begin{tikzcd}[ampersand replacement=\&]
	{\otimes_n F(A_i)} \&\& {\otimes_1 \otimes_n F(A_i)} \&\& {\otimes _1F(\otimes_n A_i)} \&\& {\otimes_1 F(B)} \&\& {F(B)} \\
	\& {(1)} \&\&\&\& {(3)} \\
	\bullet \&\&\& {(2)} \&\& {F(\otimes_n A_i)} \&\&\& {(5)} \&\& {G(B)} \\
	\& {(1')} \&\&\&\& {(4)} \\
	{\otimes_n\otimes_1F(A_i)} \&\& {\otimes_nF(A_i)} \&\&\&\& {\otimes_n G(A_i)} \&\& {G(\otimes_n A_i)}
	\arrow["{\otimes_1 F(f)}"{description}, from=1-5, to=1-7]
	\arrow["{\otimes_1F_n}"{description}, from=1-3, to=1-5]
	\arrow["{\alpha^{-1}}"{description}, from=1-1, to=1-3]
	\arrow["{\iota_{F(B)}^{-1}}"{description}, from=1-7, to=1-9]
	\arrow["{\gamma_B}"{description}, from=1-9, to=3-11]
	\arrow["{G_n}"{description}, from=5-7, to=5-9]
	\arrow["{\otimes_n \gamma_{A_i}}"{description}, from=5-3, to=5-7]
	\arrow["{\otimes_n \iota_{F(A_i)}^{-1}}"{description}, from=5-1, to=5-3]
	\arrow["{\alpha^{-1}}"{description}, from=1-1, to=5-1]
	\arrow["{G(f)}"{description}, from=5-9, to=3-11]
	\arrow["{\iota_{F(\otimes_n A_i)}^{-1}}"{description}, from=1-5, to=3-6]
	\arrow["{F(f)}"{description}, from=3-6, to=1-9]
	\arrow[Rightarrow, no head, from=1-1, to=5-3]
	\arrow["{\iota_{\otimes_nF(A_i)^{-1}}}"{description}, from=1-3, to=5-3]
	\arrow["{F_n}"{description}, from=5-3, to=3-6]
	\arrow["{\gamma_{\otimes_n A_i}}"{description}, from=3-6, to=5-9]
\end{tikzcd}}
where (1) and (1') are coherence laws of an unbiased monoidal category, (2) and (3) are naturality of $\iota^{-1}$, (4) is the coherence law of the monoidal transformation $\gamma$ and (5) is naturality of $\gamma$.

Now that we have proven that \cU is well-defined, we need to prove that it is functorial.
The identity functor is monoidal with $F_n$ the identity.
So it is clear from the definition that the identity functor is sent to the identity by \cU.
Now given two monoidal functors $(F,F_n)$ and $(G,G_n)$ we have that $\cU (G \circ F)(A) = G \circ F(A) =  \cU (G) \circ \cU (F) (A)$ for any object $A$, and for a multimap $f \colon \otimes_n A_i \to B$, we have:
\[\begin{tikzcd}[ampersand replacement=\&]
	\&\& {G(F(\otimes_n A_i))} \\
	{\otimes_n G(F(A_i))} \&\&\&\& {G(F(B))} \\
	\&\& {G(\otimes_nF(A_i))}
	\arrow["{(G \circ F)_n}"{description}, from=2-1, to=1-3]
	\arrow["{G(F(f) \circ F_n)}"{description}, from=3-3, to=2-5]
	\arrow["{G(F_n)}"{description}, from=3-3, to=1-3]
	\arrow["{G \circ F(f)}"{description}, from=1-3, to=2-5]
	\arrow["{G_n}"{description}, from=2-1, to=3-3]
\end{tikzcd}\]
where the left triangle is the definition of $(G \circ F)_n$ and the right triangle is functoriality of $G$.

Finally, for functoriality of 2-cells, we have that the identity monoidal transformation is sent to the transformation with components $\iota_A^{-1}$, i.e. the identities in $\cU (\cD)$ and for the composition we use the coherence law of an unbiased monoidal category to ``cancel out'' the $\alpha^{-1}$ and one $\iota^{-1}$.
\end{proof}

\begin{prop}
$\cU \colon \UMonCat \to \MultiCat$ is 2-fully-faithful, i.e. for any pair of unbiased monoidal categories \cM and \cN  the functor \[\cU_{\cM,\cN} \colon \UMonCat(\cM,\cN) \to \MultiCat(\cU(\cM),\cU(\cN))\] is an isomorphism of categories.
\end{prop}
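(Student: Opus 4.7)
The plan is to construct an inverse functor
\[ \Phi \colon \MultiCat(\cU(\cM),\cU(\cN)) \to \UMonCat(\cM,\cN) \]
and verify it is inverse to $\cU_{\cM,\cN}$ on both objects (functors) and morphisms (transformations).

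First I would describe $\Phi$ on objects. Given a multicategory functor $H \colon \cU(\cM) \to \cU(\cN)$, define $F := \Phi(H)$ as follows. On objects: $F(A) := H(A)$. On a morphism $g \colon X \to Y$ in $\cM$, I view $g$ as a unary multimap of $\cU(\cM)$ (namely $g \circ \iota_X^{-1} \colon \otimes_1(X) \to Y$), apply $H$, and then reconvert by composing with $\iota$:
\[ F(g) := H(g \circ \iota_X^{-1}) \circ \iota_{F(X)}. \]
The monoidal structure constraints $F_n(A_1,\dots,A_n) \colon \otimes_n(F(A_i)) \to F(\otimes_n(A_i))$ come from applying $H$ to the universal multimaps $u_n \colon A_1,\dots,A_n \to \otimes_n(A_1,\dots,A_n)$ in $\cU(\cM)$, which are by definition the morphisms $\id_{\otimes_n(A_1,\dots,A_n)}$ in $\cM$ seen as $n$-ary multimaps. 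Explicitly, $F_n(A_1,\dots,A_n) := H(u_n)$, which is automatically of the right type $\otimes_n(H(A_i)) \to H(\otimes_n(A_i))$ in $\cN$.

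Next I would verify $F$ is a lax unbiased monoidal functor: functoriality on the underlying category follows from functoriality of $H$ (using that composition in $\cU(\cM)$ of two unary multimaps reduces, via the unitor coherence $\iota$-laws, to composition in $\cM$); naturality of each $F_n$ in each argument follows from $H$'s functoriality applied to morphisms composed into the universal multimap $u_n$; and the two coherence diagrams (the associativity hexagon and the $\iota$-compatibility triangle) follow from applying $H$ to the equations that characterise $u_n$ via nested associators $\alpha$ and unit isomorphisms $\iota$ of $\cM$, since these very equations are what allow one to compose universal multimaps in $\cU(\cM)$.

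Then I would show $\cU(\Phi(H)) = H$. This is the key step, and the main obstacle: one needs to check equality on every multimap, not just unary ones. The decisive observation is that any multimap $f \colon A_1,\dots,A_n \to B$ in $\cU(\cM)$ factors as the partial composition of a unary multimap (coming from $f$ seen as a morphism $\otimes_n(A_i) \to B$ in $\cM$) with the universal multimap $u_n$. Both $H$ and $\cU(\Phi(H))$ are multicategory functors, both act the same way on unary multimaps (essentially by construction of $F$), and both send $u_n$ to the same thing (by the definition of $F_n$). Functoriality on the factorisation then forces them to coincide on $f$. Conversely, $\Phi(\cU(F)) = F$ is a short calculation because the universal multimap $u_n$ in $\cU(\cM)$ is literally the identity of $\otimes_n(A_i)$, so $H = \cU(F)$ sends it to $F_n$.

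Finally, for 2-cells, I would do the same dance: a multicategory transformation $\beta \colon \cU(F) \Rightarrow \cU(G)$ has components $\beta_A \colon F(A) \to G(A)$ which are unary multimaps, i.e.\ morphisms $\otimes_1(F(A)) \to G(A)$ in $\cN$. Precomposing with $\iota_{F(A)}$ produces a family $\Phi(\beta)_A \colon F(A) \to G(A)$. Naturality of $\Phi(\beta)$ in $\cN$ transcribes the naturality of $\beta$ on unary multimaps, and the monoidal transformation coherence $G_n \circ \otimes_n(\Phi(\beta)_{A_i}) = \Phi(\beta)_{\otimes_n A_i} \circ F_n$ is exactly the naturality of $\beta$ against the universal multimap $u_n$. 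The inverse direction $\cU(\Phi(\beta)) = \beta$ and $\Phi(\cU(\gamma)) = \gamma$ (for $\gamma$ a monoidal transformation) follow again by computing how unary multimaps relate to underlying morphisms through $\iota$.
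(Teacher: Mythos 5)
Your proposal is correct and follows essentially the same route as the paper: the inverse is built by taking $F$ to be $H$ on objects, $F(f) := H(f \circ \iota^{-1}) \circ \iota$ on morphisms, and $F_n := H(u_n)$ where $u_n$ is the identity of $\otimes_n(A_i)$ viewed as the universal multimap, with 2-cells handled by (pre)composition with the isomorphism $\iota$. The only difference is that you spell out the step the paper leaves as ``it can be checked'' --- namely that $\cU(\Phi(H)) = H$ on all multimaps, via the factorisation of an arbitrary multimap through $u_n$ and a unary multimap --- which is a worthwhile addition but not a different argument.
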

\begin{proof}
Let first prove that $\cU_{\cM,\cN}$ is bijective on objects.
Let $\cF \colon \cU(\cM) \to \cU(\cN)$ be a functor of multicategories.
We define a monoidal functor $F \colon \cM \to \cN$ such that $\cU(F) = \cF$ as follows.
We take $F$ to be $\cF$ on object.
On morphisms $f \colon A \to B$, we take $F(f) := \cF(f \circ \iota_{F(A)}^{-1}) \circ \iota_{F(A)}$.
Furthermore, to get $F_n$ we consider the multimap $m_{(A_i)} \colon A_1, \dots, A_n \to \otimes_n A_i$ in $\cU(\cM)$ given by $\id_{\otimes_n A_i}$ in $\cM$ and we define $F_n := \cF(m_{(A_i)})$.
It can be checked that $\cU (F) = \cF$.

To conclude we need to prove that $\cU_{\cM,\cN}$ is fully-faithful.
Given functors $F,G \colon \cM \to \cM$ the function $\cU \colon Nat(F,G) \to Nat(\cU(F),\cU(G))$ is a bijection.
It follows from the fact that $\cU(\gamma)$ is defined by $\gamma \circ \iota^{-1}$ with $\iota^{-1}$ an isomorphism.
\end{proof}

This means that $\UMonCat$ is a sub-2-category of $\MultiCat$.

\section{Representable multicategories}

\subsection{Definition}

In this section we will characterise the sub-2-category \UMonCat in \MultiCat, i.e. we will give a property of a multicategory that makes it the underlying multicategory of a monoidal category.
As mentioned in the case of vector spaces are other algebraic structure, the tensor product has the universal property of ``linearising multilinear maps''.
It is this universal property that we will generalise to any multicategory.

In the following we will assume a multicategory \cM unless stated otherwise.

\begin{definition}
A multimap $f \colon \Gamma \to B$ is \defin{universal in $B$} or just \defin{universal} if for any multimap $g \colon \Gamma_1', \Gamma, \Gamma_2' \to C$ there is a unique multimap $g/f \colon \Gamma_1', B, \Gamma_2' \to C$ such that $g = g/f \circ f$.

In other words, the functions $-\circ f \colon \cM(\Gamma_1', B, \Gamma_2';C) \to \cM(\Gamma_1', \Gamma, \Gamma_2';C)$ given by precomposition by $f$ are invertible.
\end{definition}

This unique factorisation property characterise the tensor product in a multicategory.

\begin{prop}
Given a finite list of objects $(A_i)_{1 \leq i \leq n}$, if there is a universal multimap $A_1, \dots A_n \to B$ then $B$ is unique up to unique invertible (unary) multimap.
\end{prop}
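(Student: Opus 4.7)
The plan is to apply the universal property twice, in both directions, to produce mutually inverse comparison maps between any two universal multimaps out of $(A_i)$, and then to use the uniqueness clause in the definition of universality to show these comparisons are indeed inverses and that they are the unique unary multimaps making the relevant triangle commute.

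More precisely, suppose $f \colon A_1, \dots, A_n \to B$ and $f' \colon A_1, \dots, A_n \to B'$ are both universal. Since $f$ is universal, applying its universal property to the multimap $f'$ (with empty contexts $\Gamma_1' = \Gamma_2' = ()$) yields a unique unary multimap $u := f'/f \colon B \to B'$ satisfying $u \circ f = f'$. Symmetrically, universality of $f'$ applied to $f$ yields a unique unary $v := f/f' \colon B' \to B$ with $v \circ f' = f$. This is the candidate pair of inverse multimaps, and the uniqueness half of the definition already gives that any other unary multimap $B \to B'$ with the same factorization property agrees with $u$, which is precisely the uniqueness part of the statement.

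It remains to check that $u$ and $v$ are mutually inverse. For this I would use the uniqueness clause of universality applied to $f$ (respectively $f'$) itself. Concretely, both $v \circ u$ and $\id_B$ are unary multimaps $B \to B$ satisfying the factorization equation $(-) \circ f = f$: indeed, $(v \circ u) \circ f = v \circ (u \circ f) = v \circ f' = f$ by associativity of composition and the defining equations of $u$ and $v$, while trivially $\id_B \circ f = f$. By the uniqueness part of the universal property of $f$ (applied to the multimap $f \colon A_1,\dots,A_n \to B$ itself, again with empty contexts), these two factorizations coincide, so $v \circ u = \id_B$. The same argument with the roles of $f$ and $f'$ swapped gives $u \circ v = \id_{B'}$.

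The whole argument is the standard "universal objects are unique up to unique isomorphism" pattern, so I do not expect any real obstacle; the only thing to be careful about is making sure associativity is applied correctly in the polycategorical-style composition notation (which here is just the partial composition along a single object, so the bookkeeping is straightforward) and that one invokes both the existence and the uniqueness halves of the universal property in the right places.
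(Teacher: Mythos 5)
Your proof is correct and follows essentially the same route as the paper's: obtain the two comparison maps from the existence half of universality, then show each composite equals the identity by comparing two factorisations of $f$ (resp.\ $f'$) through itself and invoking the uniqueness half. No gaps.
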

\begin{proof}
Let $f_1 \colon A_1, \dots, A_n \to B_1$ and $f_2 \colon A_1, \dots, A_n \to B_2$ be universal multimaps.
Then by the factorisation property of $f_1$ we get a unary map $f_2/f_1$ such that $f_2 = f_2/f_1 \circ f_1$ and similarly we have $f_1/f_2$ by the factorisation property of $f_2$.
But then $f_2 = f_2/f_1 \circ f_1 = f_2/f_1 \circ f_1/f_2 \circ f_2$.
And we also have that $f_2 = \id_{B_2} \circ f_2$.
By the uniqueness of prefactorisation by $f_2$ we get $f_2/f_1 \circ f_1/f_2 = \id_{B_2}$.
Similarly, $f_1/f_2 \circ f_2/f_1 = \id_{B_1}$ which gives us our isomorphism between $B_1$ and $B_2$.
\end{proof}

From now on, we will call the codomain of a universal multimap from $A_1,\dots, A_n$ their tensor product and we will write it $A_1 \otimes \dots \otimes A_n$.
We will usually denote a universal multimap $m_{(A_i)_i} \colon A_1, \dots, A_n \to A_1 \otimes \dots \otimes A_n$, and will often drop the index if there is no ambiguity.
We will represent it graphically as a white node.
So the factorisation property becomes:
\[\tikzfig{multimap-univ1}\]

Furthermore, we will represent $f/m$ by ``precomposing'' $f$ with a black node merging the $A_i$ together like follow:
\[\tikzfig{multicat-prefact}\]

Informally, we can think of it as an operation $A_1 \otimes \dots \otimes A_n \to A_1, \dots, A_n$ that splits the tensor product into its parts.
Although this does not correspond to any actual operation in the multicategory, it will come in handy.

The factorisation property now states that the following is $f$:
\[\tikzfig{multicat-univ}\]
i.e. that merging objects and then splitting them amounts to nothing.

Furthermore, the reverse cancellation (splitting then merging) is given by the uniqueness of factorisation.

\begin{rk}
We can also think of the white node as an introduction rule for the tensor and the black one as an elimination.
\end{rk}

\begin{prop}
The identity is a universal multimap and universal multimaps are closed under composition.
\end{prop}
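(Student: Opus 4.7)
The plan is to verify each claim by reducing to the unique factorisation property defining universality, together with the multicategorical axioms (unitality and associativity).

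For the identity $\id_A \colon A \to A$: given any $g \colon \Gamma_1', A, \Gamma_2' \to C$, I need a unique $g/\id_A \colon \Gamma_1', A, \Gamma_2' \to C$ such that $g = (g/\id_A) \circ \id_A$. By unitality of composition, $g \circ \id_A = g$, so $g/\id_A := g$ works. For uniqueness, if $k \circ \id_A = g$ then $k = g$ by unitality again. So $\id_A$ is universal.

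For closure under composition, I will argue for partial composition $g \circ_{i+1} f$ where $f \colon \Gamma \to A$ and $g \colon \Gamma_1', A, \Gamma_2' \to B$ are both universal; the parallel-composition version then follows by iterating (and is automatic once identities are universal). I claim $g \circ f \colon \Gamma_1', \Gamma, \Gamma_2' \to B$ is universal. Given $h \colon \Pi_1, \Gamma_1', \Gamma, \Gamma_2', \Pi_2 \to D$, apply the universal property of $f$ at the position occupied by $\Gamma$ to obtain a unique $h' \colon \Pi_1, \Gamma_1', A, \Gamma_2', \Pi_2 \to D$ with $h = h' \circ f$. Then apply the universal property of $g$ to $h'$ at the positions occupied by $\Gamma_1', A, \Gamma_2'$ to obtain a unique $h'' \colon \Pi_1, B, \Pi_2 \to D$ with $h' = h'' \circ g$. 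By associativity of composition,
\[ h \;=\; h' \circ f \;=\; (h'' \circ g) \circ f \;=\; h'' \circ (g \circ f), \]
so $h/(g \circ f) := h''$ gives the required factorisation. For uniqueness, if $k \colon \Pi_1, B, \Pi_2 \to D$ satisfies $k \circ (g \circ f) = h$, then $(k \circ g) \circ f = h$ by associativity; uniqueness of factorisation through $f$ forces $k \circ g = h'$, and then uniqueness of factorisation through $g$ forces $k = h''$.

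The argument is essentially bookkeeping on positions together with associativity, so there is no substantive obstacle; the only thing requiring care is that the partial composition $g \circ_{i+1} f$ inserts $\Gamma$ into $g$'s domain at the correct position, and that the inductive factorisation of $h$ must be performed at the matching positions. Once one accepts that associativity and the exchange law manage this rewriting correctly, the proof reduces to the two displayed equations above.
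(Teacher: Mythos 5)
Your proof is correct and follows essentially the same route as the paper: unitality handles the identity, and closure under composition is obtained by factoring first through the inner universal map(s) and then through the outer one, with uniqueness recovered by reversing the two factorisations. The only difference is presentational — the paper treats the $n$-fold parallel composite $m \circ (m_1,\dots,m_n)$ in one pass, while you prove the single partial-composition case and iterate, which amounts to the same argument.
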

\begin{proof}
For the identity, any multimap can be uniquely factorised through the identity followed by itself.

Now given universal maps $m_i \colon \Gamma_i \to A_i$ and $m \colon A_1,\dots,A_n \to B$, any multimap $f \colon \Gamma_1', \Gamma_1, \dots, \Gamma_n, \Gamma_2' \to C$ can be factored through $m_1$ to get a multimap 

$f/m_1 \colon \Gamma_1', A_1,\Gamma_2, \dots, \Gamma_n, \Gamma_n, \Gamma_2' \to B$.
This multimap can then be factorised through $m_2$ and so on, until one gets a multimap 
$((f/m_1)/\dots)/m_n \colon \Gamma_1', A_1, \dots, A_n, \Gamma_2' \to B$ which can then be factored through $m$.
So we get a multimap $(((f/m_1)/\dots)/m_n)/m$ such that $f = (((f/m_1)/\dots)/m_n)/m \circ m\circ (m_1,\dots,m_n)$. 
The uniqueness of this multimap follows from the uniqueness of the factorisation through each universal multimap.
Indeed, suppose that we can factor $f$ through $h$, so $f = h \circ  m\circ (m_1,\dots,m_n)$.
Then, by the uniqueness of the factorisation through $m_1$, we have: $h \circ  m\circ (\id_{A_1},\dots,m_n)=(((f/m_1)/\dots)/m_n)/m \circ m\circ (\id_{A_1},\dots,m_n)$.
By repeating this argument for each $m_i$ we get that $h \circ m = (((f/m_1)/\dots)/m_n)/m \circ m$.
And then we conclude that $h = (((f/m_1)/\dots)/m_n)/m$ by the uniqueness of the factorisation through $m$.
\end{proof}

In the following we will often write $g(f_1,\dots, f_n)$ instead of $g \circ (f_1,\dots,f_n)$ and $gf$ instead of $g \circ f$.
We will also write $f/(m m')$ instead of $(f/m)/m'$.
Then we have that $(f/(m m'))m'm = (f/m)m = f$.
So we can reason as if we were treating products and quotient.

\begin{definition}
A \defin{representable} multicategory is a multicategory such that for any finite list of objects $\Gamma$ there is a universal map $m_\Gamma \colon \Gamma \to \otimes \Gamma$.
\end{definition}

Under the axiom of choice it is equivalent to ask for a specific choice of universal maps.

\begin{definition}
A \defin{strict} representable multicategory is a multicategory equipped with a choice of universal maps containing the identity and closed under composition.
\end{definition}

Closure under composition means that the multimap 

$m_{\otimes \Gamma_1, \dots, \otimes \Gamma_2} (m_{A_{1,1},\dots,A_{1,p_1}}, \dots, m_{A_{n,1},\dots,A_{n,p_n}}) \colon A_{1,1}, \dots, A_{n,p_n} \to \otimes_i \otimes_j A_{i,j}$ should be the universal multimap $m_{A_{1,1},\dots, A_{n,p_n}} \colon A_{1,1}, \dots, A_{n,p_n} \to \otimes_{i,j} A_{i,j}$.
In particular, we should have that our choice of tensor enforces $\otimes_i \otimes_j A_{i,j} = \otimes_{(i,j)} A_{i,j}$ so that the corresponding monoidal category is strict.
Hence the terminology.

\begin{prop}
Every representable multicategory is equivalent to a strict one.
\end{prop}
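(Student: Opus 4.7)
The plan is to mirror the strictification construction for unbiased monoidal categories given earlier, translated to the multicategorical setting. Given a representable multicategory $\cM$ equipped with a chosen family of universal multimaps $m_\Gamma \colon \Gamma \to \otimes \Gamma$, I would define $st(\cM)$ to have as objects finite lists of objects of $\cM$, and multimaps $f \colon (\Gamma_i)_{i \le n} \to \Delta$ to be multimaps $f \colon \Gamma_1, \dots, \Gamma_n \to \otimes \Delta$ in $\cM$ (where $\Gamma_1, \dots, \Gamma_n$ is the concatenation of the lists $\Gamma_i$). The identity on an object $\Delta$ of $st(\cM)$ is $m_\Delta$ itself, and composition is defined via the factorisation property: given $g \colon (\Delta_i)_i \to E$ and $f_i \colon (\Gamma_{i,j})_j \to \Delta_i$, set $g \circ (f_1, \dots, f_n) := (g / (m_{\Delta_1}, \dots, m_{\Delta_n})) \circ (f_1, \dots, f_n)$. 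The multicategory axioms for $st(\cM)$ then follow from the corresponding properties in $\cM$ together with uniqueness of factorisation.

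Next I would equip $st(\cM)$ with a representable structure by declaring the tensor on objects to be concatenation of lists, and the universal multimap from $(\Gamma_i)_i$ to the concatenation to be $m_{\Gamma_1, \dots, \Gamma_n}$ in $\cM$. Since list concatenation is strictly associative with strict unit (the empty list), and the identity on $\Delta$ coincides with the universal multimap from the singleton list $(\Delta)$, this yields a strict choice of universal multimaps; closure of this family under composition reduces to the earlier fact that universal multimaps in $\cM$ compose to universal multimaps.

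The equivalence is then exhibited by the functor $F \colon st(\cM) \to \cM$ sending a list $\Gamma$ to its tensor $\otimes \Gamma$, and a multimap $f \colon (\Gamma_i)_i \to \Delta$ (unpacked as $f \colon \Gamma_1, \dots, \Gamma_n \to \otimes \Delta$ in $\cM$) to the factorisation $f / (m_{\Gamma_1}, \dots, m_{\Gamma_n}) \colon \otimes \Gamma_1, \dots, \otimes \Gamma_n \to \otimes \Delta$. Fullness and faithfulness are immediate from the universal property of each $m_{\Gamma_i}$: precomposing with them induces a bijection between the two hom-sets. Essential surjectivity is immediate since every object $A$ of $\cM$ arises as $F((A))$, the singleton tensor being an identity.

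The main technical obstacle will be the verification of associativity for composition in $st(\cM)$, which requires juggling nested factorisations. I expect this to come down to a careful application of the uniqueness clause of the universal property in $\cM$, identifying iterated factorisations with direct ones in the style of the products/quotient reasoning $(f/(m m')) m' m = f$ introduced earlier. Functoriality of $F$ will then boil down to the same kind of cancellation.
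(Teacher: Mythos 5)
The paper does not actually supply a proof here: it simply cites Hermida's \emph{Representable multicategories}, where the result is obtained via the 2-monadic/adjunction machinery relating multicategories and monoidal categories. Your proposal instead gives a direct, self-contained construction that transports the paper's own strictification of unbiased monoidal categories ($st(\cC)$) to the multicategorical setting, and it is correct: the hom-sets $st(\cM)((\Gamma_i)_i;\Delta)$ and $st(\cM)((\Sigma,\Gamma_1\cdots\Gamma_n,\Sigma');\Delta)$ coincide literally, precomposition with $m_{\Gamma_1\cdots\Gamma_n}$ computes to the identity function under your definition of composition, and closure of the chosen universal multimaps under composition reduces to $(m_{C_1\cdots C_n}/(m_{C_i})_i)\circ(m_{C_i})_i = m_{C_1\cdots C_n}$, so strictness holds on the nose; associativity of composition and functoriality of $F$ both come down to the uniqueness clause of factorisation exactly as you anticipate. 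What your route buys is independence from the 2-equivalence $\UMonCat \simeq \MultiCat_r$ (which the paper only establishes later) and from the external reference; what Hermida's route buys is a statement at the level of 2-categories with the coherence data handled uniformly. One small imprecision: $F((A)) = \otimes(A)$ is not \emph{equal} to $A$ (the singleton universal multimap $m_{(A)}\colon A \to \otimes(A)$ need not be an identity), it is only invertible by the usual uniqueness argument; this is all that essential surjectivity requires, but the phrase ``the singleton tensor being an identity'' should be replaced by ``the singleton universal multimap being invertible.''
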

\begin{proof}
See \cite{Hermida2000}.
\end{proof}

In the following we will always take representable multicategories to be equipped with a particular choice of universal multimaps.
So we can use the white-dot/black-dot notation.
As always, we can take a biased definition with a nullary operation and a binary one.

The rules for the graphical calculus are given in the following figure:
\[\tikzfig{multicat-representability}\]

\begin{rk}
It is easy to see that this graphical representation is sound, i.e. that starting from a string diagram representing a multimap and applying the rewriting rules above gives a string diagram representing the same multimap up to canonical isomorphisms, i.e. only involving isomorphisms between the tensors.

Conversely, it should be possible to prove a full completeness result, i.e. that any string diagram with exactly one output represent a (unique) multimap.
There might be a slight caveat though when dealing with the unit, where it would be not clear where the unit is being eliminated.
If it were the case we could introduce formal links that link a unit to the multimap it is eliminated in, similarly to the linking of units in proof nets.

This is left to further work.
\end{rk}

Now consider two representable multicategories \cM and \cN and a functor $F \colon \cM \to \cN$.
Then we can built a unary multimap $F_n \colon \otimes F(A_i) \to F(\otimes A_i)$ in the following way:
\[\tikzfig{multicat-laxfunctor}\]
where the part inside the box happens in \cM, is then turned into a multimap in \cN by $F$.
This can be translated by saying that the multimap is defined by \[F_n(A_1,\dots, A_n) := F(m_{A_1,\dots,A_n})/m_{F(A_1),\dots,F(A_n)}\]

\begin{definition}
A functor of representable multicategories is said to be \defin{strong} or \defin{pseudo} if the $F_n$ are invertible.
If the multicategories are strict and the $F_n$ are equalities then the functor is said to be \defin{strict}.
\end{definition}

Now let consider a transformation between functors of multicategories $\gamma \colon F \Rightarrow G$.
We will display the component of $\gamma$ as being in between the boxes representing $F$ and $G$.
So for example the naturality of $\gamma$ says that for any multimap $f \colon A_1, \dots, A_n \to B$ in \cM, the following holds in \cN:
\[\tikzfig{multicat-transformation-graph}\]

Now if both the functors are between representable multicategories, the following diagram commutes:
\[\begin{tikzcd}[ampersand replacement=\&]
	{F(A_1)\otimes \dots \otimes F(A_n)} \&\& {F(A_1 \otimes \dots \otimes A_n)} \\
	\\
	{G(A_1) \otimes \dots \otimes G(A_n)} \&\& {G(A_1\otimes \dots \otimes A_n)}
	\arrow["{F_n}"{description}, from=1-1, to=1-3]
	\arrow["{\otimes(\alpha_{A_i})}"{description}, from=1-1, to=3-1]
	\arrow["{G_n}"{description}, from=3-1, to=3-3]
	\arrow["{\alpha_{A_1 \otimes \dots \otimes A_n}}"{description}, from=1-3, to=3-3]
\end{tikzcd}\]
We can prove it graphically:
\[\scalebox{1}{
\tikzfig{multicat-transformation-monoidal}
}\]
where the equalities are to be read left to right, top to bottom.
The top left corner represent $G_n \circ \otimes(\alpha_{A_i})$.
Going from there to the top right corner is done by using cancellation of white and black dots.
Then, going to the bottom left corner is by functoriality of $G$.
Finally the last equality follows from naturality of the transformation.
The bottom right corner represent the multimap $\alpha_B \circ F_n$.

\subsection{Examples}

\begin{example}
We will see in the next section that the underlying multicategory of a monoidal category is always representable, and that in fact, any representable multicategory arises in this way.
\end{example}

\begin{example}
In particular, \Vect, \FVect, \Ban, \FBan, \Banc, and \FBanc{} are all representable.
The tensor product on the vector spaces is the usual tensor product.
The tensor product on the finite dimensional Banach spaces is the tensor product of the underlying vector spaces equipped with the so-called projective norm.
We will study this norm in more details in the chapter on polycategories.
For arbitrary Banach spaces, one has to take the completion of the tensor products of the vector spaces under the projective norm.
\end{example}

\begin{example}
The multicategory of modules over a ring is representable iff the ring is commutative.
\end{example}

\begin{example}
The terminal multicategory $\one$ is representable with $\otimes(\ast,\dots,\ast) = \ast$.
\end{example}

\begin{example}
For a representable symmetric multicategory \cM, $\Mon{\cM}$ is always representable.
Given monoids $(M_i,\mu_i^k)$, one can define a multiplication on $\otimes M_i$ by considering the following multimap
\[ M_1,\dots,M_l,\dots,M_1,\dots,M_l \simeq M_1, \dots, M_1, \dots, M_l,\dots, M_l \xrightarrow{\mu_1^k,\dots,\mu_l^k} M_1,\dots,M_l \xrightarrow{m_{M_i}} \otimes M_i\]
and factors it through $m_{M_i}$ $l$ times to get a multimap $\otimes M_i, \dots, \otimes M_i \to \otimes M_i$.
\end{example}

\begin{example}
The multicategory of actions over a symmetric multicategory $\cM$ is not representable.
\end{example}

It is possible to restrict the previous example to consider only actions over commutative monoids, where a commutative monoid can be defined as a monoid with an extra property or as an object in $\Mon{\Mon{\cM}}$.
Then, the category of actions over commutative monoids in \cM will be representable.

\begin{example}
\Act{} is not representable.
For example, there is no multimap $\star, \star \to \star$ so $\star \otimes \star$ cannot be defined.
\end{example}

\begin{example}
If \cM is a representable multicategory, then for a monoid $(M, \mu^k)$ in \cM, $\cM/M$ is representable where $\otimes(A_i,\varphi_{A_i})$ is defined by $\otimes A_i$ on objects and $(\mu^l(\varphi_{A_1},\dots,\varphi_{A_n}))/m_{A_i}$ on presheaves.
$m_{A_i} \colon A_1,\dots,A_l \to \otimes A_i$ defines a universal multimap by definition:
\[\begin{tikzcd}[ampersand replacement=\&]
	{A_1,\dots,A_l} \&\& \otimes A_i \\
	\\
	{M,\dots,M} \&\& M
	\arrow["{m_{A_i}}"{description}, from=1-1, to=1-3]
	\arrow["{\mu^l(\varphi_{A_1},\dots\varphi_{A_l})/m_{A_i}}"{description}, from=1-3, to=3-3]
	\arrow["{\varphi_{A_1},\dots,\varphi_{A_l}}"{description}, from=1-1, to=3-1]
	\arrow["{\mu^l}"{description}, from=3-1, to=3-3]
\end{tikzcd}\]
\end{example}

In particular $\Cat/\Set$ is representable where $\otimes \varphi_i$ for presheaves $\varphi_i \colon A_i \to \Set$ is a presheaf on $A_1 \times \dots \times A_n$ defined pointwise.

\subsection{2-equivalence between representable multicategories and unbiased monoidal categories}

Now we are ready to prove the 2-equivalence between unbiased monoidal categories and representable multicategories.

We will write $\MultiCat_r$ for the 2-category of representable multicategories, functors and transformations.
It is a sub-2-category of $\MultiCat$.

First, let prove that $\cU \colon \UMonCat \to \MultiCat$ restricts to representable multicategories to give $\cU_r \colon \UMonCat \to \MultiCat_r$.

\begin{prop}
$\cU \colon \UMonCat \to \MultiCat$ restricts to representable multicategories to give $\cU_r \colon \UMonCat \to \MultiCat_r$.
\end{prop}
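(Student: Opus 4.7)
The plan is to exhibit an explicit universal multimap for every list of objects and then verify the two halves of the universal property by unwinding the composition formula in $\cU(\cC)$ and applying monoidal coherence.

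Fix an unbiased monoidal category $(\cC, \otimes_n, \alpha, \iota)$ and a finite list $A_1, \dots, A_n$ of objects. The candidate universal multimap is $m_{(A_i)} \colon A_1,\dots,A_n \to \otimes_n(A_i)$ in $\cU(\cC)$ given by the identity morphism $\id_{\otimes_n(A_i)}$ in $\cC$. First I would recall that, by definition of $\cU(\cC)$, a multimap $g \colon \Gamma_1', A_1,\dots,A_n, \Gamma_2' \to C$ is a morphism $g \colon \otimes_{|\Gamma_1'|+n+|\Gamma_2'|}(\Gamma_1', A_1,\dots,A_n, \Gamma_2') \to C$ in $\cC$, while a prefactoring $h \colon \Gamma_1', \otimes_n(A_i), \Gamma_2' \to C$ is a morphism $h \colon \otimes_{|\Gamma_1'|+1+|\Gamma_2'|}(\Gamma_1', \otimes_n(A_i), \Gamma_2') \to C$. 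The two domains are related by an instance of the coherence isomorphism $\alpha$.

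For existence of the factorisation, I would define $g/m_{(A_i)} := g \circ \alpha$, where $\alpha$ is the component of the associator flattening $\otimes_{|\Gamma_1'|+1+|\Gamma_2'|}(\Gamma_1', \otimes_n(A_i), \Gamma_2')$ onto $\otimes_{|\Gamma_1'|+n+|\Gamma_2'|}(\Gamma_1', A_1,\dots,A_n, \Gamma_2')$. I would then compute the partial composition $(g/m_{(A_i)}) \circ_{|\Gamma_1'|+1} m_{(A_i)}$ in $\cU(\cC)$: by the formula for composition given just above, this amounts to tensoring the identities on the contexts with $\id_{\otimes_n(A_i)}$, precomposing with another $\alpha^{-1}$, and composing with $g \circ \alpha$. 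Using naturality of $\alpha$ and $\iota$, functoriality of $\otimes_n$, and the pentagon-style coherence law of the unbiased structure, the two associators cancel and the result is exactly $g$. (The analogous verification already appears, in a closely related form, in the proof that $\cU(\cC)$ is a multicategory earlier in the text.)

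For uniqueness, suppose $h \colon \Gamma_1', \otimes_n(A_i), \Gamma_2' \to C$ also satisfies $h \circ_{|\Gamma_1'|+1} m_{(A_i)} = g$. Running the same computation, the left-hand side reduces in $\cC$ to $h \circ \alpha$, so $h \circ \alpha = g$. Since $\alpha$ is invertible in $\cC$, this forces $h = g \circ \alpha = g/m_{(A_i)}$.

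The only subtlety is bookkeeping: keeping track of how the associator components in the composition formula $(g \circ \otimes_n(f_i)) \circ \alpha^{-1}$ line up with the single associator used in defining $g/m_{(A_i)}$, and verifying that the $\iota$'s introduced by the identity multimaps on context objects disappear via the unit coherence. Once this is carefully unpacked using the pentagon and unit axioms for an unbiased monoidal category, no further obstacle remains, and the restriction $\cU_r \colon \UMonCat \to \MultiCat_r$ is well defined.
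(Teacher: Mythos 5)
Your proof is correct and follows the same route as the paper, which simply exhibits $m_\Gamma := \id_{\otimes_n(A_i)}$ as the universal multimap and leaves the verification implicit. You supply the omitted check — factorisation through the canonical flattening isomorphism and uniqueness from its invertibility — which is exactly the intended argument.
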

\begin{proof}
$\cU(M)$ is defined to be the multicategory with objects those of $M$ and multimaps, morphisms $f \colon \otimes_n A_i \to B$.
It is representable with $m_{\Gamma} := \id_{\otimes_n A_i} \colon \otimes_n A_i \to \otimes_n A_i$.
\end{proof}

Let \cM be a multicategory.
We will write $\cR(\cM)$ for the category with:
\begin{itemize}
\item objects, the objects of \cM
\item morphisms, the unary multimap of \cM
\end{itemize}
So it is the category that we get from forgetting about the multimaps of \cM.
The categorical structure follows directly from the multicategorical structure of \cM.

\begin{prop}
For a representable multicategory \cM, $\cR(\cM)$ is an unbiased multicategory with the monoidal structure given by:
\begin{itemize}
\item $\otimes_n A_n := \otimes A_n$ inherited from the tensor in \cM
\item for $f_i \colon A_i \to B_i$, $\otimes_n f_n := (m_{B_1, \dots, B_n} f)/m_{A_1,\dots,A_n}$, graphically:
\[\tikzfig{multicat-tensor-map}\]
\item $\alpha_{n,m_i} \colon \otimes_n \otimes_{m_i} A_{i,j} \to \otimes_{\sum_i m_i} A_{i,j}$ is given by:
\[\tikzfig{multicat-tensor-alpha}\]
\item $\iota \colon A \to \otimes_1(A)$ is given by $m_A$.
\end{itemize}
\end{prop}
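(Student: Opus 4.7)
The plan is to exploit the universal property of the chosen multimaps $m_\Gamma$ throughout: every piece of structure will be defined by factoring a canonical multimap through a $m_\Gamma$, and every equation between pieces of structure will be established by verifying that both sides, when precomposed with the appropriate universal multimaps, agree — whence uniqueness of factorisation gives equality. Graphically, this is exactly the manipulation of black and white dots already set up above.

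First I would verify that each $\otimes_n$ is a functor. For identities, $\otimes_n \id_{A_i} = (m_{A_1,\ldots,A_n} \circ (\id_{A_1},\ldots,\id_{A_n}))/m_{A_1,\ldots,A_n} = m_{A_1,\ldots,A_n}/m_{A_1,\ldots,A_n}$, which equals $\id_{\otimes A_i}$ by uniqueness of the factorisation. For functoriality on composition, given $f_i \colon A_i \to B_i$ and $g_i \colon B_i \to C_i$, one computes $m_{C_i} \circ (g_1 f_1,\ldots,g_n f_n) = (\otimes_n g_i) \circ m_{B_i} \circ (f_1,\ldots,f_n) = (\otimes_n g_i) \circ (\otimes_n f_i) \circ m_{A_i}$, and uniqueness of factorisation through $m_{A_i}$ identifies this with $\otimes_n(g_i f_i) \circ m_{A_i}$.

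Next I would show that $\alpha$ and $\iota$ are natural isomorphisms. For $\iota_A = m_A$, note that $\id_A$ is universal (the identity is always universal, as established), and $m_A \colon A \to \otimes_1 A$ is universal by choice; by the uniqueness-up-to-iso of universal multimaps, $m_A$ is an isomorphism. For $\alpha$, observe that the composite $m_{\otimes\Gamma_1,\ldots,\otimes\Gamma_n} \circ (m_{\Gamma_1},\ldots,m_{\Gamma_n}) \colon A_{1,1},\ldots,A_{n,m_n} \to \otimes_n \otimes_{m_i} A_{i,j}$ is universal, being a composite of universals; and $m_{(A_{i,j})}$ is the chosen universal with source of total length $\sum_i m_i$. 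The unique isomorphism between their codomains that intertwines the two universal maps is precisely $\alpha_{n,(m_i)}$ as defined. Naturality of $\alpha$ and $\iota$ reduces in each case to two expressions becoming equal after precomposition with the relevant universal multimap, which is a direct black/white dot cancellation.

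Finally I would check the two coherence diagrams. Because composites of universal multimaps are universal, both paths around each diagram are factorisations of the same canonical universal multimap (essentially $m_{(A_{i,j,k})}$ in the pentagonal case, and $m_{(A_i)}$ in the triangle/unit cases) through $m_{\otimes \Gamma_i,\otimes\Gamma'_j,\ldots}$; uniqueness of such factorisations forces equality. The main obstacle is purely bookkeeping: carefully tracking which occurrence of $m$ each black-dot/white-dot pair refers to, particularly in the nested case $\otimes_p \otimes_{n_i} \otimes_{m_{i,j}}$, where one has to check that associating the merges in either order produces the same universal multimap. Once one adopts the reduction strategy ``collapse all nested whites into a single white on the total list'', both sides of every coherence axiom visibly reduce to the same normal form, and the verification is complete.
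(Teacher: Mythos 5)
Your proposal is correct and follows essentially the same strategy as the paper: every structure map is a factorisation through a chosen universal multimap, and every equation is checked by precomposing with the relevant universals and invoking uniqueness of the factorisation (the black/white dot cancellations). The only cosmetic differences are that you obtain invertibility of $\alpha$ and $\iota$ abstractly from uniqueness-up-to-iso of universal multimaps together with closure of universals under composition, whereas the paper exhibits the inverses explicitly in the graphical calculus, and that you record functoriality of $\otimes_n$, which the paper leaves implicit.
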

\begin{proof}

First let prove that the $\alpha$s and $\iota$s are natural isomorphisms.

Basically the idea is to reflect the diagram horizontally and exchange every black and white dots.
Then when composing all the black and white dots will cancel out to give an identity.

So the inverse of $\alpha$ is
\[\tikzfig{multicat-tensor-alpha-inverse}\]
and the inverse of $\iota$ is \tikzfig{multicat-unit-inverse}, i.e. the multimap $\id_{A}/m_{A}$.
It is easy to check that these are inverse.

Furthermore, $\alpha$ is natural.
For any $f_{i,j} \colon A_{i,j} \to  B_{i,j}$ represented by boxes in the next diagram, we have the following:
\[
\scalebox{1}{
\tikzfig{multicat-tensor-alpha-natural}
}
\]
The top and bottom left diagrams correspond to applying the $f$s followed by $\alpha$ and applying $\alpha$ followed by the $f$s respectively.
They both rewrite to the right one which can be read has: first eliminate all the tensors, then apply the $f$s and reintroduce the tensor.
Naturality of $\iota$ is similar, it says that for any $f \colon A \to B$, $\iota_B f = \otimes_1(f) \iota_A$.
But $\iota_B f = m_B f$ by definition and $\otimes_1(f) \iota_A = ((m_B f)/m_A) m_A = m_B f$.

Now, we want to prove that this defines a monoidal structure.
First, let prove that:

\scalebox{0.75}{
\begin{tikzcd}[ampersand replacement=\&]
	\& {\bigotimes_p(\bigotimes_{n_i}(\bigotimes_{m_{i,j}}(a_{i,j,k})_{1 \leq k \leq m_{i,j}})_{1 \leq j \leq n_i})_{1 \leq i \leq p}} \\
	{\bigotimes_{\sum_i n_i}(\bigotimes_{m_{i,j}}(a_{i,j,k})_{1 \leq k \leq m_{i,j}})_{1 \leq i \leq p,1 \leq j\leq n_i}} \&\& {\bigotimes_p(\bigotimes_{\sum_j}(a_{i,j,k})_{1 \leq j \leq n_i,1 \leq k \leq m_{i,j}})_{1 \leq i \leq p}} \\
	\& {\bigotimes_{\sum_i \sum_{j} m_{i,j}}(a_{i,j,k})_{1 \leq i \leq p, 1 \leq j \leq n_i, 1\leq k \leq m_{i,j}}}
	\arrow["{\alpha_{((\bigotimes_{m_{i,j}} (a_{i,j,k})_k)_j)_i}}"{description}, from=1-2, to=2-1]
	\arrow["{\alpha_{((a_{i,j,k})_k)_{(i,j)}}}"{description}, from=2-1, to=3-2]
	\arrow["{\otimes_p(\alpha_{((a_{i,j,k})_k)_j})_i}"{description}, from=1-2, to=2-3]
	\arrow["{\alpha_{((a_{i,j,k})_{(j,k)})_i}}"{description}, from=2-3, to=3-2]
\end{tikzcd}}

It is given by the following diagrams:
\[
\scalebox{0.75}{
\tikzfig{multicat-tensor-coherence}
}
\]

where the top left string diagram represent the multimaps going on the left of the commutative diagram and the bottom left string diagram to the multimaps going on the right.
Then, the equalities on top and bottom are canceling white and black dots while the one going from top to bottom is just translating a the black dots.

Then, let prove that
\[\begin{tikzcd}[ampersand replacement=\&]
	{\otimes_n(a_i)_{1 \leq i \leq n}} \&\& {\otimes_n(\otimes_1(a_i))_{1 \leq i \leq n}} \&\& {\otimes_n(a_i)_{1 \leq i \leq n}} \&\& {\otimes_1(\otimes_n(a_i)_{1 \leq i \leq n})} \\
	\&\& {\otimes_n(a_i)_{1 \leq i \leq n}} \&\&\&\& {\otimes_n(a_i)_{1 \leq i \leq n}}
	\arrow["{\otimes_n(\iota_{a_i})_{1 \leq i \leq n}}"{description}, from=1-1, to=1-3]
	\arrow["{\alpha_{((a_1))_{1 \leq i \leq n}}}"{description}, from=1-3, to=2-3]
	\arrow[Rightarrow, no head, from=1-1, to=2-3]
	\arrow["{\iota_{\otimes_n(a_i)_{1 \leq i \leq n}}}"{description}, from=1-5, to=1-7]
	\arrow["{\alpha_{(a_i)_{1 \leq i \leq n}}}"{description}, from=1-7, to=2-7]
	\arrow[Rightarrow, no head, from=1-5, to=2-7]
\end{tikzcd}\]

The proofs are given by:
\[
\scalebox{0.75}{
\tikzfig{multicat-tensor-unit-coherence}
}
\]

\end{proof}

It extends to a 2-functor.

\begin{prop}
There is a 2-functor $\cR \colon \MultiCat_r \to \UMonCat$ given by:
\begin{itemize}
\item $\cR(\cM)$ on multicategories
\item $\cR(F) := (F, F_n)$ where $F_n$ is \tikzfig{multicat-laxfunctor}
\item $\cR(\gamma) := \gamma$
\end{itemize}
\end{prop}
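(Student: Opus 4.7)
The plan is to verify the four separate claims that go into the 2-functor data, in roughly the order: that $\cR(F)$ is a monoidal functor, that $\cR(\gamma)$ is a monoidal transformation, that identities and compositions are preserved on 1-cells, and that identities and compositions are preserved on 2-cells. Throughout the proof I will use the string diagram calculus established before the statement, in which $F_n$ is drawn as the result of applying $F$ inside the box to the universal multimap $m_{(A_i)}$ and then post-composing with the factorisation through $m_{(F(A_i))}$, i.e.\ $F_n = F(m_{(A_i)})/m_{(F(A_i))}$.

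First I would check that each $F_n$ is natural in $(A_i)$. Given maps $f_i \colon A_i \to B_i$ in $\cR(\cM)$, both $F_n(B_i) \circ \otimes_n(F(f_i))$ and $F(\otimes_n(f_i)) \circ F_n(A_i)$ factor through $m_{(F(A_i))}$ to give the same multimap, namely $F(m_{(B_i)} \circ (f_1,\dots,f_n))$; uniqueness of the factorisation then yields equality. Next I would verify the coherence laws. The associativity coherence square asserts the equality of two composites built out of the $F_n$, $\otimes$, and $\alpha$; both sides, when precomposed with the universal multimap $m_{((A_{i,j})_j)_i}$, reduce to $F(m_{(A_{i,j})_{(i,j)}})$ by repeated use of the cancellation of white and black nodes (precisely the moves used previously to define $\alpha$). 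Unique factorisation through $m_{(F(A_{i,j}))_{(i,j)}}$ then gives the equality. The unit coherence $F_1 \circ \iota_{F(A)} = F(\iota_A)$ is immediate from the definitions $\iota_A = m_A$ and $F_1 = F(m_A)/m_{F(A)}$, together with the cancellation law. This is the step I expect to carry the most bookkeeping, but it is routine once one writes down the string diagrams analogous to \texttt{multicat-tensor-coherence} and \texttt{multicat-tensor-unit-coherence}.

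For 2-cells I would take a transformation $\gamma \colon F \Rightarrow G$ in $\MultiCat_r$, which is already a natural transformation in the categorical sense on the underlying categories $\cR(\cM) \to \cR(\cN)$ since it consists of unary multimaps satisfying naturality on all multimaps, in particular on the unary ones. The monoidality square for $\gamma$ relating $F_n$, $G_n$ and $\otimes_n(\gamma_{A_i})$ is proved exactly as in the graphical computation \texttt{multicat-transformation-monoidal} shown in the preceding subsection: both composites factor through $m_{(F(A_i))}$ to give $\gamma_{\otimes A_i} \circ F(m_{(A_i)}) = G(m_{(A_i)}) \circ (\gamma_{A_1},\dots,\gamma_{A_n})$, which holds by naturality of $\gamma$ at the multimap $m_{(A_i)}$.

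Finally, for 2-functoriality I would check the four identity/composition axioms. Preservation of identity 1-cells is the identity $(\id_{\cR(\cM)})_n = \id_{\otimes_n A_i}$, which follows by applying the defining formula to $F = \id$. For composition, given $F \colon \cM \to \cN$ and $G \colon \cN \to \cP$, the map $(G \circ F)_n$ and the composite $G(F_n) \circ G_n(F(A_i))$ both factor the multimap $GF(m_{(A_i)})$ through $m_{(GF(A_i))}$, so they agree. For 2-cells, $\cR(\gamma) = \gamma$ on components, and both vertical and horizontal composition in $\MultiCat_r$ are defined componentwise as in $\UMonCat$, so identities and composites are preserved on the nose. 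The main obstacle is simply discipline in string-diagram manipulation for the associativity coherence; everything else reduces to naturality plus the uniqueness clause of the universal property.
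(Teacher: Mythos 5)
Your proposal is correct and follows essentially the same route as the paper: the coherence laws for $\cR(F)$ are verified by precomposing with universal multimaps and using uniqueness of the factorisation (the white/black-node cancellations), the monoidality of $\cR(\gamma)$ is the graphical computation already carried out in the preceding subsection, and 2-functoriality reduces to matching the $F_n$ for identities and composites. The only difference is that you spell out the naturality of $F_n$ explicitly, which the paper leaves implicit; this is a welcome but minor addition, not a different argument.
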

\begin{proof}
We have already proven above that $\cR(\cM)$ is an unbiased monoidal category and $\cR(\gamma)$ is a monoidal transformation.
All is left to prove is the coherence law for the functor $\cR(F)$.

First, let prove that
\[\begin{tikzcd}[ampersand replacement=\&]
	{\otimes_n(\otimes_{m_i}(F(A_{i,j})_{j \leq m_i})_{i \leq n}} \&\& {\otimes_{\sum_i m_i} F(A_{i,j})} \\
	\&\&\&\& {F(\otimes_{\sum_i m_i}(A_{i,j}))} \\
	{\otimes_n(F(\otimes_{m_i} (A_{i,j})))} \&\& {F(\otimes_n(\otimes_{m_i}(A_{i,j})))}
	\arrow["{\otimes_n(F_{m_i}(A_{i,j}))}"{description}, from=1-1, to=3-1]
	\arrow["{F_n(\otimes_{m_i}(A_{i,j}))}"{description}, from=3-1, to=3-3]
	\arrow["{F_n(\alpha_{(A_{i,j})})}"{description}, from=3-3, to=2-5]
	\arrow["{\alpha_{F(A_{i,j})}}"{description}, from=1-1, to=1-3]
	\arrow["{F_{\sum_i m_i}(A_{i,j})}"{description}, from=1-3, to=2-5]
\end{tikzcd}\]

It is given by the following rewriting where the top row represent the multimap going on top of the commutative diagram and the bottom one going on the bottom.
We use the usual white-black cancellation and functoriality of $F$.
\[
\scalebox{0.75}{
\tikzfig{multicat-laxfunctor-coherence}
}
\]
Now let prove that

\[\begin{tikzcd}[ampersand replacement=\&]
	{F(A)} \&\& {\otimes_1(F(A))} \\
	\&\& {F(\otimes_1(A))}
	\arrow["{\iota_{F(A)}}"{description}, from=1-1, to=1-3]
	\arrow["{F_1(A)}"{description}, from=1-3, to=2-3]
	\arrow["{F(\iota_A)}"{description}, from=1-1, to=2-3]
\end{tikzcd}\]

It is given by the following rewriting:
\tikzfig{multicat-laxfunctor-unit}

Furthermore, we need to prove that $\cR$ is 2-functorial, which follows from the fact that it is the identity on functors and transformations.
The only non-trivial thing is that the $F_n$ match those of the identity and composite, which can be checked easily.

\end{proof}

\begin{prop}
There is an equivalence of 2-categories $\UMonCat \simeq \MultiCat_r$.
\end{prop}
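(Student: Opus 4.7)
The plan is to verify that the 2-functors $\cU_r \colon \UMonCat \to \MultiCat_r$ and $\cR \colon \MultiCat_r \to \UMonCat$ assemble into a 2-equivalence. The cleanest route is the standard characterization: a 2-functor is a 2-equivalence if and only if it is 2-fully-faithful and essentially surjective on objects (up to equivalence). I would apply this to $\cU_r$, and then recover $\cR$ as a pseudoinverse.

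For 2-fully-faithfulness, I would simply invoke the previous proposition, which showed that $\cU \colon \UMonCat \to \MultiCat$ is 2-fully-faithful. Since $\MultiCat_r$ is a full sub-2-category of $\MultiCat$ (representability is a property, and functors and transformations are not required to preserve universal multimaps strictly), the hom-category functor of $\cU_r$ coincides with that of $\cU$ on hom-categories, and is therefore an isomorphism of categories.

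For essential surjectivity, let $\cM$ be a representable multicategory; I would exhibit an isomorphism of multicategories $\cM \cong \cU_r(\cR(\cM))$. Unfolding definitions, an $n$-ary multimap $A_1,\dots,A_n \to B$ in $\cU_r(\cR(\cM))$ is a unary multimap $\otimes_n(A_i) \to B$ in $\cM$. The universal property of the chosen $m_{A_i} \colon A_1,\dots,A_n \to \otimes_n A_i$ provides mutually inverse bijections $f \mapsto f/m_{A_i}$ and $g \mapsto g \circ m_{A_i}$ between $\cM(A_1,\dots,A_n;B)$ and $\cM(\otimes_n A_i; B)$, assembling (together with the identity on objects) into a strict isomorphism $\Phi_\cM \colon \cM \to \cU_r(\cR(\cM))$. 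Dually, for an unbiased monoidal $\cC$, the natural isomorphism $\iota \colon A \xrightarrow{\sim} \otimes_1 A$ already defined in $\cC$ provides a monoidal equivalence $\cC \simeq \cR(\cU_r(\cC))$; this gives the unit of the equivalence. Together with the counit $\Phi_\cM^{-1}$, one obtains the pseudonatural isomorphisms required for a 2-equivalence.

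The main obstacle is to verify that $\Phi_\cM$ is genuinely a functor of multicategories, i.e.\ that it preserves identities and composition. Identity preservation is immediate: $\id_A = \id_A / m_A \circ m_A$ shows the unary identity in $\cU_r(\cR(\cM))$ corresponds to $\id_A$ in $\cM$. For composition, one must check that the multicategorical composition in $\cM$ matches the composition in $\cU_r(\cR(\cM))$, which is formed by taking $\otimes_n$ of unary multimaps in $\cR(\cM)$ and composing with the coherence isomorphism $\alpha^{-1}$. But by construction of $\otimes_n$ in $\cR(\cM)$, namely $\otimes_n(f_i) = m_{B_i} \circ (f_1,\dots,f_n) / m_{A_i}$, and of $\alpha^{-1}$ as the tensor-reassociation multimap, this reduces to the universal property of the $m$'s via the white-node/black-node cancellations established in the graphical calculus. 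Once this compatibility is verified, the two characterizing properties combine to give $\UMonCat \simeq \MultiCat_r$.
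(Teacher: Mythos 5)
Your proposal is correct and follows essentially the same route as the paper: 2-fully-faithfulness of $\cU_r$ is inherited from $\cU$ because no extra conditions are imposed on functors or transformations of representable multicategories, and essential surjectivity is established by the mutually inverse assignments $f \mapsto f/m_{(A_i)}$ and $g \mapsto g \circ m_{(A_i)}$ giving an isomorphism $\cM \cong \cU_r(\cR(\cM))$, with functoriality checked exactly via the white/black node cancellations. The paper does not spell out the unit $\cC \simeq \cR(\cU_r(\cC))$ as you do, but this is only a presentational difference.
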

\begin{proof}
Let prove that $\cU_r$ is a 2-equivalence.
We have already proven that \cU is 2-fully faithful.
Its restriction $\cU_r$ is also since we didn't ask for extra conditions on functors and transformations of representable multicategories.

Now let prove that it is essentially surjective on objects.
Given a representable multicategory \cM, we consider the unbiased monoidal category $\cR(\cM)$.
Then $\cU_r\cR(\cM)$ is given by:
\begin{itemize}
\item objects, those of \cM
\item multimaps $f \colon A_1,\dots,A_n \to B$, unary multimaps $f \colon \otimes(A_i) \to B$ in \cM
\item identities, $\iota_A^{-1} = \id_A/m_{A}$
\item composition, of $g \colon \otimes(B_i) \to C$ and $f_i \colon \otimes A_{i,j} \to B_i$ is given by:
\[\tikzfig{multicat-ur-composition}\]
\end{itemize}

Now we have a functor of multicategories $-/m \colon \cM \to \cU_r\cR(\cM)$ that:
\begin{itemize}
\item is the identity on objects
\item sends $f \colon A_1, \dots, A_n \to B$ to $f/m_{(A_i)_i}$
\end{itemize}
It sends the identity to $\id_A/m_A$ which is the identity of $\cU_r\cR(\cM)$.
Given $g \colon B_i \to C$ and $f_i \colon A_{i,j} \to B_i$ we have that:
\[\tikzfig{multicat-ur-equiv-composition}\]
which is functoriality.

Furthermore, we have a functor $- \cdot m \colon \cU_r\cR(\cM) \to \cM$ that:
\begin{itemize}
\item is the identity on objects
\item sends $f \colon A_1 \otimes \dots \otimes A_n \to B$ to $f m_{(A_i)_i}$
\end{itemize}
One can check that it is functorial and that it is inverse to $-/m$.
So we have $\cM \simeq \cU_r\cR(\cM)$ and $\cU_r$ is essentially surjective on objects.
\end{proof}

\section{Closed Multicategories}

One could define a closed representable multicategory by asking for adjoints to the tensor product in a similar way to what is done with monoidal categories.
But instead, it is possible to introduce a notion of internal hom in a multicategory that does not assume the existence of tensors.
It is defined by a universal property.
In a monoidal category $M$, $A \multimap B$ is defined by the existence of a natural isomorphism $M(A \otimes B, C) \to M(A, B \multimap C)$ but in a multicategory \cM we could instead use bilinear maps $\cM(A,B; C) \simeq \cM(A; B \multimap C)$.
Of course, we can define an $n$-ary notion directly.
We will see that it amounts to the existence of a multimap having a universal property.

\subsection{Definition}

\begin{definition}
A multimap $g \colon \Gamma_1', A, \Gamma_2' \to B$ is said to be \defin{universal in A}, written $g \colon \Gamma_1', \focin{A}, \Gamma_2' \to B$ if for any multimap $f \colon \Gamma_1', \Gamma, \Gamma_2' \to B$ there is a unique multimap $g\backslash f \colon \Gamma \to A$ such that $f = g \circ g\backslash f$. 
\end{definition}

This unique factorisation property characterises the input $A$ uniquely up-to unique invertible multimap.
The proof is similar to the one for universality in the output.

We will write $\minthom{\Gamma_1}{A}{\Gamma_2}$ for the part of the domain where a multimap is universal: $\ev_{\Gamma_1;\Gamma_2} \colon \Gamma_1, \focin{\minthom{\Gamma_1}{A}{\Gamma_2}}, \Gamma_2 \to A$.
We will denote a factorisation of $f$ by 

$\lambda_{\Gamma_1;\Gamma_2}. f := \ev_{\Gamma_1;\Gamma_2}\backslash f$.
We will often not write the indexes.
So we have $\ev(\lambda.f) = f$.
We will also write $\Gamma_1 \multimap A$ and $A \multimapinv \Gamma_2$ when one of the contexts is empty.

\begin{definition}
A multicategory is said to be \defin{closed} if for any finite lists of objects $\Gamma_1, \Gamma_2$ and any object $A$ there is universal multimap $\ev_{\Gamma_1;\Gamma_2} \colon \Gamma_1, \focin{\minthom{\Gamma_1}{A}{\Gamma_2}}, \Gamma_2 \to A$.
\end{definition}

\begin{definition}
A \defin{birepresentable multicategory} is a multicategory that is both representable and closed.
\end{definition}

We will also talk about representable closed multicategories.
They have all universal multimaps, both in all of the inputs and in the output.

We will also use a graphical representation for internal homs (universal inputs) and evaluation multimaps (universal multimaps).
The evaluation multimap will be represented by:
\[\tikzfig{multicat-evaluation}\]
where the tipped arrow is the input in which the multimap is universal.
This can be thought as an elimination rule for the internal hom.
Then we will introduce a dual introduction rule:
\[\tikzfig{multicat-hom-introduction}\]
that will let us define $\lambda.f$ by:
\[\tikzfig{multicat-lambda}\]
The factorisation property states that:
\[\scalebox{0.75}{\tikzfig{multicat-hom-universal}}\]
Furthermore, universal multimaps compose through their universal inputs, in particular $\minthom{\Gamma_2}{\minthom{\Gamma_1}{A}{\Gamma_4}}{\Gamma_3} \simeq \minthom{\Gamma_1,\Gamma_2}{A}{\Gamma_3,\Gamma_4}$.
So we get the following equations:
\[\scalebox{0.75}{\tikzfig{multicat-hom-equations}}\]

Now let $F \colon \cM \to \cN$ be a functor of closed multicategories.
Then we have a family of maps $F(\minthom{\Gamma_1}{A}{\Gamma_2}) \to \minthom{F(\Gamma_1)}{F(A)}{F(\Gamma_2)}$ given by:
\[\tikzfig{multicat-hom-functor}\]

Furthermore these are subject to the following coherence law:

\[\scalebox{0.5}{
\begin{tikzcd}[ampersand replacement=\&]
	\& {F(\minthom{\Gamma_1,\Gamma_2}{A}{\Gamma_3,\Gamma_4})} \\
	{F(\minthom{\Gamma_2}{\minthom{\Gamma_1}{A}{\Gamma_4}}{\Gamma_3})} \&\& {\minthom{F(\Gamma_1),F(\Gamma_2)}{F(A)}{F(\Gamma_3),F(\Gamma_4)}} \\
	{\minthom{F(\Gamma_2)}{F(\minthom{\Gamma_1}{A}{\Gamma_4})}{F(\Gamma_3)}} \\
	\& {\minthom{F(\Gamma_2)}{\minthom{F(\Gamma_1)}{F(A)}{F(\Gamma_4)}}{F(\Gamma_3)}}
	\arrow[from=1-2, to=2-1]
	\arrow[from=2-1, to=3-1]
	\arrow[from=3-1, to=4-2]
	\arrow[from=1-2, to=2-3]
	\arrow[from=2-3, to=4-2]
\end{tikzcd}
}\]

With the proof given by the following rewriting:
\[
\scalebox{0.5}{\tikzfig{multicat-hom-functor-coherence}}
\]
\begin{prop}
The correspondence between representable multicategories and unbiased monoidal categories restrict to a correspondence between birepresentable multicategories and unbiased monoidal biclosed categories.
\end{prop}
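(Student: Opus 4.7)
The plan is to leverage the already-established 2-equivalence $\cU_r \colon \UMonCat \simeq \MultiCat_r \colon \cR$ and show it cuts down to the biclosed/birepresentable sub-2-categories. Since $\cU_r$ is 2-fully-faithful and essentially surjective, and since no extra conditions are imposed on functors or transformations on either side (the biclosed/closed structure is determined up to canonical isomorphism), it suffices to verify two object-level facts: that $\cU_r(\cC)$ is closed whenever $\cC$ is biclosed, and that $\cR(\cM)$ is biclosed whenever $\cM$ is birepresentable.

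First, suppose $\cC$ is unbiased monoidal biclosed, with internal homs $(a_i) \multimap - \multimapinv (a_j')$ and counits $\mathrm{ev} \colon \otimes_{m+1+n}((a_i), (a_i) \multimap b \multimapinv (a_j'), (a_j')) \to b$. Viewed in $\cU_r(\cC)$, this counit is a multimap $(a_i), H, (a_j') \to b$ with $H = (a_i) \multimap b \multimapinv (a_j')$. To check universality in the middle input, I take an arbitrary multimap $(a_i), \Gamma, (a_j') \to b$, i.e.\ a morphism $\otimes_{m+k+n}((a_i), \Gamma, (a_j')) \to b$ in $\cC$. Applying the associator $\alpha$, this corresponds to a morphism $\otimes_{m+1+n}((a_i), \otimes_k(\Gamma), (a_j')) \to b$, and the biclosed adjunction transposes it to a morphism $\otimes_k(\Gamma) \to H$, i.e.\ a multimap $\Gamma \to H$ in $\cU_r(\cC)$. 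Unpacking the triangle identities shows this bijection is precisely precomposition with $\mathrm{ev}$, so $\mathrm{ev}$ is universal in $H$ and $\cU_r(\cC)$ is closed.

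Conversely, given a birepresentable $\cM$, I define in $\cR(\cM)$ the internal hom $(a_i) \multimap b \multimapinv (a_j') := \minthom{(a_i)}{b}{(a_j')}$. The required bijection $\cR(\cM)(\otimes_{m+1+n}((a_i),c,(a_j')), b) \simeq \cR(\cM)(c, (a_i)\multimap b \multimapinv (a_j'))$ is built by composing the universal property of the tensor (turning morphisms out of $\otimes_{m+1+n}(\ldots)$ into multimaps $(a_i),c,(a_j') \to b$) with the evaluation $\ev$ and its factorisation operator $\lambda$. The two directions are mutually inverse thanks to the cancellation equations $\ev(\lambda. f) = f$ and $\lambda.(\ev \circ g) = g$, and naturality is immediate from functoriality of $\ev \circ -$.

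The main obstacle is the bookkeeping between the $n$-ary associator on the monoidal side and the implicit flattening of lists on the multicategorical side: one must verify that the evaluation multimap in $\cU_r(\cC)$ genuinely witnesses universality for \emph{all} lengths of the middle context, not just unary ones. This reduces to a single application of the associator coherence, which is already subsumed by the strictification theorem and the string-diagram calculus established above; once that is checked, 2-functoriality of the restricted equivalence is automatic since 1-cells and 2-cells are unchanged.
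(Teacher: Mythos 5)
Your proposal is correct and follows essentially the same route as the paper: the key step in both is to realise the adjunction $\cR(\cM)(\otimes(\Gamma_1,A,\Gamma_2),B) \simeq \cR(\cM)(A,\minthom{\Gamma_1}{B}{\Gamma_2})$ via $\lambda_{\Gamma_1;\Gamma_2}.(-\, m_{\Gamma_1,A,\Gamma_2})$ and $(\ev_{\Gamma_1;\Gamma_2}\, -)/m_{\Gamma_1,A,\Gamma_2}$, with the two cancellation laws giving mutual inversion. You additionally spell out the converse direction (that $\cU_r(\cC)$ is closed when $\cC$ is biclosed), which the paper leaves to the definitions; the only small omission on your side is the explicit extension of $\minthom{\Gamma_1}{-}{\Gamma_2}$ to a functor on morphisms, which the paper does record but which follows routinely from the pointwise universal property.
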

\begin{proof}
We need to extend $\minthom{\Gamma_1}{-}{\Gamma_2}$ to a functor in $\cR(\cM)$ and prove that it is a right adjoint to $\otimes(\Gamma_1,A,\Gamma_2)$.

Given a morphism $f \colon A \to B$ in $\cR(\cM)$, i.e. a unary multimap in \cM, we define $\minthom{\Gamma_1}{f}{\Gamma_2} \colon \minthom{\Gamma_1}{A}{\Gamma_2} \to \minthom{\Gamma_1}{B}{\Gamma_2}$ by:
\[\tikzfig{multicat-hom-f}\]
It can be checked that this assignment is functorial.

Now let prove that there is a natural isomorphism \[\cR(\cM)(\otimes(\Gamma_1,A,\Gamma_2),B) \simeq \cR(\cM)(A,\minthom{\Gamma_1}{B}{\Gamma_2})\]
The function from left to right acts as follows:
\[\tikzfig{multicat-tensor-hom}\]
It is given by $\lambda_{\Gamma_1;\Gamma_2}.(- m_{\Gamma_1,A,\Gamma_2})$.
In the other direction the function is given by:
\[\tikzfig{multicat-hom-tensor}\]
i.e. $(\ev_{\Gamma_1;\Gamma_2} -)/m_{\Gamma_1,A,\Gamma_2}$.
The proof that it is an isomorphism is given in one direction by:
\[\scalebox{0.75}{\tikzfig{multicat-tensor-hom-tensor}}\]
and in the other by:
\[\lambda_{\Gamma_1;\Gamma_2}.((\ev_{\Gamma_1;\Gamma_2} -/m_{\Gamma_1,A,\Gamma_2}) m_{\Gamma_1,A,\Gamma_2}) = \lambda_{\Gamma_1;\Gamma_2}.(\ev_{\Gamma_1;\Gamma_2}-) = -\]

Furthermore, it can be check that those are natural in $A$ and $B$.
\end{proof}

\begin{rk}
In the literature people will prefer to call biclosed what we called closed here.
As far as we are aware, only notions of closed categories, i.e. left or right closed, that do not assume the existence of a monoidal structure appear explicitly in the literature.
These have been developed by Eilenberg and Kelly in \cite{EilenbergKelly1966}.
In \cite{Manzyuk2009} it has been proven that it is equivalent to a closed multicategory with a unit (i.e. a nullary tensor product).
Then, one can also define a notion of closed functor that corresponds to the notion of functor of closed multicategories.
They are equipped with a map $F(\minthom{\Gamma_1}{A}{\Gamma_2}) \to \minthom{F(\Gamma_1)}{F(A)}{F(\Gamma_2)}$ with some coherence laws.

A monoidal functor between monoidal biclosed categories is automatically closed.
Giving a morphism $F(\minthom{\Gamma_1}{A}{\Gamma_2}) \to \minthom{F(\Gamma_1)}{F(A)}{F(\Gamma_2)}$ is equivalent to giving a morphism $F(\Gamma_1) \otimes F(\minthom{\Gamma_1}{A}{\Gamma_2}) \otimes F(\Gamma_2) \to F(A)$ by the definition of an adjunction.
Since $F$ is monoidal we have a morphim \[F(\Gamma_1) \otimes F(\minthom{\Gamma_1}{A}{\Gamma_2}) \otimes F(\Gamma_2) \to F(\Gamma_1  \otimes \minthom{\Gamma_1}{A}{\Gamma_2} \otimes \Gamma_2)\] and another one \[F(\Gamma_1  \otimes \minthom{\Gamma_1}{A}{\Gamma_2} \otimes \Gamma_2) \to F(A)\] by $F(\ev)$ where $\ev$ is obtained by considering the identity $\id_{\Gamma_1 \otimes A \otimes \Gamma_2}$ and using the property of $\minthom{\Gamma_1}{-}{\Gamma_2}$ being a right adjoint.
\end{rk}

\begin{rk}
The correspondence between representable closed multicategories and monoidal biclosed categories extend to a 2-equivalence.
\end{rk}

\chapter{Polycategories and $\ast$-autonomous~categories}

The equivalence between representable closed multicategories and monoidal biclosed categories and its extension to the symmetric case gives two approaches to modeling multiplicative intuitionistic linear logic and its non-commutative counterpart.
In this section, we will see how this can be extended to classical multiplicative linear logic by a 2-equivalence between birepresentable polycategories and $\ast$-autonomous categories.

One way of stating the difference between intuitionistic and classic logic is in the latter the double negation elimination holds whereas it doesn't in the former.
This leads to one definition of $\ast$-autonomous category as a monoidal biclosed category with a fixed object $\bot$ inducing functors $\ldual{(-)} := - \multimap \bot$ and $\rdual{(-)} := \bot \multimapinv -$ such that the canonical morphisms $A \to \ldual{(\rdual{A})}$ and $A \to \rdual{(\ldual{A})}$ obtained by currying the evaluation maps are invertible.
There are other equivalent definitions of $\ast$-autonomous categories that are more suitable to relate to polycategories.
We will first recall all those definitions and prove that they are equivalent.

Another approach in comparing intuitionistic and classical logic is through their sequent calculi.
Gentzen introduced a sequent calculus for classical logic with sequents of the form $\Gamma \vdash \Delta$ where $\Gamma$ and $\Delta$ are finite lists of formulae.
He then showed that intuitionistic logic can be recovered by forcing the list on the right hand of the sequent to contain exactly one formula.
Polycategories model this approach.
They are similar to multicategories but their morphisms have multiple inputs and multiple outputs.
Furthermore, the composition in polycategories can only be performed along one object, mimicking the cut rule in sequent calculus:

\AXC{$\Gamma \xrightarrow{f} \Delta_1, A, \Delta_2$}
\AXC{$\Gamma_1', A, \Gamma_2' \xrightarrow{g} \Delta'$}
\BIC{$\Gamma_1', \Gamma, \Gamma_2' \xrightarrow{g \circ f} \Delta_1, \Delta', \Delta_2$}
\DP

We will define polycategories and how the interpretation of the connectives are given by the existence of objects with universal properties while they are modeled by structures in $\ast$-autonomous categories.

Then we will prove the equivalence between $\ast$-autonomous categories and those polycategories with all connectives.

There is a long-standing tradition of using $\ast$-autonomous categories for models of MLL.
It goes back to Barr's work (\cite{Barr1979,Barr1991,Barr1995}.
A detailed account is given in Mellies's textbook \cite{Mellies2009}.
Polycategories have been introduced with applications to sequent calculus in mind by Szabo in \cite{Szabo1975}.
The connection to $\ast$-autonomous categories via the introduction of linearly distributive categories and two-tensor polycategories has been developed in a series of papers by Cockett, Seely and collaborators \cite{BluteCockettSeelyTrimble1993,CockettSeely1997,CockettSeely1999}.
This section consists of a review of this background material together with some original works, namely (lax) unbiased linearly distributive categories, weak two-tensor polycategories, birepresentable polycategories and the polycategory of Banach spaces and contractive polylinear maps.

\section{$\ast$-autonomous categories}
In this section we present several definitions of $\ast$-autonomous categories and prove them equivalent.
Our $\ast$-autonomous categories will be assumed to be non-symmetric in general which differs from the usual definitions.

\subsection{As a monoidal biclosed category with a dualising object}

Since monoidal biclosed categories model (non-commutative) multiplicative intuitionistic linear logic and $\ast$-autonomous categories ought to model (non-commutative) multiplicative classical linear logic, one might hope that understanding how to go from intuitionistic to classical logic will help in defining $\ast$-autonomous categories.
It is indeed the case.
In this section we will see how to define a $\ast$-autonomous category by asking for a double negation elimination principle.

The negations can be defined by introducing an object $\bot$ that will act as a multiplicative falsehood and define left and right negation by $\rdual{(-)}:= \bot \multimapinv -$ and $\ldual{(-)} := - \multimap \bot$.

\begin{definition}
In a monoidal biclosed category $M$, a \defin{dualising object} $\bot$ is an object such that the two morphisms 
$\delta_A := \lambda_{A \multimap \bot}^r. \ev_{A}^l \colon A \longrightarrow \bot \multimapinv (A \multimap \bot)$
and
$\delta_A' := \lambda_{\bot \multimapinv A}^l. \ev_{A}^r \colon A \longrightarrow (\bot \multimapinv A) \multimap \bot$
are invertible for any $A$.
\end{definition}

With the definition for negations given above this means that $\ldual{(\rdual{A})} \simeq A \simeq \rdual{(\ldual{A})}$

\begin{definition}
A \defin{$\ast$-autonomous category} is a monoidal biclosed category equipped with a dualising object $\bot$.
\end{definition}

\begin{prop}
In a $\ast$-autonomous category we have that $\ldual{I} \simeq \bot \simeq \rdual{I}$
\end{prop}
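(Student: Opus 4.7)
The plan is to show the stronger fact that in any monoidal biclosed category we have $I \multimap B \simeq B \simeq B \multimapinv I$ for any object $B$, and then specialise to $B = \bot$. Nothing about the dualising nature of $\bot$ is actually needed for this proposition, only the biclosed monoidal structure that underlies a $\ast$-autonomous category.

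First I would apply the adjunction $A \otimes - \dashv A \multimap -$ with $A = I$. This gives a natural isomorphism
\[
\cC(I \otimes X, B) \simeq \cC(X, I \multimap B)
\]
for all $X, B$. Composing with precomposition by the left unitor $\lambda_X^{-1} \colon X \xrightarrow{\sim} I \otimes X$, which is natural in $X$, we obtain a natural isomorphism $\cC(X, B) \simeq \cC(X, I \multimap B)$. The Yoneda lemma then yields an isomorphism $B \simeq I \multimap B$, and specialising to $B = \bot$ gives $\bot \simeq I \multimap \bot = \ldual{I}$. The symmetric argument using the adjunction $- \otimes A \dashv - \multimapinv A$ with $A = I$, together with the right unitor $\rho$, yields $\bot \simeq \bot \multimapinv I = \rdual{I}$.

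If an explicit isomorphism is preferred, the unit of the first adjunction produces $B \to I \multimap (I \otimes B)$ which can be postcomposed with $I \multimap \lambda_B$ to give the forward map $B \to I \multimap B$, while the inverse is obtained by currying the composite $I \otimes (I \multimap B) \xrightarrow{\lambda} I \multimap B \xrightarrow{\ev^l} B$, where $\ev^l$ is the counit at $B$; the triangle identities for the adjunction and naturality of the unitors make the two composites equal to the identities. The analogous construction on the right, using $\rho$ and $\ev^r$, handles $\rdual{I}$.

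There is no real obstacle here, since both halves reduce immediately to the bookkeeping fact that the unit of the tensor acts trivially through the internal hom; the only thing to be careful about is keeping left and right adjoints straight, given that the paper works in the non-symmetric setting and the two sides genuinely need to be argued separately.
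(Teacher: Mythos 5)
Your proof is correct and follows essentially the same route as the paper, which simply says to curry the unitor isomorphisms $I \otimes \bot \simeq \bot$ and $\bot \otimes I \simeq \bot$; you have just spelled out the currying-plus-Yoneda step in detail and observed the (true) stronger fact $I \multimap B \simeq B \simeq B \multimapinv I$ for all $B$. No issues.
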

\begin{proof}
We use the isomorphisms $I \otimes \bot \simeq \bot$ and $\bot \otimes I \simeq \bot$ and then we curry those.
\end{proof}

\begin{rk}
This definition works for both a biased and an unbiased notion of monoidal category.
In both case the dualising object only needs the possibility to curry in one object.
\end{rk}

In a $\ast$-autonomous category, one can define another monoidal structure corresponding to the multiplicative disjunction.
It is defined by a de Morgan duality.

We will call a $\ast$-autonomous category biased/unbiased if the monoidal category is.

\begin{prop}
In a $\ast$-autonomous category, $\ldual{(-)}$ and $\rdual{(-)}$ extend to functors \[\ldual{(-)},\rdual{(-)} \colon \cC^\op \to \cC\] making \cC self-dual.
\end{prop}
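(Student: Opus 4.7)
The plan is to unpack the duality functors from the monoidal biclosed structure, check functoriality using the universal properties of $\multimap$ and $\multimapinv$, and then upgrade the isomorphisms $\delta_A, \delta'_A$ (which are given pointwise by the dualising hypothesis) to natural isomorphisms between functors $\cC \to \cC$.

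First I would define the action on morphisms. For $f \colon A \to B$, I would set $\ldual{f} \colon \ldual{B} \to \ldual{A}$ to be the currying (using the right-closure adjunction $- \multimap \bot \dashv (- \otimes \ldual{B})$ via $\lambda^r$) of the composite
\[ A \otimes (B \multimap \bot) \xrightarrow{f \otimes \id} B \otimes (B \multimap \bot) \xrightarrow{\ev^l_{B}} \bot, \]
and dually $\rdual{f} \colon \rdual{B} \to \rdual{A}$ as the currying of $(\bot \multimapinv B) \otimes A \xrightarrow{\id \otimes f} (\bot \multimapinv B) \otimes B \xrightarrow{\ev^r_{B}} \bot$. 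Functoriality ($\ldual{(\id_A)} = \id_{\ldual A}$ and $\ldual{(g \circ f)} = \ldual f \circ \ldual g$) follows from the uniqueness clause in the universal property of $\multimap$: in each case the two candidate morphisms out of $\ldual B$ (or $\ldual C$) are characterised by equal composites with $\ev^l$, so they agree. This gives functors $\ldual{(-)}, \rdual{(-)} \colon \cC^\op \to \cC$.

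Next, to get the self-duality, I would promote the hypothesised isomorphisms $\delta_A \colon A \to \rdual{(\ldual A)}$ and $\delta'_A \colon A \to \ldual{(\rdual A)}$ to natural transformations $\id_\cC \Rightarrow \rdual{(\ldual{(-)})}$ and $\id_\cC \Rightarrow \ldual{(\rdual{(-)})}$. The naturality square for $f \colon A \to B$ reduces, after applying $\ev^l \circ \ev^r$ (which is legal because these evaluation maps are jointly monomorphic by the universal property), to commutativity of an outer diagram composed of instances of $\ev^l_A, \ev^l_B$, the swap from right- to left-closure, and $f \otimes \id$. This is a standard currying/uncurrying calculation whose final equality is just $\ev^l_B \circ (f \otimes \id) = \ev^l_B \circ (f \otimes \id)$; I would spell it out using that $\ldual f$ and $\rdual{(\ldual f)}$ are by definition the unique curryings compatible with $\ev$.

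Finally, since each $\delta_A$ and $\delta'_A$ is an isomorphism (by definition of a dualising object) and natural, the pair $(\ldual{(-)}, \rdual{(-)})$ forms a mutually-quasi-inverse pair of functors $\cC^\op \rightleftarrows \cC$, which is the statement that $\cC$ is self-dual. The main obstacle is the naturality check; the functoriality part is bookkeeping with the $\multimap$-adjunction, but the naturality of $\delta$ needs the explicit description of $\ldual f$ and $\rdual f$ above and a careful tracking of which evaluation (left vs right) is being used at each step, since we are in the non-symmetric setting where $\ldual{(-)}$ and $\rdual{(-)}$ must be kept distinct.
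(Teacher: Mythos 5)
Your proposal is correct and follows essentially the same route as the paper: define $\ldual{f}$ (resp.\ $\rdual{f}$) by currying an evaluation composite, obtain functoriality from the uniqueness clause of the closure adjunction, and use the naturality and invertibility of $\delta$, $\delta'$ to conclude that $\ldual{(-)}$ and $\rdual{(-)}$ exhibit $\cC \simeq \cC^\op$. The only cosmetic differences are that you curry $\ev^l_B \circ (f \otimes \id)$ where the paper curries $\ev^l_A \circ (A \otimes (f \multimap \bot))$ --- these are equal mates --- and that the adjunction you should cite for that currying step is $A \otimes - \dashv A \multimap -$, not the one you wrote.
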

\begin{proof}
Given $f \colon A \to B$, $\ldual{f} \colon \ldual{B} \to \ldual{A}$ is defined by currying \[A \otimes (B \multimap \bot) \xrightarrow[]{A \otimes (f \multimap \bot)} A \otimes (A \multimap \bot) \xrightarrow{ev_A^l} \bot\]
It is functorial and it defines a duality thanks to the definition of a dualising object.
\end{proof}

\begin{definition}
For an unbiased $\ast$-autonomous category $(M, \otimes_n, \alpha, \iota, \bot)$, we define:
\begin{itemize}
\item $\parr_n(-) := \rdual{(\otimes_n\ldual{(-)})}$
\item $\alpha_{n,(m_k)_{1 \leq k \leq n}}'(a_{i,j})  \colon  \parr_n(\parr_{m_k}(a_{i,j})) \to \parr_{\sum_k m_k}(a_{i,j})$ by
\begin{align*}
\parr_n(\parr_{m_k}(a_{i,j})) = \rdual{(\otimes_n(\ldual{(\rdual{(\otimes_{m_k}(\ldual{a_{i,j}}))}))})} &\xrightarrow{\rdual{(\otimes_n((\delta'_{\otimes_{m_k}(\ldual{a_{i,j}})})^{-1}))}} \rdual{(\otimes_n\otimes_{m_k}(\ldual{a_{i,j}}))}\\
&\xrightarrow{\rdual{(\alpha_{n,(m_k)_{1\leq k \leq n}}(\ldual{a_{i,j}})^{-1})}} \rdual{\otimes_{\sum_k m_k}(\ldual{a_{i,j}})} = \parr_{\sum_k m_k}(a_{i,j})
\end{align*}
\item $\iota_A' \colon A \xrightarrow{\delta_A'} \rdual{(\ldual{A})} \xrightarrow{\rdual{\iota_{\ldual{A}}}} \rdual{(\otimes_1 \ldual{A})} = \parr_1 A$
\end{itemize}
\end{definition} 

\begin{prop}
For an unbiased $\ast$-autonomous category $(M, \otimes_n, \alpha, \iota, \bot)$, $(M, \parr_n, \alpha', \iota')$ defines another unbiased monoidal structure on $M$.
\end{prop}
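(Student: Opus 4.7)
The plan is to transport the unbiased monoidal structure on $\otimes$ through the de Morgan duality $\rdual{(-)} : \cC^{\op} \to \cC$, using that $\rdual{(-)}$ and $\ldual{(-)}$ form a (self-)equivalence of $\cC$ with $\cC^{\op}$ in the $\ast$-autonomous setting. Concretely, $\parr_n$ is a composite of three functors $\ldual{(-)}^n$, $\otimes_n$ and $\rdual{(-)}$, so functoriality is immediate. The components of $\alpha'$ and $\iota'$ are composites of $\rdual{(-)}$ applied to the components of $\alpha$ and $\iota$ (which are isomorphisms by hypothesis) with components of $\delta'$ (which are isomorphisms since $\bot$ is dualising); hence $\alpha'$ and $\iota'$ are natural isomorphisms. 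Naturality of $\alpha'$ and $\iota'$ follows from naturality of their constituents.

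The substantive content is verifying the two coherence laws for the unbiased monoidal structure. The strategy is: apply $\rdual{(-)}$ to the corresponding coherence diagrams for $(\otimes_n, \alpha, \iota)$ on objects of the form $\ldual{a_{i,j,k}}$, and then insert/remove the double-dual isomorphisms $\delta'$ in the appropriate places. For the pentagon-style coherence (the diagram relating the two ways of associating $\parr_p\parr_{n_i}\parr_{m_{i,j}}$ down to $\parr_{\sum\sum m_{i,j}}$), the plan is to write each of the two paths as $\rdual{(-)}$ of a path in the dual diagram for $\otimes$, modulo insertions of $\delta'$ and $(\delta')^{-1}$. Using naturality of $\delta'$, these insertions can be shuffled to the endpoints, at which point the equality reduces to the $\otimes$-coherence applied to the objects $\ldual{a_{i,j,k}}$. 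The unit coherence is handled identically but more briefly, reducing to the $\otimes$ unit coherence together with one naturality square for $\delta'$ and the identity $\rdual{\iota_{\ldual{A}}} \circ \delta'_A = \iota'_A$ built into the definition of $\iota'$.

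The main obstacle, in my view, is the bookkeeping of dualities in the non-symmetric setting: one must be careful that the $\delta'$ (rather than $\delta$) is the right natural isomorphism to use at each step, that left duals $\ldual{(-)}$ and right duals $\rdual{(-)}$ are composed in the correct order, and that the inverses of $\alpha$ and $\delta'$ (as opposed to the isomorphisms themselves) appear with the correct orientation inside $\rdual{(-)}$ — recall that $\rdual{(-)}$ is contravariant, so arrows flip. A clean way to organise this is to first prove the following auxiliary lemma: the assignment $A \mapsto \ldual{A}$ extends to a strong monoidal functor $(\cC, \otimes_n)^{\op} \to (\cC, \parr_n)$, with coherence data built from $\delta'$ and the inverses of $\alpha$, so that both coherence squares for $\parr$ become the image under this monoidal functor of the corresponding coherence squares for $\otimes$.

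Once the auxiliary lemma is in place, both coherence diagrams for $\parr$ follow mechanically, and the proof reduces to carefully identifying the coherence data. Given the length of the diagrams involved, I would only sketch the pentagon explicitly and leave the unitor verification to the reader, remarking that it is entirely analogous.
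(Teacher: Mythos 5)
Your proposal is correct in substance and its computational core coincides with the paper's proof: the paper verifies each coherence diagram for $\parr$ by a direct chase in which the outer boundary is the $\parr$-law, the inner cells are naturality squares for $(\delta')^{-1}$ and $\alpha$, and the remaining cell is the corresponding coherence law for $\otimes$ applied to the objects $\ldual{a_{i,j,k}}$ --- exactly the ``shuffle the $\delta'$ insertions to the endpoints and reduce to $\otimes$-coherence'' step you describe. Where you differ is in packaging: you propose to factor the argument through a transport-of-structure lemma along the duality $\rdual{(-)}\colon \cC^{\op}\to\cC$, which the paper does not use in the proof itself but explicitly acknowledges in the remark immediately following (crediting Richard Garner) as the conceptual explanation. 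The one thing to repair is the statement of your auxiliary lemma: you cannot assert that $\ldual{(-)}$ is a \emph{strong monoidal functor} $(\cC,\otimes_n)^{\op}\to(\cC,\parr_n)$ before knowing that $(\cC,\parr_n,\alpha',\iota')$ satisfies the monoidal coherence axioms, since that is precisely what is being proved; as literally stated the lemma is circular. The fix is either to phrase it as transport of structure along an equivalence (an unbiased monoidal structure on $\cC$ plus an equivalence $\cC\simeq\cD$ induces one on $\cD$, proved once and for all without reference to $\parr$), or to drop the lemma and run the direct verification you sketch in the preceding paragraph, which is what the paper does. Your attention to the contravariance of $\rdual{(-)}$ and to the placement of $(\delta')^{-1}$ versus $\delta'$ is exactly the right bookkeeping concern; note also that the paper must separately verify naturality of $\alpha'$ and $\iota'$ (from naturality of $(\delta')^{-1}$, $\alpha$, $\delta'$ and $\iota$), which your proposal mentions only in passing but which is needed before the coherence diagrams even make sense.
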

\begin{proof}
Let us first prove that $\alpha'$ and $\iota'$ are natural transformations.

\resizebox{\hsize}{!}{
\begin{tikzcd}[ampersand replacement=\&]
	{\parr_n\parr_{m_k}(a_{i,j})} \&\&\&\&\&\&\&\&\&\& {\parr_{\sum_k m_k}(a_{i,j})} \\
	\& {\rdual{(\otimes_n\ldual{(\rdual{(\otimes_{m_k}(\ldual{a_{i,j}}))})})}} \&\&\&\& {\rdual{(\otimes_n\otimes_{m_k}(\ldual{a_{i,j}}))}} \&\&\&\& {\rdual{(\otimes_{\sum_k m_k}(\ldual{a_{i,j}}))}} \\
	\\
	\\
	\\
	\& {\rdual{(\otimes_n\ldual{(\rdual{(\otimes_{m_k}(\ldual{b_{i,j}}))})})}} \&\&\&\& {\rdual{(\otimes_n\otimes_{m_k}(\ldual{b_{i,j}}))}} \&\&\&\& {\rdual{(\otimes_{\sum_k m_k}(\ldual{b_{i,j}}))}} \\
	{\parr_n\parr_{m_k}(b_{i,j})} \&\&\&\&\&\&\&\&\&\& {\parr_{\sum_k m_k}(b_{i,j})}
	\arrow["{\rdual{(\otimes_n\ldual{(\rdual{(\otimes_{m_k}(\ldual{f_{i,j}}))})})}}"{description}, from=2-2, to=6-2]
	\arrow["{\rdual{(\otimes_{\sum_k m_k}(\ldual{f_{i,j}}))}}"{description}, from=2-10, to=6-10]
	\arrow["{\rdual{(\otimes_n (\delta'_{\otimes_{m_k}(\ldual{a_{i,j}})})^{-1})}}"{description}, from=2-2, to=2-6]
	\arrow["{\rdual{(\alpha_{n,m_k}(\ldual{a_{i,j}}))}}"{description}, from=2-6, to=2-10]
	\arrow["{\rdual{(\otimes_n(\delta'_{\otimes_{m_k}(\ldual{b_{i,j}})})^{-1})}}"{description}, from=6-2, to=6-6]
	\arrow["{\rdual{(\alpha_{n,m_k}(\ldual{b_{i,j}}))}}"{description}, from=6-6, to=6-10]
	\arrow["{\rdual{(\otimes_n\otimes_{m_k}(\ldual{f_{i,j}}))}}"{description}, from=2-6, to=6-6]
	\arrow[Rightarrow, no head, from=1-1, to=2-2]
	\arrow[Rightarrow, no head, from=1-11, to=2-10]
	\arrow["{\alpha_{n,m_k}'(a_{i,j})}"{description}, from=1-1, to=1-11]
	\arrow[Rightarrow, no head, from=7-1, to=6-2]
	\arrow["{\parr_n\parr_{m_k}(f_{i,j})}"{description}, from=1-1, to=7-1]
	\arrow[Rightarrow, no head, from=7-11, to=6-10]
	\arrow["{\alpha_{n,m_k}'(b_{i,j})}"{description}, from=7-1, to=7-11]
	\arrow["{\parr_{\sum_k m_k}(f_{i,j})}"{description}, from=1-11, to=7-11]
\end{tikzcd}}

The outer diagram states naturality of $\alpha'$.
The left square commutes by naturality of $\delta'^{-1}$ while the right one commutes by naturality of $\alpha$.

\[\begin{tikzcd}[ampersand replacement=\&]
	A \&\&\&\&\&\& {\parr_1 A} \\
	\& A \&\& {\rdual{(\ldual{A})}} \&\& {\rdual{(\otimes_1\ldual{A})}} \\
	\\
	\& B \&\& {\rdual{(\ldual{B})}} \&\& {\rdual{(\otimes_1\ldual{B})}} \\
	B \&\&\&\&\&\& {\parr_1 B}
	\arrow["{\delta_A'}"{description}, from=2-2, to=2-4]
	\arrow["{\rdual{(\iota_{\ldual{A}})}}"{description}, from=2-4, to=2-6]
	\arrow["f"{description}, from=2-2, to=4-2]
	\arrow["{\delta_B'}"{description}, from=4-2, to=4-4]
	\arrow["{\rdual{(\iota_{\ldual{B}})}}"{description}, from=4-4, to=4-6]
	\arrow["{\rdual{(\otimes_1 \ldual{f})}}"{description}, from=2-6, to=4-6]
	\arrow["{\rdual{(\ldual{f})}}"{description}, from=2-4, to=4-4]
	\arrow[Rightarrow, no head, from=1-1, to=2-2]
	\arrow["{\iota_A'}"{description}, from=1-1, to=1-7]
	\arrow["{\parr_1 f}"{description}, from=1-7, to=5-7]
	\arrow["{\iota_B'}"{description}, from=5-1, to=5-7]
	\arrow["f"{description}, from=1-1, to=5-1]
	\arrow[Rightarrow, no head, from=5-1, to=4-2]
	\arrow[Rightarrow, no head, from=5-7, to=4-6]
	\arrow[Rightarrow, no head, from=1-7, to=2-6]
\end{tikzcd}\]

The outer rectangle corresponds to naturality of $\iota'$ and we get the inner ones by naturality of $\delta'$ and $\iota$ respectively.

Now let us prove that $\alpha'$ and $\iota'$ verify the coherence laws of an unbiased monoidal category.

\resizebox{\hsize}{!}{
\begin{tikzcd}[ampersand replacement=\&]
	\&\&\&\&\&\& {\parr_p \parr_{n_i} \parr_{m_{i,j}} a_{i,j,k}} \\
	\\
	\\
	\\
	\&\&\&\&\&\& {\rdual{(\otimes_p\ldual{(\rdual{(\otimes{n_i} \ldual{(\rdual{(\otimes_{m_{i,j}} \ldual{a_{i,j,k}})})})})})}} \\
	\\
	\\
	\&\&\& {\rdual{(\otimes_p \otimes_{n_i} \ldual{(\rdual{(\otimes_{m_{i,j}} \ldual{a_{i,j,k}})})})}} \&\&\&\&\&\& {\rdual{(\otimes_p \ldual{(\rdual{(\otimes_{n_i} \otimes_{m_{i,j}} \ldual{a_{i,j,k}})})})}} \\
	\\
	\\
	{\parr_{\sum_i n_i} \parr_{m_{i,j}} a_{i,j,k}} \&\&\& {\rdual{(\otimes_{\sum_i n_i} \ldual{(\rdual{(\otimes_{m_{i,j}} \ldual{a_{i,j,k}})})})}} \&\&\& {\rdual{(\otimes_p \otimes_{n_i}\otimes_{m_{i,j}}\ldual{a_{i,j,k}})}} \&\&\& {\rdual{(\otimes_p \ldual{(\rdual{(\otimes_{\sum_j m_{i,j}} \ldual{a_{i,j,k}})})})}} \&\&\& {\parr_p \parr_{\sum_j m_{i,j}} a_{i,j,k}} \\
	\\
	\\
	\&\&\& {\rdual{(\otimes_{\sum_i n_i} \otimes_{m_{i,j}} \ldual{a_{i,j,j}})}} \&\&\&\&\&\& {\rdual{(\otimes_p \otimes_{\sum_j m_{i,j}}\ldual{a_{i,j,k}})}} \\
	\\
	\\
	\&\&\&\&\&\& {\rdual{(\otimes_{\sum_{i,j}m_{i,j}} \ldual{a_{i,j}})}} \\
	\\
	\\
	\\
	\&\&\&\&\&\& {\parr_{\sum_{i,j}m_{i,j}} a_{i,j}}
	\arrow["{\rdual{(\otimes_p (\delta'_{\otimes_{n_i}\ldual{(\rdual{(\otimes_{m_{i,j}} \ldual{a_{i,j,k}})})}})^{-1})}}"{description}, from=5-7, to=8-4]
	\arrow["{\rdual{(\alpha_{p,n_i} (\ldual{(\rdual{(\otimes_{m_{i,j}} \ldual{a_{i,j,k}})})}))}}"{description}, from=8-4, to=11-4]
	\arrow["{\rdual{(\otimes_{\sum_i n_i} (\delta'_{\otimes_{m_{i,j}} \ldual{a_{i,j,k}}})^{-1})}}"{description}, from=11-4, to=14-4]
	\arrow["{\rdual{(\alpha_{\sum_i n_i, m_{i,j}}(\ldual{a_{i,j,k}}))}}"{description}, from=14-4, to=17-7]
	\arrow["{\rdual{(\otimes_p \ldual{(\rdual{(\otimes_{n_i} (\delta_{\otimes_{m_{i,j}} \ldual{a_{i,j,k}}}')^{-1})})})}}"{description}, from=5-7, to=8-10]
	\arrow["{\rdual{(\otimes_p \ldual{(\rdual{(\alpha_{n_i,m_{i,j}}(a_{i,j,k}))})})}}"{description}, from=8-10, to=11-10]
	\arrow["{\rdual{(\otimes_p (\delta_{\otimes_{\sum_j m_{i,j}}\ldual{a_{i,j,k}}}')^{-1})}}"{description}, from=11-10, to=14-10]
	\arrow["{\rdual{(\alpha_{p,\sum_j m_{i,j}}(a_{i,j,k}))}}"{description}, from=14-10, to=17-7]
	\arrow["{\rdual{(\otimes_p (\delta'_{\otimes_{n_i}\otimes_{m_{i,j}}\ldual{a_{i,j,k}}})^{-1})}}"{description}, from=8-10, to=11-7]
	\arrow["{\rdual{(\otimes_p \alpha_{n_i,m_{i,j}}(\ldual{a_{i,j,k}}))}}"{description}, from=11-7, to=14-10]
	\arrow["{\rdual{(\otimes_p \otimes_{n_i} (\delta'_{{\otimes_{m_{i,j}}\ldual{a_{i,j,k}}}})^{-1})}}"{description}, from=8-4, to=11-7]
	\arrow["{\rdual{(\alpha_{p,n_i}(\otimes_{m_{i,j}}\ldual{a_{i,j,k}}))}}"{description}, from=11-7, to=14-4]
	\arrow["{\alpha_{\sum_i n_i, m_{i,j}}'(a_{i,j,k})}"{description}, from=11-1, to=21-7]
	\arrow["{\parr_p \alpha_{n_i,m_{i,j}}'(a_{i,j,k})}"{description}, from=1-7, to=11-13]
	\arrow["{\alpha_{p,n_i}'(\parr_{m_{i,j}} a_{i,j,k}))}"{description}, from=1-7, to=11-1]
	\arrow[Rightarrow, no head, from=1-7, to=5-7]
	\arrow[Rightarrow, no head, from=11-1, to=11-4]
	\arrow["{\alpha'_{p,\sum_j m_{i,j}}(a_{i,j,k})}"{description}, from=11-13, to=21-7]
	\arrow[Rightarrow, no head, from=11-13, to=11-10]
	\arrow[Rightarrow, no head, from=17-7, to=21-7]
\end{tikzcd}
}

The outer diamond represent the coherence law for $\parr$.
The outer parts commute by definition.
The top diamond and the two triangles commute by naturality of $(\delta')^{-1}$ and $\alpha$ while the bottom diamond commutes by the coherence law of $\otimes$.

Another coherence law is the following:

\resizebox{\hsize}{!}{
\begin{tikzcd}[ampersand replacement=\&]
	{\parr_n a_i} \&\&\&\&\&\&\&\&\&\&\& {\parr_n \parr_1 a_i} \\
	\&\&\&\& {\rdual{(\otimes_n \ldual{a_i})}} \&\&\& {\rdual{(\otimes_n \ldual{(\rdual{(\ldual{a_i})})})}} \&\&\& {\rdual{(\otimes_n \ldual{(\rdual{(\otimes_1 \ldual{a_i})})})}} \\
	\\
	\&\&\&\&\&\&\& {\rdual{(\otimes_n \ldual{a_i})}} \&\&\& {\rdual{(\otimes_n \otimes_1 \ldual{a_i})}} \\
	\\
	\&\&\&\&\&\&\&\&\&\& {\rdual{(\otimes_n \ldual{a_i})}} \\
	\\
	\&\&\&\&\&\&\&\&\&\&\& {\parr_n a_i}
	\arrow["{\rdual{(\otimes_n \ldual{\delta'_{a_i}})}}"{description}, from=2-5, to=2-8]
	\arrow["{\rdual{(\otimes_n \ldual{(\rdual{(\iota_{\ldual{a_i}})})})}}"{description}, from=2-8, to=2-11]
	\arrow["{\rdual{(\otimes_n (\delta'_{\otimes_1 \ldual{a_i}})^{-1})}}"{description}, from=2-11, to=4-11]
	\arrow["{\rdual{(\alpha_{n,1}(\ldual{a_i}))}}"{description}, from=4-11, to=6-11]
	\arrow["{\rdual{(\otimes_n (\delta'_{\ldual{a_i}})^{-1})}}"{description}, from=2-8, to=4-8]
	\arrow["{\rdual{\otimes_n \iota_{\ldual{a_i}}}}"{description}, from=4-8, to=4-11]
	\arrow[Rightarrow, no head, from=2-5, to=4-8]
	\arrow[Rightarrow, no head, from=4-8, to=6-11]
	\arrow["{\parr_n \iota'_{a_i}}"{description}, from=1-1, to=1-12]
	\arrow["{\alpha'_{n,1}(a_i)}"{description}, from=1-12, to=8-12]
	\arrow[Rightarrow, no head, from=1-1, to=8-12]
	\arrow[Rightarrow, no head, from=1-1, to=2-5]
	\arrow[Rightarrow, no head, from=1-12, to=2-11]
	\arrow[Rightarrow, no head, from=6-11, to=8-12]
\end{tikzcd}} 

The left triangle is just invertibility and the rectangle is naturality of $(\delta')^{-1}$ while the bottom triangle is the equivalent coherence law for $\otimes$.

Similarly the last coherence law is obtained by the similar one for $\otimes$ and naturality.

\end{proof}

\begin{prop}
The duality induced by the negation extends to a monoidal one \[
\begin{tikzcd}[ampersand replacement=\&]
	{(\cC, \otimes, I)} \&\& {(\cC^{\op}, \parr, \bot)}
	\arrow[""{name=0, anchor=center, inner sep=0}, "{\rdual{(-)}}"{description}, curve={height=-18pt}, from=1-1, to=1-3]
	\arrow[""{name=1, anchor=center, inner sep=0}, "{\ldual{(-)}}"{description}, curve={height=-18pt}, from=1-3, to=1-1]
	\arrow["\simeq"{description}, Rightarrow, draw=none, from=0, to=1]
\end{tikzcd}\]
\end{prop}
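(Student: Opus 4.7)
The plan is to show that the contravariant functors $\rdual{(-)} \colon \cC \to \cC^{\op}$ and $\ldual{(-)} \colon \cC^{\op} \to \cC$ each carry a strong monoidal structure (with respect to $(\otimes, I)$ on $\cC$ and $(\parr, \bot)$ on $\cC^{\op}$) and that the natural isomorphisms $\delta$ and $\delta'$ witnessing the self-duality of $\cC$ are monoidal natural transformations.

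First, I would equip $\rdual{(-)} \colon (\cC, \otimes_n, I) \to (\cC^{\op}, \parr_n, \bot)$ with its laxators. The $n$-ary laxator should be a morphism $\parr_n(\rdual{A_i}) \to \rdual{(\otimes_n A_i)}$ in $\cC^{\op}$, equivalently a morphism $\rdual{(\otimes_n A_i)} \to \parr_n(\rdual{A_i})$ in $\cC$. Since $\parr_n(\rdual{A_i})$ is defined as $\rdual{(\otimes_n \ldual{(\rdual{A_i})})}$, the canonical candidate is $\rdual{(\otimes_n ((\delta'_{A_i})^{-1}))}$, which is an isomorphism because each $\delta'_{A_i}$ is. For the unit laxator, the isomorphism $\rdual{I} \simeq \bot$ from the previous proposition provides the required arrow. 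An entirely parallel construction, using $\delta$ instead of $\delta'$, equips $\ldual{(-)}$ with a strong monoidal structure from $(\cC^{\op}, \parr_n, \bot)$ to $(\cC, \otimes_n, I)$.

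Second, I would check the coherence axioms for these monoidal functor structures. Since $\alpha'$ and $\iota'$ were defined in the previous proposition by conjugating $\alpha$ and $\iota$ by $\rdual{(-)}$, $\ldual{(-)}$, $\delta$ and $\delta'$, the coherence squares for $\rdual{(-)}$ and $\ldual{(-)}$ reduce, after unfolding, to the coherence laws of $(\cC, \otimes_n)$ together with repeated applications of the naturality of $\delta$, $\delta'$ and the invertibility equations $\ldual{\delta'_A} \circ \delta_{\ldual{A}} = \id$ and its mirror. No genuinely new content is introduced.

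Third, to promote the self-duality to a monoidal one, I would verify that $\delta \colon \id_\cC \Rightarrow \ldual{(\rdual{(-)})}$ and $\delta' \colon \id_\cC \Rightarrow \rdual{(\ldual{(-)})}$ are monoidal with respect to $(\otimes_n, I)$. The relevant squares compare $\delta_{\otimes_n A_i}$ (respectively $\delta'_{\otimes_n A_i}$) with $\otimes_n(\delta_{A_i})$ composed with the laxators of $\rdual{(-)}$ and $\ldual{(-)}$; by the very definition of those laxators as $\rdual{(\otimes_n(\delta'_{A_i})^{-1})}$ and $\ldual{(\otimes_n(\delta_{A_i})^{-1})}$, the squares collapse to identities after cancelling $\delta'_{A_i}$ against its inverse. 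The main obstacle is purely bookkeeping: keeping track of a sizeable diagram chase when unfolding the nested $\rdual{(-)}$ and $\ldual{(-)}$, but at every step the equations hold essentially by construction, and the standard coherence theorem for unbiased monoidal categories can be used to suppress the canonical associators and unitors.
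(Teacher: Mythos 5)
Your proposal is correct and follows essentially the same route as the paper: the strong monoidal structure on $\rdual{(-)}$ is obtained by unfolding $\parr_n(\rdual{A_i}) = \rdual{(\otimes_n \ldual{(\rdual{A_i})})}$ and cancelling the double dual via $\delta'$, with coherence reduced to naturality of the duality and the coherence of $(\cC,\otimes_n)$. You go somewhat further than the paper (which only exhibits the laxators for $\rdual{(-)}$ and appeals to naturality) by also checking that $\delta$ and $\delta'$ are monoidal transformations, which is a welcome extra degree of completeness rather than a divergence in method.
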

\begin{proof}
Let us prove that $\rdual{(-)}$ is strong monoidal.
We have \[\rdual{(-)}_n \colon \parr_n \rdual{(-)} = \rdual{(\otimes_n \ldual{(\rdual{(-)})})} \simeq \rdual{(\otimes_n -)}\]
Then the coherence diagram for monoidal functors follows from naturality of the duality.
\end{proof}

\begin{rk}
The previous propositions can be derived from a more general fact: if \cC is an unbiased monoidal category and there is an equivalence of categories $\cC \simeq \cD$ then \cD has a transported structure of unbiased monoidal category.
This is what happens here with the equivalence $\rdual{(-)} \colon \cC \to \cC^\op$ and then noting that the unbiased monoidal structures on \cC and $\cC^\op$ correspond \footnote{Thanks to Richard Garner for suggesting this explanation}.
\end{rk}

\begin{prop}
The following are equivalent:
\begin{itemize}
\item $(\cC, \otimes, I, \bot)$ is a (biased) $\ast$-autonomous category
\item there is a monoidal duality  
\begin{tikzcd}[ampersand replacement=\&]
	{(\cC, \otimes, I)} \&\& {(\cC^{\op}, \parr, \bot)}
	\arrow[""{name=0, anchor=center, inner sep=0}, "{\rdual{(-)}}"{description}, curve={height=-18pt}, from=1-1, to=1-3]
	\arrow[""{name=1, anchor=center, inner sep=0}, "{\ldual{(-)}}"{description}, curve={height=-18pt}, from=1-3, to=1-1]
	\arrow["\simeq"{description}, Rightarrow, draw=none, from=0, to=1]
\end{tikzcd}

s.t. $\cC(A \otimes B, C) \simeq \cC(B, \ldual{A} \parr C)$ and $\cC(C, A \parr B) \simeq \cC(C \otimes \rdual{B},A)$ naturally
\end{itemize}
\end{prop}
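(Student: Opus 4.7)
The proof proceeds by showing the two directions separately, relying heavily on the duality $\ldual{(\rdual{A})}\simeq A \simeq \rdual{(\ldual{A})}$ afforded by the dualising object, together with the unbiased monoidal structure on $\cC^{\op}$ built from $\parr(B_1,\ldots,B_n) := \rdual{(\otimes_n(\ldual{B_1},\ldots,\ldual{B_n}))}$.

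For the forward direction, assume $(\cC,\otimes,I,\bot)$ is $\ast$-autonomous. The preceding propositions already supply the monoidal $\parr$ on $\cC^{\op}$ with unit $\bot$ (using $\ldual{I} \simeq \bot \simeq \rdual{I}$) and show that $\rdual{(-)}$ is a strong monoidal functor $(\cC,\otimes,I) \to (\cC^{\op},\parr,\bot)$, with quasi-inverse $\ldual{(-)}$ given by the dualising property. So the monoidal duality is essentially free. To verify the first natural isomorphism I would chain: $\cC(A\otimes B, C) \simeq \cC(B, A \multimap C)$ by left-closure, and then identify $A \multimap C$ with $\ldual{A}\parr C = \rdual{(A\otimes \ldual{C})}$. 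The latter identification is a one-line computation: both represent the functor $X \mapsto \cC(A\otimes X, C)$, using $\cC(A\otimes X,C)\simeq \cC(A\otimes X\otimes\ldual{C},\bot)\simeq \cC(X,\rdual{(A\otimes\ldual{C})})$. The second isomorphism follows symmetrically from right-closure and $C\parr \rdual{B} \simeq \ldual{(\rdual{C} \otimes B)}\simeq C\multimapinv B$.

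For the backward direction, assume the monoidal duality with the two prescribed natural isomorphisms. The hom-set isomorphism $\cC(A\otimes B,C) \simeq \cC(B,\ldual{A}\parr C)$ exhibits $-\mathbin{\ldual{A}\parr} C$ as a right adjoint to $A\otimes -$, so we set $A\multimap C := \ldual{A}\parr C$ and obtain left-closure; dually, the second isomorphism gives $C\multimapinv B := C\parr \rdual{B}$ as the right adjoint to $-\otimes B$. Hence $\cC$ is biclosed. It remains to show $\bot$ is dualising: compute $\ldual{A} = A\multimap\bot = \ldual{A}\parr \bot \simeq \ldual{A}$ using that $\bot$ is the $\parr$-unit, and similarly $\rdual{A} = \bot \multimapinv A \simeq \rdual{A}$ (so the chosen negations match the ones induced by the closed structure). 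The required invertibility of $\delta_A \colon A \to \rdual{(\ldual{A})}$ and $\delta_A' \colon A \to \ldual{(\rdual{A})}$ then follows immediately from the fact that $\rdual{(-)}$ and $\ldual{(-)}$ form an equivalence of categories, which is part of the data of the monoidal duality.

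The main obstacle is bookkeeping in the non-symmetric setting: one must be careful to keep the order of tensor factors consistent when moving between the $\multimap/\multimapinv$ descriptions and the $\ldual{(-)}\parr(-)$/$(-)\parr \rdual{(-)}$ descriptions, and to use the \emph{left} dual on the left input and the \emph{right} dual on the right input throughout. Once the conventions are fixed, both directions reduce to systematic applications of the biclosed adjunctions and the dualising isomorphisms, with all coherence diagrams following from naturality of these isomorphisms and the already-established monoidality of $\rdual{(-)}$.
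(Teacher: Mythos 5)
The paper states this proposition without a proof of its own, so there is nothing to compare against directly; your two-direction strategy (derive the hom-isomorphisms from biclosure plus the dualising object, and conversely read the hom-isomorphisms as adjunctions defining the internal homs) is certainly the natural one. The problem is that several of your concrete computations silently use symmetry of $\otimes$ or involutivity of a single dual, neither of which is available: the paper explicitly works with non-symmetric $\ast$-autonomous categories, where only $\ldual{(\rdual{A})}\simeq A\simeq\rdual{(\ldual{A})}$ holds, not $\ldual{(\ldual{A})}\simeq A$. Concretely: (i) your identification $\ldual{A}\parr C=\rdual{(A\otimes\ldual{C})}$ needs $\ldual{(\ldual{A})}\simeq A$, since with the paper's definition $X\parr Y=\rdual{(\ldual{X}\otimes\ldual{Y})}$ one actually gets $\ldual{A}\parr C=\rdual{(\ldual{(\ldual{A})}\otimes\ldual{C})}$; (ii) the step $\cC(A\otimes X\otimes\ldual{C},\bot)\simeq\cC(X,\rdual{(A\otimes\ldual{C})})$ extracts $X$ from the middle of a threefold tensor, which left and right closure alone cannot do --- unwinding $\rdual{(-)}=\bot\multimapinv(-)$, it amounts to $\cC(A\otimes X\otimes\ldual{C},\bot)\simeq\cC(X\otimes A\otimes\ldual{C},\bot)$, i.e.\ a symmetry. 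The correct non-symmetric computation gives $A\multimap C\simeq\ldual{(\rdual{C}\otimes A)}$, whereas your candidate $\rdual{(A\otimes\ldual{C})}$ represents $B\mapsto\cC(B\otimes A,C)$, which is $C\multimapinv A$. A similar left/right mismatch occurs in the second isomorphism, where solving $\cC(C\otimes D,A)\simeq\cC(C,A\parr B)$ forces $B=\ldual{D}$, not $\rdual{D}$.

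This is not mere bookkeeping. Chasing the two displayed isomorphisms through the biclosed structure pins down $\parr$ by two a priori different formulas, $X\parr Y\simeq\ldual{(\rdual{Y}\otimes\rdual{X})}$ from the first and $X\parr Y\simeq\rdual{(\rdual{Y}\otimes\ldual{X})}$ from the second, and their agreement reduces to a cyclicity condition $\cC(P\otimes Q,\bot)\simeq\cC(Q\otimes P,\bot)$ that does not follow from $\ast$-autonomy alone in the non-symmetric case; reconciling this with the paper's chosen definition of $\parr$ is exactly the substance a proof must supply. Your backward direction is closer to working --- defining $\multimap$ and $\multimapinv$ from the two adjunctions is fine --- but the claim that invertibility of $\delta_A$ and $\delta_A'$ ``follows immediately'' still requires identifying these specific currying maps with the unit and counit of the given adjoint equivalence before invoking the standard fact that an adjunction between equivalences is an adjoint equivalence. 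To repair the argument, fix an explicit convention for $\parr$ (likely with an order reversal relative to $\rdual{(\otimes_n\ldual{(-)})}$) and perform every currying step only on the leftmost or rightmost tensor factor.
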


\subsection{As a linearly distributive category with duals}

The notion of linearly distributive category has been introduced by Cockett, Seely and collaborators in a series of papers under the term weakly distributive categories.
From a logical point of view, the introduction of $\ast$-autonomous categories as a monoidal biclosed category with dualising objects can be understand as generating the logic from the conjunction $\otimes$, the implications $\multimap$ and $\multimapinv$ and the false value $\bot$.
From this can be deduced the negations $\ldual{-} := - \multimap \bot$ and $\rdual{-} := \bot \multimapinv$.
The perspective of linearly distributive categories with duals, is to introduce the logic from the conjunction $\otimes$, the disjunction $\parr$ and the negations $\ldual{-}$ and $\rdual{-}$.
Linearly distributive categories can be understood as models of multiplicative linear logic without negation.

Involved in their definition are two monoidal structures, that interact through so-called distributivity laws.

The distributivity laws are what is needed to interpret the cut rule.
For example, if one wants to interpret the following cut rule:
\AXC{$A,B \vdash C,D$}
\AXC{$D,E \vdash F, G$}
\BIC{$A,B,E \vdash C,F,G$}
\DP
The interpretation of the premises are given by morphisms $f \colon \den{A} \otimes \den{B} \to \den{C} \parr \den{D}$ and $g \colon \den{D} \otimes \den{E} \to \den{F} \parr \den{G}$.
To obtain a morphism $\den{A} \otimes \den{B} \otimes \den{E} \to \den{C} \parr \den{F} \parr \den{G}$, one have to proceed as follow:
\[(\den{A} \otimes \den{B}) \otimes \den{E} \xrightarrow{f \otimes \den{E}} (\den{C} \parr \den{D}) \otimes \den{E} \xrightarrow{\delta^R} \den{C} \parr (\den{D} \otimes \den{E}) \xrightarrow{\den{C} \parr g} \den{C} \parr (\den{F} \parr \den{G})\]
This $\delta^R$ is one of the two distributivity laws needed to define a linearly distributive category (see \cite{CockettSeely1997}).

\begin{definition}
A \defin{linearly distributive category} is a category equipped with two biased monoidal structures $(\cC, \otimes, I, \alpha, \lambda, \rho)$ and $(\cC, \parr, \bot, \alpha', \lambda', \rho')$ and natural transformations
\[ \delta^L \colon A \otimes (B \parr C) \to A \otimes (B \parr C)\]
\[\delta^R \colon (A \parr B) \otimes C \to A \parr (B \otimes C)\]
satisfying six pentagons equations and four triangle equations that we omit here, making all the monoidal structures works together, see \cite{CockettSeely1997}.
\end{definition}

\subsubsection{Left-biased lax linearly distributive category}

Instead of spelling out all the diagrams, let us give a more general version.

\begin{definition}
A \defin{lax left-biased linearly distributive category} is a category $\cC$ equipped with functors $\otimes, \parr \colon \cC \times \cC \to \cC$, objects $I, \bot$ and natural transformations given by the following families of morphisms:
\begin{itemize}
\item $\alpha_{A,B,C} \colon (A \otimes B) \otimes C \to A \otimes (B \otimes C)$, $\lambda_A \colon A \to I \otimes A$ and $\rho_A \colon A \to A \otimes I$
\item $\alpha_{A,B,C}' \colon A \parr (B \parr C) \to (A \parr B) \parr C$, $\lambda_A' \colon \bot \parr A \to A$ and $\rho_A' \colon A \parr \bot \to A$
\item $\delta^L_{A,B,C} \colon A \otimes (B \parr C) \to (A \otimes B) \parr C$
\item $\delta^R_{A,B,C} \colon (A \parr B) \otimes C \to A \parr ( B \otimes C)$
\end{itemize}
subject to the following diagrams:
\begin{itemize}
\item eight pentagons, one for each choice of connective $\ominus, \oslash, \odot \in \{\otimes, \parr\}$:
\[ 
\begin{tikzcd}[ampersand replacement=\&]
	{((A \ominus B) \oslash C) \odot D} \&\& {(A \ominus B) \oslash (C \odot D)} \\
	\&\&\& {A \ominus (B \oslash (C \odot D))} \\
	{(A \ominus (B \oslash C)) \odot D} \&\& {A \ominus ((B \oslash C) \odot D)}
	\arrow[no head, from=1-1, to=1-3]
	\arrow[no head, from=1-3, to=2-4]
	\arrow[no head, from=1-1, to=3-1]
	\arrow[no head, from=3-1, to=3-3]
	\arrow[no head, from=3-3, to=2-4]
\end{tikzcd}\]
\item ten triangles, four of the left type, two of the middle one and four of the right one, corresponding to the choices of $\ominus, \star \in \{\otimes, \parr\}$ and where $U$ is always the unit of $\star$
\[\begin{tikzcd}[ampersand replacement=\&]
	{(U \star A) \ominus B} \&\& {(A \star U) \star B} \&\& {(A\ominus B) \star U} \\
	{U \star (A \ominus B)} \& {A \ominus B} \& {A \star (U \star B)} \& {A \star B} \& {A \ominus (B \star U)} \& {A \ominus B}
	\arrow[no head, from=1-1, to=2-2]
	\arrow[no head, from=1-1, to=2-1]
	\arrow[no head, from=2-1, to=2-2]
	\arrow[no head, from=1-3, to=2-3]
	\arrow[no head, from=1-3, to=2-4]
	\arrow[no head, from=2-3, to=2-4]
	\arrow[no head, from=1-5, to=2-5]
	\arrow[no head, from=1-5, to=2-6]
	\arrow[no head, from=2-5, to=2-6]
\end{tikzcd}\]
\item and finally two globes corresponding to the choice of monoidal structure $(\star, U)$:
\[\begin{tikzcd}[ampersand replacement=\&]
	{U \star U} \&\& U
	\arrow[curve={height=-12pt}, no head, from=1-1, to=1-3]
	\arrow[curve={height=12pt}, no head, from=1-1, to=1-3]
\end{tikzcd}\]
\end{itemize}
where the directions of the arrows in the diagram are uniquely determined by the choice of monoidal products.
\end{definition}

\begin{rk}
	There is also a notion of lax right-biased linearly distributive category where the associators $\alpha$ and $\alpha'$ go in the opposite direction. The distributivity maps are unchanged.
\end{rk}

\begin{rk}
Each lax monoidal structure comes equipped with a pentagon diagram, three triangle ones and one globular one.
This is similar to the original definition of a monoidal category in Mac Lane.
In the case of a monoidal category where the natural transformations are invertible, it has been proven by Kelly in \cite{Kelly1964} that it is enough to consider only the usual pentagon diagram plus two triangle diagrams.
It is not true in the lax case though.
\end{rk}

\begin{prop}
A linearly distributive category is a lax linearly distributive category where the associators and unitors for the monoidal structures are invertible.
\end{prop}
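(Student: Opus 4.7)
The plan is to match the structure data and the coherence axioms one-for-one, using invertibility to flip the directions of certain natural transformations. Starting from a linearly distributive category $(\cC, \otimes, I, \alpha, \lambda, \rho, \parr, \bot, \alpha', \lambda', \rho', \delta^L, \delta^R)$, I would first observe that the lax definition requires $\lambda_A \colon A \to I \otimes A$, $\rho_A \colon A \to A \otimes I$, and $\alpha'_{A,B,C} \colon A \parr (B \parr C) \to (A \parr B) \parr C$, which point in the opposite direction to the usual unitors of $(\otimes, I)$ and the usual associator of $(\parr, \bot)$. Since these structure maps are invertible by hypothesis, I would simply substitute their inverses where required; the remaining associator $\alpha$, unitors $\lambda', \rho'$, and distributivity maps $\delta^L, \delta^R$ already have the correct shape.

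The key bookkeeping observation for the axioms is that the 8 pentagons of the lax definition split into 6 mixed pentagons (those where the three connectives $\ominus, \oslash, \odot$ are not all equal) and 2 pure pentagons, one for each of $(\otimes, I)$ and $(\parr, \bot)$; likewise the 10 triangles split into 4 mixed (left and right type with $\ominus \neq \star$) and 6 pure ones (left and right with $\ominus = \star$, together with the two middle triangles). Under the correspondence, the mixed pentagons and triangles of the lax definition are literally the 6 pentagons and 4 triangles of the Cockett--Seely definition, so they hold by hypothesis. The 2 pure pentagons together with the 2 middle triangles are precisely Mac Lane's pentagon and triangle axioms for the two monoidal structures, and the remaining 4 pure left/right triangles, together with the two globes $U \star U \rightrightarrows U$, follow from Mac Lane's axioms in the invertible case by Kelly's coherence reduction.

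The converse direction is symmetric: from a lax linearly distributive category whose associators and unitors are invertible, I would extract two biased monoidal structures $(\cC, \otimes, I)$ and $(\cC, \parr, \bot)$ by inverting the relevant $\lambda, \rho, \alpha'$ to obtain maps in the standard direction; the pure pentagons and middle triangles of the lax definition become Mac Lane's axioms, while the mixed pentagons and triangles become precisely the Cockett--Seely coherence diagrams. The main obstacle will be a careful check that inverting $\lambda, \rho, \alpha'$ rewrites each mixed coherence diagram of the lax definition onto the corresponding Cockett--Seely diagram on the nose; this is a routine but fiddly exercise in orientation tracking, using only naturality and invertibility, and is particularly finicky at the squares involving $\delta^L, \delta^R$ where compositions of associators and distributivity maps appear with several reversible factors.
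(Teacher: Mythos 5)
Your proposal is correct and takes essentially the same route as the paper, whose proof is a one-line appeal to the fact that the claim is definitional under Mac Lane's original axiomatization of monoidal categories and otherwise reduces to Kelly's coherence result identifying the original and reduced definitions. Your explicit split of the 8 pentagons into 6 mixed (the Cockett--Seely pentagons) plus 2 pure, and of the 10 triangles and 2 globes into the 4 Cockett--Seely triangles plus the Mac Lane/Kelly axioms for the two monoidal structures, is exactly the bookkeeping the paper leaves implicit.
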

\begin{proof}
By definition if we take monoidal category to mean the original version by Mac Lane.
If we prefer the more recent version, the proof of the equivalence of the two definitions, relies solely on the equivalence between the two definitions of monoidal categories.
\end{proof}

\begin{definition}
In a lax linearly distributive category a left dual $\ldual{A}$ and a right dual $\rdual{A}$ of an object $A$ is an object equipped with morphisms:
\begin{itemize}
\item $\lcup_A \colon I \to \ldual{A} \parr A$ and $\lcap_A \colon A \otimes \ldual{A} \to \bot$
\item $\rcup_A \colon I \to A \parr \rdual{A}$ and $\rcap_A \colon \rdual{A} \otimes A \to \bot$
\end{itemize}
satisfying the coherence laws:
\[\begin{tikzcd}[ampersand replacement=\&]
	{(A \ominus A') \ominus' A} \& {U' \ominus' A} \\
	\&\& A \\
	{A \ominus (A' \ominus' A)} \& {A \ominus U}
	\arrow[no head, from=1-1, to=3-1]
	\arrow[no head, from=1-1, to=1-2]
	\arrow[no head, from=1-2, to=2-3]
	\arrow[no head, from=3-1, to=3-2]
	\arrow[no head, from=3-2, to=2-3]
\end{tikzcd}\]
where $(\ominus, U)$ and $(\ominus', U')$ are dual monoidal structures and the combinations possible are $(A,\ominus, A') \in \{(A, \otimes, \ldual{A}),(\ldual{A},\parr, A)\}$ for the left dual and $(A,\ominus, A') \in \{(\rdual{A}, \otimes, A),(A, \parr, \rdual{A})\}$ for the right dual.
\end{definition}

\begin{prop}
$\ast$-autonomous categories correspond to linearly distributive categories with left and right duals for all objects. 
\end{prop}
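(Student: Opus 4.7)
The plan is to establish the correspondence in both directions, using the characterisation of $\ast$-autonomous categories as a monoidal biclosed category with a dualising object (equivalently, via the monoidal de Morgan duality). The key idea is that in a $\ast$-autonomous category the second monoidal structure $\parr$ is already available (as constructed in the previous propositions), while in a linearly distributive category with duals the negations $\ldual{(-)}, \rdual{(-)}$ already behave like those of a dualising object; the work is to match up the residuals and the distributivity laws with the dualities.

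Going from $\ast$-autonomous to linearly distributive with duals: I would take the $\parr$ defined via $A \parr B := \rdual{(\ldual{A} \otimes \ldual{B})}$ and the resulting monoidal structure established in the preceding section. The distributivity laws $\delta^L_{A,B,C} \colon A \otimes (B \parr C) \to (A \otimes B) \parr C$ and $\delta^R_{A,B,C} \colon (A \parr B) \otimes C \to A \parr (B \otimes C)$ would be constructed by currying: for instance, $\delta^R$ is the adjoint transpose, along the duality $\parr = \rdual{(\ldual{(-)} \otimes \ldual{(-)})}$, of a morphism built from the evaluation $\ev^r \colon (\bot \multimapinv A) \otimes A \to \bot$. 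The unit and counit of a left dual are then $\lcap_A := \ev^l_A \colon A \otimes \ldual{A} \to \bot$ and $\lcup_A \colon I \to \ldual{A} \parr A$ obtained as the adjoint transpose of the isomorphism $\ldual{A} \parr A \simeq \rdual{(A \otimes \ldual{A})}$ applied to $\ev^l_A$; dually for right duals. The snake identities reduce, after unwinding the adjunctions, to the triangle laws for the biclosed adjunction. Coherence of the pentagons and triangles involving $\delta^L, \delta^R$ would be checked by naturality of the (un)currying isomorphisms and the pentagon coherence of $\otimes$.

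Going from linearly distributive with duals to $\ast$-autonomous: I would set $A \multimap B := \ldual{A} \parr B$ and $B \multimapinv A := B \parr \rdual{A}$ and prove these are right adjoints to $A \otimes -$ and $- \otimes A$ respectively. The unit of the first adjunction is
\[
B \xrightarrow{\lambda_B} I \otimes B \xrightarrow{\lcup_A \otimes B} (\ldual{A} \parr A) \otimes B \xrightarrow{\delta^R} \ldual{A} \parr (A \otimes B),
\]
and the counit is
\[
A \otimes (\ldual{A} \parr B) \xrightarrow{\delta^L} (A \otimes \ldual{A}) \parr B \xrightarrow{\lcap_A \parr B} \bot \parr B \xrightarrow{\lambda'_B} B;
\]
the triangle identities for this adjunction are precisely the two snake identities for the left dual, combined with the coherence of $\delta^L, \delta^R$ with the unitors. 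Dually for $\multimapinv$. Finally, $\bot$ being dualising amounts to the maps $A \to \rdual{(\ldual{A})}$ and $A \to \ldual{(\rdual{A})}$ induced by the adjunctions being invertible; these can be identified with the snake maps built from the units and counits of the duals, so invertibility follows from the existence of the dualities.

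The main obstacle will be the verification of the coherence diagrams: in one direction, checking that the $\delta$'s constructed by transposition actually satisfy the pentagons and triangles, and in the other, verifying that the unit and counit of the putative hom-adjunction built from distributivity and duals satisfy the triangle identities cleanly. Both reduce to careful manipulation of snake identities together with the coherence of the two monoidal structures and the compatibility of $\delta^L, \delta^R$ with associators and unitors, already laid out in \cite{CockettSeely1997}; I would therefore appeal to those diagrammatic computations rather than reproduce them in detail.
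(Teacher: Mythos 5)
Your proposal is correct and follows exactly the standard Cockett--Seely argument: the paper gives no proof of its own here, deferring entirely to \cite{CockettSeely1997}, and your construction (internal homs as $\ldual{A} \parr B$ with unit/counit built from the cups, caps and distributivities, and conversely the $\delta$'s obtained by transposing evaluation) is precisely the content of that reference. Since you also defer the coherence verifications to the same source, the two routes coincide.
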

\begin{proof}
See \cite{CockettSeely1997} for a proof.

\end{proof}

\subsubsection{Lax unbiased linearly distributive category}

For it to be an interesting notion, we would like for any unbiased lax linearly distributive category to define a polycategory.
As mentioned in the introduction of this chapter, the composition in a polycategory is defined for polymaps of the form $f \colon \Gamma \to \Delta_1, A, \Delta_2$ and $g \colon \Gamma_1', A, \Gamma_2' \to \Delta'$ to give a polymap $g \circ f \colon \Gamma_1', \Gamma, \Gamma_2' \to \Delta_1, \Delta', \Delta_2$.
When defining a polycategory from a lax unbiased linearly distributive category, polymaps will correspond to morphisms $ \otimes(\Gamma) \to \parr(\Delta)$.
Composition of two polymaps $f \colon \otimes(\Gamma) \to \parr(\Delta_1, A, \Delta_2)$ and $g \colon \otimes(\Gamma_1', A, \Gamma_2') \to \parr(\Delta')$ will be defined by:

\begin{align*}
\otimes(\Gamma_1', \Gamma, \Gamma_2') &\xrightarrow{\text{oplax}} \otimes(\Gamma_1', \otimes \Gamma, \Gamma_2')\\ 
&\xrightarrow{f} \otimes(\Gamma_1', \parr(\Delta_1, A, \Delta_2), \Gamma_2')\\ 
&\xrightarrow{\text{dist.}} \parr(\Delta_1, \otimes(\Gamma_1', A, \Gamma_2'), \Delta_2) \\
&\xrightarrow{g} \parr(\Delta_1, \parr \Delta', \Delta_2) \\
&\xrightarrow{\text{lax}} \parr(\Delta_1, \Delta', \Delta_2)
\end{align*}

We learn several things from this.
First, that in order to define composition we need for $(\cC, \otimes_n)$ to be oplax monoidal and $(\cC, \parr_n)$ to be lax.
Also we can get the general form of the distributivity law $\otimes(\Gamma_1, \parr(\Delta_1, A, \Delta_2), \Gamma_2) \to \parr(\Delta_1, \otimes(\Gamma_1, A, \Gamma_2), \Delta_2)$.
However, this general form exchanges the position of $\Gamma_i$ and $\Delta_i$ which should not be possible in the non-symmetric case.
In fact, when defining composition of a non-symmetric polycategory, we constrain the composition to only be defined when either $\Gamma_1$ is empty or $\Delta_1$ is and similarly either $\Gamma_2$ is empty or $\Delta_2$ is.
We will ask for the same constraint when defining distributivity for (non-symmetric) lax unbiased linearly distributive categories.
This boils down to requiring four distributivity laws:
\begin{itemize}
\item $\otimes(\Gamma, \parr(A, \Delta)) \to \parr(\otimes(\Gamma,A), \Delta))$
\item $\otimes(\parr(\Delta,A), \Gamma) \to \parr(\Delta, \otimes(A,\Gamma))$
\item $\otimes(\Gamma_1, \parr(A), \Gamma_2) \to \parr(\otimes(\Gamma_1, A, \Gamma_2))$
\item $\otimes(\parr(\Delta_1, A, \Delta_2)) \to \parr(\Delta_1, \otimes(A), \Delta_2)$
\end{itemize}

In the case where the lists are taken to be singletons the first two gives the usual $A \otimes (B \parr C) \to (A \otimes B) \parr C$ and $(A \parr B) \otimes C \to A \parr (B \otimes C)$.

For the definitions of lax and oplax unbiased monoidal categories, we take the definition of unbiased monoidal categories and replace natural isomorphisms by natural transformations.

\begin{definition}
A \defin{lax unbiased linearly distributive category} (luldc for short) is a category \cC equipped with
\begin{itemize}
\item an oplax monoidal structure 

$(\cC, (\otimes_n)_{n \in \N}, (\alpha_{n,(m_k)_{1 \leq k \leq n}})_{n \in \N}, \eta)$, i.e.
\begin{itemize}
\item for each arity $n$, a functor $\otimes_n \colon \cC \times \dots \times \cC \to \cC$ from $n$ copies of \cC
\item for each $n,(m_k)$, natural transformations $\alpha_{n, (m_k)} \colon\otimes_{\sum_k m_k (-)} \Rightarrow \otimes_n(\otimes_{m_1}(-),\dots, \otimes_{m_k}(-))$
\item a natural transformation $\eta \colon \otimes_1(-) \Rightarrow -$
\end{itemize}
subject to the coherence law:
\[\begin{tikzcd}[ampersand replacement=\&]
	\& {\otimes_p\otimes_{n_i}\otimes_{m_{i,j}}A_{i,j,k}} \\
	{\otimes_{\sum_i n_i} \otimes_{m_{i,j}} A_{i,j,k}} \&\& {\otimes_p \otimes_{\sum_j m_{i,j}} A_{i,j,k}} \\
	\& {\otimes_{\sum_{i,j} m_{i,j}} A_{i,j,k}} \\
	\\
	{\otimes_n \otimes_1 A_i} \& {\otimes_n A_i} \& {\otimes_1\otimes_nA_i} \\
	\\
	\& {\otimes_nA_i}
	\arrow["{\alpha_{p,(n_i)_i}}"{description}, from=2-1, to=1-2]
	\arrow["{\alpha_{\sum_i n_i, (m_{i,j})_j}}"{description}, from=3-2, to=2-1]
	\arrow["{\alpha_{p,(\sum_j m_{i,j})_i}}"{description}, from=3-2, to=2-3]
	\arrow["{\otimes_p \alpha_{n_i,(m_{i,j})_j}}"{description}, from=2-3, to=1-2]
	\arrow["\eta"{description}, from=5-3, to=5-2]
	\arrow[Rightarrow, no head, from=5-2, to=7-2]
	\arrow["{\alpha_{1,(n)}}"{description}, from=7-2, to=5-3]
	\arrow["{\otimes_n \eta}"{description}, from=5-1, to=5-2]
	\arrow["{\alpha_{n,(1)}}"{description}, from=7-2, to=5-1]
\end{tikzcd}\]
\item a lax monoidal structure $(\cC, (\parr_n)_{n\in \N}, (\gamma_{(n,(m_k)_{1 \leq k \leq n}})_{n \in \N}, \iota)$, i.e.
\begin{itemize}
\item for each arity $n$, a functor $\parr_n \colon \cC \times \dots \times \cC \to \cC$
\item for each $n, (m_k)$, natural transformations $\gamma_{n,(m_k)} \colon \parr_n(\parr_{m_1}(-), \dots, \parr_{m_n}(-)) \Rightarrow \parr_{\sum_k m_k}(-)$
\item a natural transformation $\iota \colon \parr_1(-) \Rightarrow -$
\end{itemize}
subject to the coherence laws:
\[
\begin{tikzcd}[ampersand replacement=\&]
	\& {\parr_p\parr_{n_i}\parr_{m_{i,j}}A_{i,j,k}} \\
	{\parr_{\sum_i n_i} \parr_{m_{i,j}} A_{i,j,k}} \&\& {\parr_p \parr_{\sum_j m_{i,j}} A_{i,j,k}} \\
	\& {\gamma_{\sum_{i,j} m_{i,j}} A_{i,j,k}} \\
	\\
	{\parr_n \parr_1 A_i} \& {\parr_n A_i} \& {\parr_1\parr_nA_i} \\
	\\
	\& {\otimes_nA_i}
	\arrow["{\gamma_{p,(n_i)_i}}"{description}, from=1-2, to=2-1]
	\arrow["{\gamma_{\sum_i n_i, (m_{i,j})_j}}"{description}, from=2-1, to=3-2]
	\arrow["{\gamma_{p,(\sum_j m_{i,j})_i}}"{description}, from=2-3, to=3-2]
	\arrow["{\parr_p \gamma_{n_i,(m_{i,j})_j}}"{description}, from=1-2, to=2-3]
	\arrow["\iota"{description}, from=5-2, to=5-3]
	\arrow[Rightarrow, no head, from=5-2, to=7-2]
	\arrow["{\gamma_{1,(n)}}"{description}, from=5-3, to=7-2]
	\arrow["{\parr_n \iota}"{description}, from=5-2, to=5-1]
	\arrow["{\gamma_{n,(1)}}"{description}, from=5-1, to=7-2]
\end{tikzcd}\]
\item for each $m_1, n_1, n_2, m_1$ such that $m_i = 0$ or $n_i = 0$, a natural transformation $\delta_{m_1, n_1, n_2, m_2} \colon \otimes_{m_1 + 1 + m_2}(\Gamma_1, \parr_{n_1 + 1 + n_2}(\Delta_1, A, \Delta_2) ,\Gamma_2) \to \parr_{n_1 + 1 + n_2}(\Delta_1, \otimes_{m_1 + 1 + m_2}(\Gamma_1, A, \Gamma_2), \Delta_2)$
\end{itemize}
satisfying the following coherence laws:
\begin{itemize}
\item
\[\resizebox{\hsize}{!}{\begin{tikzcd}[ampersand replacement=\&]
	\& {\otimes_p(\Gamma_1, \parr_{n_1 + 1}(\Delta_1,A), \Gamma_2, \parr_{1 + n_2}(B, \Delta_2), \Gamma_3)} \\
	{\parr_{n_1+1}(\Delta_1, \otimes_p(\Gamma_1, A, \Gamma_2, \parr_{1 + n_2}(B,\Delta_2), \Gamma_3))} \&\& {\parr_{1+n_2}(\otimes_p(\Gamma_1, \parr_{n_1 + 1}(\Delta_1,A),\Gamma_2,B,\Gamma_3),\Delta_2)} \\
	\\
	{\parr_{n_1+1}(\Delta_1,\parr_{1+n_2}(\otimes_p(\Gamma_1, A, \Gamma_2, B, \Gamma_3),\Delta_2))} \&\& {\parr_{1+n_2}(\parr_{n_1+1}(\Delta_1, \otimes_p(\Gamma_1, A, \Gamma_2, B, \Gamma_3)),\Delta_2)} \\
	\& {\parr_{n_1+1+n_2}(\Delta_1, \otimes_p(\Gamma_1,A,\Gamma_2,B, \Gamma_3), \Delta_2)}
	\arrow["{\delta_{m_1,n_1,0,m_2 + 1 + m_3}}"{description}, from=1-2, to=2-1]
	\arrow["{\parr_{n_1 + 1}(\Delta_1,\delta_{m_1+1+m_2,0,n_2,m_3})}"{description}, from=2-1, to=4-1]
	\arrow["{\gamma'_{n_1,1+n_2,0}}"{description}, from=4-1, to=5-2]
	\arrow["{\delta_{m_1 + 1 + m_2,0,n_2,m_2}}"{description}, from=1-2, to=2-3]
	\arrow["{\parr_{1+n_2}(\delta_{m_1,n_1,0,m_2+1+m_3},\Delta_2)}"{description}, from=2-3, to=4-3]
	\arrow["{\gamma'_{0,n_1+1,n_2}}"{description}, from=4-3, to=5-2]
\end{tikzcd}}\]
where $p:= m_1 + 1 + m_2 + 1 + m_3$ and with $\gamma'_{n_1,n_2,n_3} \colon \parr_{n_1+1+n_3}(-, \parr_{n_2}(-),-) \Rightarrow \parr_{n_1 + n_2 + n_3}(-)$ defined by:
\[\resizebox{\hsize}{!}{\begin{tikzcd}[ampersand replacement=\&]
	{\parr_{n_1+1+n_3}(A_1,\dots,A_{n_1}, \parr_{n_2}(\Delta_2),B_1,\dots,B_{n_3})} \\
	\\
	{\parr_{n_1+1+n_3}(\parr_1(A_1),\dots,\parr_1(A_{n_1}), \parr_{n_2}(\Delta_2),\parr_1(B_1),\dots,\parr_1(B_{n_3}))} \&\&\& {\parr_{n_1+n_2+n_3}(A_1,\dots,A_{n_1}, \Delta_2,B_1,\dots,B_{n_3})}
	\arrow["{\parr(\eta_{A_1},\dots,\eta_{A_n}, \parr(\Delta_2), \eta_{B_1},\dots\eta_{B_{n_2}})}"{description}, from=1-1, to=3-1]
	\arrow["{\gamma}"{description}, from=3-1, to=3-4]
	\arrow["{\gamma'_{n_1,n_2,n_3}}"{description}, from=1-1, to=3-4]
\end{tikzcd}}\]

\item
\[\resizebox{\hsize}{!}{\begin{tikzcd}[ampersand replacement=\&]
	{\otimes(\Gamma_1,\dots,\Gamma_p,\Gamma,\parr(\Delta,A,\Delta'),\Gamma', \Gamma_1',\dots,\Gamma'_{p'})} \&\& {\parr(\Delta,\otimes(\Gamma_1,\dots,\Gamma_p,\Gamma,A,\Gamma',\Gamma'_1,\dots,\Gamma'_{p'}),\Delta')} \\
	\&\&\& {\parr(\Delta,\otimes(\otimes\Gamma_1,\dots,\otimes\Gamma_p,\otimes(\Gamma,A,\Gamma'),\otimes\Gamma_1',\dots,\otimes\Gamma'_{p'}),\Delta')} \\
	{\otimes(\otimes\Gamma_1,\dots,\otimes\Gamma_p,\otimes(\Gamma,\parr(\Delta,A,\Delta'),\Gamma'),\otimes\Gamma'_1,\dots,\otimes\Gamma'_{p'})} \&\& {\otimes(\otimes\Gamma_1,\dots,\otimes\Gamma_p,\parr(\Delta,\otimes(\Gamma,A,\Gamma'),\Delta'),\otimes\Gamma'_1,\dots,\otimes\Gamma'_{p'})}
	\arrow["\alpha"{description}, from=1-1, to=3-1]
	\arrow["{\otimes(-,\delta,-)}"{description}, from=3-1, to=3-3]
	\arrow["\delta"{description}, from=3-3, to=2-4]
	\arrow["\delta"{description}, from=1-1, to=1-3]
	\arrow["{\parr(-,\alpha,-)}"{description}, from=1-3, to=2-4]
\end{tikzcd}}\]
\item
\[\resizebox{\hsize}{!}{\begin{tikzcd}[ampersand replacement=\&]
	\&\& {\otimes_1(\parr_{n_1+1+n_2}(\Delta_1,A,\Delta_2))} \& {\otimes_{m_1+1+m_2}(\Gamma_1,A,\Gamma_2)} \\
	\\
	{\parr_{n_1+1+n_2}(\Delta_1,A,\Delta_2)} \&\& {\parr_{n_1+1+n_2}(\Delta_1,\otimes_1(A),\Delta_2)} \& {\otimes_{m_1+1+m_2}(\Gamma_1,\parr_1(A),\Gamma_2)} \&\& {\parr_1(\otimes_{m_1+1+m_2}(\Gamma_1,A,\Gamma_2))}
	\arrow["\delta"{description}, from=1-3, to=3-3]
	\arrow["{\parr(-,\eta,-)}"{description}, from=3-3, to=3-1]
	\arrow["\eta"{description}, from=1-3, to=3-1]
	\arrow["\delta"{description}, from=3-4, to=3-6]
	\arrow["\iota"{description}, from=1-4, to=3-6]
	\arrow["{\otimes(-,\iota,-)}"{description}, from=1-4, to=3-4]
\end{tikzcd}}\]
\end{itemize}
\end{definition}

\begin{definition}
A \defin{lax normal unbiased linearly distributive category} is a luldc where the natural transformations $\eta \colon \otimes_1(-) \Rightarrow -$ and $\iota \colon - \Rightarrow \parr_1(-)$ are natural isomorphisms.
\end{definition}

\begin{definition}
A \defin{unbiased linearly distributive category} is a luldc that is normal and such that the associators $\alpha$ and $\gamma$ are all isomorphisms.
\end{definition}

In other words an unbiased ldc is a luldc where $\otimes_n$ and $\parr_n$ define monoidal structures.

\begin{prop}
There is a correspondence between unbiased and biased ldc.
\end{prop}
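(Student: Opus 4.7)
The plan is to follow the same pattern used earlier to connect biased and unbiased monoidal categories, but now applied simultaneously to the two monoidal structures $(\otimes_n, \alpha, \eta)$ and $(\parr_n, \gamma, \iota)$, together with the distributivity transformations. Because an ldc is an unbiased luldc whose associators, $\eta$ and $\iota$ are all invertible, it is enough to establish the correspondence for luldcs with these extra invertibility hypotheses and then restrict.

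From an unbiased ldc $(\cC, \otimes_n, \parr_n, \alpha, \gamma, \eta, \iota, \delta)$ I would extract a biased one by setting $\otimes := \otimes_2$, $I := \otimes_0$, $\parr := \parr_2$, $\bot := \parr_0$, and defining the biased associators and unitors from $\alpha, \gamma, \eta, \iota$ exactly as in the monoidal case treated in the excerpt. For the distributivity, take $\delta^L_{A,B,C}$ and $\delta^R_{A,B,C}$ to be the components of the unbiased distributivity transformation at $(m_1,n_1,n_2,m_2) = (0,0,1,0)$ and $(1,1,0,0)$ (with suitable choices; the two ``axial'' binary cases). The required six pentagons and four triangles of \cite{CockettSeely1997} then follow by reading off the unbiased coherence diagrams at small arities and using the monoidal coherence already proved for $\otimes$ and $\parr$.

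For the reverse direction, I would define $\otimes_n$ and $\parr_n$ recursively as was done for monoidal categories: $\otimes_0 := I$, $\otimes_1(A) := A$, $\otimes_{n+1}((A_i)) := \otimes_n((A_i)_{i\leq n}) \otimes A_{n+1}$, and symmetrically for $\parr_n$. The associators and unitors $\alpha, \gamma, \eta, \iota$ are built from the biased $\alpha,\lambda,\rho$ and $\alpha',\lambda',\rho'$ by the same nested induction as in the monoidal proposition. The interesting new piece is the unbiased distributivity natural transformation $\delta_{m_1,n_1,n_2,m_2}$, which I would construct inductively by repeated application of the biased $\delta^L$ or $\delta^R$, preceded and followed by the appropriate $\alpha$s (and their inverses where available). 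Concretely, to move a $\parr_{n_1+1+n_2}(\Delta_1,A,\Delta_2)$ past a tensor context $\Gamma_1,-,\Gamma_2$, one iteratively peels off one object of $\Gamma_1$ (using $\delta^L$ on the right or $\delta^R$ on the left, depending on which of $m_i, n_i$ is zero per the hypothesis) and reassociates. The constraint that either $m_i = 0$ or $n_i = 0$ is exactly what makes this iteration unambiguous in the non-symmetric setting: we never need to commute $\Delta$'s past $\Gamma$'s.

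The main obstacle is the verification of the three coherence laws for $\delta$ in the unbiased setting (the hexagon-like law mixing two $\parr$'s, the pentagonal coherence with $\alpha$, and the unit law with $\iota$ and $\eta$). These unfold, after inductive reduction on the arities, into instances of the biased six pentagons and four triangles of an ldc, together with the monoidal coherences already established. Rather than writing out each rewriting, I would invoke a coherence-theorem style argument: prove first that every unbiased luldc is luldc-equivalent to a strict one by the same strictification construction $st(\cC)$ as for monoidal categories, extended so that $st(\cC)$ carries both an unbiased $\otimes$ and $\parr$ and a canonical strict distributivity given by concatenation and insertion of lists; then the biased-to-unbiased assignments become inverse up to isomorphism, with all coherence diagrams collapsing to identities in the strict model. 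This reduces the remaining verifications to the monoidal coherence proposition already cited from \cite{Leinster2004} together with the ldc coherence of \cite{CockettSeely1997}. Finally, restricting to the normal and fully-invertible case gives the claimed correspondence between unbiased and biased linearly distributive categories.
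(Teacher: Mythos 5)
Your proposal is correct and takes essentially the same route as the paper: the paper's own proof just reduces the two monoidal structures to the already-established biased/unbiased monoidal correspondence and explicitly leaves the connection between the distributivity laws to the reader, which is precisely the content you supply (binary $\delta^L,\delta^R$ as special components of the unbiased $\delta$ one way, and an iterated/reassociated construction of the unbiased $\delta$ from the biased ones the other way, with coherence handled by strictification). One small slip worth fixing: with the paper's indexing $\delta_{m_1,n_1,n_2,m_2}$ the binary distributivities are the components $\delta_{1,0,1,0}$ and $\delta_{0,1,0,1}$, not $(0,0,1,0)$ and $(1,1,0,0)$ --- the latter even violates the side condition that $m_i=0$ or $n_i=0$ --- though since you hedged with ``suitable choices'' this does not affect the argument.
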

\begin{proof}
The correspondence between the underlying monoidal categories follows from the correspondence between unbiased and biased monoidal categories.
So all that is needed is to exhibit the connection between the distributivity laws.
The proof is left to the reader.
\end{proof}

\begin{definition}
In a luldc $(\cC, \otimes_n, \parr_n)$, a \defin{left dual} of an object $A$, is an object $\ldual{A}$ equipped with morphisms $\lcup_A \colon \otimes_0 \to \parr_2(\ldual{A}, A)$ and $\lcap_A \colon \otimes_2(A,\ldual{A}) \to \parr_0$ such that the following coherence laws hold:
\[\resizebox{\hsize}{!}{\begin{tikzcd}[ampersand replacement=\&]
	{\otimes_1\ldual{A}} \&\& {\otimes_2(\otimes_0,\otimes_1\ldual{A})} \&\&\& {\otimes_2(\otimes_0,\ldual{A})} \&\& {\otimes_2(\parr_2(\ldual{A},A),\ldual{A})} \&\&\& {\parr_2(\ldual{A},\otimes_2(A,\ldual{A}))} \\
	\\
	\&\&\&\&\&\&\&\&\&\& {\parr_2(\ldual{A},\parr_0)} \\
	\\
	\&\&\&\&\&\&\&\&\&\& {\parr_2(\parr_1 \ldual{A},\parr_0)} \\
	\\
	{\ldual{A}} \&\&\&\&\&\&\&\&\&\& {\parr_1\ldual{A}}
	\arrow["{\alpha_{2,(0,1)}}"{description}, from=1-1, to=1-3]
	\arrow["{\delta_{0,1,0,1}}"{description}, from=1-8, to=1-11]
	\arrow["{\parr_2(\iota,-)}"{description}, from=3-11, to=5-11]
	\arrow["{\gamma_{2,(1,0)}}"{description}, from=5-11, to=7-11]
	\arrow["{\otimes_2(-,\eta)}"{description}, from=1-3, to=1-6]
	\arrow["{\otimes_2(\lcup_A,-)}"{description}, from=1-6, to=1-8]
	\arrow["{\parr_2(-,\lcap_A)}"{description}, from=1-11, to=3-11]
	\arrow["\eta"{description}, from=1-1, to=7-1]
	\arrow["\iota"{description}, from=7-1, to=7-11]
\end{tikzcd}}\]

\[\resizebox{\hsize}{!}{\begin{tikzcd}[ampersand replacement=\&]
	{\otimes_1 A} \&\& {\otimes_2(\otimes_1 A, \otimes_0)} \&\&\& {\otimes_2(A, \otimes_0)} \&\& {\otimes_2(A,\parr_2(\ldual{A},A))} \&\&\& {\parr_2(\otimes_2(A, \ldual{A}), A)} \\
	\\
	\&\&\&\&\&\&\&\&\&\& {\parr_2(\parr_0,A)} \\
	\\
	\&\&\&\&\&\&\&\&\&\& {\parr_2(\parr_0,\parr_1 A)} \\
	\\
	A \&\&\&\&\&\&\&\&\&\& {\parr_1 A}
	\arrow["{\alpha_{2,(1,0)}}"{description}, from=1-1, to=1-3]
	\arrow["{\delta_{1,0,1,0}}"{description}, from=1-8, to=1-11]
	\arrow["{\parr_2(-,\iota)}"{description}, from=3-11, to=5-11]
	\arrow["{\gamma_{2,(0,1)}}"{description}, from=5-11, to=7-11]
	\arrow["{\otimes_2(\eta,-)}"{description}, from=1-3, to=1-6]
	\arrow["{\otimes_2(-,\lcup_A)}"{description}, from=1-6, to=1-8]
	\arrow["{\parr_2(\lcap_A,-)}"{description}, from=1-11, to=3-11]
	\arrow["\eta"{description}, from=1-1, to=7-1]
	\arrow["\iota"{description}, from=7-1, to=7-11]
\end{tikzcd}}\]

A \defin{right dual} to an object $A$ is an object $\rdual{A}$ equipped with morphisms $\otimes_0 \to \parr_2(A, \rdual{A})$ and $\otimes_2(\rdual{A},A) \to \parr_0$ with coherence laws symmetric to the ones for left duals.
\end{definition}

\begin{rk}
The coherence laws are a refinement of the so-called snake identities for duals in a compact close category.
\end{rk}

Left and right duals are unique up to unique isomorphisms.
The proof is similar to the usual one, for example in compact closed categories.
We will see a version of it in the case of polycategories later.

\begin{definition}
A \defin{lax unbiased $\ast$-autonomous category} is a lax unbiased linearly distributive category with all left and right duals.
\end{definition}

Lax unbiased linearly distributive categories are more general than their biased analogs.
In fact, when dealing with duals, lax unbiased linearly distributive categories are too general.
This is because to use the cup and cap we want to be able to introduce/eliminate the tensor/parr unit, i.e., we rely on the morphisms $A \to I \otimes A$, $\bot \parr A \to A$ and their right versions.
However, these do not exist in a general lax unbiased linearly distributive category.
The best one can hope for are $\otimes_1 A \to \otimes_2(\otimes_0, A)$ that we get by $\otimes_1 A \xrightarrow{\alpha} \otimes_2(\otimes_0, \otimes_1 A) \xrightarrow{\otimes_2(-,\eta)} \otimes_2(\otimes_0, A)$ and similarly $\parr_2(\parr_0, A) \to \parr_1 A$.
So most of the usual morphisms that we get in a $\ast$-autonomous category will exist in a lax unbiased $\ast$-autonomous category only when decorated with $\otimes_1$ and $\parr_1$.
For example, from a morphism $f \colon A \to B$ we only get a morphism $\ldual{f} \colon \otimes_1 B \to \parr_1 A$.
Similarly, we only have morphisms $\otimes_1 A \to \parr_1 \ldual{(\rdual{A})}$ and $\otimes_1 \ldual{(\rdual{A})} \to \parr_1 A$, to name a few.
This is also why in the coherence law for the duals we had to replace the single $A$ by $\otimes_1 A$.
So in order to get a  notion extending the familiar theory of $\ast$-autonomous category, from now on, we will consider lax normal $\ast$-autonomous categories, i.e. ones where $\eta$ and $\iota$ are invertible giving $\otimes_1 A \simeq A \simeq \parr_1 A$.

\begin{prop}
In a lax normal $\ast$-autonomous category, $\ldual{-}$ and $\rdual{-}$ extend to contravariant functors.
\end{prop}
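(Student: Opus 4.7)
The plan is to define the action of $\ldual{-}$ on a morphism $f \colon A \to B$ via the classical ``bending a wire'' construction, using the cup for $A$, the cap for $B$, and one of the distributivity laws. Explicitly, $\ldual{f} \colon \ldual{B} \to \ldual{A}$ would be the composite
\begin{align*}
\ldual{B} &\xrightarrow{\eta^{-1}} \otimes_1(\ldual{B}) \xrightarrow{\alpha_{2,(0,1)}} \otimes_2(\otimes_0,\ldual{B}) \xrightarrow{\otimes_2(\lcup_A,-)} \otimes_2(\parr_2(\ldual{A},A),\ldual{B}) \\
&\xrightarrow{\delta_{0,1,0,1}} \parr_2(\ldual{A},\otimes_2(A,\ldual{B})) \xrightarrow{\parr_2(-,\otimes_2(f,-))} \parr_2(\ldual{A},\otimes_2(B,\ldual{B})) \\
&\xrightarrow{\parr_2(-,\lcap_B)} \parr_2(\ldual{A},\parr_0) \xrightarrow{\gamma_{2,(1,0)}} \parr_1(\ldual{A}) \xrightarrow{\iota^{-1}} \ldual{A}.
\end{align*}
The assignment $f \mapsto \rdual{f}$ is then defined by the symmetric construction using $\rcup_B$, $\rcap_A$ and the appropriate distributivity instance.

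Verifying functoriality then breaks into two steps. For $\ldual{\id_A} = \id_{\ldual{A}}$, I would specialise to $f = \id_A$ and observe that the middle portion of the composite, from $\otimes_1(\ldual{A})$ down to $\parr_1(\ldual{A})$, coincides exactly with the ``top'' route of the first pentagonal coherence diagram in the preceding definition of a left dual, whose ``bottom'' route is $\iota \circ \eta$. Hence the whole composite collapses to $\iota^{-1} \circ \iota \circ \eta \circ \eta^{-1} = \id_{\ldual{A}}$. For $\ldual{(g \circ f)} = \ldual{f} \circ \ldual{g}$ with $g \colon B \to C$, I would expand $\ldual{f} \circ \ldual{g}$, insert a cup-cap pair for $B$ that evaluates to the identity by the same snake-like coherence, and then use naturality of $\delta$ together with the nested-distributivity pentagon displayed in the definition of a luldc, plus naturality of $\eta, \iota, \alpha, \gamma$, to rearrange the composite into the one defining $\ldual{(g \circ f)}$. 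The argument for $\rdual{-}$ is entirely symmetric.

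The main obstacle is the bookkeeping in the lax unbiased setting: unlike the classical biased case, every passage between $A$ and $\otimes_1 A$ (or $A$ and $\parr_1 A$) must be carried out explicitly via $\eta^{-1}$ and $\iota^{-1}$, and each rearrangement of brackets requires an instance of $\alpha$ or $\gamma$ followed by a nontrivial coherence. Normality is essential here, since it is precisely the invertibility of $\eta$ and $\iota$ that provides the ``missing'' inverse unitors needed to bend a wire in both directions. Once the structural maps are correctly accounted for, the proof reduces to the same naturality-plus-snake-identity computation as in the classical case.
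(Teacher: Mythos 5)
Your proposal is correct and is essentially the paper's own proof: the same cup--distributivity--cap composite defines $\ldual{f}$ (up to sliding $f$ across $\delta$ by naturality), identity preservation follows from the snake coherence in the definition of a left dual, and composition is handled by inserting a cup--cap pair for the middle object and rearranging by naturality of $\delta$, $\eta$, $\iota$, $\alpha$, $\gamma$. The only discrepancy is that your displayed composite elides the two unit coherences $\otimes_2(-,\eta)$ and $\parr_2(\iota,-)$ needed to make the types of $\alpha_{2,(0,1)}$ and $\gamma_{2,(1,0)}$ match (they land in $\otimes_2(\otimes_0,\otimes_1\ldual{B})$ and depart from $\parr_2(\parr_1\ldual{A},\parr_0)$ respectively), which the paper writes explicitly --- exactly the bookkeeping you flag in your final paragraph.
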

\begin{proof}
Given $f \colon B \to A$, we define \[\begin{aligned}
\ldual{f} \colon \ldual{A} &\xrightarrow{\eta^{-1}} \otimes_1 \ldual{A}\\
&\xrightarrow{\alpha} \otimes_2(\otimes_0, \otimes_1 \ldual{A}) \\
&\xrightarrow{\otimes_2(-,\eta)} \otimes_2(\otimes_0, \ldual{A}) \\
&\xrightarrow{\otimes_2(\lcup_B, -)} \otimes_2(\parr_2(\ldual{B}, B), \ldual{A}) \\
&\xrightarrow{\otimes_2(\parr_2(-,f),-)} \otimes_2(\parr_2(\ldual{B}, A), \ldual{A}) \\
&\xrightarrow{\delta} \parr_2(\ldual{B}, \otimes_2(A, \ldual{A})) \\
&\xrightarrow{\parr_2(-,\lcap_A)} \parr_2(\ldual{B}, \parr_0) \\
&\xrightarrow{\parr_2(\iota,-)} \parr_2(\parr_1 \ldual{B}, \parr_0) \\
&\xrightarrow{\gamma} \parr_1 \ldual{B} \\
&\xrightarrow{\iota^{-1}} \ldual{B}
\end{aligned}\]

This is functorial.

$\ldual{\id_A}$ is by definition:
\[\resizebox{\hsize}{!}{\begin{tikzcd}[ampersand replacement=\&]
	{\ldual{A}} \&\& {\otimes_1 \ldual{A}} \&\& {\otimes_2(\otimes_0, \otimes_1\ldual{A})} \&\& {\otimes_2(\otimes_0,\ldual{A})} \&\& {\otimes_2(\parr_2(\ldual{A},A),\ldual{A})} \\
	\\
	\&\&\&\&\&\&\&\& {\otimes_2(\parr_2(\ldual{A},A),\ldual{A})} \\
	\\
	\&\&\&\&\&\&\&\& {\parr_2(\ldual{A},\otimes_2(A,\ldual{A}))} \\
	\\
	\&\&\&\&\&\&\&\& {\parr_2(\ldual{A},\parr_0)} \\
	\\
	\&\&\&\&\&\&\&\& {\parr_2(\parr_0 \ldual{A},\parr_0)} \\
	\\
	\&\& {\ldual{A}} \&\&\&\&\&\& {\parr_1\ldual{A}} \\
	\\
	\&\&\&\&\&\&\&\& {\ldual{A}}
	\arrow["{\eta^{-1}}"{description}, from=1-1, to=1-3]
	\arrow["\alpha"{description}, from=1-3, to=1-5]
	\arrow["{\otimes_2(-,\eta)}"{description}, from=1-5, to=1-7]
	\arrow["{\otimes_2(\lcup_A,-)}"{description}, from=1-7, to=1-9]
	\arrow["{\otimes_2(\parr_2(-,\id_A),-)}"{description}, from=1-9, to=3-9]
	\arrow["\delta"{description}, from=3-9, to=5-9]
	\arrow["{\parr_2(-,\lcap_A)}"{description}, from=5-9, to=7-9]
	\arrow["{\parr_2(\iota,-)}"{description}, from=7-9, to=9-9]
	\arrow["\gamma"{description}, from=9-9, to=11-9]
	\arrow["{\iota^{1}}"{description}, from=11-9, to=13-9]
	\arrow["\eta"{description}, from=1-3, to=11-3]
	\arrow["\iota"{description}, from=11-3, to=11-9]
	\arrow[Rightarrow, no head, from=1-1, to=11-3]
	\arrow[Rightarrow, no head, from=11-3, to=13-9]
\end{tikzcd}}\]

To prove that $\ldual{(f \circ g)} = \
\ldual{g} \circ \ldual{f}$ for $f \colon B \to A$ and $g \colon C \to B$ we need to prove that the outer rectangle commute:

\[\scalebox{0.7}{\begin{tikzcd}[ampersand replacement=\&]
	{\ldual{A}} \& {\otimes_2(\otimes_0,\ldual{A})} \& {\otimes_2(\parr_2(\ldual{B},B),\ldual{A})} \&\& {\otimes_2(\parr_2(\ldual{B},A),\ldual{A})} \\
	\\
	\&\&\&\& {\parr_2(\ldual{B},\otimes_2(A,\ldual{A}))} \\
	{\otimes_2(\otimes_0,\ldual{A})} \\
	\&\&\&\& {\parr_2(\ldual{B},\parr_0)} \\
	\&\& {\otimes_2(\parr_2(\ldual{C},\otimes_2(B,\otimes_0)),\ldual{A})} \\
	\&\&\&\& {\ldual{B}} \\
	{\otimes_2(\parr_2(\ldual{C},C),\ldual{A})} \&\&\& {\otimes_2(\parr_2(\ldual{C},\otimes_2(B,\parr_2(\ldual{B},B))),\ldual{A})} \\
	\&\&\&\& {\otimes_2(\otimes_0,\ldual{B})} \\
	\& {\otimes_2(\parr_2(\ldual{C},B),\ldual{A})} \\
	\&\&\&\& {\otimes_2(\parr_2(\ldual{C},C),\ldual{B})} \\
	{\otimes_2(\parr_2(\ldual{C},A),\ldual{A})} \&\&\& {\otimes_2(\parr_2(\ldual{C},\parr_2(\otimes_2(B,\ldual{B}),B)),\ldual{A})} \\
	\&\&\&\& {\otimes_2(\parr_2(\ldual{C},B),\ldual{B})} \\
	\&\& {\otimes_2(\parr_2(\ldual{C},\parr_2(\parr_0,B)),\ldual{A})} \\
	\&\&\&\& {\parr_2(\ldual{C},\otimes_2(B,\ldual{B}))} \\
	\\
	\&\&\&\& {\parr_2(\ldual{C},\parr_0)} \\
	\\
	{\parr_2(\ldual{C},\otimes_2(A,\ldual{A}))} \&\& {\parr_2(\ldual{C},\parr_0)} \&\& {\ldual{C}}
	\arrow["\lambda"{description}, from=1-1, to=1-2]
	\arrow["{\otimes_2(\lcup_B,-)}"{description}, from=1-2, to=1-3]
	\arrow["{\otimes_2(\parr_2(-,f),-)}"{description}, from=1-3, to=1-5]
	\arrow["\delta"{description}, from=1-5, to=3-5]
	\arrow["{\parr_2(-,\lcap_A)}"{description}, from=3-5, to=5-5]
	\arrow["{\rho'}"{description}, from=5-5, to=7-5]
	\arrow["\lambda"{description}, from=7-5, to=9-5]
	\arrow["{\otimes_2(\lcup_C,-)}"{description}, from=9-5, to=11-5]
	\arrow["{\otimes_2(\parr_2(-,g),-)}"{description}, from=11-5, to=13-5]
	\arrow["\delta"{description}, from=13-5, to=15-5]
	\arrow["{\parr_2(-,\lcap_B)}"{description}, from=15-5, to=17-5]
	\arrow["{\rho'}"{description}, from=17-5, to=19-5]
	\arrow["\lambda"{description}, from=1-1, to=4-1]
	\arrow["{\otimes_2(\lcup_C,-)}"{description}, from=4-1, to=8-1]
	\arrow["{\otimes_2(\parr_2(-,f \circ g),-)}"{description}, from=8-1, to=12-1]
	\arrow["\delta"{description}, from=12-1, to=19-1]
	\arrow["{\parr_2(-,\lcap_A)}"{description}, from=19-1, to=19-3]
	\arrow["{\rho'}"{description}, from=19-3, to=19-5]
	\arrow["{\otimes_2(\parr_2(-,\rho),-)}"{description}, from=10-2, to=6-3]
	\arrow["{\otimes_2(\parr_2(-,\otimes_2(-,\lcup_B)),-)}"{description}, from=6-3, to=8-4]
	\arrow["{\otimes_2(\parr_2(-,\delta),-)}"{description}, from=8-4, to=12-4]
	\arrow["{\otimes_2(\parr_2(-,\parr_2(\lcap_B,-)),-)}"{description}, from=12-4, to=14-3]
	\arrow["{\otimes_2(\parr_2(-,\lambda'),-)}"{description}, from=14-3, to=10-2]
	\arrow["{\otimes_2(\parr_2(-,f),-)}"{description}, from=10-2, to=12-1]
	\arrow["{\otimes_2(\parr_2(-,g),-)}"{description}, from=8-1, to=10-2]
\end{tikzcd}}\]

This is a huge diagram chasing where we go from the topmost path to the interior path by using naturality of the different natural transformations, functoriality of $\otimes_2$ and $\parr_2$ and the coherence laws of a luldc relating $\delta$ and $\eta$/$\iota$.
Then we use the coherence law of a lax $\ast$-autonomous category to get rid of the internal pentagon.

In term of string diagrams the first part would correspond to sliding $f,g$ and the cups and caps and the second part to using the snake identity, in the following way.

\center\tikzfig{lax-autonomous-functorial} 

The proof for functoriality of $\rdual{(-)}$ is similar.
\end{proof}

There are different ways a functor can interact with lax/oplax monoidal structures.
The idea will be to express that the functor preserves the monoidal product.
However this preservation can be defined on the nose, up to iso, or even in a specific direction.
The different definitions are the same as the one for usual monoidal categories.
In particular, the direction of the preservation of the monoidal product is unrelated to that of the laws the monoidal structure.
So it is possible to define oplax monoidal functors between lax monoidal categories.

\begin{definition}
A \defin{lax monoidal functor} between lax monoidal categories \[(F, F_n) \colon (\cC,\otimes_n^{\cC}, \alpha^{\cC},\iota^{\cC}) \to (\cD, \otimes_n^{\cD}, \alpha^{\cD}, \iota^{\cD})\] is a functor $F \colon \cC \to \cD$ together with natural transformations \[F_n \colon \otimes_n^{\cD}(F(-)) \Rightarrow F(\otimes_n^{\cC}(-))\] such that the following diagrams commute:
\[\begin{tikzcd}[ampersand replacement=\&]
	{\otimes_n\otimes_{m_i}F(A_{i,j})} \&\& {\otimes_{\sum_i m_i}F(A_{i,j})} \\
	\&\&\&\& {F(\otimes_{\sum_i m_i}A_{i,j})} \\
	{\otimes_nF(\otimes_{m_i}A_{i,j})} \&\& {F(\otimes_n\otimes_{m_i}A_{i,j})}
	\arrow["\alpha"{description}, from=1-1, to=1-3]
	\arrow["{F_{\sum_i m_i}}"{description}, from=1-3, to=2-5]
	\arrow["{\otimes_n F_{m_i}}"{description}, from=1-1, to=3-1]
	\arrow["{F_n}"{description}, from=3-1, to=3-3]
	\arrow["{F(\alpha)}"{description}, from=3-3, to=2-5]
\end{tikzcd}\]
\[\begin{tikzcd}[ampersand replacement=\&]
	{F(A)} \&\& {\otimes_1F(A)} \\
	\\
	\&\& {F(\otimes_1 A)}
	\arrow["\iota"{description}, from=1-1, to=1-3]
	\arrow["{F_1}"{description}, from=1-3, to=3-3]
	\arrow["{F(\iota)}"{description}, from=1-1, to=3-3]
\end{tikzcd}\]

A \defin{oplax monoidal functor} between lax monoidal categories is defined similarly except that the natural transformations $F_n$ go in the reverse direction, and so the corresponding arrows in the coherence laws are reversed.

\defin{Lax/oplax monoidal functors} between oplax monoidal categories are defined similarly with the natural transformations $\alpha$ and $\iota$ going in the opposite direction.

A \defin{strong monoidal functor} between lax/oplax monoidal categories is a lax monoidal functor where the $F_n$ are isomorphisms (and so it is also an oplax monoidal functor).

A \defin{strict monoidal functor} is one where they are identities.
\end{definition}

\begin{definition}
A \defin{monoidal transformation} between lax monoidal functors between lax monoidal categories is a natural transformation $\gamma \colon F \Rightarrow G$, such that:
\[\begin{tikzcd}[ampersand replacement=\&]
	{\otimes_n F(A_i)} \&\& {F(\otimes_n A_i)} \\
	\\
	{\otimes_n G(A_i)} \&\& {G(\otimes_n A_i)}
	\arrow["{F_n}"{description}, from=1-1, to=1-3]
	\arrow["{\gamma_{\otimes_n A_i}}"{description}, from=1-3, to=3-3]
	\arrow["{\otimes_n\gamma_{A_i}}"{description}, from=1-1, to=3-1]
	\arrow["{G_n}"{description}, from=3-1, to=3-3]
\end{tikzcd}\]

Similarly a monoidal transformation between oplax monoidal functors is a natural transformation such that the previous square commutes with the horizontal arrows going from right to left.
\end{definition}

There are 2-categories consisting of lax/oplax monoidal categories, lax/oplax/strong/strict monoidal functors and monoidal transformations.

When defining functors between luldcs in addition to respecting the monoidal structures, we will want the functor to interact well with the distributivity law.
This will consist of an hexagonal commutative diagram as follows:
\[\resizebox{\hsize}{!}{\begin{tikzcd}[ampersand replacement=\&]
	\& {\otimes(F(\Gamma_1),F(\parr(\Delta_1,A,\Delta_2)),F(\Gamma_2))} \\
	{\otimes(F(\Gamma_1),\parr(F(\Delta_1),F(A),F(\Delta_2)),F(\Gamma_2))} \&\& {F(\otimes(\Gamma_1,\parr(\Delta_1,A,\Delta_2),\Gamma_2))} \\
	\\
	{\parr(F(\Delta_1),\otimes(F(\Gamma_1),F(A),F(\Gamma_2)),F(\Delta_2))} \&\& {F(\parr(\Delta_1,\otimes(\Gamma_1,A,\Gamma_2),\Delta_2))} \\
	\& {\parr(F(\Delta_1),F(\otimes(\Gamma_1,A,\Gamma_2)),F(\Delta_2))}
	\arrow[no head, from=1-2, to=2-1]
	\arrow["\delta"{description}, from=2-1, to=4-1]
	\arrow[no head, from=4-1, to=5-2]
	\arrow[no head, from=1-2, to=2-3]
	\arrow["{F(\delta)}"{description}, from=2-3, to=4-3]
	\arrow[no head, from=4-3, to=5-2]
\end{tikzcd}}\]
where the unmarked arrow correspond to some lax/oplax conditions for the functor with respect to both monoidal structures.
Out of the four possible choices, only three are valid, since the $\otimes$-oplax $\parr$-lax functor does not make two parallel paths:

\[\begin{tikzcd}[ampersand replacement=\&]
	{\otimes/\parr} \&\&\&\&\&\& {\otimes/\parr} \\
	\&\&\& \bullet \&\&\&\&\&\& \bullet \\
	\&\& \bullet \&\& \bullet \&\&\&\& \bullet \&\& \bullet \\
	{\text{lax/oplax}} \&\&\& \checkmark \&\&\& {\text{oplax/lax}} \&\&\& \times \\
	\&\& \bullet \&\& \bullet \&\&\&\& \bullet \&\& \bullet \\
	\&\&\& \bullet \&\&\&\&\&\& \bullet \\
	\&\&\& \bullet \&\&\&\&\&\& \bullet \\
	\&\& \bullet \&\& \bullet \&\&\&\& \bullet \&\& \bullet \\
	{\text{lax/lax}} \&\&\& \checkmark \&\&\& {\text{oplax/oplax}} \&\&\& \checkmark \\
	\&\& \bullet \&\& \bullet \&\&\&\& \bullet \&\& \bullet \\
	\&\&\& \bullet \&\&\&\&\&\& \bullet
	\arrow[from=2-4, to=3-3]
	\arrow[from=3-3, to=5-3]
	\arrow[from=5-3, to=6-4]
	\arrow[from=2-4, to=3-5]
	\arrow[from=3-5, to=5-5]
	\arrow[from=5-5, to=6-4]
	\arrow[from=3-9, to=2-10]
	\arrow[from=3-11, to=2-10]
	\arrow[from=3-9, to=5-9]
	\arrow[from=3-11, to=5-11]
	\arrow[from=6-10, to=5-9]
	\arrow[from=6-10, to=5-11]
	\arrow[from=8-3, to=7-4]
	\arrow[from=7-4, to=8-5]
	\arrow[from=8-3, to=10-3]
	\arrow[from=8-5, to=10-5]
	\arrow[from=10-3, to=11-4]
	\arrow[from=11-4, to=10-5]
	\arrow[from=7-10, to=8-9]
	\arrow[from=8-9, to=10-9]
	\arrow[from=11-10, to=10-9]
	\arrow[from=10-11, to=11-10]
	\arrow[from=8-11, to=7-10]
	\arrow[from=8-11, to=10-11]
\end{tikzcd}\]

The choice lax/oplax will be the one that will correspond to functor of polycategories.
This comes from the universal properties of $\otimes$ and $\parr$ in a polycategory.
For the functor we will have a polymap $\Gamma \to \otimes \Gamma$.
Applying the functor $F$ this will give a polymap $F(\Gamma) \to F(\otimes \Gamma)$.
Finally the universal property of $\otimes$ will let us build a unique functor $\otimes F(\Gamma) \to F(\otimes \Gamma)$.
For the $\parr$ it will go in the other direction.

However, this notion of functor between luldcs is sometimes too restrictive, as argued in \cite{CockettSeely1999}.
In general, one might want a pair of functors, one interacting with each monoidal structure.
Let $F_\otimes$ be lax/oplax with respect to $\otimes$ and $F_\parr$ be lax/oplax with respect to $\parr$.
Then we will have two hexagons similar to the one above.
First:
\[\resizebox{\hsize}{!}{\begin{tikzcd}[ampersand replacement=\&]
	\& {\otimes(F_\otimes(\Gamma_1),F_\otimes(\parr(\Delta_1,A,\Delta_2)),F_\otimes(\Gamma_2))} \\
	{\otimes(F_\otimes(\Gamma_1),\parr(F_\parr(\Delta_1),F_\otimes(A),F_\parr(\Delta_2)),F_\otimes(\Gamma_2))} \&\& {F_\otimes(\otimes(\Gamma_1,\parr(\Delta_1,A,\Delta_2),\Gamma_2))} \\
	\\
	{\parr(F_\parr(\Delta_1),\otimes(F_\otimes(\Gamma_1),F_\otimes(A),F_\otimes(\Gamma_2)),F_\parr(\Delta_2))} \&\& {F_\otimes(\parr(\Delta_1,\otimes(\Gamma_1,A,\Gamma_2),\Delta_2))} \\
	\& {\parr(F_\parr(\Delta_1),F_\otimes(\otimes(\Gamma_1,A,\Gamma_2)),F_\parr(\Delta_2))}
	\arrow[no head, from=1-2, to=2-1]
	\arrow["\delta"{description}, from=2-1, to=4-1]
	\arrow[no head, from=4-1, to=5-2]
	\arrow[no head, from=1-2, to=2-3]
	\arrow["{F_\otimes(\delta)}"{description}, from=2-3, to=4-3]
	\arrow[no head, from=4-3, to=5-2]
\end{tikzcd}}\]
The top-left and bottom-right arrow correspond to some strength-like rule connecting $F_\otimes$ and $\parr$ relating $F_\otimes(\parr(\Delta_1,A,\Delta_2))$ and $\parr(F_\parr(\Delta_1),F_\otimes(A), F_\parr(\Delta_2))$.
We will talk of a $\parr$-costrength when we have a natural transformation:
$F_\otimes(\parr(\Delta_1,A,\Delta_2)) \to \parr(F_\parr(\Delta_1),F_\otimes(A), F_\parr(\Delta_2))$ and a $\parr$-strength when it goes the other way.
The top-right and bottom-left ones correpond to some lax/oplax condition on $F_\otimes$.
Like for the interaction between $\otimes$-laxity and $\parr$-laxity only three choices makes sense: $\otimes$-lax with $\parr$-costrength, $\otimes$-lax with $\parr$-strength and $\otimes$-oplax with $\parr$-costrength.

We also have the following hexagon:
\[\resizebox{\hsize}{!}{\begin{tikzcd}[ampersand replacement=\&]
	\& {\otimes(F_\otimes(\Gamma_1),F_\parr(\parr(\Delta_1,A,\Delta_2)),F_\otimes(\Gamma_2))} \\
	{\otimes(F_\otimes(\Gamma_1),\parr(F_\parr(\Delta_1),F_\parr(A),F_\parr(\Delta_2)),F_\otimes(\Gamma_2))} \&\& {F_\parr(\otimes(\Gamma_1,\parr(\Delta_1,A,\Delta_2),\Gamma_2))} \\
	\\
	{\parr(F_\parr(\Delta_1),\otimes(F_\otimes(\Gamma_1),F_\parr(A),F_\otimes(\Gamma_2)),F_\parr(\Delta_2))} \&\& {F_\parr(\parr(\Delta_1,\otimes(\Gamma_1,A,\Gamma_2),\Delta_2))} \\
	\& {\parr(F_\parr(\Delta_1),F_\parr(\otimes(\Gamma_1,A,\Gamma_2)),F_\parr(\Delta_2))}
	\arrow[no head, from=1-2, to=2-1]
	\arrow["\delta"{description}, from=2-1, to=4-1]
	\arrow[no head, from=4-1, to=5-2]
	\arrow[no head, from=1-2, to=2-3]
	\arrow["{F_\parr(\delta)}"{description}, from=2-3, to=4-3]
	\arrow[no head, from=4-3, to=5-2]
\end{tikzcd}}\]
Where this time the top-left and bottom-right arrow correspond to $\parr$-laxity condition and the top-right and bottom-left to $\otimes$-strength condition.
The possible arrangement this time are: $\parr$-oplax with $\otimes$-strength, $\parr$-lax with $\otimes$-strength and $\parr$-oplax with $\otimes$-costrength.

Putting everything together, out of the 16 possibilities of $\otimes$/$\parr$-laxity/strength, 7 are ruled out.

The one that we will be interested in and that correspond to the one proposed in \cite{CockettSeely1999} is the $\otimes$-lax, $\parr$-oplax with $\otimes$-strengh and $\parr$-costrength.

\begin{definition}
A \defin{linear functor} between luldc is a pair of functors $F_\otimes, F_\parr \colon \cC \to \cD$ such that $F_\otimes$ is $\otimes$-lax monoidal, $F_\parr$ is $\parr$-oplax monoidal, equipped with a $\otimes$-strength natural transformation $\nu_\parr \otimes(F_\otimes(\Gamma_1), F_\parr(A),F_\otimes(\Gamma_2)) \to F_\parr(\otimes(\Gamma_1,A,\Gamma_2))$ and a $\parr$-costrength natural transformation $\nu_\otimes \colon F_\otimes(\parr(\Delta_1,A,\Delta_2)) \to \parr(F_\parr(\Delta_1),F_\otimes(A),F_\parr(\Delta_2))$, such that the following diagrams commute:
\[\resizebox{\hsize}{!}{\begin{tikzcd}[ampersand replacement=\&]
	\& {\otimes(F_\otimes(\Gamma_1),F_\otimes(\parr(\Delta_1,A,\Delta_2)),F_\otimes(\Gamma_2))} \\
	{\otimes(F_\otimes(\Gamma_1),\parr(F_\parr(\Delta_1),F_\otimes(A),F_\parr(\Delta_2)),F_\otimes(\Gamma_2))} \&\& {F_\otimes(\otimes(\Gamma_1,\parr(\Delta_1,A,\Delta_2),\Gamma_2))} \\
	\\
	{\parr(F_\parr(\Delta_1),\otimes(F_\otimes(\Gamma_1),F_\otimes(A),F_\otimes(\Gamma_2)),F_\parr(\Delta_2))} \&\& {F_\otimes(\parr(\Delta_1,\otimes(\Gamma_1,A,\Gamma_2),\Delta_2))} \\
	\& {\parr(F_\parr(\Delta_1),F_\otimes(\otimes(\Gamma_1,A,\Gamma_2)),F_\parr(\Delta_2))}
	\arrow["{\otimes(-,\nu_\otimes,-)}"{description}, from=1-2, to=2-1]
	\arrow["\delta"{description}, from=2-1, to=4-1]
	\arrow["{\parr(-,(F_\otimes)_{m_1+1+m_2},-)}"{description}, from=4-1, to=5-2]
	\arrow["{(F_\otimes)_{m_1+1+m_2}}"{description}, from=1-2, to=2-3]
	\arrow["{F_\otimes(\delta)}"{description}, from=2-3, to=4-3]
	\arrow["{\nu_\otimes}"{description}, from=4-3, to=5-2]
\end{tikzcd}}\]

\[\begin{tikzcd}[ampersand replacement=\&]
	{F_\otimes(A)} \&\& {F_\otimes(\parr_1 A)} \\
	\\
	\&\& {\parr_1 F_\otimes(A)}
	\arrow["{F_\otimes(\iota_A)}"{description}, from=1-1, to=1-3]
	\arrow["{\nu_\otimes}"{description}, from=1-3, to=3-3]
	\arrow["{\iota_{F_\otimes(A)}}"{description}, from=1-1, to=3-3]
\end{tikzcd}\]

\[\resizebox{\hsize}{!}{\begin{tikzcd}[ampersand replacement=\&]
	\& {\otimes(F_\otimes(\Gamma_1),F_\parr(\parr(\Delta_1,A,\Delta_2)),F_\otimes(\Gamma_2))} \\
	{\otimes(F_\otimes(\Gamma_1),\parr(F_\parr(\Delta_1),F_\parr(A),F_\parr(\Delta_2)),F_\otimes(\Gamma_2))} \&\& {F_\parr(\otimes(\Gamma_1,\parr(\Delta_1,A,\Delta_2),\Gamma_2))} \\
	\\
	{\parr(F_\parr(\Delta_1),\otimes(F_\otimes(\Gamma_1),F_\parr(A),F_\otimes(\Gamma_2)),F_\parr(\Delta_2))} \&\& {F_\parr(\parr(\Delta_1,\otimes(\Gamma_1,A,\Gamma_2),\Delta_2))} \\
	\& {\parr(F_\parr(\Delta_1),F_\parr(\otimes(\Gamma_1,A,\Gamma_2)),F_\parr(\Delta_2))}
	\arrow["{\otimes(-,(F_\parr)_{n_1+1+n_2},-)}"{description}, from=1-2, to=2-1]
	\arrow["\delta"{description}, from=2-1, to=4-1]
	\arrow["{\parr(-,\nu_\parr,-)}"{description}, from=4-1, to=5-2]
	\arrow["{\nu_\parr}"{description}, from=1-2, to=2-3]
	\arrow["{F_\parr(\delta)}"{description}, from=2-3, to=4-3]
	\arrow["{(F_\parr)_{n_1+1+n_2}}"{description}, from=4-3, to=5-2]
\end{tikzcd}}\]

\[\begin{tikzcd}[ampersand replacement=\&]
	{F_\parr(A)} \&\& {\otimes_1F_\parr(A)} \\
	\\
	\&\& {F_\parr(\otimes_1 A)}
	\arrow["{\eta_{F_\parr(A)}}"{description}, from=1-3, to=1-1]
	\arrow["{\nu_\parr}"{description}, from=3-3, to=1-3]
	\arrow["{F_\parr(\eta_A)}"{description}, from=3-3, to=1-1]
\end{tikzcd}\]

\end{definition}

\begin{rk}
This strength and co-strength and their coherence laws are generalisations of the ones in \cite{CockettSeely1999}.
It should be possible to extend the correspondence between unbiased linearly distributive categories and biased ones to an equivalence of categories where the morphisms are linear functors.
\end{rk}

As mentioned above, these are more general than what we would get by considering functors between the underlying polycategories.
On the nlab, the latter are called Frobenius linear functors.

\begin{definition}
A \defin{Frobenius linear functor} between luldc is a linear functor such that $F_\otimes = F_\parr$ and the $\otimes$-strength is laxity $\nu_\parr = (F_\otimes)_{m_1+1+m_2}$ and the $\parr$-costrength is oplaxity $\nu_\otimes = (F_\parr)_{n_1+1+n_2}$.
\end{definition}

\begin{definition}
A \defin{linear transformation} between linear functors consists of a pair of monoidal transformations $\alpha_\otimes \colon F_\otimes \Rightarrow G_\otimes$ and $\alpha_\parr \colon F_\parr \Rightarrow G_\parr$ such that the following diagrams commute:
\[\resizebox{\hsize}{!}{\begin{tikzcd}[ampersand replacement=\&]
	{\otimes(F_\otimes(\Gamma_1),G_\parr(A),F_\otimes(\Gamma_2))} \&\& {\otimes(F_\otimes(\Gamma_1),F_\parr(A),F_\otimes(\Gamma_2))} \\
	\\
	{\otimes(G_\otimes(\Gamma_1),G_\parr(A),G_\otimes(\Gamma_2))} \&\& {F_\parr(\otimes(\Gamma_1,A,\Gamma_2))} \\
	\& {G_\parr(\otimes(\Gamma_1,A,\Gamma_2))} \\
	\& {F_\otimes(\parr(\Delta_1,A,\Delta_2))} \\
	{\parr(F_\parr(\Delta_1),F_\otimes(A),F_\parr(\Delta_2))} \&\& {G_\otimes(\parr(\Delta_1,A,\Delta_2))} \\
	\\
	{\parr(F_\parr(\Delta_1),G_\otimes(A),F_\parr(\Delta_2))} \&\& {\parr(G_\parr(\Delta_1),G_\otimes(A),G_\parr(\Delta_2))}
	\arrow["{\otimes(-,\alpha_\parr,-)}"{description}, from=1-3, to=1-1]
	\arrow["{\nu_\parr}"{description}, from=1-3, to=3-3]
	\arrow["{\alpha_\parr}"{description}, from=3-3, to=4-2]
	\arrow["{\otimes(\alpha_\otimes,-,\alpha_\otimes)}"{description}, from=1-1, to=3-1]
	\arrow["{\nu_\parr}"{description}, from=3-1, to=4-2]
	\arrow["{\nu_\otimes}"{description}, from=5-2, to=6-1]
	\arrow["{\parr(-,\alpha_\otimes,-)}"{description}, from=6-1, to=8-1]
	\arrow["{\alpha_\otimes}"{description}, from=5-2, to=6-3]
	\arrow["{\nu_\otimes}"{description}, from=6-3, to=8-3]
	\arrow["{\parr(\alpha_\parr,-,\alpha_\parr)}"{description}, from=8-1, to=8-3]
\end{tikzcd}}\]
\end{definition}

Notice that if $F$ is a Frobenius functor and $\alpha_\otimes$ is a natural transformation that is monoidal with respect to $\otimes$ and $\parr$ then $(\alpha,\alpha)$ is a linear transformation.
The two pentagons commute by monoidality:

\begin{rk}
In \cite{CockettSeely1999}, the notion of linear transformation is taken to be a pair of $\alpha_\otimes \colon F_\otimes \Rightarrow G_\otimes$ and $\alpha_\parr \colon G_\parr \Rightarrow F_\parr$ satisfying similar coherence conditions as above.
The reverse direction of $\alpha_\parr$ is justified by the definition of $F_\parr$ and $\alpha_\parr$ as $\ldual{F(\rdual{(-)})}$ and $\ldual{\alpha_{\rdual{(-)}}}$ when $F$ and $\alpha$ are monoidal functors/transformations between $\ast$-autonomous categories.
However with this definition, the equivalence between the category of linearly distributive categories and Frobenius functors and the category of two-tensor polycategories and functors does not extend to a 2-equivalence.
\end{rk}

From now on we will write \LULDClin (\LNULDClin), and \LULDCFrob (\LNULDCFrob) for the 2-categories of lax (normal) unbiased linearly distributive categories with linear/Frobenius functors and linear transformations.

\subsection{Comments}

Some things that I haven't mentioned or proven in this section.

\begin{prop}
A linear functor $(F_\otimes, F_\parr)$ between $\ast$-autonomous categories is equivalent to the data of a monoidal functor $F_\otimes$ such that $\ldual{F_\otimes(\rdual{A})} \simeq \rdual{F_\otimes(\ldual{A})}$.
\end{prop}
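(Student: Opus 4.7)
The plan is to establish the equivalence by constructing each datum from the other and then verifying that the two constructions are mutually inverse up to canonical isomorphism. The underlying intuition is that, in a $\ast$-autonomous category, the de Morgan dualities $\ldual{(A \otimes B)} \simeq \ldual{B} \parr \ldual{A}$ and $\rdual{(A \parr B)} \simeq \rdual{B} \otimes \rdual{A}$ let us transpose any $\parr$-structure on $F_\parr$ to a $\otimes$-structure on a functor of the form $A \mapsto \ldual{F_\otimes(\rdual{A})}$, so $F_\parr$ should essentially be forced by $F_\otimes$ and the dualities.

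For the forward direction, starting from a linear functor $(F_\otimes, F_\parr, \nu_\otimes, \nu_\parr)$, I would first exhibit a canonical natural isomorphism $F_\parr(A) \simeq \ldual{F_\otimes(\rdual{A})}$. This is obtained by feeding the unit/counit of the duality $A \dashv \rdual{A}$ through the strengths: $\nu_\otimes$ applied to $\rcup_A \colon I \to A \parr \rdual{A}$ (together with the monoidality of $F_\otimes$) produces the counit of an adjunction witnessing $F_\parr(\rdual{A})$ as a left dual of $F_\otimes(A)$, and symmetrically for the unit. By uniqueness of duals (proved earlier for luldcs), one extracts the iso. The same argument with left duals yields $F_\parr(A) \simeq \rdual{F_\otimes(\ldual{A})}$, and composing these two gives the required isomorphism $\ldual{F_\otimes(\rdual{A})} \simeq \rdual{F_\otimes(\ldual{A})}$.

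For the backward direction, given a monoidal $F_\otimes$ together with the iso $\theta_A \colon \ldual{F_\otimes(\rdual{A})} \simeq \rdual{F_\otimes(\ldual{A})}$, I define $F_\parr(A) := \ldual{F_\otimes(\rdual{A})}$ (using $\theta$ to identify this with the right-dual version when convenient). The $\parr$-oplax structure comes from the composite
\[
\ldual{F_\otimes(\rdual{(A \parr B)})} \simeq \ldual{F_\otimes(\rdual{B} \otimes \rdual{A})} \xrightarrow{\ldual{(F_\otimes)_2}} \ldual{(F_\otimes(\rdual{B}) \otimes F_\otimes(\rdual{A}))} \simeq \ldual{F_\otimes(\rdual{A})} \parr \ldual{F_\otimes(\rdual{B})}
\]
using the $\ast$-autonomous de Morgan iso in the last step, and analogously for the unit $\bot \simeq \ldual{I}$. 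The strengths $\nu_\otimes$ and $\nu_\parr$ are obtained by currying the monoidal coherences of $F_\otimes$ through the duality adjunctions, and it is here that $\theta$ is essential to make the two choices of dual align so that both $\nu_\otimes$ and $\nu_\parr$ are well-defined natural transformations.

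The main obstacle is the verification of the two coherence hexagons for a linear functor together with the unit triangles. Each reduces, via the universal property of duals and naturality, to a coherence already satisfied by $(F_\otimes)_n$, the distributivity $\delta$, and the snake identities for $\lcup, \lcap, \rcup, \rcap$; however, in the non-symmetric setting the two possible paths through the hexagons land in $\ldual{F_\otimes(\rdual{-})}$ and $\rdual{F_\otimes(\ldual{-})}$ respectively, and it is precisely $\theta$ that identifies them. Finally, to close the equivalence I would check that starting from a linear functor, extracting $F_\otimes$ with its $\theta$, and reconstructing $F_\parr$ recovers the original $F_\parr$ up to the canonical iso above, and that the other composite is the identity on $(F_\otimes, \theta)$ by construction.
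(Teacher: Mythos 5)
The paper offers no proof of this proposition: it sits in the ``Comments'' subsection, which the author prefaces with ``Some things that I haven't mentioned or proven in this section,'' and the only hint given is the remark that in the backward direction one defines $F_\parr(-) := \ldual{F_\otimes(\rdual{(-)})}$. Your outline is the standard argument (essentially the one in Cockett--Seely) and is consistent with that hint, so there is no competing approach in the paper to compare against; on its merits your plan is sound and both directions are set up correctly, including the de Morgan transport of the $\parr$-oplax structure through $\ldual{(F_\otimes)_2}$.

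Two points need tightening. First, in the forward direction the real work is not merely ``feeding the unit/counit through the strengths'': you must check that the candidate cup $\otimes_0 \to \parr_2(F_\otimes(A), F_\parr(\rdual{A}))$ (built from $\rcup_A$ via the lax unit of $F_\otimes$ and the costrength $\nu_\otimes$) and the candidate cap $\otimes_2(F_\parr(\rdual{A}), F_\otimes(A)) \to \parr_0$ (built from $\rcap_A$ via the strength $\nu_\parr$ and the nullary oplax structure of $F_\parr$) satisfy the snake identities; it is precisely here, and not only in the backward direction, that the linear-functor hexagons and unit triangles are consumed. Only after that can you invoke uniqueness of duals to obtain $F_\parr(\rdual{A}) \simeq \rdual{F_\otimes(A)}$ and hence the two isomorphisms you compose. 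Second, the proposition asserts an equivalence of \emph{data}, and a linear functor comprises not just the pair $(F_\otimes, F_\parr)$ but also $\nu_\otimes$, $\nu_\parr$ and the $\parr$-oplax structure of $F_\parr$; your round-trip check only verifies that $F_\parr$ itself is recovered up to canonical isomorphism. You must additionally argue that the strengths and the oplax structure are \emph{determined} by $F_\otimes$ together with the duality data (they are, being forced by the adjunctions $A \otimes - \dashv \ldual{A} \parr -$ available in a $\ast$-autonomous category), since otherwise the reconstruction is not surjective onto linear functors and the claimed equivalence of data would fail.
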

In the latter case we define $F_\parr(-) := \ldual{F_\otimes(\rdual{(-)})}$.

This gives a situation where the following diagram commutes in the 2-category of oplax monoidal categories, lax monoidal functors and monoidal transformations:
\[\begin{tikzcd}[ampersand replacement=\&]
	{(\cC,\otimes_n)} \& {(\cC^{\op},\parr_n^{\op})} \\
	\\
	\\
	{(\cD,\otimes_n)} \& {(\cD^{\op},\parr_n^{\op})}
	\arrow["{F_\otimes}"{description}, from=1-1, to=4-1]
	\arrow["{F_\parr}"{description}, from=1-2, to=4-2]
	\arrow[""{name=0, anchor=center, inner sep=0}, "{\rdual{(-)}}"{description}, curve={height=-18pt}, from=1-2, to=1-1]
	\arrow[""{name=1, anchor=center, inner sep=0}, "{\ldual{(-)}}"{description}, curve={height=-18pt}, from=1-1, to=1-2]
	\arrow[""{name=2, anchor=center, inner sep=0}, "{\ldual{(-)}}"{description}, curve={height=-18pt}, from=4-1, to=4-2]
	\arrow[""{name=3, anchor=center, inner sep=0}, "{\rdual{(-)}}"{description}, curve={height=-24pt}, from=4-2, to=4-1]
	\arrow["\simeq"{description}, shorten <=5pt, shorten >=5pt, Rightarrow, from=1, to=0]
	\arrow["\simeq"{description}, shorten <=6pt, shorten >=6pt, Rightarrow, from=2, to=3]
\end{tikzcd}\]

When considering functors between $\ast$-autonomous categories we also have that:
\begin{prop}
A Frobenius linear functor between $\ast$-autonomous categories is equivalent to the data of a monoidal functor $F$ that preserves the dual $F(\ldual{-}) \simeq \ldual{F(-)}$ and $F(\rdual{-}) \simeq \rdual{F(-)}$. 
\end{prop}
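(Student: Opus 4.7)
The plan is to establish the equivalence in both directions by using the characterisation of $\parr$ in a $\ast$-autonomous category as $\parr_n(-) \simeq \rdual{(\otimes_n \ldual{(-)})}$, which was established earlier, together with the previous proposition on linear functors between $\ast$-autonomous categories.

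For the forward direction, suppose $(F_\otimes, F_\parr)$ is a Frobenius linear functor with $F_\otimes = F_\parr =: F$. By the previous proposition, $F_\parr(-) \simeq \ldual{F_\otimes(\rdual{-})}$, so $F(A) \simeq \ldual{F(\rdual{A})}$ naturally in $A$. Applying this at $\ldual{A}$ and using the duality isomorphism $\rdual{(\ldual{A})} \simeq A$ gives $F(\ldual{A}) \simeq \ldual{F(A)}$, and symmetrically $F(\rdual{A}) \simeq \rdual{F(A)}$. The functor $F$ is $\otimes$-lax monoidal by definition, so $F$ is a monoidal functor preserving duals.

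For the backward direction, given a monoidal functor $F$ with $F(\ldual{-}) \simeq \ldual{F(-)}$ and $F(\rdual{-}) \simeq \rdual{F(-)}$, set $F_\otimes := F$ and $F_\parr := F$. The $\parr$-oplaxity $F_n^\parr \colon F(\parr_n(A_i)) \to \parr_n(F(A_i))$ is built from the sequence
\[ F(\rdual{(\otimes_n \ldual{A_i})}) \xrightarrow{\sim} \rdual{F(\otimes_n \ldual{A_i})} \xrightarrow{\rdual{F_n^\otimes}} \rdual{(\otimes_n F(\ldual{A_i}))} \xrightarrow{\sim} \rdual{(\otimes_n \ldual{F(A_i)})} \]
using the preservation of duals twice and the $\otimes$-laxity once. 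The $\parr$-costrength $\nu_\otimes$ is then taken to equal this oplaxity, and symmetrically the $\otimes$-strength $\nu_\parr$ is taken to equal the $\otimes$-laxity, which is precisely the Frobenius condition.

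The main obstacle will be verifying the two hexagonal coherence diagrams for a linear functor in this setting. The plan is to check that, under the identification $\parr = \rdual{(\otimes \ldual{-})}$, the distributivity $\delta$ is itself obtainable from $\otimes$, the duals and the cups/caps, so that the hexagons reduce (after transporting along the dual-preservation isomorphisms) to instances of the monoidal functor coherence for $F_\otimes$ together with the snake/triangle identities for the duals. Coherence for the unit hexagons reduces similarly to the identification $\bot \simeq \ldual{I} \simeq \rdual{I}$ preserved by $F$. Finally, the two constructions are mutually inverse up to canonical isomorphism: starting with a Frobenius linear functor and extracting the dual-preservation iso, then rebuilding the Frobenius structure, returns the original data by naturality and by the Frobenius identification of $\nu_\parr$ with laxity, $\nu_\otimes$ with oplaxity.
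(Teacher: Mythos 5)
The paper states this proposition in a ``Comments'' subsection that explicitly lists results it does \emph{not} prove, so there is no proof of record to compare against; I can only assess your argument on its own terms. Your overall strategy is the natural one and is correct in outline: the forward direction correctly specialises the preceding proposition ($F_\parr(-) \simeq \ldual{F_\otimes(\rdual{(-)})}$, and symmetrically $\rdual{F_\otimes(\ldual{(-)})}$) to the Frobenius case $F_\otimes = F_\parr$ and then substitutes $\ldual{A}$ (resp.\ $\rdual{A}$) using double-negation elimination, and your backward construction of the $\parr$-oplaxity via $\parr_n(-) \simeq \rdual{(\otimes_n \ldual{(-)})}$ is type-correct, with the Frobenius identities holding by fiat.

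Two points deserve more care before this counts as a proof. First, the phrase ``preserves the dual'' must be read as more than the existence of a bare natural isomorphism $F(\ldual{-}) \simeq \ldual{F(-)}$: for a merely lax monoidal $F$ the object $F(\ldual{A})$ need not be a dual of $F(A)$ at all, and the hexagons you plan to verify will only close if the given isomorphisms are compatible with the cups and caps (equivalently, if they are the canonical comparison maps exhibiting $F(\ldual{A})$ as a left dual of $F(A)$ via $F(\lcup_A)$, $F(\lcap_A)$ and the laxity). An arbitrary natural isomorphism can be twisted by a nontrivial automorphism and would break your reduction to the snake identities, so you should either build this compatibility into the statement or show it is automatic. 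Second, the verification of the two hexagon coherence diagrams (and the unit triangles, and the fact that your formula for $F^\parr_n$ genuinely satisfies oplax monoidal coherence) is the entire substance of the backward direction, and you only sketch the reduction; the reduction itself is plausible — $\delta$ is indeed expressible from $\otimes$, duals and cups/caps in a $\ast$-autonomous category — but as written it is a plan rather than an argument, and it is exactly the part the paper also declines to carry out.
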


It would be interesting to consider how this can be relaxed.
I don't think that similar statements can be made in the full generality of (Frobenius) linear functors between lax unbiased linearly distributive categories.
However, it could be the case that it is true when restricted to the lax normal case.

\section{Polycategories}

There are several different definitions of ``polycategory'' in the literature.
We will consider the following definition of (non-symmetric) polycategory due to Cockett and Seely \cite{CockettSeely1997}, which differs slightly from Szabo's original definition \cite{Szabo1975} in imposing a planarity condition on composition.
The ideas in this thesis may be transferred in an almost straightforward way to the setting of symmetric polycategories (cf.~\cite{Hyland2002,Shulman2020}), but we work with planar polycategories for the sake of greater generality.

\begin{definition}
  A polycategory $\mathcal{P}$ consists of:
  \begin{itemize}
  \item a collection of objects $Ob(\mathcal{P})$
  \item for any pair of finite lists of objects $\Gamma$ and $\Delta$, a set $\mathcal{P}(\Gamma;\Delta)$ of polymaps from $\Gamma$ to $\Delta$ denoted $f \colon \Gamma \to \Delta$ (we refer to objects in $\Gamma$ as inputs of $f$, and to objects in $\Delta$ as outputs)
  \item for every object $A$, an identity polymap $\id_A : A \to A$
  \item for any pair of polymaps $f \colon \Gamma \to \Delta_1 , A, \Delta_2$ and $g \colon \Gamma_1', A, \Gamma_2' \to \Delta'$
    satisfying the restriction that [either $\Delta_1$ or $\Gamma_1'$ is empty] and [either $\Delta_2$ or $\Gamma_2'$ is empty],
    a polymap $g \circ_A f \colon \Gamma_1', \Gamma, \Gamma_2' \to \Delta_1, \Delta', \Delta_2$
  \end{itemize}
  subject to appropriate unitality, associativity, and interchange laws whenever these make sense:
  \begin{align}
    \id_A \circ_A f &= f \label{eqn:unit1}\\
    f \circ_A \id_A &= f \label{eqn:unit2} \\
    (h \circ_B g) \circ_A f &= h \circ_B (g \circ_A f) \label{eqn:assoc} \\
    (h \circ_B g) \circ_A f &= (h \circ_A f) \circ_B g \label{eqn:int1} \\
    h \circ_B (g \circ_A f) &= g \circ_A (h \circ_B f) \label{eqn:int2}
  \end{align}
\end{definition}

\begin{remark}
  The notation $\circ_A$ for the composition can be ambiguous when there are multiple copies of the same object.
  This can be dealt with more carefully by indexing or labelling each input and output of a polymap.
  However, we will stick with the more relaxed (albeit less precise) notation in this thesis, since it will never lead to ambiguity in the examples.
\end{remark}
\begin{remark}
  We will sometimes find it useful to represent polymaps by string diagrams.
  In this diagrammatic syntax, the composition operation may be depicted schematically as follows:
  \begin{center}
    {\scalebox{0.7}{\tikzfig{comp}}}
  \end{center}
  The restriction on the composition operation that either $\Delta_1$ or $\Gamma'_1$ is empty and that either $\Delta_2$ or $\Gamma'_2$ is empty is called a ``planarity'' condition, since in the picture above it means that there are actually no crossing wires.
  In general, the string diagram of a polymap corresponds to a planar tree with the edges oriented from left to right, and the polycategory axioms correspond to natural isotopies between diagrams.
  For example, the interchange law \eqref{eqn:int1} states that when composing along two different inputs, the order should not matter:
  \begin{center}\scalebox{0.7}{\tikzfig{comm}}\end{center}
  This justifies drawing the two polymaps $f$ and $g$ above on the same level, as we will sometimes do in examples.
\end{remark}

\begin{definition}
Given polycategories \cP and \cQ , a functor of polycategories $F$ is the data of:
\begin{itemize}
\item for each $A$ in \cP , an object $F(A)$ in \cQ
\item for each $f \colon A_1,\dots,A_m \to B_1, \dots, B_n$ in \cP , a polymap \[F(f) \colon F(A_1),\dots, F(A_m) \to F(B_1), \dots, F(B_n)\] in \cQ
\end{itemize}
such that 
\begin{itemize}
\item $F(\id_A) = \id_{F(A)}$
\item $F(g \circ_A f) = F(g) \circ_{F(A)} F(f)$
\end{itemize}
\end{definition}

\begin{definition}
Given functors of polycategories $F, G \colon \cP \to \cQ$, a natural transformation $\alpha \colon F \Rightarrow G$ is a family of polymaps $\alpha_A \colon F(A) \to G(A)$ such that the following diagram commutes:
\[\begin{tikzcd}[ampersand replacement=\&]
	{F(A_1),\dots, F(A_m)} \&\& {F(B_1),\dots,F(B_n)} \\
	\\
	{G(A_1),\dots,G(A_m)} \&\& {G(B_1),\dots, G(B_n)}
	\arrow["{F(f)}"{description}, from=1-1, to=1-3]
	\arrow["{\gamma_{B_1},\dots,\gamma_{B_n}}"{description}, from=1-3, to=3-3]
	\arrow["{\gamma_{A_1}, \dots, \gamma_{A_m}}"{description}, from=1-1, to=3-1]
	\arrow["{G(f)}"{description}, from=3-1, to=3-3]
\end{tikzcd}\]
\end{definition}

\begin{prop}
There is a (strict) 2-category $\PolyCat$ of polycategories, functors and natural transformations.
\end{prop}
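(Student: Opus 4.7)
The plan is to verify directly that the data of polycategories (as 0-cells), functors (as 1-cells), and natural transformations (as 2-cells) assemble into a strict 2-category by exhibiting the composition operations and checking the usual axioms. Since everything is ultimately controlled by the composition operation $\circ_A$ inside a single polycategory (together with the associativity, unitality and interchange laws~\eqref{eqn:unit1}--\eqref{eqn:int2}), most of the verifications reduce to routine book-keeping.

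First I would define composition of functors: for $F \colon \cP \to \cQ$ and $G \colon \cQ \to \cR$, set $(G \circ F)(A) := G(F(A))$ on objects and $(G \circ F)(f) := G(F(f))$ on polymaps. Preservation of identities and of $\circ_A$ follows immediately from the corresponding properties of $F$ and $G$, and this composition is strictly associative and strictly unital for the evident identity functor. Next I would define vertical composition of natural transformations: for $\alpha \colon F \Rightarrow G$ and $\beta \colon G \Rightarrow H$, both $\cP \to \cQ$, set $(\beta \cdot \alpha)_A := \beta_A \circ_{G(A)} \alpha_A$. This makes sense because the components are unary polymaps, so the composition is a well-defined composition along the single object $G(A)$. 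Naturality of $\beta \cdot \alpha$ follows by pasting the naturality squares of $\alpha$ and $\beta$, using only associativity~\eqref{eqn:assoc} and the interchange laws~\eqref{eqn:int1}--\eqref{eqn:int2} to slide the components $\beta_{B_i}$ past $\alpha_{B_j}$ in the output side (and symmetrically on the input side); the identity 2-cell with components $\id_{F(A)}$ is a strict unit thanks to~\eqref{eqn:unit1}--\eqref{eqn:unit2}.

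Next I would introduce horizontal composition by whiskering. Given $\alpha \colon F \Rightarrow G \colon \cP \to \cQ$ and $H \colon \cQ \to \cR$, define $(H \alpha)_A := H(\alpha_A)$; given $K \colon \cO \to \cP$, define $(\alpha K)_A := \alpha_{K(A)}$. Naturality of each whiskered transformation is immediate by functoriality. Horizontal composition of two 2-cells $\alpha \colon F \Rightarrow G \colon \cP \to \cQ$ and $\alpha' \colon F' \Rightarrow G' \colon \cQ \to \cR$ is then defined either as $(G' \alpha) \cdot (\alpha' F)$ or as $(\alpha' G) \cdot (F' \alpha)$; equality of these two expressions (and thereby well-definedness) coincides with the middle-four interchange law, which I would verify component-wise by chasing the single naturality square for $\alpha'$ at the polymap $\alpha_A$, using again the polycategorical interchange laws.

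The main obstacle, such as it is, lies in the naturality checks for vertical and horizontal composition: the components live in different hom-sets of $\cQ$, so one must be careful that the polycategorical compositions really match up planarly. This is where the planarity restriction on $\circ_A$ and the interchange laws~\eqref{eqn:int1}, \eqref{eqn:int2} do the real work, but because all components of a natural transformation are unary, no genuinely non-planar situation ever arises and the checks go through. Finally, strictness of the 2-category is automatic: composition of functors is defined by strict equality on objects and polymaps, vertical composition of 2-cells is associative and unital on the nose by~\eqref{eqn:unit1}--\eqref{eqn:assoc}, and the interchange law holds as an equality rather than up to isomorphism. Hence $\mathbf{PolyCat}$ is a strict 2-category.
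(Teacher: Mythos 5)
Your verification is correct: defining composition of functors pointwise, vertical composition of transformations via $\circ_{G(A)}$ on unary components, horizontal composition by whiskering, and observing that planarity is automatic for unary--counary components is exactly the standard argument, and the interchange laws \eqref{eqn:int1}--\eqref{eqn:int2} are indeed what make the naturality and middle-four checks go through. The paper states this proposition without proof, treating it as routine, so your write-up simply supplies the expected details and there is no divergence to report.
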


\section{Two-tensor polycategories with duals}

Any lax unbiased linearly distributive category has an underlying polycategory.
In this section I will characterise the polycategories that arise this way.
Furthermore, I will describe those that are the underlying polycategory of (nonlax) linearly distributive categories.

\subsection{Underlying polycategory of a luldc}

Let $(\cC,\otimes_n,\parr_n)$ be a luldc.

We define a polycategory $\cP(\cC)$ with:
\begin{itemize}
\item objects, those of \cC
\item polymaps $f \colon A_1,\dots,A_m \to B_1, \dots, B_n$, maps $\otimes_m A_i \to \parr_n B_j$ in \cC
\item $\id_A \colon A \to A$ in $\cP(\cC)$, $\otimes_1 A \xrightarrow{\eta_A} A \xrightarrow{\iota_A} \parr_1 A$ in \cC
\item given $f \colon \otimes \Gamma \to \parr(\Delta_1,A,\Delta_2)$ and $f \colon \otimes(\Gamma_1, A, \Gamma_2) \to \parr \Delta$, we define $g\circ_A f$ by:

\begin{align*}
\otimes(\Gamma_1, \Gamma, \Gamma_2) &\xrightarrow{\alpha'} \otimes(\Gamma_1, \otimes \Gamma, \Gamma_2)\\
&\xrightarrow{\otimes(\Gamma_1,f,\Gamma_2)} \otimes(\Gamma_1, \parr(\Delta_1,A,\Delta_2), \Gamma_2)\\
&\xrightarrow{\delta} \parr(\Delta_1,\otimes(\Gamma_1,A,\Gamma_2),\Delta_2)\\
&\xrightarrow{\parr(\Delta_1,g,\Delta_2)} \parr(\Delta_1,\parr \Delta, \Delta_2)\\
&\xrightarrow{\gamma'} \parr(\Delta_1,\Delta,\Delta_2)
\end{align*}
\end{itemize}

Notice that the constraints on composition of polymaps and the existence of the distributivity law coincide.

\begin{prop}
For any luldc \cC, $\cP(\cC)$ forms a polycategory.
\end{prop}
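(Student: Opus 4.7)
The plan is to verify in turn: (i) composition is well-defined, (ii) the unit laws \eqref{eqn:unit1}--\eqref{eqn:unit2}, (iii) associativity \eqref{eqn:assoc}, and (iv) the two interchange laws \eqref{eqn:int1}--\eqref{eqn:int2}. Throughout we exploit the definition of $g \circ_A f$ as the obvious five-step pasting built from an associator of $\otimes$, the polymap $f$, the distributivity $\delta$, the polymap $g$, and an associator of $\parr$.

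First I would observe that the constraint on polycategorical composition (that either $\Delta_1$ or $\Gamma_1'$ and either $\Delta_2$ or $\Gamma_2'$ be empty) is exactly the constraint on the index pattern of the distributivity law $\delta_{m_1,n_1,n_2,m_2}$ in a luldc, so the composite $g \circ_A f$ makes sense precisely when it is meant to.

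For the unit laws \eqref{eqn:unit1}--\eqref{eqn:unit2}, I would unfold $\id_A \circ_A f$ and $f \circ_A \id_A$ into concrete pasting diagrams and reduce them using: naturality of $\alpha$, $\gamma$, $\eta$, $\iota$; the two unit coherence laws of a luldc relating $\delta$ with $\eta$ and $\iota$ (the last two hexagons in the definition); and the unit coherence axioms for the (op)lax monoidal structures. In both cases, the net effect is that the $\delta$-step collapses against an $\otimes_1$ or $\parr_1$ inserted by the identity, and the two associators on either side cancel with the (op)laxitors, leaving $f$.

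For associativity \eqref{eqn:assoc}, I would write out both sides of $(h \circ_B g) \circ_A f = h \circ_B (g \circ_A f)$ as long composites in $\cC$ and compare them via a diagram chase. The two key inputs are the associativity coherence of $\alpha$ and $\gamma$ separately, and the middle ``$\delta$-$\delta$'' pentagon in the definition of luldc (the second distributivity coherence law, relating $\alpha \cdot \otimes(-,\delta,-) \cdot \delta$ with $\delta \cdot \parr(-,\alpha,-)$). Here $B$ is an output of $g$ but need not be the same object as $A$, so the planarity hypothesis forces a specific arrangement of contexts which matches one of the ``shapes'' of $\delta$; the pentagon then provides the needed rewriting between the two bracketings.

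For the interchange laws, the idea is essentially the same. Law \eqref{eqn:int1} concerns composing $h$ simultaneously along two distinct outputs of some polymap, and \eqref{eqn:int2} concerns composing with two different polymaps into $B$ via $h$. In both cases the corresponding diagram in $\cC$ is manipulated by (a) the bifunctoriality of $\otimes_n$ and $\parr_n$ (so that two disjoint applications of $f$ and $g$ can be ``slid past each other''), (b) naturality of $\delta$, and (c) the same middle distributivity coherence used for associativity, applied to the configuration where the two cuts happen on different sides. The planarity condition on composition is what guarantees that the configuration always falls inside the domain of one of the four allowed $\delta_{m_1,n_1,n_2,m_2}$, so no non-existent distributor is required.

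The main obstacle is the associativity/interchange bookkeeping: the composite $g \circ_A f$ is a five-step pasting, and each axiom becomes an equality between two pastings of length roughly ten. I expect these to reduce uniformly via the pentagon for $\delta$ together with naturality, but the full diagrams are large; this is exactly the sort of calculation that luldc coherence is designed to discharge, and no new ingredient beyond the axioms listed in the definition of a luldc should be needed.
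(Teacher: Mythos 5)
Your overall strategy is the same as the paper's: a direct verification of unitality, associativity, and interchange by unfolding the five-step composite and chasing large diagrams built from naturality, functoriality of $\otimes_n$ and $\parr_n$, and the luldc coherence axioms; your identification of the planarity constraint with the index constraint on $\delta$, and your treatment of the unit and associativity laws, match the paper's proof.

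There is one misattribution worth correcting. You claim that the interchange laws are discharged by ``the same middle distributivity coherence used for associativity,'' i.e.\ the $\alpha$--$\delta$ pentagon relating $\delta \cdot \parr(-,\alpha,-)$ with $\alpha \cdot \otimes(-,\delta,-) \cdot \delta$. That axiom governs a \emph{single} distributivity interacting with reassociation of the tensor, and it is what associativity of $\circ_A$ needs. The interchange law for two cuts on two different inputs of $g$ instead produces a tensor containing \emph{two distinct} $\parr$-factors, $\otimes(\Gamma_1,\parr(\Delta_1,A),\Gamma_2,\parr(B,\Delta_2),\Gamma_3)$, which must be distributed in either order and then reconciled by $\gamma'$; this is exactly the content of the \emph{first} coherence hexagon in the definition of a luldc (the $\delta$--$\delta$--$\gamma'$ axiom), and it is independent of the pentagon. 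Without invoking that first axiom the interchange diagrams do not close, so your sketch as stated would stall at that step; with it, the argument goes through exactly as in the paper.
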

\begin{proof}
First, let us prove that the identity is unital.
The following diagram commutes:
\resizebox{\hsize}{!}{
\begin{tikzcd}[ampersand replacement=\&]
	\& {\otimes(\Gamma_1,\otimes_1A,\Gamma_2)} \&\& {\otimes(\Gamma_1,\parr_1 A,\Gamma_2)} \\
	{\otimes(\Gamma_1,A,\Gamma_2)} \&\& {\otimes(\Gamma_1,A,\Gamma_2)} \&\& {\parr_1(\otimes(\Gamma_1,A,\Gamma_2))} \\
	\\
	\&\& {\parr \Delta} \&\& {\parr_1(\parr \Delta)} \\
	\\
	\&\&\&\& {\parr \Delta}
	\arrow["{\alpha'}"{description}, from=2-1, to=1-2]
	\arrow["{\otimes(-,\eta_A,-)}"{description}, from=1-2, to=2-3]
	\arrow["{\otimes(-,\iota_A,-)}"{description}, from=2-3, to=1-4]
	\arrow["\delta"{description}, from=1-4, to=2-5]
	\arrow["{\parr_1(-,g,-)}"{description}, from=2-5, to=4-5]
	\arrow["{\gamma'}"{description}, from=4-5, to=6-5]
	\arrow[Rightarrow, no head, from=2-1, to=2-3]
	\arrow["{\iota_{\otimes(\Gamma_1,A,\Gamma_2)}}"{description}, from=2-3, to=2-5]
	\arrow["g"{description}, from=2-3, to=4-3]
	\arrow["{\iota_{\parr \Delta}}"{description}, from=4-3, to=4-5]
	\arrow[Rightarrow, no head, from=4-3, to=6-5]
\end{tikzcd}}
where the top left and bottom right triangles commutes by definition of 
$\alpha'$/$\gamma'$ and their coherence laws.
The other triangle is one of the coherence laws for a luldc, while the square is by naturality of $\iota$.
Similarly the following diagram commutes:
\[\begin{tikzcd}[ampersand replacement=\&]
	{\otimes \Gamma} \&\& {\otimes_1\otimes\Gamma} \&\& {\otimes_1\parr(\Delta_1,A,\Delta_2)} \\
	\&\&\&\&\& {\parr(\Delta_1,\otimes_1 A,\Delta_2)} \\
	\&\& \otimes\Gamma \&\& {\parr(\Delta_1,A,\Delta_2)} \\
	\&\&\&\&\& {\parr(\Delta_1,\parr_1 A, \Delta_2)} \\
	\&\&\&\& {\parr(\Delta_1,A,\Delta_2)}
	\arrow["{\alpha'}"{description}, from=1-1, to=1-3]
	\arrow["{\otimes_1f}"{description}, from=1-3, to=1-5]
	\arrow["\delta"{description}, from=1-5, to=2-6]
	\arrow["{\parr(-,\eta_A,-)}"{description}, from=2-6, to=3-5]
	\arrow["{\parr(-,\iota_A,-)}"{description}, from=3-5, to=4-6]
	\arrow["{\gamma'}"{description}, from=4-6, to=5-5]
	\arrow[Rightarrow, no head, from=3-5, to=5-5]
	\arrow["{\eta_{\parr(\Delta_1,A,\Delta_2)}}"{description}, from=1-5, to=3-5]
	\arrow["{\eta_{\otimes\Gamma}}"{description}, from=1-3, to=3-3]
	\arrow["f"{description}, from=3-3, to=3-5]
	\arrow[Rightarrow, no head, from=1-1, to=3-3]
\end{tikzcd}\]
Now we want to prove associativity of composition:
\[\resizebox{\hsize}{!}{\begin{tikzcd}[ampersand replacement=\&]
	{\otimes(\Gamma_1'',\Gamma_1',\Gamma,\Gamma_2',\Gamma_2'')} \&\& {\otimes(\Gamma_1'',\Gamma_1',\otimes\Gamma,\Gamma_2',\Gamma_2'')} \&\& {\otimes(\Gamma_1'',\Gamma_1',\parr(\Delta_1,A,\Delta_2),\Gamma_2',\Gamma_2'')} \\
	\\
	{\otimes(\Gamma_1'',\otimes(\Gamma_1',\Gamma,\Gamma_2'),\Gamma_2'')} \&\&\&\& {\parr(\Delta_1,\otimes(\Gamma_1'',\Gamma_1',A,\Gamma_2',\Gamma_2''),\Delta_2)} \\
	\\
	{\otimes(\Gamma_1'',\otimes(\Gamma_1',\otimes\Gamma,\Gamma_2'),\Gamma_2'')} \&\&\&\& {\parr(\Delta_1,\otimes(\Gamma_1'',\otimes(\Gamma_1',A,\Gamma_2'),\Gamma_2''),\Delta_2)} \\
	\\
	{\otimes(\Gamma_1'',\otimes(\Gamma_1',\parr(\Delta_1,A,\Delta_2),\Gamma_2'),\Gamma_2'')} \&\&\&\& {\parr(\Delta_1,\otimes(\Gamma_1'',\parr(\Delta_1',B,\Delta_2'),\Gamma_2''),\Delta_2)} \\
	\\
	{\otimes(\Gamma_1'',\parr(\Delta_1,\otimes(\Gamma_1',A,\Gamma_2'),\Delta_2),\Gamma_2'')} \&\&\&\& {\parr(\Delta_1,\parr(\Delta_1',\otimes(\Gamma_1'',B,\Gamma_2''),\Delta_2'),\Delta_2)} \\
	\\
	{\otimes(\Gamma_1'',\parr(\Delta_1,\parr(\Delta_1',B,\Delta_2'),\Delta_2),\Gamma_2'')} \&\&\&\& {\parr(\Delta_1,\parr(\Delta_1',\parr \Delta'',\Delta_2'),\Delta_2)} \\
	\\
	{\otimes(\Gamma_1'',\parr(\Delta_1,\Delta_1',B,\Delta_2',\Delta_2),\Gamma_2'')} \&\&\&\& {\parr(\Delta_1,\parr(\Delta_1',\Delta'',\Delta_2'),\Delta_2)} \\
	\\
	{\parr(\Delta_1,\Delta_1',\otimes(\Gamma_1'',B,\Gamma_2''),\Delta_2',\Delta_2)} \&\& {\parr(\Delta_1,\Delta_1',\parr \Delta,\Delta_2',\Delta_2)} \&\& {\parr(\Delta_1,\Delta_1',\Delta'',\Delta_2',\Delta_2)}
	\arrow["{\alpha'}"{description}, from=1-1, to=1-3]
	\arrow["{\otimes(-,f,-)}"{description}, from=1-3, to=1-5]
	\arrow["\delta"{description}, from=1-5, to=3-5]
	\arrow["{\parr(-,\alpha',-)}"{description}, from=3-5, to=5-5]
	\arrow["{\parr(-,\otimes(-,g,-),-)}"{description}, from=5-5, to=7-5]
	\arrow["{\parr(-,\delta,-)}"{description}, from=7-5, to=9-5]
	\arrow["{\parr(-,\parr(-,h,-),-)}"{description}, from=9-5, to=11-5]
	\arrow["{\parr(-,\gamma',-)}"{description}, from=11-5, to=13-5]
	\arrow["{\gamma'}"{description}, from=13-5, to=15-5]
	\arrow["{\alpha'}"{description}, from=1-1, to=3-1]
	\arrow["{\otimes(-,\alpha',-)}"{description}, from=3-1, to=5-1]
	\arrow["{\otimes(-,\otimes(-,f,-),-)}"{description}, from=5-1, to=7-1]
	\arrow["{\otimes(-,\delta,-)}"{description}, from=7-1, to=9-1]
	\arrow["{\otimes(-,\parr(-,g,-),-)}"{description}, from=9-1, to=11-1]
	\arrow["{\otimes(-,\gamma',-)}"{description}, from=11-1, to=13-1]
	\arrow["\delta"{description}, from=13-1, to=15-1]
	\arrow["{\parr(-,h,-)}"{description}, from=15-1, to=15-3]
	\arrow["{\gamma'}"{description}, from=15-3, to=15-5]
	\arrow["{\gamma'}"{description}, from=11-5, to=15-3]
	\arrow["{\gamma'}"{description}, from=9-5, to=15-1]
	\arrow["\delta"{description}, from=11-1, to=7-5]
	\arrow["\delta"{description}, from=9-1, to=5-5]
	\arrow["{\alpha'}"{description}, from=1-5, to=7-1]
	\arrow["{\alpha'}"{description}, from=1-3, to=5-1]
\end{tikzcd}}\]
where the laws involving $\alpha'$ and $\gamma'$ follow from similar ones for $\alpha$ and $\gamma$, namely from top-left to bottom-right: coherence law of an oplax monoidal structure, naturality, coherence law of a luldc, naturality, coherence law of a luldc, naturality and coherence law of a lax monoidal structure.

Now given $g \colon \otimes(\Gamma_1',A_1,\Gamma_2',A_2,\Gamma_3') \to \parr \Delta'$ and $f_1 \colon \otimes \Gamma_1 \to \parr(\Delta_1,A_1)$ and $f_2 \colon \otimes \Gamma_2 \to \parr(A_2,\Delta_2)$ we have $(g \circ f_1) \circ f_2 = (g \circ f_2) \circ f_1 \colon \otimes(\Gamma_1',\Gamma_1,\Gamma_2',\Gamma_2,\Gamma_3') \to \parr(\Delta_1,\Delta',\Delta_2)$.
This is given by the following commuting diagram:
\[\resizebox{\hsize}{!}{\begin{tikzcd}[ampersand replacement=\&]
	{\otimes(\Gamma_1',\Gamma_1,\Gamma_2',\Gamma_2,\Gamma_3')} \&\& {\otimes(\Gamma_1',\otimes \Gamma_1,\Gamma_2',\Gamma_2,\Gamma_3')} \&\& {\otimes(\Gamma_1',\parr(\Delta_1,A_1),\Gamma_2',\Gamma_2,\Gamma_3')} \&\& {\parr(\Delta_1,\otimes(\Gamma_1',A_1,\Gamma_2',\Gamma_2,\Gamma_3'))} \\
	\& {\text{coh. of oplax mon. cat.}} \&\& {\text{nat. of } \alpha'} \\
	{\otimes(\Gamma_1',\Gamma_1,\Gamma_2',\otimes \Gamma_2,\Gamma_3')} \&\& {\otimes(\Gamma_1',\otimes\Gamma_1,\Gamma_2',\otimes\Gamma_2,\Gamma_3')} \&\& {\text{nat. of } \delta} \&\& {\parr(\Delta_1,\otimes(\Gamma_1',A_1,\Gamma_2',\otimes\Gamma_2,\Gamma_3'))} \\
	\& {\otimes(\Gamma_1',\otimes\Gamma_1,\Gamma_2',\parr(A_2,\Delta_2),\Gamma_3')} \& {\text{funct. of } \otimes} \& {\otimes(\Gamma_1',\parr(A_1,\Delta_1),\Gamma_2',\otimes\Gamma_2,\Gamma_3')} \\
	{\otimes(\Gamma_1',\Gamma_1,\Gamma_2',\parr(A_2,\Delta_2),\Gamma_3')} \&\& {\otimes(\Gamma_1',\parr(\Delta_1,A_1),\Gamma_2',\parr(A_2,\Delta_2),\Gamma_3')} \&\&\&\& {\parr(\Delta_1,\otimes(\Gamma_1',A_1,\Gamma_2',\parr(A_2,\Delta_2),\Gamma_3'))} \\
	\& {\text{nat. of } \delta} \& {\text{coh. of luldc}} \\
	{\parr(\otimes(\Gamma_1',\Gamma_1,\Gamma_2',A_2,\Gamma_3'),\Delta_2)} \&\& {\parr(\Delta_1,\otimes(\Gamma_1',A_1,\Gamma_2',A_2,\Gamma_3'),\Delta_2)} \&\&\&\& {\parr(\Delta_1,\parr(\otimes(\Gamma_1',A_1,\Gamma_2',A_2,\Gamma_3'),\Delta_2))} \\
	\&\&\&\& {\text{nat. of } \gamma'} \\
	{\parr(\otimes(\Gamma_1',\otimes\Gamma_1,\Gamma_2',A_2,\Gamma_3'),\Delta_2)} \&\& {\parr(\Delta_1,\parr \Delta',\Delta_2)} \&\&\&\& {\parr(\Delta_1,\parr(\parr \Delta',\Delta_2))} \\
	\&\&\&\& {\text{coh. of lax mon. cat.}} \\
	{\parr(\otimes(\Gamma_1',\parr(\Delta_1,A_1),\Gamma_2',A_2,\Gamma_3'),\Delta_2)} \& {\text{nat. of } \gamma'} \&\&\&\&\& {\parr(\Delta_1,\parr(\Delta',\Delta_2))} \\
	\&\&\& {\text{coh. of lax mon. cat.}} \\
	{\parr(\parr(\Delta_1,\otimes(\Gamma_1',A_1,\Gamma_2',A_2,\Gamma_3')),\Delta_2)} \&\& {\parr(\parr(\Delta_1,\parr \Delta'),\Delta_2)} \&\& {\parr(\parr(\Delta_1,\Delta'),\Delta_2)} \&\& {\parr(\Delta_1,\Delta',\Delta_2)}
	\arrow["{\alpha'}"{description}, from=1-1, to=1-3]
	\arrow["{\otimes(-,f_1,-,-,-)}"{description}, from=1-3, to=1-5]
	\arrow["\delta"{description}, from=1-5, to=1-7]
	\arrow["{\parr(-,\alpha')}"{description}, from=1-7, to=3-7]
	\arrow["{\parr(-,\otimes(-,-,-,f_2,-))}"{description}, from=3-7, to=5-7]
	\arrow["{\parr(-,\delta)}"{description}, from=5-7, to=7-7]
	\arrow["{\parr(-,\parr(g,-))}"{description}, from=7-7, to=9-7]
	\arrow["{\parr(-,\gamma')}"{description}, from=9-7, to=11-7]
	\arrow["{\gamma'}"{description}, from=11-7, to=13-7]
	\arrow["{\alpha'}"{description}, from=1-1, to=3-1]
	\arrow["{\otimes(-,-,-,f_2,-)}"{description}, from=3-1, to=5-1]
	\arrow["\delta"{description}, from=5-1, to=7-1]
	\arrow["{\parr(\alpha',-)}"{description}, from=7-1, to=9-1]
	\arrow["{\parr(\otimes(-,f_1,-,-,-),-)}"{description}, from=9-1, to=11-1]
	\arrow["{\parr(\delta,-)}"{description}, from=11-1, to=13-1]
	\arrow["{\parr(\parr(-,g),-)}"{description}, from=13-1, to=13-3]
	\arrow["{\parr(\gamma',-)}"{description}, from=13-3, to=13-5]
	\arrow["{\gamma'}"{description}, from=13-5, to=13-7]
	\arrow["{\gamma'}"{description}, from=13-3, to=9-3]
	\arrow["{\gamma'}"{description}, from=9-7, to=9-3]
	\arrow["{\gamma'}"{description}, from=9-3, to=13-7]
	\arrow["{\parr(-,g,-)}"{description}, from=7-3, to=9-3]
	\arrow["{\gamma'}"{description}, from=13-1, to=7-3]
	\arrow["{\gamma'}"{description}, from=7-7, to=7-3]
	\arrow["\delta"{description}, from=5-3, to=11-1]
	\arrow["\delta"{description}, from=5-3, to=5-7]
	\arrow["{\otimes(-,f_1,-,-,-)}"{description}, from=4-2, to=5-3]
	\arrow["{\otimes(-,-,-,f_2,-)}"{description}, from=3-3, to=4-2]
	\arrow["{\alpha'}"{description}, from=5-1, to=4-2]
	\arrow["{\alpha'}"{description}, from=3-1, to=3-3]
	\arrow["{\alpha'}"{description}, from=1-3, to=3-3]
	\arrow["{\otimes(-,f_1,-,-,-)}"{description}, from=3-3, to=4-4]
	\arrow["{\otimes(-,-,-,f_2,-)}"{description}, from=4-4, to=5-3]
	\arrow["{\alpha'}"{description}, from=1-5, to=4-4]
\end{tikzcd}}\]

Similarly for composing on two different outputs:
\[\resizebox{\hsize}{!}{\begin{tikzcd}[ampersand replacement=\&]
	{\otimes(\Gamma_1',\Gamma,\Gamma_2')} \&\& {\otimes(\otimes (\Gamma_1', \Gamma), \Gamma_2')} \&\& {\otimes(\otimes(\Gamma_1',\otimes\Gamma),\Gamma_2')} \&\& {\otimes(\otimes(\Gamma_1',\parr(\Delta_1,A_1,\Delta_2,A_2,\Delta_3)),\Gamma_2')} \&\& {\otimes(\parr(\Delta_1,\otimes(\Gamma_1',A_1),\Delta_2,A_2,\Delta_3),\Gamma_2')} \\
	\\
	{\otimes(\Gamma_1',\otimes(\Gamma,\Gamma_2'))} \\
	\\
	{\otimes(\Gamma_1',\otimes(\otimes \Gamma, \Gamma_2'))} \&\&\&\& {\otimes(\Gamma_1',\otimes\Gamma,\Gamma_2')} \&\&\&\& {\otimes(\parr(\Delta_1,\parr \Delta_1',\Delta_2,A_2,\Delta_3),\Gamma_2')} \\
	\\
	{\otimes(\Gamma_1',\otimes(\parr(\Delta_1,A_1,\Delta_2,A_2,\Delta_3),\Gamma_2'))} \&\&\&\& {\otimes(\Gamma_1',\parr(\Delta_1,A_1,\Delta_2,A_2,\Delta_3),\Gamma_2')} \&\&\&\& {\otimes(\parr(\Delta_1,\Delta_1',\Delta_2,A_2,\Delta_3),\Gamma_2')} \\
	\\
	{\otimes(\Gamma_1',\parr(\Delta_1,A_1,\Delta_2,\otimes(A_2,\Gamma_2'),\Delta_3))} \&\&\&\& {\parr(\Delta_1,\otimes(\Gamma_1',A_1),\Delta_2,\otimes(A_2,\Gamma_2'),\Delta_3)} \&\& {\parr(\Delta_1,\parr\Delta_1',\Delta_2,\otimes(A_2,\Gamma_2'),\Delta_3)} \&\& {\parr(\Delta_1,\Delta_1',\Delta_2,\otimes(A_2,\Gamma_2'),\Delta_3)} \\
	\\
	{\otimes(\Gamma_1',\parr(\Delta_1,A_1,\Delta_2,\parr \Delta_2', \Delta_3))} \&\&\&\& {\parr(\Delta_1,\otimes(\Gamma_1',A_1),\Delta_2,\parr\Delta_2',\Delta_3)} \&\& {\parr(\Delta_1,\parr\Delta_1',\Delta_2,\parr\Delta_2',\Delta_3)} \&\& {\parr(\Delta_1,\Delta_1',\Delta_2,\parr \Delta_2',\Delta_3)} \\
	\\
	{\otimes(\Gamma_1',\parr(\Delta_1,A_1,\Delta_2,\Delta_2',\Delta_3))} \&\&\&\& {\parr(\Delta_1,\otimes(\Gamma_1',A_1),\Delta_2,\Delta_2',\Delta_3)} \&\& {\parr(\Delta_1,\parr \Delta_1',\Delta_2,\Delta_2',\Delta_3)} \&\& {\parr(\Delta_1,\Delta_1',\Delta_2,\Delta_2',\Delta_3)}
	\arrow["{\alpha'}"{description}, from=1-1, to=1-3]
	\arrow["{\otimes(\alpha',-)}"{description}, from=1-3, to=1-5]
	\arrow["{\otimes(\otimes(-,f),-)}"{description}, from=1-5, to=1-7]
	\arrow["{\otimes(\delta,-)}"{description}, from=1-7, to=1-9]
	\arrow["{\otimes(\parr(-,g_1,-,-,-),-)}"{description}, from=1-9, to=5-9]
	\arrow["{\otimes(\gamma',-)}"{description}, from=5-9, to=7-9]
	\arrow["\delta"{description}, from=7-9, to=9-9]
	\arrow["{\parr(-,-,-,g_2,-)}"{description}, from=9-9, to=11-9]
	\arrow["{\gamma'}"{description}, from=11-9, to=13-9]
	\arrow["{\alpha'}"{description}, from=1-1, to=3-1]
	\arrow["{\otimes(-,\alpha')}"{description}, from=3-1, to=5-1]
	\arrow["{\otimes(-,\otimes(f,-))}"{description}, from=5-1, to=7-1]
	\arrow["{\otimes(-,\delta)}"{description}, from=7-1, to=9-1]
	\arrow["{\otimes(-,\parr(-,-,-,g_2,-))}"{description}, from=9-1, to=11-1]
	\arrow["{\otimes(-,\gamma')}"{description}, from=11-1, to=13-1]
	\arrow["\delta"{description}, from=13-1, to=13-5]
	\arrow["{\parr(-,g_1,-,-,-)}"{description}, from=13-5, to=13-7]
	\arrow["{\gamma'}"{description}, from=13-7, to=13-9]
	\arrow["{\alpha'}"{description}, from=1-1, to=5-5]
	\arrow["{\alpha'}"{description}, from=5-5, to=5-1]
	\arrow["{\alpha'}"{description}, from=5-5, to=1-5]
	\arrow["{\otimes(-,f,-)}"{description}, from=5-5, to=7-5]
	\arrow["{\alpha'}"{description}, from=7-5, to=7-1]
	\arrow["{\alpha'}"{description}, from=7-5, to=1-7]
	\arrow["{\gamma'}"{description}, from=11-7, to=13-7]
	\arrow["{\gamma'}"{description}, from=11-7, to=11-9]
	\arrow["{\parr(-,-,-,g_2,-)}"{description}, from=9-5, to=11-5]
	\arrow["{\parr(-,g_1,-,-,-)}"{description}, from=11-5, to=11-7]
	\arrow["{\parr(-,g_1,-,-,-)}"{description}, from=9-5, to=9-7]
	\arrow["{\parr(-,-,-,g_2,-)}"{description}, from=9-7, to=11-7]
	\arrow["{\gamma'}"{description}, from=9-7, to=9-9]
	\arrow["{\gamma'}"{description}, from=11-5, to=13-5]
	\arrow["\delta"{description}, from=1-9, to=9-5]
	\arrow["\delta"{description}, from=9-1, to=9-5]
\end{tikzcd}}\]
\end{proof}

We can extend \cP to a 2-functor $\cP \colon \LULDCFrob \to \PolyCat$.
It is defined on functors by:
\begin{itemize}
\item $\cP(F)(A) := F(A)$
\item $\cP(F)(f) := \otimes_m F(A_i) \xrightarrow{F_m} F(\otimes_m A_i) \xrightarrow{F(f)} F(\parr_n B_j) \xrightarrow{F^n} \parr_n F(B_j)$ where $F_m$ comes from the $\otimes$-lax structure of $F$ and $F^n$ from its $\parr$-oplax structure.
\end{itemize}

\begin{prop}
For a Frobenius functor $F \colon \cC \to \cD$, $\cP(F) \colon \cP(\cC) \to \cP(\cD)$ is a functor of polycategories.
\end{prop}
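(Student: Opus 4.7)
The plan is to verify both functoriality axioms by exploiting the three kinds of coherence carried by a Frobenius linear functor: the $\otimes$-lax monoidal coherence of the structure maps $F_\bullet$, the $\parr$-oplax opmonoidal coherence of $F^\bullet$, and the Frobenius hexagon relating $F(\delta)$ to the distributivity $\delta$ in $\cD$. Since $\cP(F)(h) = F^n \circ F(h) \circ F_m$ by definition, each axiom reduces to an equality of two composites in $\cD$, and the strategy is to rewrite one side into the other using naturality and these coherences.

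For identities, $\id_A$ in $\cP(\cC)$ unfolds as $\iota_A \circ \eta_A \colon \otimes_1 A \to \parr_1 A$, so
\[\cP(F)(\id_A) = F^1 \circ F(\iota_A) \circ F(\eta_A) \circ F_1.\]
The two unit triangles in the definition of a linear functor yield $F(\eta_A) \circ F_1 = \eta_{F(A)}$ and $F^1 \circ F(\iota_A) = \iota_{F(A)}$; substituting, the composite collapses to $\iota_{F(A)} \circ \eta_{F(A)} = \id_{F(A)}$ in $\cP(\cD)$.

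For composition, I would unfold both $\cP(F)(g \circ_A f)$ and $\cP(F)(g) \circ_{F(A)} \cP(F)(f)$ into the five-step recipe defining polycategorical composition (associator for $\otimes$, substitution of $f$, distributivity, substitution of $g$, associator for $\parr$). The idea is to migrate the outer $F_\bullet$ and $F^\bullet$ on the $\cP(F)(g \circ_A f)$ side past the successive layers, inserting additional $F_\bullet$ and $F^\bullet$ where appropriate. On the $\otimes$-side, naturality of $F_\bullet$ together with the monoidal coherence for $\alpha$ convert $F(\alpha')$ and $F(\otimes(-, f, -))$ into the $\cD$-associator and $\otimes(-, \cP(F)(f), -)$; symmetrically, naturality of $F^\bullet$ and the opmonoidal coherence for $\gamma$ handle $F(\gamma')$ and $F(\parr(-, g, -))$ on the $\parr$-side.

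The main obstacle is the middle step, where $F(\delta)$ sits between the $\otimes$-half and the $\parr$-half of the composite. Here I need the equation
\[F^{|\Delta_1|+1+|\Delta_2|} \circ F(\delta) \circ F_{|\Gamma_1|+1+|\Gamma_2|} = \parr\bigl(-, F_{|\Gamma_1|+1+|\Gamma_2|}, -\bigr) \circ \delta \circ \otimes\bigl(-, F^{|\Delta_1|+1+|\Delta_2|}, -\bigr),\]
which is precisely the first linear-functor hexagon specialised via the Frobenius condition ($\nu_\parr = F_\bullet$, $\nu_\otimes = F^\bullet$). Chaining together these rewrites yields $\cP(F)(g\circ_A f) = \cP(F)(g)\circ_{F(A)} \cP(F)(f)$; the difficulty is entirely bookkeeping, since every individual step is a naturality square or a standard coherence diagram for the monoidal, opmonoidal, or Frobenius structure on $F$.
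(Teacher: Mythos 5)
Your proposal is correct and follows essentially the same route as the paper: identity preservation via the two unit triangles collapsing $F^1 \circ F(\iota_A) \circ F(\eta_A) \circ F_1$ to $\iota_{F(A)} \circ \eta_{F(A)}$, and composition preservation by a diagram chase through the five-step composite using lax/oplax monoidal coherence for $\alpha'$ and $\gamma'$, naturality of $F_m$ and $F^n$, and the linear-functor hexagon specialised to the Frobenius case ($\nu_\parr = F_\bullet$, $\nu_\otimes = F^\bullet$) for the distributivity step. No gaps.
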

\begin{proof}
First it preserves identity:
\[\begin{tikzcd}[ampersand replacement=\&]
	\& {F(\otimes_1 A)} \\
	{\otimes_1F(A)} \&\& {F(A)} \\
	\&\&\& {F(\parr_1 A)} \\
	\&\& {\parr_1F(A)}
	\arrow["{F_1}"{description}, from=2-1, to=1-2]
	\arrow["{F(\eta_A)}"{description}, from=1-2, to=2-3]
	\arrow["{F(\iota_A)}"{description}, from=2-3, to=3-4]
	\arrow["{F^1}"{description}, from=3-4, to=4-3]
	\arrow["{\eta_{F(A)}}"{description}, from=2-1, to=2-3]
	\arrow["{\iota_{F(A)}}"{description}, from=2-3, to=4-3]
\end{tikzcd}\]
Furthermore it preserves composition:
\[\resizebox{\hsize}{!}{\begin{tikzcd}[ampersand replacement=\&]
	{\otimes (F(\Gamma_1),F(\Gamma),F(\Gamma_2))} \&\& {\otimes(F(\Gamma_1),\otimes F(\Gamma),F(\Gamma_2))} \&\& {\otimes(F(\Gamma_1),F(\otimes \Gamma),F(\Gamma_2))} \&\& {\otimes(F(\Gamma_1),F(\parr(\Delta_1,A,\Delta_2)),F(\Gamma_2))} \&\& {\otimes(F(\Gamma_1),\parr(F(\Delta_1),F(A),F(\Delta_2)),F(\Gamma_2))} \\
	\& {\text{coh. for lax mon. funct.}} \\
	{F(\otimes(\Gamma_1,\Gamma,\Gamma_2))} \&\&\&\&\&\&\&\& {\parr(F(\Delta_1),\otimes(F(\Gamma_1),F(A),F(\Gamma_2)),F(\Delta_2))} \\
	\&\& {\text{nat of } F_{m_1+1+m_2}} \\
	{F(\otimes(\Gamma_1,\otimes\Gamma,\Gamma_2))} \&\&\&\&\& {\text{coh. for linear functor}} \&\&\& {\parr(F(\Delta_1),F(\otimes(\Gamma_1,A,\Gamma_2)),F(\Delta_2))} \\
	\\
	{F(\otimes(\Gamma_1,\parr(\Delta_1,A,\Delta_2),\Gamma_2))} \&\&\&\&\&\&\&\& {\parr(F(\Delta_1),F(\parr \Delta),F(\Delta_2))} \\
	\\
	\&\&\&\&\& {\text{nat. of } F^{n_1+1+n_2}} \&\&\& {\parr(F(\Delta_1),\parr F(\Delta),F(\Delta_2))} \\
	\&\&\&\&\&\&\& {\text{coh. for oplax mon. funct.}} \\
	{F(\parr(\Delta_1,\otimes(\Gamma_1,A,\Gamma_2),\Delta_2))} \&\&\&\& {F(\parr(\Delta_1,\parr\Delta,\Delta_2))} \&\& {F(\parr(\Delta_1,\Delta,\Delta_2))} \&\& {\parr(F(\Delta_1),F(\Delta),F(\Delta_2))}
	\arrow["{\alpha'}"{description}, from=1-1, to=1-3]
	\arrow["{\otimes(-,F_m,-)}"{description}, from=1-3, to=1-5]
	\arrow["{\otimes(-,F(f),-)}"{description}, from=1-5, to=1-7]
	\arrow["{\otimes(-,F^{n_1+1+n_2},-)}"{description}, from=1-7, to=1-9]
	\arrow["\delta"{description}, from=1-9, to=3-9]
	\arrow["{\parr(F(\Gamma_1),F_{m_1+1+m_2},F(\Gamma_2))}"{description}, from=3-9, to=5-9]
	\arrow["{\parr(-,F(g),-)}"{description}, from=5-9, to=7-9]
	\arrow["{\parr(-,F^n,-)}"{description}, from=7-9, to=9-9]
	\arrow["{\gamma'}"{description}, from=9-9, to=11-9]
	\arrow["{F_{m_1+m+m_2}}"{description}, from=1-1, to=3-1]
	\arrow["{F(\alpha')}"{description}, from=3-1, to=5-1]
	\arrow["{F(\otimes(-,f,-))}"{description}, from=5-1, to=7-1]
	\arrow["{F(\delta)}"{description}, from=7-1, to=11-1]
	\arrow["{F(\parr(-,g,-))}"{description}, from=11-1, to=11-5]
	\arrow["{F(\gamma')}"{description}, from=11-5, to=11-7]
	\arrow["{F^{n_1+n+n_2}}"{description}, from=11-7, to=11-9]
	\arrow["{F^{n_1+1+n_2}}"{description}, from=11-5, to=7-9]
	\arrow["{F^{n_1+1+n_2}}"{description}, from=11-1, to=5-9]
	\arrow["{F_{m_1+1+m_2}}"{description}, from=1-5, to=5-1]
	\arrow["{F_{m_1+1+m_2}}"{description}, from=1-7, to=7-1]
\end{tikzcd}}\]
\end{proof}

On transformations it is defined as $\cP(\alpha)_A := \otimes_1 F(A) \xrightarrow{\eta_A} F(A) \xrightarrow{\alpha_A} G(A) \xrightarrow{\iota_A} \parr_1 G(A)$

\begin{prop}
If $\alpha \colon F \Rightarrow G$ is a linear transformation between Frobenius functors then $\cP(\alpha)\colon \cP(F) \to \cP(G)$ is a natural transformation between functors of polycategories.
\end{prop}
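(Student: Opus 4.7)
The plan is to verify the naturality condition of $\cP(\alpha)$ by unfolding both sides of the polycategorical equation as morphisms in $\cD$ and reducing them to a common morphism using the monoidality of $\alpha$ with respect to both $\otimes$ and $\parr$ together with the ordinary naturality of $\alpha$. Given a polymap $f \colon A_1,\dots,A_m \to B_1,\dots,B_n$ in $\cP(\cC)$ with underlying morphism $f \colon \otimes_m A_i \to \parr_n B_j$ in $\cC$, the equation to verify is $(\cP(\alpha)_{B_1},\dots,\cP(\alpha)_{B_n}) \circ \cP(F)(f) = \cP(G)(f) \circ (\cP(\alpha)_{A_1},\dots,\cP(\alpha)_{A_m})$ in $\cP(\cD)$. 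Since each $\cP(\alpha)_X$ is a unary polymap, the interchange laws in $\cP(\cD)$ make the iterated compositions on all inputs/outputs independent of the order, so it suffices to compare the two canonical iterated composites.

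First I would establish a local lemma: composing a polymap $p \colon X_1,\dots,X_m \to Y_1,\dots,Y_n$ in $\cP(\cD)$, with underlying morphism $\bar p \colon \otimes_m X_i \to \parr_n Y_j$, with a unary polymap $u = \iota \circ \bar u \circ \eta \colon \otimes_1 Y_{j_0} \to \parr_1 Z$ on output $Y_{j_0}$, yields a polymap whose underlying morphism in $\cD$ is simply $\parr_n(\id,\dots,\bar u,\dots,\id) \circ \bar p$, with $\bar u$ in the $j_0$-th slot. Unfolding the composition formula $\gamma' \circ \parr(-,u,-) \circ \delta \circ \otimes_1 \bar p \circ \alpha_{1,(m)}$ and then applying in succession (i) the luldc coherence $\parr(-,\eta,-) \circ \delta = \eta$, (ii) naturality of $\eta$, (iii) the oplax unit coherence $\eta \circ \alpha_{1,(m)} = \id$, (iv) functoriality of $\parr_n$, and (v) the lax unit coherence $\gamma_{n,(1,\dots,1)} \circ \parr_n \iota = \id$ used to absorb the $\iota$'s inside $\gamma'$ and inside $u$, collapses the expression to the claimed form. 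A completely dual computation (using $\delta \circ \otimes(-,\iota,-) = \iota$ and the symmetric unit coherences) handles composition with a unary polymap on an input.

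Iterating the local lemma on all outputs $B_j$ on the left and on all inputs $A_i$ on the right, the left-hand side of the naturality equation reduces in $\cD$ to $\parr_n(\alpha_{B_j}) \circ F^n \circ F(f) \circ F_m$, and the right-hand side reduces to $G^n \circ G(f) \circ G_m \circ \otimes_m(\alpha_{A_i})$. These two morphisms are equal by pasting three naturality squares vertically: the $\otimes$-monoidality of $\alpha$ gives $\alpha_{\otimes_m A_i} \circ F_m = G_m \circ \otimes_m \alpha_{A_i}$; ordinary naturality of $\alpha \colon F \Rightarrow G$ applied to $f$ in $\cC$ gives $\alpha_{\parr_n B_j} \circ F(f) = G(f) \circ \alpha_{\otimes_m A_i}$; and the $\parr$-monoidality of $\alpha$ gives $\parr_n(\alpha_{B_j}) \circ F^n = G^n \circ \alpha_{\parr_n B_j}$. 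The $\parr$-monoidality is available precisely because $F,G$ are Frobenius, so $F_\parr = F$ and $G_\parr = G$, and $\cP(\alpha)$ is built from a single natural transformation that (as prescribed by the preceding remark relating linear and Frobenius data) serves as both $\alpha_\otimes$ and $\alpha_\parr$.

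The main obstacle is the local lemma, because the polycategorical composition formula involves the coherence decorations $\alpha_{1,(m)}$, $\delta$, and $\gamma'$, and these must be shown to cancel precisely against the $\eta$ and $\iota$ appearing in $\cP(\alpha)_X = \iota \circ \alpha_X \circ \eta$. This cancellation is purely diagrammatic but delicate, relying on the two luldc coherence laws relating $\delta$ to the units plus the unit coherences of the lax and oplax monoidal structures, and it is essentially a reuse of the kind of bookkeeping already performed in the verification that $\cP(\cC)$ is a polycategory (in particular its unitality diagrams). Once the local lemma is in hand, everything else is a routine three-square paste.
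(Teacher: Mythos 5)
Your proposal is correct and follows essentially the same route as the paper: unfold $(\cP(\alpha)_{B_1},\dots,\cP(\alpha)_{B_n}) \circ \cP(F)(f)$ to $\parr(\alpha_{B_1},\dots,\alpha_{B_n}) \circ F^n \circ F(f) \circ F_m$ (and dually for the other side) using the luldc unit coherences, naturality of $\eta$, and functoriality of $\parr$, then conclude by pasting the $\otimes$-monoidality square, the naturality square for $\alpha$ at $f$, and the $\parr$-monoidality square. The only difference is organisational: you factor the unfolding through a one-output-at-a-time local lemma where the paper carries out the same iterated cancellation in a single large diagram.
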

\begin{proof}
We need to prove that for any $f \colon A_1,\dots, A_m \to B_1, \dots, B_n$ in $\cP(\cD)$, \[(\cP(\alpha)_{B_1},\dots,\cP(\alpha)_{B_n}) \circ \cP(F)(f) = \cP(G)(f) \circ (\cP(\alpha)_{A_1},\dots,\cP(\alpha)_{A_n})\]
First, let prove that the polymap $(\cP(\alpha)_{B_1},\dots,\cP(\alpha)_{B_n}) \circ \cP(F)(f)$ in $\cP(\cD)$ is the map $\parr(\alpha_{B_1},\dots,\alpha_{B_n}) \circ F^n \circ F(f) \circ F_m$ in \cD .
The proof is given in a figure on the next page.
It uses the coherence laws of a luldc, functoriality of $\parr$ and naturality of $\eta$.
\begin{figure}
\rotatebox{90}{
\resizebox{\vsize}{!}{
\begin{tikzcd}[ampersand replacement=\&]
	{\otimes(F(A_1),\dots,F(A_m))} \&\& {\otimes\otimes(F(A_1),\dots,F(A_m))} \& \dots \& {\otimes^{n-j+1}(F(A_1),\dots,F(A_m))} \&\& {\otimes^{n-j+2}(F(A_1),\dots,F(A_m))} \& \dots \& {\otimes^n(F(A_1),\dots,F(A_m))} \&\& {\otimes^{n+1}(F(A_1),\dots,F(A_m))} \&\& {\otimes^n F(\otimes(A_1,\dots,A_m))} \&\& {\otimes^nF(\parr(B_1,\dots,B_n))} \&\& {\otimes^n\parr(F(B_1),\dots,F(B_n))} \\
	\&\&\&\&\&\&\&\&\&\&\&\&\&\&\&\&\& {\otimes^{n-1}\parr(\otimes F(B_1),\dots,F(B_n))} \\
	\&\&\&\&\&\&\&\&\&\& {\otimes^n(F(A_1),\dots,F(A_m))} \&\&\&\&\& {\otimes^{n-1}\parr(F(B_1),\dots,F(B_n))} \& {\otimes^{n-1}\parr(F(B_1),\dots,F(B_n))} \\
	\&\&\&\&\&\&\&\&\&\& \vdots \&\&\&\&\& \vdots \\
	\&\&\&\&\&\&\&\&\&\& {\otimes^{n-j+2}(F(A_1),\dots,F(A_m))} \&\&\&\&\& {\otimes^{n-j+1}\parr(F(B_1),\dots,F(B_n))} \& {\otimes^{n-1}\parr(G(B_1),\dots,F(B_n))} \\
	\&\&\&\&\&\&\&\&\&\&\&\&\&\&\&\&\& {\otimes^{n-1}\parr(\parr G(B_1),\dots,F(B_n))} \\
	\&\&\&\&\&\&\&\&\&\& {\otimes^{n-j+1}(F(A_1),\dots,F(A_m))} \&\&\&\&\& {\otimes^{n-j}\parr(F(B_1),\dots,F(B_n))} \& {\otimes^{n-1}\parr(G(B_1),\dots,F(B_n))} \\
	\&\&\&\&\&\&\&\&\&\& \vdots \&\&\&\&\& \vdots \& \vdots \\
	{F(\otimes(A_1,\dots,A_m))} \&\&\&\&\&\&\&\&\&\& {\otimes\otimes(F(A_1),\dots,F(A_m))} \&\&\&\&\& {\otimes\parr(F(B_1),\dots,F(B_n))} \& {\otimes^{n-j+1}\parr(G(B_1),\dots,G(B_{j-1}),F(B_j),F(B_{j+1}),\dots,F(B_n))} \\
	\&\&\&\&\&\&\&\&\&\&\&\&\&\&\&\&\& {\otimes^{n-j}\parr(G(B_1),\dots,G(B_{j-1}),\otimes F(B_j),F(B_{j+1}),\dots,F(B_n))} \\
	\&\&\&\&\&\&\&\&\&\&\&\&\&\&\&\& {\otimes^{n-j}\parr(G(B_1),\dots,G(B_{j-1}),F(B_j),F(B_{j+1}),\dots,F(B_n))} \\
	\&\&\&\&\&\&\&\&\&\& {\otimes(F(A_1),\dots,F(A_m))} \&\& {F(\otimes(A_1,\dots,A_m))} \&\& {F(\parr(B_1,\dots,B_n))} \& {\parr(F(B_1),\dots,F(B_n))} \\
	\&\&\&\&\&\&\&\&\&\&\&\&\&\&\&\& {\otimes^{n-j}\parr(G(B_1),\dots,G(B_{j-1}),G(B_j),F(B_{j+1}),\dots,F(B_n))} \\
	\&\&\&\&\&\&\&\&\&\&\&\&\&\&\&\&\& {\otimes^{n-j}\parr(G(B_1),\dots,G(B_{j-1}),\parr G(B_j),F(B_{j+1}),\dots,F(B_n))} \\
	{F(\parr(B_1,B_n))} \&\&\&\&\&\&\&\&\&\&\&\&\&\&\& {\parr(G(B_1),\dots,F(B_n))} \& {\otimes^{n-j}\parr(G(B_1),\dots,G(B_{j-1}),G(B_j),F(B_{j+1}),\dots,F(B_n))} \\
	\&\&\&\&\&\&\&\&\&\&\&\&\&\&\& \vdots \& \vdots \\
	\&\&\&\&\&\&\&\&\&\&\&\&\&\&\& {\parr(G(B_1),\dots,G(B_{j-1}),F(B_j),F(B_{j+1}),\dots,F(B_n))} \& {\otimes \parr (G(B_1),\dots,F(B_n))} \\
	\&\&\&\&\&\&\&\&\&\&\&\&\&\&\&\&\& {\parr(G(B_1),\dots,\otimes F(B_n))} \\
	\&\&\&\&\&\&\&\&\&\&\&\&\&\&\& {\parr(G(B_1),\dots,G(B_{j-1}),G(B_j),F(B_{j+1}),\dots,F(B_n))} \& {\parr(G(B_1),\dots,F(B_n))} \\
	\&\&\&\&\&\&\&\&\&\&\&\&\&\&\& \vdots \\
	\&\&\&\&\&\&\&\&\&\&\&\&\&\&\& {\parr(G(B_1),\dots,F(B_n))} \& {\parr(G(B_1),\dots,G(B_n))} \\
	\&\&\&\&\&\&\&\&\&\&\&\&\&\&\&\&\& {\parr(G(B_1),\dots,\parr G(B_n))} \\
	{\parr(F(B_1),\dots,F(B_n))} \&\&\&\&\&\&\&\&\&\&\&\&\&\&\&\& {\parr(G(B_1),\dots,G(B_n))}
	\arrow["\alpha"{description}, from=1-1, to=1-3]
	\arrow["{\otimes^{n-j}\alpha}"{description}, from=1-5, to=1-7]
	\arrow["{\otimes^{n-1}\alpha}"{description}, from=1-9, to=1-11]
	\arrow["{\otimes^n F_m}"{description}, from=1-11, to=1-13]
	\arrow["{\otimes^nF(f)}"{description}, from=1-13, to=1-15]
	\arrow["{\otimes^nF^n}"{description}, from=1-15, to=1-17]
	\arrow["{\otimes^{n-1} \delta}"{description}, from=1-17, to=2-18]
	\arrow["{\otimes^{n-1}\parr(\eta_{F(B_1)},-)}"{description}, from=2-18, to=3-17]
	\arrow["{\otimes^{n-1}(\alpha_{B_1},-)}"{description}, from=3-17, to=5-17]
	\arrow["{\otimes^{n-1}(\iota_{G(B_1)},-)}"{description}, from=5-17, to=6-18]
	\arrow["{\otimes^{n-1} \gamma'}"{description}, from=6-18, to=7-17]
	\arrow["{\otimes^{n-j}\delta}"{description}, from=9-17, to=10-18]
	\arrow["{\otimes^{n-j}(-,\eta_{F(B_j)},-)}"{description}, from=10-18, to=11-17]
	\arrow["{\otimes^{n-j}(-,\alpha_{B_j},-)}"{description}, from=11-17, to=13-17]
	\arrow["{\otimes^{n-j}(-,\iota_{G(B_j)},-)}"{description}, from=13-17, to=14-18]
	\arrow["{\otimes^{n-j}\gamma'}"{description}, from=14-18, to=15-17]
	\arrow["\delta"{description}, from=17-17, to=18-18]
	\arrow["{\parr(-,\eta_{F(B_n)})}"{description}, from=18-18, to=19-17]
	\arrow["{\parr(-,\alpha_{B_n})}"{description}, from=19-17, to=21-17]
	\arrow["{\parr(-,\iota_{G(B_n)})}"{description}, from=21-17, to=22-18]
	\arrow["{\gamma'}"{description}, from=22-18, to=23-17]
	\arrow[Rightarrow, no head, from=21-17, to=23-17]
	\arrow["{\eta_{\parr(G(B_1),\dots,F(B_n))}}"{description}, from=17-17, to=19-17]
	\arrow[Rightarrow, no head, from=13-17, to=15-17]
	\arrow["{\otimes^{n-j}\eta_{\parr(G(B_1),\dots,G(B_{j-1}),F(B_j),F(B_{j+1}),\dots,F(B_n))}}"{description}, from=9-17, to=11-17]
	\arrow[Rightarrow, no head, from=5-17, to=7-17]
	\arrow["{\otimes^{n-1}\eta_{\parr(F(B_1),\dots,F(B_n))}}"{description}, from=1-17, to=3-17]
	\arrow["{\otimes^{n-1}\eta_{\parr(F(B_1),\dots,F(B_n))}}"{description}, from=1-17, to=3-16]
	\arrow[Rightarrow, no head, from=3-16, to=3-17]
	\arrow["{\otimes^{n-j}\eta_{\parr(F(B_1),\dots,F(B_n))}}"{description}, from=5-16, to=7-16]
	\arrow["{\eta_{\parr(F(B_1),\dots,F(B_n))}}"{description}, from=9-16, to=12-16]
	\arrow["{\parr(\alpha_{B_1},-)}"{description}, from=12-16, to=15-16]
	\arrow["{\parr(-,\alpha_{B_j},-)}"{description}, from=17-16, to=19-16]
	\arrow["{\parr(-,\alpha_{B_n})}"{description}, from=21-16, to=23-17]
	\arrow[Rightarrow, no head, from=21-16, to=21-17]
	\arrow["{\otimes^{n-1}\eta_{\otimes(F(A_1),\dots,F(A_m))}}"{description}, from=1-11, to=3-11]
	\arrow["{\otimes^{n-j}\eta_{\otimes(F(A_1),\dots,F(A_m))}}"{description}, from=5-11, to=7-11]
	\arrow["{\eta_{\otimes(F(A_1),\dots,F(A_m))}}"{description}, from=9-11, to=12-11]
	\arrow["{F_m}"{description}, from=12-11, to=12-13]
	\arrow["{F(f)}"{description}, from=12-13, to=12-15]
	\arrow["{F^n}"{description}, from=12-15, to=12-16]
	\arrow[Rightarrow, no head, from=1-9, to=3-11]
	\arrow[Rightarrow, no head, from=1-7, to=5-11]
	\arrow[Rightarrow, no head, from=1-5, to=7-11]
	\arrow[Rightarrow, no head, from=1-3, to=9-11]
	\arrow[Rightarrow, no head, from=1-1, to=12-11]
	\arrow["{F_m}"{description}, from=1-1, to=9-1]
	\arrow["{F(f)}"{description}, from=9-1, to=15-1]
	\arrow["{F^n}"{description}, from=15-1, to=23-1]
	\arrow["{\parr(\alpha_1,\dots,\alpha_n)}"{description}, from=23-1, to=23-17]
	\arrow[Rightarrow, no head, from=12-16, to=23-1]
\end{tikzcd}
}
}
\end{figure}

Similarly, one can prove that the polymap $\cP(G)(f) \circ (\cP(\alpha)_{A_1},\dots,\cP(\alpha_{A_m})$ in $\cP(\cD)$ is the map $G^n \circ G(f) \circ G_m \circ \otimes(\alpha_{A_1},\dots,\alpha_{A_m})$ in \cD.

Finally the following diagram commutes:
\[\resizebox{\hsize}{!}{\begin{tikzcd}[ampersand replacement=\&]
	{\otimes(F(A_1),\dots,F(A_m))} \&\& {F(\otimes(A_1,\dots,A_m))} \&\& {F(\parr(B_1,\dots,B_n))} \&\& {\parr(F(B_1),\dots,F(B_n))} \\
	\\
	{\otimes(G(A_1),\dots,G(A_m))} \&\& {G(\otimes(A_1,\dots,A_m))} \&\& {G(\parr(B_1,\dots,B_n))} \&\& {\parr(G(B_1),\dots,G(B_n))}
	\arrow["{F_m}"{description}, from=1-1, to=1-3]
	\arrow["{F(f)}"{description}, from=1-3, to=1-5]
	\arrow["{F^n}"{description}, from=1-5, to=1-7]
	\arrow["{\parr(\alpha_{B_1},\dots,\alpha_{B_n})}"{description}, from=1-7, to=3-7]
	\arrow["{\otimes(\alpha_{A_1},\dots,\alpha_{A_m})}"{description}, from=1-1, to=3-1]
	\arrow["{G_m}"{description}, from=3-1, to=3-3]
	\arrow["{\alpha_{\otimes(A_1,\dots,A_m)}}"{description}, from=1-3, to=3-3]
	\arrow["{G(f)}"{description}, from=3-3, to=3-5]
	\arrow["{\alpha_{\parr(A_1,\dots,A_n)}}"{description}, from=1-5, to=3-5]
	\arrow["{G^n}"{description}, from=3-5, to=3-7]
\end{tikzcd}}\]
where the left and right squares commute because $\alpha$ is a monoidal transformation with respect to both monoidal structures and the middle one by naturality.
\end{proof}

\begin{prop}
$\cP \colon \LULDCFrob \to \PolyCat$ is a 2-functor.
\end{prop}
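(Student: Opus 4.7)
The plan is to verify the four 2-functoriality axioms: preservation of identity 1-cells, composition of 1-cells, identity 2-cells, and vertical/horizontal composition of 2-cells. Since we have already established that $\cP$ sends each luldc to a polycategory, each Frobenius linear functor to a functor of polycategories, and each linear transformation to a natural transformation of polycategory functors, the remaining work is largely bookkeeping.

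First, I would handle the action on 1-cells. For the identity Frobenius functor $\mathrm{id}_\cC \colon \cC \to \cC$, both the $\otimes$-lax and $\parr$-oplax structure maps $(\mathrm{id}_\cC)_m$ and $(\mathrm{id}_\cC)^n$ are identities, so for any polymap $f \colon \otimes_m A_i \to \parr_n B_j$ one computes directly $\cP(\mathrm{id}_\cC)(f) = \mathrm{id} \circ f \circ \mathrm{id} = f$, matching the identity functor on $\cP(\cC)$. For composition, given Frobenius functors $F \colon \cC \to \cD$ and $G \colon \cD \to \cE$, one uses the definition of composite Frobenius functor: $(G \circ F)_m = G(F_m) \circ G_m$ and $(G \circ F)^n = G^n \circ G(F^n)$. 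Then $\cP(G \circ F)(f)$ unfolds to $(G \circ F)^n \circ (G \circ F)(f) \circ (G \circ F)_m$, which by functoriality of $G$ equals $G^n \circ G(F^n) \circ G(F(f)) \circ G(F_m) \circ G_m = G^n \circ G(F^n \circ F(f) \circ F_m) \circ G_m = \cP(G) \circ \cP(F)(f)$.

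Next, for 2-cells, I would show $\cP(\mathrm{id}_F) = \mathrm{id}_{\cP(F)}$. The identity linear transformation has components $\mathrm{id}_{F(A)}$, and by construction $\cP(\mathrm{id}_F)_A = \iota_{F(A)} \circ \mathrm{id}_{F(A)} \circ \eta_{F(A)} = \iota_{F(A)} \circ \eta_{F(A)}$. Under the identification of $\cP(\cD)(F(A); F(A))$ with morphisms $\otimes_1 F(A) \to \parr_1 F(A)$ in $\cD$, this is exactly the identity polymap on $F(A)$ as defined in $\cP(\cD)$. For vertical composition of linear transformations $\alpha \colon F \Rightarrow G$ and $\beta \colon G \Rightarrow H$, the polycategorical composite of $\cP(\alpha)_A$ and $\cP(\beta)_A$ in $\cP(\cD)$ unfolds through the distributivity/associators/unitors to $\iota_{H(A)} \circ \beta_A \circ \alpha_A \circ \eta_{F(A)}$ (using normality and the coherence laws for $\eta$ and $\iota$ cancelling through $\delta$ in the unary case), which coincides with $\cP(\beta \circ \alpha)_A$.

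Finally, for horizontal composition (whiskering suffices since the 2-categories are strict), one checks that for $F \colon \cC \to \cD$ and $\alpha \colon G \Rightarrow H \colon \cD \to \cE$, the whiskering $\cP(\alpha \ast F)$ has component at $A$ given by $\iota \circ \alpha_{F(A)} \circ \eta$, matching $\cP(\alpha)_{\cP(F)(A)}$ pointwise; and dually for right whiskering by a Frobenius functor, one uses that linear/monoidal transformations are preserved by functoriality of the 2-categorical structure on $\LULDCFrob$, together with the coherence diagrams for $(F')_m$ and $(F')^n$ commuting with $\alpha$. The main obstacle, and the only genuinely nontrivial computation, is the right whiskering case: one must check that applying a Frobenius functor $F' \colon \cE \to \cE'$ after $\alpha$ interacts correctly with the insertion of $\eta_{F'(G(A))}$ and $\iota_{F'(H(A))}$ used to lift $F'(\alpha_{G(A)})$ to a polymap in $\cP(\cE')$, which reduces to the naturality of $\eta, \iota$ together with the coherence law relating $F'$'s lax/oplax structure maps to these units — this is a diagram chase in the same style as the preceding proposition and does not require any new ingredient.
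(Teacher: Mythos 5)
Your proposal is correct and follows essentially the same route as the paper: identity and composition of 1-cells via the definition of composite Frobenius structure maps $(G\circ F)_m = G(F_m)\circ G_m$ and functoriality of $G$, then identity and vertical composition of 2-cells via the coherence laws cancelling the inserted $\eta$ and $\iota$. The only difference is that you also spell out horizontal composition/whiskering of 2-cells, which the paper leaves implicit; your treatment of it is consistent with the rest of the argument.
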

\begin{proof}
We have already proven that it is well-defined.
We have that $\cP(\id_{\cC})(A) = A$ and $\cP(\id_{\cC})(f) = f$ by definition.
Furthermore, $\cP(G \circ F)(A) = (G \circ F)(A) = (\cP(G) \circ \cP(F))(A)$.
The equality is also true on morphisms.
\[\resizebox{\hsize}{!}{\begin{tikzcd}[ampersand replacement=\&]
	\& {G(\otimes_mF(A_i))} \&\&\&\& {G(\parr_nF(B_j))} \\
	{\otimes_m G(F(A_i))} \&\& {G(F(\otimes_m A_i))} \&\& {G(F(\parr_n B_j))} \&\& {\parr_nG(F(B_j))}
	\arrow["{G_m}"{description}, from=2-1, to=1-2]
	\arrow["{G(F_m)}"{description}, from=1-2, to=2-3]
	\arrow["{G(F(f))}"{description}, from=2-3, to=2-5]
	\arrow["{G(F^n)}"{description}, from=2-5, to=1-6]
	\arrow["{G^n}"{description}, from=1-6, to=2-7]
	\arrow["{(G\circ F)_m}"{description}, from=2-1, to=2-3]
	\arrow["{(G\circ F)^n}"{description}, from=2-5, to=2-7]
\end{tikzcd}}\]

Finally, for the identity linear transformation $\cP(\id_F)_A = \iota_{F(A)} \circ \eta_{F(A)}$ which is the identity in $\cP(\cD)$ and composition of linear transformations is given by:
\[\resizebox{\hsize}{!}{
\begin{tikzcd}[ampersand replacement=\&]
	\& {\otimes_1\otimes_1F(A)} \\
	{\otimes_1F(A)} \&\& {\otimes_1F(A)} \&\& {\otimes_1G(A)} \&\& {\otimes_1\parr_1G(A)} \\
	\\
	{F(A)} \&\&\&\&\&\& {\parr_1\otimes_1G(A)} \\
	\\
	{G(A)} \&\&\&\&\&\& {\parr_1G(A)} \\
	\\
	\&\&\&\&\&\& {\parr_1H(A)} \\
	\&\&\&\&\&\&\& {\parr_1\parr_1H(A)} \\
	{H(A)} \&\&\&\&\&\& {\parr_1 H(A)}
	\arrow["{\otimes_1\eta_{F(A)}}"{description}, from=1-2, to=2-3]
	\arrow["{\otimes_1\alpha}"{description}, from=2-3, to=2-5]
	\arrow["\alpha"{description}, from=2-1, to=1-2]
	\arrow["{\otimes_1\iota_{G(A)}}"{description}, from=2-5, to=2-7]
	\arrow["\delta"{description}, from=2-7, to=4-7]
	\arrow["{\parr_1\eta_{G(A)}}"{description}, from=4-7, to=6-7]
	\arrow["{\parr_1\beta_A}"{description}, from=6-7, to=8-7]
	\arrow["{\parr_1\iota_{H(A)}}"{description}, from=8-7, to=9-8]
	\arrow["\gamma"{description}, from=9-8, to=10-7]
	\arrow[Rightarrow, no head, from=8-7, to=10-7]
	\arrow[Rightarrow, no head, from=2-1, to=2-3]
	\arrow["{\eta_{F(A)}}"{description}, from=2-1, to=4-1]
	\arrow["{\alpha_A}"{description}, from=4-1, to=6-1]
	\arrow["{\beta_A}"{description}, from=6-1, to=10-1]
	\arrow["{\iota_{H(A)}}"{description}, from=10-1, to=10-7]
	\arrow["{\iota_{\otimes_1G(A)}}"{description}, from=2-5, to=4-7]
	\arrow["{\eta_{G(A)}}"{description}, from=2-5, to=6-1]
	\arrow["{\iota_{G(A)}}"{description}, from=6-1, to=6-7]
\end{tikzcd}}\]
\end{proof}

\subsection{Weak two-tensor polycategories with duals}

In this section we will introduce the notion of weak two-tensor polycategories with duals.
These are categories equipped with objects, the tensors, pars, and duals, having some weak universal property.
We will prove that it characterises the polycategories that are the underlying polycategories of a lax normal linearly distributive category.
As the name suggests, these correspond to a weakening of the notion of two-tensor polycategories with duals.

\begin{definition}
Let $\Gamma,\Delta$ be lists of objects in a polycategory \cP .

A (weak) tensor product of $\Gamma$ is an object $\otimes \Gamma$ equipped with a polymap $m_{\Gamma} \colon \Gamma \to \otimes \Gamma$ such that for any polymap $f \colon \Gamma \to \Delta$ there is a unique polymap $f/m_{\Gamma} \colon \otimes \Gamma \to A$ such that $f = (f/m_{\Gamma}) \circ m_{\Gamma}$.

A (weak) par product of $\Delta$ is an object $\parr \Delta$ equipped with a polymap $w_{\Delta} \parr \Delta \to \Delta$ such that for any polymap $f \colon \Gamma \to \Delta$ there is a unique $w_{\Delta}\backslash f$ such that $f = w_\Delta \circ w_\Delta \backslash f$.
\end{definition}
 
When working with polymaps in weak two-tensor polycategories we will often write $gf$ for $g \circ f$, so we have that $f = (f/m_\Gamma) m_\Gamma$ for example. 
 
\begin{prop}
Tensor and par products are unique up to unique isomorphism.
\end{prop}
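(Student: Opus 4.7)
The plan is to mimic the standard uniqueness argument for representing objects, which appeared earlier in the text for universal multimaps in a multicategory; the only adjustment is that we now work with polymaps instead of multimaps, but the essential ingredient is still the uniqueness clause in the universal factorization property, applied once on the right (for tensors) and dually on the left (for pars).

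First I would handle the tensor case. Let $(\otimes\Gamma, m_\Gamma)$ and $(\otimes'\Gamma, m'_\Gamma)$ be two tensor products of the same list $\Gamma$. Applying the universal property of $m_\Gamma$ to the polymap $m'_\Gamma \colon \Gamma \to \otimes'\Gamma$ yields a unique unary polymap $\varphi := m'_\Gamma/m_\Gamma \colon \otimes\Gamma \to \otimes'\Gamma$ satisfying $\varphi \circ m_\Gamma = m'_\Gamma$; symmetrically, the universal property of $m'_\Gamma$ gives a unique $\psi := m_\Gamma/m'_\Gamma \colon \otimes'\Gamma \to \otimes\Gamma$ with $\psi \circ m'_\Gamma = m_\Gamma$. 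Uniqueness of $\varphi$ (relative to the equation $\varphi \circ m_\Gamma = m'_\Gamma$) is exactly the content of ``unique isomorphism''.

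Next I would check invertibility. The composite $\psi \circ \varphi \colon \otimes\Gamma \to \otimes\Gamma$ satisfies
\[(\psi \circ \varphi) \circ m_\Gamma \;=\; \psi \circ (\varphi \circ m_\Gamma) \;=\; \psi \circ m'_\Gamma \;=\; m_\Gamma,\]
and $\id_{\otimes\Gamma}$ of course also satisfies $\id_{\otimes\Gamma} \circ m_\Gamma = m_\Gamma$. Both are therefore factorizations of $m_\Gamma \colon \Gamma \to \otimes\Gamma$ through $m_\Gamma$ itself, so by the uniqueness clause of the universal property of $m_\Gamma$ (applied to the polymap $m_\Gamma$) they coincide: $\psi \circ \varphi = \id_{\otimes\Gamma}$. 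Interchanging the roles of $m_\Gamma$ and $m'_\Gamma$ gives $\varphi \circ \psi = \id_{\otimes'\Gamma}$.

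The par case is strictly dual: the universal property of $w_\Delta$ produces unique unary polymaps in the opposite direction from those above, and the same uniqueness argument, with $w_\Delta \backslash (-)$ replacing $(-)/m_\Gamma$, shows that the two constructed polymaps are mutually inverse. I do not anticipate any obstacle; the only point deserving care is simply that at each step the polymaps involved are unary, so the factorization property (whose general form concerns arbitrary input/output contexts) specializes cleanly, and the composites being compared are genuinely of the type for which uniqueness applies.
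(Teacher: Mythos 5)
Your proof is correct and follows essentially the same route as the paper: factor each universal polymap through the other, then use the uniqueness clause of the factorisation of $m_\Gamma$ through itself (comparing the composite against $\id_{\otimes\Gamma}$) to conclude the two factorisations are mutually inverse, with the par case handled dually. No issues.
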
 
\begin{proof}
Suppose that we have $\otimes \Gamma$ and $(\otimes \Gamma)'$ with weakly universal polymaps $m_\Gamma \colon \Gamma \to \otimes \Gamma$ and $m_\Gamma' \colon \Gamma \to (\otimes \Gamma)'$.
We can use the weakly universality of each polymap to factor the other through it giving polymaps $m_\Gamma/(m_\Gamma') \colon (\otimes \Gamma)' \to \otimes \Gamma$ and $m_\Gamma'/m_\Gamma \colon \otimes \Gamma \to (\otimes \Gamma)'$.
These polymaps are inverse to each other.
Indeed we have that
\[m_\Gamma = (m_\Gamma/(m_\Gamma)') m_\Gamma' = (m_\Gamma/(m_\Gamma)') (m_\Gamma'/m_\Gamma) m_\Gamma\]
But then the polymap $m_\Gamma$ factors through $m_\Gamma$ by $m_\Gamma/(m_\Gamma)' m_\Gamma'/m_\Gamma$ and $\id_{\otimes \Gamma}$.
By uniqueness of the factorisation through $m_\Gamma$ we have $m_\Gamma/(m_\Gamma)' m_\Gamma'/m_\Gamma = \id_{\otimes \Gamma}$.

\noindent Similarly $m_\Gamma'/m_\Gamma m_\Gamma/(m_\Gamma') = \id_{(\otimes \Gamma)'}$.

The proof for $\parr \Delta$ is similar.
\end{proof}
 
Given a list of lists of objects in a polycategory \cP , $(A_{i,j})_{1 \leq i \leq p, 1 \leq j \leq n_i}$, we have a polymap $\otimes A_{i,j} \to \otimes \otimes A_{i,j}$ from the tensor of the concatenation of the lists to the tensor of the tensors of the individual list.
To build it first consider the following weakly universal polymaps:
\begin{itemize}
\item for $1 \leq i \leq p$, $m_{(A_{i,j})_j} \colon A_{i,1},\dots,A_{i,n_i} \to \otimes A_{i,j}$
\item $m_{(\otimes A_{i,j})_i} \colon \otimes A_{1,j},\dots, \otimes A_{p,j} \to \otimes \otimes A_{i,j}$
\end{itemize}
We can compose them to get
\[ m_{(\otimes A_{i,j})_i}(m_{(A_{1,j})_j},\dots,m_{(A_{p,j})_j}) \colon A_{1,1},\dots, A_{1,n_1},\dots, A_{p,1},\dots, A_{p,n_p} \to \otimes \otimes A_{i,j}\]
Finally we can factor through
\[m_{(A_{i,j})_{(i,j)}} \colon A_{1,1},\dots, A_{1,n_1},\dots, A_{p,1},\dots, A_{p,n_p} \to \otimes A_{i,j}\] to get
\[m_{(\otimes A_{i,j})_i}(m_{(A_{1,j})_j},\dots,m_{(A_{p,j})_j})/m_{(A_{i,j})_{(i,j)}} \colon \otimes A_{i,j} \to \otimes \otimes A_{i,j}\]

Similarly we get a polymap $\parr \parr A_{i,j} \to \parr A_{i,j}$.

These polymaps are not in general invertible.
In particular, weakly universal polymaps don't compose in general.
If it were the case, since tensors/pars are unique up to unique isomorphisms we would have $\otimes \otimes A_{i,j} \simeq \otimes A_{i,j}$ which is not the case.

\begin{definition}
A weak two-tensor polycategory is a polycategory that has all weak tensors and pars. 
\end{definition}

\begin{figure}
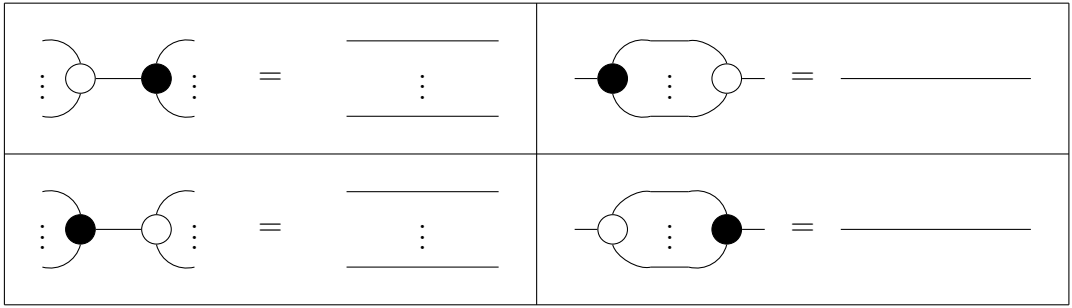

\tikzfig{polycat-weak-two-tensor-equations}
\caption{Graphical calculus for a weak two tensor polycategory}
\label{fig:polycat-weak-two-tensor}
\end{figure}

There is a graphical calculus for weak two-tensor polycategories that extend the one for representable polycategories.
We have a white multiplicative node that represent the multiplication polymap $m_\Gamma$ and correspond to the introduction of the tensor.
It is oplax monoidal.
There is a black comultiplicative node.
Precomposing a polymap $f$ by it represent the polymap $f/m_\Gamma$, so it is only possible to precompose all the inputs of $f$ at once by a black node.
It corresponds to elimination of the tensor and is lax comonoidal.
Then there are black multiplicative and white comultiplicative nodes corresponding to the par.
These are lax and oplax respectively, and the black one can only by use to postcompose all the output of a polymap.
Furthermore, black and white nodes cancel each other.
The rules are given in figure \ref{fig:polycat-weak-two-tensor}.

Given a weak two-tensor polycategory \cP , one can define a lnuldc that consists of unary co-unary maps in \cP.
We define $\cR(\cP)$ as the category with:
\begin{itemize}
\item objects those of \cP
\item maps,unary co-unary polymaps $f \colon A \to B$ in \cP
\item $\otimes_n (A_i)_i := \otimes (A_i)_i$
\item $\otimes_n(f_i)_i := m_{(B_i)}(f_1\dots f_n)/m_{(A_i)}$ which is represented graphically
\[\tikzfig{polycat-tensor-map}\]
\item $\parr_n (A_i)_i := \parr (A_i)$
\item $\parr_n (f_i)_i := w_{(B_i)}\backslash(f_1 \dots f_n) w_{(A_i)}$ which is represented graphically by \[\tikzfig{polycat-par-map}\]
\item $\alpha_{(A_{i,j})} := m_{(\otimes (A_{1,j}),\dots,\otimes (A_{n,j}))}(m_{(A_{1,j})},\dots,m_{(A_{n,j})})/m_{(A_{i,j})}$, graphically:
\[\tikzfig{polycat-tensor-alpha}\]
\item $\eta_A := \id_A/m_A \colon \otimes A \to A$
\item $\gamma_{(A_{i,j})}$ is given by:
\[\tikzfig{polycat-par-gamma}\]
\item $\iota_A := w_A \backslash \id_A \colon A \to \parr A$
\item the distributivity law $\delta \colon \otimes(\Gamma_1, \parr(\Delta_1,A,\Delta_2),\Gamma_2) \to \parr(\Delta_1,\otimes(\Gamma_1,A,\Gamma_2),\Delta_2)$ is given graphically by the following figure:
\tikzfig{polycat-delta}
Notice that it involves crossing of wires which is not possible in general.
However the restriction on the contexts in which $\delta$ is defined exactly prevents these crossing to happen.
For example when we take $\Gamma_1$ and $\Delta_2$ to be empty we get the following map
\tikzfig{polycat-delta-partial}
which represents $w_{\Delta_1,\otimes(A,\Gamma_1)}\backslash m_{A,\Gamma_2}w_{\Delta_1,A}/m_{\parr(\Delta_1,A),\Gamma_2}$
\end{itemize}

\begin{prop}
For any weak two-tensor polycategory \cP , $\cR(\cP)$ is a lax normal unbiased linearly distributive category.
\end{prop}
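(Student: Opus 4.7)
The plan is to unfold the definition of a lax normal unbiased linearly distributive category and verify each piece of data and each axiom in turn, relying throughout on the graphical calculus for weak two-tensor polycategories established in Figure \ref{fig:polycat-weak-two-tensor}. In that calculus, every coherence law reduces to showing that two string diagrams denote the same unary co-unary polymap of $\cP$, and this is generally done by a sequence of sliding moves together with cancellation of adjacent white/black nodes.

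First I would check that each $\otimes_n$ and $\parr_n$ is functorial. Functoriality on identities is a direct application of white/black cancellation. Functoriality on composition is obtained by noticing that $\otimes_n(g_i \circ f_i)$ and $\otimes_n(g_i) \circ \otimes_n(f_i)$ both provide a factorisation of $m_{(C_i)} \circ (g_1 f_1, \dots, g_n f_n)$ through $m_{(A_i)}$, and then invoking the uniqueness half of the universal property; dually for $\parr_n$. Next I would show that $\alpha$ and $\gamma$ are natural transformations by the same kind of sliding argument; and that $\eta_A := \id_A/m_A$ and $\iota_A := w_A\backslash \id_A$ are natural isomorphisms with inverses $m_A$ and $w_A$, respectively. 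Naturality is immediate from the definition, and invertibility is forced by the uniqueness part of the universal property (e.g.\ $m_A \circ (\id_A/m_A)$ and $\id_{\otimes_1 A}$ both factor $m_A$ through $m_A$, hence coincide). This already yields the normality condition.

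The coherence laws for the oplax monoidal structure $(\otimes_n, \alpha, \eta)$ and for the lax structure $(\parr_n, \gamma, \iota)$ are established by drawing both sides as string diagrams, applying black/white cancellation to collapse the factoring/cofactoring nodes, and verifying that both sides reduce to the multiplication polymap $m$ (resp.\ $w$) of the fully flattened list of inputs. For the three coherence laws for $\delta$, the key observation is that the planarity restriction on when $\delta$ is defined is exactly the restriction needed to express the composites as planar polycategorical string diagrams in $\cP$: each diagram then reduces, by white/black cancellation and by associativity and interchange for $\cP$'s composition, to a single polymap built from $m$'s, $w$'s and the identity.

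The main obstacle I anticipate is bookkeeping rather than any conceptual difficulty: the three coherence diagrams for $\delta$ involve nested occurrences of $\otimes$ and $\parr$ with multiple context slots, and one must check at each intermediate step that the restriction on polycategorical composition (``$\Delta_1$ or $\Gamma_1'$ empty, $\Delta_2$ or $\Gamma_2'$ empty'') is satisfied so that the string diagram indeed represents a well-formed polymap. Once this is handled, each law reduces to a uniqueness statement: both sides exhibit the same unfactored polymap as $w_\Delta\backslash f / m_\Gamma$, and so must coincide by the universal properties of $m_\Gamma$ and $w_\Delta$.
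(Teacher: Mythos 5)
Your proposal is correct and follows essentially the same route as the paper: verify each axiom by drawing both sides as string diagrams in the weak two-tensor calculus, reduce them to a common normal form by white/black cancellation and the polycategory axioms, and obtain normality from the uniqueness of factorisation through $m_A$ and $w_A$ (which forces $m_A \circ (\id_A/m_A) = \id_{\otimes A}$, and dually for $w_A$). Your observation that the planarity restriction on polycategorical composition is exactly what makes the restricted distributivity laws well-formed is also the point the paper makes when defining $\delta$ in $\cR(\cP)$.
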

\begin{proof}
First, it is a category: unitality of the identities and associativity of composition follow directly from those in the polycategory.

Now let us prove that it is oplax $\otimes$-monoidal.
First we want to prove that the following diagram commutes:

\[\begin{tikzcd}[ampersand replacement=\&]
	\& {\otimes_{\sum_{i,j} m_{i,j}}A_{i,j,k}} \\
	{\otimes_{\sum_i n_i}\otimes_{m_{i,j}}A_{i,j,k}} \&\& {\otimes_p \otimes_{\sum_j m_{i,j}}A_{i,j,k}} \\
	\& {\otimes_p\otimes_{n_i}\otimes_{m_{i,j}}A_{i,j,k}}
	\arrow["\alpha"{description}, from=2-1, to=3-2]
	\arrow["\alpha"{description}, from=1-2, to=2-1]
	\arrow["{\otimes_p \alpha}"{description}, from=2-3, to=3-2]
	\arrow["\alpha"{description}, from=1-2, to=2-3]
\end{tikzcd}\]

We will prove that both paths give polymaps equal to \[m_{(\otimes_{n_i}\otimes_{m_{i,j}} A_{i,j,k})_i}(m_{(\otimes_{m_{1,j} A_{1,j,k}}(m_{(A_{1,1,k})_k},\dots,m_{(A_{1,n_1,k})_k}))_j},\dots,m_{(\otimes_{m_{p,j} A_{p,j,k}}(m_{(A_{p,1,k})_k},\dots,m_{(A_{p,n_p,k})_k}))_j})/m_{(A_{i,j,k})_{i,j,k}}\]

which can be understood as eliminating the tensor product in $\otimes_{\sum_{i,j} m_{i,j}} A_{i,j,k}$ then introducing first the tensor products $\otimes_{m_{i,j}} A_{i,j,k}$ followed by the tensor products $\otimes_{n_i}\otimes_{m_{i,j}} A_{i,j,k}$ and finally the tensor product $\otimes_p \otimes_{n_i} \otimes_{m_{i,j}} A_{i,j,k}$

\begin{figure}
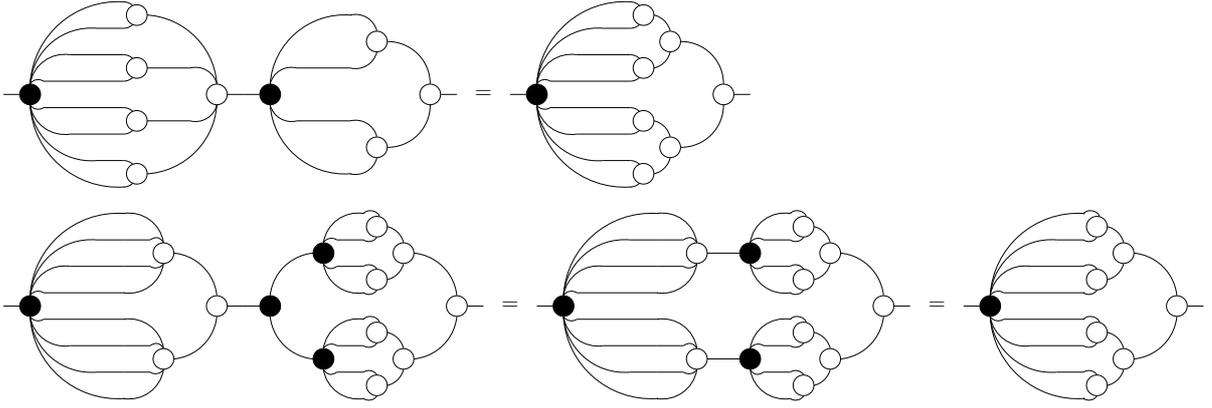

\centering
\resizebox{\hsize}{!}{\tikzfig{polycat-tensor-alpha-coherence}}
\caption{Coherence for $\alpha$}
\label{fig:polycat-tensor-alpha-coherence}
\end{figure}

The proof is given graphically in figure \ref{fig:polycat-tensor-alpha-coherence}.
Then we need to prove that the following diagram commutes:

\[\begin{tikzcd}[ampersand replacement=\&]
	{\otimes_n A_i} \&\& {\otimes_n\otimes_1A_i} \\
	\\
	{\otimes_1\otimes_nA_i} \&\& {\otimes_n A_i}
	\arrow["\alpha"{description}, from=1-1, to=1-3]
	\arrow["{\otimes_n\eta_{A_i}}"{description}, from=1-3, to=3-3]
	\arrow[Rightarrow, no head, from=1-1, to=3-3]
	\arrow["\alpha"{description}, from=1-1, to=3-1]
	\arrow["{\eta_{\otimes_nA_i}}"{description}, from=3-1, to=3-3]
\end{tikzcd}\]

\begin{figure}
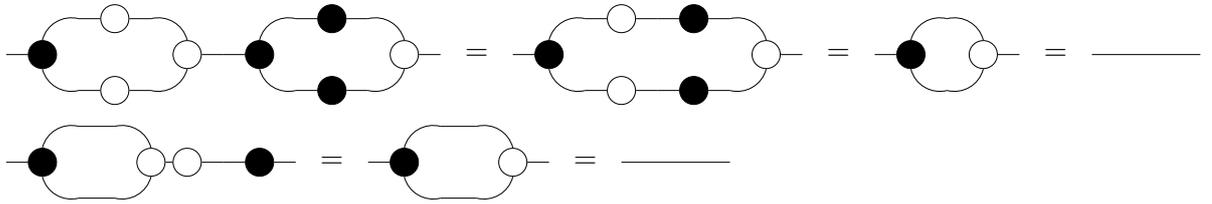

\centering
\resizebox{\hsize}{!}{\tikzfig{polycat-tensor-eta-coherence}}
\caption{Coherence for $\eta$}
\label{fig:polycat-tensor-eta-coherence}
\end{figure}

The proof is given in figure \ref{fig:polycat-tensor-eta-coherence}.

The proofs of the coherence laws for $\gamma$ and $\iota$ are obtained by doing a horizontal reflection of each figure.

Finally we want to prove coherence for the distributivity law.
First, we shall prove that the following diagram commutes:
\[\resizebox{\hsize}{!}{\begin{tikzcd}[ampersand replacement=\&]
	\& {\otimes(\Gamma_1,\parr(\Delta_1,A_1),\Gamma_2,\parr(A_2,\Delta_2),\Gamma_3)} \\
	{\parr(\Delta_1,\otimes(\Gamma_1,A_1,\Gamma_2,\parr(A_2,\Delta_2),\Gamma_3))} \&\& {\parr(\otimes(\Gamma_1,\parr(\Delta_1,A_1),\Gamma_2,A_2,\Gamma_3),\Delta_2)} \\
	\\
	{\parr(\Delta_1,\parr(\otimes(\Gamma_1,A_1,\Gamma_2,A_2,\Gamma_3),\Delta_2))} \&\& {\parr(\parr(\Delta_1,\otimes(\Gamma_1,A_1,\Gamma_2,A_2,\Gamma_3)),\Delta_2)} \\
	\& {\parr(\Delta_1,\otimes(\Gamma_1,A_1,\Gamma_2,A_2,\Gamma_3),\Delta_2)}
	\arrow["\delta"{description}, from=1-2, to=2-1]
	\arrow["{\parr(-,\delta)}"{description}, from=2-1, to=4-1]
	\arrow["{\gamma'}"{description}, from=4-1, to=5-2]
	\arrow["\delta"{description}, from=1-2, to=2-3]
	\arrow["{\parr(\delta,-)}"{description}, from=2-3, to=4-3]
	\arrow["{\gamma'}"{description}, from=4-3, to=5-2]
\end{tikzcd}}\]

\begin{figure}
\centering
\resizebox{\hsize}{!}{\tikzfig{polycat-tensor-par-par-coherence}}
\caption{Coherence for $\delta$}
\label{fig:polycat-tensor-par-par-coherence}
\end{figure}

The proof is given in figure \ref{fig:polycat-tensor-par-par-coherence}.
Notice that the symmetry between the two paths means that the two corresponding digrams are a vertical reflection of one another.

Then we want to prove that:
\[\scalebox{0.5}{\begin{tikzcd}[ampersand replacement=\&]
	{\otimes(\Gamma_1,\dots,\Gamma_p,\Gamma,\parr(\Delta,A,\Delta'),\Gamma',\Gamma_1',\dots,\Gamma_{q}')} \&\& {\otimes(\otimes\Gamma_1,\dots,\otimes\Gamma_p,\otimes(\Gamma,\parr(\Delta,A,\Delta'),\Gamma'),\otimes \Gamma_1',\dots,\otimes \Gamma_q')} \\
	\\
	{\parr(\Delta, \otimes(\Gamma_1,\dots,\Gamma_p,A,\Gamma',\Gamma_1',\dots,\Gamma_q'),\Delta')} \&\& {\otimes(\otimes\Gamma_1,\dots,\otimes\Gamma_p,\parr(\Delta,\otimes(\Gamma,A,\Gamma'),\Delta'),\otimes\Gamma_1',\dots,\otimes\Gamma_q')} \\
	\\
	\& {\parr(\Delta,\otimes(\otimes\Gamma_1,\dots,\otimes\Gamma_p,\otimes(\Gamma,A,\Gamma'),\otimes\Gamma_1',\dots,\otimes\Gamma_q'),\Delta')}
	\arrow["\delta"{description}, from=1-1, to=3-1]
	\arrow["{\parr(-,\alpha,-)}"{description}, from=3-1, to=5-2]
	\arrow["\alpha"{description}, from=1-1, to=1-3]
	\arrow["{\otimes(-,\delta,-)}"{description}, from=1-3, to=3-3]
	\arrow["\delta"{description}, from=3-3, to=5-2]
\end{tikzcd}}\]

\begin{figure}
\centering
\resizebox{\hsize}{!}{\tikzfig{polycat-tensor-par-coherence}}
\caption{Coherence for $\delta$}
\label{fig:polycat-tensor-par-coherence}
\end{figure}

It is proven in figure \ref{fig:polycat-tensor-par-coherence}.

Last, we want to prove that:
\[\begin{tikzcd}[ampersand replacement=\&]
	{\otimes\parr(\Delta_1,A,\Delta_2)} \&\& {\parr(\Delta_1,\otimes A,\Delta_2)} \&\& {\otimes(\Gamma_1,A,\Gamma_2)} \&\& {\parr \otimes(\Gamma_1,A,\Gamma_2)} \\
	\\
	\&\& {\parr(\Delta_1,A,\Delta_2)} \&\& {\otimes(\Gamma_1,\parr A, \Gamma_2)}
	\arrow["\delta"{description}, from=1-1, to=1-3]
	\arrow["{\parr(-,\eta,-)}"{description}, from=1-3, to=3-3]
	\arrow["\eta"{description}, from=1-1, to=3-3]
	\arrow["\delta"{description}, from=3-5, to=1-7]
	\arrow["\iota"{description}, from=1-5, to=1-7]
	\arrow["{\otimes(-,\iota,-)}"{description}, from=1-5, to=3-5]
\end{tikzcd}\]

\begin{figure}
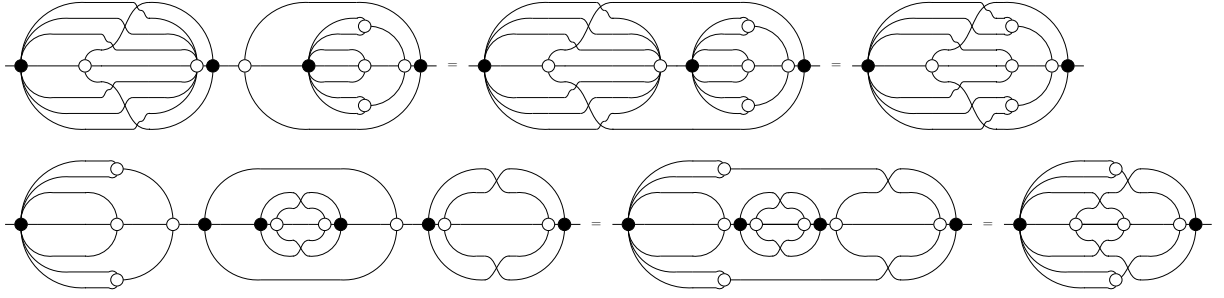

\centering
\resizebox{\hsize}{!}{\tikzfig{polycat-tensor-par-coherence-unit}}
\caption{Coherence for $\delta$}
\label{fig:polycat-tensor-par-coherence-unit}
\end{figure}

The proof is given by figure \ref{fig:polycat-tensor-par-coherence-unit}.

This concludes the proof that $\cR(\cP)$ is a lax unbiased linearly distributive category.

For the fact that it is normal, we have that $\eta_A := \id_A/m_A$ has an inverse: $m_A$.
We have by definition that $\id_A/m_A \circ m_A = \id_A$.
Furthermore, \[(m_A \circ \id_A/m_A) \circ m_A = m_A \circ \id_A = m_A = \id_{\otimes A} \circ m_A\]
But by the unicity of prefactorisation by $m_A$, $(m_A \circ \id_A) = \id_{\otimes A}$.
Similarly, $\iota_A$ has for inverse $w_A$.

\end{proof}

Now we can extend \cR to functors to get a Frobenius functor.
Given a functor $F \colon \cP \to \cQ$ we define $\cR(F) \colon \cR(\cP) \to \cR(\cQ)$
\begin{itemize}
\item it is the restriction of $F$ to the category of unary counary polymaps in \cP
\item $\cR(F)_n \colon \otimes_n F(-) \Rightarrow F(\otimes_n -)$ is given on components $\Gamma$ by $F(m_\Gamma)/m_{F(\Gamma)}$.
Graphically it will be represented as follows where a functor is represented as a box:
\tikzfig{polycat-lax-monoidal}
\item $\cR(F)^n \colon F(\parr_n -) \Rightarrow \parr_n F(-)$ is given dually by \tikzfig{polycat-oplax-monoidal}
\end{itemize}

\begin{prop}
Given a functor of polycategories $F \colon \cP \to \cQ$, $\cR(F) \colon \cR(\cP) \to \cR(\cQ)$ is a Frobenius functor.
\end{prop}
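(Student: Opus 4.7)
The plan is to unpack what being a Frobenius linear functor requires and check each axiom using the graphical calculus for weak two-tensor polycategories. By definition a Frobenius linear functor is a linear functor whose $\otimes$-part and $\parr$-part agree, and whose $\otimes$-strength coincides with $\otimes$-laxity and whose $\parr$-costrength coincides with $\parr$-oplaxity. Since $\cR(F)_\otimes$ and $\cR(F)_\parr$ are both defined as the restriction of $F$ to unary co-unary polymaps, they are literally equal as functors of categories, and the required strengths are by construction the same natural transformations as the laxity and oplaxity data. So once we verify that $(\cR(F), \cR(F)_n, \cR(F)^n)$ is a linear functor in the $\otimes$-lax, $\parr$-oplax sense, the Frobenius conditions are immediate.

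First I would verify that $\cR(F)_n \colon \otimes_n F(-) \Rightarrow F(\otimes_n -)$ and $\cR(F)^n \colon F(\parr_n-) \Rightarrow \parr_n F(-)$ are natural. Naturality of $\cR(F)_n$ on a tuple of morphisms $(f_i \colon A_i \to B_i)$ follows from the rewriting $F(m_{(B_i)})/m_{(F(B_i))} \circ \otimes_n F(f_i) = F(m_{(B_i)}(f_1,\dots,f_n))/m_{(F(A_i))} = F(\otimes_n f_i \circ m_{(A_i)})/m_{(F(A_i))} = F(\otimes_n f_i) \circ F(m_{(A_i)})/m_{(F(A_i))}$, using functoriality of $F$ applied to the white-node definition of $\otimes_n f_i$ in $\cR(\cP)$; the same argument reflected vertically handles $\cR(F)^n$.

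Next I would check the two monoidal coherence laws for $\cR(F)_n$. The $\alpha$-coherence asks that two ways of combining nested laxity with the associator agree; graphically, both sides rewrite (using black/white cancellations and functoriality of $F$) to the single diagram obtained by eliminating all tensors inside $F$ and then reintroducing a single tensor outside, which gives equality. The $\eta$-coherence is analogous and easier: both composites rewrite to $F(\id)/m = \id/m$. The dual arguments with $w$ and $\iota$ give the coherence for $\cR(F)^n$ as an oplax $\parr$-monoidal structure.

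The main obstacle, and the step most worth laying out carefully, is the compatibility of $\cR(F)$ with the distributivity law $\delta$. What needs to commute is the hexagon from the definition of a linear functor with $\nu_\parr = \cR(F)_{m_1+1+m_2}$ and $\nu_\otimes = \cR(F)^{n_1+1+n_2}$. Unfolding the definition of $\delta$ in $\cR(\cP)$ (the diagram that introduces $w_\Delta$ on the $\parr$-input, crosses nothing thanks to the planarity constraint that either $\Gamma_1$ or $\Delta_1$ and either $\Gamma_2$ or $\Delta_2$ is empty, and then introduces $m_\Gamma$ on the outputs), both composites of the hexagon unfold to the same string diagram in $\cQ$: a single occurrence of $F$ applied to the relevant polymap, wrapped between the canonical pattern of eliminations $m/w$ on the inputs and introductions $w\backslash/m$ on the outputs. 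The rewriting only uses functoriality of $F$ and the cancellation rules of Figure~\ref{fig:polycat-weak-two-tensor}; the $\eta$- and $\iota$-pentagons for $\nu_\otimes, \nu_\parr$ reduce similarly to the identity rewriting. Once this hexagon is checked, the Frobenius identifications are tautological and the proof is complete.
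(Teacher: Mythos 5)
Your proposal is correct and follows essentially the same route as the paper's proof: everything is reduced to the graphical calculus for weak two-tensor polycategories, using white/black node cancellation and functoriality of $F$ to verify that $\cR(F)_n = F(m_\Gamma)/m_{F(\Gamma)}$ is lax $\otimes$-monoidal, with the $\parr$-oplax part obtained by vertical reflection. In fact you go further than the paper does: its proof checks only the two monoidal coherence laws for $\cR(F)_n$ and dualises, leaving the compatibility of $\cR(F)$ with the distributivity law $\delta$ (the hexagons in the definition of a linear functor, which a Frobenius functor must satisfy with $\nu_\parr = \cR(F)_{m_1+1+m_2}$ and $\nu_\otimes = \cR(F)^{n_1+1+n_2}$) entirely implicit, whereas you single it out as the main step and sketch the correct rewriting argument for it. Your naturality computation for $\cR(F)_n$ and the observation that the Frobenius identifications are then tautological are both sound.
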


\begin{proof}
First let prove that $F$ is lax $\otimes$-monoidal.
We first want to prove that the following diagram commutes:
\[\begin{tikzcd}[ampersand replacement=\&]
	{\otimes_{\sum_i n_i}F(A_{i,j})} \&\& {F(\otimes_{\sum_i n_i} A_{i,j})} \\
	\&\&\& {F(\otimes_p \otimes_{n_i} A_{i,j})} \\
	{\otimes_p\otimes_{n_i} F(A_{i,j})} \&\& {\otimes_p F(\otimes_{n_i} A_{i,j})}
	\arrow["{\otimes_p F_{n_i}}"{description}, from=3-1, to=3-3]
	\arrow["{F_p}"{description}, from=3-3, to=2-4]
	\arrow["{F(\alpha)}"{description}, from=1-3, to=2-4]
	\arrow["\alpha"{description}, from=1-1, to=3-1]
	\arrow["{F_{\sum_i n_i}}"{description}, from=1-1, to=1-3]
\end{tikzcd}\]

\begin{figure}
\centering
\resizebox{\hsize}{!}{\tikzfig{polycat-lax-monoidal-coherence}}
\caption{Coherence for $\otimes$-lax monoidality}
\label{fig:polycat-lax-monoidal-coherence}
\end{figure}

The proof is given in figure \ref{fig:polycat-lax-monoidal-coherence}.

Then we want to prove that the following diagram commutes:
\[\begin{tikzcd}[ampersand replacement=\&]
	{\otimes F(A)} \&\& {F(A)} \\
	{F(\otimes A)}
	\arrow["\eta"{description}, from=1-1, to=1-3]
	\arrow["{F_1}"{description}, from=1-1, to=2-1]
	\arrow["{F(\eta)}"{description}, from=2-1, to=1-3]
\end{tikzcd}\]

\begin{figure}
\centering
\tikzfig{polycat-lax-monoidal-coherence-unit}
\caption{Coherence for $\otimes$-lax monoidality}
\label{fig:polycat-lax-monoidal-coherence-unit}
\end{figure}

It is given in figure \ref{fig:polycat-lax-monoidal-coherence-unit}.

This ends the proof of lax $\otimes$-monoidality of $\cR(F)$.
The proofs for oplax $\parr$-monoidality of $\cR(F)$ are dual.

\end{proof}

Finally given a natural transformation $\beta \colon F \Rightarrow G$ between functors of polycategories, we define a transformation $\cR(\beta) \colon \cR(F) \Rightarrow \cR(G)$ given by $\cR(\beta) := \beta$.

\begin{prop}
Any natural transformation $\beta \colon F \Rightarrow G$ between functors of polycategories induces a linear transformation between Frobenius functors $\alpha \colon \cR(F) \Rightarrow \cR(G)$.
\end{prop}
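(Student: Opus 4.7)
The plan is to leverage the observation, already recorded in the paper immediately preceding the statement, that for a Frobenius linear functor $F$ and a natural transformation $\alpha$ that is simultaneously $\otimes$- and $\parr$-monoidal, the pair $(\alpha,\alpha)$ automatically satisfies the two hexagonal coherence laws defining a linear transformation (because the strength $\nu_\parr$ equals the lax structure $F_{m_1+1+m_2}$ and the costrength $\nu_\otimes$ equals the oplax structure $F^{n_1+1+n_2}$, so both pentagons collapse to monoidality squares). Hence the only real content is to show that $\cR(\beta) := \beta$ is monoidal for both $\otimes_n$ and $\parr_n$ in $\cR(\cQ)$.

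First I would check $\otimes$-monoidality, i.e.\ the commutativity of
\[
\begin{tikzcd}
\otimes_n F(A_i) \arrow[r,"\cR(F)_n"] \arrow[d,"\otimes_n \beta_{A_i}"'] & F(\otimes_n A_i) \arrow[d,"\beta_{\otimes_n A_i}"] \\
\otimes_n G(A_i) \arrow[r,"\cR(G)_n"'] & G(\otimes_n A_i)
\end{tikzcd}
\]
in $\cR(\cQ)$. Since $\cR(F)_n$ is by definition the unique factorisation $F(m_{(A_i)})/m_{(F(A_i))}$, and similarly $\otimes_n \beta_{A_i} = m_{(G(A_i))}(\beta_{A_1},\dots,\beta_{A_n})/m_{(F(A_i))}$, I would reduce both sides using the universal property of $m_{(F(A_i))}$: it suffices to check equality after precomposition with $m_{(F(A_i))}$. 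The left path then becomes $\beta_{\otimes_n A_i}\circ F(m_{(A_i)})$ and the right path becomes $G(m_{(A_i)})\circ (\beta_{A_1},\dots,\beta_{A_n})$, and these coincide precisely by naturality of $\beta$ applied to the polymap $m_{(A_i)}\colon A_1,\dots,A_n \to \otimes_n A_i$ in $\cP$. Graphically, this amounts to sliding a box labelled $\beta$ across a white multiplication node. The unit square (compatibility with $\eta$) reduces similarly to naturality of $\beta$ at $\id_A$.

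The $\parr$-monoidality square is handled by the dual argument: one exploits the universal property of $w_{(B_j)}$ in $\cR(\cQ)$ (so both sides become equal after postcomposition with $w_{(G(B_j))}$), and naturality of $\beta$ applied to the co-multiplication polymap $w_{(B_j)}\colon \parr_n B_j \to B_1,\dots,B_n$ concludes. Compatibility with $\iota$ is analogous.

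Once both monoidalities are established, the two hexagons in the definition of a linear transformation — whose top-left and bottom-right edges are $\otimes(-,\nu_\parr,-)$ and $\parr(-,\nu_\otimes,-)$ respectively — become instances of the $\otimes$-monoidality square (for the first hexagon) and the $\parr$-monoidality square (for the second), since $\nu_\parr = \cR(F)_{m_1+1+m_2}$ and $\nu_\otimes = \cR(F)^{n_1+1+n_2}$ in the Frobenius case; naturality of $\beta$ closes the remaining triangles. The main obstacle is purely bookkeeping: identifying which subdiagrams of the hexagons are instances of naturality versus monoidality; no new coherence idea is required beyond the universal properties of $m_\Gamma$ and $w_\Delta$ used together with naturality of $\beta$.
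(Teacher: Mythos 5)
Your proposal is correct and follows essentially the same route as the paper: the paper's proof also reduces the $\otimes$-monoidality square, after precomposition with the universal polymap $m_{(F(A_i))}$, to naturality of $\beta$ at $m_{(A_i)}$ (its displayed diagram is exactly this factorisation argument), handles the $\parr$ case dually, and relies on the remark preceding the proposition that a doubly monoidal transformation on a Frobenius functor automatically satisfies the two hexagons. No gaps.
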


\begin{proof}
The naturality of $\alpha$ followed directly from its naturality as a transformation between functors of polycategories.

To prove that it is $\otimes$-monoidal we want to show that the following diagram commutes:
\[\begin{tikzcd}[ampersand replacement=\&]
	{\otimes F(A_i)} \&\& {F(\otimes A_i)} \\
	\\
	{\otimes G(A_i)} \&\& {G(\otimes A_i)}
	\arrow["{F_n}"{description}, from=1-1, to=1-3]
	\arrow["{\beta_{\otimes A_i}}"{description}, from=1-3, to=3-3]
	\arrow["{\otimes\beta_{A_i}}"{description}, from=1-1, to=3-1]
	\arrow["{G_n}"{description}, from=3-1, to=3-3]
\end{tikzcd}\]

The proof is given by the following commuting diagram:
\[\begin{tikzcd}[ampersand replacement=\&]
	{\otimes F(A_i)} \&\& {F(\otimes A_i)} \\
	\& {F(A_1),\dots,F(A_n)} \\
	\& {G(A_1),\dots,G(A_n)} \\
	{\otimes G(A_i)} \&\& {G(\otimes A_i)}
	\arrow["{F(m_{(A_i)_i})}"{description}, from=2-2, to=1-3]
	\arrow["{\beta_{\otimes A_i}}"{description}, from=1-3, to=4-3]
	\arrow["{\beta_{A_1},\dots,\beta_{A_n}}"{description}, from=2-2, to=3-2]
	\arrow["{G(m_{(A_i)_i})}"{description}, from=3-2, to=4-3]
	\arrow["{F_n}"{description}, from=1-1, to=1-3]
	\arrow["{m_{(F(A_i))_)}}"{description}, from=2-2, to=1-1]
	\arrow["{m_{(G(A_i))_i}}"{description}, from=3-2, to=4-1]
	\arrow["{G_n}"{description}, from=4-1, to=4-3]
	\arrow["{\otimes \beta_{A_i}}"{description}, from=1-1, to=4-1]
\end{tikzcd}\]

Dually, $F$ is $\parr$-monoidal.

\end{proof}

\begin{prop}
There is a 2-functor $\cR \colon \PolyCat_{wtt} \to \LNULDCFrob$.
\end{prop}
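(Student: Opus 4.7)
The plan is to verify the three conditions required of a 2-functor: preservation of identity 1-cells, preservation of horizontal composition of 1-cells, and strict functoriality on 2-cells (identities and both kinds of composition). The three preceding propositions have already established that $\cR$ is well-defined on objects, 1-cells, and 2-cells, producing lnuldc's, Frobenius functors, and linear transformations respectively. So the only work remaining is to check the coherence equalities.

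For the identity functor $\id_\cP$, the underlying functor of $\cR(\id_\cP)$ on unary co-unary polymaps is literally $\id_{\cR(\cP)}$. Its $\otimes$-lax structure at a list $\Gamma$ is $\id_\cP(m_\Gamma)/m_\Gamma = m_\Gamma/m_\Gamma$, which by the uniqueness clause of the factorisation of $m_\Gamma$ through itself must equal $\id_{\otimes\Gamma}$; dually, the $\parr$-oplax structure is the identity. Hence $\cR(\id_\cP)$ is the strict identity Frobenius functor. For the composite $G \circ F$, the underlying functor of $\cR(G \circ F)$ agrees with that of $\cR(G)\circ\cR(F)$ on the nose. The $\otimes$-lax structure of $\cR(G\circ F)$ at $\Gamma$ is $(G\circ F)(m_\Gamma)/m_{GF(\Gamma)}$, while the composite of the lax structures of $\cR(F)$ and $\cR(G)$ is $G(m_{F(\Gamma)})/m_{GF(\Gamma)} \circ G(F(m_\Gamma)/m_{F(\Gamma)})$. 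Using functoriality of $G$ together with the fact that a composite of factorisations through universal multiplication maps is itself the factorisation (which can be read off directly from the white/black node calculus), these two polymaps agree; the dual argument handles the $\parr$-oplax structure.

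For 2-cells, the identity is immediate: $\cR(\id_F)_A = (\id_F)_A = \id_{F(A)}$, which is by definition the identity transformation on $\cR(F)$ in $\LNULDCFrob$. Vertical composition is pointwise by construction of $\cR$ on transformations, so $\cR(\gamma\circ\beta)_A = \gamma_A\circ\beta_A = (\cR(\gamma)\circ\cR(\beta))_A$. For the horizontal composition (whiskering of a natural transformation by a functor on each side), one unpacks the definition of whiskering in both $\PolyCat_{wtt}$ and $\LNULDCFrob$ and checks that the resulting componentwise polymaps coincide; this reduces to the naturality of $\beta$ plus the universal property of $m_\Gamma$ and $w_\Delta$, both of which are available from the weak two-tensor structure.

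The main obstacle will be the horizontal-composition bookkeeping: while vertical composition and the identity clauses are essentially immediate from the pointwise definitions, whiskering requires tracking how the lax/oplax structure maps $\cR(F)_n$ and $\cR(F)^n$ interact with the components of $\alpha$ under functor application on each side. The calculation is a diagram chase combining functoriality of $F$ and $G$, naturality of $\alpha$, and the uniqueness clause of factorisation through $m_\Gamma$ and $w_\Delta$. None of this introduces genuine content beyond what was proved in the three preceding propositions, so the result is essentially a formal consequence of the definitions, and the proof in the paper should reasonably just remark that the verification is routine given the graphical calculus established above.
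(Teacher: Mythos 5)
Your proposal is correct and follows essentially the same route as the paper: both reduce the statement to the functoriality checks (identity via $m_\Gamma/m_\Gamma = \id_{\otimes\Gamma}$ and its dual, composites via functoriality and the white/black node calculus, and 2-cells handled by noting that $\cR$ is the identity on transformations). The paper is even terser than you anticipate on the 2-cell clauses, dismissing them in one line, but the content matches.
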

\begin{proof}
We have proven that it is well defined.
We now just need to prove the functoriality conditions.

$\cR(\id_{\cP}) = \id_{\cR(\cP)}$ by definition.
Furthermore, $\cR(\id_{\cP})_n \colon \otimes_n - \Rightarrow \otimes -$ is given on $\Gamma$ by $\cR(\id_{\cP})(m_\Gamma)/(m_{\cR(\id_{\cP})(\Gamma)}) = m_\Gamma/m_\Gamma = \id_{\otimes \Gamma}$.
Similarly, $\cR(\id_{\cP})^n_\Gamma = \id_{\parr \Gamma}$.
So we get that $\cR(\id_{\cP})$ is the identity in $\LNULDCFrob$.

$\cR(G \circ F) = G \circ F = \cR(G) \circ \cR(F)$.
Furthermore, $\cR(G \circ F)_n = \cR(G)_n \circ \cR(F)_n$ is given by the following figure:

\tikzfig{polycat-lax-monoidal-functoriality}

And similarly for $\cR(G \circ F)^n$.

Since $\cR$ is defined as the identity on transformations and there is no extra structure on them, 2-functoriality follows directly.
\end{proof}

\begin{prop}
There is a 2-equivalence of 2-categories \[\LNULDCFrob \simeq \PolyCat_{wtt}\]
\end{prop}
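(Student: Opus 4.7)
The plan is to show that the 2-functors $\cR$ and $\cP$ form a 2-equivalence by arguing that $\cP$, suitably restricted, is essentially surjective on objects and locally an equivalence of categories. A preliminary step is to verify that for any lnuldc $\cC$, the underlying polycategory $\cP(\cC)$ lies in $\PolyCat_{wtt}$: the candidate for the weak tensor of a list $\Gamma$ is the object $\otimes_n \Gamma$ of $\cC$, equipped with the polymap $m_\Gamma$ represented by $\iota \colon \otimes_n \Gamma \to \parr_1 \otimes_n \Gamma$; its weak universality uses invertibility of $\eta$ and $\iota$ (normality), since a polymap $f \colon \Gamma \to \Delta$ in $\cP(\cC)$, being a map $\otimes_n \Gamma \to \parr \Delta$ in $\cC$, produces via $\eta^{-1}$ a unique map $\otimes_1 \otimes_n \Gamma \to \parr \Delta$ yielding the required factorisation. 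Weak par products are obtained dually.

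For essential surjectivity I would exhibit an isomorphism of polycategories $\cP(\cR(\cP)) \cong \cP$ for every weak two-tensor polycategory $\cP$. Unfolding definitions, a polymap $\Gamma \to \Delta$ in $\cP(\cR(\cP))$ is a unary counary polymap $\otimes \Gamma \to \parr \Delta$ in $\cP$, and the weak universal properties of $m_\Gamma$ and $w_\Delta$ set up a bijection with polymaps $\Gamma \to \Delta$ in $\cP$, sending $f$ to $(w_\Delta \backslash f)/m_\Gamma$. Compatibility with composition, identities, and the chosen tensors and pars reduces to a diagram chase in the graphical calculus of Figure~\ref{fig:polycat-weak-two-tensor}, using the cancellation laws for white and black dots.

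For local equivalence, given lnuldcs $\cC, \cD$, I would show that $\cP_{\cC,\cD} \colon \LNULDCFrob(\cC,\cD) \to \PolyCat_{wtt}(\cP(\cC), \cP(\cD))$ is an equivalence of categories. For fully faithfulness on 2-cells: a natural transformation $\alpha \colon \cP(F) \Rightarrow \cP(G)$ has components that are maps $\otimes_1 F(A) \to \parr_1 G(A)$ in $\cD$, which correspond bijectively to maps $F(A) \to G(A)$ via normality, and both naturality and the monoidal coherences unfold directly into the clauses for a linear transformation between Frobenius functors. For essential surjectivity on 1-cells: given a polycategorical functor $H \colon \cP(\cC) \to \cP(\cD)$, the restriction $\cR(H) \colon \cC \to \cD$ carries a canonical Frobenius linear structure obtained by applying $H$ to the universal polymaps $m_\Gamma, w_\Delta$ and reflecting back along the universal properties in $\cP(\cD)$; one then verifies $\cP(\cR(H)) \cong H$ naturally.

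The hard part will be matching all the coherence data: once the correspondences on underlying objects and morphisms are established, one must check that the associators $\alpha,\gamma$, the normality isos $\eta,\iota$, the distributivity $\delta$, and the Frobenius linearity data reconstructed from a polycategorical functor all coincide with the originals. This reduces in every case to diagram chases using either the coherence axioms of the lnuldc (on the $\cR\cP$ side) or the white/black dot cancellation rules of the graphical calculus (on the $\cP\cR$ side). These verifications are lengthy but essentially mechanical given the machinery already built in the chapter, and I expect the only subtle point to be ensuring that the Frobenius linearity data $\nu_\parr, \nu_\otimes$ reconstructed from a polycategorical functor $H$ agrees with $\cR(H)_{m}$ and $\cR(H)^{n}$, which follows because $H$ must preserve both the tensor-introducing and par-eliminating universal polymaps.
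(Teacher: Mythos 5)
Your proposal is correct and follows essentially the same route as the paper: $\cP$ factors through $\PolyCat_{wtt}$, essential surjectivity comes from $\cQ \simeq \cP(\cR(\cQ))$ via the factorisations $w_\Delta\backslash f/m_\Gamma$ and $w_\Delta \circ g \circ m_\Gamma$, and the hom-categories are compared using normality. The one point the paper makes explicit that you only gesture at (``reflecting back along the universal properties'') is that $\cR_{\cP(\cC),\cP(\cD)}$ cannot serve directly as the inverse on hom-categories, since $\cR(\cP(\cC))$ is merely isomorphic to $\cC$ (a morphism $f \colon A \to B$ becomes one $\otimes_1 A \to \parr_1 B$), so the paper instead uses the corrected assignment $G \mapsto \iota^{-1}_{G(-)}\circ G(-)\circ \eta^{-1}_{G(-)}$, which is manifestly inverse to $\cP_{\cC,\cD}$ acting by $\iota \circ - \circ \eta$.
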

\begin{proof}
First, notice that the 2-functor $\cP \colon \LNULDCFrob \to \PolyCat$ factors through $\PolyCat_{wtt}$.

Furthermore, its restriction $\cP_{wtt}$ is essentially surjective on objects since for any weak two-tensor polycategory $\cQ$, $\cQ \simeq \cP(\cR(\cQ))$.
Indeed, we have that $\cP(\cR(\cQ))$ has the same objects as \cQ and polymaps $f \colon \Gamma \to \Delta$ in $\cP(\cR(\cQ))$ are polymaps $f \colon \otimes \Gamma \to \parr \Delta$ in \cQ.
Given a polymap in \cQ $f \colon \Gamma \to \Delta$, one gets one in $\cP(\cR(\cQ))$ by factorisation $w_\Delta \backslash f / m_\Gamma$.
In the other direction, given a polymap in $\cP(\cR(\cQ))$ $g \colon \otimes \Gamma \to \parr \Delta$, one gets one in \cQ $w_\Delta \circ g \circ m_\Gamma$.
The factorisation properties and its unicity ensure that these operations are inverse.

Now let us prove that it is fully-faithful, i.e. that $\cP$ induces an isomorphism of categories $\cP_{\cC,\cD} \colon\LNULDCFrob(\cC, \cD) \simeq \PolyCat_{wtt}(\cP(\cC),\cP(\cD))$.
Notice that we cannot directly takes $\cR_{\cP(\cC),\cP(\cD)}$ as an inverse to $\cP_{\cC,\cD}$ because we only have an isomorphism $\cC \simeq \cR(\cP(\cC))$ and not an equality.
This is because a morphism $f \colon A \to B$ in $\cC$ is sent to $\cR(\cP(f)) \colon \otimes A \to \parr B$.
Instead we consider the functor $\cU \colon \PolyCat_{wtt}(\cP(\cC),\cP(\cD)) \to \LNULDCFrob(\cC, \cD)$ that sends a functor of polycategories $G \colon \cP(\cC) \to \cP(\cD)$ to the Frobenius functor $\cU \colon \cC \to \cD$ that acts like $G$ on objects and such that $\cU(f) = \iota_{G(B)}^{-1} \circ f \circ \eta_{G(A)}^{-1}$.
Similarly, its takes a natural transformation $\alpha \colon G \Rightarrow G'$ to one $\iota^{-1}_{G'(-)} \circ \alpha \circ \eta^{-1}_{G(-)}$.
It can be checked that those define Frobenius functors and linear transformations.
Furthermore, it is easy to verify that this is inverse to $\cP_{\cC,\cD}$ since the latter acts on morphisms via $\iota \circ - \circ \eta$ and the former via $\iota^{-1} \circ - \circ \eta^{-1}$.  
\end{proof}

\subsection{Two-tensor polycategories}

Now we will consider how the 2-equivalence above reduces to a 2-equivalence involving (non-lax) linearly distributive categories.
One option is simply to ask for the universal polymaps defining the weak tensor products and par products to be closed under composition.
This is equivalent to ask for a stronger universal property given below.

\begin{definition}
Let $\Gamma,\Delta$ be lists of objects in a polycategory \cP .

A (strong) tensor product of $\Gamma$ is an object $\otimes \Gamma$ equipped with a polymap $m_{\Gamma} \colon \Gamma \to \otimes \Gamma$ such that for any polymap $f \colon \Gamma_1, \Gamma, \Gamma_2 \to \Delta$ there is a unique polymap $f/m_{\Gamma} \colon \otimes \Gamma \to A$ such that $f = f/m_{\Gamma} \circ m_{\Gamma}$.

A (strong) par product of $\Delta$ is an object $\parr \Delta$ equipped with a polymap $w_{\Delta} \parr \Delta \to \Delta$ such that for any polymap $f \colon \Gamma \to \Delta_1, \Delta, \Delta$ there is a unique $w_{\Delta}\backslash f$ such that $f = w_\Delta \circ w_\Delta \backslash f$.
\end{definition}

\begin{definition}
A two-tensor polycategory is a polycategory that has all strong tensors and pars. 
\end{definition}

Any strong tensor/par product is a weak one by restricting the universal property to $\Gamma_i = \emptyset$/$\Delta_i = \emptyset$.

From now on, we will talk of the tensor/par product and only specify weak/strong when it is not clear from the context.

Now, given a lax linearly distributive category $\cC$, let us consider $\cP(\cC)$.
As a reminder, polymaps $f \colon \Gamma \to \Delta$ in $\cP(\cC)$ correspond to polymaps $f \colon \otimes \Gamma \to \parr \Delta$ in \cC. 
We have that $\cP(\cC)$ has weak tensors given by the ones in $\cC$ with universal map $m_\Gamma := \iota_{\otimes \Gamma} \colon \otimes \Gamma \to \parr \otimes \Gamma$.
Then for a polymap $f \colon \otimes \Gamma \to \parr \Delta$, \[f/m_\Gamma := \otimes \otimes \Gamma \xrightarrow{\eta_{\otimes \Gamma}} \otimes \Gamma \xrightarrow{f} \parr  \Delta\]

Now if \cC is a linearly distributive category, given a polymap $f \colon \Gamma_1,\Gamma,\Gamma_2 \to \Delta$ we can define \[f/m_\Gamma \colon \otimes (\Gamma_1, \otimes \Gamma, \Gamma_2) \xrightarrow{\alpha'^{-1}} \otimes(\Gamma_1, \Gamma, \Gamma_2) \xrightarrow{f} \parr \Delta\]
It makes $\otimes$ a strong tensor product.
Similarly for $\parr$.

So we can restrict $\cP$ to $\LDCFrob \to \PolyCat_{tt}$ from linearly distributive categories to two-tensor polycategories.

Now given a weak two-tensor polycategory \cP, we consider a category $\cR(\cP)$ by restricting to unary co-unary polymaps.
It is lax linearly distributive with $\alpha \colon \otimes A_{i,j} \to \otimes \otimes A_{i,j}$ given by
\tikzfig{polycat-tensor-alpha}

If $\otimes$ is a strong tensor product we can define an inverse to $\alpha$ by:
\tikzfig{polycat-tensor-alpha-inv}

This is not possible with the weak universal property since precomposing by a black dot means factorising through the multiplication.
In the weak case it is only possible to do so for the whole domain of a polymap at once.
But to define the inverse we need to do it multiple times on a partition of the domain of $m_{(A_{i,j})}$, the white dot.
The fact that it provides an inverse is clear by the usual rules that white and black dots cancel each other out.

Similarly, the $\gamma \colon \parr \parr A_{i,j} \to \parr A_{i,j}$ are invertible when the par product is strong.

So, we can restrict $\cR$ to $\PolyCat_{tt} \to \LDCFrob$.

\begin{prop}
The 2-equivalence $\LNULDCFrob \simeq \PolyCat_{wtt}$ restricts to a 2-equivalence $\LDCFrob \simeq \PolyCat_{tt}$
\end{prop}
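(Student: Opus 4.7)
The plan is to reuse the equivalence $\cP \colon \LNULDCFrob \rightleftarrows \PolyCat_{wtt} \colon \cR$ already established and verify that both $\cP$ and $\cR$ carry the respective sub-2-categories into one another, so that essential surjectivity, fullness and faithfulness all restrict automatically.

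First I would check that $\cP$ restricts, which is essentially already done in the paragraphs preceding the statement: if $(\cC,\otimes_n,\parr_n)$ is a (non-lax) unbiased ldc, then for a polymap $f \colon \Gamma_1,\Gamma,\Gamma_2 \to \Delta$ in $\cP(\cC)$, which is a morphism $f \colon \otimes(\Gamma_1,\Gamma,\Gamma_2) \to \parr\Delta$ in \cC, the factorisation through $m_{\Gamma} = \iota_{\otimes\Gamma}$ can be defined using the inverse of $\alpha$, namely $f/m_{\Gamma} := f \circ \alpha^{-1}_{(\Gamma_1,\otimes\Gamma,\Gamma_2)}$, and uniqueness follows from $\alpha$ being an isomorphism. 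Dually for $\parr$. Thus $\cP(\cC) \in \PolyCat_{tt}$, and since $\cP$ does not touch morphisms or 2-cells, it restricts to a 2-functor $\LDCFrob \to \PolyCat_{tt}$.

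Next I would verify that $\cR$ restricts. Given $\cP \in \PolyCat_{tt}$, the key observation (also sketched above) is that the strong universal property of tensors allows one to factor polymaps whose domain contains $\otimes\Gamma$ only as a sub-list, and in particular lets us split a black dot into several black dots occurring in parallel with identity wires. This is exactly what is needed to build an inverse to $\alpha$ as depicted in the diagram before the proposition; the proof that it is indeed an inverse is the usual white/black cancellation. Dually for $\gamma$. Together with normality (which is already part of $\cR(\cP)$ being a lnuldc), this shows $\cR(\cP) \in \LDCFrob$. Again $\cR$ is the identity on Frobenius functors and linear transformations, so it restricts to $\PolyCat_{tt} \to \LDCFrob$.

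Finally I would conclude the 2-equivalence. The unit $\cC \simeq \cR(\cP(\cC))$ and counit $\cQ \simeq \cP(\cR(\cQ))$ constructed in the previous proof are natural isomorphisms of lnuldcs and weak two-tensor polycategories respectively, and they do not involve any extra choices: on objects they are identities, and on morphisms they are the canonical $\iota^{-1} \circ - \circ \eta^{-1}$ and its inverse. Since the restricted $\cR$ and $\cP$ land in the sub-2-categories, these unit and counit automatically become isomorphisms in $\LDCFrob$ and $\PolyCat_{tt}$, giving the desired 2-equivalence. The hom-categories of $\LDCFrob$ (resp.\ $\PolyCat_{tt}$) are full sub-2-categories of those of $\LNULDCFrob$ (resp.\ $\PolyCat_{wtt}$), so fullness and faithfulness of $\cP_{\cC,\cD}$ are inherited without further work. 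The main (minor) obstacle is just checking that the canonical comparisons do not inadvertently use the weak universal property in a way that fails in the strong setting, but this is immediate because the constructions only use the unary/co-unary part of the polycategorical data, which is identical in both cases.
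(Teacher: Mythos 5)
Your proposal is correct and follows essentially the same route as the paper: the text preceding the proposition already establishes that $\cP$ restricts (via $\alpha'^{-1}$ giving the strong factorisation) and that $\cR$ restricts (the strong universal property inverting $\alpha$ and $\gamma$), and the proposition is then immediate because both functors are unchanged on morphisms and 2-cells, so the unit and counit of the equivalence restrict. Your additional remark that the comparison isomorphisms only use the unary/co-unary data is a correct and harmless elaboration of the same argument.
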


\subsection{Duals}

In this section we recall the notion of duals in a polycategory.
The theory of weak two-tensor polycategories with duals is still to be explored.
In particular, the question of a potential equivalence with lax normal linearly distributive category with duals.
More generally, the theory of lax $\ast$-autonomous categories has still a lot of unanswered challenges.
For example, is it enough to consider the usual notion of duals or should they be also relaxed?
Does the equivalence between two-tensor polycategories with duals and birepresentable polycategories that we will see in the next section have a lax version?
In which case, what is the lax notion of $\ast$-autonomous category related to it?
All of this is left to further work.

From now on, we will consider everything to be strong.
We will see that the 2-equivalence between linearly distributive categories and two-tensor polycategories restricts to one between $\ast$-autonomous categories and two-tensor polycategories with duals.

\begin{definition}
  In a polycategory, a right dual of an object $A$ is an object $\rdual{A}$ equipped with polymaps $\rcup_A : \cdot \to A,\rdual{A}$ and $\rcap_A : \rdual{A},A \to \cdot$ such that $\rcup_A \circ_{\rdual{A}} \rcap_A = \id_A$ and $\rcap_A \circ_A \rcup_A = \id_{\rdual{A}}$.
  A left dual of $A$ is an object $\ldual{A}$ equipped with polymaps $\lcup_A : \cdot \to \ldual{A},A$ and $\lcap_A : A,\ldual{A} \to \cdot$ such that $\lcup_A \circ_{\ldual{A}} \lcap_A = \id_A$ and $\lcap_A \circ_A \lcup_A = \id_{\ldual{A}}$.
\end{definition}
\begin{definition}
  A polycategory is said to have duals
  if every object has a right and a left dual.
\end{definition}
Note that this definition may be simplified in the case of a symmetric polycategory because left and right duals coincide in that case, although following Cockett and Seely we have chosen to consider the more general situation.

We will write $\PolyCat^\ast$ for the sub-2-category of $\PolyCat$ consisting of polycategories with duals and $\PolyCat^\ast_{tt}$ for the one consisting of two-tensor polycategories with duals.

Now consider \cC a $\ast$-autonomous category.
One way to think about $\cC$ is as a linearly distributive category with duals.
The duals in $\cC$ come equipped with cups and caps given for example in the left dual case by $\lcup_A \colon \otimes_0 \to \parr_2(\ldual{A},A)$ and $\lcap_A \colon \otimes_2(A,\ldual{A}) \to \parr_0$.
These induce polymaps $\lcup_A \colon \cdot \to \ldual{A},A$ and $\lcap_A \colon A,\ldual{A} \to \cdot$ in $\cP(\cC)$ that exhibit $\ldual{A}$ as a left dual in $\cP(\cC)$.

Dually, if $\cP$ is a two-tensor polycategory with duals, the polymaps $\lcup_A \colon \cdot \to \ldual{A},A$ and $\lcap_A \colon A,\ldual{A} \to \cdot$ induce maps $w_{(\ldual{A},A)} \backslash \lcup_A / m_{()} \colon \otimes_0 \to \parr_2(\ldual{A},A)$ and $w_{()} \backslash \lcap_A / m_{(A,\ldual{A})} \colon \otimes_2(A,\ldual{A}) \to \parr_0$ in $\cR(\cP)$.

\begin{prop}
The 2-equivalence $\LDCFrob \simeq \PolyCat_{tt}$ restricts to a 2-equivalence $\ast-\mathbf{Aut} \simeq \PolyCat_{tt}^\ast$.
\end{prop}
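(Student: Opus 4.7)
The plan is to leverage the 2-equivalence $\LDCFrob \simeq \PolyCat_{tt}$ already established, and show that both 2-functors $\cP$ and $\cR$ restrict along the inclusions of the full sub-2-categories picked out by ``having duals'' on each side. Since the 2-equivalence is essentially surjective, fully faithful, and 2-fully-faithful as already proven, it will suffice to check that an object $\cC$ in $\LDCFrob$ lies in $\ast\text{-}\mathbf{Aut}$ if and only if $\cP(\cC)$ lies in $\PolyCat_{tt}^\ast$, or equivalently (since the 2-equivalence respects isomorphism of objects) that the constructions $\cP$ and $\cR$ preserve and reflect the property of having duals. Frobenius functors and linear transformations require no extra data to handle duals, so once the object-level restriction is checked, the 2-equivalence automatically restricts.

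First I would show that $\cP$ sends $\ast$-autonomous categories to two-tensor polycategories with duals. If $A$ has a left dual $\ldual{A}$ in $\cC$ with structure maps $\lcup_A : \otimes_0 \to \parr_2(\ldual{A}, A)$ and $\lcap_A : \otimes_2(A, \ldual{A}) \to \parr_0$, then the same morphisms, seen as polymaps $\lcup_A : \cdot \to \ldual{A}, A$ and $\lcap_A : A, \ldual{A} \to \cdot$ in $\cP(\cC)$, satisfy the polycategorical snake identities. The verification unfolds the definition of polycategorical composition in $\cP(\cC)$ (which is the one built from the distributivity map, the associators $\alpha'$, the lax unit $\gamma'$, and $\eta, \iota$) and reduces exactly to the two coherence pentagons that define duals in a linearly distributive category. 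Right duals are treated symmetrically.

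Next I would show that $\cR$ sends $\PolyCat_{tt}^\ast$ into $\ast\text{-}\mathbf{Aut}$. Given a left polycategorical dual $\ldual{A}$ with polymaps $\lcup_A, \lcap_A$, set $\lcup_A^{\cR} := w_{(\ldual{A}, A)} \backslash \lcup_A / m_{()}$ and $\lcap_A^{\cR} := w_{()} \backslash \lcap_A / m_{(A, \ldual{A})}$, exploiting the universal properties of the weak tensor and par objects. The snake identities in $\cR(\cP)$ can then be verified using the graphical calculus for two-tensor polycategories: white/black-node cancellation translates precisely the polycategorical triangle equations $\lcup_A \circ_{\ldual{A}} \lcap_A = \id_A$ and $\lcap_A \circ_A \lcup_A = \id_{\ldual{A}}$ into the required LDC coherence diagrams.

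Finally, I would observe that the unit and counit of the equivalence $\cC \simeq \cR(\cP(\cC))$ and $\cP \simeq \cP(\cR(\cP))$ automatically intertwine the two dual structures built above (this is a routine computation, and in any case left/right duals are unique up to canonical isomorphism, so once existence is transported, the morphism-level coherence is forced). Frobenius functors, by definition equal on $\otimes$ and $\parr$ parts and preserving both monoidal structures, preserve duals up to canonical isomorphism on both sides, and the 2-fully-faithful nature of the original equivalence carries over verbatim. The main obstacle I anticipate is purely bookkeeping: carefully matching the polycategorical snake identities with the two pentagonal coherence diagrams defining duals in a (non-lax) LDC, since the translation goes through the distributivity law and the associators of both monoidal structures and is notationally heavy, even though conceptually each line of the verification is forced by the universal property of $\otimes$ and $\parr$.
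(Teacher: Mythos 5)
Your proposal is correct and follows essentially the same route as the paper: the paper's (very terse) argument consists precisely of observing that the cups and caps of a left/right dual transfer along $\cP$ unchanged and along $\cR$ via the factorisations $w_{(\ldual{A},A)} \backslash \lcup_A / m_{()}$ and $w_{()} \backslash \lcap_A / m_{(A,\ldual{A})}$, after which the restriction of the 2-equivalence to the full sub-2-categories determined by the property of having duals is automatic. You supply more detail than the paper does (the snake-identity verifications and the remark that duals are a property preserved by Frobenius functors), but the underlying argument is identical.
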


\section{Birepresentable polycategories}

In this section, we will introduce a notion of birepresentability for polycategories.
The idea is to have a general notion of object equipped with an universal polymap that encompasses all the connectives defined above.
To do that we will introduce the notion of a polymap being universal in one of its inputs or one of its outputs.
When restricted to unary polymaps universal in their only input and co-unary polymaps universal in their only input we get the polymap $w$ and $m$ defining $\parr$ and $\otimes$.
Maybe more surprisingly, when we restrict to polymaps with two inputs and no output or no input and two outputs we get the cups and cups defining the duals.
So any birepresentable polycategory is a two-tensor polycategory with duals.
We will prove that the converse holds and that any universal polymap can be decomposed into instances of $m,w,\lcup,\lcap,\rcup,\rcap$.

\begin{definition}
  A polymap $u : \Gamma \to \Delta_1, A, \Delta_2$ is said to be universal in the output $A$ (or out-universal for short, or simply universal when there is no ambiguity), written $u : \Gamma \to \Delta_1, \focout{A}, \Delta_2$ if for any polymap $h : \Gamma_1, \Gamma, \Gamma_2 \to \Delta_1, \Delta, \Delta_2$ such that $\Gamma_i = \emptyset$ or $\Delta_i = \emptyset$, there is a unique polymap $h/u : \Gamma_1, A, \Gamma_2 \to \Delta$ such that $h = h/u \circ_A u$.
  
  Dually, a  polymap $n : \Gamma_1, A, \Gamma_2 \to \Delta$ is universal in the input $A$ (or in-universal), written $n : \Gamma_1, \focin{A}, \Gamma_2 \to \Delta$ if for any polymap $h : \Gamma_1, \Gamma, \Gamma_2 \to \Delta_1,\Delta, \Delta_2$ such that $\Gamma_i = \emptyset$ or $\Delta_i = \emptyset$ there is a unique polymap $n \backslash h : \Gamma \to \Delta_1, A, \Delta_2$ such that $h = n \circ_A n \backslash h$.
\end{definition}
\begin{remark}
  By extension, we say that $A$ is an out-universal object (resp.~ in-universal object) with respect to the surrounding context $\Gamma\to \Delta_1,\textvisiblespace,\Delta_2$ (resp.~$\Gamma_1,\textvisiblespace,\Gamma_2 \to \Delta$) if there is an out-universal polymap $\Gamma \to \Delta_1,\focout{A},\Delta_2$ (resp.~in-universal polymap $\Gamma_1,\focin{A},\Gamma_2 \to \Delta$).
  For a fixed surrounding context, in-universal and out-universal objects are unique up to unique isomorphism.
\end{remark}

\begin{definition}
  A polycategory is said to be birepresentable if it has all in-universal and out-universal objects, that is, if for any $\Gamma$, $\Delta_1$, $\Delta_2$ there is an object $A$ equipped with an out-universal polymap $\Gamma \to \Delta_1,\focout{A},\Delta_2$, and similarly, for any $\Gamma_1$, $\Gamma_2$, $\Delta$ there is an object $A$ equipped with an in-universal polymap $\Gamma_1,\focin{A},\Gamma_2 \to \Delta$.
\end{definition}

In \cite{BlancoZeilberger2020} we called such polycategories $\ast$-representable.
The term birepresentable has been introduced by Mike Shulman in \cite{Shulman2021}.

These universal polymaps are generalisations of the universal polymaps for $\otimes$ and $\parr$.
In Section~\ref{ch:bifib-poly}, we will see that these concepts are special cases of more general fibrational concepts.
Like strong universal multimaps in a multicategory, both in-universal and out-universal polymaps are closed under composition in an appropriate sense.
\begin{prop}
  \label{prop:univ_comp}
  In-universal polymaps compose, in the sense that if $f : \Gamma_1, \focin{A}, \Gamma_2 \to \Delta_1, B, \Delta_2$ and $g : \Gamma_1', \focin{B}, \Gamma_2' \to \Delta'$, then $g \circ_B f : \Gamma_1', \Gamma_1, \focin{A}, \Gamma_2, \Gamma_2' \to \Delta_1, \Delta', \Delta_2$.
  Similarly, out-universal maps compose in the sense that if $f : \Gamma \to \Delta_1, \focout{B}, \Delta_2$ and $g : \Gamma_1', B, \Gamma_2' \to \Delta_1', \focout{C}, \Delta_2'$, then $g \circ_B f : \Gamma_1', \Gamma, \Gamma_2' \to \Delta_1, \Delta'_1, \focout{C}, \Delta'_2, \Delta_2$.
\end{prop}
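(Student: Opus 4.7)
The plan is to prove both halves by reducing the universal property of the composite to iterated applications of the universal properties of the two factors, in the appropriate order dictated by which object is focused. I will spell out the in-universal case (part~1) in some detail; the out-universal case is dual.

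For part~1, suppose $f : \Gamma_1, \focin{A}, \Gamma_2 \to \Delta_1, B, \Delta_2$ and $g : \Gamma_1', \focin{B}, \Gamma_2' \to \Delta'$, so that $g \circ_B f : \Gamma_1', \Gamma_1, A, \Gamma_2, \Gamma_2' \to \Delta_1, \Delta', \Delta_2$. To show the composite is in-universal at $A$, take an arbitrary polymap $h : \Gamma_1', \Gamma_1, \Gamma, \Gamma_2, \Gamma_2' \to \Delta_1^{\ast}, \Delta_1, \Delta', \Delta_2, \Delta_2^{\ast}$ compatible with the planarity constraint, and set
\[
(g \circ_B f)\backslash h \;:=\; f \backslash (g \backslash h).
\]
Here one first applies the in-universality of $g$ at $B$, taking the block $\Gamma_1, \Gamma, \Gamma_2$ as playing the role of the ``$\Gamma$'' in the defining universal property of $g$, and $\Delta_1^{\ast}, \Delta_1$ and $\Delta_2, \Delta_2^{\ast}$ as the admissible outer output contexts. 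This yields $g\backslash h : \Gamma_1, \Gamma, \Gamma_2 \to \Delta_1^{\ast}, \Delta_1, B, \Delta_2, \Delta_2^{\ast}$. One then applies the in-universality of $f$ at $A$ to this polymap, treating $\Delta_1^{\ast}$ and $\Delta_2^{\ast}$ as the extra outer output contexts around $f$'s output $\Delta_1, B, \Delta_2$, to obtain the required factor $\Gamma \to \Delta_1^{\ast}, \Delta_1, A, \Delta_2, \Delta_2^{\ast}$.

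Existence of the factorisation then comes from two appeals to associativity/interchange \eqref{eqn:assoc},\eqref{eqn:int2}: we compute
$(g \circ_B f) \circ_A (f\backslash g\backslash h) = g \circ_B (f \circ_A f\backslash g\backslash h) = g \circ_B (g\backslash h) = h$. Uniqueness propagates in the reverse order: any factor $k$ of $h$ through $g \circ_B f$ yields, via the same associativity move, a factor $f \circ_A k$ of $h$ through $g$, which by uniqueness of $g\backslash h$ must equal $g\backslash h$; another application of uniqueness, this time for $f\backslash(g\backslash h)$, forces $k = f\backslash(g\backslash h)$.

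Part~2 is entirely dual: given $f : \Gamma \to \Delta_1, \focout{B}, \Delta_2$ and $g : \Gamma_1', B, \Gamma_2' \to \Delta_1', \focout{C}, \Delta_2'$, define $h/(g \circ_B f) := (h/f)/g$, factoring first through $f$ at $B$ (which can absorb the outer output contexts $\Delta_1, \Delta_2$ that sit around $B$ in the composite) and then through $g$ at $C$ (which absorbs the outer input contexts $\Gamma_1^{\ast}, \Gamma_2^{\ast}$ around $\Gamma_1', \Gamma, \Gamma_2'$). Existence and uniqueness follow by the same pattern of associativity rewrites. The main thing to get right is purely bookkeeping: one must verify that at each stage the remaining outer contexts match the asymmetric shape permitted by the universal property (extra outputs only for in-universals, extra inputs only for out-universals), and that the planarity side-conditions on all intermediate compositions are satisfied. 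Once this is checked, the argument is a standard two-step unique-factorisation composition.
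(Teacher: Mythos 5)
Your proposal is correct and follows essentially the same route as the paper's proof: define $(g\circ_B f)\backslash h := f\backslash(g\backslash h)$, verify existence by associativity, and obtain uniqueness by first cancelling through $g$ and then through $f$. The extra bookkeeping you note about the outer contexts and planarity is the right thing to check and is consistent with the asymmetric shape of the universal property in the paper.
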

\begin{proof}
  Given $h \colon \Gamma_1', \Gamma_1, \Gamma'', \Gamma_1, \Gamma_2' \to \Delta_1'',\Delta_1,\Delta', \Delta_2, \Delta_2''$, we take $(gf)\backslash h := f \backslash (g \backslash h) \colon \Gamma'' \to \Delta_1'', A , \Delta_2''$.
  First we have that \[gf ((gf)\backslash h) = gf (f \backslash (g \backslash h)) = g (g \backslash h) = h\]  
  Furthermore, let $k \colon \Gamma'' \to \Delta_1'',A,\Delta''$ be a polymap such that \[gfk = h = gf((gf)\backslash h)\]
  Then by uniqueness of the factorisation through $g$ we have that $fk = f((gf)\backslash h)$ and by uniqueness of the factorisation through $f$ that $k = (gf)\backslash h$.
\end{proof}

An immediate consequence of these definitions is that tensor products can be considered as out-universal objects, and par products as in-universal objects.
\begin{prop}
  \label{prop:tensorpar}
  An object $\bigotimes\Gamma$ equipped with a polymap $m : \Gamma \to \bigotimes\Gamma$ is a tensor product of $\Gamma$ iff $m$ is out-universal (in its unique output).
  Dually, an object $\bigparr \Delta$ equipped with a polymap $w : \bigparr \Delta \to \Delta$ is a par product of $\Delta$ iff $w$ is in-universal (in its unique input).
\end{prop}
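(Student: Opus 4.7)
The plan is to prove this by directly comparing the two universal properties once one specializes the notion of out-universality (resp.~in-universality) to the case where the output (resp.~input) list of the universal polymap is the singleton list on the purported tensor (resp.~par). I expect no real obstacle: the statement is essentially a bookkeeping check that the planarity restriction on the polycategorical composition does the work of allowing arbitrary surrounding contexts $\Gamma_1,\Gamma_2$ (resp.~$\Delta_1,\Delta_2$) in the factorisation property, so the result will come out by inspecting what the defining condition degenerates to when $\Delta_1 = \Delta_2 = \emptyset$ (resp.~$\Gamma_1 = \Gamma_2 = \emptyset$).

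For the tensor direction, I would unpack what it means for $m : \Gamma \to \bigotimes\Gamma$ to be out-universal in its only output. The surrounding context of that output is $\Delta_1 = \Delta_2 = \emptyset$, so the planarity side condition ``$\Delta_i = \emptyset$ or $\Gamma_i = \emptyset$'' in the definition of an out-universal polymap is satisfied automatically on the $\Delta$-side. Hence out-universality of $m$ asks exactly that for every polymap $h : \Gamma_1, \Gamma, \Gamma_2 \to \Delta$, with \emph{arbitrary} $\Gamma_1, \Gamma_2$, there be a unique polymap $h/m : \Gamma_1, \bigotimes\Gamma, \Gamma_2 \to \Delta$ with $h = (h/m) \circ_{\bigotimes\Gamma} m$. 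This is verbatim the defining property of a (strong) tensor product of $\Gamma$, so the two notions coincide.

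The par case is completely dual: for $w : \bigparr \Delta \to \Delta$, the context of the distinguished input is $\Gamma_1 = \Gamma_2 = \emptyset$, so the planarity condition on the $\Gamma$-side in the definition of in-universality is automatic, and in-universality of $w$ reduces to: for every $h : \Gamma \to \Delta_1, \Delta, \Delta_2$ with arbitrary $\Delta_1, \Delta_2$, there is a unique $w \backslash h : \Gamma \to \Delta_1, \bigparr \Delta, \Delta_2$ factoring $h$ through $w$. That is the defining property of a par product. Both directions of each ``iff'' being just this re-reading of definitions, the proposition follows immediately, with no further diagram chasing required beyond observing that the factorisations $h/m$ and $w\backslash h$ coming from the two universal properties are literally the same data.
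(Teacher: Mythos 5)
Your proof is correct and matches the paper, which states this proposition without proof as ``an immediate consequence of these definitions'': the whole content is, as you say, that the empty output (resp.~input) context makes the planarity side condition vacuous, so out-universality (resp.~in-universality) degenerates verbatim to the strong tensor (resp.~par) factorisation property.
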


Somewhat more surprisingly, duals can also be characterised as either in-universal or out-universal objects.

\begin{proposition}
  \label{prop:dual}
  Let $A$ and $\rdual{A}$ be objects of a polycategory $\mathcal{P}$. The following are equivalent:
  \begin{enumerate}
  \item there is an out-universal map $\rcup_A : \cdot \to A,\focout{\rdual{A}}$
  \item there is an in-universal map $\rcap_A : \focin{\rdual{A}},A \to \cdot$
  \item there is an out-universal map $\rcup_A : \cdot \to \focout{A},\rdual{A}$
  \item there is an in-universal map $\rcap_A : \rdual{A},\focin{A} \to \cdot$
  \item $\rdual{A}$ is the right dual of $A$
  \end{enumerate}
\end{proposition}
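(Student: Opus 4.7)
The plan is to prove that each of conditions (1)--(4) is individually equivalent to (5), with the four arguments being formally dual (related by swapping input-focus with output-focus and by reversing left-right positioning within polymap contexts). I will spell out the equivalence $(1) \Leftrightarrow (5)$ in detail, then indicate how the remaining cases follow the same template.

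For $(5) \Rightarrow (1)$, I will assume cups and caps satisfying the two snakes $\rcap_A \circ_{\rdual{A}} \rcup_A = \id_A$ and $\rcap_A \circ_A \rcup_A = \id_{\rdual{A}}$ and show that $\rcup_A$ is out-universal at its second output. Given any $h \colon \Gamma_2 \to A, \Delta$ (planarity forces $\Gamma_1 = \emptyset$ because the ambient output context $\Delta_1 = A$ is nonempty), I will set $h / \rcup_A := \rcap_A \circ_A h$. The factorization equation follows by applying the interchange axiom \eqref{eqn:int1} to rewrite $(\rcap_A \circ_A h) \circ_{\rdual{A}} \rcup_A$ as $(\rcap_A \circ_{\rdual{A}} \rcup_A) \circ_A h = \id_A \circ_A h = h$. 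Uniqueness is dual: for any $k$ with $k \circ_{\rdual{A}} \rcup_A = h$, composing with $\rcap_A$ along $A$ and applying the interchange axiom \eqref{eqn:int2} gives $\rcap_A \circ_A h = k \circ_{\rdual{A}} (\rcap_A \circ_A \rcup_A) = k$.

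For $(1) \Rightarrow (5)$, I will define $\rcap_A := \id_A / \rcup_A \colon \rdual{A}, A \to \cdot$ as the unique factorization of $\id_A$ through the out-universal $\rcup_A$; the defining equation $\rcap_A \circ_{\rdual{A}} \rcup_A = \id_A$ is the first snake. For the second snake, I will view $\rcup_A$ itself as a polymap $\cdot \to A, \rdual{A}$: by out-universality there is a unique $k \colon \rdual{A} \to \rdual{A}$ such that $k \circ_{\rdual{A}} \rcup_A = \rcup_A$, which is manifestly $\id_{\rdual{A}}$. I will then verify that $\rcap_A \circ_A \rcup_A$ is also such a $k$ via the computation $(\rcap_A \circ_A \rcup_A) \circ_{\rdual{A}} \rcup_A = (\rcap_A \circ_{\rdual{A}} \rcup_A) \circ_A \rcup_A = \id_A \circ_A \rcup_A = \rcup_A$, where the first step uses \eqref{eqn:int1} and the second uses the snake just established. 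Uniqueness forces $\rcap_A \circ_A \rcup_A = \id_{\rdual{A}}$.

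The remaining equivalences $(2) \Leftrightarrow (5)$, $(3) \Leftrightarrow (5)$, and $(4) \Leftrightarrow (5)$ follow by the same recipe: define the missing polymap by factoring an identity through the given universal polymap, obtain one snake for free from the factorization equation, and deduce the other snake by combining the appropriate interchange axiom with uniqueness of factorizations. The only real bookkeeping concerns the planarity constraints on polycategory composition, but because the cups and caps have the minimal types $\cdot \to A, \rdual{A}$ and $\rdual{A}, A \to \cdot$ every composite encountered type-checks automatically; the main subtlety throughout is keeping track of which of the two snakes and which of the two interchange axioms is active in each variant.
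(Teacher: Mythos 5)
Your proposal is correct and follows essentially the same route as the paper: prove $(1)\Leftrightarrow(5)$ in detail (defining $\rcap_A := \id_A/\rcup_A$ in one direction and $h/\rcup_A := \rcap_A \circ_A h$ in the other, with the second snake and uniqueness both extracted from the interchange laws plus uniqueness of factorisation) and dispatch the remaining equivalences by symmetry. No gaps.
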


\begin{proof}
  Let prove that $1 \Leftrightarrow 5$.
  The others - $2 \Leftrightarrow 5$ and so on - are proved similarly.
  
  First, let suppose that we have an universal polymap $\rcup_A \colon \cdot \to A, \focout{\rdual{A}}$.
  We can define $\rcap_A := \id_A / \rcup_A \colon \rdual{A}, A \to \cdot$.
  By definition we have that \[\rcap_A \circ_{\rdual{A}} \rcup_A = \id_A/\rcup_A \circ_{\rdual{A}} \rcup_A = \id_A\]
  Furthermore, by postcomposing $\rcap_A$ we have that \[\rcap_A \circ_A \rcap_A \circ_{\rdual{A}} \rcup_A = \rcup_A = \id_{\rdual{A}} \circ_{\rdual{A}} \rcup_A\].
  But since factorisation through $\rcup_A$ in $\rdual{A}$ is unique, we have that \[\rcup_A \circ_A \rcap_A = \id_{\rdual{A}}\]
  
  Conversely, suppose that $\rdual{A}$ is a right dual.
  Consider a polymap $f \colon \Gamma \to A,\Delta$.
  We define $f/\rcup_A := \rcap_A \circ_A f \colon \rdual{A}, \Gamma \to \Delta$.
  Then we have that \[f/\rcup_A \circ_{\rdual{A}} \rcup_A = \rcap_A \circ_A f \circ_{\rdual{A}} \rcup_A = \rcap_A \circ_{\rdual{A}} \rcup_A \circ_A f = \id_A \circ_A f = f\]

Furthermore, assume that $g \colon \rdual{A}, \Gamma \to \Delta$ is such that $g \circ_{\rdual{A}} \rcup_A = f$.
Then, \[\rcap_A \circ_A f = \rcap_A \circ_A g \circ_{\rdual{A}} \rcup_A = \rcap_A \circ_{\rdual{A}} \rcup_A \circ_A g = \id_A g = g\]
So $f/\rcup_A$ is uniquely determined which concludes the proof.  
\end{proof}

There is of course a similar result for left duals.

\begin{theorem}  $\mathcal{P}$ is a two-tensor polycategory with duals iff it is birepresentable.
\end{theorem}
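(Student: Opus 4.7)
The plan is to prove the two directions separately, with the forward direction (birepresentable implies two-tensor with duals) being essentially a repackaging of earlier results. Specialising the surrounding context of the universal polymaps gives tensors by Proposition \ref{prop:tensorpar} (take $\Delta_1 = \Delta_2 = \emptyset$), pars by the same proposition (take $\Gamma_1 = \Gamma_2 = \emptyset$), right duals by Proposition \ref{prop:dual} (take $\Gamma = \emptyset$, $\Delta_1 = A$, $\Delta_2 = \emptyset$), and left duals dually. So this direction is immediate.

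For the converse, suppose $\mathcal{P}$ is a two-tensor polycategory with duals. I would construct an out-universal object for any surrounding context $(\Gamma; \Delta_1 = D_1,\dots,D_n; \Delta_2 = E_1,\dots,E_m)$ by setting
\[A := \otimes\bigl(\rdual{D_n},\dots,\rdual{D_1},\Gamma,\ldual{E_m},\dots,\ldual{E_1}\bigr)\]
and building the universal polymap $u \colon \Gamma \to \Delta_1, A, \Delta_2$ from the tensor's universal polymap $m$ into $A$ by iteratively composing with the cups $\rcup_{D_i}$ and $\lcup_{E_j}$ to convert each $\rdual{D_i}$ in the domain into $D_i$ in the codomain (and similarly the $\ldual{E_j}$ into $E_j$). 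The in-universal case is entirely dual, using pars and caps instead of tensors and cups, with
\[A := \parr\bigl(\ldual{C_n},\dots,\ldual{C_1},\Delta,\rdual{F_m},\dots,\rdual{F_1}\bigr)\]
for $\Gamma_1 = C_1,\dots,C_n$ and $\Gamma_2 = F_1,\dots,F_m$.

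To verify universality, given any polymap $h \colon \Gamma_1, \Gamma, \Gamma_2 \to \Delta_1, \Delta, \Delta_2$ satisfying the planarity constraint, I would define the factorisation $h/u$ by the reverse procedure: compose $h$ with the caps $\rcap_{D_i}$ and $\lcap_{E_j}$ to transport the $D_i$'s and $E_j$'s back to the input side as the dual objects $\rdual{D_i}$ and $\ldual{E_j}$ sitting adjacent to $\Gamma$, then factor through $m$ using the \emph{strong} universal property of the tensor. The equation $(h/u) \circ_A u = h$ then follows from the snake identities for duals, and uniqueness reduces to the uniqueness of the tensor factorisation. The main obstacle is the bookkeeping in the non-symmetric setting: one must check that every composition with a cup or cap respects planarity (which is ensured precisely by the constraint $\Gamma_i = \emptyset$ or $\Delta_i = \emptyset$), and that the order in which duals are introduced through the successive compositions matches the reverse order $\rdual{D_n},\dots,\rdual{D_1}$ used to define $A$. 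Once this combinatorics is verified, the argument is a direct application of strong representability and the snake identities, with no further conceptual content.
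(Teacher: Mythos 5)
Your proposal is correct and follows essentially the same route as the paper: the easy direction via Propositions \ref{prop:tensorpar} and \ref{prop:dual}, the object $A := \otimes(\rdual{\Delta}_1,\Gamma,\ldual{\Delta}_2)$ with the reversed lists, and the universal polymap obtained by composing the tensor's universal map with cups all match the paper's argument. The only difference is that where you verify universality by hand via caps, the snake identities, and the strong tensor factorisation, the paper obtains it immediately by combining Proposition \ref{prop:dual} (cups and caps are universal) with Proposition \ref{prop:univ_comp} (universal polymaps compose along their universal objects) --- which is precisely the argument your direct verification unfolds.
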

\begin{proof}
  The right to left direction follows by propositions \ref{prop:tensorpar} and \ref{prop:dual}.
  For the left to right direction we want to construct in-universal and out-universal objects for any contexts just using $\otimes$, $\parr$ and $\ast$.
  Given contexts $\Gamma,\Delta_1,\Delta_2$ consider the object $A := \otimes(\rdual{\Delta}_1, \Gamma, \ldual{\Delta_2})$ where $\rdual{\Delta}_1 := \rdual{B}_{1,n_1}, ..., \rdual{B}_{1,1}$ for $\Delta_1 = B_{1,1},...,B_{1,n_1}$ and similarly for $\ldual{\Delta}_2$.
  This object comes with the following polymap, which is a composition of universal polymaps along their universal objects. So by proposition \ref{prop:univ_comp}, it is universal.
  
  \begin{center}
    \tikzfig{polycat-birepresentable-out}
  \end{center}
  
  Similarly given $\Gamma_1,\Gamma_2,\Delta$ the object $A := \parr(\ldual{\Gamma}_1, \Delta, \rdual{\Gamma}_2)$ is in-universal with in-universal polymap.
\begin{center}
    \tikzfig{polycat-birepresentable-in}
  \end{center}
\end{proof}

There is a string diagram calculus for birepresentable polycategories that extends the one of two-tensor polycategories with duals and the one of monoidal biclosed categories.
The idea is to represent a universal polymap as a white spider with an arrow indicating the object it is universal in.
Then, its factorisation property is given by a dual black spider with the universal object marked.
The same color spiders compose along marked objects.
Furthermore, there are two cancellation possibilities for a white and a black spider with mirrored types: either they are connecting along their marked arrows or they are connecting along all the non-marked ones.
The equations for the graphical calculus are given in figure \ref{fig:polycat-birepresentable-equations}.
To understand how to use the black spider to factor, consider the universal polymap $u \colon \Gamma_1, \focin{A}, \Gamma_2 \to \Delta$ and a polymap $f \colon \Gamma_1, \Gamma, \Gamma_2 \to \Delta_1, \Delta, \Delta$, then the polymap $u\backslash f \colon \Gamma \to \Delta_1, A, \Delta_2$ is given by:
\tikzfig{polycat-birepresentable-factorisation}

\begin{figure}
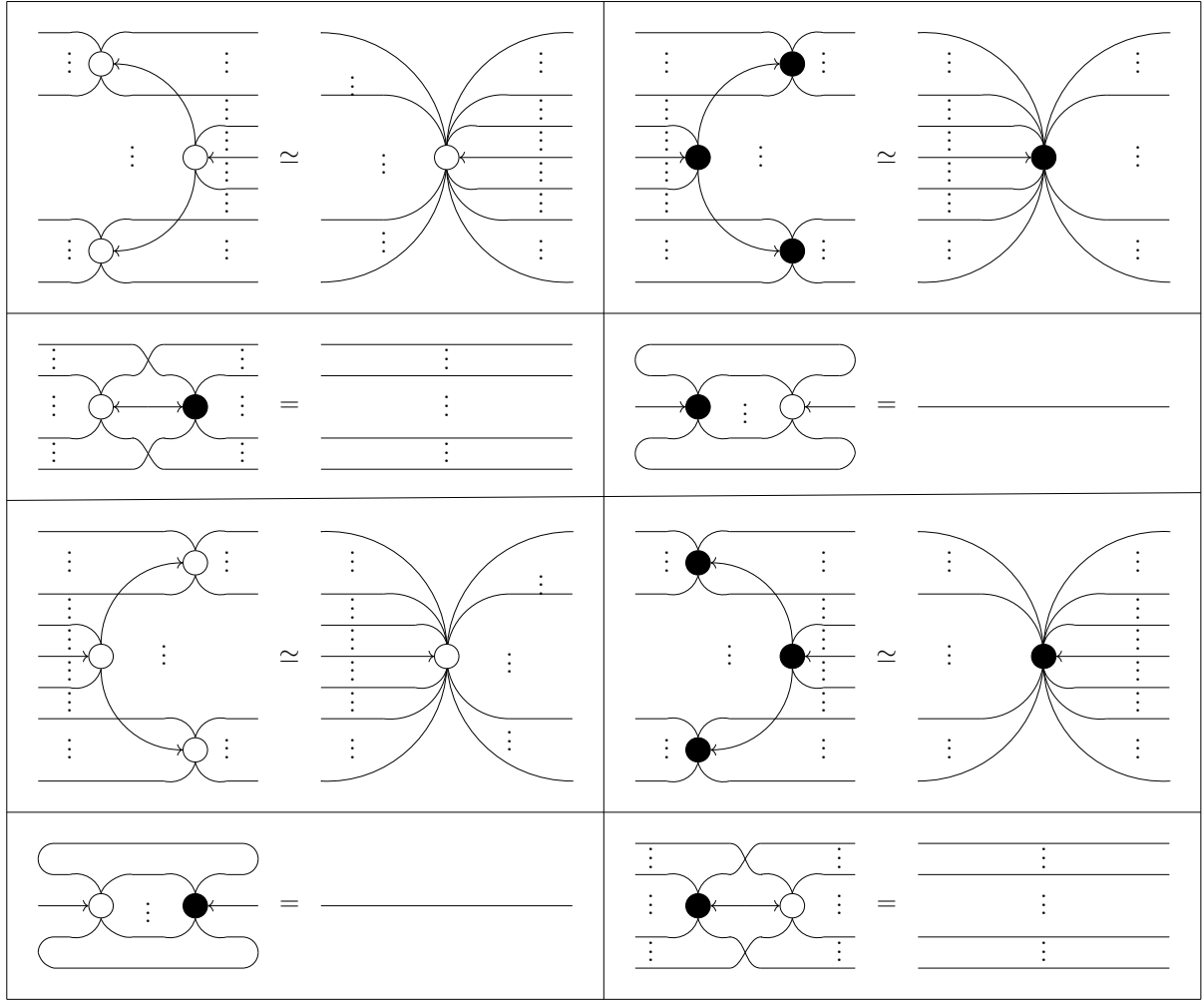

\centering
\resizebox{\hsize}{!}{\tikzfig{polycat-birepresentable-equations}}
\caption{Graphical calculus for a birepresentable polycategory}
\label{fig:polycat-birepresentable-equations}
\end{figure}

In a birepresentable polycategory \cP, any polymap $A_1,\dots, A_m \to B_1, \dots, B_n$ is equivalent to a polymap $\cdot \to \ldual{A_m},\dots, \ldual{A_1}, B_1, \dots, B_n$ by precomposing with cups.
This gives a one-sided presentation of the maps in the polycategory.
If \cP is the syntactic polycategory of a two-sided sequent calculus for MLL, restricting to co-nullary polymaps corresponds to considering a one-sided presentation of the sequent calculus.

Furthermore, consider the polymaps $\ldual{A}, A \to \cdot$, $\ldual{B}, B \to \cdot$, $A, B \to A\otimes B$ and $A\otimes B, \rdual{(A\otimes B)} \to \cdot$ universal in $A, B$, $A \otimes B$  and $\rdual{(A\otimes B)}$ respectively.
Since universal maps compose, we get a polymap $\rdual{(A\otimes B)} \to \ldual{B}, \ldual{A}$ universal in $\rdual{A\otimes B}$.
This means that in the graphical calculus, all the white dots - except the $(0,1)$-ones - can be generated by only $(2,1)$-white dots introducing the tensor, and cups and caps.
Similarly, all the black dots - except the $(0,1)$-ones - can be generated by only $(2,1)$-black dots introducing the par, and cups and caps.
If we rename the cup ``axiom'' and the cap ``cut'', the one-sided graphical calculus generated by axiom, cut, (binary) tensor introduction and (binary) par introduction is the graphical calculus known in linear logic as proof-nets for MLL without units.
More precisely, these correspond to proof structures, and not all proof structures represent a proof.
The ones that do are called proof nets and there are criterion to identify these.
It would be an interesting question to adapt these criterion to the graphical calculus of polycategories to make it complete.

\section{Examples}

\begin{example}
  Any linearly distributive category $\mathcal{C}$ gives a polycategory $\mathcal{P}(\mathcal{C})$ called its underlying polycategory.
  It has the same objects as $\mathcal{C}$ and a polymap $f : A_1,...,A_m \to B_1,...,B_n$ in $\mathcal{P}(\mathcal{C})$ is a map $f : A_1 \otimes ... \otimes A_m \to B_1 \parr ... \parr B_n$ in $\mathcal{C}$.
\end{example}

\begin{example}\label{ex:polymon}
  In particular any monoidal category gives rises to a polycategory with the same objects and with polymaps $f : A_1 \otimes ... \otimes A_m \to B_1 \otimes ... \otimes B_n$.
\end{example}

\begin{example}
  The terminal polycategory $\one$ has one object $\ast$ and a unique arrow $\spider mn : \ast^m \to \ast^n$ for every arity $m$ and co-arity $n$.
  Although this example is trivial, we will see that it plays an important role when looking at bifibrations.
\end{example}

\begin{example}
  Any category induces a polycategory with only unary-counary maps.
  Conversely any polycategory has an underlying category obtained by forgetting about the non-unary-counary maps. 
\end{example}

\begin{example}
  From any multicategory $\mathcal{M}$ we can define two polycategories $\mathcal{M}^{+}$ and $\mathcal{M}^{-}$ that have the same objects as $\mathcal{M}$.
  The polymaps of $\mathcal{M}^+$ have always exactly one output and correspond to multimaps in $\mathcal{M}$ while the polymaps in $\mathcal{M}^-$ have always exactly one input and correspond to multimaps in $\mathcal{M}$ reversed.
  Conversely from any polycategory we get two multicategories by restricting to polymaps with exactly one output and (reversed) polymaps with exactly one input.
\end{example}

\begin{example}
  There are polycategories $\mathbf{Vect}$ and $\mathbf{FVect}$ of vector spaces (resp.~finite dimensional vector spaces) and polylinear maps.
  Both of these can be seen as the underlying polycategories of monoidal categories of vector spaces and linear maps.
  $\mathbf{FVect}$ is a representable polycategory with duals while $\mathbf{Vect}$ is representable but does not have duals in general.
  In fact the vector spaces that admit a dual are precisely the finite dimensional ones.
\end{example}

\begin{example}
  Free polycategories give examples of polycategories which are \emph{not} representable.
  Let a ``poly-signature'' $\Sigma$ consist of a collection of types, together with for any finite lists of types $\Gamma$ and $\Delta$, a set of operations $\Sigma(\Gamma;\Delta)$.
  The free polycategory generated by $\Sigma$, denoted $\mathcal{P}(\Sigma)$, has types as objects, and polymaps given by planar oriented trees with a boundary of free edges, whose nodes are labelled by operations and whose edges are labelled by types subject to the constraints specified by the signature.
  For example, here is a depiction of the composite polymap  $f \circ_A (g \circ_B f) : A, B, B \to A, B$ in the free polycategory generated by the signature containing a pair of types $A$ and $B$ and a pair of operations $f : A,B \to B$ and $g : B \to A,A$ (in the diagram, the edges are implicitly oriented from left to right):
  \begin{center}
    \scalebox{0.7}{\tikzfig{free}}
  \end{center}
  In general, composition is performed by grafting two trees along an edge, while the identity on a type $A$ is given by the trivial tree with no nodes and one oriented edge labelled $A$.
  Observe this polycategory is not representable, for example there is no polymap $A,A \to A \otimes A$.
\end{example}

\begin{example}
  A one-object multicategory is commonly referred to as an operad,
  while a one-object polycategory is also known as a dioperad \cite{Gan2003}.
  For any polycategory $\mathcal{P}$ and any object $A \in \mathcal{P}$ there is a dioperad called the endomorphism dioperad of $A$, denoted $End_{\mathcal{P}}(A)$, defined as the full subpolycategory of $\mathcal{P}$ containing only the object $A$.
  It has one object and its polymaps correspond to polymaps $A,...,A \to A,...,A$ in $\mathcal{P}$. 
\end{example}

\section{Example of Banach spaces}
\label{sec:Banach1}

In this section, we focus on Banach spaces.
Although the use of polycategories is new most of the results are standard.
The question of what norms can be assigned to the tensor product of Banach spaces, and more generally of topological vector spaces, was the focus of Grothendieck's PhD thesis, see \cite{Grothendieck1954}.
The standard theory of Banach spaces can be found in \cite{Ryan2002}.
We will only consider finite dimensional Banach spaces.
Some of the results might be extended to the general case, although not all of them.
For example, a vector space has duals in the polycategorical sense iff it is finite dimensional.
Furthermore, in the treatment of this example we often rely on the fact that every finite-dimensional normed vector space is complete.
This allows us to skip the subtleties about completeness.

We fix a field $\mathbb{K}$ that is either $\mathbb{R}$ or $\mathbb{C}$.
Given a ($\mathbb{K}$-)vector space $A$, we denote $\rdual{A}$ the vector space of functionals over $A$, i.e., linear maps $A \to \mathbb{K}$.

\begin{definition}
A \defin{polylinear map} $f \colon A_1,\dots, A_m \to B_1, \dots B_n$ between vectors spaces, is a functional assignment of a scalar $(\varphi_1,\dots, \varphi_n)f(a_1,\dots,a_m) \in \mathbb{K}$ for each $a_i \in A_i$ and each $\varphi_j \in \rdual{B_j}$ that is linear in each variable, i.e.,
\begin{itemize}
\item $(\dots)f(\dots, \lambda a_i + a_i', \dots) = \lambda (\dots)f(\dots, a_i, \dots) + (\dots)f(\dots, a_i', \dots)$
\item $(\dots, \lambda \varphi_i + \varphi_i', \dots)f(\dots) = \lambda (\dots, \varphi_i, \dots )f(\dots) + (\dots, \varphi_i', \dots)f(\dots)$
\end{itemize}
\end{definition}

Consider two polylinear maps $f \colon \Gamma \to \Delta_1, A, \Delta_2$ and $g \colon \Gamma_1, A, \Gamma_2 \to \Delta$ and lists of vectors and functionals $\overrightarrow{a_i} \in \Gamma_i$, $\overrightarrow{a} \in \Gamma$, $\overrightarrow{\varphi_j} \in \Delta_j$, and $\overrightarrow{\varphi} \in \Delta$.
The expression $(\overrightarrow{\varphi})g(\overrightarrow{a_1},-,\overrightarrow{a_2})$ defines a functional on $A$.
Indeed, for any $x \in A$ it assigns a scalar $(\overrightarrow{\varphi})g(\overrightarrow{a_1},x,\overrightarrow{a_2})$ in a linear way.
On the other hand, $(\overrightarrow{\varphi_1},-,\overrightarrow{\varphi_2})f(\overrightarrow{x})$ assigns a scalar to any functional on $A$.
So we get a scalar $(\overrightarrow{\varphi_1},(\overrightarrow{\varphi})g(\overrightarrow{a_1},-,\overrightarrow{a_2}),\overrightarrow{\varphi_2})f(\overrightarrow{x})$ that we will note $(\overrightarrow{\varphi_1},\overrightarrow{\varphi},\overrightarrow{\varphi_2})g\circ f(\overrightarrow{a_1},\overrightarrow{a},\overrightarrow{a_2})$.
This assignment is linear in each of the variables since $f$ and $g$ are.
This defines a polylinear map $g \circ f$, their composition.

Furthermore, the polylinear map $\id_A \colon A \to A$ is defined by $(\varphi)\id_A(a) := \varphi(a)$.

\begin{rk}
Technically, we should only define the composition in a planar setting, i.e. with the constraint that both input and output have one empty side.
That would not capture all the structure of the polylinear composition, but it would be enough for everything we do in this thesis: defining the connectives and extending to Banach spaces via the fibrational structure.
Instead of constraining the composition to a planar one, one could also take into account the symmetries and consider symmetric polycategories.
\end{rk}

\begin{prop}
There is a polycategory $\FVect$ of finite dimensional vector spaces and polylinear maps.
It has all tensors and pars both given by $\otimes$ the tensor product of vector spaces.
It also has all duals given by the dual space $\rdual{A}$, so it is birepresentable.
\end{prop}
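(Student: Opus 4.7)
The plan is to verify each claim in turn: polycategory axioms, tensor products, par products, duals, and then conclude birepresentability via the equivalence theorem proved earlier in the chapter.

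First I would check that the given data form a polycategory. The only non-trivial axioms are associativity and the two interchange laws. These all follow from unfolding the definition of composition: composing $g\circ_A f$ amounts to substituting the functional $(\overrightarrow{\varphi})g(\overrightarrow{a_1},-,\overrightarrow{a_2})$ for the slot in $f$ labelled by $A$. Associativity $(h\circ_B g)\circ_A f = h\circ_B (g\circ_A f)$ reduces to the associativity of this substitution operation, and the planarity restriction guarantees that the substitutions commute as required for the two interchange laws. Unitality is immediate since $(\varphi)\id_A(a) = \varphi(a)$ behaves as a formal placeholder under substitution.

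Next, for each finite list $\Gamma=(A_1,\dots,A_n)$ I would exhibit the universal polymap $m_\Gamma : \Gamma \to A_1\otimes\cdots\otimes A_n$ defined by $(\varphi)m_\Gamma(a_1,\dots,a_n) := \varphi(a_1\otimes\cdots\otimes a_n)$, and check its universal property as a strong out-universal polymap in the sense of the previous section. Given any polylinear $f:\Gamma_1,\Gamma,\Gamma_2\to\Delta$, the required factorisation $f/m_\Gamma$ is defined on pure tensors by $(\overrightarrow{\varphi})(f/m_\Gamma)(\overrightarrow{a_1},a_1\otimes\cdots\otimes a_n,\overrightarrow{a_2}) := (\overrightarrow{\varphi})f(\overrightarrow{a_1},a_1,\dots,a_n,\overrightarrow{a_2})$ and extended by linearity in the tensor slot; this uses the classical universal property of $\otimes$ for multilinear maps applied to the functional $(\overrightarrow{\varphi})f(\overrightarrow{a_1},-,\overrightarrow{a_2})$ in each choice of outputs. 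Uniqueness follows because polylinear maps are determined by their values on pure tensors. Dually, I would exhibit the universal polymap $w_\Delta : B_1\otimes\cdots\otimes B_m \to \Delta$ for the par, defined by $(\varphi_1,\dots,\varphi_m)w_\Delta(x) := (\varphi_1\otimes\cdots\otimes\varphi_m)(x)$ under the canonical isomorphism $\rdual{(B_1\otimes\cdots\otimes B_m)}\simeq \rdual{B_1}\otimes\cdots\otimes \rdual{B_m}$, valid in finite dimension, and check its in-universal property symmetrically.

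For duals, I would take $\rdual{A}$ (the usual linear dual) and define $\rcap_A:\rdual{A},A\to\cdot$ by $\rcap_A(\varphi,a):=\varphi(a)$ and $\rcup_A:\cdot\to A,\rdual{A}$ by $(\psi,\varphi)\rcup_A() := \psi(\varphi)$, using the canonical isomorphism $\rdual{(\rdual{A})}\simeq A$ (again valid only in finite dimension). Checking the snake equations $\rcup_A\circ_{\rdual{A}}\rcap_A=\id_A$ and $\rcap_A\circ_A\rcup_A=\id_{\rdual{A}}$ reduces to unfolding the composition: for example, $(\varphi)(\rcup_A\circ_{\rdual{A}}\rcap_A)(a) = (\varphi)\rcap_A(-,a)$ evaluated on $\rcup_A$, which simplifies to $\varphi(a)$. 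Left duals are handled identically, taking $\ldual{A}:=\rdual{A}$ with swapped cups and caps. Having established tensors, pars and left/right duals, birepresentability is then immediate from the theorem that two-tensor polycategories with duals are exactly the birepresentable polycategories.

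The only real subtlety is the finite-dimensionality assumption, which enters twice: once in the natural isomorphism $\rdual{(B_1\otimes\cdots\otimes B_m)}\simeq\bigotimes \rdual{B_i}$ that lets $\otimes$ serve as par, and once in $\rdual{(\rdual{A})}\simeq A$ that makes $\rcup_A$ well-defined. For infinite-dimensional vector spaces neither isomorphism holds, so tensor and par genuinely differ and duals fail to exist, as remarked in the paragraph introducing this section.
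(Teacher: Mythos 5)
Your proposal is correct and follows essentially the same route as the paper: the same universal polymaps $m_\Gamma$, $w_\Delta$, $\rcup_A$, $\rcap_A$ with the same factorisation formulas (extend by linearity on pure tensors for $\otimes$, use $\rdual{(B_1\otimes\cdots\otimes B_n)}\simeq\rdual{B_1}\otimes\cdots\otimes\rdual{B_n}$ for $\parr$, and the canonical $A\simeq\rdual{(\rdual{A})}$ for the cup), followed by the same snake-identity check. Your explicit remarks on where finite-dimensionality enters are a welcome addition but do not change the argument.
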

\begin{proof}
$m_{A_i} \colon A_1,\dots,A_n \to A_1 \otimes \dots \otimes A_n$ is defined by \[(\varphi)m(a_1,\dots,a_n) := \varphi(a_1 \otimes \dots \otimes a_n)\]
Any polymap $f \colon \Gamma_1, A_1, \dots, A_n, \Gamma_2 \to \Delta$ uniquely factorises into a polymap \\$f/m_{A_i} \colon \Gamma_1, A_1 \otimes \dots \otimes A_n, \Gamma_2 \to \Delta$ by \[(\overrightarrow{\psi})f/m(\overrightarrow{x_1},\sum_i a_1^i \otimes \dots \otimes a_n^i,\overrightarrow{x_2}) := \sum_i (\overrightarrow{\psi})f(\overrightarrow{x_1},a_1^i,\dots,a_n^i,\overrightarrow{x_2})\]

$w_{B_j} \colon B_1 \otimes \dots \otimes B_n \to B_1, \dots, B_n$ is defined by
\[ (\varphi_1,\dots,\varphi_n)w(\sum_i b_1^i \otimes \dots \otimes b_n^i) := \sum_i \varphi_1(b_1^i)\dots\varphi_n(b_n^i) \]
Any polymap $f \colon \Gamma \to \Delta_1, B_1,\dots, B_n, \Delta_2$ uniquely factorises through $w\backslash f$ defined by:
\[(\overrightarrow{\psi_1},\sum_i \varphi_1^i \otimes \dots \otimes \varphi_n^i, \overrightarrow{\psi_2})w \backslash f(\overrightarrow{x}) := \sum_i (\overrightarrow{\psi_1},\varphi_1,\dots,\varphi_n,\overrightarrow{\psi_2})f(\overrightarrow{x})\]
For the duals, the map $\rcup_A \colon \cdot \to A, \rdual{A}$ is given by $(\varphi, \check{a})\rcup_A = \check{a}(\varphi)$ where $(\varphi,\check{a}) \in \rdual{A} \otimes A^{\ast\ast}$.
While the map $\rcap_A \colon \rdual{A}, A \to \cdot$ is given by $\rcap_A(\varphi,a) = \varphi(a)$.
We can check that these verify the snake identities.
\[ (\varphi)(\rcap_A \circ_{\rdual{A}} \rcup_A)(a) = (\varphi, \rcap_A(-,a)) \rcup_A = \rcap_A(\varphi,a) = \varphi(a) \]
\[ (\check{a})(\rcap_A \circ_A \rcup_A)(\varphi) = (\rcap_A(\varphi,-), \check{a})\rcup_A = (\varphi,\check{a})\rcup_A = \check{a}(\varphi) \]
\end{proof}

Given a finite dimensional Banach space $(A,\|-\|_A)$ one can define a finite dimensional Banach space $(\rdual{A},\|-\|_{\rdual{A}})$ where $\|-\|_{\rdual{A}}$ is the dual norm defined on functionals by:
\[ \|\varphi\| := \sup\limits_{\|a\|_A \leq 1} |\varphi(a)|\]

Continuous linear maps between Banach spaces correspond to bounded maps.
This can be generalised to polylinear maps.

\begin{definition}
  A polylinear map $f : A_1,...,A_m \to B_1,...,B_n$ between normed vector spaces $(A_i,\|-\|_{A_i})$ and $(B_j,\|-\|_{B_j})$ is \defin{bounded} if \[\exists K, \forall a_i \in A_i, \forall \varphi_j \in B_j^\ast,\, |(\varphi_1, ..., \varphi_n)f(a_1, ..., a_m)| \leq K \prod\limits_{i,j}\|a_i\|_{A_i}\|\varphi_j\|_{B_j^\ast}\].
\end{definition}

\begin{prop}
  A unary polymap $f : A \to B$ is bounded if it is bounded as a linear map.
\end{prop}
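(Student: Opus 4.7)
The plan is to unpack what ``unary polymap'' means in the sense of the definitions just introduced and reduce the question to a well-known characterisation of the norm on a Banach space. I read the statement as a biconditional (as is the custom in propositions of this form): boundedness in the polycategorical sense is equivalent to boundedness in the ordinary sense.

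First I would observe that a unary polymap $f \colon A \to B$ in the sense of the polylinear definition is an assignment $(\varphi, a) \mapsto (\varphi)f(a) \in \mathbb{K}$ that is linear in both arguments, i.e.\ a bilinear form $A \times \rdual{B} \to \mathbb{K}$. In finite dimensions the canonical evaluation $B \to \rdual{(\rdual{B})}$ is an isometric isomorphism, so such a bilinear form is the same data as a linear map $\bar f \colon A \to B$ satisfying $(\varphi)f(a) = \varphi(\bar f(a))$. This identification is what makes the comparison of the two notions of boundedness even possible. I would state this explicitly at the start and then use $f$ and $\bar f$ interchangeably.

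Next I would handle the two directions separately. For the easy direction, if $\bar f$ is bounded as a linear map with constant $K'$, then
\[ |(\varphi)f(a)| \;=\; |\varphi(\bar f(a))| \;\leq\; \|\varphi\|_{\rdual{B}}\,\|\bar f(a)\|_B \;\leq\; K'\,\|a\|_A\,\|\varphi\|_{\rdual{B}}, \]
so $f$ is bounded as a polylinear map with the same constant. For the converse, suppose $f$ is bounded as a polylinear map with constant $K$. I would appeal to the standard identity $\|b\|_B = \sup_{\|\varphi\|_{\rdual{B}} \leq 1} |\varphi(b)|$ (a consequence of Hahn--Banach, and immediate in finite dimensions from the isometric isomorphism $B \simeq \rdual{(\rdual{B})}$ and the definition of the dual norm). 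Applying this to $b = \bar f(a)$ gives
\[ \|\bar f(a)\|_B \;=\; \sup_{\|\varphi\|_{\rdual{B}} \leq 1} |\varphi(\bar f(a))| \;=\; \sup_{\|\varphi\|_{\rdual{B}} \leq 1} |(\varphi)f(a)| \;\leq\; K\,\|a\|_A, \]
so $\bar f$ is bounded with the same constant $K$.

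There is no real obstacle here; the content of the proof is entirely in recognising that the polycategorical notion of a unary polymap is a disguised form of the evaluation pairing $B \otimes \rdual{B} \to \mathbb{K}$, so that the polylinear boundedness condition becomes the operator-norm bound after taking a supremum over the unit ball of the dual. The only subtle point worth flagging is the use of finite dimensionality (or completeness together with Hahn--Banach) to ensure that the supremum formula for $\|b\|_B$ holds; in the setting at hand this is guaranteed by the blanket assumption of finite dimensional Banach spaces fixed at the start of the section.
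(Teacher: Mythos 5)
Your proof is correct and is exactly the standard argument the paper implicitly relies on (the paper states this proposition without proof). The two directions follow from the identification of a unary polymap with a linear map via $B \simeq \rdual{(\rdual{B})}$ and the duality formula $\|b\|_B = \sup_{\|\varphi\|_{\rdual{B}} \leq 1} |\varphi(b)|$, both of which are legitimate here given the blanket finite-dimensionality assumption of the section, and you correctly flag that this is where that hypothesis is used.
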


We write $\|f\|_{A \multimap B}$ for the smallest such $K$.
It defines a norm on $A \to B$ the vector space of linear maps from $A$ to $B$ and $f$ is contractive when its norm is smaller than 1.

\begin{definition}
  A polylinear map  $f : A_1,...,A_m \to B_1,...,B_n$ between normed vector spaces $(A_i,\|-\|_{A_i})$ and $(B_j,\|-\|_{B_j})$ is \defin{contractive} if \[\forall a_i \in A_i, \forall \varphi_j \in B_j^\ast,\ |(\varphi_1, ..., \varphi_n) f(a_1, ..., a_m)| \leq \prod\limits_{i,j}\|a_i\|_{A_i}\|\varphi_j\|_{B_j^\ast}\]
\end{definition}

\begin{definition}
  There are polycategories:
  \begin{itemize}
  \item $\FBan$ of finite dimensional Banach spaces and bounded polylinear maps
  \item $\FBanc$ of finite dimensional Banach spaces and contractive polylinear maps
  \end{itemize}
\end{definition}

For objects in any of those polycategories to be isomorphic they need to be isomorphic as vector spaces.
$(A,\|-\|)$ and $(A,\|-\|')$ are isomorphic in $\FBan$ if $\exists K,K', \forall a \in A,\ K \|a\| \leq \|a\|' \leq K' \|a\|$.
Such norms are called equivalent.
On the other hand, two Banach spaces are isomorphic in $\FBanc$ if their norms are equal in the sense that they are equal on every vector.
Since all the norms on a given finite dimensional vector space are equivalent, \FBan{} is not an interesting polycategory to study.

\begin{prop}
  $\FBan$ is equivalent to $\FVect$.
\end{prop}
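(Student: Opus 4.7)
The plan is to exhibit the equivalence via the obvious forgetful functor $U \colon \FBan \to \FVect$ that sends $(A, ||-||_A)$ to $A$ and a bounded polylinear map to its underlying polylinear map. This is manifestly a functor of polycategories since composition and identities in $\FBan$ are defined by the same formulas as in $\FVect$, and I will show that $U$ is fully faithful and essentially surjective on objects, which suffices for an equivalence of polycategories (by the polycategorical analogue of the standard argument).

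Essential surjectivity is immediate: given a finite dimensional vector space $A$ with some chosen basis $(e_k)_{k \leq d}$, define $||\sum_k \lambda_k e_k|| := \max_k |\lambda_k|$. This equips $A$ with a Banach space structure whose image under $U$ is literally $A$. Faithfulness of $U$ is also clear, since $U$ is the identity on hom-sets once a bounded polylinear map is identified with its underlying functional assignment.

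The content of the proof is fullness, i.e.\ that \emph{every} polylinear map $f \colon A_1, \dots, A_m \to B_1, \dots, B_n$ between finite dimensional normed vector spaces is automatically bounded. The strategy is to pick bases $(e_k^i)_k$ of each $A_i$ and $(g_l^j)_l$ of each $B_j$ with corresponding dual bases $(\check{g}_l^j)_l$ of each $\rdual{B_j}$, then to expand $a_i = \sum_k \lambda_{i,k} e_k^i$ and $\varphi_j = \sum_l \mu_{j,l} \check{g}_l^j$ and use polylinearity to write
\[ (\varphi_1, \dots, \varphi_n)f(a_1, \dots, a_m) = \sum_{\vec{k}, \vec{l}} \Bigl(\prod_i \lambda_{i, k_i}\Bigr)\Bigl(\prod_j \mu_{j, l_j}\Bigr) (\check{g}_{l_1}^1, \dots, \check{g}_{l_n}^n) f(e_{k_1}^1, \dots, e_{k_m}^m). \]
The scalar values $(\check{g}_{l_1}^1, \dots, \check{g}_{l_n}^n) f(e_{k_1}^1, \dots, e_{k_m}^m)$ form a finite family, so their absolute values are bounded by some constant $M$. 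Applying the triangle inequality gives a uniform bound of the form $M' \prod_i \max_k |\lambda_{i,k}| \prod_j \max_l |\mu_{j,l}|$.

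The main (and only nontrivial) ingredient is then the equivalence of norms on a finite dimensional vector space: the coordinate-max norms appearing above are equivalent to the actual norms $||-||_{A_i}$ and $||-||_{\rdual{B_j}}$, which yields a constant $K$ with $|(\varphi_1, \dots, \varphi_n)f(a_1, \dots, a_m)| \leq K \prod_i ||a_i||_{A_i} \prod_j ||\varphi_j||_{\rdual{B_j}}$ as required. This is the standard result that all norms on a finite dimensional vector space are equivalent, and is really the only analytic input to the whole argument; everything else is bookkeeping with bases.
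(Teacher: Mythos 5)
Your proof is correct, and it takes the route the paper implicitly intends: the paper gives no proof of this proposition at all, relying only on the preceding remark that all norms on a finite-dimensional vector space are equivalent, which is exactly the single analytic input your argument isolates. The only point worth making explicit is that the equivalence-of-norms fact must be applied both to each $A_i$ and to each dual space $\rdual{B_j}$ (to compare the coordinate-max norm on the dual basis with the dual norm $\|-\|_{\rdual{B_j}}$ appearing in the definition of boundedness), which your argument does handle.
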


On the other hand, $\FBanc$ is a $\ast$-representable polycategory that does not come from a compact closed category.
It is one of the examples of $\ast$-autonomous categories described in Barr's original paper \cite{Barr1979}.
In this article, the $\ast$-autonomous structure of the cateogry of finite dimensional Banach spaces and contractive linear maps is proved by using a characterisation of a $\ast$-autonomous category as a symmetric monoidal closed category where the canonical maps $A \to A^{\ast\ast}$ are isomorphisms.
In particular, the induced norm for the par is never discussed.
We did not find any reference in the literature linking it to the well-known injective norm in the theory of Banach spaces.

\begin{definition}
  Let $(A_i,\|-\|_{A_i})_i$ be Banach spaces.
  The \defin{projective norm} $\|-\|_{\otimes A_i}$ and the \defin{injective norm} $\|-\|_{\parr A_i}$ are the norms defined on the vector space $A_1 \otimes \dots \otimes A_n$ by the following formulae:
\[
  \|u\|_{\otimes A_i} := \inf\limits_{u = \sum\limits_j a_1^j \otimes \dots \otimes a_n^j}\sum\limits_j\|a_1^j\|_{A_1}\dots\|a_n^j\|_{A_n}
  \qquad
  \|u\|_{\parr A_i} := \sup\limits_{\|\varphi_i\|_{A_i^\ast}\leq 1}|(\varphi_1 \otimes \dots \otimes \varphi_n)(u)|
\]
\end{definition}
These norms are known to be extremal among the set of well-behaved norms that one can put on the tensor.

\begin{definition}
    For Banach spaces $(A_i,\|-\|_{A_i})$, a norm $\|-\|$ on $A_1 \otimes \dots \otimes A_n$ is a \defin{crossnorm} if 
\begin{itemize}
\item $\forall a_i \in A_i,\ \|a_1 \otimes \dots \otimes a_n\| \leq \|a_1\|_{A_1}\dots\|a_n\|_{A_n}$ 
\item $\forall \varphi_j \in A_j^\ast, \ \|\varphi_1 \otimes \dots \otimes \varphi_n \|' \leq \|\varphi_1\|_{A_1^\ast}\dots\|\varphi_n\|_{A_n^\ast}$ where $\|-\|'$ is the dual norm associated to $\|-\|$
\end{itemize}
\end{definition}

\begin{remark}
  It is equivalent to ask for equalities in the definition.
  A proof can by found in \cite{Ryan2002}.
\end{remark}

\begin{prop}
  A norm is a crossnorm iff it makes $m \colon A_1,\dots,A_n \to A_1 \otimes \dots \otimes A_n$ and $w \colon A_1 \otimes \dots \otimes A_n \to A_1,\dots,A_n$ contractive.
\end{prop}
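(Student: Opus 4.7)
The plan is to unpack the contractivity of $m$ and $w$ directly against the definitions and recognise each inequality as one of the two crossnorm conditions.

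First I would compute what contractivity of $m \colon A_1,\dots,A_n \to A_1 \otimes \dots \otimes A_n$ says. Since $m$ has one output, contractivity reads
\[|(\varphi)m(a_1,\dots,a_n)| = |\varphi(a_1 \otimes \dots \otimes a_n)| \leq \|\varphi\|' \prod_i \|a_i\|_{A_i}\]
for all $a_i$ and all $\varphi$ in the dual of $(A_1 \otimes \dots \otimes A_n, \|-\|)$. By the definition of the dual norm $\|u\| = \sup_{\|\varphi\|' \leq 1} |\varphi(u)|$, taking the sup over such $\varphi$ shows this is equivalent to $\|a_1 \otimes \dots \otimes a_n\| \leq \prod_i \|a_i\|_{A_i}$, which is the first crossnorm inequality. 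Conversely, if this inequality holds, then $|\varphi(a_1 \otimes \dots \otimes a_n)| \leq \|\varphi\|' \|a_1 \otimes \dots \otimes a_n\| \leq \|\varphi\|' \prod_i \|a_i\|_{A_i}$ by definition of the dual norm, recovering contractivity.

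Then I would do the symmetric computation for $w \colon A_1 \otimes \dots \otimes A_n \to A_1,\dots,A_n$. Contractivity reads
\[|(\varphi_1,\dots,\varphi_n)w(u)| \leq \|u\| \prod_j \|\varphi_j\|_{A_j^\ast}\]
for all $u \in A_1 \otimes \dots \otimes A_n$ and all $\varphi_j \in A_j^\ast$. By the defining formula for $w$ and bilinear extension, $(\varphi_1,\dots,\varphi_n)w(u) = (\varphi_1 \otimes \dots \otimes \varphi_n)(u)$, where $\varphi_1 \otimes \dots \otimes \varphi_n$ is viewed as an element of the dual of $(A_1 \otimes \dots \otimes A_n,\|-\|)$. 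Taking the sup over $u$ with $\|u\| \leq 1$ yields $\|\varphi_1 \otimes \dots \otimes \varphi_n\|' \leq \prod_j \|\varphi_j\|_{A_j^\ast}$, which is the second crossnorm inequality; and conversely this inequality recovers contractivity by the definition of the dual norm applied in the other direction.

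There is no real obstacle here; the proposition is essentially a repackaging of the definition of the dual norm as an adjoint relationship between $\|-\|$ on $A_1 \otimes \dots \otimes A_n$ and $\|-\|'$ on its dual, the two crossnorm conditions being precisely the two contractivity conditions on $m$ and $w$. The only care needed is to observe that $(\varphi_1,\dots,\varphi_n)w(-)$ on a general (not necessarily pure) $u$ is nothing but evaluation of $\varphi_1 \otimes \dots \otimes \varphi_n$ on $u$, which is immediate from the defining formula for $w$ on elementary tensors together with bilinearity.
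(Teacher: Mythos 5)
Your proof is correct and follows essentially the same route as the paper's: both unpack contractivity of $m$ and $w$ against the two crossnorm inequalities via the dual norm, with your homogeneous formulation merely avoiding the paper's normalisation-to-subunits and separate zero-vector case. One point of precision: the identity $\|u\| = \sup_{\|\varphi\|' \leq 1}|\varphi(u)|$ that you use to pass from contractivity of $m$ to the inequality $\|a_1 \otimes \dots \otimes a_n\| \leq \prod_i \|a_i\|_{A_i}$ is \emph{not} the definition of the dual norm (that is $\|\varphi\|' = \sup_{\|u\|\leq 1}|\varphi(u)|$) but its converse, i.e.\ the existence of norming functionals, which is a corollary of Hahn--Banach and is exactly what the paper invokes explicitly at this step; in the finite-dimensional setting this is unproblematic, but it is the one genuinely nontrivial input to the argument and should be named as such rather than folded into a definition.
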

\begin{proof}
Suppose that $\|-\|$ is a crossnorm.
Then we want to prove that for any $\|a_i\|_{A_i} \leq 1$ and any $\|\varphi\|' \leq 1$, \[|(\varphi)m(a_1,\dots,a_n)| := |\varphi(a_1\otimes\dots\otimes a_n)| \leq 1\]
By definition $\|\varphi\|' := \sup\limits_{\|u\| \leq 1} |\varphi(u)| \leq 1$.
Since $\|-\|$ is contractive we have that 
\[\|a_1 \otimes \dots \otimes a_n\| \leq \|a_1\|_{A_1}\dots\|a_n\|_{A_n} \leq 1\]
So $|\varphi(a_1\otimes\dots\otimes a_n)| \leq 1$, proving that $m$ is contractive.

Now we want to prove that $w$ is contractive.
That is for any $\|\varphi_i\|_{\rdual{A_i}} \leq 1$ and any $\|u\| \leq 1$, \[|(\varphi_1,\dots,\varphi_n)w(u)| \leq 1\]
By definition we have that $(\varphi_1,\dots,\varphi_n)w(u) = (\varphi_1 \otimes \dots \otimes \varphi_n)(u)$.
Since $\|-\|$ is contractive we have
\[\|\varphi_1 \otimes \dots \otimes \varphi_n\|' \leq 1\]
which means by definition of $\|-\|'$ and since $\|u\| \leq 1$ that \[|(\varphi_1 \otimes \dots \otimes \varphi_n)(u)| \leq 1\]

Now suppose that $m$ and $w$ are contractive.
Since $m$ is contractive we have that for any $\|a_i'\|_{A_i} \leq 1$ and any $\|\varphi\|' \leq 1$, $|\varphi(a_1 \otimes \dots \otimes a_n)| \leq 1$.
Now let us fix $a_i \neq 0 \in A_i$ and consider the vectors $\frac{a_i}{\|a_i\|_{A_i}}$.
We have that by the property of a norm that \[\|\frac{a_i}{\|a_i\|_{A_i}}\|_{A_i} = \frac{\|a_i\|_{A_i}}{\|a_i\|_{A_i}} = 1\]
Furthermore, as a corollary of the Hahn-Banach theorem (see \cite{Ryan2002}) there exists a functional $\|\varphi\|' \leq 1$ such that \[\varphi(\frac{a_1}{\|a_1\|_{A_1}}\otimes\dots\otimes\frac{a_n}{\|a_n\|_{a_n}}) = \|\frac{a_1}{\|a_1\|_{A_1}}\otimes\dots\otimes\frac{a_n}{\|a_n\|_{a_n}}\|\]
By the property of a norm
\[\|\frac{a_1}{\|a_1\|_{A_1}}\otimes\dots\otimes\frac{a_n}{\|a_n\|_{a_n}}\| = \frac{\|a_1 \otimes \dots \otimes a_n\|}{\|a_1\|_{A_1}\dots\|a_n\|_{A_n}}\]
and since $m$ is contractive
\[\frac{\|a_1 \otimes \dots \otimes a_n\|}{\|a_1\|_{A_1}\dots\|a_n\|_{A_n}} \leq 1\]
So $\|a_1 \otimes \dots \otimes a_n\| \leq \|a_1\|_{A_1} \dots \|a_n\|_{A_n}$.
If at least one of the $a_i$ is $0$ then both sides equal $0$ and the inequality holds.
This proves one property of the crossnorm.

For the other property, we want to prove that for any $\varphi_i \in \rdual{A_i}$ we have \[\|\varphi_1 \otimes \dots \otimes \varphi_n\|' \leq \|\varphi_1\|_{\rdual{A_1}} \dots \|\varphi_n\|_{\rdual{A_n}}\]
If at least one of the $\|\varphi_i\|_{\rdual{A_i}} = 0$ then by definition of a norm $\varphi_i =0$ and the left side is also $0$.
Otherwise the above inequality is equivalent to
\[ \|\frac{\varphi_1}{\|\varphi_1\|_{\rdual{A_1}}} \otimes \dots \otimes \frac{\varphi_n}{\|\varphi_n\|_{\rdual{A_n}}}\|' \leq 1\]
By definition of $\|-\|'$ this is equivalent to prove that for any $\|u\| \leq 1$,
\[ |\frac{\varphi_1}{\|\varphi_1\|_{\rdual{A_1}}} \otimes \dots \otimes \frac{\varphi_n}{\|\varphi_n\|_{\rdual{A_n}}}(u)| \leq 1\]
which is true since $\|\frac{\varphi_i}{\|\varphi_i\|_{\rdual{A_i}}}\|_{\rdual{A_i}} \leq 1$ and $w$ is contractive.
\end{proof}

The injective and projective norms are crossnorms.
The following property of the injective and projective crossnorm made us consider the injective crossnorm as a potential candidate for interpreting the par, and was one of our original motivations for studying the notion of bifibration of polycategories that we will develop later in this thesis.

\begin{prop}
  Let $\|-\|$ be a crossnorm then for any $u\in A_1 \otimes \dots \otimes A_n$ we have \[\|u\|_{A_1 \parr \dots \parr A_n} \leq \|u\| \leq \|u\|_{A_1 \otimes \dots \otimes A_n}\]
\end{prop}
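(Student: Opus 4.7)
The plan is to prove each inequality separately, using the two defining properties of a crossnorm (bounding $\|a_1\otimes\cdots\otimes a_n\|$ by $\prod \|a_i\|_{A_i}$ and bounding $\|\varphi_1\otimes\cdots\otimes\varphi_n\|'$ by $\prod \|\varphi_j\|_{\rdual{A_j}}$, as restated in the previous proposition in terms of contractivity of $m$ and $w$).

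For the right inequality $\|u\| \leq \|u\|_{\otimes A_i}$, I would start from an arbitrary decomposition $u = \sum_j a_1^j \otimes \cdots \otimes a_n^j$. By the triangle inequality for $\|-\|$, together with the first crossnorm condition applied to each summand, I get
\[ \|u\| \;\leq\; \sum_j \|a_1^j \otimes \cdots \otimes a_n^j\| \;\leq\; \sum_j \|a_1^j\|_{A_1}\cdots\|a_n^j\|_{A_n}. \]
Taking the infimum over all such decompositions on the right, the definition of the projective norm yields $\|u\| \leq \|u\|_{\otimes A_i}$.

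For the left inequality $\|u\|_{\parr A_i} \leq \|u\|$, I would fix functionals $\varphi_i \in \rdual{A_i}$ with $\|\varphi_i\|_{\rdual{A_i}} \leq 1$. The second crossnorm condition gives $\|\varphi_1 \otimes \cdots \otimes \varphi_n\|' \leq 1$, and by definition of the dual norm $\|-\|'$ this means $|(\varphi_1 \otimes \cdots \otimes \varphi_n)(u)| \leq \|u\|$ for any $u$. Taking the supremum over such families $(\varphi_i)$, the definition of the injective norm gives exactly $\|u\|_{\parr A_i} \leq \|u\|$.

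Neither step should present a serious obstacle: both inequalities reduce directly to unwinding the definitions of the injective and projective norms and applying the crossnorm conditions in the appropriate direction. The only subtle point is being careful that the crossnorm condition relating $\|-\|$ to its dual $\|-\|'$ is used correctly in the lower bound — one must invoke the dual-norm inequality for $\varphi_1 \otimes \cdots \otimes \varphi_n$ evaluated at $u$, rather than trying to bound $\|u\|$ in terms of tensor products of vectors. Once this is in place, the argument is essentially immediate from the suprema/infima in the definitions.
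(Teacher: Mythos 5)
Your proposal is correct and follows essentially the same route as the paper: the upper bound via the triangle inequality, the first crossnorm condition and the infimum over decompositions, and the lower bound via the second crossnorm condition and the dual-norm inequality $|(\varphi_1\otimes\cdots\otimes\varphi_n)(u)|\leq\|\varphi_1\otimes\cdots\otimes\varphi_n\|'\,\|u\|$ followed by the supremum. The only cosmetic difference is that the paper derives this last inequality by evaluating at the normalised vector $u/\|u\|$, whereas you invoke it directly.
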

\begin{proof}
Consider a decomposition $u = \sum_j a_1^j \otimes \dots \otimes a_n^j$ then by properties of a norm we have
\[\|u\| = \|\sum_j a_1^j \otimes \dots \otimes a_n^j\| \leq \sum_j \|a_1^j \otimes \dots \otimes a_n^j\|\]
Now since $\|-\|$ is a crossnorm,
\[\sum_j \|a_1^j \otimes \dots \otimes a_n^j\| \leq \sum_j \|a_1^j\|_{A_1}\dots\|a_n^j\|_{A_n}\]
Since this is true for any decomposition, this is true on the infimum and $\|u\| \leq \|u\|_{A_1 \otimes \dots \otimes A_n}$.

Now consider functionals $\|\varphi_i\|_{\rdual{A_i}} \leq 1$.
Since $\|-\|$ is a crossnorm, \[\|\varphi_1 \otimes \dots \otimes \varphi_n\|' \leq \|\varphi_1\|_{\rdual{A_1}}\dots \|\varphi_n\|_{\rdual{A_n}} \leq 1 \]
By definition of $\|-\|'$ this means that
\[\sup\limits_{\|w\| \leq 1} |(\varphi_1 \otimes \dots \otimes \varphi_n)(w)| \leq 1 \]
But then since $\|\frac{u}{\|u\|}\| \leq 1$,
\[|(\varphi_1 \otimes \dots \otimes \varphi_n)(\frac{u}{\|u\|})| \leq 1\]
And so, \[|(\varphi_1 \otimes \dots \otimes \varphi_n)(u)| \leq \|u\|\]
Since this is true for any $\|\varphi_i\|_{\rdual{A_i}}$ it is true on the sup and $\|u\|_{A_1 \parr \dots \parr A_n} \leq \|u\|$.
\end{proof}

\begin{theorem}
  $\FBanc$ is a $\ast$-representable polycategory with tensor, par and duality defined above.
\end{theorem}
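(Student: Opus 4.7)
The plan is to verify each piece of the $\ast$-representable structure in turn, importing the underlying linear-algebraic data from $\FVect$ and then checking the analytic constraints. First I would confirm that $\FBanc$ is indeed a polycategory: the identity polylinear map is visibly contractive, and given contractive $f \colon \Gamma \to \Delta_1, A, \Delta_2$ and $g \colon \Gamma_1, A, \Gamma_2 \to \Delta$, the composite $g \circ_A f$ inherits contractivity by a direct estimate on a single quadruple of unit vectors/functionals, using the fact that the composition formula only involves pairing intermediate values of $f$ with functionals built from $g$ (or vice versa). So $\FBanc$ sits inside $\FVect$ as a wide sub-polycategory.

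Next I would handle tensor products. Take $\otimes A_i$ to be the usual vector-space tensor equipped with the projective norm. The proposition just proved shows that the projective norm is a crossnorm, so the universal polymap $m \colon A_1,\dots,A_n \to \otimes A_i$ is contractive. For the universal property, given a contractive $f \colon \Gamma_1, A_1, \dots, A_n, \Gamma_2 \to \Delta$, the linear factorisation $f/m$ already exists in $\FVect$; I need to show it is contractive with respect to the projective norm. The key calculation is: for $u \in \otimes A_i$ and any decomposition $u = \sum_j a_1^j \otimes \cdots \otimes a_n^j$, the value $(\overrightarrow{\psi})f/m(\overrightarrow{x_1}, u, \overrightarrow{x_2})$ equals $\sum_j (\overrightarrow{\psi})f(\overrightarrow{x_1}, a_1^j,\dots,a_n^j,\overrightarrow{x_2})$. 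Contractivity of $f$ bounds each summand by $\prod_i\|a_i^j\|_{A_i}\cdot\prod\|x\|\prod\|\psi\|$, and taking the infimum over decompositions gives exactly the projective norm of $u$. This is where the extremal ``infimum'' definition of the projective norm is doing the real work.

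Par products are dual. Take $\parr B_j$ to be the tensor space equipped with the injective norm, with co-universal polymap $w$, again contractive since the injective norm is a crossnorm. Given contractive $f \colon \Gamma \to \Delta_1, B_1,\dots,B_n,\Delta_2$, the factorisation $w\backslash f$ takes a functional $\sum_j \varphi_1^j \otimes \cdots \otimes \varphi_n^j$ and pairs it summand-wise. The bound this time is obtained by pairing against an arbitrary unit vector in $\parr B_j$: the supremum formulation of the dual of the injective norm lets us estimate $(\dots,\sum_j \varphi_1^j\otimes\cdots\otimes\varphi_n^j,\dots) w\backslash f(\overrightarrow{x})$ by contractivity of $f$ applied to each elementary tensor of functionals, taking a supremum at the end. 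So $w\backslash f$ is contractive; here the ``supremum'' definition of the injective norm is essential.

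Finally, duals: take $\rdual{A}$ to be the vector-space dual with its dual norm. The evaluation $\rcap_A \colon \rdual{A}, A \to \cdot$ is contractive directly from $|\varphi(a)| \le \|\varphi\|_{\rdual{A}}\|a\|_A$. For $\rcup_A \colon \cdot \to A, \rdual{A}$, given any $\|\varphi\|_{\rdual{A}}\le 1$ and $\|\check{a}\|_{A^{\ast\ast}}\le 1$, the value $\check{a}(\varphi)$ is bounded by $1$ by definition of the double-dual norm (and here finite-dimensionality ensures the canonical map $A\to A^{\ast\ast}$ is a norm-isomorphism, so we may identify $A^{\ast\ast}$ with $A$ without losing contractivity). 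The snake identities are already verified in $\FVect$ and are purely algebraic, so they transfer. Left duals are treated symmetrically. The main obstacle throughout is the factorisation arguments of the middle two paragraphs, where one must recognise that the extremality of the projective and injective norms among crossnorms is precisely what makes $m$ (respectively $w$) a \emph{universal} contractive polymap rather than merely a contractive one.
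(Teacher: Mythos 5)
Your proposal is correct and follows essentially the same route as the paper: the central computation is the same decomposition-and-infimum argument showing $f/m$ is contractive with respect to the projective norm, with the par case dual via the supremum formulation and the duals handled through the dual norm. The only cosmetic difference is that you verify the duals via contractivity of $\rcup_A$, $\rcap_A$ plus the (algebraic) snake identities, whereas the paper checks universality of $\rcap_A$ directly — but these are equivalent by the paper's earlier proposition characterising duals as (in/out-)universal objects.
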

\begin{proof}
Let us first prove that the projective norm defines a tensor product.
Given a contractive polylinear map $f \colon \Gamma_1, A_1, \dots, A_n, \Gamma_2 \to \Delta$ it factors uniquely as a polylinear map $f/m \colon \Gamma_1, A_1 \otimes \dots \otimes A_n, \Gamma_2 \to \Delta$.
It then suffices to show that $f/m$ is contractive.
That is we want to prove that
\[|(\overrightarrow{\psi})f/m(\overrightarrow{x_1},u,\overrightarrow{x_2})| \leq 1\]
when all the vectors and functionals considered are of norm lesser than 1.
Consider a decomposition $u = \sum_j a_1^j \otimes \dots \otimes a_n^j$, then, by definition,
\[|(\overrightarrow{\psi})f/m(\overrightarrow{x_1},\sum_j a_1^j \otimes \dots \otimes a_n^j,\overrightarrow{x_2})| = |\sum_j(\overrightarrow{\psi})f(\overrightarrow{x_1},a_1^j,\dots,a_n^j,\overrightarrow{x_2})|\]
Using the properties of the norm we get
\[|(\overrightarrow{\psi})f/m(\overrightarrow{x_1},\sum_j a_1^j \otimes \dots \otimes a_n^j,\overrightarrow{x_2})| \leq \sum_j\|a_1^j\|_{A_1}\dots\|a_n^j\|_{A_n}|(\overrightarrow{\psi})f(\overrightarrow{x_1},\frac{a_1^j}{\|a_1^j\|_{A_1}},\dots,\frac{a_n^j}{\|a_n^j\|_{A_n}},\overrightarrow{x_2})|\]
Now since $\frac{a_i^j}{\|a_i^j\|_{A_i}} \leq 1$ and $f$ is contractive we get:
\[|(\overrightarrow{\psi})f/m(\overrightarrow{x_1},\sum_j a_1^j \otimes \dots \otimes a_n^j,\overrightarrow{x_2})| \leq \sum_j\|a_1^j\|_{A_1}\dots\|a_n^j\|_{A_n}\]
Since this holds for any decomposition of $u$, by definition of the projective norm:
\[|(\overrightarrow{\psi})f/m(\overrightarrow{x_1},\sum_j a_1^j \otimes \dots \otimes a_n^j,\overrightarrow{x_2})| \leq \|u\|_{A_1 \otimes \dots \otimes A_n} \leq 1\]
As usual, we assumed that all the vectors are non-zero.
If one is $0$ both sides are too and the inequality holds.

The proof that $\parr$ is universal is similar.
For the dual, any polylinear map $f \colon \Gamma \to A, \Delta$ factorises uniquely into a polylinear map $f/\rcap_A \colon \rdual{A}, \Gamma \to \Delta$ where \[(\overrightarrow{\psi})f/\rcap_A(\varphi, \overrightarrow{x}) = (\varphi,\overrightarrow{\psi})f(\overrightarrow{x})\]
From the definition it is straightforward that if $f$ is contractive then $f/\rcap_A$ is too.
\end{proof}

\begin{remark}
  More than just a model of classical MLL, $\FBanc$ is a model of classical MALL.
  The additive connectives are given by the vector space $A \oplus B$ with the norms $\|(a,b)\|_1 := \sum\limits_i \|a\|_A + \|b\|_B$ and $\|(a,b)\|_\infty := \max(\|a\|_A,\|b\|_B)$.
  These norms are extremal among the $p$-norms.
\end{remark}

After introducing bifibrations of polycategories in next chapter, we will reexamined the example of \FBanc{} from a fibrational view in section \ref{sec:Banach2}.

\chapter{Bicategories, double categories and virtual double categories}

This section is a review of bicategories, double categories and virtual double categories.
Nothing in it is new.
These concepts were introduced in the sixties and seventies, bicategories by B\'{e}nabou \cite{Benabou1967}, double categories by Ehresmann \cite{Ehresmann1963} and virtual double categories by Burroni \cite{Burroni1971} under the name multicat\'{e}gories.
A recent textbook by Johnson and Yau \cite{JohnsonYau2020} offers an extensive review of the theory of bicategories and of double categories.
Modern accounts of virtual double categories are given in \cite{Leinster2004} and in \cite{CruttwellShulman2009}.

\section{2-categories as categories enriched in $\Cat$}

In order to generalise the notion of category, one can reformulate it as follows.
A (locally small) category is given by
\begin{itemize}
\item a collection of objects $\Ob{\cC}$
\item for any two objects $A,B$ a set $\cC(A,B)$ of morphisms
\item for any object $A$, an identity function $\id_A \colon \one \to \cC(A,A)$ from the terminal set
\item for any objects $A,B,C$, a composition function $- \circ - \colon \cC(B,C) \times \cC(A,B) \to \cC(A,C)$
\end{itemize}
such that the following diagrams commute:
\[\begin{tikzcd}
	{(\cC(C,D) \times \cC(B,C)) \times \cC(A,B)} && {\cC(B,D) \times \cC(A,B)} \\
	&&&& {\cC(A,D)} \\
	{\cC(C,D) \times (\cC(B,C) \times \cC(A,B))} && {\cC(C,D) \times \cC(A,C)}
	\arrow["{- \circ - \times 1_{\cC(A,B)}}", from=1-1, to=1-3]
	\arrow["{-\circ-}", from=1-3, to=2-5]
	\arrow["\simeq"', from=1-1, to=3-1]
	\arrow["{1_{\cC(C,D)}\times - \circ -}"', from=3-1, to=3-3]
	\arrow["{-\circ-}"', from=3-3, to=2-5]
\end{tikzcd}\]
\[\begin{tikzcd}
	{\cC(A,B) \times \one} && {\cC(A,B) \times \cC(A,A)} \\
	&& {\cC(A,B)}
	\arrow["{1_{\cC(A,B) } \times \id_A}", from=1-1, to=1-3]
	\arrow["{-\circ-}", from=1-3, to=2-3]
	\arrow["\simeq"', from=1-1, to=2-3]
\end{tikzcd}
\begin{tikzcd}
	{\one \times \cC(A,B)} && {\cC(B,B) \times \cC(A,B)} \\
	&& {\cC(A,B)}
	\arrow["{\id_B \times 1_{\cC(A,B)}}", from=1-1, to=1-3]
	\arrow["{-\circ-}", from=1-3, to=2-3]
	\arrow["\simeq"', from=1-1, to=2-3]
\end{tikzcd}\]

From this definition one can replace the set of morphisms $\cC(A,B)$ and the identity and composition functions by objects and morphisms in another category $\cV$.
In order to state this definition one sufficient condition on $\cV$ is that it is monoidal.
This gives the notion of category enriched in $\cV$, or $\cV$-category.
In particular, one can consider categories enriched in $\Cat$.
These are called $2$-categories.
If we unravel the definition we get the following.

\begin{definition}
A \defin{2-category} \cC is given by
\begin{itemize}
\item a collection of object $\Ob{\cC}$
\item for any object $A,B$, a category $\cC(A,B)$ of morphisms, i.e.
\begin{itemize}
\item a collection of morphisms $f \colon A \to B$
\item for any pair of morphisms $f, g \colon A \to B$, a collection of 2-morphisms $\cC(f,g)$ written $\alpha \colon f \Rightarrow g$
\item an identity 2-morphism $\id_f \colon f \Rightarrow f$
\item for any $f, g, h \colon A \to B$ and any 2-morphisms $\alpha \colon f \Rightarrow g$ and $\beta \colon g \to h$ a vertical composition $\beta \bullet \alpha \colon f \Rightarrow h$
\end{itemize}
such that
\begin{itemize}
\item $\alpha \bullet \id_f = \alpha = \id_g \bullet \alpha$
\item $(\gamma \bullet \beta) \bullet \alpha = \gamma \bullet (\beta \bullet \alpha)$
\end{itemize}
\item for any $A$, an identity functor $\id_A \colon \one \to \cC(A,A)$, i.e.
\begin{itemize}
\item a morphism $\id_A \colon A \to A$
\end{itemize}
\item a composition functor $\circ \colon \cC(B,C) \times \cC(A,B) \to \cC(A,C)$, i.e.
\begin{itemize}
\item for any morphisms $g \colon B \to C$ and $f \colon A \to B$, a morphism $g \circ f \colon A \to C$
\item for any morphisms $g, g' \colon B \to C$ and $f, f' \colon A \to B$, and any 2-morphisms $\alpha \colon g \Rightarrow g'$ and $\beta \colon g \Rightarrow f'$, a horizontal composition $\beta \circ \alpha \colon g \circ f \Rightarrow g' \circ f'$
\end{itemize}
such that
\begin{itemize}
\item $\id_g \circ \id_f = \id_{g \circ f}$
\item $(\beta' \bullet \beta) \circ (\alpha' \bullet \alpha) = (\beta' \circ \alpha') \bullet (\beta \circ \alpha)$
\end{itemize}
such that the following diagrams commutes:
\item \[\begin{tikzcd}
	{(\cC(C,D) \times \cC(B,C)) \times \cC(A,B)} && {\cC(B,D) \times \cC(A,B)} \\
	&&&& {\cC(A,D)} \\
	{\cC(C,D) \times (\cC(B,C) \times \cC(A,B))} && {\cC(C,D) \times \cC(A,C)}
	\arrow["{- \circ - \times 1_{\cC(A,B)}}", from=1-1, to=1-3]
	\arrow["{-\circ-}", from=1-3, to=2-5]
	\arrow["\simeq"', from=1-1, to=3-1]
	\arrow["{1_{\cC(C,D)}\times - \circ -}"', from=3-1, to=3-3]
	\arrow["{-\circ-}"', from=3-3, to=2-5]
\end{tikzcd}\]
i.e.
\begin{itemize}
\item $(h \circ g) \circ f = h \circ (g \circ f)$
\item $(\gamma \circ \beta) \circ \alpha = \gamma \circ (\beta \circ \alpha)$
\end{itemize}
\item \[\begin{tikzcd}
	{\cC(A,B) \times \one} && {\cC(A,B) \times \cC(A,A)} \\
	&& {\cC(A,B)}
	\arrow["{1_{\cC(A,B) } \times \id_A}", from=1-1, to=1-3]
	\arrow["{-\circ-}", from=1-3, to=2-3]
	\arrow["\simeq"', from=1-1, to=2-3]
\end{tikzcd}
\begin{tikzcd}
	{\one \times \cC(A,B)} && {\cC(B,B) \times \cC(A,B)} \\
	&& {\cC(A,B)}
	\arrow["{\id_B \times 1_{\cC(A,B)}}", from=1-1, to=1-3]
	\arrow["{-\circ-}", from=1-3, to=2-3]
	\arrow["\simeq"', from=1-1, to=2-3]
\end{tikzcd}\]
i.e.
\begin{itemize}
\item $f \circ \id_A = f = \id_B \circ f$
\end{itemize}
\end{itemize}
\end{definition}

\begin{definition}
A 2-category \cC is \defin{locally small} if for any $A,B \in \Ob{\cC}$, $\cC(A,B)$ is small.
\end{definition}

\begin{example}
The archetypal 2-category is $\Cat$, the 2-category of categories, functors and natural transformations.
\end{example}

\begin{example}
Any category can be seen as a 2-category whose category of morphisms is discrete.
\end{example}

\begin{example}
There is a 2-category $\Poset$ whose objects are posets, morphisms are monotone maps functions and there is a unique 2-morphism $\alpha \colon f \Rightarrow g$ between functions $f,g \colon A \to B$ iff $f \leq g$, where the order is defined pointwise: $f \leq g$ iff for all $x\in A$, $f(x) \leq g(x)$.
\end{example}

\begin{example}
There is a 2-category $\Rel$ whose objects are sets, morphisms are relations and there is a unique 2-morphism $\alpha \colon R \Rightarrow S$ iff $R \subseteq S$ as subsets of $A\times B$, or equivalently iff for any $(a,b)\in A \times B$, $a R b \Rightarrow a S b$.
\end{example}

A category can be seen as an many-object generalisation of a monoid.
That is, any monoid $M$ gives a one-object category $\cB(M)$ called the delooping of $M$.
And conversely, any one-object (locally small) category is a monoid, since for any object $A$ of a category, the set of its endomorphisms forms a monoid under composition.
A similar relationship holds for 2-categories and strict monoidal categories.

\begin{example}
For any object $A$ in a 2-category \cC, $(\cC(A,A),\circ,\id_A)$ forms a strict monoidal category, i.e. a monoidal category where associativity and unitality holds strictly.
Reciprocally, any strict monoidal category gives a one-object 2-category called its delooping.
\end{example}

A way to categorify the notion of relation between sets is to consider distributors between categories.
A distributor $p \colon A \xto B$ between categories $A$ and $B$ is a functor $p \colon A^\op \times B \to \Set$.
Distributors compose via a coend formula.
One might expect that small categories and distributors for a 2-category $\Dist$ with 2-morphisms given by natural transformations between distributors.

However, it is not the case.
Let us fix a category $A$ and consider the endo-distributors on $A$.
It defines a category $\Dist_A := \Cat(A^\op \times A, \Set)$ whose:
\begin{itemize}
\item objects are distributors $P \colon A \xto A$
\item morphisms are natural transformations
\end{itemize}
Now the identity and composition in $\Dist$ are given by the homfunctor $\id_A := A(-,-) \colon A \xto A$ and given $P \colon A \xto B$ and $P' \colon B \to C$, by the coend $(P' \circ P)(a,c) = \int^{b \in B} P(a,b) \times P'(b,c)$.
This makes $\Dist_A$ into a monoidal category.
However, it is not strict since the coend is a colimit defined up to natural isomorphism.
So $\Dist$ does not form a 2-category.

\section{Bicategories}

In order to get examples such as $\Dist$ where the identity morphisms and composition of morphisms does not hold on the nose but up to 2-isomorphism, one can relax the condition that the pentagon and triangle diagrams above commute and ask for 2-isomorphisms between those.

\begin{definition}
A \defin{bicategory} \cC is given by the data of:
\begin{itemize}
\item a collection of objects $\Ob{\cC}$
\item for any objects $A,B$, a category $\cC(A,B)$
\item identity functors $\id_A \colon \one \to \cC(A,A)$
\item horizontal composition functors $-\circ_{A,B,C} - \colon \cC(B,C) \times \cC(A,B) \to \cC(A,C)$
\item natural isomorphisms $\mathrm{A}_{A,B,C,D} \colon (-\circ_{A,B,D}-)(- \circ_{B,C,D} - ) \times 1_{\cC(A,B)} \simeq (- \circ_{A,C,D} -)(1_{\cC(C,D)} \times (- \circ_{A,B,C} -)\alpha_{\cC(C,D),\cC(B,C),\cC(A,B)}$, i.e. for any morphisms $f \colon A \to B$, $g \colon B \to C$ and $h \colon C \to D$ an invertible 2-morphism $A_{f,g,h} \colon (h \circ g) \circ f \Rightarrow h \circ (g \circ f)$ such that for any 2-morphisms $\alpha \colon f \Rightarrow f'$, $\beta \colon g \Rightarrow g'$ and $\gamma \colon h \Rightarrow h'$ the following diagram commutes:
\[\begin{tikzcd}
	{(h \circ g) \circ f} && {h \circ (g \circ f)} \\
	\\
	{(h' \circ g') \circ f'} && {h' \circ (g' \circ f')}
	\arrow["{\mathrm{A}_{f,g,h}}", Rightarrow, from=1-1, to=1-3]
	\arrow["{\gamma \circ (\beta \circ \alpha)}", Rightarrow, from=1-3, to=3-3]
	\arrow["{(\gamma \circ \beta) \circ \alpha}"', Rightarrow, from=1-1, to=3-1]
	\arrow["{A_{f',g',h'}}"', Rightarrow, from=3-1, to=3-3]
\end{tikzcd}\]
\item natural isomorphisms $\mathrm{R}_{A,B} \colon (- \circ_{A,A,B} -)(1_{\cC(A,B)} \times \id_A) \to \rho_{\cC(A,B)}$ and $\Lambda_{A,B} \colon (-\circ_{A,B,B} -)(\id_B \times 1_{\cC(A,B)}) \to \lambda_{\cC(A,B)}$, i.e. invertible 2-morphisms $R_f \colon f \circ \id_A \Rightarrow f$ and $\Lambda_f \colon \id_B \circ f \Rightarrow f$ such that for any 2-morphism $\alpha \colon f \to f'$ the following diagrams commute:
\[\begin{tikzcd}
	{f \circ \id_A} && f \\
	&& {} \\
	{f' \circ \id_A} && {f'}
	\arrow["{\mathrm{R}_{f}}", Rightarrow, from=1-1, to=1-3]
	\arrow["\alpha", Rightarrow, from=1-3, to=3-3]
	\arrow["{\alpha \circ \id_A}"', Rightarrow, from=1-1, to=3-1]
	\arrow["{\mathrm{R}_{f'}}"', Rightarrow, from=3-1, to=3-3]
\end{tikzcd}
\begin{tikzcd}
	{\id_B \circ f} && f \\
	&& {} \\
	{\id_B \circ f'} && {f'}
	\arrow["{\Lambda_f}", Rightarrow, from=1-1, to=1-3]
	\arrow["\alpha", Rightarrow, from=1-3, to=3-3]
	\arrow["{\id_B \circ \alpha}"', Rightarrow, from=1-1, to=3-1]
	\arrow["{\Lambda_{f'}}"', Rightarrow, from=3-1, to=3-3]
\end{tikzcd}\]

where the composition of functors $F,G$ is written as concatenation $GF$ and $\alpha, \rho, \lambda$ are the associator and unitors of the monoidal category $(\Cat,\times, \one)$ 
\end{itemize}
such that the following diagram commutes
\[\begin{tikzcd}
	{(g \circ \id_B) \circ f} && {g \circ (\id_B \circ f)} \\
	\\
	&& {g \circ f}
	\arrow["{A_{f,\id_B,g}}", Rightarrow, from=1-1, to=1-3]
	\arrow["{\id_g \circ \Lambda_f}", Rightarrow, from=1-3, to=3-3]
	\arrow["{\mathrm{R}_g \circ \id_f}"', Rightarrow, from=1-1, to=3-3]
\end{tikzcd}\]
\[\begin{tikzcd}
	{((i \circ h) \circ g) \circ f} && {(i \circ h) \circ (g \circ f)} \\
	&&&& {i \circ (h \circ (g \circ f))} \\
	{(i \circ (h \circ g)) \circ f} && {i \circ ((h \circ g) \circ f)}
	\arrow["{\mathrm{A}_{f,g,i\circ h}}", Rightarrow, from=1-1, to=1-3]
	\arrow["{\mathrm{A}_{g\circ f,h,i}}", Rightarrow, from=1-3, to=2-5]
	\arrow["{\mathrm{A}_{g,h,i} \circ \id_f}"', Rightarrow, no head, from=1-1, to=3-1]
	\arrow["{\mathrm{A}_{f,h\circ g,i}}"', Rightarrow, from=3-1, to=3-3]
	\arrow["{\id_i \circ\mathrm{A}_{f,g,h}}"', Rightarrow, from=3-3, to=2-5]
\end{tikzcd}\]
\end{definition}

\begin{example}
Any strict 2-category is a bicategory for which the natural isomorphisms $\mathrm{A}$, $\mathrm{R}$ and $\Lambda$ are all equalities.
Reciprocally, if all of those natural isomorphisms are equalities then the bicategory is a strict 2-category.
\end{example}

\begin{example}
Monoidal categories correspond to one-object bicategories through their delooping.
\end{example}

\begin{example}
$\Dist$ is a bicategory.
\end{example}

\begin{definition}
A \defin{(lax) functor} between bicategories $F \colon \cC \to \cD$ is the data of:
\begin{itemize}
\item an assignment for each object $A \in \Ob{\cC}$ of an object $F(A) \in \Ob{\cD}$
\item for each object $A,B$ a functor $F \colon \cC(A,B) \to \cD(F(A),F(B))$
\item for each object $A,B,C$ a natural transformation:
\[\begin{tikzcd}
	{\cC(B,C) \times \cC(A,B)} && {\cD(F(B),F(C)) \times \cD(F(A),F(B))} \\
	\\
	{\cC(A,C)} && {\cD(F(A), F(C))}
	\arrow[""{name=0, anchor=center, inner sep=0}, "{F \times F}", from=1-1, to=1-3]
	\arrow["{-\circ-}", from=1-3, to=3-3]
	\arrow["{-\circ-}"', from=1-1, to=3-1]
	\arrow[""{name=1, anchor=center, inner sep=0}, "F"', from=3-1, to=3-3]
	\arrow["{F_2}", shorten <=9pt, shorten >=9pt, Rightarrow, from=0, to=1]
\end{tikzcd}\]
i.e.
a family of 2-morphisms $F_2(f,g) \colon F(g) \circ F(f) \Rightarrow F(g \circ f)$ such that for any 2-morphisms $\alpha \colon f \Rightarrow f'$ and $\beta \colon g \Rightarrow g'$, the following square commutes:
\[\begin{tikzcd}
	{F(g)\circ F(f)} && {F(g \circ f)} \\
	\\
	{F(g') \circ F(f')} && {F(g' \circ f')}
	\arrow["{F_2(f,g)}", Rightarrow, from=1-1, to=1-3]
	\arrow["{F(\beta \circ \alpha)}", Rightarrow, from=1-3, to=3-3]
	\arrow["{F(\beta)\circ F(\alpha)}"', Rightarrow, from=1-1, to=3-1]
	\arrow["{F_2(f',g')}"', Rightarrow, from=3-1, to=3-3]
\end{tikzcd}\]
\item for any object A, a natural transformation:
\[\begin{tikzcd}
	\one && {\cD(F(A),F(A))} \\
	\\
	& {\cC(A,A)}
	\arrow["{\id_A}"', from=1-1, to=3-2]
	\arrow["F"', from=3-2, to=1-3]
	\arrow[""{name=0, anchor=center, inner sep=0}, "{\id_{F(A)}}", from=1-1, to=1-3]
	\arrow["{F_0}", shorten <=7pt, Rightarrow, from=0, to=3-2]
\end{tikzcd}\]
i.e. a 2-morphism $F_0 \colon \id_{F(A)} \Rightarrow F(\id_A)$ 
\end{itemize}  
such that the following diagrams commute:
\begin{itemize}
\item 
\[\begin{tikzcd}
	& {(F(h) \circ F(g)) \circ F(f)} \\
	{F(h) \circ (F(g) \circ F(f))} &&& {F(h \circ g) \circ F(f)} \\
	{F(h) \circ F(g \circ f)} &&& {F((h \circ g) \circ f)} \\
	& {F(h \circ (g \circ f))}
	\arrow["{F_2(g,h) \circ \id_{F(f)}}", Rightarrow, from=1-2, to=2-4]
	\arrow["{F_2(f,h \circ g)}", Rightarrow, from=2-4, to=3-4]
	\arrow["{\mathrm{A}_{F(f),F(g),F(h)}}"', Rightarrow, from=1-2, to=2-1]
	\arrow["{\id_{F(h)}\circ F_2(f,g)}"', Rightarrow, from=2-1, to=3-1]
	\arrow["{F_2(g\circ f,h)}"', Rightarrow, from=3-1, to=4-2]
	\arrow["{F(\mathrm{A}_{f,g,h})}", Rightarrow, from=3-4, to=4-2]
\end{tikzcd}\]
\item 
\[\begin{tikzcd}[column sep=small]
	{F(f) \circ \id_{F(A)}} && {F(f) \circ F(\id_A)} && {\id_{F(B)}\circ F(f)} && {F(\id_B) \circ F(f)} \\
	\\
	{F(f)} && {F(f \circ \id_A)} && {F(f)} && {F(\id_B \circ f)}
	\arrow["{F_2(\id_A,f)}", from=1-3, to=3-3]
	\arrow["{\id_{F(f)}\circ F_0}", from=1-1, to=1-3]
	\arrow["{\mathrm{R}_{F(f)}}"', from=1-1, to=3-1]
	\arrow["{F(R_f)}", from=3-3, to=3-1]
	\arrow["{F_0 \circ \id_{F(f)}}", from=1-5, to=1-7]
	\arrow["{F_2(f,\id_B)}", from=1-7, to=3-7]
	\arrow["{F(\Lambda_f)}", from=3-7, to=3-5]
	\arrow["{\Lambda_{F(f)}}"', from=1-5, to=3-5]
\end{tikzcd}\]
\end{itemize}
\end{definition}

\begin{definition}
A functor $F$ between bicategories is called \defin{a pseudofunctor} if the $F_2$ and $F_0$ are natural isomorphisms and \defin{a strict functor} if they are identities.
\end{definition}

\begin{definition}

A \defin{(lax) natural transformation} $\alpha \colon F \Rightarrow G$ between parallel functors $F,G \colon \cC \to \cD$ is given by a family of morphisms $\alpha_A \colon F(A) \to G(A)$ and of 2-morphisms
\[\begin{tikzcd}
	{F(A)} && {G(A)} \\
	\\
	{F(B)} && {G(B)}
	\arrow["{F(f)}"', from=1-1, to=3-1]
	\arrow[""{name=0, anchor=center, inner sep=0}, "{\alpha_B}"', from=3-1, to=3-3]
	\arrow[""{name=1, anchor=center, inner sep=0}, "{\alpha_A}", from=1-1, to=1-3]
	\arrow["{G(f)}", from=1-3, to=3-3]
	\arrow["{\alpha_f}"', shorten <=9pt, shorten >=9pt, Rightarrow, from=1, to=0]
\end{tikzcd}\]
such that the equalities holds:
\[\begin{tikzcd}[column sep = small]
	{F(A)} &&&& {G(A)} &&&& {F(A)} &&&& {G(A)} \\
	\\
	&&&&&& {=} \\
	\\
	{F(A)} &&&& {G(A)} &&&& {F(A)} &&&& {G(A)}
	\arrow["{F(\id_A)}"', from=1-1, to=5-1]
	\arrow[""{name=0, anchor=center, inner sep=0}, "{\alpha_A}"', from=5-1, to=5-5]
	\arrow[""{name=1, anchor=center, inner sep=0}, "{\alpha_A}", from=1-1, to=1-5]
	\arrow[""{name=2, anchor=center, inner sep=0}, "{G(\id_A)}"', from=1-5, to=5-5]
	\arrow[""{name=3, anchor=center, inner sep=0}, "{\id_{G(A)}}", curve={height=-30pt}, from=1-5, to=5-5]
	\arrow["{\alpha_A}", from=1-9, to=1-13]
	\arrow[""{name=4, anchor=center, inner sep=0}, "{F(\id_A)}"', curve={height=30pt}, from=1-9, to=5-9]
	\arrow["{\alpha_A}"', from=5-9, to=5-13]
	\arrow["{\id_{G(A)}}", from=1-13, to=5-13]
	\arrow[""{name=5, anchor=center, inner sep=0}, "{\id_{F(A)}}", from=1-9, to=5-9]
	\arrow[""{name=6, anchor=center, inner sep=0}, "{\alpha_A}"{description}, from=1-9, to=5-13]
	\arrow["{\alpha_{\id_A}}"', shorten <=17pt, shorten >=17pt, Rightarrow, from=1, to=0]
	\arrow["{G_0}", shorten <=6pt, shorten >=6pt, Rightarrow, from=3, to=2]
	\arrow["{F_0}", shorten <=6pt, shorten >=6pt, Rightarrow, from=5, to=4]
	\arrow["{\Lambda_{\alpha_A}}"{description}, shorten >=13pt, Rightarrow, from=1-13, to=6]
	\arrow["{\mathrm{R}_{\alpha_A}}"{description}, shorten <=13pt, Rightarrow, from=6, to=5-9]
\end{tikzcd}\]

\resizebox{\hsize}{!}{
\begin{tikzcd}[ampersand replacement=\&]
	{F(A)} \&\& {G(A)} \&\&\&\& {F(A)} \&\& {G(A)} \\
	\\
	\& {F(B)} \&\& {G(B)} \& {=} \&\&\&\&\& {G(B)} \\
	\\
	{F(C)} \&\& {G(C)} \&\&\&\& {F(C)} \&\& {G(C)}
	\arrow["{F(f)}"{description}, from=1-1, to=3-2]
	\arrow[""{name=0, anchor=center, inner sep=0}, "{\alpha_B}"{description}, from=3-2, to=3-4]
	\arrow[""{name=1, anchor=center, inner sep=0}, "{\alpha_A}", from=1-1, to=1-3]
	\arrow["{G(f)}", from=1-3, to=3-4]
	\arrow["{F(g)}"{description}, from=3-2, to=5-1]
	\arrow["{G(g)}", from=3-4, to=5-3]
	\arrow[""{name=2, anchor=center, inner sep=0}, "{\alpha_C}"', from=5-1, to=5-3]
	\arrow[""{name=3, anchor=center, inner sep=0}, "{F(g\circ f)}"', curve={height=30pt}, from=1-1, to=5-1]
	\arrow["{F(g\circ f)}"', curve={height=30pt}, from=1-7, to=5-7]
	\arrow[""{name=4, anchor=center, inner sep=0}, "{G(g\circ f)}"{description, pos=0.4}, curve={height=30pt}, from=1-9, to=5-9]
	\arrow["{G(f)}", from=1-9, to=3-10]
	\arrow["{G(g)}", from=3-10, to=5-9]
	\arrow[""{name=5, anchor=center, inner sep=0}, "{\alpha_A}", from=1-7, to=1-9]
	\arrow[""{name=6, anchor=center, inner sep=0}, "{\alpha_C}"', from=5-7, to=5-9]
	\arrow["{\alpha_{f}}"', shorten <=10pt, shorten >=10pt, Rightarrow, from=1, to=0]
	\arrow["{\alpha_g}"', shorten <=10pt, shorten >=10pt, Rightarrow, from=0, to=2]
	\arrow["{G_2}", shorten >=11pt, Rightarrow, from=3-10, to=4]
	\arrow["{\alpha_{g\circ f}}"{description}, curve={height=30pt}, shorten <=19pt, shorten >=19pt, Rightarrow, from=5, to=6]
	\arrow["{F_2}", shorten >=11pt, Rightarrow, from=3-2, to=3]
\end{tikzcd}}

\[\begin{tikzcd}[ampersand replacement=\&]
	{F(A)} \&\& {F(B)} \&\& {F(A)} \&\& {F(B)} \\
	\&\&\& {=} \\
	{G(A)} \&\& {G(B)} \&\& {G(A)} \&\& {G(B)}
	\arrow[""{name=0, anchor=center, inner sep=0}, "{F(f)}"{description}, curve={height=18pt}, from=1-1, to=1-3]
	\arrow[""{name=1, anchor=center, inner sep=0}, "{F(g)}"{description}, curve={height=-18pt}, from=1-1, to=1-3]
	\arrow[""{name=2, anchor=center, inner sep=0}, "{\alpha_A}"{description}, from=1-1, to=3-1]
	\arrow["{G(f)}"{description}, from=3-1, to=3-3]
	\arrow[""{name=3, anchor=center, inner sep=0}, "{\alpha_B}"{description}, from=1-3, to=3-3]
	\arrow[""{name=4, anchor=center, inner sep=0}, "{G(g)}"{description}, curve={height=-18pt}, from=3-5, to=3-7]
	\arrow["{F(g)}"{description}, from=1-5, to=1-7]
	\arrow[""{name=5, anchor=center, inner sep=0}, "{\alpha_B}"{description}, from=1-7, to=3-7]
	\arrow[""{name=6, anchor=center, inner sep=0}, "{\alpha_A}"{description}, from=1-5, to=3-5]
	\arrow[""{name=7, anchor=center, inner sep=0}, "{G(f)}"{description}, curve={height=18pt}, from=3-5, to=3-7]
	\arrow["{F(\theta)}"{description}, shorten <=5pt, shorten >=5pt, Rightarrow, from=0, to=1]
	\arrow["{\alpha_f}"{description}, shorten <=13pt, shorten >=13pt, Rightarrow, from=2, to=3]
	\arrow["{\alpha_g}"{description}, shorten <=13pt, shorten >=13pt, Rightarrow, from=6, to=5]
	\arrow["{G(\theta)}"{description}, shorten <=5pt, shorten >=5pt, Rightarrow, from=7, to=4]
\end{tikzcd}\]
\end{definition}

\begin{definition}
A \defin{pseudo-natural transformation} is a natural transformation where all the $\alpha_f$ are invertible.
A strict natural transformation is one where they are equalities.
\end{definition}

\begin{rk}
Although pseudo-natural transformations can be defined between arbitrary functors, strict natural transformations only make sense between strict functors.
\end{rk}

\section{Virtual double category}

\begin{definition}
A \defin{virtual double category}, or \defin{vdc}, \vdC is the data of:
\begin{itemize}
\item a category $\vdV(\vdC)$ called its vertical category, whose morphisms are called vertical morphisms of \vdC
\item for any objects $A_0, A_1 \in \Ob{\vdC} := \Ob{\vdV(\vdC)}$, a collection of horizontal morphisms $\vdH(\vdC)$
\item for any (possibly empty) finite chain of horizontal morphisms $p_i \colon A_{i-1} \to A_i$, any horizontal morphism $q \colon B_0 \to B_1$ and any vertical morphisms $f \colon A_0 \to B_0$ and $g \colon A_n \to B_1$, a collection of cells:
\[\begin{tikzcd}
	{A_0} && {A_1} & \dots & {A_{n-1}} && {A_n} \\
	\\
	{B_0} &&&&&& {B_1}
	\arrow["{p_1}"{description}, from=1-1, to=1-3]
	\arrow["f"{description}, from=1-1, to=3-1]
	\arrow["g"{description}, from=1-7, to=3-7]
	\arrow["{p_n}"{description}, from=1-5, to=1-7]
	\arrow[""{name=0, anchor=center, inner sep=0}, "q"{description}, from=3-1, to=3-7]
	\arrow["\alpha"{description}, shorten >=7pt, Rightarrow, from=1-4, to=0]
\end{tikzcd}\]
\item for any horizontal morphism, an identity cell:
\[\begin{tikzcd}
	{A_0} && {B_0} \\
	\\
	{A_0} && {B_0}
	\arrow[Rightarrow, no head, from=1-1, to=3-1]
	\arrow[Rightarrow, no head, from=1-3, to=3-3]
	\arrow[""{name=0, anchor=center, inner sep=0}, "p"{description}, from=1-1, to=1-3]
	\arrow[""{name=1, anchor=center, inner sep=0}, "p"{description}, from=3-1, to=3-3]
	\arrow["{\id_P}"{description}, shorten <=9pt, shorten >=9pt, Rightarrow, from=0, to=1]
\end{tikzcd}\]
\item for any composable cells, $\alpha_i$, $\beta$, i.e. of the following form:

\resizebox{\hsize}{!}{
\begin{tikzcd}[ampersand replacement=\&]
	{A_0} \&\& {A_1} \& \dots \& {A_{m_1-1}} \&\& {A_{m_1}} \& \dots \& {A_{m_{n-1}}} \&\& {A_{m_{n-1}+1}} \& \dots \& {A_{m_n-1}} \&\& {A_{m_n}} \\
	\&\&\&\&\&\&\& \dots \\
	{B_0} \&\&\&\&\&\& {B_1} \& \dots \& {B_{n-1}} \&\&\&\&\&\& {B_n} \\
	\\
	{C_0} \&\&\&\&\&\&\&\&\&\&\&\&\&\& {C_1}
	\arrow["{p_1}"{description}, from=1-1, to=1-3]
	\arrow["{p_{m_1}}"{description}, from=1-5, to=1-7]
	\arrow["{p_{m_{n-1}+1}}"{description}, from=1-9, to=1-11]
	\arrow["{p_{m_n}}"{description}, from=1-13, to=1-15]
	\arrow[""{name=0, anchor=center, inner sep=0}, "{q_1}"{description}, from=3-1, to=3-7]
	\arrow[""{name=1, anchor=center, inner sep=0}, "{q_n}"{description}, from=3-9, to=3-15]
	\arrow["{f_0}"{description}, from=1-1, to=3-1]
	\arrow["{f_1}"{description}, from=1-7, to=3-7]
	\arrow["{f_{n_1}}"{description}, from=1-9, to=3-9]
	\arrow["{f_n}"{description}, from=1-15, to=3-15]
	\arrow[""{name=2, anchor=center, inner sep=0}, "r"{description}, from=5-1, to=5-15]
	\arrow["{g_0}"{description}, from=3-1, to=5-1]
	\arrow["{g_n}"{description}, from=3-15, to=5-15]
	\arrow["{\alpha_1}"{description}, shorten >=7pt, Rightarrow, from=1-4, to=0]
	\arrow["{\alpha_n}"{description}, shorten >=7pt, Rightarrow, from=1-12, to=1]
	\arrow["\beta"{description}, shorten >=8pt, Rightarrow, from=3-8, to=2]
\end{tikzcd}
}

a cell $\beta(\alpha_1,\dots,\alpha_n)$:

\resizebox{0.9\hsize}{!}{
\begin{tikzcd}[ampersand replacement=\&]
	{A_0} \&\& {A_1} \& \dots \& {A_{m_1-1}} \&\& {A_{m_1}} \& \dots \& {A_{m_{n-1}}} \&\& {A_{m_{n-1}+1}} \& \dots \& {A_{m_n-1}} \&\& {A_{m_n}} \\
	\\
	{B_0} \&\&\&\&\&\&\&\&\&\&\&\&\&\& {B_n} \\
	\\
	{C_0} \&\&\&\&\&\&\&\&\&\&\&\&\&\& {C_1}
	\arrow["{p_1}"{description}, from=1-1, to=1-3]
	\arrow["{p_{m_1}}"{description}, from=1-5, to=1-7]
	\arrow["{p_{m_{n-1}+1}}"{description}, from=1-9, to=1-11]
	\arrow["{p_{m_n}}"{description}, from=1-13, to=1-15]
	\arrow["{f_0}"{description}, from=1-1, to=3-1]
	\arrow["{f_n}"{description}, from=1-15, to=3-15]
	\arrow[""{name=0, anchor=center, inner sep=0}, "r"{description}, from=5-1, to=5-15]
	\arrow["{g_0}"{description}, from=3-1, to=5-1]
	\arrow["{g_n}"{description}, from=3-15, to=5-15]
	\arrow["{\beta(\alpha_1,\dots,\alpha_n)}"{description}, shorten >=16pt, Rightarrow, from=1-8, to=0]
\end{tikzcd}
}
\end{itemize}
such that
\begin{itemize}
\item $\alpha(\id_{p_1},\dots,\id_{p_n}) = \alpha = \id_q(\alpha)$
\item $\gamma(\beta_1,\dots,\beta_n)(\alpha_{1,1},\dots, \alpha_{n,m_n}) = \gamma(\beta_1(\alpha_{1,1},\dots,\alpha_{1,m_1}),\dots,\beta_n(\alpha_{n,1},\dots,\alpha_{n,m_n})$
\end{itemize}
\end{definition}

\begin{example}
From any bicategory \cC we can define a vdc with:
\begin{itemize}
\item vertical category the discrete category with objects those of \cC
\item horizontal morphisms the 1-morphisms of \cC
\item cells $\alpha \colon p_1,\dots,p_n \Rightarrow q$, the 2-morphisms $\alpha \colon p_n \circ \dots \circ p_1 \Rightarrow q$ in \cC 
\end{itemize}

Not every vdc with a discrete vertical category defines a bicategory though.
This is because in a vdc, horizontal morphisms do not generally compose.
When it is the case, we call it a (pseudo) double category.
\end{example}

\begin{example}
From any multicategory \cM we can define a vdc called its delooping $\cB(\cM)$ with:
\begin{itemize}
\item vertical category the terminal category $\one$
\item horizontal morphisms, the objects of \cM
\item cells, the multimap in \cM
\end{itemize}
This vdc is a double category iff \cM is representable.
\end{example}

In particular, a monoidal category is a vdc either by considering the underlying vdc of its delooping bicategory or by considering the delooping vdc of its underlying multicategory.
Both definitions coincide.

\begin{example}
There is a vdc $\Dist$ with:
\begin{itemize}
\item vertical category $\Cat$
\item horizontal morphisms, distributors
\item cells, natural transformations
\end{itemize}
\end{example}

\begin{example}
The terminal vdc $\one$ has:
\begin{itemize}
\item for vertical category the terminal category $\one$
\item one horizontal arrow $\ast \to \ast$
\item for each arity $n \geq 0$ one cell $\underline{n}$ from n copies of the arrow to it:

\begin{tikzcd}[ampersand replacement=\&]
	\ast \&\& \ast \& \dots \& \ast \&\& \ast \\
	\\
	\ast \&\&\&\&\&\& \ast
	\arrow[from=1-1, to=1-3]
	\arrow[Rightarrow, no head, from=1-1, to=3-1]
	\arrow[Rightarrow, no head, from=1-7, to=3-7]
	\arrow[from=1-5, to=1-7]
	\arrow[""{name=0, anchor=center, inner sep=0}, from=3-1, to=3-7]
	\arrow["{\underline{n}}"{description}, Rightarrow, from=1-4, to=0]
\end{tikzcd}
\end{itemize}
\end{example}

\begin{example}
There is a vdc $\Rel$ with:
\begin{itemize}
\item vertical category $\Set$
\item horizontal morphisms relations
\item a unique cell
\[\begin{tikzcd}[ampersand replacement=\&]
	{A_0} \&\& {A_1} \& \dots \& {A_{n-1}} \&\& {A_n} \\
	\\
	{B_0} \&\&\&\&\&\& {B_1}
	\arrow["{r_1}"{description}, from=1-1, to=1-3]
	\arrow["{r_n}"{description}, from=1-5, to=1-7]
	\arrow[""{name=0, anchor=center, inner sep=0}, "s"{description}, from=3-1, to=3-7]
	\arrow["f"{description}, from=1-1, to=3-1]
	\arrow["g"{description}, from=1-7, to=3-7]
	\arrow[shorten >=7pt, Rightarrow, from=1-4, to=0]
\end{tikzcd}\]
iff for any $(a_0, a_n)\in A_0 \times A_n$ if there exists $a_i$ for $0 < i < n$ with $r_i(a_{i-1},a_i)$ then $s(f(a_0),g(a_n))$.
\end{itemize}
\end{example}

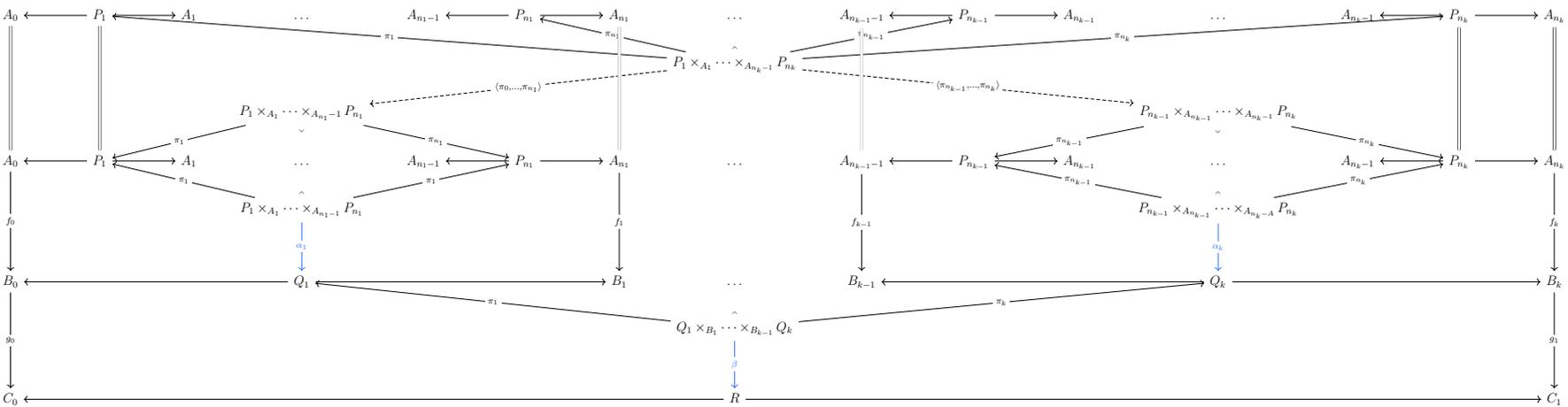
\begin{figure}
\rotatebox{90}{
\resizebox{\vsize}{!}{
\begin{tikzcd}[ampersand replacement=\&]
	{A_0} \&\& {P_1} \&\& {A_1} \& \dots \& {A_{n_1-1}} \&\& {P_{n_1}} \&\& {A_{n_1}} \& \dots \& {A_{n_{k-1}-1}} \&\& {P_{n_{k-1}}} \&\& {A_{n_{k-1}}} \& \dots \& {A_{n_k-1}} \&\& {P_{n_k}} \&\& {A_{n_k}} \\
	\&\&\&\&\&\&\&\&\&\&\& {P_1 \times_{A_1}\dots\times_{A_{n_k-1}}P_{n_k}} \\
	\&\&\&\&\& {P_1\times_{A_1}\dots\times_{A_{n_1}-1}P_{n_1}} \&\&\&\&\&\&\&\&\&\&\&\& {P_{n_{k-1}}\times_{A_{n_{k-1}}}\dots\times_{A_{n_k-1}}P_{n_k}} \\
	{A_0} \&\& {P_1} \&\& {A_1} \& \dots \& {A_{n_1-1}} \&\& {P_{n_1}} \&\& {A_{n_1}} \& \dots \& {A_{n_{k-1}-1}} \&\& {P_{n_{k-1}}} \&\& {A_{n_{k-1}}} \& \dots \& {A_{n_k-1}} \&\& {P_{n_k}} \&\& {A_{n_k}} \\
	\&\&\&\&\& {P_1 \times_{A_1} \dots \times_{A_{n_1-1}}P_{n_1}} \&\&\&\&\&\&\&\&\&\&\&\& {P_{n_{k-1}}\times_{A_{n_{k-1}}}\dots\times_{A_{n_k-A}}P_{n_k}} \\
	\\
	{B_0} \&\&\&\&\& {Q_1} \&\&\&\&\& {B_1} \& \dots \& {B_{k-1}} \&\&\&\&\& {Q_k} \&\&\&\&\& {B_k} \\
	\&\&\&\&\&\&\&\&\&\&\& {Q_1\times_{B_1}\dots\times_{B_{k-1}}Q_k} \\
	\\
	{C_0} \&\&\&\&\&\&\&\&\&\&\& R \&\&\&\&\&\&\&\&\&\&\& {C_1}
	\arrow[from=1-3, to=1-1]
	\arrow[from=1-3, to=1-5]
	\arrow[from=1-9, to=1-7]
	\arrow[from=1-9, to=1-11]
	\arrow[from=1-15, to=1-13]
	\arrow[from=1-15, to=1-17]
	\arrow[from=1-21, to=1-19]
	\arrow[from=1-21, to=1-23]
	\arrow[Rightarrow, no head, from=1-1, to=4-1]
	\arrow[Rightarrow, no head, from=1-23, to=4-23]
	\arrow["{\pi_1}"{description}, from=2-12, to=1-3]
	\arrow["{\pi_{n_1}}"{description}, from=2-12, to=1-9]
	\arrow["{\pi_{n_{k-1}}}"{description}, from=2-12, to=1-15]
	\arrow["{\pi_{n_k}}"{description}, from=2-12, to=1-21]
	\arrow["\lrcorner"{anchor=center, pos=0.125, rotate=135}, draw=none, from=2-12, to=1-12]
	\arrow[from=4-3, to=4-1]
	\arrow[from=4-3, to=4-5]
	\arrow[from=4-9, to=4-7]
	\arrow[from=4-9, to=4-11]
	\arrow[from=4-15, to=4-13]
	\arrow[from=4-15, to=4-17]
	\arrow[from=4-21, to=4-19]
	\arrow[from=4-21, to=4-23]
	\arrow["{\pi_1}"{description}, from=3-6, to=4-3]
	\arrow["{\pi_{n_1}}"{description}, from=3-6, to=4-9]
	\arrow["\lrcorner"{anchor=center, pos=0.125, rotate=-45}, draw=none, from=3-6, to=4-6]
	\arrow["{\pi_{n_{k-1}}}"{description}, from=3-18, to=4-15]
	\arrow["{\pi_{n_k}}"{description}, from=3-18, to=4-21]
	\arrow["\lrcorner"{anchor=center, pos=0.125, rotate=-45}, draw=none, from=3-18, to=4-18]
	\arrow["{\langle \pi_0,\dots,\pi_{n_1}\rangle}"{description}, dashed, from=2-12, to=3-6]
	\arrow["{\langle\pi_{n_{k-1}},\dots,\pi_{n_k}\rangle}"{description}, dashed, from=2-12, to=3-18]
	\arrow["{\pi_1}"{description}, from=5-6, to=4-3]
	\arrow["{\pi_1}"{description}, from=5-6, to=4-9]
	\arrow["{\pi_{n_{k-1}}}"{description}, from=5-18, to=4-15]
	\arrow["{\pi_{n_k}}"{description}, from=5-18, to=4-21]
	\arrow[from=7-6, to=7-1]
	\arrow[from=7-6, to=7-11]
	\arrow[from=7-18, to=7-13]
	\arrow[from=7-18, to=7-23]
	\arrow["{f_0}"{description}, from=4-1, to=7-1]
	\arrow["{\alpha_1}"{description}, color={rgb,255:red,51;green,119;blue,255}, from=5-6, to=7-6]
	\arrow["\lrcorner"{anchor=center, pos=0.125, rotate=135}, draw=none, from=5-6, to=4-6]
	\arrow["{\alpha_k}"{description}, color={rgb,255:red,51;green,119;blue,255}, from=5-18, to=7-18]
	\arrow["\lrcorner"{anchor=center, pos=0.125, rotate=135}, draw=none, from=5-18, to=4-18]
	\arrow["{f_k}"{description}, from=4-23, to=7-23]
	\arrow["{\pi_1}"{description}, from=8-12, to=7-6]
	\arrow["{\pi_k}"{description}, from=8-12, to=7-18]
	\arrow["\lrcorner"{anchor=center, pos=0.125, rotate=135}, draw=none, from=8-12, to=7-12]
	\arrow[from=10-12, to=10-1]
	\arrow[from=10-12, to=10-23]
	\arrow["{g_0}"{description}, from=7-1, to=10-1]
	\arrow["{g_1}"{description}, from=7-23, to=10-23]
	\arrow["\beta"{description}, color={rgb,255:red,51;green,119;blue,255}, from=8-12, to=10-12]
	\arrow[Rightarrow, no head, from=1-3, to=4-3]
	\arrow[color={rgb,255:red,179;green,179;blue,179}, Rightarrow, no head, from=1-11, to=4-11]
	\arrow[color={rgb,255:red,179;green,179;blue,179}, Rightarrow, no head, from=1-13, to=4-13]
	\arrow[Rightarrow, no head, from=1-21, to=4-21]
	\arrow["{f_1}"{description}, from=4-11, to=7-11]
	\arrow["{f_{k-1}}"{description}, from=4-13, to=7-13]
\end{tikzcd}
}
}
\caption{Composition in the vdc $\mathbf{Span}$}
\label{fig:span_comp}
\end{figure}

\begin{example}
For any category with pullbacks \cC, we denote by $\Span(\cC)$ the virtual category with:
\begin{itemize}
\item vertical category \cC
\item horizontal morphisms $P \colon A \to B$ are spans 
\begin{tikzcd}[ampersand replacement=\&]
	A \&\& P \&\& B
	\arrow["{p_A}"{description}, from=1-3, to=1-1]
	\arrow["{p_B}"{description}, from=1-3, to=1-5]
\end{tikzcd}
\item cells are morphisms from the pullback of the $P_i$ to $Q$ making the following diagram commute:
\scalebox{0.7}{
\begin{tikzcd}[ampersand replacement=\&]
	{A_0} \&\& {P_1} \&\& {A_1} \& \dots \& {A_{n-1}} \&\& {P_n} \&\& {A_n} \\
	\\
	\&\&\&\&\& {P_1 \times_{A_1} ... \times_{A_{n-1}} P_n} \\
	\\
	{B_0} \&\&\&\&\& Q \&\&\&\&\& {B_1}
	\arrow[from=1-3, to=1-1]
	\arrow[from=1-3, to=1-5]
	\arrow[from=1-9, to=1-7]
	\arrow[from=1-9, to=1-11]
	\arrow[from=3-6, to=1-3]
	\arrow[from=3-6, to=1-9]
	\arrow["\lrcorner"{anchor=center, pos=0.125, rotate=135}, draw=none, from=3-6, to=1-6]
	\arrow[from=5-6, to=5-1]
	\arrow[from=5-6, to=5-11]
	\arrow["{f_0}"{description}, from=1-1, to=5-1]
	\arrow["{f_n}"{description}, from=1-11, to=5-11]
	\arrow["\alpha"{description}, from=3-6, to=5-6]
\end{tikzcd}
}
\item the identity cell is given by the identity cell in \cC:
\[
\begin{tikzcd}[ampersand replacement=\&]
	{A_0} \&\& P \&\& {A_1} \\
	\\
	{A_0} \&\& P \&\& {A_1}
	\arrow[from=1-3, to=1-1]
	\arrow[from=1-3, to=1-5]
	\arrow[Rightarrow, no head, from=1-1, to=3-1]
	\arrow[Rightarrow, no head, from=1-3, to=3-3]
	\arrow[Rightarrow, no head, from=1-5, to=3-5]
	\arrow[from=3-3, to=3-1]
	\arrow[from=3-3, to=3-5]
\end{tikzcd}
\]

\item the composition of $(f_{i-1},\alpha_i,f_i)$ and $(g_0,\beta,g_1)$ is given by \[(g_0 \circ f_0, \beta \circ \langle \alpha_i \circ \langle \pi_{j}\rangle_{n_i \leq j \leq n_{i+1}}\rangle_{1\leq i \leq k-1},g_1\circ f_k)\] where $\pi_i$ is the projection of the pullback to $A_i$ and $\langle f_i \rangle$ is the unique factorisation corresponding to the universal property of the pullback, as represented in figure \ref{fig:span_comp} where the dotted lines corresponds to the morphisms obtained by the universal property of the pullback and the blue ones corresponds to the cells being composed.
\end{itemize}

This is a virtual double category.
One can check that \[\beta \circ \langle \id_{A_i} \langle \pi_j \rangle_{i \leq j \leq i} \rangle_{1 \leq i \leq k} = \beta \circ \langle \pi_i \rangle_{1 \leq i \leq k} = \beta \circ \id_{P_1 \times_{A_1} \dots \times_{A_k} P_k} = \beta \]
and \[\id_Q \circ \langle \beta \circ \langle \pi_j \rangle_{1 \leq j \leq k} \rangle_{1 \leq i \leq 1} = \id_Q \circ \beta = \beta\]
The associativity follows from the property of pullbacks.
\end{example}

\begin{example}
There is a vdc \Ring{} whose:
\begin{itemize}
\item objects are rings
\item vertical morphisms are ring homomorphisms
\item horizontal morphisms are bimodules
\item cells are balanced multimorphisms with the bimodule structure on the target obtained through restriction along the vertical morphisms
\[\begin{tikzcd}[ampersand replacement=\&]
	{A_0} \&\& {A_1} \& \dots \& {A_{n-1}} \&\& {A_n} \\
	\\
	{B_0} \&\&\&\&\&\& {B_1}
	\arrow["{M_1}"{description}, from=1-1, to=1-3]
	\arrow["{M_n}"{description}, from=1-5, to=1-7]
	\arrow[""{name=0, anchor=center, inner sep=0}, "N"{description}, from=3-1, to=3-7]
	\arrow["{f_0}"{description}, from=1-1, to=3-1]
	\arrow["{f_1}"{description}, from=1-7, to=3-7]
	\arrow["\alpha"{description}, shorten >=7pt, Rightarrow, from=1-4, to=0]
\end{tikzcd}\]
\end{itemize}
Let look at this example in more detail.
A ring $(A,+,\times,0,1)$ is an abelian group $(A,+,0)$ together with a monoid structure $(A,\times,1)$ compatible with the abelian structure, i.e. $a \times (b+c) = (a \times b) + (a \times c)$.
A ring homomorphism is a group homomorphism that is also a monoid homomorphism.
An $(A,B)$-bimodule $(M,+,0,\ls_A\cdot, \cdot_B)$ is an abelian group $(M,+,0)$ equipped with a left action by $A$ and a right action by $B$, i.e. $a \ls_A\cdot (m + m') = a \ls_A\cdot m + a \ls_A\cdot m'$, $(a + a') \ls_A\cdot m = a \ls_A\cdot m + a'\ls_A\cdot m$, $((a_1 \times a_2) \ls_A\cdot m = a_1 \ls_A\cdot (a_2 \ls_A\cdot m)$, $1 \ls_A\cdot m = m$ and similarly for $\cdot_B$.
A balanced multimorphism $\alpha \colon M_1, \dots, M_n \Rightarrow N$ from $(A_{i-1},A_i)$-bimodules to a $(A_0,A_n)$-bimodule is a multivariable function $\alpha \colon M_1 \times \dots \times M_n \to N$ which is additive in each variable and balanced:
\begin{itemize}
\item $\alpha(m_1,\dots, m_i + m_i', \dots m_n) = \alpha(m_1,\dots,m_i,\dots,m_n) + \alpha(m_1,\dots,m_i',\dots,m_n)$
\item $\alpha(m_1, \dots, m_{i-1} \cdot_{A_i} a_i, m_i, \dots, m_n) = \alpha(m_1, \dots, m_{i-1}, a_i \ls_{A_i}\cdot m_i, m_n)$
\item $a_0 \ls_{A_0}\cdot \alpha(m_1,\dots,m_n) = \alpha(a_0 \ls_{A_0}\cdot m_1,\dots, m_n)$
\item $\alpha(m_1,\dots,m_n \cdot_{A_n} a_n) = \alpha(m_1,\dots,m_n) \cdot_{A_n} a_n$
\end{itemize}
Given a $(B_0,B_1)$-bimodule $N$ and ring homomorphisms $f_i \colon A_i \to B_i$ one can define a $(A_0,A_1)$-bimodule structure on $N$ by $a_0 \ls_{A_0}\cdot m := f_0(a_0) \ls_{B_0}\cdot m$ and similarly for the right action.
One can check that it defines an action using the fact that $f_0$ is a homomorphism and $\ls_{B_0}\cdot$ an action.
\end{example}

\begin{example}
There is a vdc $\mathbf{Mon}$ whose
\begin{itemize}
\item objects are monoids
\item vertical morphisms are monoid homomorphisms
\item horizon morphisms are bimodules of monoids, i.e. sets equipped with a left action by one monoid and a right action by the other
\item cells are balanced multimorphims of bimodules, i.e. multivariable functions such that the image of the left action of a monoid and of its right action are equated (see the balanced multimorphisms of bimodules in \Ring)
\end{itemize}
\end{example}

\begin{definition}
The examples above can be extended to monoids internal to any monoidal category \cV. 
\end{definition}

As mentioned above, double categories are virtual double categories in which we can also compose horizontal morphisms (and where we have an horizontal identity).
An explicit description of a double category can be given.
Then, it is possible to characterise the vdcs that are the underlying vdc of a double category universally, in a similar way that we can characterise monoidal categories in multicategories.
Due to lack of space, we will not recall the definition of a double category but instead directly give their characterisation in vdc.
In \cite{CruttwellShulman2009} the universal cells that characterise composition are called opcartesian.
We will prefer the term universal.
First because we want to stay coherent with the rest of this manuscript.
Also because we want to reserve this term for cartesian cells relatively to a functor of vdc in order to define pushfibrations of vdcs.
We will see that universal cells are a special case of opcartesian one (relatively to the functor into the terminal vdc).

\begin{definition}
In a vdc \vdC, a cell

\begin{tikzcd}[ampersand replacement=\&]
	{A_0} \&\& {A_1} \& \dots \& {A_{n-1}} \&\& {A_n} \\
	\\
	{A_0} \&\&\&\&\&\& {A_n}
	\arrow["{p_1}"{description}, from=1-1, to=1-3]
	\arrow[Rightarrow, no head, from=1-1, to=3-1]
	\arrow[Rightarrow, no head, from=1-7, to=3-7]
	\arrow["{p_n}"{description}, from=1-5, to=1-7]
	\arrow[""{name=0, anchor=center, inner sep=0}, "q"{description}, from=3-1, to=3-7]
	\arrow["\alpha"{description}, shorten >=7pt, Rightarrow, from=1-4, to=0]
\end{tikzcd}
is \defin{universal} if for any cell

\resizebox{\hsize}{!}{
\begin{tikzcd}[ampersand replacement=\&]
	{C_0} \&\& {C_2} \& \dots \& {C_{m-1}} \&\& {A_0} \&\& {A_1} \& \dots \& {A_{n-1}} \&\& {A_n} \&\& {D_1} \& \dots \& {D_{k-1}} \&\& {D_k} \\
	\\
	{B_0} \&\&\&\&\&\&\&\&\&\&\&\&\&\&\&\&\&\& {B_1}
	\arrow["{p_1}"{description}, from=1-7, to=1-9]
	\arrow["{p_n}"{description}, from=1-11, to=1-13]
	\arrow[""{name=0, anchor=center, inner sep=0}, "q"{description}, from=3-1, to=3-19]
	\arrow["{r_m}"{description}, from=1-5, to=1-7]
	\arrow["{r_1}"{description}, from=1-1, to=1-3]
	\arrow["{s_1}"{description}, from=1-13, to=1-15]
	\arrow["{s_k}"{description}, from=1-17, to=1-19]
	\arrow["g"{description}, from=1-19, to=3-19]
	\arrow["f"{description}, from=1-1, to=3-1]
	\arrow["\beta"{description}, shorten >=7pt, Rightarrow, from=1-10, to=0]
\end{tikzcd}}
there is a unique factorisation:

\resizebox{\hsize}{!}{
\begin{tikzcd}[ampersand replacement=\&]
	{C_0} \&\& {C_1} \& \dots \& {C_{m-1}} \&\& {A_0} \&\& {A_1} \& \dots \& {A_{n-1}} \&\& {A_n} \&\& {D_1} \& \dots \& {D_{k-1}} \&\& {D_k} \\
	\\
	{C_0} \&\& {C_1} \& \dots \& {C_{m-1}} \&\& {A_0} \&\&\&\&\&\& {A_n} \&\& {D_1} \& \dots \& {D_{k-1}} \&\& {D_k} \\
	\\
	{B_0} \&\&\&\&\&\&\&\&\&\&\&\&\&\&\&\&\&\& {B_1}
	\arrow["{p_1}"{description}, from=1-7, to=1-9]
	\arrow["{p_n}"{description}, from=1-11, to=1-13]
	\arrow[""{name=0, anchor=center, inner sep=0}, "q"{description}, from=5-1, to=5-19]
	\arrow["{r_m}"{description}, from=1-5, to=1-7]
	\arrow["{r_1}"{description}, from=1-1, to=1-3]
	\arrow["{s_1}"{description}, from=1-13, to=1-15]
	\arrow["{s_k}"{description}, from=1-17, to=1-19]
	\arrow[Rightarrow, no head, from=1-7, to=3-7]
	\arrow[Rightarrow, no head, from=1-13, to=3-13]
	\arrow[Rightarrow, no head, from=1-19, to=3-19]
	\arrow[Rightarrow, no head, from=1-1, to=3-1]
	\arrow["f"{description}, from=3-1, to=5-1]
	\arrow["g"{description}, from=3-19, to=5-19]
	\arrow["{r_1}"{description}, from=3-1, to=3-3]
	\arrow[Rightarrow, no head, from=1-5, to=3-5]
	\arrow[Rightarrow, no head, from=1-3, to=3-3]
	\arrow["{r_m}"{description}, from=3-5, to=3-7]
	\arrow[""{name=1, anchor=center, inner sep=0}, "q"{description}, from=3-7, to=3-13]
	\arrow["{s_1}"{description}, from=3-13, to=3-15]
	\arrow["{s_K}"{description}, from=3-17, to=3-19]
	\arrow[Rightarrow, no head, from=1-17, to=3-17]
	\arrow[Rightarrow, no head, from=1-15, to=3-15]
	\arrow["\alpha"{description}, shorten >=7pt, Rightarrow, from=1-10, to=1]
	\arrow["{\beta/\alpha}"{description}, shorten <=9pt, shorten >=9pt, Rightarrow, from=1, to=0]
\end{tikzcd}
}
We call $q$ the composite of $p_1,\dots,p_n$ and we write it $p_n\bullet\dots\bullet p_n$.
\end{definition}

\begin{prop}
The composite of horizontal cells is unique up to unique invertible cell.
\end{prop}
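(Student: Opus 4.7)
The plan is to imitate the standard uniqueness-of-universal-objects argument already used for tensor and par products in the section on weak two-tensor polycategories. Suppose $(q,\alpha)$ and $(q',\alpha')$ are two composites of the chain $p_1,\dots,p_n$, that is, both $\alpha \colon p_1,\dots,p_n \Rightarrow q$ and $\alpha' \colon p_1,\dots,p_n \Rightarrow q'$ are universal cells (with identity vertical morphisms on the sides). I need to produce a unique invertible cell $q \Rightarrow q'$ (of the appropriate type, i.e.\ with identities on the verticals) that mediates between $\alpha$ and $\alpha'$.

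First, I would apply the universal property of $\alpha$ to the cell $\alpha'$, obtaining a unique cell $\phi := \alpha'/\alpha \colon q \Rightarrow q'$ such that $\phi(\alpha) = \alpha'$. Symmetrically, the universal property of $\alpha'$ applied to $\alpha$ yields a unique cell $\psi := \alpha/\alpha' \colon q' \Rightarrow q$ with $\psi(\alpha') = \alpha$. These are the candidate inverse pair.

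Next I would verify invertibility by using the uniqueness clause of the universal property. We compute
\[
(\psi \circ \phi)(\alpha) \;=\; \psi(\phi(\alpha)) \;=\; \psi(\alpha') \;=\; \alpha \;=\; \mathrm{id}_q(\alpha),
\]
and since factorisations through the universal cell $\alpha$ are unique, this forces $\psi \circ \phi = \mathrm{id}_q$. The symmetric computation, using uniqueness of factorisation through $\alpha'$, gives $\phi \circ \psi = \mathrm{id}_{q'}$. Hence $\phi$ is invertible with inverse $\psi$.

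Finally, for uniqueness of the mediating isomorphism: any invertible cell $\phi' \colon q \Rightarrow q'$ satisfying $\phi'(\alpha) = \alpha'$ must coincide with $\phi$ by the uniqueness part of the universal property of $\alpha$. The only mild subtlety, and the one place one must be careful, is to check that the factorisations produced by the universal property really do have identities on the vertical sides (so that $\phi$ and $\psi$ are composable in the vertical-identity strip where composites live); this is automatic because both $\alpha$ and $\alpha'$ have identities on the sides, and the factorisation of a cell with identity verticals through a universal cell with identity verticals again has identity verticals. No deeper obstacle is anticipated — the argument is a direct transcription of the standard ``universal objects are unique up to unique iso'' pattern into the language of virtual double categories.
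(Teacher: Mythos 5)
Your proof is correct and follows essentially the same route as the paper's: factor each universal cell through the other, then use uniqueness of factorisation to show the two mediating cells are mutually inverse. The extra remarks on uniqueness of the mediating cell and on the vertical boundaries being identities are fine and consistent with what the paper does implicitly.
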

\begin{proof}
Take two universal cells:

\resizebox{\hsize}{!}{
\begin{tikzcd}[ampersand replacement=\&]
	{A_0} \&\& {A_1} \& \dots \& {A_{n-1}} \&\& {A_n} \\
	\\
	{A_0} \&\&\&\&\&\& {A_n}
	\arrow["{p_1}"{description}, from=1-1, to=1-3]
	\arrow[Rightarrow, no head, from=1-1, to=3-1]
	\arrow[Rightarrow, no head, from=1-7, to=3-7]
	\arrow[""{name=0, anchor=center, inner sep=0}, "q"{description}, from=3-1, to=3-7]
	\arrow["{p_n}"{description}, from=1-5, to=1-7]
	\arrow["\alpha"{description}, shorten >=7pt, Rightarrow, from=1-4, to=0]
\end{tikzcd}
}
and
\resizebox{1\hsize}{!}{
\begin{tikzcd}[ampersand replacement=\&]
	{A_0} \&\& {A_1} \& \dots \& {A_{n-1}} \&\& {A_n} \\
	\\
	{A_0} \&\&\&\&\&\& {A_n}
	\arrow["{p_1}"{description}, from=1-1, to=1-3]
	\arrow[Rightarrow, no head, from=1-1, to=3-1]
	\arrow[Rightarrow, no head, from=1-7, to=3-7]
	\arrow[""{name=0, anchor=center, inner sep=0}, "{q'}"{description}, from=3-1, to=3-7]
	\arrow["{p_n}"{description}, from=1-5, to=1-7]
	\arrow["{\alpha'}"{description}, shorten >=7pt, Rightarrow, from=1-4, to=0]
\end{tikzcd}
}

Since $\alpha'$ is universal we can factorise $\alpha$ through it:

\resizebox{\hsize}{!}{
\begin{tikzcd}[ampersand replacement=\&]
	{A_0} \&\& {A_1} \& \dots \& {A_{n-1}} \&\& {A_n} \&\& {A_0} \&\& {A_1} \& \dots \& {A_{n-1}} \&\& {A_n} \\
	\\
	\&\&\&\&\&\&\& {=} \& {A_0} \&\&\&\&\&\& {A_n} \\
	\\
	{A_0} \&\&\&\&\&\& {A_n} \&\& {A_0} \&\&\&\&\&\& {A_n}
	\arrow["{p_1}"{description}, from=1-9, to=1-11]
	\arrow[Rightarrow, no head, from=1-9, to=3-9]
	\arrow[Rightarrow, no head, from=1-15, to=3-15]
	\arrow[""{name=0, anchor=center, inner sep=0}, "{q'}"{description}, from=3-9, to=3-15]
	\arrow["{p_n}"{description}, from=1-13, to=1-15]
	\arrow[Rightarrow, no head, from=3-9, to=5-9]
	\arrow[Rightarrow, no head, from=3-15, to=5-15]
	\arrow[""{name=1, anchor=center, inner sep=0}, "q"{description}, from=5-9, to=5-15]
	\arrow["{p_1}"{description}, from=1-1, to=1-3]
	\arrow["{p_n}"{description}, from=1-5, to=1-7]
	\arrow[Rightarrow, no head, from=1-1, to=5-1]
	\arrow[Rightarrow, no head, from=1-7, to=5-7]
	\arrow[""{name=2, anchor=center, inner sep=0}, "q"{description}, from=5-1, to=5-7]
	\arrow["{\alpha'}"{description}, shorten >=7pt, Rightarrow, from=1-12, to=0]
	\arrow["{\alpha/\alpha'}"{description}, shorten <=9pt, shorten >=9pt, Rightarrow, from=0, to=1]
	\arrow["\alpha"{description}, shorten >=16pt, Rightarrow, from=1-4, to=2]
\end{tikzcd}
}

and similarly, $\alpha'$ can be factorised through $\alpha$.

But then, we have:

\resizebox{\hsize}{!}{
\begin{tikzcd}[ampersand replacement=\&]
	{A_0} \&\& {A_1} \& \dots \& {A_{n-1}} \&\& {A_n} \&\& {A_0} \&\& {A_1} \& \dots \& {A_{n-1}} \&\& {A_n} \\
	\\
	{A_0} \&\&\&\&\&\& {A_n} \\
	\&\&\&\&\&\&\& {=} \\
	\\
	\\
	{A_0} \&\&\&\&\&\& {A_n} \&\& {A_0} \&\&\&\&\&\& {A_n} \\
	\\
	{A_0} \&\& {A_1} \& \dots \& {A_{n-1}} \&\& {A_n} \&\& {A_0} \&\& {A_1} \& \dots \& {A_{n-1}} \&\& {A_n} \\
	\\
	\&\&\&\&\&\&\&\& {A_0} \&\&\&\&\&\& {A_n} \\
	\&\&\&\&\&\&\& {=} \\
	{A_0} \&\&\&\&\&\& {A_n} \&\& {A_0} \&\&\&\&\&\& {A_n} \\
	\\
	{A_0} \&\&\&\&\&\& {A_n} \&\& {A_0} \&\&\&\&\&\& {A_n}
	\arrow["{p_1}"{description}, from=9-1, to=9-3]
	\arrow[Rightarrow, no head, from=9-1, to=13-1]
	\arrow[Rightarrow, no head, from=9-7, to=13-7]
	\arrow[""{name=0, anchor=center, inner sep=0}, "{q'}"{description}, from=13-1, to=13-7]
	\arrow["{p_n}"{description}, from=9-5, to=9-7]
	\arrow[Rightarrow, no head, from=13-1, to=15-1]
	\arrow[Rightarrow, no head, from=13-7, to=15-7]
	\arrow[""{name=1, anchor=center, inner sep=0}, "q"{description}, from=15-1, to=15-7]
	\arrow["{p_1}"{description}, from=9-9, to=9-11]
	\arrow["{p_n}"{description}, from=9-13, to=9-15]
	\arrow[Rightarrow, no head, from=9-9, to=11-9]
	\arrow[Rightarrow, no head, from=11-9, to=13-9]
	\arrow[Rightarrow, no head, from=13-9, to=15-9]
	\arrow[""{name=2, anchor=center, inner sep=0}, "q"{description}, from=15-9, to=15-15]
	\arrow[Rightarrow, no head, from=9-15, to=11-15]
	\arrow[Rightarrow, no head, from=11-15, to=13-15]
	\arrow[Rightarrow, no head, from=13-15, to=15-15]
	\arrow[""{name=3, anchor=center, inner sep=0}, "{q'}"{description}, from=13-9, to=13-15]
	\arrow[""{name=4, anchor=center, inner sep=0}, "q"{description}, from=11-9, to=11-15]
	\arrow[""{name=5, anchor=center, inner sep=0}, "q"{description}, from=7-9, to=7-15]
	\arrow[Rightarrow, no head, from=1-9, to=7-9]
	\arrow[Rightarrow, no head, from=1-15, to=7-15]
	\arrow["{p_1}"{description}, from=1-9, to=1-11]
	\arrow["{p_n}"{description}, from=1-13, to=1-15]
	\arrow["{p_n}"{description}, from=1-5, to=1-7]
	\arrow["{p_1}"{description}, from=1-1, to=1-3]
	\arrow[""{name=6, anchor=center, inner sep=0}, "q"{description}, from=7-1, to=7-7]
	\arrow[Rightarrow, no head, from=1-1, to=3-1]
	\arrow[Rightarrow, no head, from=3-1, to=7-1]
	\arrow[Rightarrow, no head, from=1-7, to=3-7]
	\arrow[Rightarrow, no head, from=3-7, to=7-7]
	\arrow[""{name=7, anchor=center, inner sep=0}, "q"{description}, from=3-1, to=3-7]
	\arrow["{\alpha'}"{description}, shorten >=16pt, Rightarrow, from=9-4, to=0]
	\arrow["{\alpha/\alpha'}"{description}, shorten <=9pt, shorten >=9pt, Rightarrow, from=0, to=1]
	\arrow["{\alpha/\alpha'}"{description}, shorten <=9pt, shorten >=9pt, Rightarrow, from=3, to=2]
	\arrow["{\alpha'/\alpha}"{description}, shorten <=9pt, shorten >=9pt, Rightarrow, from=4, to=3]
	\arrow["\alpha"{description}, shorten >=7pt, Rightarrow, from=9-12, to=4]
	\arrow["\alpha"{description}, shorten >=12pt, Rightarrow, from=1-12, to=5]
	\arrow["\alpha"{description}, shorten >=7pt, Rightarrow, from=1-4, to=7]
	\arrow["{\id_q}"{description}, shorten <=17pt, shorten >=17pt, Rightarrow, from=7, to=6]
\end{tikzcd}
}

which by unicity of the factorisation through $\alpha$ give $\alpha/\alpha'\circ\alpha'/\alpha = \id_q$.
And similarly $\alpha/\alpha'\circ\alpha'/\alpha = \id_{q'}$.
\end{proof}

Furthermore, universal cells compose.

\begin{prop}
Given universal cells

\begin{tikzcd}[ampersand replacement=\&]
	\bullet \&\& \bullet \& \dots \& \bullet \&\& \bullet \\
	\\
	\bullet \&\&\&\&\&\& \bullet
	\arrow["{p_{i,1}}"{description}, from=1-1, to=1-3]
	\arrow["{p_{i,m_i}}"{description}, from=1-5, to=1-7]
	\arrow[""{name=0, anchor=center, inner sep=0}, "{q_i}"{description}, from=3-1, to=3-7]
	\arrow[Rightarrow, no head, from=1-1, to=3-1]
	\arrow[Rightarrow, no head, from=1-7, to=3-7]
	\arrow["{\alpha_i}"{description}, shorten >=7pt, Rightarrow, from=1-4, to=0]
\end{tikzcd}

and

\begin{tikzcd}[ampersand replacement=\&]
	\bullet \&\& \bullet \& \dots \& \bullet \&\& \bullet \\
	\\
	\bullet \&\&\&\&\&\& \bullet
	\arrow["{q_1}"{description}, from=1-1, to=1-3]
	\arrow["{q_n}"{description}, from=1-5, to=1-7]
	\arrow[""{name=0, anchor=center, inner sep=0}, "t"{description}, from=3-1, to=3-7]
	\arrow[Rightarrow, no head, from=1-1, to=3-1]
	\arrow[Rightarrow, no head, from=1-7, to=3-7]
	\arrow["\beta"{description}, shorten >=7pt, Rightarrow, from=1-4, to=0]
\end{tikzcd}

then

\scalebox{0.7}{\begin{tikzcd}[ampersand replacement=\&]
	\bullet \&\& \bullet \& \dots \& \bullet \&\& \bullet \& \dots \& \bullet \&\& \bullet \& \dots \& \bullet \&\& \bullet \\
	\\
	\bullet \&\&\&\&\&\& \bullet \& \dots \& \bullet \&\&\&\&\&\& \bullet \\
	\\
	\bullet \&\&\&\&\&\&\&\&\&\&\&\&\&\& \bullet
	\arrow[""{name=0, anchor=center, inner sep=0}, "{q_1}"{description}, from=3-1, to=3-7]
	\arrow[""{name=1, anchor=center, inner sep=0}, "{q_n}"{description}, from=3-9, to=3-15]
	\arrow[""{name=2, anchor=center, inner sep=0}, "t"{description}, from=5-1, to=5-15]
	\arrow[Rightarrow, no head, from=3-1, to=5-1]
	\arrow[Rightarrow, no head, from=3-15, to=5-15]
	\arrow["{p_{1,1}}"{description}, from=1-1, to=1-3]
	\arrow["{p_{1,m_1}}"{description}, from=1-5, to=1-7]
	\arrow["{p_{n,1}}"{description}, from=1-9, to=1-11]
	\arrow["{p_{n,m_n}}"{description}, from=1-13, to=1-15]
	\arrow[Rightarrow, no head, from=1-1, to=3-1]
	\arrow[Rightarrow, no head, from=1-7, to=3-7]
	\arrow[Rightarrow, no head, from=1-9, to=3-9]
	\arrow[Rightarrow, no head, from=1-15, to=3-15]
	\arrow["\beta"{description}, shorten >=7pt, Rightarrow, from=3-8, to=2]
	\arrow["{\alpha_n}"{description}, shorten >=7pt, Rightarrow, from=1-12, to=1]
	\arrow["{\alpha_1}"{description}, shorten >=7pt, Rightarrow, from=1-4, to=0]
\end{tikzcd}}

is universal.
\end{prop}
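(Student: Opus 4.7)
My plan is to verify the universal factorisation property directly, using universality of $\beta$ and of each $\alpha_i$ in sequence, together with associativity of composition of cells in a vdc. So, given any test cell
\[
\gamma \colon r_1,\dots,r_k,\,p_{1,1},\dots,p_{1,m_1},\dots,p_{n,1},\dots,p_{n,m_n},\,s_1,\dots,s_l \Rightarrow u
\]
(with appropriate vertical boundary), I want to produce a unique cell
\[
\widetilde{\gamma} \colon r_1,\dots,r_k,\,t,\,s_1,\dots,s_l \Rightarrow u
\]
such that $\widetilde{\gamma}(\id_{r_1},\dots,\id_{r_k},\beta(\alpha_1,\dots,\alpha_n),\id_{s_1},\dots,\id_{s_l}) = \gamma$.

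\textbf{Existence.} First I would peel off one universal cluster at a time. Treating everything to the left and right of the sub-list $p_{1,1},\dots,p_{1,m_1}$ (including the other $p_{j,k}$'s with $j>1$) as surrounding horizontal context, the universality of $\alpha_1$ yields a unique cell
$\gamma^{(1)}$ with $p_{1,1},\dots,p_{1,m_1}$ replaced by $q_1$, such that $\gamma^{(1)}(\id,\dots,\alpha_1,\dots,\id)=\gamma$. Iterating for $i=2,\dots,n$ produces a cell
\[
\gamma^{(n)} \colon r_1,\dots,r_k,\,q_1,\dots,q_n,\,s_1,\dots,s_l \Rightarrow u
\]
with $\gamma = \gamma^{(n)}(\id,\dots,\alpha_1,\dots,\alpha_n,\dots,\id)$. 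Next, I apply universality of $\beta$ to $\gamma^{(n)}$ (again viewing the $r$'s and $s$'s as surrounding context) to obtain the desired $\widetilde{\gamma}$ with $\gamma^{(n)} = \widetilde{\gamma}(\id,\dots,\beta,\dots,\id)$. Then associativity and interchange of the vdc composition give
\[
\widetilde{\gamma}(\id,\dots,\beta(\alpha_1,\dots,\alpha_n),\dots,\id) \;=\; \widetilde{\gamma}(\id,\dots,\beta,\dots,\id)(\id,\dots,\alpha_1,\dots,\alpha_n,\dots,\id) \;=\; \gamma.
\]

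\textbf{Uniqueness.} Suppose $\widetilde{\gamma}'$ is another factorisation. Then
$\widetilde{\gamma}'(\id,\dots,\beta,\dots,\id)$ is a factorisation of $\gamma$ through $(\id,\dots,\alpha_1,\dots,\alpha_n,\dots,\id)$; by the $n$-fold uniqueness coming from the $\alpha_i$ it must equal $\gamma^{(n)}$. Then by uniqueness in the universal property of $\beta$, $\widetilde{\gamma}' = \widetilde{\gamma}$.

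\textbf{Expected obstacle.} The argument is essentially formal, so the only real subtlety is bookkeeping: one must check that when peeling off $\alpha_i$ from inside $\gamma$, the other $p_{j,k}$'s and the outer lists $r_\bullet,s_\bullet$ can legitimately be absorbed into the ``surrounding context'' appearing in the definition of universality. This is exactly what the vdc interchange law guarantees, so the hardest part is simply writing down the iterated factorisation carefully (ideally as a pasting diagram) and invoking associativity to collapse the two-step factorisation into the single composite $\beta(\alpha_1,\dots,\alpha_n)$.
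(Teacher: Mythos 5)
Your proposal is correct and follows essentially the same route as the paper: rearrange via associativity/unitality so that each $\alpha_i$ and then $\beta$ can be peeled off one layer at a time, factor the test cell successively through $\alpha_1,\dots,\alpha_n$ and then $\beta$, and derive uniqueness from the uniqueness of each intermediate factorisation. Your write-up is if anything slightly more explicit about the uniqueness step than the paper's.
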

\begin{proof}
First, we can using associativity and unitality we have:
\begin{align*}
\beta(\alpha_1,\alpha_2,\dots,\alpha_n) &= \beta(\id_{q_1}(\alpha_1),\alpha_2(\id_{p_{2,1}},\dots,\id_{p_{2,m_2}}),\dots,\alpha_n(\id_{p_{n,1}},\dots,\id_{p_{n,m_n}}))\\
&= \beta(\id_{q_1},\alpha_2,\dots,\alpha_n)(\alpha_1,\id_{p_{2,1}},\dots,\id_{p_{n,m_n}})\\
&\vdots \\
&= \beta(\id_{p_{1,1}},\dots,\id_{p_{n-1,m_{n-1}}}, \alpha_n)\dots(\alpha_1,\id_{p_{2,1}},\dots,\id_{p_{n,m_n}})
\end{align*}
i.e. we can arrange for vertical composition of cells to only have a single non-identity cell at each layer:

\resizebox{\hsize}{!}{
\begin{tikzcd}[ampersand replacement=\&]
	\bullet \&\& \bullet \& \dots \& \bullet \&\& \bullet \& \dots \& \bullet \&\& \bullet \& \dots \& \bullet \&\& \bullet \\
	\\
	\bullet \&\&\&\&\&\& \bullet \& \dots \& \bullet \&\& \bullet \& \dots \& \bullet \&\& \bullet \\
	\&\&\&\&\&\&\& \vdots \\
	\bullet \&\&\&\&\&\& \bullet \& \dots \& \bullet \&\& \bullet \& \dots \& \bullet \&\& \bullet \\
	\\
	\bullet \&\&\&\&\&\& \bullet \& \dots \& \bullet \&\&\&\&\&\& \bullet \\
	\\
	\bullet \&\&\&\&\&\&\&\&\&\&\&\&\&\& \bullet
	\arrow["{p_{1,1}}"{description}, from=1-1, to=1-3]
	\arrow["{p_{1,m_1}}"{description}, from=1-5, to=1-7]
	\arrow["{p_{n,1}}"{description}, from=1-9, to=1-11]
	\arrow["{p_{n,m_n}}"{description}, from=1-13, to=1-15]
	\arrow[Rightarrow, no head, from=1-1, to=3-1]
	\arrow[Rightarrow, no head, from=1-7, to=3-7]
	\arrow[""{name=0, anchor=center, inner sep=0}, "{q_1}"{description}, from=3-1, to=3-7]
	\arrow[Rightarrow, no head, from=1-9, to=3-9]
	\arrow["{p_{n,1}}"{description}, from=3-9, to=3-11]
	\arrow["{p_{n,m_n}}"{description}, from=3-13, to=3-15]
	\arrow[Rightarrow, no head, from=1-15, to=3-15]
	\arrow[Rightarrow, no head, from=1-13, to=3-13]
	\arrow[Rightarrow, no head, from=1-11, to=3-11]
	\arrow["{q_1}"{description}, from=5-1, to=5-7]
	\arrow["{p_{n,1}}"{description}, from=5-9, to=5-11]
	\arrow["{p_{n,m_n}}"{description}, from=5-13, to=5-15]
	\arrow[Rightarrow, no head, from=3-15, to=5-15]
	\arrow[Rightarrow, no head, from=3-13, to=5-13]
	\arrow[Rightarrow, no head, from=3-11, to=5-11]
	\arrow[Rightarrow, no head, from=3-9, to=5-9]
	\arrow[Rightarrow, no head, from=3-7, to=5-7]
	\arrow[Rightarrow, no head, from=3-1, to=5-1]
	\arrow["{q_1}"{description}, from=7-1, to=7-7]
	\arrow[""{name=1, anchor=center, inner sep=0}, "{q_n}"{description}, from=7-9, to=7-15]
	\arrow[Rightarrow, no head, from=5-1, to=7-1]
	\arrow[Rightarrow, no head, from=5-7, to=7-7]
	\arrow[Rightarrow, no head, from=5-9, to=7-9]
	\arrow[Rightarrow, no head, from=5-15, to=7-15]
	\arrow[""{name=2, anchor=center, inner sep=0}, "t"{description}, from=9-1, to=9-15]
	\arrow[Rightarrow, no head, from=7-1, to=9-1]
	\arrow[Rightarrow, no head, from=7-15, to=9-15]
	\arrow["{\alpha_1}"{description}, shorten >=7pt, Rightarrow, from=1-4, to=0]
	\arrow["{\alpha_n}"{description}, shorten >=7pt, Rightarrow, from=5-12, to=1]
	\arrow["\beta"{description}, shorten >=7pt, Rightarrow, from=7-8, to=2]
\end{tikzcd}
}

Now given a cell:

\resizebox{\hsize}{!}{
\begin{tikzcd}[ampersand replacement=\&]
	\bullet \&\& \bullet \& \dots \& \bullet \&\& \bullet \&\& \bullet \& \dots \& \bullet \&\& \bullet \& \dots \& \bullet \&\& \bullet \& \dots \& \bullet \&\& \bullet \&\& \bullet \& \dots \& \bullet \&\& \bullet \\
	\\
	\bullet \&\&\&\&\&\&\&\&\&\&\&\&\&\&\&\&\&\&\&\&\&\&\&\&\&\& \bullet
	\arrow["{p_{1,1}}"{description}, from=1-7, to=1-9]
	\arrow["{p_{1,m_1}}"{description}, from=1-11, to=1-13]
	\arrow["{p_{n,1}}"{description}, from=1-15, to=1-17]
	\arrow["{p_{n,m_n}}"{description}, from=1-19, to=1-21]
	\arrow[""{name=0, anchor=center, inner sep=0}, "t"{description}, from=3-1, to=3-27]
	\arrow["{r_k}"{description}, from=1-5, to=1-7]
	\arrow["{r_1}"{description}, from=1-1, to=1-3]
	\arrow["{s_1}"{description}, from=1-21, to=1-23]
	\arrow["{s_j}"{description}, from=1-25, to=1-27]
	\arrow["f"{description}, from=1-1, to=3-1]
	\arrow["g"{description}, from=1-27, to=3-27]
	\arrow["\gamma"{description}, shorten >=7pt, Rightarrow, from=1-14, to=0]
\end{tikzcd}
}

since $\alpha_1$ is universal we can factor $\gamma$ through it:

\resizebox{\hsize}{!}{
\begin{tikzcd}[ampersand replacement=\&]
	\bullet \&\& \bullet \& \dots \& \bullet \&\& \bullet \&\& \bullet \& \dots \& \bullet \&\& \bullet \& \dots \& \bullet \&\& \bullet \& \dots \& \bullet \&\& \bullet \&\& \bullet \& \dots \& \bullet \&\& \bullet \\
	\\
	\bullet \&\& \bullet \& \dots \& \bullet \&\& \bullet \&\&\&\&\&\& \bullet \& \dots \& \bullet \&\& \bullet \& \dots \& \bullet \&\& \bullet \&\& \bullet \& \dots \& \bullet \&\& \bullet \\
	\\
	\bullet \&\&\&\&\&\&\&\&\&\&\&\&\&\&\&\&\&\&\&\&\&\&\&\&\&\& \bullet
	\arrow["{p_{1,1}}"{description}, from=1-7, to=1-9]
	\arrow["{p_{1,m_1}}"{description}, from=1-11, to=1-13]
	\arrow["{p_{n,1}}"{description}, from=1-15, to=1-17]
	\arrow["{p_{n,m_n}}"{description}, from=1-19, to=1-21]
	\arrow[""{name=0, anchor=center, inner sep=0}, "t"{description}, from=5-1, to=5-27]
	\arrow["{r_k}"{description}, from=1-5, to=1-7]
	\arrow["{r_1}"{description}, from=1-1, to=1-3]
	\arrow["{s_1}"{description}, from=1-21, to=1-23]
	\arrow["{s_j}"{description}, from=1-25, to=1-27]
	\arrow["{r_1}"{description}, from=3-1, to=3-3]
	\arrow["{r_k}"{description}, from=3-5, to=3-7]
	\arrow[""{name=1, anchor=center, inner sep=0}, "{q_1}"{description}, from=3-7, to=3-13]
	\arrow["{p_{n,1}}"{description}, from=3-15, to=3-17]
	\arrow["{p_{n,m_n}}"{description}, from=3-19, to=3-21]
	\arrow["{s_1}"{description}, from=3-21, to=3-23]
	\arrow["{s_j}"{description}, from=3-25, to=3-27]
	\arrow[Rightarrow, no head, from=1-1, to=3-1]
	\arrow[Rightarrow, no head, from=1-3, to=3-3]
	\arrow[Rightarrow, no head, from=1-5, to=3-5]
	\arrow[Rightarrow, no head, from=1-13, to=3-13]
	\arrow[Rightarrow, no head, from=1-15, to=3-15]
	\arrow[Rightarrow, no head, from=1-17, to=3-17]
	\arrow[Rightarrow, no head, from=1-19, to=3-19]
	\arrow[Rightarrow, no head, from=1-7, to=3-7]
	\arrow[Rightarrow, no head, from=1-21, to=3-21]
	\arrow[Rightarrow, no head, from=1-23, to=3-23]
	\arrow[Rightarrow, no head, from=1-25, to=3-25]
	\arrow[Rightarrow, no head, from=1-27, to=3-27]
	\arrow["f"{description}, from=3-1, to=5-1]
	\arrow["g"{description}, from=3-27, to=5-27]
	\arrow["{\alpha_1}"{description}, shorten >=7pt, Rightarrow, from=1-10, to=1]
	\arrow["{\gamma/\alpha_1}"{description}, shorten >=7pt, Rightarrow, from=3-14, to=0]
\end{tikzcd}
}

But then since $\alpha_2$ is universal we can factorise $\gamma/\alpha_1$ by it and so on.
Finally, we get a unique cell $(((\gamma/\alpha_1)/\dots)/\alpha_n)/\beta$ such that $\gamma$ factorise through it and $\beta(\alpha_1,\dots,\alpha_n)$:

\resizebox{\hsize}{!}{
\begin{tikzcd}[ampersand replacement=\&]
	\bullet \&\& \bullet \& \dots \& \bullet \&\& \bullet \&\& \bullet \& \dots \& \bullet \&\& \bullet \& \dots \& \bullet \&\& \bullet \& \dots \& \bullet \&\& \bullet \&\& \bullet \& \dots \& \bullet \&\& \bullet \\
	\\
	\bullet \&\& \bullet \& \dots \& \bullet \&\& \bullet \&\&\&\&\&\&\&\&\&\&\&\&\&\& \bullet \&\& \bullet \& \dots \& \bullet \&\& \bullet \\
	\\
	\bullet \&\&\&\&\&\&\&\&\&\&\&\&\&\&\&\&\&\&\&\&\&\&\&\&\&\& \bullet
	\arrow["{p_{1,1}}"{description}, from=1-7, to=1-9]
	\arrow["{p_{1,m_1}}"{description}, from=1-11, to=1-13]
	\arrow["{p_{n,1}}"{description}, from=1-15, to=1-17]
	\arrow["{p_{n,m_n}}"{description}, from=1-19, to=1-21]
	\arrow[""{name=0, anchor=center, inner sep=0}, "t"{description}, from=5-1, to=5-27]
	\arrow["{r_k}"{description}, from=1-5, to=1-7]
	\arrow["{r_1}"{description}, from=1-1, to=1-3]
	\arrow["{s_1}"{description}, from=1-21, to=1-23]
	\arrow["{s_j}"{description}, from=1-25, to=1-27]
	\arrow["{r_1}"{description}, from=3-1, to=3-3]
	\arrow["{r_k}"{description}, from=3-5, to=3-7]
	\arrow["{s_1}"{description}, from=3-21, to=3-23]
	\arrow["{s_j}"{description}, from=3-25, to=3-27]
	\arrow[Rightarrow, no head, from=1-1, to=3-1]
	\arrow[Rightarrow, no head, from=1-3, to=3-3]
	\arrow[Rightarrow, no head, from=1-5, to=3-5]
	\arrow[Rightarrow, no head, from=1-7, to=3-7]
	\arrow[Rightarrow, no head, from=1-21, to=3-21]
	\arrow[Rightarrow, no head, from=1-23, to=3-23]
	\arrow[Rightarrow, no head, from=1-25, to=3-25]
	\arrow[Rightarrow, no head, from=1-27, to=3-27]
	\arrow["f"{description}, from=3-1, to=5-1]
	\arrow["g"{description}, from=3-27, to=5-27]
	\arrow[""{name=1, anchor=center, inner sep=0}, "s"{description}, from=3-7, to=3-21]
	\arrow["{\beta(\alpha_1,\dots,\alpha_n)}"{description}, shorten >=7pt, Rightarrow, from=1-14, to=1]
	\arrow["{(((\gamma/\alpha_1)/\dots)/\alpha_n)/\beta}"{description}, shorten <=9pt, shorten >=9pt, Rightarrow, from=1, to=0]
\end{tikzcd}
}
\end{proof}

We also have that the identity cell is universal.

\begin{prop}
The identity cell $1_p$ is universal.
\end{prop}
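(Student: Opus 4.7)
The plan is to unpack the definition of a universal cell for the special case of the identity cell $1_p$, and observe that the claim is essentially a direct consequence of the unitality axiom for cells in a virtual double category.

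Recall that $1_p$ is a unary cell whose horizontal domain consists of the single horizontal morphism $p \colon A_0 \to A_1$, whose horizontal codomain is also $p$, and whose vertical boundaries are both identity vertical morphisms. So universality of $1_p$ asks: given any cell
\[
\beta \colon r_1, \ldots, r_m, p, s_1, \ldots, s_k \Rightarrow q
\]
with vertical boundaries $f$ and $g$, there is a unique cell $\beta/1_p$ with the same horizontal boundary data such that
\[
\beta = (\beta/1_p)\bigl(\id_{r_1}, \ldots, \id_{r_m}, 1_p, \id_{s_1}, \ldots, \id_{s_k}\bigr).
\]

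For existence, I would simply take $\beta/1_p := \beta$. By the unitality axiom $\alpha(\id_{p_1}, \ldots, \id_{p_n}) = \alpha$, substituting for $\beta$ gives exactly $\beta = \beta(\id_{r_1}, \ldots, \id_{r_m}, 1_p, \id_{s_1}, \ldots, \id_{s_k})$, noting that $1_p$ is by definition the identity cell on the horizontal morphism $p$ and therefore plays the role of $\id_p$ in the unitality axiom.

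For uniqueness, suppose $\gamma$ is another cell with the required boundary satisfying $\gamma(\id_{r_1}, \ldots, \id_{r_m}, 1_p, \id_{s_1}, \ldots, \id_{s_k}) = \beta$. Applying unitality on the left-hand side yields $\gamma = \beta$, so the factorisation is unique. There is no real obstacle here; the proof is essentially a one-line application of the unitality axiom, and the only subtlety is bookkeeping to confirm that the universal cell condition, when instantiated at the identity cell, reduces precisely to unitality.
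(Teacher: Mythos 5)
Your proof is correct and matches the paper's argument, which is simply the observation that the universal factorisation property instantiated at $\id_p$ reduces to the unitality axiom $\alpha(\id_{p_1},\dots,\id_{p_n})=\alpha$. The paper states this in one line; your version just spells out the existence and uniqueness bookkeeping explicitly.
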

\begin{proof}
It follows directly from unitality of the identity cell.
\end{proof}

\begin{rk}
To gain some space we will not write the objects of the cells when it is not necessary.
\end{rk}

We will write $1_A$ for the 0-composite, i.e. the codomain of a universal cell with empty source:

\begin{tikzcd}[ampersand replacement=\&]
	\& A \\
	\\
	A \&\& A
	\arrow[""{name=0, anchor=center, inner sep=0}, "{1_A}"{description}, from=3-1, to=3-3]
	\arrow[Rightarrow, no head, from=1-2, to=3-1]
	\arrow[Rightarrow, no head, from=1-2, to=3-3]
	\arrow[shorten >=7pt, Rightarrow, from=1-2, to=0]
\end{tikzcd}

\begin{cor}
For any horizontal morphisms $f,g,h$ we have invertible cells:
\begin{itemize}
\item $f \bullet 1_A \simeq f \bullet 1_B$
\item $(h \bullet g) \bullet f \simeq h \bullet (g \bullet f)$.
\end{itemize}
Those cells satisfy the coherence law of a double category.
\end{cor}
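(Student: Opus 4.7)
The plan is to extract all the coherence data of a (pseudo) double category from the uniqueness properties of universal cells already established. Throughout, I read the first bullet as asserting the two unitor isomorphisms $f \bullet 1_A \simeq f$ and $1_B \bullet f \simeq f$ (the statement as printed appears to be a typo).

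First I would construct the unitors. Recall that $1_A$ is by definition the target of a $0$-ary universal cell with empty source over $A$; denote this cell by $\upsilon_A$. Given a horizontal morphism $f \colon A \to B$, paste $\upsilon_A$ on the left of the identity cell $\id_f$ to obtain a cell $\id_f(\upsilon_A, \id_f) \colon 1_A, f \Rightarrow f$. By the proposition that universal cells compose, this pasted cell is universal. On the other hand, $\id_f$ itself is universal over the unary chain $(f)$ — however, we really want a universal cell over the binary chain $(1_A, f)$. The composite $1_A \bullet f$ comes with its own universal cell $\alpha \colon 1_A, f \Rightarrow 1_A \bullet f$, and by the uniqueness of composites up to unique invertible cell already proved, the universal cell $\id_f(\upsilon_A, \id_f)$ and $\alpha$ have uniquely isomorphic targets. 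This yields the desired invertible cell $1_A \bullet f \simeq f$. The right unitor $f \bullet 1_B \simeq f$ is obtained symmetrically.

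For the associator, given composable horizontal morphisms $f, g, h$, both $(h \bullet g) \bullet f$ and $h \bullet (g \bullet f)$ fit into universal cells over the common chain $(f, g, h)$: in each case one pastes the universal cell for the inner composite with the universal cell for the outer composite, and the composition of universal cells proposition guarantees universality of the pasted cells. Applying once more the fact that composites are unique up to unique invertible cell, we obtain a canonical invertible cell $\mathrm{A}_{f,g,h} \colon (h \bullet g) \bullet f \simeq h \bullet (g \bullet f)$.

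Finally, the pentagon and triangle coherence laws for a double category follow automatically from uniqueness of factorisation through universal cells: any two parallel cells built out of associators, unitors, and identities, when composed with the relevant universal cells, yield the same universal cell over the same underlying chain of horizontal morphisms, and hence must agree. Concretely, for the pentagon one verifies that both sides of the diagram, when precomposed with the universal cell $(f, g, h, i) \Rightarrow ((i \bullet h) \bullet g) \bullet f$ obtained by iterated pasting, reduce to the same universal cell $(f, g, h, i) \Rightarrow i \bullet (h \bullet (g \bullet f))$; uniqueness of the factorisation then forces equality. The triangle law is argued identically. The main subtlety — and essentially the only place where care is needed — is keeping track of which universal cells are being factored through which, since the uniqueness argument must be applied at the level of the whole chain, not to the individual binary composites.
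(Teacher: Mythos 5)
Your proposal is correct and follows essentially the same route as the paper: the unitors and associator are obtained by pasting universal cells (using that universal cells compose) and invoking uniqueness of composites up to unique invertible cell, and the coherence laws follow from uniqueness of factorisation through a universal cell over the whole chain. Your reading of the first bullet as the two unitor isomorphisms (correcting an apparent typo) is the intended one, and your expanded account merely fills in details the paper leaves implicit.
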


\begin{proof}
Since universal cells compose and composites are unique up to unique invertible cell we get the cells.
The coherence law follow from unicity of the factorisation by a universal cell.
\end{proof}

\begin{definition}
A vdc that admits a composite for any chain of horizontal morphisms is said to be \defin{representable}.
\end{definition}

Given a vdc \vdC, we write $\vdH(\vdC)$ for the data of:
\begin{itemize}
\item the objects of \vdC
\item the horizontal morphisms
\item the cells whose vertical morphisms are identities
\end{itemize}

\begin{prop}
For a vdc \vdC, if \vdC is representable then $\vdH(\vdC)$ forms a bicategory.
\end{prop}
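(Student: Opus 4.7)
The plan is to unpack the data of $\vdH(\vdC)$ and exhibit it as a bicategory, using representability to define composition of 1-cells and the uniqueness of factorisation through universal cells to verify the coherence axioms.

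First I would fix the underlying data. Objects are those of $\vdC$; 1-cells $f : A \to B$ are horizontal morphisms; and a 2-cell $\alpha : f \Rightarrow g$ between parallel horizontal morphisms $f,g : A \to B$ is a cell of $\vdC$ with domain the length-one chain $(f)$, codomain $g$, and vertical sides both identities. The vertical composition of such 2-cells and the identity 2-cells are inherited from $\vdC$, so each $\vdH(\vdC)(A,B)$ is a category. For each $A$, the identity 1-cell is $1_A$, defined via a chosen universal cell with empty domain; for 1-cells $f : A \to B$ and $g : B \to C$, I set $g \circ f := g \bullet f$, chosen from the representability hypothesis together with its universal cell $\mu_{g,f} : (f,g) \Rightarrow g \bullet f$. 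Since composites are unique up to unique invertible cell, different choices will be canonically isomorphic.

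Next I would define horizontal composition of 2-cells and the coherence isomorphisms. Given $\alpha : f \Rightarrow f'$ and $\beta : g \Rightarrow g'$, the composite $(f,g) \Rightarrow g' \bullet f'$ obtained from $\mu_{g',f'} \circ (\alpha,\beta)$ factors uniquely through the universal cell $\mu_{g,f}$, producing a 2-cell $\beta \circ \alpha : g \bullet f \Rightarrow g' \bullet f'$. Functoriality of this assignment (preservation of identities and of vertical composition, i.e.\ the interchange law) follows directly from the uniqueness half of the universal property, as does its compatibility with $\mu$. The associator $\mathrm{A}_{f,g,h}$ and unitors $\mathrm{R}_f, \Lambda_f$ are the canonical invertible cells provided by the corollary preceding the statement; each arises because two iterated composites of universal cells are themselves universal for the same chain, hence unique up to unique invertible 2-cell. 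Naturality of $\mathrm{A}$, $\mathrm{R}$, $\Lambda$ is again forced by uniqueness: the two ways of factoring a given cell through nested universal cells must agree.

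Finally I would verify the pentagon and triangle axioms. In both cases one obtains two invertible 2-cells between the same pair of composites built from universal cells, and both composites serve as factorisations of one and the same cell (built by pasting the relevant $\mu$'s) through a common universal cell. By uniqueness of factorisation they coincide. The main obstacle, and where one must be careful, is precisely the definition of horizontal composition of 2-cells and the proof that it is functorial and strictly compatible with vertical composition; everything else reduces mechanically to the universal property and its uniqueness clause. Once these verifications are assembled, $\vdH(\vdC)$ satisfies the bicategory axioms.
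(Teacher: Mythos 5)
Your proposal is correct and follows essentially the same route as the paper, which leaves the proposition's proof implicit in the preceding results: uniqueness of composites up to unique invertible cell, closure of universal cells under composition, universality of the identity cell, and the corollary that the resulting associators and unitors satisfy the coherence laws by uniqueness of factorisation. Your write-up simply makes explicit the same ingredients (composition of 1-cells and 2-cells via the universal property, coherence via the uniqueness clause), so nothing further is needed.
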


\begin{example}
The underlying vdc of a bicategory is representable with $\bullet$ given by horizontal composition.
In fact, the vertical category of a representable vdc is discrete iff it is the underlying vdc of a bicategory.
\end{example}

\begin{example}
The delooping of a multicategory is representable iff the multicategory is.
Horizontal composition is given by tensor product.
\end{example}

\begin{example}
$\Dist$ is representable with horizontal composition given by composition of distributors, i.e. by a coend.
\end{example}

\begin{example}
Recall that the terminal vdc has exactly one object $\ast$, one vertical morphism $\id_\ast$, one horizontal morphism $\ast \to \ast$ and one cell for each arity:
\[
\begin{tikzcd}[ampersand replacement=\&]
	\ast \&\& \ast \& \dots \& \ast \&\& \ast \\
	\\
	\ast \&\&\&\&\&\& \ast
	\arrow[from=1-1, to=1-3]
	\arrow[from=1-5, to=1-7]
	\arrow[Rightarrow, no head, from=1-1, to=3-1]
	\arrow[Rightarrow, no head, from=1-7, to=3-7]
	\arrow[""{name=0, anchor=center, inner sep=0}, from=3-1, to=3-7]
	\arrow["{\underline{n}}"{description}, shorten >=7pt, Rightarrow, from=1-4, to=0]
\end{tikzcd}
\]
It is representable with $1_\ast := \ast \to \ast$ and $(\ast \to \ast)\bullet \dots \bullet (\ast \to \ast) := \ast \to \ast$.
In particular, its vertical category is $\one$ the terminal category and its horizontal bicategory is $\one$ the terminal bicategory.
\end{example}

\begin{example}
\Rel{} is representable where the horizontal composition is the relation given by $a (R_1 \bullet R_0) c$ iff $\exists b,\ a R_0 b \wedge b R_1 c$.
\end{example}

\begin{example}
For a category with pullbacks \cC, $\Span(\cC)$ is representable with horizontal composition given by pullback.
\end{example}

\begin{example}
\Ring{} is representable with composition of bimodules given by their tensor product.
By definition, the tensor product of bimodules is characterised by the universal property of ``linearising'' balanced multimorphisms, i.e. it is equipped with a balanced multimorphism $M_1, \dots, M_n \to M_1 \otimes_{A_1} \dots \otimes_{A_{n-1}} M_n$ that gives an equivalence between balanced multimorphisms and balanced (linear) morphisms out of the tensor product of the inputs.

Let us construct the tensor product of the $(A_{i-1},A_i)$-bimodules $M_i$.
In the following we will use the correspondence between abelian groups and $\Z$-modules.
First let $\langle M_i \rangle$ be the free $\Z$-module on $M_1 \times \dots \times M_n$.
So its elements consist of formal sums $\sum_i k_i (m_{i,1},\dots, m_{i_n})$ with $k_i \in \Z$ and $m_{i,j} \in M_j$.
We can quotient it by the ideal generated by elements of the form:
\begin{itemize}
\item $(m_1, \dots, m_i + m_i', \dots, m_n) - (m_1,\dots,m_i, \dots, m_n) - (m_1,\dots,m_i',\dots,m_n)$
\item $(m_1, \dots, m_{i-1} \cdot a_i, \dots m_n) - (m_1,\dots,m_{i-1}, a_i \cdot m_i, m_n)$
\item $(m_1,\dots,m_{i-1},0,m_{i+1},\dots,m_n)$
\end{itemize}
The abelian group that we get is the underlying abelian group of the tensor product $M_1 \otimes_{A_1} \dots \otimes_{A_n} M_n$.
We write its elements $\sum\limits_i m_{i,1} \otimes \dots, \otimes m_{i,n}$.
Notice that for a multivariable function $f \colon M_1 \times \dots \times M_n \to N$, we get a unique $\Z$-module morphism out of $\langle M_i \rangle$ whose kernel include $I$ iff $f$ is balanced and additive.
So for any balanced additive map we have the following universal property:
\[\begin{tikzcd}[ampersand replacement=\&]
	{M_1 \times \dots \times M_n} \\
	{\langle M_i\rangle} \& G \\
	{M_1\otimes_{A_1}\dots\otimes_{A_{n-1}}M_n}
	\arrow["{\text{bal. + add.}}"{description}, from=1-1, to=2-2]
	\arrow[hook, from=1-1, to=2-1]
	\arrow[dashed, from=2-1, to=2-2]
	\arrow[hook, from=2-1, to=3-1]
	\arrow[dashed, from=3-1, to=2-2]
\end{tikzcd}\]
Following from the universal property of free module and quotient module.

We still need to define the $(A_0,A_n)$-bimodule structure on $M_1 \otimes_{A_1} \dots \otimes_{A_{n-1}} M_n$.
The left action is inherited by the left action on $M_1$ in the following way:
\[a_0 \cdot \sum\limits_i m_{i,1} \otimes \dots \otimes m_{i,n} := \sum\limits_i (a_0 \cdot m_{i,1}) \otimes \dots \otimes m_{i_n}\] and similarly for the right action of $A_n$
This is well-defined: suppose that there are two representations of an element of the tensor products $\sum\limits_i m_{i,1} \otimes \dots \otimes m_{i,n} = \sum\limits_j m'_{j,1} \otimes \dots \otimes m_{j,n}$ we want to prove that \[\sum\limits_i (a_0 \cdot m_{i,1}) \otimes \dots \otimes m_{i,n} = \sum\limits_j (a_0 \cdot m'_{j,1}) \otimes \dots \otimes m_{j,n}\] for any $a_0$.
This can be done by considering the action of $A_0$ on $M_1 \times \dots \times M_n$ defined by $(a_0 \cdot m_1,\dots, m_n)$, extend it to $\langle M_i \rangle$ and then prove that it sends generators of the ideal for balanced additiveness to other generators.

In particular, for $R$ a commutative ring and considering a $R$-module as a $(R,R)$-bimodule, this gives the usual tensor product of modules and balanced multimorphisms gives multilinear maps.
\end{example}

\begin{rk}
There is a subtlety about the previous example.
In the format above it does not quite work for the case $n=0$:
\[\begin{tikzcd}[ampersand replacement=\&]
	\& {A_0} \\
	{A_0} \&\& {A_0}
	\arrow[Rightarrow, no head, from=1-2, to=2-1]
	\arrow[Rightarrow, no head, from=1-2, to=2-3]
	\arrow["{1_{A_0}}"{description}, from=2-1, to=2-3]
\end{tikzcd}\]
Indeed, when performing the construction above for $n=0$, we get that $1_{A_0}$ is \Z{} equipped with a trivial action by $A_0$: $a_0\bullet z \bullet a_0' = z$.
This forgets everything about $A_0$.
What we would want instead is for $1_{A_0}$ to be $A_0$ considered as an $(A_0,A_0)$-bimodule by multiplication.
We can remedy that in two ways.
One would be to simply separate the case: give the $n$-ary tensor product as above for $n > 0$, and take $A_0$ as a bimodule for $n=0$.
A more uniform approach is to define the $n$-ary composition to be $A_0 \otimes_{A_0} M_1 \otimes_{A_1} \dots \otimes_{A_{n-1}} M_n \otimes_{A_n} A_n$ where $A_0$ and $A_n$ are considered as bimodules with multiplication.
It can then be checked that the $(A,A)$-bimodule $A$ is a unit for $\otimes_A$.
So in the case $n>0$ we get \[A_0 \otimes_{A_0} M_1 \otimes_{A_1} \dots \otimes_{A_{n-1}} M_n \otimes_{A_n} A_n \simeq M_1 \otimes_{A_1} \dots \otimes_{A_{n-1}} M_n\] and in the case $n= 0$ we get \[A_0 \otimes_{A_0} A_0 \simeq A_0\].

This subtlety also arises for $\Span(\cC)$ and $\Dist$.
We can also fix it in two ways, by considering the nullary case independently or by composing by $1_{A}$ on both part.
For $\Span(\cC)$ it will be $A_0 \times_{A_0} P_0 \times_{A_1} \dots \times_{A_{n-1}} P_n \times_{A_n} A_n$ and for $\Dist$, \[\int^{a_i} A_0(-,a_0) \times P_1(a_0,a_1) \times \dots \times P_n(a_{n-1},a_n) \times A_n(a_n,-)\]
\end{rk}

\begin{example}
For any category with reflexive coequalisers preserved by tensor product in each variable, we can define a tensor product of bimodules (of internal monoids) by the reflexive coequaliser:
\[\begin{tikzcd}[ampersand replacement=\&]
	{M_1 \otimes A_1 \otimes M_2} \&\& {M_1 \otimes M_2} \&\& {M_1 \otimes_{A_1} M_2}
	\arrow["{\cdot_{A_1} \otimes M_2}", shift left=2, from=1-1, to=1-3]
	\arrow["{M_1 \otimes {}_{A_1}\cdot}"', shift right=2, from=1-1, to=1-3]
	\arrow[from=1-3, to=1-5]
\end{tikzcd}\]
\end{example}

A functor of vdcs is an assignment on objects, vertical and horizontal morphisms, and cells that preserves all the structure involved.

\begin{definition}
A functor of vdcs $F \colon \vdC \to \vdD$ is given by:
\begin{itemize}
\item a functor $F \colon \vdV(\vdC) \to \vdV(\vdD)$ between the vertical categories
\item for any horizontal morphism $p \colon A_0 \to A_1$ in \vdC, one $F(p) \colon F(A_0) \to F(A_1)$ in \vdD
\item for any cell

\begin{tikzcd}[ampersand replacement=\&]
	{A_0} \&\& {A_1} \& \dots \& {A_{n-1}} \&\& {A_n} \\
	\\
	{B_0} \&\&\&\&\&\& {B_1}
	\arrow["{p_1}"{description}, from=1-1, to=1-3]
	\arrow["{p_n}"{description}, from=1-5, to=1-7]
	\arrow["f"{description}, from=1-1, to=3-1]
	\arrow["g"{description}, from=1-7, to=3-7]
	\arrow[""{name=0, anchor=center, inner sep=0}, "q"{description}, from=3-1, to=3-7]
	\arrow["\alpha"{description}, shorten >=7pt, Rightarrow, from=1-4, to=0]
\end{tikzcd}

in \vdC, one in \vdD:

\begin{tikzcd}[ampersand replacement=\&]
	{F(A_0)} \&\& {F(A_1)} \& \dots \& {F(A_{n-1})} \&\& {F(A_n)} \\
	\\
	{F(B_0)} \&\&\&\&\&\& {F(B_1)}
	\arrow["{F(p_1)}"{description}, from=1-1, to=1-3]
	\arrow["{F(p_n)}"{description}, from=1-5, to=1-7]
	\arrow["{F(f)}"{description}, from=1-1, to=3-1]
	\arrow["{F(g)}"{description}, from=1-7, to=3-7]
	\arrow[""{name=0, anchor=center, inner sep=0}, "{F(q)}"{description}, from=3-1, to=3-7]
	\arrow["{F(\alpha)}"{description}, shorten >=7pt, Rightarrow, from=1-4, to=0]
\end{tikzcd}
\end{itemize}
such that
\begin{itemize}
\item $F(\id_p) = \id_{F(p)}$
\item $F(\beta(\alpha_1,\dots,\alpha_n)) = F(\beta)(F(\alpha_1),\dots,F(\alpha_n))$
\end{itemize}
\end{definition}

Consider a functor of representable vdcs $F \colon \vdC \to \vdD$.
Then we can consider the image of a universal cell from \vdC:

\begin{tikzcd}[ampersand replacement=\&]
	{F(A_0)} \&\& {F(A_1)} \& \dots \& {F(A_{n-1})} \&\& {F(A_n)} \\
	\\
	{F(A_0)} \&\&\&\&\&\& {F(A_n)}
	\arrow["{F(p_1)}"{description}, from=1-1, to=1-3]
	\arrow["{F(p_n)}"{description}, from=1-5, to=1-7]
	\arrow["{F(p_n \bullet\dots\bullet p_1)}"{description}, from=3-1, to=3-7]
	\arrow[Rightarrow, no head, from=1-7, to=3-7]
	\arrow[Rightarrow, no head, from=1-1, to=3-1]
\end{tikzcd}

Then it can be factored uniquely through the universal cell in \vdD:

\begin{tikzcd}[ampersand replacement=\&]
	{F(A_0)} \&\& {F(A_1)} \& \dots \& {F(A_{n-1})} \&\& {F(A_n)} \\
	\\
	{F(A_0)} \&\&\&\&\&\& {F(A_n)} \\
	\\
	{F(A_0)} \&\&\&\&\&\& {F(A_n)}
	\arrow["{F(p_1)}"{description}, from=1-1, to=1-3]
	\arrow["{F(p_n)}"{description}, from=1-5, to=1-7]
	\arrow[""{name=0, anchor=center, inner sep=0}, "{F(p_n \bullet\dots\bullet p_1)}"{description}, from=5-1, to=5-7]
	\arrow[Rightarrow, no head, from=1-1, to=3-1]
	\arrow[Rightarrow, no head, from=1-7, to=3-7]
	\arrow[Rightarrow, no head, from=3-1, to=5-1]
	\arrow[Rightarrow, no head, from=3-7, to=5-7]
	\arrow[""{name=1, anchor=center, inner sep=0}, "{F(p_n)\bullet\dots\bullet F(p_1)}"{description}, from=3-1, to=3-7]
	\arrow[shorten <=9pt, shorten >=9pt, Rightarrow, from=1, to=0]
	\arrow[shorten >=7pt, Rightarrow, from=1-4, to=1]
\end{tikzcd}

These cells obtained by factorisation verify coherence laws because of the unicity of said factorisation.
This correspond to the definition of (lax) functor of double categories.
If these cells are invertible/identities then we talk of pseudo/strict functor.
If these cells are identities only for the units $1$, i.e. the nullary composite, we say that the functor is normal. 

\begin{example}
Functors between the underlying vdc of bicategories correspond to (lax) functors of bicategories.
They are pseudo/strict as functors of vdcs iff they are as functors of bicategories.
Similarly for monoidal categories.
\end{example}

\section{Monoids and modules}

Given a vdc \vdC, we can form a new vdc \Mon{\vdC} whose objects are monoids in \vdC.
This generalises the construction that produce a monoidal category of monoids internal to a monoidal category.
Some examples of interest for us arise in this way.

\begin{definition}
Given a vdc \vdC, a monoid in \vdC is an object $M$ and a horizontal morphisms $m \colon M \to M$ equipped with cells:
\[\begin{tikzcd}[ampersand replacement=\&]
	\& M \&\&\&\& M \&\& M \&\& M \\
	\\
	M \&\& M \&\&\& M \&\&\&\& M
	\arrow[Rightarrow, no head, from=1-2, to=3-1]
	\arrow[Rightarrow, no head, from=1-2, to=3-3]
	\arrow[""{name=0, anchor=center, inner sep=0}, "m"{description}, from=3-1, to=3-3]
	\arrow[""{name=1, anchor=center, inner sep=0}, "m"{description}, from=3-6, to=3-10]
	\arrow[Rightarrow, no head, from=1-6, to=3-6]
	\arrow[Rightarrow, no head, from=1-10, to=3-10]
	\arrow["m"{description}, from=1-6, to=1-8]
	\arrow["m"{description}, from=1-8, to=1-10]
	\arrow["{m_0}"{description}, shorten >=7pt, Rightarrow, from=1-2, to=0]
	\arrow["{m_2}"{description}, shorten >=7pt, Rightarrow, from=1-8, to=1]
\end{tikzcd}\]
satisfying:
\begin{itemize}
\item
\resizebox{\hsize}{!}{
\begin{tikzcd}[ampersand replacement=\&]
	\& M \&\&\& M \&\& M \&\&\&\& M \&\& M \&\&\& M \\
	\\
	M \&\& M \&\& M \& {=} \&\&\&\&\&\& {=} \& M \&\& M \&\& M \\
	\\
	M \&\&\&\& M \&\& M \&\&\&\& M \&\& M \&\&\&\& M
	\arrow[""{name=0, anchor=center, inner sep=0}, "m"{description}, from=5-1, to=5-5]
	\arrow[Rightarrow, no head, from=3-1, to=5-1]
	\arrow[Rightarrow, no head, from=3-5, to=5-5]
	\arrow[""{name=1, anchor=center, inner sep=0}, "m"{description}, from=3-1, to=3-3]
	\arrow["m"{description}, from=3-3, to=3-5]
	\arrow[Rightarrow, no head, from=1-2, to=3-3]
	\arrow[Rightarrow, no head, from=1-2, to=3-1]
	\arrow[Rightarrow, no head, from=1-5, to=3-5]
	\arrow["m"{description}, from=1-2, to=1-5]
	\arrow[Rightarrow, no head, from=1-7, to=5-7]
	\arrow["m"{description}, from=5-7, to=5-11]
	\arrow["m"{description}, from=1-7, to=1-11]
	\arrow[Rightarrow, no head, from=1-11, to=5-11]
	\arrow["m"{description}, from=3-13, to=3-15]
	\arrow[""{name=2, anchor=center, inner sep=0}, "m"{description}, from=3-15, to=3-17]
	\arrow[Rightarrow, no head, from=3-13, to=5-13]
	\arrow[""{name=3, anchor=center, inner sep=0}, "m"{description}, from=5-13, to=5-17]
	\arrow[Rightarrow, no head, from=3-17, to=5-17]
	\arrow[Rightarrow, no head, from=1-16, to=3-15]
	\arrow[Rightarrow, no head, from=1-16, to=3-17]
	\arrow[Rightarrow, no head, from=1-13, to=3-13]
	\arrow["m"{description}, from=1-13, to=1-16]
	\arrow["{m_2}"{description}, shorten >=7pt, Rightarrow, from=3-3, to=0]
	\arrow["{m_0}"{description}, shorten >=7pt, Rightarrow, from=1-2, to=1]
	\arrow["{m_0}"{description}, shorten >=7pt, Rightarrow, from=1-16, to=2]
	\arrow["{m_2}"{description}, shorten >=7pt, Rightarrow, from=3-15, to=3]
\end{tikzcd}
}
\item
\resizebox{\hsize}{!}{
\begin{tikzcd}[ampersand replacement=\&]
	M \&\& M \&\& M \&\& M \&\& M \&\& M \&\& M \&\& M \\
	\\
	M \&\&\& {} \& M \&\& M \& {=} \& M \&\& M \& {} \&\&\& M \\
	\\
	M \&\&\&\&\&\& M \&\& M \&\&\&\&\&\& M
	\arrow["m"{description}, from=1-1, to=1-3]
	\arrow["m"{description}, from=1-3, to=1-5]
	\arrow["m"{description}, from=1-5, to=1-7]
	\arrow[""{name=0, anchor=center, inner sep=0}, "m"{description}, from=3-1, to=3-5]
	\arrow["m"{description}, from=3-5, to=3-7]
	\arrow[""{name=1, anchor=center, inner sep=0}, "m"{description}, from=5-1, to=5-7]
	\arrow[Rightarrow, no head, from=1-1, to=3-1]
	\arrow[Rightarrow, no head, from=3-1, to=5-1]
	\arrow[Rightarrow, no head, from=1-5, to=3-5]
	\arrow[Rightarrow, no head, from=1-7, to=3-7]
	\arrow[Rightarrow, no head, from=3-7, to=5-7]
	\arrow["m"{description}, from=1-9, to=1-11]
	\arrow["m"{description}, from=1-11, to=1-13]
	\arrow["m"{description}, from=1-13, to=1-15]
	\arrow[Rightarrow, no head, from=1-9, to=3-9]
	\arrow["m"{description}, from=3-9, to=3-11]
	\arrow[Rightarrow, no head, from=1-11, to=3-11]
	\arrow[Rightarrow, no head, from=1-15, to=3-15]
	\arrow[""{name=2, anchor=center, inner sep=0}, "m"{description}, from=3-11, to=3-15]
	\arrow[Rightarrow, no head, from=3-9, to=5-9]
	\arrow[""{name=3, anchor=center, inner sep=0}, "m"{description}, from=5-9, to=5-15]
	\arrow[Rightarrow, no head, from=3-15, to=5-15]
	\arrow["{m_2}"{description}, shorten >=7pt, Rightarrow, from=1-3, to=0]
	\arrow["{m_2}"{description}, shorten >=7pt, Rightarrow, from=3-4, to=1]
	\arrow["{m_2}"{description}, shorten >=7pt, Rightarrow, from=1-13, to=2]
	\arrow["{m_2}"{description}, shorten >=7pt, Rightarrow, from=3-12, to=3]
\end{tikzcd}
}
\end{itemize}
\end{definition}

Alternatively, one could ask for an unbiased notion.

\begin{definition}
An \defin{unbiased monoid} in a vdc \vdC is an object $M$ and an horizontal morphism $m \colon M \to M$ equipped with for each arity $n \geq 0$ a cell from $n$ copies of $m$ to itself:

\[\begin{tikzcd}[ampersand replacement=\&]
	M \&\& M \& \dots \& M \&\& M \\
	\\
	M \&\&\&\&\&\& M
	\arrow["m"{description}, from=1-1, to=1-3]
	\arrow["m"{description}, from=1-5, to=1-7]
	\arrow[""{name=0, anchor=center, inner sep=0}, "m"{description}, from=3-1, to=3-7]
	\arrow[Rightarrow, no head, from=1-1, to=3-1]
	\arrow[Rightarrow, no head, from=1-7, to=3-7]
	\arrow["{m_n}"{description}, shorten >=7pt, Rightarrow, from=1-4, to=0]
\end{tikzcd}\]
such that $m_1 = \id_m$ and they compose, i.e. for any arities $n,n'$:

\resizebox{\hsize}{!}{
\begin{tikzcd}[ampersand replacement=\&]
	M \&\& M \& \dots \& M \&\& M \&\& M \& \dots \& M \&\& M \&\& M \&\& M \& \dots \& M \&\& M \&\& M \& \dots \& M \&\& M \\
	\\
	M \&\&\&\&\&\& M \&\&\&\&\&\& M \& {=} \\
	\\
	M \&\&\&\&\&\&\&\&\&\&\&\& M \&\& M \&\&\&\&\&\&\&\&\&\&\&\& M
	\arrow["m"{description}, from=1-11, to=1-13]
	\arrow[Rightarrow, no head, from=1-13, to=3-13]
	\arrow[""{name=0, anchor=center, inner sep=0}, "m"{description}, from=5-1, to=5-13]
	\arrow[Rightarrow, no head, from=3-1, to=5-1]
	\arrow[Rightarrow, no head, from=3-13, to=5-13]
	\arrow["m"{description}, from=1-15, to=1-17]
	\arrow["m"{description}, from=1-19, to=1-21]
	\arrow["m"{description}, from=1-21, to=1-23]
	\arrow["m"{description}, from=1-25, to=1-27]
	\arrow[Rightarrow, no head, from=1-15, to=5-15]
	\arrow[Rightarrow, no head, from=1-27, to=5-27]
	\arrow[""{name=1, anchor=center, inner sep=0}, "m"{description}, from=5-15, to=5-27]
	\arrow[Rightarrow, no head, from=1-7, to=3-7]
	\arrow["m"{description}, from=1-7, to=1-9]
	\arrow["m"{description}, from=1-5, to=1-7]
	\arrow[Rightarrow, no head, from=1-1, to=3-1]
	\arrow["m"{description}, from=1-1, to=1-3]
	\arrow[""{name=2, anchor=center, inner sep=0}, "m"{description}, from=3-1, to=3-7]
	\arrow[""{name=3, anchor=center, inner sep=0}, "m"{description}, from=3-7, to=3-13]
	\arrow["{m_{n+n'}}"{description}, shorten >=16pt, Rightarrow, from=1-21, to=1]
	\arrow["{m_2}"{description}, shorten >=7pt, Rightarrow, from=3-7, to=0]
	\arrow["{m_n}"{description}, shorten >=7pt, Rightarrow, from=1-4, to=2]
	\arrow["{m_{n'}}"{description}, shorten >=7pt, Rightarrow, from=1-10, to=3]
\end{tikzcd}
}

\end{definition}

\begin{prop}
The biased and unbiased notions of monoid coincide.
\end{prop}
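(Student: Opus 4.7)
The plan is to construct maps in both directions between biased and unbiased monoid structures on a fixed horizontal morphism $m \colon M \to M$, and show they are mutually inverse.

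From unbiased to biased, I would simply extract $m_0$ and $m_2$ from the given family $(m_n)_{n\geq 0}$. Unitality follows by applying the composition law to $m_2(m_1, m_0) = m_1$ and $m_2(m_0, m_1) = m_1$ together with $m_1 = \id_m$; associativity follows because both $m_2(m_2, m_1)$ and $m_2(m_1, m_2)$ compose (via the unbiased composition law) to $m_3$. So this direction is a direct verification.

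From biased to unbiased, I would define $m_n$ inductively: $m_0$ and $m_2$ are given, $m_1 := \id_m$, and $m_{n+1}$ is obtained by vertical composition of $m_n$ (on the first $n$ copies of $m$) with $m_2$ (on the resulting composite and the last $m$), together with appropriate identity cells on the remaining strands. Graphically this means building $m_{n+1}$ as a ``left-leaning tree'' built from binary multiplications. The base cases $n=0,1$ need to be checked against this recipe using the unit laws of the biased monoid.

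The main obstacle is verifying the unbiased composition law
$m_2(m_n, m_{n'}) = m_{n+n'}$ (and more generally $m_k(m_{n_1}, \dots, m_{n_k}) = m_{\sum_i n_i}$) for all arities. This is a coherence statement analogous to Mac Lane's coherence theorem for monoidal categories: every parenthesisation of $n$ copies of $m$, constructed via $m_0$ and $m_2$ and identities, yields the same cell. I would prove it by induction on $n$, reducing each tree to the canonical left-leaning form using the associativity pentagon of the biased monoid (to rebracket) and the unitality triangles (to delete $m_0$-insertions). More explicitly: to show $m_k(m_{n_1},\dots,m_{n_k}) = m_{\sum n_i}$, I would induct on $k$; the case $k=2$ is the heart, and for this I would induct on $n'$, using associativity to move an extra $m$ from the right $m_{n'+1}$ into the outer $m_2$. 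The cases where some $n_i = 0$ require the biased unit laws. This inductive pattern is essentially the same as in the proof that biased and unbiased monoidal categories agree (established earlier in the thesis), applied one dimension up.

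Finally, inverse-ness is automatic: starting biased, extracting $m_0, m_2$, and re-building gives back $m_0, m_2$ themselves; starting unbiased, extracting $m_0, m_2$ and rebuilding $m_n$ reproduces the original family by repeated application of its own composition law $m_2(m_{n-1}, m_1) = m_n$ together with $m_1 = \id_m$. Everything takes place inside the ambient virtual double category, and the only tools needed are vertical composition of cells, unitality, and associativity of vertical composition, all of which are part of the vdc axioms.
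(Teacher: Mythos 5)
Your proposal is correct and follows essentially the same route as the paper's proof: one direction just extracts $m_0,m_2$ and reads off unitality and associativity from the unbiased composition law, and the other builds $m_{n+1}$ out of $m_2$ and $m_n$ (the paper uses the right-leaning $m_{n+1}:=m_2(\id_m,m_n)$ rather than your left-leaning tree, so it inducts on the left argument where you induct on the right) and then verifies $m_2(m_n,m_{n'})=m_{n+n'}$ by induction using associativity and unitality. The only remark worth making is that associativity and unitality of a (biased) monoid in a vdc are strict equations between cells, so no Mac Lane-style coherence argument with pentagons and triangles is actually needed --- the single induction you identify as the heart of the matter already suffices, exactly as in the paper; your additional check that the two constructions are mutually inverse is a pleasant extra that the paper leaves implicit.
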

\begin{proof}
If $m$ is an unbiased monoid then we have \[m_2(\id_m,m_0) = m_2(m_1,m_0) = m_1 = \id_m\]
and similarly for $m_2(m_0,\id_m) = \id_m$
\[m_2(m_2,\id_m) = m_2(m_2,m_1) = m_3 = m_2(m_1,m_2) = m_2(\id_m,m_2)\]
So $m$ is a biased monoid.

Conversely, if $m$ is a monoid then we take $m_0 := m_0$ and $m_{n+1} := m_2(\id_m,m_n)$.
Notice that we have $m_1 = \id_m$ and $m_2 = m_2$ by the rules of a monoid.

Then let prove that $m_2(m_n,m_{n'}) = m_{n+n'}$ by induction:
\begin{itemize}
\item for $n=0$, 
\[m_2(m_0,m_{n'}) = (m_2(m_0,\id_m))(m_{n'}) = \id_m(m_{n'}) = m_{n'}\]
where the second equality use the fact that $m$ is a monoid
\item now suppose that it is true for $n$,
\begin{align*}
m_2(m_{n+1},m_{n'}) &= m_2(m_2(\id_m,m_n),m_{n'}))\\
&= m_2(m_2,\id_m)(\id_m,m_n,m_{n'}) \\
&= m_2(\id_m,m_2)(\id_m,m_n,m_{n'}) \\&
= m_2(\id_m,m_2(m_n,m_{n'})) \\&
= m_2(\id_m,m_{n+n'}) \\&
= m_{n+n'+1}
\end{align*}
\end{itemize}

\begin{definition}
A \defin{morphism} of monoids $(f,F) \colon (M,m) \to (M',m')$ is a pair of a vertical morphisms $f \colon M \to M'$ and a cell $F \colon m \Rightarrow m'$ with horizontal domain and codomain $f$ that preserves the multiplication cells of the monoids, i.e.:

\resizebox{\hsize}{!}{
\begin{tikzcd}[ampersand replacement=\&]
	M \&\& M \& \dots \& M \&\& M \&\& M \&\& M \& \dots \& M \&\& M \\
	\\
	M \&\&\&\&\&\& M \& {=} \& {M'} \&\& {M'} \& \dots \& {M'} \&\& {m'} \\
	\\
	{M'} \&\&\&\&\&\& {M'} \&\& {M'} \&\&\&\&\&\& {M'}
	\arrow[""{name=0, anchor=center, inner sep=0}, "m"{description}, from=1-9, to=1-11]
	\arrow[""{name=1, anchor=center, inner sep=0}, "m"{description}, from=1-13, to=1-15]
	\arrow[""{name=2, anchor=center, inner sep=0}, "{m'}"{description}, from=3-9, to=3-11]
	\arrow[""{name=3, anchor=center, inner sep=0}, "{m'}"{description}, from=3-13, to=3-15]
	\arrow[""{name=4, anchor=center, inner sep=0}, "{m'}"{description}, from=5-9, to=5-15]
	\arrow["f"{description}, from=1-9, to=3-9]
	\arrow["f"{description}, from=1-11, to=3-11]
	\arrow["f"{description}, from=1-13, to=3-13]
	\arrow["f"{description}, from=1-15, to=3-15]
	\arrow[Rightarrow, no head, from=3-9, to=5-9]
	\arrow[Rightarrow, no head, from=3-15, to=5-15]
	\arrow["m"{description}, from=1-5, to=1-7]
	\arrow["m"{description}, from=1-1, to=1-3]
	\arrow[""{name=5, anchor=center, inner sep=0}, "m"{description}, from=3-1, to=3-7]
	\arrow[""{name=6, anchor=center, inner sep=0}, "{m'}"{description}, from=5-1, to=5-7]
	\arrow[Rightarrow, no head, from=1-1, to=3-1]
	\arrow[Rightarrow, no head, from=1-7, to=3-7]
	\arrow["f"{description}, from=3-1, to=5-1]
	\arrow["f"{description}, from=3-7, to=5-7]
	\arrow["F"{description}, shorten <=9pt, shorten >=9pt, Rightarrow, from=0, to=2]
	\arrow["F"{description}, shorten <=9pt, shorten >=9pt, Rightarrow, from=1, to=3]
	\arrow["{m_n'}"{description}, shorten >=7pt, Rightarrow, from=3-12, to=4]
	\arrow["{m_n}"{description}, shorten >=7pt, Rightarrow, from=1-4, to=5]
	\arrow["F"{description}, shorten <=9pt, shorten >=9pt, Rightarrow, from=5, to=6]
\end{tikzcd}
}
\end{definition}
\end{proof}

\begin{definition}
A \defin{module} from $(M,m)$ to $(N,n)$ or $(M,N)$-module is an horizontal morphism $p \colon M \to N$ with a left action by $m$ and a right action by $n$, i.e. cells:
\[\begin{tikzcd}[ampersand replacement=\&]
	M \&\& M \&\& N \\
	\\
	M \&\&\&\& N \\
	M \&\& N \&\& N \\
	\\
	M \&\&\&\& N
	\arrow["m"{description}, from=1-1, to=1-3]
	\arrow[""{name=0, anchor=center, inner sep=0}, "p"{description}, from=3-1, to=3-5]
	\arrow[Rightarrow, no head, from=1-1, to=3-1]
	\arrow[Rightarrow, no head, from=1-5, to=3-5]
	\arrow["p"{description}, from=4-1, to=4-3]
	\arrow["n"{description}, from=4-3, to=4-5]
	\arrow[""{name=1, anchor=center, inner sep=0}, "p"{description}, from=6-1, to=6-5]
	\arrow[Rightarrow, no head, from=4-1, to=6-1]
	\arrow[Rightarrow, no head, from=4-5, to=6-5]
	\arrow["p"{description}, from=1-3, to=1-5]
	\arrow["{\cdot_m}"{description}, shorten >=7pt, Rightarrow, from=1-3, to=0]
	\arrow["{\cdot_n}"{description}, shorten >=7pt, Rightarrow, from=4-3, to=1]
\end{tikzcd}\]
such that applying $\cdot$ to a multiplication consists of applying it multiple times, i.e.:

\resizebox{\hsize}{!}{
\begin{tikzcd}[ampersand replacement=\&]
	M \&\& M \& \dots \& M \&\& M \&\& N \&\& M \&\& M \& \dots \& M \&\& M \&\& N \\
	\\
	\&\&\&\&\&\&\&\&\&\& M \&\& M \&\& M \&\&\&\& N \\
	M \& {} \&\&\& {} \&\& M \&\& N \& {=} \&\& \vdots \&\& \vdots \&\&\& \vdots \\
	\&\&\&\&\&\&\&\&\&\& M \&\& M \&\& {} \&\&\&\& N \\
	\\
	M \&\&\&\&\&\&\&\& N \&\& M \&\&\&\&\&\&\&\& N
	\arrow[""{name=0, anchor=center, inner sep=0}, "m"{description}, from=4-1, to=4-7]
	\arrow["p"{description}, from=4-7, to=4-9]
	\arrow[Rightarrow, no head, from=4-1, to=7-1]
	\arrow[Rightarrow, no head, from=4-9, to=7-9]
	\arrow[""{name=1, anchor=center, inner sep=0}, "p"{description}, from=7-1, to=7-9]
	\arrow["m"{description}, from=1-5, to=1-7]
	\arrow["m"{description}, from=1-1, to=1-3]
	\arrow["p"{description}, from=1-7, to=1-9]
	\arrow[Rightarrow, no head, from=1-9, to=4-9]
	\arrow[Rightarrow, no head, from=1-7, to=4-7]
	\arrow[Rightarrow, no head, from=1-1, to=4-1]
	\arrow["m"{description}, from=1-11, to=1-13]
	\arrow["m"{description}, from=1-15, to=1-17]
	\arrow["p"{description}, from=1-17, to=1-19]
	\arrow[Rightarrow, no head, from=1-15, to=3-15]
	\arrow[Rightarrow, no head, from=1-19, to=3-19]
	\arrow[""{name=2, anchor=center, inner sep=0}, "p"{description}, from=3-15, to=3-19]
	\arrow[Rightarrow, no head, from=1-11, to=3-11]
	\arrow[Rightarrow, no head, from=1-13, to=3-13]
	\arrow["m"{description}, from=3-11, to=3-13]
	\arrow[""{name=3, anchor=center, inner sep=0}, "p"{description}, from=7-11, to=7-19]
	\arrow[Rightarrow, no head, from=5-11, to=7-11]
	\arrow[Rightarrow, no head, from=5-19, to=7-19]
	\arrow["m"{description}, from=5-11, to=5-13]
	\arrow["p"{description}, from=5-13, to=5-19]
	\arrow[Rightarrow, no head, from=3-11, to=5-11]
	\arrow[Rightarrow, no head, from=3-19, to=5-19]
	\arrow["{m_k}"{description}, shorten >=12pt, Rightarrow, from=1-4, to=0]
	\arrow["{\cdot_m}"{description}, shorten >=12pt, Rightarrow, from=4-5, to=1]
	\arrow["{\cdot_m}"{description}, shorten >=7pt, Rightarrow, from=1-17, to=2]
	\arrow["{\cdot_m}"{description}, shorten >=7pt, Rightarrow, from=5-15, to=3]
\end{tikzcd}
}
and similarly for $\cdot_n$
\end{definition}

\begin{definition}
Given modules $p_i$ from $(M_{i-1},m_{i-1})$ to $(M_i,m_i)$ and $q$ from $(N_0,n_0)$ to $(N_1,n_1)$, a \defin{module multimorphism} $\alpha \colon p_1,\dots, p_n \Rightarrow q$ is a cell in \vdC that
\begin{itemize}
\item respects the action externally, i.e.:

\resizebox{\hsize}{!}{
\begin{tikzcd}[ampersand replacement=\&]
	\&\&\&\&\&\&\&\&\&\&\&\&\&\& {} \\
	\\
	\\
	\\
	\\
	\\
	\\
	\\
	M \&\& {M_0} \&\& {M_1} \& \dots \& {M_{k-1}} \&\& {M_k} \&\&\&\& {M_0} \&\& {M_0} \&\& {M_1} \& \dots \& {M_{k-1}} \&\& {M_k} \\
	\\
	{M_0} \&\&\&\& {M_1} \& \dots \& {M_{k-1}} \&\& {M_k} \&\& {=} \&\& {N_0} \&\& {N_0} \&\& {} \&\&\&\& {N_1} \\
	\\
	{N_0} \&\&\&\&\&\&\&\& {N_1} \&\&\&\& {N_0} \&\&\&\&\&\&\&\& {N_1}
	\arrow[""{name=0, anchor=center, inner sep=0}, "{m_0}"{description}, from=9-13, to=9-15]
	\arrow["{p_1}"{description}, from=9-15, to=9-17]
	\arrow["f"{description}, from=9-13, to=11-13]
	\arrow["f"{description}, from=9-15, to=11-15]
	\arrow[""{name=1, anchor=center, inner sep=0}, "{n_0}"{description}, from=11-13, to=11-15]
	\arrow[""{name=2, anchor=center, inner sep=0}, "q"{description}, from=13-13, to=13-21]
	\arrow[Rightarrow, no head, from=11-13, to=13-13]
	\arrow["{p_1}"{description}, from=9-3, to=9-5]
	\arrow["m"{description}, from=9-1, to=9-3]
	\arrow["{p_k}"{description}, from=9-19, to=9-21]
	\arrow["g"{description}, from=9-21, to=11-21]
	\arrow[""{name=3, anchor=center, inner sep=0}, "q"{description}, from=11-15, to=11-21]
	\arrow[Rightarrow, no head, from=11-21, to=13-21]
	\arrow[Rightarrow, no head, from=9-1, to=11-1]
	\arrow["{p_k}"{description}, from=9-7, to=9-9]
	\arrow[Rightarrow, no head, from=9-5, to=11-5]
	\arrow[""{name=4, anchor=center, inner sep=0}, "{p_1}"{description}, from=11-1, to=11-5]
	\arrow["{p_k}"{description}, from=11-7, to=11-9]
	\arrow[Rightarrow, no head, from=9-7, to=11-7]
	\arrow[Rightarrow, no head, from=9-9, to=11-9]
	\arrow[""{name=5, anchor=center, inner sep=0}, "q"{description}, from=13-1, to=13-9]
	\arrow["f"{description}, from=11-1, to=13-1]
	\arrow["g"{description}, from=11-9, to=13-9]
	\arrow["F"{description}, shorten <=9pt, shorten >=9pt, Rightarrow, from=0, to=1]
	\arrow["\alpha"{description}, shorten >=7pt, Rightarrow, from=9-18, to=3]
	\arrow["{\cdot_n}"{description}, shorten >=7pt, Rightarrow, from=11-17, to=2]
	\arrow["{\cdot_m}"{description}, shorten >=7pt, Rightarrow, from=9-3, to=4]
	\arrow["\alpha"{description}, shorten >=7pt, Rightarrow, from=11-5, to=5]
\end{tikzcd}
}
and similarly on the right.
\item is balanced, i.e. respects the action internally:

\resizebox{\hsize}{!}{
\begin{tikzcd}[ampersand replacement=\&]
	{M_0} \&\& {M_1} \& \dots \& {M_{i-1}} \&\& {M_i} \&\& {M_i} \&\& {M_{i+1}} \& \dots \& {M_{k-1}} \&\& {M_k} \\
	\\
	{M_0} \&\& {M_1} \& \dots \& {M_{i-1}} \&\&\&\& {M_i} \&\& {M_{i+1}} \& \dots \& {M_{k-1}} \&\& {M_k} \\
	\\
	{N_0} \&\&\&\&\&\&\&\&\&\&\&\&\&\& {N_1} \\
	\&\&\&\&\&\&\& {=} \\
	{M_0} \&\& {M_1} \& \dots \& {M_{i-1}} \&\& {M_i} \&\& {M_i} \&\& {M_{i+1}} \& \dots \& {M_{k-1}} \&\& {M_k} \\
	\\
	{M_0} \&\& {M_1} \& \dots \& {M_{i-1}} \&\& {M_i} \&\&\&\& {M_{i+1}} \& \dots \& {M_{k-1}} \&\& {M_k} \\
	\\
	{N_0} \&\&\&\&\&\&\&\&\&\&\&\&\&\& {N_1}
	\arrow["{p_1}"{description}, from=1-1, to=1-3]
	\arrow["{p_i}"{description}, from=1-5, to=1-7]
	\arrow["{p_k}"{description}, from=1-13, to=1-15]
	\arrow["{m_i}"{description}, from=1-7, to=1-9]
	\arrow["{p_{i+1}}"{description}, from=1-9, to=1-11]
	\arrow[Rightarrow, no head, from=1-1, to=3-1]
	\arrow["{p_1}"{description}, from=3-1, to=3-3]
	\arrow[Rightarrow, no head, from=1-3, to=3-3]
	\arrow[""{name=0, anchor=center, inner sep=0}, "{p_i}"{description}, from=3-5, to=3-9]
	\arrow[Rightarrow, no head, from=1-5, to=3-5]
	\arrow[Rightarrow, no head, from=1-9, to=3-9]
	\arrow[Rightarrow, no head, from=1-11, to=3-11]
	\arrow["{p_{i+1}}"{description}, from=3-9, to=3-11]
	\arrow[Rightarrow, no head, from=1-13, to=3-13]
	\arrow[Rightarrow, no head, from=1-15, to=3-15]
	\arrow["{p_k}"{description}, from=3-13, to=3-15]
	\arrow["f"{description}, from=3-1, to=5-1]
	\arrow[""{name=1, anchor=center, inner sep=0}, "q"{description}, from=5-1, to=5-15]
	\arrow["g"{description}, from=3-15, to=5-15]
	\arrow["{p_1}"{description}, from=7-1, to=7-3]
	\arrow["{p_{i}}"{description}, from=7-5, to=7-7]
	\arrow["{m_i}"{description}, from=7-7, to=7-9]
	\arrow["{p_{i+1}}"{description}, from=7-9, to=7-11]
	\arrow["{p_k}"{description}, from=7-13, to=7-15]
	\arrow[Rightarrow, no head, from=7-1, to=9-1]
	\arrow[Rightarrow, no head, from=7-3, to=9-3]
	\arrow[Rightarrow, no head, from=7-5, to=9-5]
	\arrow[Rightarrow, no head, from=7-7, to=9-7]
	\arrow[Rightarrow, no head, from=7-11, to=9-11]
	\arrow[Rightarrow, no head, from=7-13, to=9-13]
	\arrow[Rightarrow, no head, from=7-15, to=9-15]
	\arrow["{p_1}"{description}, from=9-1, to=9-3]
	\arrow["{p_i}"{description}, from=9-5, to=9-7]
	\arrow[""{name=2, anchor=center, inner sep=0}, "{p_{i+1}}"{description}, from=9-7, to=9-11]
	\arrow["{p_k}"{description}, from=9-13, to=9-15]
	\arrow["f"{description}, from=9-1, to=11-1]
	\arrow["g"{description}, from=9-15, to=11-15]
	\arrow[""{name=3, anchor=center, inner sep=0}, "q"{description}, from=11-1, to=11-15]
	\arrow["{\cdot_{m_i}}"{description}, shorten >=7pt, Rightarrow, from=1-7, to=0]
	\arrow["\alpha"{description}, shorten >=9pt, Rightarrow, from=3-9, to=1]
	\arrow["{\cdot_{m_i}}"{description}, shorten >=7pt, Rightarrow, from=7-9, to=2]
	\arrow["\alpha"{description}, shorten >=9pt, Rightarrow, from=9-7, to=3]
\end{tikzcd}
}
\end{itemize}
\end{definition}

\begin{prop}
Monoids and monoid morphisms in \vdC form a category with identity and composition inherited from \vdC.
\end{prop}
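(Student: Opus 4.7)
The plan is to construct the categorical data explicitly and then verify the three required axioms (identity preservation, associative composition, and unit/associativity of the composition law) by reducing each to a corresponding axiom of the ambient virtual double category $\vdC$.

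For the identity on $(M,m)$ I would take the pair $(\id_M, \id_m)$, where $\id_m$ is the identity cell on the horizontal morphism $m \colon M \to M$. The monoid-morphism condition for this pair is the equation $m_n \circ (\id_m,\dots,\id_m) = \id_m \circ m_n$, which is immediate from the left and right unit laws for cell composition in $\vdC$. For composition, given $(f,F) \colon (M,m) \to (M',m')$ and $(g,G) \colon (M',m') \to (M'',m'')$ I would define the composite as $(g\circ f,\; G \bullet F)$, where $g \circ f$ is composition in $\vdV(\vdC)$ and $G \bullet F$ is the vertical composition of cells obtained by pasting $F$ above $G$ (legal since $F$ has codomain $m'$ and $G$ has domain $m'$).

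To check that $(g \circ f, G \bullet F)$ is again a monoid morphism, I would expand both sides of the required equation into the pasting of $n$ copies of $F$, one copy of $m'_n$, $n$ copies of $G$, and one copy of $m''_n$. Using the interchange law of $\vdC$ (composition of $\beta(\alpha_1,\dots,\alpha_n)$ is associative when nested) I can reorganize either as ``apply $m_n$ then the pasting $G\bullet F$'' or as ``apply each $G\bullet F$ separately then $m''_n$''; the monoid-morphism axiom for $F$ reduces the first grouping to one involving $m'_n$, and the monoid-morphism axiom for $G$ finishes the reduction to $m''_n$ applied pointwise. This is essentially two applications of the axiom in sequence glued by interchange.

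Finally, associativity $((h,H) \circ (g,G)) \circ (f,F) = (h,H) \circ ((g,G) \circ (f,F))$ and unitality follow componentwise: on vertical morphisms from the category axioms of $\vdV(\vdC)$, and on cells from the associativity and unitality of cell composition in $\vdC$ (where the pasting $H \bullet (G \bullet F) = (H \bullet G) \bullet F$ is the special case of the general associativity axiom with a single horizontal morphism at each level). The only mildly delicate point is the interchange step in the composition argument; the rest is a bookkeeping verification that each categorical axiom for $\mathbf{Mon}(\vdC)$ is just the corresponding axiom of $\vdC$ in disguise.
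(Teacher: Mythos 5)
Your proof is correct and follows the same route as the paper, which simply observes that everything reduces to the unitality and associativity axioms of vertical morphisms and cells in $\vdC$; your write-up supplies the details the paper leaves implicit, in particular the check that the composite pair is again a monoid morphism via two applications of the morphism axiom linked by the associativity (interchange) law for cell composition.
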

\begin{proof}
These form a category by unitality and associativity of vertical morphisms and cells in \vdC.
\end{proof}

\begin{prop}
The identity cell is a module multimorphism and these compose.
\end{prop}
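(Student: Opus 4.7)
The plan is to verify the two claims — identity is a module multimorphism, and composition preserves the module multimorphism property — by systematic diagram chases using the axioms of a virtual double category (associativity, unitality, interchange) together with the defining equations of the action cells $\cdot_m$, $\cdot_n$.

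For the identity cell $\id_p \colon p \Rightarrow p$, note that its source and target vertical morphisms are both $\id_M$ on the left and $\id_N$ on the right. The external preservation condition asks that the composite $\cdot_m(\id_m, \id_p)$ equal $\id_p \cdot_m$; since both the monoid morphism cell from $m$ to itself along $\id_M$ and the cell $\id_p$ are identities, unitality of vertical cell composition in \vdC{} collapses both sides to the cell $\cdot_m$ itself. The right-action condition is symmetric, and the balanced condition at any internal position reduces likewise to $\id_p \cdot_{m_i} = \cdot_{m_i}$ by unitality. So $\id_p$ is a module multimorphism.

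For composition, suppose we have module multimorphisms $\alpha_i \colon p_{i,1},\dots,p_{i,m_i} \Rightarrow q_i$ with matching boundary vertical morphisms, and $\beta \colon q_1,\dots,q_k \Rightarrow r$. I want to show that the vdc composite $\beta(\alpha_1,\dots,\alpha_k)$ is a module multimorphism. For the external left-action condition, I would start from the composite $\cdot_{m_0}$ applied at the left boundary of the $p$-chain, invoke the external condition of $\alpha_1$ to push the action through $\alpha_1$ onto $q_1$, then invoke the external condition of $\beta$ to push it through $\beta$ onto $r$. Associativity and the interchange law of vdc composition let us rearrange the cells into the desired form. The right-action case is symmetric.

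The balanced condition splits into two cases depending on the position of the internal monoid $m$. If the action $\cdot_m$ sits strictly inside one block $\alpha_i$ — that is, between $p_{i,j}$ and $p_{i,j+1}$ — then the balanced condition for $\alpha_i$ gives the required equality directly, using only interchange with the identity cells substituted in the other slots of $\beta$. If instead $\cdot_m$ sits at an interface between $\alpha_i$ and $\alpha_{i+1}$ (so $m$ is the monoid whose $q$-chain position lies between $q_i$ and $q_{i+1}$), the balancing cannot be handled inside a single $\alpha$; here I use the external right-action condition of $\alpha_i$ to transport the action from $p_{i,m_i}$ up to $q_i$, then apply the balanced condition of $\beta$ at the matching interface, then use the external left-action condition of $\alpha_{i+1}$ to transport the action back down to $p_{i+1,1}$. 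This three-step chain, rearranged by associativity and interchange, produces the other side of the balancing equation. The main obstacle is purely one of bookkeeping: tracking which interchange/associativity move is applied to convert the long vertical pasting $\beta(\alpha_1,\dots,\alpha_k)$ pre- or post-composed with $\cdot_m$ into the canonical form demanded by the module-multimorphism equations. No new ideas are needed beyond the axioms already in hand.
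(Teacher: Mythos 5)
Your proof is correct and follows essentially the same route as the paper: unitality handles the identity cell, and the composite is handled by pushing the action cells through the constituent multimorphisms using their external and balanced conditions together with associativity and interchange (the paper records only the external left-action case in a figure and declares the rest similar). Your explicit case split for the balanced condition — internal to a block versus at an interface, the latter via right-action of $\alpha_i$, balancing of $\beta$, then left-action of $\alpha_{i+1}$ — is exactly the argument the paper leaves implicit.
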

\begin{proof}
\begin{figure}
\resizebox{\hsize}{!}{
\begin{tikzcd}[ampersand replacement=\&]
	{M_0} \&\& {M_0} \&\& {M_1} \& \dots \& {M_{k_1-1}} \&\& {M_{k_1}} \& \dots \& {M_{k_{l-1}}} \&\& {M_{k_{l-1}+1}} \& \dots \& {M_{k_l-1}} \&\& {M_{k_l}} \\
	\\
	{M_0} \&\&\&\& {M_1} \& \dots \& {M_{k_1-1}} \&\& {M_{k_l}} \& \dots \& {M_{k_{l-1}}} \&\& {M_{k_{l-1}+1}} \& \dots \& {M_{k_l-1}} \&\& {M_{k_l}} \\
	\\
	{N_0} \&\&\&\&\&\&\&\& {N_1} \& \dots \& {N_{l-1}} \&\&\&\&\&\& {N_l} \\
	\\
	{P_0} \&\&\&\&\&\&\&\&\&\&\&\&\&\&\&\& {P_1} \\
	\\
	\&\&\&\&\&\&\&\& {=} \\
	\\
	{M_0} \&\& {M_0} \&\& {M_1} \& \dots \& {M_{k_1-1}} \&\& {M_{k_1}} \& \dots \& {M_{k_{l-1}}} \&\& {M_{k_{l-1}+1}} \& \dots \& {M_{k_l-1}} \&\& {M_{k_l}} \\
	\\
	{N_0} \&\& {N_0} \&\& {} \&\&\&\& {N_1} \& \dots \& {N_{l-1}} \&\&\&\&\&\& {N_l} \\
	\\
	{N_0} \&\&\&\&\&\&\&\& {N_1} \& \dots \& {N_{l-1}} \&\&\&\&\&\& {N_l} \\
	\\
	{P_0} \&\&\&\&\&\&\&\&\&\&\&\&\&\&\&\& {P_1} \\
	\\
	\&\&\&\&\&\&\&\& {=} \\
	\\
	{M_0} \&\& {M_0} \&\& {M_1} \& \dots \& {M_{k_1-1}} \&\& {M_{k_1}} \& \dots \& {M_{k_{l-1}}} \&\& {M_{k_{k-1}+1}} \& \dots \& {M_{k_l-1}} \&\& {M_{k_l}} \\
	\\
	{N_0} \&\& {N_0} \&\&\&\&\&\& {N_1} \& \dots \& {N_{l-1}} \&\&\&\&\&\& {N_l} \\
	\\
	{P_0} \&\& {P_0} \&\&\&\&\&\& {} \&\&\&\&\&\&\&\& {P_1} \\
	\\
	{P_0} \&\&\&\&\&\&\&\&\&\&\&\&\&\&\&\& {P_1}
	\arrow[""{name=0, anchor=center, inner sep=0}, "{p_1}"{description}, from=3-1, to=3-5]
	\arrow["{p_{k_1}}"{description}, from=3-7, to=3-9]
	\arrow["{p_{k_{l-1}+1}}"{description}, from=3-11, to=3-13]
	\arrow["{p_{k_l}}"{description}, from=3-15, to=3-17]
	\arrow[""{name=1, anchor=center, inner sep=0}, "{q_0}"{description}, from=5-1, to=5-9]
	\arrow[""{name=2, anchor=center, inner sep=0}, "{q_l}"{description}, from=5-11, to=5-17]
	\arrow["{f_0}"{description}, from=3-1, to=5-1]
	\arrow["{f_1}"{description}, from=3-9, to=5-9]
	\arrow["{f_{l-1}}"{description}, from=3-11, to=5-11]
	\arrow["{f_l}"{description}, from=3-17, to=5-17]
	\arrow[""{name=3, anchor=center, inner sep=0}, "r"{description}, from=7-1, to=7-17]
	\arrow["{g_0}"{description}, from=5-1, to=7-1]
	\arrow["{g_1}"{description}, from=5-17, to=7-17]
	\arrow["{p_1}"{description}, from=1-3, to=1-5]
	\arrow["{p_{k_1}}"{description}, from=1-7, to=1-9]
	\arrow["{p_{k_{l-1}+1}}"{description}, from=1-11, to=1-13]
	\arrow["{p_{k_l}}"{description}, from=1-15, to=1-17]
	\arrow[Rightarrow, no head, from=1-1, to=3-1]
	\arrow[Rightarrow, no head, from=1-5, to=3-5]
	\arrow[Rightarrow, no head, from=1-7, to=3-7]
	\arrow[Rightarrow, no head, from=1-9, to=3-9]
	\arrow[Rightarrow, no head, from=1-11, to=3-11]
	\arrow[Rightarrow, no head, from=1-13, to=3-13]
	\arrow[Rightarrow, no head, from=1-15, to=3-15]
	\arrow[Rightarrow, no head, from=1-17, to=3-17]
	\arrow["{m_0}"{description}, from=1-1, to=1-3]
	\arrow[""{name=4, anchor=center, inner sep=0}, "{m_0}"{description}, from=21-1, to=21-3]
	\arrow["{p_1}"{description}, from=21-3, to=21-5]
	\arrow["{p_{k_1}}"{description}, from=21-7, to=21-9]
	\arrow["{p_{k_{l-1}+1}}"{description}, from=21-11, to=21-13]
	\arrow["{p_{k_l}}"{description}, from=21-15, to=21-17]
	\arrow["{f_0}"{description}, from=21-1, to=23-1]
	\arrow["{f_0}"{description}, from=21-3, to=23-3]
	\arrow[""{name=5, anchor=center, inner sep=0}, "{q_1}"{description}, from=23-3, to=23-9]
	\arrow[""{name=6, anchor=center, inner sep=0}, "{n_0}"{description}, from=23-1, to=23-3]
	\arrow["{f_1}"{description}, from=21-9, to=23-9]
	\arrow["{f_{l-1}}"{description}, from=21-11, to=23-11]
	\arrow[""{name=7, anchor=center, inner sep=0}, "{q_l}"{description}, from=23-11, to=23-17]
	\arrow["{f_l}"{description}, from=21-17, to=23-17]
	\arrow["{g_0}"{description}, from=23-1, to=25-1]
	\arrow["{g_1}"{description}, from=23-3, to=25-3]
	\arrow[""{name=8, anchor=center, inner sep=0}, "{p_0}"{description}, from=25-1, to=25-3]
	\arrow[""{name=9, anchor=center, inner sep=0}, "r"{description}, from=25-3, to=25-17]
	\arrow["{g_1}"{description}, from=23-17, to=25-17]
	\arrow[""{name=10, anchor=center, inner sep=0}, "r"{description}, from=27-1, to=27-17]
	\arrow[Rightarrow, no head, from=25-1, to=27-1]
	\arrow[Rightarrow, no head, from=25-17, to=27-17]
	\arrow["{p_{k_1}}"{description}, from=11-7, to=11-9]
	\arrow["{p_{k_{l-1}+1}}"{description}, from=11-11, to=11-13]
	\arrow["{p_{k_l}}"{description}, from=11-15, to=11-17]
	\arrow[""{name=11, anchor=center, inner sep=0}, "{m_0}"{description}, from=11-1, to=11-3]
	\arrow["{p_1}"{description}, from=11-3, to=11-5]
	\arrow["{f_0}"{description}, from=11-1, to=13-1]
	\arrow["{f_0}"{description}, from=11-3, to=13-3]
	\arrow[""{name=12, anchor=center, inner sep=0}, "{n_0}"{description}, from=13-1, to=13-3]
	\arrow[""{name=13, anchor=center, inner sep=0}, "{q_1}"{description}, from=13-3, to=13-9]
	\arrow["{f_1}"{description}, from=11-9, to=13-9]
	\arrow[Rightarrow, no head, from=13-1, to=15-1]
	\arrow[""{name=14, anchor=center, inner sep=0}, "{q_0}"{description}, from=15-1, to=15-9]
	\arrow[Rightarrow, no head, from=13-9, to=15-9]
	\arrow[""{name=15, anchor=center, inner sep=0}, "{q_l}"{description}, from=13-11, to=13-17]
	\arrow["{f_l}"{description}, from=11-17, to=13-17]
	\arrow["{f_{l-1}}"{description}, from=11-11, to=13-11]
	\arrow[Rightarrow, no head, from=13-11, to=15-11]
	\arrow["{q_l}"{description}, from=15-11, to=15-17]
	\arrow[Rightarrow, no head, from=13-17, to=15-17]
	\arrow["{g_0}"{description}, from=15-1, to=17-1]
	\arrow[""{name=16, anchor=center, inner sep=0}, "r"{description}, from=17-1, to=17-17]
	\arrow["{g_1}"{description}, from=15-17, to=17-17]
	\arrow["{\alpha_l}"{description}, shorten >=7pt, Rightarrow, from=3-14, to=2]
	\arrow["{F_0}"{description}, shorten <=9pt, shorten >=9pt, Rightarrow, from=4, to=6]
	\arrow["{G_0}"{description}, shorten <=9pt, shorten >=9pt, Rightarrow, from=6, to=8]
	\arrow["{\alpha_1}"{description}, shorten >=7pt, Rightarrow, from=21-6, to=5]
	\arrow["{\alpha_l}"{description}, shorten >=7pt, Rightarrow, from=21-14, to=7]
	\arrow["\beta"{description}, shorten >=8pt, Rightarrow, from=23-10, to=9]
	\arrow["{\cdot_{p_0}}"{description}, shorten >=8pt, Rightarrow, from=25-9, to=10]
	\arrow["{F_0}"{description}, shorten <=9pt, shorten >=9pt, Rightarrow, from=11, to=12]
	\arrow["{\alpha_1}"{description}, shorten >=7pt, Rightarrow, from=11-6, to=13]
	\arrow["{\cdot_{n_0}}"{description}, shorten >=7pt, Rightarrow, from=13-5, to=14]
	\arrow["{\alpha_l}"{description}, shorten >=7pt, Rightarrow, from=11-14, to=15]
	\arrow["\beta"{description}, shorten >=8pt, Rightarrow, from=15-9, to=16]
	\arrow["\beta"{description}, shorten >=8pt, Rightarrow, from=5-9, to=3]
	\arrow["{\alpha_1}"{description}, shorten >=7pt, Rightarrow, from=3-5, to=1]
	\arrow["{\cdot_{m_0}}"{description}, shorten >=7pt, Rightarrow, from=1-3, to=0]
\end{tikzcd}
}
\caption{Left action on the composition}
\label{fig:lact_comp}
\end{figure}
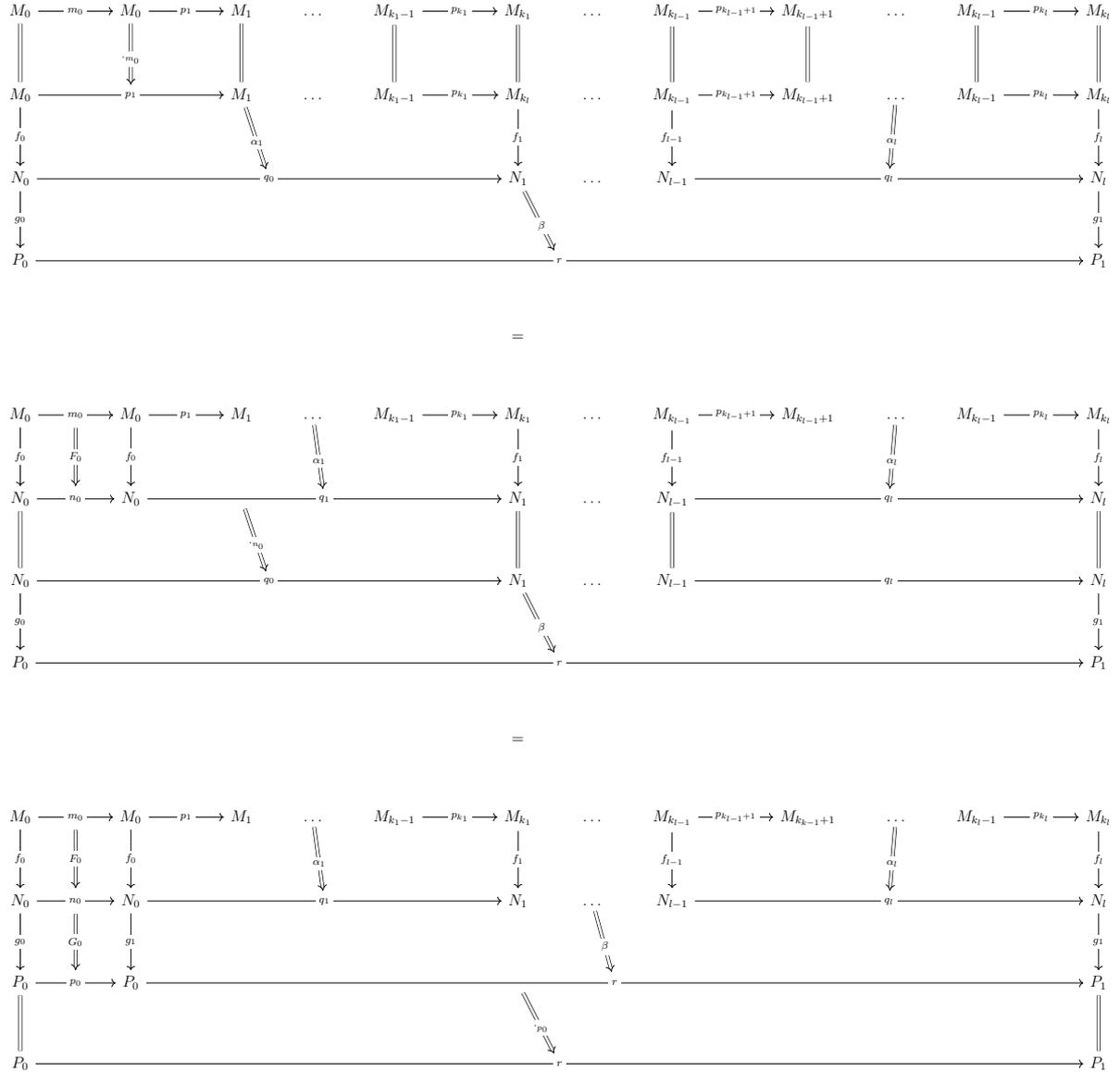
For the identity it follows from unitality and for composition it uses associativity.
For example the proof that the composite respect left action is given in figure \ref{fig:lact_comp}.
\end{proof}

So we can define a vdc \Mon{\vdC}.

\begin{definition}
\Mon{\vdC} is the virtual double category with:
\begin{itemize}
\item objects, monoids
\item vertical morphisms, monoid morphisms,
\item horizontal morphisms, modules
\item cells, module multimorphisms
\end{itemize}
\end{definition}

\begin{example}
Let \cC be a multicategory considered as a vdc with one object.
Then we get the usual notion of monoids, bimodules and their morphisms internal to \cC.
In particular for \Ab{} and \Set{} we get \Ring{} and $\mathbf{Mon}$.
\end{example}

\begin{example}
$\Mon{\Span} = \Dist$.
\end{example}

\part{Fibrations}

Fibrations have been introduced by Grothendieck in the context of of descent theory in \cite{Grothendieck1960} under the name cat\'{e}gories fibr\'{e}es, fibered categories.
Since then, they have been used in diverse contexts, for example to define categorical models of dependent type (see \cite{Jacobs2001}).
In this thesis, we will take the point-of-view of considering functors as type refinement systems
advertised in \cite{MelliesZeilberger2015}.

A fibration of categories is a functor $p \colon \cE \to \cB$ such that for any morphism $f \colon A \to B$ in \cB and any object $S$ in \cE such that $p(S) = B$ one can compute an object $R$ in \cE, called the pullback of $R$ along $f$, such that $p(R) = A$ equipped with a map $\varphi \colon R \to S$ such that $p(\varphi) = f$ satisfying some universal property.
One intuition for understanding fibrations is given by the example $\cU \colon \Sub \to \Set$ where $\Sub$ is the category whose objects $(A, R \subseteq A)$ are sets equipped with a subset and morphisms $f \colon (A,R) \to (B,S)$ are functions such that $f(R) \subseteq S$, and $\cU$ is the functor forgetting the subset.
Then, given a function $f \colon A \to B$ and a subset $S \subseteq B$, the pullback of $S$ along $f$ is the inverse image $f^{-1}(S)$.
Using \Set{} as a denotational semantics for a programming language and a subset of (the interpretation of) a type as a property of this type, the pullback can be use to interpret the weakest precondition of Hoare logic (see \cite{MelliesZeilberger2015} for more on this).
Similarly, an opfibration is a functor $p \colon \cE \to \cB$ such that for any map $f \colon A \to B$ in \cB and any object $R \in \cE$ with $p(R) = A$ there is a universal choice of an object $S$ called the pushforward of $R$ along $f$.
It corresponds to the image $f(R)$ in $\cU \colon \Sub \to \Set$ and can be used to interpret the strongest postcondition in Hoare logic.

In a different direction, Hermida
noticed that the universal property of a pushforward is similar to the universal property of the tensor product in a multicategory.
To formalise this connection, he defines a notion of opfibration of multicategories in \cite{Hermida2004}.
He then proves that tensor products are a particular case of pushforwards, namely the pushwards for the unique functor $\cM \to \one$ into the terminal multicategory.
Under this reading, the pushforward of a family of objects $(R_i)_i$ along a multimap $f \colon A_1, \dots, A_n \to B$ is akin to a tensor product parametrised by $f$, hence in this thesis we will take the convention of denoting it by $\otimes_f(R_1,\dots,R_n)$.

In the first chapter of this part, we will define a notion of bifibration of polycategories.
We will show that pullbacks and pullforwards correspond to parametrised version of the universal objects introduced earlier.
Namely, when considering the unique functor $\cP \to \one$, they will correspond exactly to the universal objects.
In other words, a polycategory \cP is birepresentable iff $\cP \to \one$ is a bifibration.

The notion of bifibration can also be leveraged to lift a model of MLL to a more refined one.
That is, given a bifibration of polycategories $p \colon \cE \to \cB$, if \cB is birepresentable then so is \cE.
The tensor in \cE is given by pushing along the universal map of the tensor in \cB, and its par by pulling along the universal map of the par.
We will illustrate this by recovering the birepresentability of \FBanc{} from the bifibrational property of the forgetful functor $\cU \colon \FBanc \to \FVect$.
In particular, this explains the connection between the projective and injective norms being the norms of $\otimes$ and $\parr$, and the fact that these are the largest and smallest crossnorm.

Finally, in a second chapter we will define a notion of opfibration of virtual double categories.
In this context, it will be a functor of vdcs $p \colon \vdE \to \vdC$ where lists of composable horizontal maps can be pushed along cells.
This will provide a parametrised notion of composition of horizontal maps.
Opfibrations of vdcs will be used in the next part on the Grothendieck construction.

As far as the author is aware, everything in this part is original material.
It has been highly influenced by Mellies and Zeilberger's perspective on fibrations (see \cite{MelliesZeilberger2015,MelliesZeilberger2016}) as well as Hermida's work on fibrations of multicategories \cite{Hermida2004}.

\chapter{Bifibrations of polycategories}
\label{ch:bifib-poly}

In this section we introduce a notion of bifibration of polycategories, and prove that a polycategory is a representable polycategory with duals just in case it is bifibred over $\one$.
We find it convenient to begin by adapting some terminological and notational conventions from the study of type refinement systems \cite{MelliesZeilberger2015,MelliesZeilberger2016}.

\section{Definitions}

\begin{definition}
  A \defin{poly-refinement system} is defined as a functor of polycategories $p : \mathcal{E} \to \mathcal{B}$.
  Explicitly, $p$ sends objects $R \in \mathcal{E}$ to objects $p(R) \in \mathcal{B}$ and polymaps $\psi : R_1,...,R_m \to S_1,..., S_n$ in $\mathcal{E}$ to polymaps $p(f) : p(R_1),...,p(R_m) \to p(S_1),...,p(S_n)$ in $\mathcal{B}$ in such a way that identities and composition are preserved strictly.
  We write $R \refs A$ (pronounced ``$R$ refines $A$'') to indicate that $p(R) = A$, and extend this to lists of objects in the obvious way, writing $\Pi \refs \Gamma$ to indicate that $\Pi = R_1,\dots,R_n$ and $\Gamma = A_1,\dots,A_n$ for some $R_1\refs A_1, \dots, R_n \refs A_n$.
  Finally, we write $\psi : \Pi \seq{f} \Sigma$ to indicate that $\psi$ is a polymap $\Pi \to \Sigma$ in $\mathcal{E}$ such that $p(\psi) = f$, with the implied constraint that $f : \Gamma \to \Delta$ where $\Pi \refs \Gamma$ and $\Sigma \refs \Delta$.
  
\end{definition}

\begin{remark}
  We will draw poly-refinement systems vertically. The top diagram will be in $\mathcal{E}$ and the bottom one in $\mathcal{B}$ with objects and polymaps directly above their image, e.g. preservation of composition is given by:
  \begin{center}\scalebox{0.7}{\tikzfig{func}}\end{center}
\end{remark}

\begin{definition}\label{defn:cartesian}
  Fix $p : \mathcal{E} \to \mathcal{B}$ a poly-refinement system and
  $\psi : \Pi_1, R, \Pi_2 \seq{g} \Sigma$ a polymap in $\mathcal{E}$
  with $R \refs A$.
  $\psi$ is \defin{in-cartesian in $R$} (relative to $p$),
  written $\psi : \Pi_1, \focin{R}, \Pi_2 \seq{g} \Sigma$,
  if for any polymap $\xi : \Pi_1, \Pi, \Pi_2 \seq{g \circ_A f} \Sigma_1,\Sigma,\Sigma_2$ there exists a unique polymap $\psi \backslash \xi : \Pi \seq{f} \Sigma_1, R, \Sigma_2$ such that $\xi = \psi \circ_R (\psi \backslash \xi)$.

  Dually, $\varphi : \Pi \seq{f} \Sigma_1, S, \Sigma_2$, with $S \refs B$, is \defin{out-cartesian in $S$},
  written $\varphi : \Pi \seq{f} \Sigma_1, \focout{S}, \Sigma_2$,
  if for any polymap $\xi : \Pi_1, \Pi, \Pi_2 \seq{g \circ_B f} \Sigma_1,\Sigma,\Sigma_2$ there is a unique polymap $\xi/\varphi : \Pi_1, S, \Pi_2 \seq{g} \Sigma$ such that $\xi = (\xi/\varphi) \circ_S \varphi$.

  Graphically, the definitions are summarised by the following diagram:

  \begin{equation}\label{diag:cartesian}\tag{$\dagger$}
    \scalebox{0.7}{\tikzfig{cart}}
  \end{equation}
\end{definition}

\begin{prop}
  \label{prop:cart_comp}
  In-cartesian polymaps compose, in the sense that if $\varphi : \Pi_1, \focin{R}, \Pi_2 \seq{g} \Sigma_1, S, \Sigma_2$ and $\psi : \Pi_1', \focin{S}, \Pi_2' \seq{f} \Sigma'$ then $\psi \circ_S \varphi : \Pi'_1, \Pi_1, \focin{R}, \Pi_2, \Pi_2' \seq{g \circ_B f} \Sigma_1,\Sigma',\Sigma_2$.
  Similarly, out-cartesian maps compose in the sense that if $\varphi : \Pi \seq{g} \Sigma_1, \focout{S}, \Sigma_2$ and $\psi : \Pi_1', S, \Pi_2' \seq{f} \Sigma_1', \focout{T}, \Sigma_2'$ then $\psi \circ_S \varphi : \Pi'_1, \Pi, \Pi_2' \seq{g \circ_B f} \Sigma_1,\Sigma'_1,\focout{T},\Sigma'_2,\Sigma_2$.
\end{prop}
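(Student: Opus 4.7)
My plan is to adapt the strategy used earlier in the excerpt for the composition of in-universal (respectively out-universal) polymaps in a polycategory: iterate the factorisation property twice, once through each of the two given cartesian polymaps, and read off existence and uniqueness from associativity of composition.

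Concretely, suppose $\varphi : \Pi_1, \focin{R}, \Pi_2 \seq{g} \Sigma_1, S, \Sigma_2$ and $\psi : \Pi'_1, \focin{S}, \Pi'_2 \seq{f} \Sigma'$, and consider a test polymap $\xi : \Pi'_1, \Pi_1, \Pi, \Pi_2, \Pi'_2 \to \Sigma_1, \Sigma', \Sigma_2$ whose image under $p$ factors as $(f \circ_B g) \circ_A h$ with $A = p(R)$ and $B = p(S)$. Because $\psi$ is in-cartesian in $S$, the polymap $\xi$ factors uniquely as $\xi = \psi \circ_S (\psi \backslash \xi)$, and the underlying polymap of $\psi \backslash \xi$ is $g \circ_A h$. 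Because $\varphi$ is in-cartesian in $R$, the polymap $\psi \backslash \xi$ then factors uniquely as $\psi \backslash \xi = \varphi \circ_R (\varphi \backslash (\psi \backslash \xi))$. I would therefore define $(\psi \circ_S \varphi) \backslash \xi := \varphi \backslash (\psi \backslash \xi)$. Existence of the factorisation then follows from associativity of polycategorical composition:
\[
(\psi \circ_S \varphi) \circ_R (\varphi \backslash (\psi \backslash \xi)) = \psi \circ_S (\varphi \circ_R (\varphi \backslash (\psi \backslash \xi))) = \psi \circ_S (\psi \backslash \xi) = \xi.
\]

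For uniqueness, suppose $k$ is another polymap with $(\psi \circ_S \varphi) \circ_R k = \xi$. Applying associativity again, $\psi \circ_S (\varphi \circ_R k) = \xi = \psi \circ_S (\varphi \circ_R (\varphi \backslash (\psi \backslash \xi)))$, so by uniqueness of factorisation through $\psi$ we get $\varphi \circ_R k = \varphi \circ_R (\varphi \backslash (\psi \backslash \xi))$, and then uniqueness of factorisation through $\varphi$ gives $k = \varphi \backslash (\psi \backslash \xi)$, as required. The out-cartesian case is strictly dual: one factors $\xi$ first through $\varphi$ and then through $\psi$, setting $\xi/(\psi \circ_S \varphi) := (\xi/\varphi)/\psi$.

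The only point that needs some care rather than pure calculation is checking that the planarity sides of the composition in Definition~\ref{defn:cartesian} are compatible at each step; but this is automatic from the hypotheses, since the contexts $\Sigma_1, \Sigma_2, \Pi'_1, \Pi'_2$ in which $\psi \circ_S \varphi$ lives are precisely those produced by the successive applications of the factorisation property, and the required emptiness conditions for the inner compositions are inherited from the outer one. I therefore expect no real obstacle: the argument is a direct translation of the ``universal polymaps compose'' proof, with $p$-labels tracked throughout, and the interchange/associativity laws of the polycategory $\mathcal{E}$ do the rest.
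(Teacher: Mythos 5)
Your proof is correct and follows essentially the same route as the paper's: factor the test polymap through $\psi$ using in-cartesianness in $S$, factor the result through $\varphi$ using in-cartesianness in $R$, and obtain existence from associativity and uniqueness by reversing the two factorisations. Your version is slightly more explicit about the uniqueness step, but the argument is the same.
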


 \begin{proof}
   Given $\xi : \Pi_1', \Pi_1, \Pi'', \Pi_2, \Pi_2' \seq{p(\psi \circ_S \varphi) \circ_A d} \Sigma_1'', \Sigma_1, \Sigma_1', T, \Sigma_2', \Sigma_2, \Sigma_2'' $ .
   By functoriality of $p$ and associativity of composition we have \[p(\xi) = (p(\psi)\circ_B p(\varphi)) \circ_A d = p(\psi)\circ_B (p(\varphi) \circ_A d)\]
   Since $\psi$ is in-cartesian in $S$ there is a unique polymap $\epsilon$ such that $p(\epsilon) = p(\varphi) \circ_A d$ and $\xi = \psi \circ_S \epsilon$.
   Now since $\varphi$ is in-cartesian an $R$ there is a unique polymap $\delta$ such that $p(\delta) = d$ and $\epsilon = \varphi \circ_R \delta$.
   Putting these two results together there is a unique polymap $\delta$ such that $p(\delta) = d$ and $\xi = \psi \circ_S (\varphi \circ_R \delta) = (\psi \circ_S \varphi) \circ_R \delta$.
   Which proves that $\psi \circ_S \varphi$ is in-cartesian in $R$.
 \end{proof}

\begin{definition}
  A poly-refinement system $p : \mathcal{E} \to \mathcal{B}$ is said to be a \defin{pull-fibration} if for any $f : \Gamma_1, A, \Gamma_2 \to \Delta$ in $\mathcal{B}$ and any $\Pi_1 \refs \Gamma_1$, $\Pi_2 \refs \Gamma_2$, and $\Sigma \refs \Delta$ there is an object $\pull{f}{\Pi_1}{\Pi_2}{\Sigma} \refs A$ together with an in-cartesian polymap $\Pi_1,\focin{\pull{f}{\Pi_1}{\Pi_2}{\Sigma}},\Pi_2 \seq{f} \Sigma$.
  Dually, $p$ is said to be a \defin{push-fibration} if for any $f : \Gamma \to \Delta_1, B, \Delta_2$ in $\mathcal{B}$ and any $\Pi\refs \Gamma$, $\Sigma_1 \refs \Delta_1$, and $\Sigma_2 \refs \Delta_2$ there is an object $\push{f}{\Pi}{\Sigma_1}{\Sigma_2} \refs B$ together with an out-cartesian polymap $\Pi \seq{f} \Sigma_1, \focout{\push{f}{\Pi}{\Sigma_1}{\Sigma_2}}, \Sigma_2$.
  Finally, $p$ is said to be a \defin{bifibration} if it is both a pull-fibration and a push-fibration.  
\end{definition}

\begin{remark}
  When pulling along a map $f : A \to \Delta$ with only one input, we will write $\pullone{f}{\Sigma}$ as shorthand for $\pull{f}{}{}{\Sigma}$.
  Similarly when pushing along a map $f : \Gamma \to A$, we will write $\pushone{f}{\Gamma}$ for $\push{f}{\Gamma}{}{}$.
\end{remark}

\section{$\ast$-autonomous categories as bifibrations~of~polycategories}

\begin{prop}\label{prop:univiffcart}
  Let $\mathcal{P}$ be a polycategory.
  A polymap $u : \Gamma \to \Delta_1,A,\Delta_2$ (resp.~$u : \Gamma_1,A,\Gamma_2 \to \Delta$) is out-universal (resp.~in-universal) in $A$ iff it is out-cartesian (resp.~in-cartesian) with respect to the unique functor $\mathcal{P} \to \one$ into the terminal polycategory.
\end{prop}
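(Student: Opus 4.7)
The plan is to observe that both pairs of conditions (out-universal/out-cartesian, in-universal/in-cartesian) reduce to the same factorization property, because the terminal polycategory $\one$ carries no information beyond arities: for any $m,n$ there is exactly one polymap $\ast^m \to \ast^n$. Hence the unique functor $p : \mathcal{P} \to \one$ sends every polymap $g : A_1,\dots,A_m \to B_1,\dots,B_n$ to this unique arrow, and any equation of the form $p(\xi) = h \circ_\ast p(u)$ in $\one$ holds automatically as soon as $\xi$ and $u$ have compatible shapes for such a composition to be defined.

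First I would handle the out-universal $\Leftrightarrow$ out-cartesian direction. Fix a polymap $u : \Gamma \to \Delta_1, \focout{A}, \Delta_2$ in $\mathcal{P}$. Out-cartesianness relative to $p$ asks: for every polymap $\xi : \Pi_1, \Gamma, \Pi_2 \to \Delta_1, \Delta, \Delta_2$ (subject to the planarity constraint that $\Pi_i = \emptyset$ or $\Delta_i = \emptyset$) whose image under $p$ factors as $h \circ_\ast p(u)$ for some $h$ in $\one$, there exists a unique $\xi/u : \Pi_1, A, \Pi_2 \to \Delta$ with $p(\xi/u) = h$ and $\xi = (\xi/u) \circ_A u$. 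Out-universality asks the same existence and uniqueness of a factorization through $u$, but without reference to $p$. These conditions coincide because the hypothesis $p(\xi) = h \circ_\ast p(u)$ in $\one$ is vacuous (it is equivalent to the planarity constraint, the only actual requirement on arities for the composition in $\one$ to make sense), $h$ is uniquely determined by the arities of $\xi$, and the constraint $p(\xi/u) = h$ is automatic from the arities of $\xi/u$.

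The in-universal $\Leftrightarrow$ in-cartesian direction is entirely dual and I would dispatch it by the same argument with the roles of inputs and outputs swapped. The only things to verify carefully are bookkeeping: that the planarity restrictions in Definition~\ref{defn:cartesian} and in the definition of in-/out-universal polymaps match up, and that the constraints on the composition $g \circ_\ast p(u)$ in $\one$ do indeed coincide with those planarity constraints. I do not anticipate any real obstacle; the content of the proposition is essentially that bifibring over $\one$ is the same as internal universality, and the proof is a definition chase made transparent by the triviality of $\one$.
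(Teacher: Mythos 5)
Your proposal is correct and follows essentially the same route as the paper's own proof: both observe that the terminal polycategory has exactly one polymap of each arity, so the factorisation data in the base carries no information and the cartesian condition collapses to the internal universal property. Your additional remarks on matching the planarity constraints are sound bookkeeping but do not change the argument.
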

\begin{proof}
Consider the definition of cartesian polymaps in \ref{diag:cartesian}.
If the bottom polycategory is the terminal one, then there is only one choice of polymap of each arity and the bottom part of the diagram as no information.
So in-cartesian and out-cartesian in this case only amounts to the unique factorisation property displayed on the top of the diagram.
This is exactly the factorisation property of universal polymaps. 
\end{proof}

\begin{proposition}\label{prop:*repiffbifib}
  $\mathcal{P}$ is a birepresentable polycategory iff $\mathcal{P} \to \one$ is a bifibration of polycategories.
\end{proposition}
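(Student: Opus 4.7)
The plan is to reduce the statement directly to Proposition \ref{prop:univiffcart}, since both conditions are essentially parameterised by the same data once one unwinds the definitions.

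First I would observe that since the terminal polycategory $\one$ has a single object $\ast$, a list of objects in $\one$ is determined by its length, any polymap in $\one$ is determined by its arity pair $(m,n)$, and a refinement $\Pi \refs \Gamma$ along $\mathcal{P} \to \one$ is simply a choice of list $\Pi$ of objects of $\mathcal{P}$ of length equal to that of $\Gamma$. Hence the push-fibration condition for $\mathcal{P} \to \one$ unfolds to: for every triple of lists $\Pi, \Sigma_1, \Sigma_2$ of objects of $\mathcal{P}$, there exists an object $\push{f}{\Pi}{\Sigma_1}{\Sigma_2}$ together with an out-cartesian polymap $\Pi \to \Sigma_1, \focout{\push{f}{\Pi}{\Sigma_1}{\Sigma_2}}, \Sigma_2$. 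Dually, the pull-fibration condition unfolds to the existence, for every $\Pi_1, \Pi_2, \Sigma$, of an object equipped with an in-cartesian polymap $\Pi_1, \focin{\pull{f}{\Pi_1}{\Pi_2}{\Sigma}}, \Pi_2 \to \Sigma$.

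Next I would apply Proposition \ref{prop:univiffcart}, which precisely identifies out-cartesian (resp.\ in-cartesian) polymaps with respect to $\mathcal{P} \to \one$ as out-universal (resp.\ in-universal) polymaps in $\mathcal{P}$. Under this identification, the push-fibration condition becomes exactly the condition that every triple $(\Pi, \Sigma_1, \Sigma_2)$ admits an out-universal object, and the pull-fibration condition becomes the condition that every triple $(\Pi_1, \Pi_2, \Sigma)$ admits an in-universal object. Combining these yields that $\mathcal{P} \to \one$ is a bifibration iff $\mathcal{P}$ has all in-universal and out-universal objects, i.e.\ iff $\mathcal{P}$ is birepresentable.

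I do not anticipate any genuine obstacle here: once Proposition \ref{prop:univiffcart} is in hand the result is a purely formal unwinding of the two definitions. The only point requiring a small amount of care is the verification that the parameterisation of cartesian polymaps by a morphism $f$ of the base degenerates, for the base $\one$, to a parameterisation by the arity pair alone, so that the data $(f, \Pi, \Sigma_1, \Sigma_2)$ reduces to $(\Pi, \Sigma_1, \Sigma_2)$ and matches the data of birepresentability.
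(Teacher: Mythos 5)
Your proposal is correct and is exactly the argument the paper intends: the paper states this proposition without a separate proof precisely because it is the formal unwinding of Proposition \ref{prop:univiffcart} that you carry out, with the observation that over the terminal polycategory the parameterising polymap $f$ carries no information beyond its arities so the bifibration data collapses to the birepresentability data. No gaps.
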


So we get from the equivalence between birepresentable polycategories and $\ast$-autonomous categories.

\begin{theorem}
  \label{th:bif1}
  There is an equivalence between $\ast$-autonomous categories and bifibrations over the terminal polycategory $\one$.
\end{theorem}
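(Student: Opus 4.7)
The plan is to obtain this equivalence by composing two equivalences that have essentially already been established in the preceding development. The first is the 2-equivalence $\ast\text{-}\mathbf{Aut} \simeq \mathbf{PolyCat}^{\ast}_{tt}$ between $\ast$-autonomous categories and two-tensor polycategories with duals, together with the characterisation of $\mathbf{PolyCat}^{\ast}_{tt}$ as the 2-category of birepresentable polycategories. The second is Proposition \ref{prop:*repiffbifib}, which identifies birepresentable polycategories with polycategories bifibred over $\one$. So the only real content to verify is that this identification extends coherently to morphisms and 2-cells on either side.

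First I would make precise the 2-category of bifibrations over $\one$: objects are bifibrations $p\colon \mathcal{P} \to \one$, 1-cells are functors $F\colon \mathcal{P}\to\mathcal{Q}$ over $\one$ that preserve in-cartesian and out-cartesian polymaps (equivalently, preserve pullback and pushforward objects up to canonical isomorphism), and 2-cells are natural transformations. Since $\one$ is terminal, the commutation over $\one$ is automatic, so the only real condition is the preservation property. Then I would invoke Proposition \ref{prop:univiffcart} to conclude that 1-cells in this 2-category are exactly those functors of birepresentable polycategories that preserve universal polymaps, which is precisely the notion of morphism in $\mathbf{PolyCat}^{\ast}_{tt}$ (since universal polymaps for $\otimes$, $\parr$, and duals are all captured by the in/out-universality).

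Next I would compose with the already established 2-equivalence $\ast\text{-}\mathbf{Aut} \simeq \mathbf{PolyCat}^{\ast}_{tt}$: on objects, a $\ast$-autonomous category $\mathcal{C}$ is sent to its underlying polycategory $\mathcal{P}(\mathcal{C})$, which is birepresentable hence bifibred over $\one$; conversely, a bifibration $\mathcal{P}\to\one$ exhibits $\mathcal{P}$ as birepresentable, so $\mathcal{R}(\mathcal{P})$ can be equipped with the $\ast$-autonomous structure induced by the universal polymaps (tensors from out-cartesian unary-output polymaps, pars from in-cartesian single-input polymaps, duals from out-cartesian polymaps $\cdot \to A, \focout{\rdual{A}}$ per Proposition \ref{prop:dual}). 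On morphisms, a strong $\ast$-autonomous functor corresponds to a functor preserving all universal polymaps, which matches the cartesian-preservation condition for bifibrations.

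The main obstacle is largely bookkeeping rather than conceptual: one must check that ``preserving pullbacks and pushforwards'' for the bifibration $\mathcal{P}\to\one$ coincides on the nose with ``preserving all in- and out-universal polymaps'', and that the 2-cells match without extra coherence conditions. By Proposition \ref{prop:univiffcart} the two notions of cartesianness and universality coincide over $\one$, and since there are no additional vertical arrows in $\one$ to constrain 2-cells, natural transformations on both sides agree. With this comparison in hand, the composite 2-functor from $\ast$-autonomous categories to bifibrations over $\one$ is essentially surjective (every bifibration over $\one$ arises from some $\ast$-autonomous category via the passage above) and fully faithful on 1-cells and 2-cells, giving the desired 2-equivalence.
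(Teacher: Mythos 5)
Your proposal is correct and follows essentially the same route as the paper: combine Proposition \ref{prop:*repiffbifib} (birepresentable iff bifibred over $\one$, itself a direct consequence of Proposition \ref{prop:univiffcart}) with the previously established chain $\ast$-autonomous categories $\simeq$ two-tensor polycategories with duals $\simeq$ birepresentable polycategories. The only divergence is that you sketch the 2-categorical refinement (matching 1-cells and 2-cells on both sides), which the paper explicitly states only as an expectation and defers to future work, so your extra bookkeeping goes slightly beyond what the theorem as stated requires.
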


We also expect that this result may be stated more precisely as an equivalence of 2-categories, but we leave this to future work.

One application of Theorem~\ref{th:bif1} is that it provides a way of decomposing a $\ast$-autonomous structure on a category, using elementary facts about cartesian polymaps.

\begin{prop}\label{prop:compcartfunc}
  For $p : \mathcal{P} \to \mathcal{E}$ and $q : \mathcal{E} \to \mathcal{B}$ poly-refinement systems and $\psi : \Pi_1, R, \Pi_2 \seq{g} \Sigma$ a polymap in $\mathcal{P}$,
if $\psi$ is $p$-in-cartesian in $R\refs A$ and $g$ is $q$-in-cartesian in $A \refs X$ then $\psi$ is $q \circ p$-in-cartesian in $R \refs X$.
\end{prop}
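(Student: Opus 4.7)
The plan is to verify the universal factorisation property directly, lifting through the two refinement systems successively.

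Suppose we are given a polymap $\xi : \Pi_1, \Pi, \Pi_2 \to \Sigma_1, \Sigma, \Sigma_2$ in $\mathcal{P}$ with $(q\circ p)(\xi) = (q\circ p)(\psi) \circ_X h$ for some polymap $h$ in $\mathcal{B}$. First I would look at $p(\xi)$, a polymap in $\mathcal{E}$ satisfying $q(p(\xi)) = q(g) \circ_X h$. Since $g : \Pi_1', \focin{A}, \Pi_2' \seq{q(g)} \Sigma'$ is $q$-in-cartesian in $A$, there is a unique polymap $f := g \backslash p(\xi)$ in $\mathcal{E}$ with $q(f) = h$ and $p(\xi) = g \circ_A f$. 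Now $\xi$ is a polymap in $\mathcal{P}$ whose image under $p$ is $g \circ_A f = p(\psi) \circ_A f$; since $\psi$ is $p$-in-cartesian in $R$, there is a unique polymap $\psi \backslash \xi$ in $\mathcal{P}$ with $p(\psi \backslash \xi) = f$ and $\xi = \psi \circ_R (\psi \backslash \xi)$. Composing, $(q\circ p)(\psi \backslash \xi) = q(f) = h$, giving the desired factorisation.

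For uniqueness, I would suppose $\phi$ is any polymap in $\mathcal{P}$ with $(q\circ p)(\phi) = h$ and $\xi = \psi \circ_R \phi$. Then $p(\phi)$ witnesses a factorisation $p(\xi) = g \circ_A p(\phi)$ in $\mathcal{E}$ with $q(p(\phi)) = h$, so by uniqueness of the $q$-factorisation of $p(\xi)$ through $g$ we get $p(\phi) = f$. A second application of uniqueness, this time of the $p$-factorisation of $\xi$ through $\psi$, gives $\phi = \psi \backslash \xi$.

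Essentially no step is hard: the argument is the evident two-step lift of the one-object case for ordinary fibrations, and the polycategorical composition restriction never causes trouble because the polymap $f$ produced by $q$-cartesianness of $g$ is already composable with $\psi$ in exactly the configuration required by $p$-cartesianness. The only small thing to be careful about is bookkeeping of the input/output contexts $\Pi_i$, $\Sigma_i$: the planarity condition on $\circ_A$ for $g$ and $p(\xi)$ is inherited directly from the planarity condition on $\circ_R$ for $\psi$ and $\xi$, so no additional restriction arises. This makes it a routine diagram chase rather than a substantive obstacle.
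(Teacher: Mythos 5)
Your proof is correct and follows essentially the same two-step lifting argument as the paper: factor $p(\xi)$ through $g$ using $q$-cartesianness, then factor $\xi$ through $\psi$ using $p$-cartesianness. Your explicit uniqueness paragraph (pushing an arbitrary competitor $\phi$ down to $\mathcal{E}$ and applying the two uniqueness clauses in turn) is in fact spelled out more carefully than in the paper's own proof, which leaves that step implicit.
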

\begin{proof}
Consider a polymap $\xi$ in \cP such that $(q \circ p)(\xi) = (q \circ p)(\psi) \circ d$.
Then $q(p(\xi)) = q(p(\psi)) \circ d = q(g) \circ d$.
Since $g$ is $q$-in-cartesian there is a unique polymap $g \backslash p(\xi)$ with $q(g \backslash p(\xi)) = d$ such that \[p(\xi) = g \circ (g \backslash p(\xi)) = p(\psi) \circ (g \backslash p(\xi))\]
Now since $\psi$ is $p$-in-cartesian there is a unique polymap $\psi \backslash \xi$ with $p(\psi \backslash \xi) = g \backslash p(\xi)$ such that \[\xi = \psi \circ (\psi \backslash \xi)\]
But then, \[(q\circ p)(\psi \backslash \xi) = q(g \backslash p(\xi)) = g\]
So $\psi$ is $(q \circ p)$-in-cartesian.
\end{proof}

\begin{remark}
  Similarly, a $p$-out-cartesian polymap over a $q$-out-cartesian polymap is $(q \circ p)$-out-cartesian.
\end{remark}

\begin{prop}
  Let $p : \mathcal{E} \to \mathcal{B}$ be a poly-refinement system, and suppose that $\mathcal{B}$ is $\ast$-representable. 
  If $\mathcal{E}$ has all in-cartesian liftings of in-universal polymaps and all out-cartesian liftings of out-universal polymaps then $\mathcal{E}$ is a $\ast$-representable polycategory.
\end{prop}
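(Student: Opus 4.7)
The plan is to combine Propositions \ref{prop:univiffcart} and \ref{prop:compcartfunc} to transfer universal objects from $\mathcal{B}$ up to $\mathcal{E}$. The key observation is that the unique functor $\mathcal{E} \to \one$ factors as $\mathcal{E} \xrightarrow{p} \mathcal{B} \to \one$, so universal polymaps in $\mathcal{E}$, characterised as cartesian with respect to $\mathcal{E} \to \one$, can be built in two stages: first take a universal polymap in $\mathcal{B}$ (which is $(\mathcal{B} \to \one)$-cartesian), then lift it cartesianly along $p$.

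To construct out-universal objects, fix a context $\Pi \to \Sigma_1, \_, \Sigma_2$ in $\mathcal{E}$ and apply $p$ to obtain $p(\Pi) \to p(\Sigma_1), \_, p(\Sigma_2)$ in $\mathcal{B}$. Since $\mathcal{B}$ is birepresentable, there is an out-universal polymap $u : p(\Pi) \to p(\Sigma_1), \focout{A}, p(\Sigma_2)$, which by Proposition \ref{prop:univiffcart} is out-cartesian for $\mathcal{B} \to \one$. By the hypothesis that $\mathcal{E}$ has all out-cartesian liftings of out-universal polymaps, there exist $S \refs A$ and a $p$-out-cartesian polymap $\varphi : \Pi \to \Sigma_1, \focout{S}, \Sigma_2$ with $p(\varphi) = u$. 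Invoking the out-cartesian version of Proposition \ref{prop:compcartfunc} (the remark following its proof), the composition of $p$-cartesianness of $\varphi$ with $(\mathcal{B} \to \one)$-cartesianness of $u$ shows that $\varphi$ is $(\mathcal{E} \to \one)$-out-cartesian, hence by Proposition \ref{prop:univiffcart} it is out-universal in $\mathcal{E}$.

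The construction of in-universal objects is entirely dual: starting from an arbitrary context $\Pi_1, \_, \Pi_2 \to \Sigma$ in $\mathcal{E}$, take an in-universal polymap in $\mathcal{B}$ for the image context, then use the assumption on in-cartesian liftings together with the in-cartesian forms of Propositions \ref{prop:univiffcart} and \ref{prop:compcartfunc}. Since $\mathcal{E}$ has both in-universal and out-universal objects for every context, it is birepresentable.

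There is no real obstacle here: the proposition is a direct application of the two preceding lemmas, and its content is exactly that the hypothesis was designed to make this two-stage lifting work. The only subtle point worth emphasising in the write-up is that the hypothesis ``in-cartesian liftings of in-universal polymaps'' provides precisely the lifts over the polymaps $u$ produced by birepresentability of $\mathcal{B}$, which is what makes the composite cartesian argument applicable.
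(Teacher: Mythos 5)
Your proof is correct and is exactly the argument the paper intends: the paper's own proof is just the one-line citation of Propositions \ref{prop:univiffcart} and \ref{prop:compcartfunc}, and you have spelled out precisely how those two lemmas combine via the factorisation of $\mathcal{E} \to \one$ through $p$. Nothing to change.
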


\begin{proof}
  By Propositions~\ref{prop:univiffcart} and \ref{prop:compcartfunc}.
\end{proof}

\section{Frobenius monoids}
\label{sect:frob}

\begin{definition}
  In a polycategory $\mathcal{P}$ a \defin{Frobenius monoid} is an object $A$ equipped with a polymap $\overline{(m,n)_A} :A^m \to A^n$ for each $m,n \in \mathbb{N}$ such that $\overline{(1,1)_A} = \id_A$ and these polymaps are stable under composition.
\end{definition}

\begin{prop}
  Equivalently a Frobenius monoid in $\mathcal{P}$ is a functor $F : \one \to \mathcal{P}$.
\end{prop}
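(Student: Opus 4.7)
The plan is a straightforward unpacking of both sides of the equivalence, leveraging the fact that the hom-sets of $\one$ are singletons.

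Given a functor $F : \one \to \mathcal{P}$, I would define a Frobenius monoid structure on the object $A := F(\ast)$ by setting $\overline{(m,n)_A} := F(\spider{m}{n})$ for each $m,n \in \mathbb{N}$. Functoriality on identities gives $\overline{(1,1)_A} = F(\id_\ast) = \id_{F(\ast)} = \id_A$. For stability under composition, observe that in $\one$ any composite $\spider{m'}{n'} \circ_\ast \spider{m}{n}$ (performed along one of the outputs of $\spider{m}{n}$, respecting the polycategorical planarity constraint) is necessarily the unique polymap $\spider{m+m'-1}{n+n'-1}$. Hence $F$ applied to the composition formula yields precisely the required closure of the family $\overline{(m,n)_A}$ under polycategorical composition in $\mathcal{P}$.

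Conversely, given a Frobenius monoid $(A, (\overline{(m,n)_A})_{m,n \in \mathbb{N}})$ in $\mathcal{P}$, I would define $F : \one \to \mathcal{P}$ by $F(\ast) := A$ and, for each polymap $\spider{m}{n}$ in $\one$, $F(\spider{m}{n}) := \overline{(m,n)_A}$. There is no ambiguity because $\one$ has a unique polymap of each shape. The functoriality conditions $F(\id_\ast) = \id_{F(\ast)}$ and $F(g \circ_\ast f) = F(g) \circ_{F(\ast)} F(f)$ translate respectively into the conditions $\overline{(1,1)_A} = \id_A$ and stability of the family under composition, both of which are assumed.

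The two constructions are manifestly mutually inverse: starting from $F$, extracting the Frobenius monoid data and then rebuilding a functor recovers $F$ on objects (trivially) and on every generator $\spider{m}{n}$ (by definition), and similarly in the other direction. Since $\mathcal{P}(A^m, A^n)$ absorbs the data on both sides without any further coherence constraints, no further checks are needed.

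The only mildly delicate point — and the one I would treat most carefully — is the verification that the combinatorics of polycategorical composition in $\one$ match the intended notion of ``stability under composition'' for the Frobenius family: one must check that every legal partial composite $\spider{m'}{n'} \circ_\ast \spider{m}{n}$ in $\one$ lands on a spider of the correct arity and coarity, accounting for the planarity restriction on the composition operation in a polycategory. Once this bookkeeping is in hand, the rest of the proof is essentially definitional.
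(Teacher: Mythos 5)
Your proof is correct and follows the same route as the paper's (much terser) argument: identify the Frobenius monoid with $F(\ast)$, the structure polymaps with the images of the spiders, and observe that functoriality is exactly the identity and composition-stability conditions. The arity bookkeeping you flag — that a legal composite of an $(m,n)$-spider with an $(m',n')$-spider in $\one$ is the $(m+m'-1,\,n+n'-1)$-spider — is right and is the only content beyond unwinding definitions.
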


\begin{proof}
  The Frobenius monoid corresponds to $F(\ast)$ and the polymaps $\overline{(m,n)_{F(\ast)}}$ to $F(\overline{(m,n)})$.
  The properties needed on the polymaps are exactly functoriality of $F$.
\end{proof}

\begin{remark}
  For $\mathcal{P}$ representable with $\otimes = \parr$, this reduces to the unbiased definition of a Frobenius monoid in a monoidal category.
\end{remark}

\begin{definition}
  Given a poly-refinement system $p : \mathcal{E} \to \mathcal{B}$ and a Frobenius monoid $A$ in $\mathcal{B}$ the \defin{polyfiber} of $p$ over $A$, noted $p^{-1}(A)$ is the subcategory of $\mathcal{E}$ whose objects and polymaps are sent by $p$ to $A$ and the $\overline{(m,n)_A}$. 
\end{definition}

\begin{prop}
  $p^{-1}(A)$ is equivalent to the following pullback:\\
  
  \begin{tikzcd}[sep = huge]
    p^{-1}(A) \ar[r, hookrightarrow] \ar[d,"!"'] \arrow[dr, phantom, "\scalebox{1.5}{$\lrcorner$}" , very near start, color=black] & \mathcal{E} \ar[d,"p"]\\
    \one \ar[r,"A"'] & \mathcal{B}
  \end{tikzcd}
  \ \ where $A : \one \to \mathcal{B}$ is the functor associated to the object $A$.
\end{prop}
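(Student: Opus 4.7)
The plan is to verify this by an explicit computation of the pullback in $\PolyCat$ and checking it matches the stated polyfiber, then verifying the universal property.

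First, I would describe the pullback $\mathcal{E} \times_\mathcal{B} \one$ in $\PolyCat$. Pullbacks in $\PolyCat$ are computed componentwise on objects and polymaps: the pullback has as objects the pairs $(R, \ast)$ with $R \in \mathcal{E}$, $\ast$ the unique object of $\one$, and $p(R) = A(\ast) = A$; and as polymaps the pairs $(\psi, \underline{(m,n)})$ where $\psi : R_1,\dots,R_m \to S_1,\dots,S_n$ in $\mathcal{E}$, $\underline{(m,n)}$ is the unique polymap of that arity in $\one$, and $p(\psi) = A(\underline{(m,n)}) = \overline{(m,n)_A}$. Composition and identities are inherited from $\mathcal{E}$, since the first projection $\mathcal{E} \times_\mathcal{B} \one \to \mathcal{E}$ must preserve them strictly and the second coordinate is uniquely determined.

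Next, I would observe that because $\one$ has exactly one object and exactly one polymap of each arity, the second coordinate of each pair is fully determined by the first. So the pullback is naturally isomorphic to the subpolycategory of $\mathcal{E}$ whose objects are those $R$ with $p(R) = A$ and whose polymaps are those $\psi$ with $p(\psi) = \overline{(m,n)_A}$ (for appropriate $m,n$). This is exactly the defining data of $p^{-1}(A)$ given above.

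Finally, I would check the universal property to confirm this is the pullback. Given any polycategory $\mathcal{Q}$ with functors $F : \mathcal{Q} \to \mathcal{E}$ and $G : \mathcal{Q} \to \one$ satisfying $p \circ F = A \circ G$, the functor $G$ is automatically the unique functor into $\one$, and the commuting condition forces $p(F(X)) = A$ on every object and $p(F(\varphi)) = \overline{(m,n)_A}$ on every polymap of arity $(m,n)$. Thus $F$ factors through the inclusion $p^{-1}(A) \hookrightarrow \mathcal{E}$, and the factorization is unique since the inclusion is faithful on the nose.

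There is essentially no obstacle here beyond being careful about the bookkeeping: the whole statement is really an unfolding of what a pullback of polycategories looks like when one leg comes from $\one$, and verifying that the combinatorial data of $p^{-1}(A)$ (fibering over a specified Frobenius monoid) coincides with it. The only subtle point worth mentioning is that the Frobenius monoid structure of $A$ is precisely what guarantees that the subpolycategory $p^{-1}(A)$ is closed under composition and identities, since the $\overline{(m,n)_A}$ are closed under composition by the definition of a Frobenius monoid.
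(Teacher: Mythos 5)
Your proof is correct. The paper in fact gives no proof of this proposition, treating it as immediate from the definitions, and your argument — computing the pullback componentwise (exactly as the paper itself does later when proving stability of bifibrations under pullback), noting that the $\one$-coordinate is redundant, and observing that the Frobenius structure is what makes $p^{-1}(A)$ closed under identities and composition — is precisely the intended unfolding.
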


We have that bifibrations are stable under pullback.

\begin{prop}
  Given a poly-refinement system $p : \mathcal{E} \to \mathcal{B}$ and a functor $s : \mathcal{B}' \to \mathcal{B}$, let $\mathcal{E} \times_{\mathcal{B}} \mathcal{B}'$ be the pullback.
  
   \begin{tikzcd}[sep = huge]
    \mathcal{E} \times_{\mathcal{B}} \mathcal{B}' \ar[r,"\pi_1"] \ar[d,"\pi_2"'] \arrow[dr, phantom, "\scalebox{1.5}{$\lrcorner$}" , very near start, color=black] & \mathcal{E} \ar[d,"p"]\\
    \mathcal{B}' \ar[r,"s"'] & \mathcal{B}
  \end{tikzcd} 

  For a polymap $f: \Gamma_1, A, \Gamma_2 \to \Delta$ in $\mathcal{B}'$ and lists of objects $\Pi_1, \Pi_2,\Sigma$ in $\mathcal{E} \times_{\mathcal{B}} \mathcal{B}'$ lying over $\Gamma_1,\Gamma_2$ and $\Delta$, if there is a pullback $\pull{s(f)}{\pi_1(\Pi_1)}{\pi_1(\Pi_2)}{\pi_1(\Sigma)}$ in $\mathcal{E}$ then there is a pullback $\pull{f}{\Pi_1}{\Pi_2}{\Sigma}$ in $\mathcal{E} \times_{\mathcal{B}} \mathcal{B}'$.
\end{prop}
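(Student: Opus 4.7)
The plan is to build the pullback of $\Sigma$ along $f$ in $\mathcal{E}\times_{\mathcal{B}}\mathcal{B}'$ componentwise, using the given pullback in $\mathcal{E}$ together with the object $A$ (in the $\mathcal{B}'$-coordinate) and the map $f$ itself.

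Recall that an object of $\mathcal{E}\times_{\mathcal{B}}\mathcal{B}'$ is a pair $(R,X)$ with $R\in\mathcal{E}$, $X\in\mathcal{B}'$, and $p(R)=s(X)$; a polymap is a pair of polymaps with equal images under $p$ and $s$. First I would define the candidate object $\pull{f}{\Pi_1}{\Pi_2}{\Sigma} := (\pull{s(f)}{\pi_1(\Pi_1)}{\pi_1(\Pi_2)}{\pi_1(\Sigma)},\, A)$; this is a legitimate object of the pullback because by definition $p$ sends $\pull{s(f)}{\pi_1(\Pi_1)}{\pi_1(\Pi_2)}{\pi_1(\Sigma)}$ to the source $A\in \mathcal{B}$ of $s(f)$, which equals $s(A)$. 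The candidate in-cartesian polymap is the pair $(\varphi, f)$, where $\varphi:\pi_1(\Pi_1),\focin{\pull{s(f)}{\pi_1(\Pi_1)}{\pi_1(\Pi_2)}{\pi_1(\Sigma)}},\pi_1(\Pi_2) \seq{s(f)} \pi_1(\Sigma)$ is the $p$-in-cartesian polymap in $\mathcal{E}$; these have equal images $s(f)=p(\varphi)$, so the pair is a well-defined polymap in the fiber product.

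For the universal property, I take an arbitrary polymap $\xi:\Pi_1,\Pi,\Pi_2 \to \Sigma_1,\Sigma,\Sigma_2$ in $\mathcal{E}\times_{\mathcal{B}}\mathcal{B}'$ whose image $s(\pi_2(\xi))$ factors as $f\circ_A d$ for some $d$ in $\mathcal{B}'$, and I must produce a unique factorisation through $(\varphi,f)$. The $\mathcal{E}$-component $\pi_1(\xi)$ lies over $s(f\circ_A d)=s(f)\circ_A s(d)$ in $\mathcal{B}$, so by the $p$-in-cartesianness of $\varphi$ there is a unique $\varphi\backslash\pi_1(\xi)$ with $p(\varphi\backslash\pi_1(\xi))=s(d)$ and $\pi_1(\xi)=\varphi\circ(\varphi\backslash\pi_1(\xi))$. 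Pairing this with the evident $\mathcal{B}'$-factor $d$ yields the desired factorisation $(\varphi\backslash\pi_1(\xi),\,d)$ in $\mathcal{E}\times_{\mathcal{B}}\mathcal{B}'$, which is well-defined precisely because $p(\varphi\backslash\pi_1(\xi))=s(d)$.

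Uniqueness then follows coordinatewise: any other factorisation $(\psi,d')$ must have $d'=d$ on the $\mathcal{B}'$-side (because composing with $f$ in $\mathcal{B}'$ must give back $\pi_2(\xi)=f\circ_A d$, and $f$ admits no nontrivial factorisation choice in $\mathcal{B}'$ once its input $A$ is fixed — this is just cancellation of $\pi_2$), and then $\psi$ is forced by uniqueness of the $p$-in-cartesian factorisation in $\mathcal{E}$. The only mildly subtle point — and the main thing worth checking carefully — is the coherence of the composite/factorisation conventions in the planar setting, namely that the ``either $\Gamma_i$ or $\Gamma_i'$ is empty'' restriction on polycategorical composition is preserved by the projections, so that the factorisation diagram \eqref{diag:cartesian} really does lift pointwise. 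I do not expect any genuine obstacle here, only bookkeeping.
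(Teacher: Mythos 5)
Your proposal is correct and follows essentially the same route as the paper's proof: both take the candidate pullback to be $(\pull{s(f)}{\pi_1(\Pi_1)}{\pi_1(\Pi_2)}{\pi_1(\Sigma)}, A)$ with in-cartesian polymap $(\varphi, f)$, and obtain the factorisation componentwise as $(\varphi\backslash\pi_1(\xi), d)$ using the $p$-in-cartesianness of $\varphi$. The only remark is that your uniqueness step is more involved than necessary — since Definition~\ref{defn:cartesian} requires the factorising polymap to lie over the given $d$, the $\mathcal{B}'$-component is forced immediately and the $\mathcal{E}$-component then follows from uniqueness of $\varphi\backslash\pi_1(\xi)$.
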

\begin{proof}
  $\mathcal{E} \times_{\mathcal{B}} \mathcal{B}'$ is the polycategory whose objects are pairs of objects $(E,B')$ of $\mathcal{E}$ and $\mathcal{B}$ such that $p(E) = s(B')$ and whose polymaps are pairs of polymaps $(f,b')$ such that $p(f) = s(b')$.
  
  Let us consider a polymap $f: \Gamma_1, A, \Gamma_2 \to \Delta$ in $\mathcal{B}'$ and lists of objects $(\Pi_1,\Gamma_1), (\Pi_2,\Gamma_2),(\Sigma,\Delta)$ in $\mathcal{E} \times_{\mathcal{B}} \mathcal{B}'$. 
  
From the pullback $\pull{s(f)}{\Pi_1}{\Pi_2}{\Sigma}$ in $\mathcal{E}$ with in-cartesian polymap $\varphi \colon \Pi_1, \pull{s(f)}{\Pi_1}{\Pi_2}{\Sigma}, \Pi_2 \to \Sigma$ we get a pullback $\pull{f}{(\Pi_1,\Gamma_1)}{(\Pi_2,\Gamma_2)}{(\Sigma,\Delta)} := (\pull{s(f)}{\Pi_1}{\Pi_2}{\Sigma},A)$ with in-cartesian polymap $(\varphi,f)$. 
To prove that $(\varphi,f)$ is incartesian we need to show that any polymap $(\psi,f \circ h)$ can be decomposed uniquely as \[(\psi,f \circ h) = (\varphi, f) \circ (\varphi,f)\backslash(\psi,f \circ h)\]
We can take $(\varphi,f)\backslash(\psi,f \circ h) := (\varphi\backslash \psi, h)$ using that $\varphi$ is incartesian and functoriality to prove that $p(\varphi \backslash \psi) = s(h)$.
\end{proof}

\begin{remark}
  Similarly if the pushforward exists in $\mathcal{E}$ it exists in $\mathcal{E} \times_{\mathcal{B}} \mathcal{B'}$.
\end{remark}

In particular, if we can push and pull along the polymap defining a Frobenius monoid, then the fibre over the Frobenius monoid is birepresentable.

\begin{cor}
  \label{cor:frob}
  Given a poly-refinement system $p : \mathcal{E} \to \mathcal{B}$ and a Frobenius monoid $(A,\{\overline{(m,n)_A}\})$ in $\mathcal{B}$ if all in-cartesian and out-cartesian liftings of $\overline{(m,n)_A}$ exist then $p^{-1}(A)$ is birepresentable.
\end{cor}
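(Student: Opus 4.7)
The plan is to assemble three facts established earlier in the excerpt: (i) a Frobenius monoid in $\mathcal{B}$ is equivalent to a functor $A : \one \to \mathcal{B}$; (ii) the polyfiber $p^{-1}(A)$ is (up to equivalence) the pullback of $p$ along this functor, fitting into a commutative square with $q : p^{-1}(A) \to \one$ on the left; and (iii) the characterisation from Proposition~\ref{prop:*repiffbifib} that a polycategory $\mathcal{Q}$ is birepresentable iff the unique functor $\mathcal{Q} \to \one$ is a bifibration. So the goal reduces to showing that $q : p^{-1}(A) \to \one$ is a bifibration.

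First I would observe that every polymap in $\one$ is, by definition, of the form $\overline{(m,n)} : \ast^m \to \ast^n$, and its image under $A : \one \to \mathcal{B}$ is exactly $\overline{(m,n)_A}$. Consequently, lifting a polymap of $\one$ through $q$ amounts to lifting $\overline{(m,n)_A}$ through $p$ and then restricting to the fibre. This is where the stability proposition immediately preceding the corollary comes in: applied to the pullback square
\[
\begin{tikzcd}
p^{-1}(A) \ar[r] \ar[d,"q"'] \arrow[dr, phantom, "\scalebox{1.2}{$\lrcorner$}", very near start] & \mathcal{E} \ar[d,"p"] \\
\one \ar[r,"A"'] & \mathcal{B}
\end{tikzcd}
\]
it tells us that whenever a pull-lifting of $\overline{(m,n)_A}$ exists in $\mathcal{E}$ along chosen refinements, a pull-lifting of $\overline{(m,n)}$ exists in $p^{-1}(A)$, and dually for push-liftings. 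By hypothesis all such in-cartesian and out-cartesian liftings exist in $\mathcal{E}$, hence $q$ has all pull-liftings and all push-liftings, i.e., $q$ is a bifibration.

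Finally, invoking Proposition~\ref{prop:*repiffbifib} in the direction ``bifibred over $\one$ implies birepresentable'' yields that $p^{-1}(A)$ is birepresentable, concluding the argument. There is really no hard obstacle: the statement is essentially a packaging of machinery built up through Section~\ref{sect:frob}. The only point that deserves a line of care is the identification between the polymaps of $\one$ and the Frobenius multiplications $\overline{(m,n)_A}$, so that the hypothesis of the corollary is recognised as exactly what the stability proposition requires, and the observation that pulling back preserves the property of being a strict functor of polycategories so that $q$ is genuinely a poly-refinement system to which Proposition~\ref{prop:*repiffbifib} applies.
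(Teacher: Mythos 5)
Your argument is correct and is exactly the route the paper intends: the corollary is stated without proof precisely because it is the assembly of the three preceding facts you cite (Frobenius monoid as a functor $\one \to \mathcal{B}$, the polyfiber as the pullback along it, stability of pull/push-liftings under pullback) combined with Proposition~\ref{prop:*repiffbifib}. Your added care in identifying the polymaps $\overline{(m,n)}$ of $\one$ with the structure maps $\overline{(m,n)_A}$, so that the hypothesis matches what the stability proposition needs, is the right point to make explicit.
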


\section{Examples}
\label{sec:bifib:examples}

\begin{example}
  Let $\mathcal{E}$ and $\mathcal{B}$ be ordinary categories considered as degenerate polycategories with only unary co-unary maps (i.e., polymaps of arity and co-arity 1), and let $p : \mathcal{E} \to \mathcal{B}$ be an ordinary (strict) functor.
  Then $p$ is a pull-fibration, push-fibration or bifibration just in case it is an ordinary (Grothendieck) fibration, opfibration or bifibration.
  Similarly, if $\mathcal{E}$ and $\mathcal{B}$ are multicategories considered as polycategories with only co-unary maps, then $p$ is a push-fibration just in case it is a covariant fibration of multicategories in the sense of Hermida \cite{Hermida2004}, and more generally the polycategorical notions of pullback and pushforward coincide with the multicategorical ones described in \cite{Hoermann2017,LicataShulmanRiley2017}.
\end{example}

\begin{example}
  The forgetful functor $\Cat_\ast \to \Cat$ from the category of pointed (small) categories to the category of (small) categories is an opfibration of 2-categories.
  The pushforward of $(\mathcal{A},A)$ along $F : \mathcal{A} \to \mathcal{B}$ is $(\mathcal{B},F(A))$.
  Similarly the forgetful functor $\Adj_\ast \to \Adj$ of pointed adjunctions is a bifibration of 2-categories.
  Here a pointed adjunction between pointed categories $(\mathcal{A},A)$ and $(\mathcal{B},B)$ consist of an adjunction $F \dashv G : \mathcal{A} \to \mathcal{B}$ and a morphism $f : F(A) \to B$ in $\mathcal{B}$ - or equivalently of a morphism $g : A \to G(B)$ in $\mathcal{A}$.
  The pushforward is given by the image by $F$ while the pullback is given by the image of $G$.
  While working on the polycategorical Grothendieck correspondences we will define the 2-polycategory of multivariable adjunction $\MVar$.
  It also has a pointed variant $\MVar_\ast$.
  The forgetful functor induced is a bifibration of 2-polycategories.  
\end{example}

\begin{example}
Consider a multicategories \cE and \cB as polycategories with only co-unary polymaps.
A push-fibration then correspond to an opfibration of multicategories as defined in \cite{Hermida2004}.
Then, a multicategory \cE is representable iff it is opfibred over the terminal multicategory $\one$.
We also get a notion of fibration of multicategories from by asking for a pull-fibration of the polycategories.
In it a pullback $\pull{f}{\Gamma_1}{\Gamma_2}{A}$ can be seen as a parametrised notion of internal hom $\Gamma_1 \multimap_{f} A \multimapinv \Gamma_2$.
Indeed, a multicategory is fibred over the terminal one iff it is closed.
Putting these two results together, a multicategory is bifibred over $\one$ iff it is the underlying multicategory of a monoidal closed category.
\end{example}

\begin{example}
From the example above we get a notion of fibration of monoidal categories.
Monoidal bifibrations have been studied in \cite{Shulman2007}.
While monoidal opfibrations correspond to opfibrations of representable multicategories, this is not the case for monoidal fibrations.
This is because in monoidal fibrations, $\otimes$ should preserves the in-cartesian morphisms.
However, since the tensor is defined as a pushforward it only preserves opcartesian morphisms.
\end{example}

It is however possible to recover the notion of monoidal fibration using pull-fibrations of polycategories.

\begin{example}
Given a functor of multicategories $p \colon \cE \to \cB$, one can define a functor of polycategories $p^\op \colon \cE^\op \to \cB^\op$ where $\cE^\op$ is the polycategory with only unary polymaps $A \to \Gamma$ corresponding to multimaps $\Gamma \to A$ in \cE, and similarly for \cB.
If the multicategories \cE and \cB are representable then their tensor products define par products in $\cE^\op$ and $\cB^\op$ and $p$ is a monoidal fibration iff $p^\op$ is a pull-fibration.
\end{example}

We proved that an opfibration of multicategories $p \colon \cE \to \one$ defines a monoidal category.
Furthermore, a functor $\one \to \cE$ is a monoid internal to $\cE$.
This let us define a monoid internal to a monoidal category as a section of an opfibration into $\one$, the free multicategory containing a monoid.
This recipe can be extended to other algebraic structures by:
\begin{itemize}
\item considering the free multicategory/category containing such an structure
\item an opfibration/bifibration over it will be a categorified version of the algebraic structure
\item a section of it will be an an internal version of it
\end{itemize}
An another example that we encounter is for Frobenius monoids internal to $\ast$-autonomous categories.

\begin{example}
Recall that \Act{} is the free multicategory containing a monoid action.
It has two objects $\ast, \star$, a family of multimaps making $\ast$ a monoid and a family of multimaps defining an action of $\ast$ on $\star$.

Consider a functor of multicategories $p \colon \cE \to \Act$.
The polyfibre over the monoid $\ast$, $p^{-1}(\ast)$ defines a multicategory.
It consists of all the objects in $\cE$ sent to $\ast$ and all the multimaps in \cE sent to the multiplication of $\ast$.
The fibre over $\star$ $p^{-1}(\star)$ defines a category.
If $p$ is an opfibration then $p ^{-1}(\ast)$ is representable.
Furthermore, by pushing objects of $p^{-1}(\ast)$ over the action in $\Act$, we get an action $p^{-1}(\ast)$ on $p^{-1}(\star)$, namely an actegory.
This defines a correspondence between actegories and opfibred multicategories over \Act.
A section of $p$ is then a monoid action internal to an actegory.
This is a monoid internal to the monoidal category acting on an object of the category acted on in a coherent way.

If instead of an opfibration $p \colon \cE \to \Act$ we only have pushforwards over the actions (but not the multiplication) this defines a multicategory acting on a category. 
\end{example}

\begin{rk}
The fact that opfibrations/bifibrations give a categorified version of the algebraic structures is explained by the B\'{e}nabou-Grothendieck correspondence that we will explore at the end of this thesis.
For example, an opfibration of multicategories $\cE \to \cB$ correspond to a pseudofunctor $\cB \to \Cat$.
When $\cB$ is the free multicategory on an algebraic structure, this gives a non-strict version of this algebraic structure in \Cat.
\end{rk}

\begin{rk}
Instead of \Act{} it should be possible to define \PolyAct{}, the free polycategory containing a monoid $\ast$ acting on a Frobenius monoid $\star$.
Then by considering some fibred structures on a functor of polycategories $p \colon \cE \to \PolyAct$ we should recover the polyactegories and linear actegories of \cite{CockettPastro2007}.
\end{rk}

\section{Forgetful functor from Banach spaces}
\label{sec:Banach2}

We will use proposition \ref{prop:compcartfunc} to derive the birepresentability of the polycategory $\FBanc$ defined in \ref{sec:Banach1}.
In order to do that we consider the  forgetful functor $\FBanc \to \FVect$.
We want to characterise the polymaps that admit cartesian liftings.

In the following, given a finite list of Banach spaces $(A_i, \|-\|_{A_i})$ with $\Gamma := A_1,\dots,A_n$ we will write $\|-\|_{\Gamma} := \|-\|_{A_1}\dots\|-\|_{A_n}$ in equations.
We will also write $\|-\|_{\Gamma} := \|-\|_{A_1},\dots,\|-\|_{A_n}$ for the list of the norms.

Given a polylinear map $g \colon \Gamma_1, A, \Gamma_2 \to \Delta$ and norms $\|-\|_{\Gamma_i}, \|-\|_{\Delta}$ we want to define a norm on $A$ $\|-\|_g$ that is a pullback.
In particular, it should make $g$ contractive.
That is, if $\|x\|_g \leq 1$ and given $\overrightarrow{a}^k, \overrightarrow{\varphi}$ composed of subunits, i.e., vectors of norm lesser than 1, we should have $|(\overrightarrow{\varphi})g(\overrightarrow{a}^1,x,\overrightarrow{a}^2)| \leq 1$.
To ensure that we will ask that it is true for their supremum.

\begin{definition}
  Given $g \colon \Gamma_1, A, \Gamma_2 \to \Delta$ a polylinear map with norms $\|-\|_\Gamma, \|-\|_{\Delta_i}$, we define a function $\|-\|_g \colon A \to \mathbb{K}$ by \[\|x\|_g :=  \sup\limits_{\substack{\overrightarrow{a}^k \in \Gamma^k, \overrightarrow{\varphi} \in \Delta^\ast\\ \|a_i^k\|_{A_i^k}\leq 1, \|\varphi_j\|_{B_j^\ast} \leq 1}}|(\overrightarrow{\varphi})g(\overrightarrow{a}^1,x,\overrightarrow{a}^2)|\]
\end{definition}

This does not define a norm in general.
Indeed, take $g \colon A \to B$ for simplicity.
Then $\|x\|_g := \sup_{\|\varphi\|_{B^\ast} \leq 1} |(\varphi)g(x)|$.
Now $\|x\|_g = 0$ iff for any $\|\varphi\|_{A^\ast} \leq 1$, we have $\varphi(g(x)) = 0$, which is true iff $g(x) = 0$.
Then if $g$ is not injective, there an element $x \neq 0$ such that $\|x\|_g = 0$.

\begin{prop}
  $\|-\|_g$ defines a pseudonorm on $A$.
\end{prop}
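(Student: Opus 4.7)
The plan is to verify in order the three defining properties of a pseudonorm on $A$: (i) finiteness and non-negativity, (ii) absolute homogeneity, and (iii) the triangle inequality, all of which will follow essentially from multilinearity of $g$ together with the elementary behaviour of suprema of absolute values. I would first pause to note that we do not require the separation axiom $\|x\|_g = 0 \Rightarrow x = 0$, which is expected: when $g$ is non-injective in its $A$-input, there are non-zero $x$ for which every evaluation $(\overrightarrow{\varphi})g(\overrightarrow{a}^1, x, \overrightarrow{a}^2)$ vanishes, as discussed just before the statement.

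First I would check finiteness of $\|x\|_g$. Since we are in finite dimensions, all vector spaces involved are complete and all multilinear maps are bounded. Fixing $x \in A$ and unrolling the polylinearity, the assignment $(\overrightarrow{a}^1, \overrightarrow{a}^2, \overrightarrow{\varphi}) \mapsto (\overrightarrow{\varphi})g(\overrightarrow{a}^1, x, \overrightarrow{a}^2)$ is a continuous multilinear form on a product of finite-dimensional normed spaces, hence bounded on the product of the closed unit balls. So the supremum is a finite non-negative real, establishing $\|x\|_g \in [0,\infty)$.

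Next I would handle homogeneity. Using linearity of $g$ in its $A$-argument, for any scalar $\lambda$ and any admissible $(\overrightarrow{a}^1, \overrightarrow{a}^2, \overrightarrow{\varphi})$ we have $(\overrightarrow{\varphi})g(\overrightarrow{a}^1, \lambda x, \overrightarrow{a}^2) = \lambda \cdot (\overrightarrow{\varphi})g(\overrightarrow{a}^1, x, \overrightarrow{a}^2)$, so taking absolute values and supremum yields $\|\lambda x\|_g = |\lambda|\, \|x\|_g$. In particular $\|0\|_g = 0$. Similarly, for the triangle inequality, linearity in the $A$-slot gives
\[(\overrightarrow{\varphi})g(\overrightarrow{a}^1, x+y, \overrightarrow{a}^2) = (\overrightarrow{\varphi})g(\overrightarrow{a}^1, x, \overrightarrow{a}^2) + (\overrightarrow{\varphi})g(\overrightarrow{a}^1, y, \overrightarrow{a}^2),\]
so by the triangle inequality for $|\cdot|$ and monotonicity/subadditivity of the supremum (the sup of a sum is bounded by the sum of the individual sups over the same index set), we obtain $\|x+y\|_g \leq \|x\|_g + \|y\|_g$.

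No step looks genuinely difficult; the only mild subtlety is the finiteness argument, but in the finite-dimensional setting adopted throughout this section it is automatic, and the author has already flagged that reliance on finite dimensions allows us to skip questions of completeness. After this proposition, the natural next step (not part of the statement, but worth noting for the surrounding narrative) would be to quotient $A$ by the kernel $\{x : \|x\|_g = 0\}$ to obtain an honest norm, which will be the mechanism producing the pullback norm in the bifibrational lifting of $\FBanc \to \FVect$.
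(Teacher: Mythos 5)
Your proof is correct and follows the same route as the paper, whose own proof is the one-line remark that the result ``follows from linearity of $g$ and the properties of the norms''; you have simply spelled out the homogeneity and triangle-inequality checks that this one-liner compresses. The extra attention you give to finiteness of the supremum (via boundedness of multilinear maps in finite dimensions) is a reasonable elaboration consistent with the paper's standing finite-dimensionality assumptions.
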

\begin{proof}
   This follows from linearity of $g$ and the properties of the norms.
\end{proof}

We want to characterise the polymaps for which this is a norm.

\begin{definition}
  $g$ is \defin{injective in $A$} - or \defin{$A$-injective} - if \[(\forall \overrightarrow{a^i},\forall \overrightarrow{\varphi},\ (\overrightarrow{\varphi})g(\overrightarrow{a^1},x,\overrightarrow{a^2}) = 0) \Rightarrow x = 0\]
\end{definition}

\begin{definition}
  The \defin{$A$-kernel} of $g$ is the set \[\Ker_{A}(g) := \set{x \in A}{(\overrightarrow{\varphi})g(\overrightarrow{a^1},x,\overrightarrow{a^2}) = 0\ \forall \overrightarrow{a^i},\forall\overrightarrow{\varphi}}\]
\end{definition}

The $A$-kernel of $g$ forms a vector space.
$g$ is $A$-injective iff its $A$-kernel is trivial.

\begin{remark}
  A polylinear map $g \colon A \to B$ is $A$-injective if it is injective as a linear map.
\end{remark}

\begin{prop}
  $\|-\|_g$ is a norm iff $g$ is $A$-injective.
\end{prop}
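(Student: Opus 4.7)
The plan is to reduce the claim to a direct characterisation of the vanishing locus of $\|-\|_g$. Since the previous proposition already establishes that $\|-\|_g$ is a pseudonorm, the only property missing for it to be a norm is definiteness: $\|x\|_g = 0 \Rightarrow x = 0$. So the whole proof amounts to showing that the set $\{x \in A \mid \|x\|_g = 0\}$ coincides with $\Ker_A(g)$, at which point the equivalence with $A$-injectivity is immediate by the definition of $A$-injectivity.

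First I would observe that, because $|(\overrightarrow{\varphi})g(\overrightarrow{a}^1,x,\overrightarrow{a}^2)|$ is non-negative, the supremum defining $\|x\|_g$ vanishes if and only if $(\overrightarrow{\varphi})g(\overrightarrow{a}^1,x,\overrightarrow{a}^2) = 0$ for all choices of subunit vectors $\overrightarrow{a}^k$ and subunit functionals $\overrightarrow{\varphi}$. The one direction of the equivalence with membership in $\Ker_A(g)$ is then trivial: if $x \in \Ker_A(g)$, the expression vanishes for all inputs whatsoever, hence in particular for subunit ones, so $\|x\|_g = 0$.

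For the other direction, I would use the multilinearity of $g$ to lift the vanishing from subunit inputs to arbitrary inputs. Given arbitrary $\overrightarrow{a}^k$ and $\overrightarrow{\varphi}$, if any of them is zero the expression $(\overrightarrow{\varphi})g(\overrightarrow{a}^1,x,\overrightarrow{a}^2)$ is automatically $0$ by linearity; otherwise, normalise each $a_i^k$ to $a_i^k/\|a_i^k\|_{A_i^k}$ and each $\varphi_j$ to $\varphi_j/\|\varphi_j\|_{B_j^\ast}$ so that all inputs become subunit, then apply multilinearity to write the original expression as a non-zero scalar multiple of an expression that vanishes by hypothesis. This shows $\|x\|_g = 0 \iff x \in \Ker_A(g)$, and the proposition follows by unfolding the definition of $A$-injectivity.

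There is essentially no obstacle here; the argument is routine. The one thing worth being careful about is simply keeping track of the rescaling in the multilinear case (as opposed to the bilinear or unary case), but this is just a matter of applying the scaling identity once per input variable.
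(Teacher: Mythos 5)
Your argument is correct. Note that the paper actually states this proposition without proof, so there is nothing to compare against; your reduction — invoke the preceding pseudonorm proposition so that only definiteness is at stake, identify the vanishing locus of $\|-\|_g$ with $\Ker_A(g)$ by rescaling arbitrary inputs to subunits via multilinearity (treating the zero-input case separately), and then unfold the definition of $A$-injectivity — is exactly the routine argument the author evidently had in mind, and it also makes transparent the paper's follow-up remark that the conclusion depends only on $g$ and not on the chosen norms.
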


It is worth noticing that the fact that it is a norm only depends on $g$ and not on any properties of the norms on $\Gamma_i$ and $\Delta$.

\begin{prop}
  For $g$ $A$-injective and norms on $\Gamma$ and $\Delta_i$, the norm $\|-\|_g$ makes $g$ contractive.
\end{prop}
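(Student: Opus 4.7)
The plan is to unfold the definitions and observe that the statement is essentially tautological, given how $\|-\|_g$ was defined as a supremum.

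First I would fix arbitrary subunital data: vectors $\overrightarrow{a}^1 \in \Gamma_1$, $\overrightarrow{a}^2 \in \Gamma_2$ with $\|a_i^k\|_{A_i^k} \leq 1$, a vector $x \in A$ with $\|x\|_g \leq 1$, and functionals $\overrightarrow{\varphi} \in \Delta^\ast$ with $\|\varphi_j\|_{B_j^\ast} \leq 1$. The goal is to show $|(\overrightarrow{\varphi})g(\overrightarrow{a}^1, x, \overrightarrow{a}^2)| \leq 1$. By the very definition of $\|-\|_g$ as a supremum over all subunital choices of $\overrightarrow{a}^k$ and $\overrightarrow{\varphi}$, the quantity $|(\overrightarrow{\varphi})g(\overrightarrow{a}^1, x, \overrightarrow{a}^2)|$ is one of the values being taken a supremum of, so
\[ |(\overrightarrow{\varphi})g(\overrightarrow{a}^1, x, \overrightarrow{a}^2)| \leq \|x\|_g \leq 1. \]
This is precisely the contractiveness condition, so we are done.

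There is no real obstacle here: the hypothesis that $g$ is $A$-injective is not actually needed to establish contractiveness itself — it is only needed (and has been used in the preceding proposition) to guarantee that $\|-\|_g$ is genuinely a norm rather than merely a pseudonorm. The contractiveness inequality holds by construction in either case. So the proof is a single line unwinding the supremum definition, and I would present it as such without further ceremony.
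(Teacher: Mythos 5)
Your proof is correct; the paper actually states this proposition without any proof, precisely because it is immediate from the supremum definition in the way you describe, and your observation that $A$-injectivity is only needed to make $\|-\|_g$ a genuine norm (not to get contractiveness) is also accurate. The only pedantic gap is that the paper's formal definition of \emph{contractive} requires $|(\varphi_1,\dots,\varphi_n)g(\overrightarrow{a}^1,x,\overrightarrow{a}^2)| \leq \prod_{i}\|a_i\|\cdot\prod_j\|\varphi_j\|\cdot\|x\|_g$ for \emph{arbitrary} vectors, so one should append the standard homogeneity/normalisation step (dividing each nonzero vector by its norm, and noting that both sides vanish if any vector is zero) to pass from your subunital inequality to the general one --- a step the paper itself performs routinely in the neighbouring proofs and which you may reasonably leave implicit.
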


This norm defines a pullback in $\FBanc$.

\begin{prop}
  Given a $A$-injective polylinear map $g \colon \Gamma_1,A,\Gamma_2 \to \Delta$ and norms $\|-\|_{\Gamma_i},\|-\|_\Delta$, the pullback is given by \[\pull{g}{(\Gamma_1,\|-\|_{\Gamma_1})}{(\Gamma_2,\|-\|_{\Gamma_2})}{(\Delta,\|-\|_\Delta)} = (A,\|-\|_g)\]
\end{prop}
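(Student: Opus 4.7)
The plan is to verify directly that the polylinear map $g$, regarded as a polymap $(\Gamma_1,\|-\|_{\Gamma_1}), (A,\|-\|_g), (\Gamma_2,\|-\|_{\Gamma_2}) \to (\Delta,\|-\|_\Delta)$ in $\FBanc$, is in-cartesian in $A$ with respect to the forgetful functor $\cU \colon \FBanc \to \FVect$. The previous proposition already supplies contractiveness of $g$. What remains is the universal property: given any contractive polylinear $\xi$ such that $\cU(\xi) = g \circ_A h$ for some linear $h \colon \Pi \to \Sigma_1, A, \Sigma_2$ in $\FVect$, show that $h$ is itself contractive when $A$ is equipped with $\|-\|_g$. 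Uniqueness of the factorisation is automatic, since $\cU$ is faithful on underlying data and $\FVect$ is birepresentable so $h$ is uniquely determined in $\FVect$.

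The bulk of the work is the contractivity of $h$. I would fix subunital vectors $\overrightarrow{r}$ in $\Pi$ and subunital functionals $\overrightarrow{\psi_1}, \varphi, \overrightarrow{\psi_2}$ (where $\varphi \in A^\ast$ has dual norm $\|\varphi\|_g^\ast \leq 1$) and aim to bound $|(\overrightarrow{\psi_1}, \varphi, \overrightarrow{\psi_2}) h(\overrightarrow{r})|$ by $1$. Setting $z := (\overrightarrow{\psi_1}, -, \overrightarrow{\psi_2}) h(\overrightarrow{r}) \in A$, the quantity factors as $\varphi(z)$, which by definition of the dual norm is bounded by $\|z\|_g$. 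Hence it suffices to prove $\|z\|_g \leq 1$. Unpacking the supremum defining $\|-\|_g$, this amounts to showing $|(\overrightarrow{\varphi}')\,g(\overrightarrow{a^1}, z, \overrightarrow{a^2})| \leq 1$ for all subunital choices $\overrightarrow{a^k}, \overrightarrow{\varphi}'$. Here the composition formula for polylinear maps kicks in: taking $\varphi := (\overrightarrow{\varphi}')\,g(\overrightarrow{a^1},-,\overrightarrow{a^2})$ in the definition of $z$ yields exactly $(\overrightarrow{\psi_1}, \overrightarrow{\varphi}', \overrightarrow{\psi_2}) (g \circ_A h)(\overrightarrow{a^1}, \overrightarrow{r}, \overrightarrow{a^2}) = (\overrightarrow{\psi_1}, \overrightarrow{\varphi}', \overrightarrow{\psi_2}) \xi(\overrightarrow{a^1}, \overrightarrow{r}, \overrightarrow{a^2})$, which is bounded by $1$ by contractivity of $\xi$ on subunital arguments.

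The main obstacle is conceptual rather than technical: it consists in lining up the composition formula for polylinear maps carefully enough so that the supremum defining $\|-\|_g$ reads off precisely as a family of evaluations of $\xi$. Once this identification is made, every step reduces to applying either the definition of the dual norm, the definition of $\|-\|_g$, or the hypothesis that $\xi$ is contractive. The planarity constraint on polycategorical composition causes no difficulty here because the contexts $\Gamma_i$ and $\Sigma_j$ sit on opposite sides of the composed object $A$, so the composition $g \circ_A h$ is always well-formed. This completes the verification that $(A,\|-\|_g)$ together with $g$ is the desired in-cartesian lifting, and hence is the pullback.
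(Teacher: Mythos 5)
Your proposal is correct and follows essentially the same route as the paper: both arguments reduce contractivity of the factorisation $h$ to the bound $\|z\|_g \leq 1$ for the element $z \in A$ determined (via the finite-dimensional double-dual identification, which you use implicitly when writing $z \in A$) by evaluating $h$ at subunital arguments, and then unwind the supremum defining $\|-\|_g$ so that each term becomes an evaluation of the contractive composite. The only cosmetic difference is that you also record the (trivial) uniqueness clause explicitly, which the paper leaves to the faithfulness of the forgetful functor.
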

\begin{proof}
Consider a polylinear map $f \colon \Gamma \to \Delta_1, A, \Delta_2$ such that $g\circ f$ is contractive.
We want to prove that $f$ is contractive when $A$ is equipped with $\|-\|_g$.

First by contractivity of $g \circ f$ we have that 
\[ |(\overrightarrow{\varphi_1},\overrightarrow{\varphi},\overrightarrow{\varphi_2})g\circ f(\overrightarrow{a_1},\overrightarrow{a},\overrightarrow{a_2}) | \leq 1 \]
for any subunits.
By definition of composition this means that
\begin{equation}
\label{eqn:pullback1}
 |(\overrightarrow{\varphi_1},(\overrightarrow{\varphi})g(\overrightarrow{a_1},-,\overrightarrow{a_2}),\overrightarrow{\varphi_2})f(\overrightarrow{a}) | \leq 1
\end{equation}

What we want to prove is that $f$ is contractive, i.e.
\[ |(\overrightarrow{\varphi_1}, \xi, \overrightarrow{\varphi_2})f(\overrightarrow{a})| \leq 1 \]
for any subunits.
First, notice that since $(\overrightarrow{\varphi_1},-,\overrightarrow{\varphi_2})f(\overrightarrow{a})$ is a functional on $\rdual{A}$ and $A \rightarrow A^{\ast\ast}$ is an isomorphism since $A$ is finite dimensional, there exists a unique $u \in A$ such that for any $\xi \in \rdual{A}$
\[ (\overrightarrow{\varphi_1},\xi,\overrightarrow{\varphi_2})f(\overrightarrow{a})= \xi(u)\]
So contractivity of $f$ is equivalent to ask that for any subunit $\xi$
\[ |\xi(u)| \leq 1\]
Now since $g \circ f$ is contractive we have for any subunits
\[ (\overrightarrow{\varphi}g(\overrightarrow{a_1},u,\overrightarrow{a_2})| = |(\overrightarrow{\varphi_1},(\overrightarrow{\varphi})g(\overrightarrow{a_1},-,\overrightarrow{a_2}),\overrightarrow{\varphi_2})f(\overrightarrow{a})| = |(\overrightarrow{\varphi_1},\overrightarrow{\varphi},\overrightarrow{\varphi_2})g\circ f(\overrightarrow{a_1},\overrightarrow{a},\overrightarrow{a_2})| \leq 1 \]
So $\|u\|_g \leq 1$.
This means by definition that for any subunit $\xi \in \rdual{A}$, $||\xi(u)| \leq 1$, and $f$ is contractive.
\end{proof}

So we have in-cartesian liftings of any polylinear map that is injective in the input considered.
The injectivity condition is only needed for $\|-\|_f$ to be a norm, otherwise it is still a seminorm, i.e., $\|x\|_f \ge 0$ for all $x$ and $\|0\|_f = 0$, but $\|x\|_f = 0$ does not imply $x = 0$.

\begin{cor}
  There is a polycategory $\mathbf{FBan_1^{ps}}$ of finite dimensional complete seminormed vector spaces and contractive polylinear maps that comes with a forgetful functor that is pull-fibred.
\end{cor}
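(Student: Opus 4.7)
The approach is to relax the pullback-norm construction from the preceding proposition so that the hypothesis of $A$-injectivity, which was needed only to promote the pseudonorm $\|-\|_g$ to a genuine norm, can be dropped by enlarging the ambient polycategory to allow seminorms. The relevant fact, already established in the excerpt, is that $\|-\|_g$ is unconditionally a pseudonorm.

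First I would define $\mathbf{FBan_1^{ps}}$ to have as objects pairs $(A,\|-\|_A)$ with $A$ a finite-dimensional $\mathbb{K}$-vector space and $\|-\|_A$ a seminorm (completeness being automatic in finite dimensions), and as polymaps contractive polylinear maps, where contractivity is defined using the dual seminorm $\|\xi\|_{A^\ast}:=\sup_{\|a\|_A\leq 1}|\xi(a)|$ on functionals. The polycategorical structure transports from $\FVect$: identities are contractive, and the argument that composition of contractive polylinear maps is contractive depends only on the triangle inequality and absolute homogeneity, never on positive definiteness, so it applies verbatim. Thus $\mathbf{FBan_1^{ps}}$ is a polycategory, and the assignment $(A,\|-\|_A)\mapsto A$ extended by identity on polymaps defines a strict functor $\cU^{ps}\colon\mathbf{FBan_1^{ps}}\to\FVect$.

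Next I would establish the pull-fibration property. Given $g\colon \Gamma_1,A,\Gamma_2\to \Delta$ in $\FVect$ together with liftings $(\Pi_i,\|-\|_{\Pi_i})\refs\Gamma_i$ and $(\Sigma,\|-\|_{\Sigma})\refs\Delta$, I would take $(A,\|-\|_g)$ as the candidate pullback, with $g$ itself as candidate in-cartesian polymap. Contractivity of $g$ with respect to $\|-\|_g$ is immediate from the supremum definition of $\|-\|_g$. For the universal property, consider $f\colon\Pi\to\Sigma_1,(A,\|-\|_g),\Sigma_2$ in $\FVect$ with $g\circ f$ contractive. Fixing subunital $\overrightarrow{a},\overrightarrow{\varphi_1},\overrightarrow{\varphi_2}$, the assignment $\xi\mapsto(\overrightarrow{\varphi_1},\xi,\overrightarrow{\varphi_2})f(\overrightarrow{a})$ is linear in $\xi$, hence an element of $A^{\ast\ast}\simeq A$ (using finite-dimensionality), giving a unique $u\in A$ with $\xi(u)=(\overrightarrow{\varphi_1},\xi,\overrightarrow{\varphi_2})f(\overrightarrow{a})$. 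Contractivity of $g\circ f$ yields $|(\overrightarrow{\varphi})g(\overrightarrow{a_1},u,\overrightarrow{a_2})|\leq 1$ for all subunits, so by definition $\|u\|_g\leq 1$, and then $|\xi(u)|\leq 1$ for every subunital $\xi$ by definition of the dual seminorm, establishing contractivity of $f$.

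The main obstacle is the subtlety of the representation argument in the seminormed regime: the dual pairing between $A$ and $A^\ast$ only separates points modulo $\ker(\|-\|_g)$, so one should verify that the representing element $u$, although not uniquely determined by the functional it defines on $A^\ast$, can always be chosen and that $|\xi(u)|\leq\|\xi\|_{A^\ast}\cdot\|u\|_g$ survives without positive definiteness. Both points are handled by the fact that $A$ remains finite-dimensional as a bare vector space, so $A\simeq A^{\ast\ast}$ still holds as vector spaces, and the closing inequality is a direct consequence of the definition of the dual seminorm, independent of whether $\|-\|_g$ is genuinely positive-definite. Once the cartesian lifting is verified, combining with Proposition~\ref{prop:*repiffbifib} and functoriality suggests further that $\mathbf{FBan_1^{ps}}$ inherits a birepresentable structure in the same manner as $\FBanc$.
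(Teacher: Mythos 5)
Your proposal is correct and follows essentially the same route as the paper, which derives this corollary immediately from the preceding observation that $\|-\|_g$ is always a pseudonorm and that the in-cartesian factorisation argument for the pullback never uses positive definiteness. The extra care you take with the duality pairing $A \simeq A^{\ast\ast}$ in the seminormed regime is a reasonable elaboration of what the paper leaves implicit.
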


Now we want to determine which polylinear maps have out-cartesian liftings.

Given a polylinear map $f \colon \Gamma \to \Delta_1, A, \Delta_2$ and norms of $\Gamma, \Delta_i$, we want to define a norm $\|-\|^f$ on $A$.
Remember that $(\overrightarrow{\varphi_1},-,\overrightarrow{\varphi_2})f(\overrightarrow{a})$ defines an element of $A$.
We would like that \[\|(\overrightarrow{\varphi_1},-,\overrightarrow{\varphi_2})f(\overrightarrow{a})\|^f = \|\overrightarrow{\varphi_1}\|_{(\Delta_1)^\ast}\|\overrightarrow{\varphi_2}\|_{(\Delta_2)^\ast}\|\overrightarrow{a}\|_{\Gamma}\]
Then if all the vectors considers are subunits, this norm is less than 1.
Furthermore, since $A$ is a vector space we can consider linear combinaison of elements such as above.
In this case we would like to define the norm as the linear combination of the norms.
However this is not well-defined since a vector of $A$ could potentially be decomposed as a linear combination of images of $f$ in multiple ways.
So we take the infimum on all these decomposition.

\begin{definition}
  For $f : \Gamma \to \Delta_1, A, \Delta_2$ and families of norms $\|-\|_{\Gamma},\|-\|_{\Delta_1},\|-\|_{\Delta_2}$, we define a function $\|-\|^f : A \to \bar{\mathbb{K}}$ where $\bar{\mathbb{K}}$ is the completion of $\mathbb{K}$, i.e., we add a point at infinity. It is given by
  $\|y\|^f := \inf\limits_{\check{y} = \sum\limits_i (\vect{\varphi}_{1,i},-,\vect{\varphi}_{2,i})f(\vect{a}_i)} \sum\limits_i \|\vect{\varphi}_{1,i}\|\|\vect{\varphi}_{2,i}\|\|\vect{a}_i\|$
  where the inf is over all the decompositions of $\check{y}$, the functional on $\rdual{A}$ given by evaluation at $y$.
\end{definition}

\begin{prop}
  $\|-\|^f$ is an extended norm, i.e., a norm with value in $\bar{\mathbb{K}}$.
\end{prop}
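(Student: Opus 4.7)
The plan is to verify, one at a time, the four defining properties of an extended norm: non-negativity, vanishing at $0$, absolute homogeneity, and the triangle inequality. Throughout, I will exploit two observations: that the canonical isomorphism $y \mapsto \check y$ between $A$ and $A^{\ast\ast}$ is linear, so additive/scalar manipulations of $y$ transfer directly to manipulations of $\check y$; and that each term $(\vect\varphi_{1,i},-,\vect\varphi_{2,i}) f(\vect a_i)$ is multilinear in its ingredients, so scaling any one factor scales the whole term by the corresponding scalar.

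First I would handle non-negativity and the value at $0$. Non-negativity is immediate because the inf is over non-negative reals (using the convention $\inf \emptyset = +\infty$, which justifies values in $\bar{\mathbb{K}}$). For $y = 0$, the empty decomposition of $\check 0 = 0$ is admissible with empty sum equal to $0$, so $\|0\|^f = 0$.

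Next I would prove absolute homogeneity $\|\lambda y\|^f = |\lambda|\,\|y\|^f$. For $\lambda \neq 0$, given any decomposition $\check y = \sum_i (\vect\varphi_{1,i},-,\vect\varphi_{2,i}) f(\vect a_i)$, multiplying the first functional in each term by $\lambda$ yields a decomposition of $\lambda \check y = \widecheck{\lambda y}$ whose cost is exactly $|\lambda|$ times the original cost, proving $\|\lambda y\|^f \leq |\lambda|\,\|y\|^f$. Applying the same argument to $\lambda^{-1}$ gives the reverse inequality; the case $\lambda = 0$ follows from $\|0\|^f = 0$. The triangle inequality comes from concatenating decompositions: if $\check y_1 = \sum_i (\vect\varphi_{1,i},-,\vect\varphi_{2,i})f(\vect a_i)$ and $\check y_2 = \sum_j (\vect\psi_{1,j},-,\vect\psi_{2,j})f(\vect b_j)$, then by linearity of $(-)\check{\ }$ their sum is a decomposition of $\widecheck{y_1+y_2}$ whose cost is the sum of the individual costs; taking infima gives $\|y_1+y_2\|^f \leq \|y_1\|^f + \|y_2\|^f$ (with the usual convention $r + \infty = \infty$).

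The main obstacle is positive definiteness, i.e.\ $\|y\|^f = 0 \Rightarrow y = 0$, which by duality with the pullback case is not automatic: it should require a condition on $f$ dual to $A$-injectivity (some form of $A$-surjectivity/density, ensuring that no nonzero $y$ admits arbitrarily cheap decompositions of $\check y$). I expect the cleanest treatment to observe that if $\|y\|^f = 0$, then for every $\varepsilon > 0$ we can decompose $\check y$ with total cost $< \varepsilon$, and evaluating on any fixed unit vector of $\rdual{A}$ together with Hahn–Banach forces $\check y = 0$, hence $y = 0$ in the finite-dimensional setting where $A \simeq A^{\ast\ast}$. If this argument fails without an extra hypothesis, the proposition should be read as asserting an extended \emph{pseudo}norm — exactly parallel to the pullback case, where the norm property required the injectivity condition on $g$.
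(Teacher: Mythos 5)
Your proposal is correct and is essentially a fleshed-out version of the paper's (one-line) proof, which simply observes that the infimum of an empty set accounts for the value $+\infty$ and that the remaining properties ``follow from linearity of $f$'' --- your treatment of homogeneity (rescale one factor of each term) and the triangle inequality (concatenate decompositions) is exactly what that remark is gesturing at. The one place you go beyond the paper is positive definiteness, and there you hedge unnecessarily: the argument you sketch does go through, and no extra hypothesis on $f$ is needed. Concretely, fix any auxiliary norm $\|-\|_0$ on the finite-dimensional space $A$; since every polylinear map between finite-dimensional normed spaces is bounded, there is a constant $K$ with $|(\vect{\varphi}_1,\xi,\vect{\varphi}_2)f(\vect{a})| \leq K\,\|\vect{\varphi}_1\|\,\|\xi\|_0'\,\|\vect{\varphi}_2\|\,\|\vect{a}\|$. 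Evaluating any decomposition of $\check{y}$ against a fixed $\xi \in \rdual{A}$ then gives $|\xi(y)| \leq K\,\|\xi\|_0'\,\|y\|^f$, so $\|y\|^f = 0$ forces $\xi(y)=0$ for all $\xi$ and hence $y=0$ (no Hahn--Banach needed, only that functionals separate points). The $A$-surjectivity condition that appears in the paper's next proposition governs \emph{finiteness} of $\|-\|^f$, not definiteness, so you should not retreat to calling it a pseudonorm; the dual of $A$-injectivity in the pullback case is not definiteness failing here but the value $+\infty$ appearing.
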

\begin{proof}
This is an extended norm since the decomposition may not exist in which case we take the infimum of an empty set.
The properties of an extended norm follows from linearity of $f$.
\end{proof}

\begin{definition}
  $f : \Gamma \to \Delta_1, A, \Delta_2$ is \defin{$A$-surjective} if \[\forall y \in A, \exists \vect{\varphi}_{1,i}, \vect{\varphi}_{2,i}, \vect{a}_i,\ y = \sum\limits_i (\vect{\varphi}_{1,i},-,\vect{\varphi}_{2,i})f(\vect{a}_i)\]
  The $A$-image of $f$ is the set $\Img_A(f) := \{ \sum\limits_i (\vect{\varphi}_{1,i},-,\vect{\varphi}_{2,i})f(\vect{a}_i) \}$.
\end{definition}

\begin{prop}
  $\Img_A(f)$ forms a vector space.
  $f$ is $A$-surjective iff $\Img_A(f) = A$.
\end{prop}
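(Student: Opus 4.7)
The plan is to unpack the definition of $\Img_A(f)$ directly and verify the two claims by elementary arguments.

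First I would check that $\Img_A(f)$ is a linear subspace of $A$ (implicitly using the canonical isomorphism $A \simeq A^{\ast\ast}$ for finite dimensional $A$, so that functionals on $\rdual{A}$ of the form $(\vect{\varphi}_1,-,\vect{\varphi}_2)f(\vect{a})$ are identified with elements of $A$). Closure under addition is immediate from the definition, since the concatenation of two finite sums of the prescribed form is again a finite sum of that form. For containment of $0$, take any decomposition with $\vect{a}_i$ having a zero component: by multilinearity of $f$, all such terms vanish. Closure under scalar multiplication follows from linearity of $f$ in any single argument: given $y = \sum_i (\vect{\varphi}_{1,i},-,\vect{\varphi}_{2,i})f(\vect{a}_i) \in \Img_A(f)$ and $\lambda \in \mathbb{K}$, we have $\lambda y = \sum_i (\vect{\varphi}_{1,i},-,\vect{\varphi}_{2,i})f(\lambda a_{i,1},a_{i,2},\dots)$ (scaling the first component of each $\vect{a}_i$), which is again of the right form.

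For the second claim, observe that $f$ is $A$-surjective precisely when every $y \in A$ admits at least one decomposition as a finite linear combination $\sum_i (\vect{\varphi}_{1,i},-,\vect{\varphi}_{2,i})f(\vect{a}_i)$. By definition of $\Img_A(f)$, this is exactly the assertion that $A \subseteq \Img_A(f)$. Since the reverse inclusion $\Img_A(f) \subseteq A$ holds automatically, the equivalence $f$ $A$-surjective $\iff$ $\Img_A(f) = A$ follows.

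The argument is essentially a direct unfolding of definitions, so there is no real obstacle. The only minor subtlety worth flagging is the identification of expressions $(\vect{\varphi}_1,-,\vect{\varphi}_2)f(\vect{a}) \in A^{\ast\ast}$ with elements of $A$, which relies on finite-dimensionality; this is the same identification already used earlier in the proof that $\|-\|_g$ defines a pullback norm, so I would invoke it without further comment.
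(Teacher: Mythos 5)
The paper states this proposition without proof, and your direct unfolding of the definitions is exactly the argument it leaves implicit: closure of $\Img_A(f)$ under addition by concatenating sums, closure under scalars by absorbing $\lambda$ into one linear slot (an input $a_{i,1}$ or, if $\Gamma$ is empty, one of the functionals $\varphi_{j,i}$), and the second claim as a literal restatement of $A$-surjectivity, all modulo the identification $A \simeq A^{\ast\ast}$ which you rightly flag. The one caveat is the degenerate case where $\Gamma$, $\Delta_1$ and $\Delta_2$ are all empty, so there is no slot to absorb a scalar into and $\Img_A(f)$ reduces to the natural-number multiples of a single vector; this is an imprecision in the statement itself rather than in your proof, and it never arises in the paper's applications ($m_{A,B}$, $w_{A,B}$, $\rcap_A$).
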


\begin{remark}
  A linear map $f \colon A \to B$ is $B$-surjective iff it is surjective.
  Indeed if for $y \in B$ there are $x_i$ such that $y = \sum\limits_if(x_i)$ then by linearity $y = f(\sum\limits_ix_i)$.
\end{remark}

\begin{prop}
  For $f$ and families of norms $\|-\|_{\Gamma},\|-\|_{\Delta_1},\|-\|_{\Delta_2}$, $\|-\|^f$ is a norm iff $f$ is $A$-surjective.
\end{prop}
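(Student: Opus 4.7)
The plan is to treat this as a direct unpacking of the definitions: the previous proposition already guarantees that $\|-\|^f$ satisfies all axioms of an extended norm (positivity, absolute homogeneity, triangle inequality, and definiteness $\|y\|^f = 0 \Rightarrow y = 0$), so the only thing separating an extended norm from a norm is whether it takes finite values on all of $A$. Hence the whole statement reduces to showing that $\|y\|^f < \infty$ for every $y \in A$ if and only if $f$ is $A$-surjective. By the convention that the infimum of an empty set in $\bar{\mathbb{K}}$ is $+\infty$, the quantity $\|y\|^f$ is finite precisely when there exists at least one decomposition $\check y = \sum_i (\vect{\varphi}_{1,i},-,\vect{\varphi}_{2,i})f(\vect a_i)$ as functionals on $\rdual{A}$, i.e., precisely when $y \in \Img_A(f)$ (using the canonical isomorphism $A \simeq A^{\ast\ast}$, which is legitimate since $A$ is finite-dimensional).

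For the ``only if'' direction, I would argue by contrapositive. Suppose $f$ is not $A$-surjective, and pick some $y \in A \setminus \Img_A(f)$. Then no decomposition of $\check y$ of the required form exists, so the infimum in the definition of $\|y\|^f$ ranges over the empty set and evaluates to $+\infty$. This shows $\|-\|^f$ is not finite-valued, hence not a norm.

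For the ``if'' direction, assume $f$ is $A$-surjective. Fix any $y \in A$; by surjectivity there exist finitely many tuples $(\vect\varphi_{1,i}, \vect\varphi_{2,i}, \vect a_i)$ with $\check y = \sum_i (\vect\varphi_{1,i},-,\vect\varphi_{2,i})f(\vect a_i)$, and the corresponding finite sum $\sum_i \|\vect\varphi_{1,i}\|\|\vect\varphi_{2,i}\|\|\vect a_i\|$ is a finite nonnegative real number that bounds $\|y\|^f$ from above. Combined with the extended norm axioms from the preceding proposition, this establishes that $\|-\|^f$ is a bona fide norm on $A$.

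There is essentially no obstacle here: the argument is a bookkeeping exercise on the definitions of $\|-\|^f$, $\Img_A(f)$, and the $A \simeq A^{\ast\ast}$ identification. The only point that deserves a brief check is that ``no decomposition exists'' genuinely forces $\|y\|^f = +\infty$ rather than some other pathological value, which follows from the standing convention $\inf \emptyset = +\infty$ in $\bar{\mathbb{K}}$ that underlies the previous proposition's statement that $\|-\|^f$ is an extended (rather than ordinary) norm.
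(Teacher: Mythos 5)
Your proof is correct, and since the paper states this proposition without any proof, your argument supplies exactly the reasoning the surrounding text intends: the preceding proposition reduces the question to finiteness of $\|-\|^f$, and finiteness at $y$ is by definition the existence of at least one decomposition of $\check{y}$, i.e.\ membership of $y$ in $\Img_A(f)$, so finiteness everywhere is precisely $A$-surjectivity. The only point worth making explicit (which you do) is the convention $\inf \emptyset = +\infty$ and the identification $A \simeq A^{\ast\ast}$ needed to match decompositions of $\check{y}$ with the definition of $\Img_A(f)$; both are legitimate here.
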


\begin{prop}
  For $f$ $A$-surjective and families of norms as usual, $\|-\|^f$ makes $f$ contractive.
\end{prop}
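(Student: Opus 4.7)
The plan is to prove contractivity directly by taking advantage of the fact that $\|-\|^f$ is defined as an infimum: for any explicit decomposition, the sum of the products of norms provides an upper bound for $\|y\|^f$, and the trivial singleton decomposition is exactly what yields contractivity.

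First I would unfold the definition of contractivity for $f : \Gamma \to \Delta_1, A, \Delta_2$: it suffices to show that for any subunits $\vect{a} \in \Gamma$, $\vect{\varphi}_1 \in \Delta_1^\ast$, $\vect{\varphi}_2 \in \Delta_2^\ast$, and any $\xi \in A^\ast$ with $\|\xi\|^{f'} \leq 1$ (where $\|-\|^{f'}$ denotes the dual norm of $\|-\|^f$), one has $|(\vect{\varphi}_1, \xi, \vect{\varphi}_2)f(\vect{a})| \leq 1$. Because every vector space in sight is finite-dimensional, the canonical map $A \to A^{\ast\ast}$ is an isomorphism, so there is a unique $y \in A$ such that $\check{y} = (\vect{\varphi}_1, -, \vect{\varphi}_2)f(\vect{a})$ as a functional on $A^\ast$. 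Under this identification, $(\vect{\varphi}_1, \xi, \vect{\varphi}_2)f(\vect{a}) = \xi(y)$, and the target inequality becomes $|\xi(y)| \leq 1$.

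Next I would invoke the defining inequality between a norm and its dual norm: $|\xi(y)| \leq \|\xi\|^{f'} \cdot \|y\|^f$. Since $\|\xi\|^{f'} \leq 1$ by assumption, it then suffices to bound $\|y\|^f$. This is where the infimum definition comes in: the singleton decomposition $\check{y} = (\vect{\varphi}_1, -, \vect{\varphi}_2)f(\vect{a})$ is one of the decompositions over which the infimum is taken, so by definition
\[
\|y\|^f \;\leq\; \|\vect{\varphi}_1\| \cdot \|\vect{\varphi}_2\| \cdot \|\vect{a}\| \;\leq\; 1,
\]
where the last inequality uses subunitality of all the chosen vectors and functionals. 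Combining the two bounds gives $|\xi(y)| \leq 1$, as required.

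There is essentially no serious obstacle here; the argument is a one-line unwrapping of the definition. The only conceptual points to be careful about are (i) the use of $A$-surjectivity of $f$, which is needed only to guarantee that $\|-\|^f$ is an honest norm (finite-valued everywhere on $A$) rather than an extended one, so that the dual norm $\|-\|^{f'}$ on $A^\ast$ and its pairing inequality with $\|-\|^f$ are well-behaved; and (ii) the identification $A \simeq A^{\ast\ast}$ allowing us to pass between elements of $A$ and functionals of the shape $(\vect{\varphi}_1, -, \vect{\varphi}_2)f(\vect{a})$ on $A^\ast$, which is legitimate because we restrict to the finite-dimensional setting.
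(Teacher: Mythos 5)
Your proposal is correct and follows essentially the same route as the paper's proof: both reduce contractivity to the inequality $|\varphi((\vect{\varphi}_1,-,\vect{\varphi}_2)f(\vect{a}))| \leq 1$, apply the dual-norm pairing, and bound $\|(\vect{\varphi}_1,-,\vect{\varphi}_2)f(\vect{a})\|^f$ by $1$ using the singleton decomposition in the infimum defining $\|-\|^f$. Your explicit remarks on the role of $A$-surjectivity and the identification $A \simeq A^{\ast\ast}$ are sound and merely make precise what the paper leaves implicit.
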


\begin{proof}
We want to prove that for any subunits, \[|(\vect{\varphi}_1,\varphi,\vect{\varphi}_2)f(\vect{a})| \leq 1\]
This is equivalent to prove
\[
	|\varphi((\vect{\varphi}_1,-,\vect{\varphi}_2)f(\vect{a}))| \leq 1
\]

Now since $(\|\varphi\|^f)' \leq 1$ we have that $|\varphi(x)| \leq 1$ for any $\|x\|^f \leq 1$.
But then \[ \|(\vect{\varphi}_1,-,\vect{\varphi}_2)f(\vect{a})\|^f \leq \|\vect{\varphi}_1\|_{\Delta_1^\ast}\|\vect{\varphi}_2\|_{\Delta_2^\ast}\|\vect{a}\|_\Gamma \leq 1\]
because all of the vectors considered are subunits.

 \end{proof}

This norm defines a pushforward on $\FBanc$.

\begin{prop}
  For $f : \Gamma \to \Delta_1, A, \Delta_2$ an $A$-surjective polylinear map and the usual families of norms, we get the pushforward $\push{f}{\Gamma}{\Delta_1}{\Delta_2} = (A,\|-\|^f)$.
\end{prop}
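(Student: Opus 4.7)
The plan is to show that the polymap $f$, viewed in $\FBanc$ with $A$ carrying the norm $\|-\|^f$, is out-cartesian relative to the forgetful functor $\FBanc \to \FVect$. Since the previous proposition already tells us that $f$ is contractive with respect to this norm, $f$ is a legitimate polymap in $\FBanc$ lying over the polylinear map $f$ in $\FVect$. Given any polylinear map $g \colon \Pi_1, A, \Pi_2 \to \Sigma_1', \Sigma, \Sigma_2'$ and any contractive lifting $\xi$ of $g \circ_A f$ (for chosen norms on $\Pi_i, \Sigma_i', \Sigma$), the uniqueness of the factorisation is automatic because the forgetful functor is faithful on objects-with-norm and the underlying polylinear factorisation is forced to be $g$. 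Thus the entire content of the proof reduces to showing that $g$ itself is contractive when $A$ is given the norm $\|-\|^f$.

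To verify this, I will fix subunits $\vect{b}_1 \in \Pi_1$, $\vect{b}_2 \in \Pi_2$, $\vect{\psi}_1 \in \rdual{\Sigma_1'}$, $\vect{\psi} \in \rdual{\Sigma}$, $\vect{\psi}_2 \in \rdual{\Sigma_2'}$ and an element $y \in A$ with $\|y\|^f \leq 1$, and bound $|(\vect{\psi}_1, \vect{\psi}, \vect{\psi}_2) g(\vect{b}_1, y, \vect{b}_2)|$ by $1$. The key device is to introduce the functional
\[
  \xi_0 := (\vect{\psi}_1, \vect{\psi}, \vect{\psi}_2)\, g(\vect{b}_1, -, \vect{b}_2) \in \rdual{A},
\]
so that the quantity to bound is $|\xi_0(y)|$. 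Then, for any $\epsilon > 0$, invoking the definition of $\|-\|^f$ as an infimum yields a decomposition
\[
  y = \sum_i (\vect{\varphi}_{1,i}, -, \vect{\varphi}_{2,i}) f(\vect{a}_i),
  \qquad
  \sum_i \|\vect{\varphi}_{1,i}\|\|\vect{\varphi}_{2,i}\|\|\vect{a}_i\| < 1 + \epsilon.
\]

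By linearity of $\xi_0$, we obtain $\xi_0(y) = \sum_i (\vect{\varphi}_{1,i}, \xi_0, \vect{\varphi}_{2,i}) f(\vect{a}_i)$, and substituting the definition of $\xi_0$ into each summand identifies it with $(\vect{\psi}_1, \vect{\varphi}_{1,i}, \vect{\psi}, \vect{\varphi}_{2,i}, \vect{\psi}_2)(g \circ_A f)(\vect{b}_1, \vect{a}_i, \vect{b}_2)$, which is exactly the polycategorical composition underlying $\xi$. Normalising each vector and functional to a subunit by factoring out its norm, and then applying contractivity of $\xi$ to the normalised subunits, each such term has modulus at most $\|\vect{\varphi}_{1,i}\|\|\vect{\varphi}_{2,i}\|\|\vect{a}_i\|$. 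Summing, $|\xi_0(y)| < 1 + \epsilon$, and letting $\epsilon \to 0$ gives the required bound.

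The main obstacle is bookkeeping rather than depth: one must carefully juggle the planarity constraint on $\circ_A$, track the positions of $\vect{\psi}_i$ versus $\vect{\varphi}_{j,i}$ in the composite, and check that the identity $\xi_0(y) = \sum_i (\vect{\varphi}_{1,i}, \xi_0, \vect{\varphi}_{2,i}) f(\vect{a}_i)$—which implicitly uses the canonical isomorphism $A \simeq A^{\ast\ast}$ in finite dimension—is the correct polycategorical manipulation. Dually, uniqueness of the factorisation in $\FBanc$ is a formal consequence of faithfulness of the forgetful functor on morphisms, together with the fact that polylinear maps to a fixed codomain are determined by their underlying functions.
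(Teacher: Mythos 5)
Your proof is correct and follows essentially the same route as the paper: decompose $y$ via the infimum defining $\|-\|^f$, rewrite each summand as an evaluation of the contractive composite $g \circ_A f$ on normalised subunits, and sum to obtain the bound. The only cosmetic differences are that you pick an $\epsilon$-optimal decomposition up front (where the paper bounds by an arbitrary decomposition and passes to the infimum at the end) and that you route the computation through the functional $\xi_0 = (\vect{\psi}_1,\vect{\psi},\vect{\psi}_2)g(\vect{b}_1,-,\vect{b}_2)$ rather than substituting the decomposition of $y$ directly into $g$.
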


\begin{proof}
Suppose that $g \colon \Gamma_1, A, \Gamma_2 \to \Delta$ is such that $g\circ f$ is contractive, i.e. \[|(\vect{\varphi}_1, \vect{\varphi},\vect{\varphi}_2)g \circ f(\vect{a}_1, \vect{a},\vect{a}_2)| \leq 1\]
for any subunits.
We have to prove that then, $g$ is contractive with $\|-\|^f$ on $A$.
That is, for any subunits \[|(\vect{\varphi})g(\vect{a}_1, x, \vect{a}_2)| \leq 1\]
Now since \[\|x\|^f = \inf\limits_{x = \sum_i (\vect{\psi}_{1,i},-,\vect{\psi}_{2,i})f(\vect{b}_i)} \sum_i \|\vect{\psi}_{1,i}\|\|\vect{\psi}_{2,i}\|\|\vect{a}_i\| \leq 1\]
Given any decomposition \[x =  \sum_i (\vect{\psi}_{1,i},-,\vect{\psi}_{2,i})f(\vect{b}_i)\]
we have 
\begin{align*}
|(\vect{\varphi})g(\vect{a}_1, x, \vect{a}_2)| &= |(\vect{\varphi})g(\vect{a}_1, \sum_i (\vect{\psi}_{1,i},-,\vect{\psi}_{2,i})f(\vect{b}_i), \vect{a}_2)|\\
&\leq \sum_i |(\vect{\varphi})g(\vect{a}_1, (\vect{\psi}_{1,i},-,\vect{\psi}_{2,i})f(\vect{b}_i), \vect{a}_2)|\\
&= \sum_i |(\vect{\psi}_{1,i},\vect{\varphi},\vect{\psi}_{2,i})g \circ f(\vect{a}_1, \vect{b}_i,\vect{a}_2)|\\
&= \sum_i \|\vect{\psi}_{1,i}\|\|\vect{\psi}_{2,i}\|\|\vect{b}_i\||(\vect{\psi}_{1,i}',\vect{\varphi},\vect{\psi}_{2,i}')g \circ f(\vect{a}_1, \vect{b}_i',\vect{a}_2)|
\end{align*}
where $\vect{\psi}_{1,i}'$ is the normalisation of $\vect{\psi}_{1,i}$, i.e., we divide each vector by its norm to make it a unit.
Then, since $g \circ f$ is contractive:
\begin{align*}
|(\vect{\varphi})g(\vect{a}_1, x, \vect{a}_2)| & \leq \sum_i \|\vect{\psi}_{1,i}\| |\vect{\psi}_{2,i}\| \|\vect{b}_i\| |(\vect{\psi}_{1,i}',\vect{\varphi},\vect{\psi}_{2,i}')g \circ f(\vect{a}_1, \vect{b}_i',\vect{a}_2)|\\
&\leq \sum_i \|\vect{\psi}_{1,i}\| \|\vect{\psi}_{2,i}\|\|\vect{b}_i\|
\end{align*}
Since this inequality holds for any decomposition it holds for their infimum which is $\|x\|^f$.
And since $x$ is a subunits we get
\[|(\vect{\varphi})g(\vect{a}_1, x, \vect{a}_2)| \leq 1\]
\end{proof}

So we can take the out-cartesian lifting of any polymap that is surjective in the considered output.

\begin{cor}
  There are polycategories $\mathbf{FBan_1^{ex}}$ and $\mathbf{FBan_1^{ex,ps}}$ of f.d. extended normed/seminormed vector spaces and polylinear maps with forgetful functors that are push-fibred and bifibred respectively. 
\end{cor}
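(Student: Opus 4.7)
The plan is to build both polycategories by observing that the restrictions needed earlier ($A$-injectivity for $\|-\|_g$ to be a norm, $A$-surjectivity for $\|-\|^f$ to be a norm) disappear once we allow extended values and/or drop positive-definiteness, so the formulas defining $\|-\|_g$ and $\|-\|^f$ make sense universally.

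First I would set up the polycategories. Objects of $\mathbf{FBan_1^{ex,ps}}$ are pairs $(A, \|-\|)$ of a finite-dimensional vector space with an extended seminorm $\|-\| : A \to \bar{\mathbb{K}}_{\geq 0}$, adopting the conventions $0 \cdot \infty = 0$, $r + \infty = \infty$ for $r \in \bar{\mathbb{K}}_{\geq 0}$, and $\inf \emptyset = \infty$. Polymaps are polylinear maps between underlying vector spaces satisfying the contractivity inequality $|(\vect{\varphi})f(\vect{a})| \leq \prod_{i,j}\|a_i\|_{A_i}\|\varphi_j\|_{B_j^\ast}$ interpreted in $\bar{\mathbb{K}}$. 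Identities and (polycategorical) composition are inherited from $\FVect$; I would verify that the composite of two contractive polymaps is contractive by the same chain of inequalities used in \FBanc{}, noting that the extended arithmetic preserves monotonicity of sums and products. The polycategory $\mathbf{FBan_1^{ex}}$ is obtained as the full subpolycategory on objects whose seminorm is positive-definite (i.e., an extended norm). The forgetful functors to $\FVect$ are then strict functors of polycategories.

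Second, I would lift the pushforward construction to this setting. Given any polylinear map $f : \Gamma \to \Delta_1, A, \Delta_2$ with norms fixed on $\Gamma, \Delta_1, \Delta_2$, define $\|-\|^f : A \to \bar{\mathbb{K}}_{\geq 0}$ by the same infimum formula as in the $A$-surjective case; for $y \notin \Img_A(f)$ the infimum is taken over the empty set and hence equals $\infty$. Positive-definiteness holds because if $\|y\|^f < \infty$ then $y$ admits a decomposition through $f$ and the earlier argument applies. Thus $\|-\|^f$ is an extended norm and $(A, \|-\|^f)$ is a valid object of $\mathbf{FBan_1^{ex}}$. The contractivity of $f$ and the universal property carry over verbatim from the proof already given, since that argument only exploited decompositions $x = \sum_i (\vect{\psi}_{1,i},-,\vect{\psi}_{2,i})f(\vect{b}_i)$, and when no such decomposition exists (i.e.~$\|x\|^f = \infty$) the inequality to be checked becomes vacuous. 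This shows $\mathbf{FBan_1^{ex}} \to \FVect$ has all pushforwards.

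Third, to get a bifibration over $\FVect$ I would observe that the pullback construction $\|-\|_g$ from Section~\ref{sec:Banach2} is a seminorm in general, and becomes a norm precisely when $g$ is $A$-injective; in $\mathbf{FBan_1^{ex,ps}}$ seminorms are allowed as objects, so the pullback is defined for every polylinear map and the universal property proved earlier goes through unchanged. Combined with the pushforward (whose formula is still well-defined when the input/output norms are extended seminorms, producing an extended seminorm), this yields a bifibration $\mathbf{FBan_1^{ex,ps}} \to \FVect$. The push-fibration case is obtained either directly as above or, more cleanly, by restricting the bifibration along the inclusion $\FVect \hookrightarrow \FVect$ after noting that $\mathbf{FBan_1^{ex}}$ is stable under pushforwards (an extended norm pushes forward to an extended norm).

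The main technical obstacle, which is bookkeeping rather than conceptually new, is to check that the extended arithmetic on $\bar{\mathbb{K}}$ does not break any of the steps in the composition law or in the earlier universal-property proofs: in particular, that $g \circ f$ remains contractive when some intermediate seminorm takes value $0$ or $\infty$, and that the computation $\|(\vect{\varphi}_1,-,\vect{\varphi}_2)f(\vect{a})\|^f \leq \|\vect{\varphi}_1\|\|\vect{\varphi}_2\|\|\vect{a}\|$ used in the pushforward proof still holds with the conventions above. Once this is verified, the proposition follows directly from Propositions~\ref{prop:univiffcart}--\ref{prop:compcartfunc} and the constructions of $\|-\|_g$ and $\|-\|^f$ already established.
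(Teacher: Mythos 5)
Your proposal is correct and follows the same route the paper intends: the corollary is drawn directly from the preceding propositions by observing that $\|-\|_g$ is always a seminorm and $\|-\|^f$ always an extended norm once the injectivity/surjectivity hypotheses are dropped, so enlarging the objects to extended (semi)norms makes the pullback and pushforward constructions total, and the universal-property arguments go through unchanged (vacuously when $\|x\|^f=\infty$). Your extra care about the extended arithmetic on $\bar{\mathbb{K}}$ and the duality between seminorms and extended norms is exactly the bookkeeping the paper leaves implicit.
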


When considering $\FBanc$ even without semi-/extended norms, there are still enough cartesian polymaps to lift the $\ast$-representability of $\FVect$.

\begin{prop}
  In $\FVect$, the universal polylinear maps $m_{A,B} : A,B \to A \otimes B$, $w_{A,B} : A \otimes B \to A,B$ and $rcap_A : A^\ast, A \to \cdot$ are $A \otimes B$-surjective, $A \otimes B$-injective and $A^\ast$-injective, respectively.
\end{prop}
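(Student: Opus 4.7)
The plan is to verify each of the three claims directly from the definitions of $A$-surjectivity/$A$-injectivity given earlier, using the explicit descriptions of $m_{A,B}$, $w_{A,B}$, and $\rcap_A$ in $\FVect$. The third claim is essentially a restatement of the nondegeneracy of the evaluation pairing, the first is a restatement of the defining property of the tensor product, and the second will require a small argument using dual bases.

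First I would unpack $m_{A,B}$. Since the codomain $A\otimes B$ sits in a context with $\Delta_1=\Delta_2=\emptyset$, the sum defining $(A\otimes B)$-surjectivity becomes $\sum_i m_{A,B}(a_i,b_i)$, and I would reinterpret this via the isomorphism $A\otimes B\simeq (A\otimes B)^{**}$. Using $(\varphi)m_{A,B}(a,b)=\varphi(a\otimes b)$, the element of $A\otimes B$ induced by $m_{A,B}(a,b)$ is precisely $a\otimes b$. Hence $(A\otimes B)$-surjectivity of $m_{A,B}$ amounts to saying every $u\in A\otimes B$ is a finite sum $\sum_i a_i\otimes b_i$ of simple tensors, which is the standard characterization of elements of the tensor product.

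Next I would treat $\rcap_A$. Here the context is $\Gamma_1=\emptyset$, $A^*$ in focus, $\Gamma_2=A$, and $\Delta=\emptyset$, so $A^*$-injectivity reduces to: if $\rcap_A(\varphi,a)=\varphi(a)=0$ for every $a\in A$, then $\varphi=0$. This is precisely the definition of the zero functional, so the claim is immediate.

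The main (and only slightly nontrivial) step is $w_{A,B}$. Here $(A\otimes B)$-injectivity asks: if $u\in A\otimes B$ satisfies $(\varphi_1,\varphi_2)w_{A,B}(u)=0$ for every $\varphi_1\in A^*$ and $\varphi_2\in B^*$, then $u=0$. I would pick bases $(e_k)_k$ of $A$ and $(f_\ell)_\ell$ of $B$, write $u=\sum_{k,\ell}c_{k\ell}\,e_k\otimes f_\ell$, and apply the condition with $\varphi_1=e_k^*$ and $\varphi_2=f_\ell^*$. Since $(e_k^*,f_\ell^*)w_{A,B}(u)=\sum_{k',\ell'}c_{k'\ell'}\,e_k^*(e_{k'})f_\ell^*(f_{\ell'})=c_{k\ell}$, each coefficient vanishes and hence $u=0$. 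Equivalently, one can invoke the fact that in finite dimensions the canonical map $A^*\otimes B^*\to(A\otimes B)^*$ is an isomorphism, so the family of pairings $(\varphi_1\otimes\varphi_2)$ separates points of $A\otimes B$. No step here is a genuine obstacle; the only care needed is to translate correctly between the polycategorical definition (infinite symbolic sums of pairings) and the algebraic statement about bases, which the finite-dimensional hypothesis makes painless.
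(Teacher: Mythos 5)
Your proof is correct and follows essentially the same route as the paper: $m_{A,B}$-surjectivity is the standard decomposition of tensors into sums of simple tensors, $\rcap_A$-injectivity is nondegeneracy of evaluation, and for $w_{A,B}$ you supply the dual-basis (equivalently, $A^*\otimes B^*\simeq(A\otimes B)^*$) argument that the paper simply declares trivial. No issues.
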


\begin{proof}
   By definition of the tensor product any $u \in A \otimes B$ is a linear combination of elements from $A$ and $B$, $u = \sum\limits_ia_i \otimes b_i$.
   So $m_{A,B}$ is $A\otimes B$-surjective.
   $A\otimes B$-injectivity of $w_{A,B}$ is trivial.
   Finally given $\varphi \in A^\ast$ if, for all $a \in A$, $\varphi(a) = 0$ then $\varphi = 0$.
\end{proof}

\begin{cor}
  $\FBanc$ is birepresentable.
\end{cor}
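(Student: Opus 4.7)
The plan is to combine the bifibrational results with the surjectivity/injectivity properties of the universal polymaps of $\FVect$, using the fact that a cartesian polymap sitting over a universal polymap is itself universal.

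First, I would recall that $\FVect$ is birepresentable (with $\otimes$ the tensor of vector spaces, $\parr = \otimes$, and $\rdual{A}$ the linear dual), which by Proposition \ref{prop:*repiffbifib} means that the unique functor $\FVect \to \one$ is a bifibration. Next, I would observe that to establish birepresentability of $\FBanc$, it suffices by Proposition \ref{prop:*repiffbifib} and Proposition \ref{prop:compcartfunc} to show that the composite $\FBanc \to \FVect \to \one$ admits out-cartesian and in-cartesian liftings of the universal polymaps of $\FVect$. Indeed, a $\cU$-cartesian lifting of a universal (hence $\one$-cartesian) polymap is $\one$-cartesian by Proposition \ref{prop:compcartfunc}, and $\one$-cartesian polymaps are exactly the universal ones.

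Then I would invoke the previous proposition: the universal multiplication $m_{A,B} \colon A,B \to A\otimes B$ is $A\otimes B$-surjective, so by the pushforward result for $\FBanc$ (via $\mathbf{FBan_1^{ex,ps}}$) it admits an out-cartesian lifting along $\cU \colon \FBanc \to \FVect$; moreover this pushforward norm is genuinely a norm (not merely extended/seminorm) because the decomposition exists for every element (surjectivity) and the infimum is finite and strictly positive on nonzero vectors (this uses finite dimensionality). Dually, the universal co-multiplication $w_{A,B} \colon A\otimes B \to A,B$ is $A\otimes B$-injective, hence admits an in-cartesian lifting with norm that is honestly a norm. Finally, the counit $\rcap_A \colon \rdual{A}, A \to \cdot$ is $\rdual{A}$-injective, so we get an in-cartesian lifting on $\rdual{A}$; by symmetry the same holds for $\lcap_A$, providing the duals.

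The main subtlety I anticipate will not be the cartesian-lifting machinery, which is essentially immediate from the preceding propositions, but rather verifying that the lifted pushforward/pullback norms actually produce finite, nondegenerate norms rather than extended norms or seminorms. For $m_{A,B}$, surjectivity guarantees every element of $A \otimes B$ is expressible as $\sum a_i \otimes b_i$, so the infimum defining $\|-\|^{m_{A,B}}$ is taken over a nonempty set and gives a finite value; one should check this infimum is $0$ only at $0$, which for finite-dimensional spaces follows from the standard argument that the projective norm is a crossnorm. For $w_{A,B}$ and $\rcap_A$, injectivity directly rules out degeneracy. Putting these pieces together, we obtain out-cartesian liftings of $m$ and in-cartesian liftings of $w$ and the cups/caps in $\FBanc$, all sitting over universal polymaps of $\FVect$, hence universal in $\FBanc$; this gives all tensors, pars, and duals, proving birepresentability.
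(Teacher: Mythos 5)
Your proposal is correct and follows essentially the same route as the paper: the corollary is obtained by combining Propositions \ref{prop:univiffcart} and \ref{prop:compcartfunc} with the fact that the universal polymaps of $\FVect$ are surjective/injective in the relevant variables, so that their out-/in-cartesian liftings along $\cU \colon \FBanc \to \FVect$ exist and are universal in $\FBanc$. Your worry about the lifted norms being merely extended norms or seminorms is already settled by the earlier propositions characterising exactly when $\|-\|^f$ and $\|-\|_g$ are genuine norms, so no extra argument is needed there.
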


\begin{remark}
  We get the projective, injective and dual norm using the norms above:  $\|-\|_{A \otimes B} = \|-\|_{m_{A,B}}$, $\|-\|_{A \parr B} = \|-\|^{w_{A,B}}$ and $\|-\|_{A^\ast} = \|-\|^{rcap_A}$.
  The fact that the projective and injective crossnorms are extremal follows directly from the factorisation properties of the cartesian polymaps $m_{A,B}$ and $w_{A,B}$.
\end{remark}

\chapter{Pushfibrations of virtual~double~categories}

The notion of 2-pushfibration of bicategories that we will present here is different from the notion of 2-(op)fibration developed by Hermida and also studied in \cite{Bakovic2011} and \cite{Buckley2014} for the purpose of a bicategorical Grothendieck correspondence.
In these works, a 2-fibration comes equipped with pullbacks or pushforwards both for morphisms and for 2-morphisms.
Furthermore, the pullback or pushforward of a cartesian 2-morphism is asked to be cartesian.

In order to talk about the Bénabou-Grothendieck correspondence, we will only need the existence of pushforward along 2-morphisms.
Hence, our notion of 2-pushfibration will have only lifting of 2-morphisms.
Also, our main example $\cU \colon \Dist_\ast \to \Dist$ is a strict functor.
This makes it easier to define the notion of pushforward of 2-cell and it lets us make use of the vertical diagrammatic representation of functors.
So we will required for a 2-pushfibration to be a strict 2-functor.

\section{Virtual double pushfibration}

In the following, we will fix a functor of vdc $p \colon \vdE \to \vdB$.
We will represent it diagrammatically vertically.

\begin{definition}
A cell $A$ in \cE with image $\alpha = p(A)$ in \cB

\begin{tikzcd}[ampersand replacement=\&]
	{R_0} \&\& {R_1} \& \dots \& {R_{n-1}} \&\& {R_n} \\
	\&\&\&\&\&\&\&\& \vdE \\
	{R_0} \&\&\&\&\&\& {R_n} \\
	\\
	{A_0} \&\& {A_1} \& \dots \& {A_{n-1}} \&\& {A_n} \\
	\&\&\&\&\&\&\&\& \vdB \\
	{A_0} \&\&\&\&\&\& {A_n}
	\arrow["{\pi_1}"{description}, from=1-1, to=1-3]
	\arrow["{\pi_n}"{description}, from=1-5, to=1-7]
	\arrow[""{name=0, anchor=center, inner sep=0}, "\sigma"{description}, from=3-1, to=3-7]
	\arrow[Rightarrow, no head, from=1-7, to=3-7]
	\arrow[Rightarrow, no head, from=1-1, to=3-1]
	\arrow["{p_1}"{description}, from=5-1, to=5-3]
	\arrow["{p_n}"{description}, from=5-5, to=5-7]
	\arrow[""{name=1, anchor=center, inner sep=0}, "s"{description}, from=7-1, to=7-7]
	\arrow[Rightarrow, no head, from=5-1, to=7-1]
	\arrow[Rightarrow, no head, from=5-7, to=7-7]
	\arrow["p"{description}, from=2-9, to=6-9]
	\arrow["{\mathrm{A}}"{description}, shorten >=7pt, Rightarrow, from=1-4, to=0]
	\arrow["\alpha"{description}, shorten >=7pt, Rightarrow, from=5-4, to=1]
\end{tikzcd}

is \defin{opcartesian} if any cell $\Lambda$ lying over a factorisation through $\alpha$,

\resizebox{\hsize}{!}{
\begin{tikzcd}[ampersand replacement=\&]
	{S_0} \&\& {S_1} \& \dots \& {S_{m-1}} \&\& {R_0} \&\& {R_1} \& \dots \& {R_{n-1}} \&\& {R_n} \&\& {S_1'} \& \dots \& {S_{m'-1}'} \&\& {S_{m'}'} \\
	\\
	{S_0} \&\&\&\&\&\&\&\&\&\&\&\&\&\&\&\&\&\& {S_{m'}'} \&\& \vdE \\
	\\
	{T_0} \&\&\&\&\&\&\&\&\&\&\&\&\&\&\&\&\&\& {T_1} \\
	\\
	{B_0} \&\& {B_1} \& \dots \& {B_{m-1}} \&\& {A_0} \&\& {A_1} \& \dots \& {A_{n-1}} \&\& {A_n} \&\& {B_1'} \& \dots \& {B_{m'-1}'} \&\& {B_{m'}'} \\
	\\
	{B_0} \&\& {B_1} \& \dots \& {B_{m-1}} \&\& {A_0} \&\&\&\&\&\& {A_n} \&\& {B_1'} \& \dots \& {B_{m'-1}'} \&\& {B_{m'}'} \&\& \vdB \\
	\\
	{C_0} \&\&\&\&\&\&\&\&\&\&\&\&\&\&\&\&\&\& {C_1}
	\arrow["{\pi_1}"{description}, from=1-7, to=1-9]
	\arrow["{\pi_n}"{description}, from=1-11, to=1-13]
	\arrow["{p_1}"{description}, from=7-7, to=7-9]
	\arrow["{p_n}"{description}, from=7-11, to=7-13]
	\arrow[""{name=0, anchor=center, inner sep=0}, "s"{description}, from=9-7, to=9-13]
	\arrow[Rightarrow, no head, from=7-7, to=9-7]
	\arrow[Rightarrow, no head, from=7-13, to=9-13]
	\arrow["p"{description}, from=3-21, to=9-21]
	\arrow["{\rho_m}"{description}, from=1-5, to=1-7]
	\arrow["{\rho_1}"{description}, from=1-1, to=1-3]
	\arrow["{\rho_1'}"{description}, from=1-13, to=1-15]
	\arrow["{\rho_{m'}'}"{description}, from=1-17, to=1-19]
	\arrow[Rightarrow, no head, from=1-1, to=3-1]
	\arrow["\varphi"{description}, from=3-1, to=5-1]
	\arrow[Rightarrow, no head, from=1-19, to=3-19]
	\arrow["\psi"{description}, from=3-19, to=5-19]
	\arrow[""{name=1, anchor=center, inner sep=0}, "\tau"{description}, from=5-1, to=5-19]
	\arrow["{r_1}"{description}, from=7-1, to=7-3]
	\arrow["{r_m}"{description}, from=7-5, to=7-7]
	\arrow["{r_1'}"{description}, from=7-13, to=7-15]
	\arrow["{r_{m'}'}"{description}, from=7-17, to=7-19]
	\arrow[Rightarrow, no head, from=7-1, to=9-1]
	\arrow["{r_1}"{description}, from=9-1, to=9-3]
	\arrow[Rightarrow, no head, from=7-3, to=9-3]
	\arrow[Rightarrow, no head, from=7-5, to=9-5]
	\arrow["{r_m}"{description}, from=9-5, to=9-7]
	\arrow["{r_1'}"{description}, from=9-13, to=9-15]
	\arrow[Rightarrow, no head, from=7-15, to=9-15]
	\arrow[Rightarrow, no head, from=7-17, to=9-17]
	\arrow[Rightarrow, no head, from=7-19, to=9-19]
	\arrow["{r_{m'}'}"{description}, from=9-17, to=9-19]
	\arrow[""{name=2, anchor=center, inner sep=0}, "t"{description}, from=11-1, to=11-19]
	\arrow["f"{description}, from=9-1, to=11-1]
	\arrow["g"{description}, from=9-19, to=11-19]
	\arrow["\alpha"{description}, shorten >=7pt, Rightarrow, from=7-10, to=0]
	\arrow["\beta"{description}, shorten <=9pt, shorten >=9pt, Rightarrow, from=0, to=2]
	\arrow["\Lambda"{description}, shorten >=16pt, Rightarrow, from=1-10, to=1]
\end{tikzcd}
}

 can be uniquely factored through $\mathrm{A}$:

\resizebox{\hsize}{!}{
\begin{tikzcd}[ampersand replacement=\&]
	{S_0} \&\& {S_1} \& \dots \& {S_{m-1}} \&\& {R_0} \&\& {R_1} \& \dots \& {R_{n-1}} \&\& {R_n} \&\& {S_1'} \& \dots \& {S_{m'-1}'} \&\& {S_{m'}'} \\
	\\
	{S_0} \&\& {S_1} \& \dots \& {S_{m-1}} \&\& {R_0} \&\&\&\&\&\& {R_n} \&\& {S_1'} \& \dots \& {S_{m'-1}'} \&\& {S_{m'}'} \&\& \vdE \\
	\\
	{T_0} \&\&\&\&\&\&\&\&\&\&\&\&\&\&\&\&\&\& {T_1} \\
	\\
	{B_0} \&\& {B_1} \& \dots \& {B_{m-1}} \&\& {A_0} \&\& {A_1} \& \dots \& {A_{n-1}} \&\& {A_n} \&\& {B_1'} \& \dots \& {B_{m'-1}'} \&\& {B_{m'}'} \\
	\\
	{B_0} \&\& {B_1} \& \dots \& {B_{m-1}} \&\& {A_0} \&\&\&\&\&\& {A_n} \&\& {B_1'} \& \dots \& {B_{m'-1}'} \&\& {B_{m'}'} \&\& \vdB \\
	\\
	{C_0} \&\&\&\&\&\&\&\&\&\&\&\&\&\&\&\&\&\& {C_1}
	\arrow["{\pi_1}"{description}, from=1-7, to=1-9]
	\arrow["{\pi_n}"{description}, from=1-11, to=1-13]
	\arrow[Rightarrow, no head, from=1-13, to=3-13]
	\arrow[Rightarrow, no head, from=1-7, to=3-7]
	\arrow["{p_1}"{description}, from=7-7, to=7-9]
	\arrow["{p_n}"{description}, from=7-11, to=7-13]
	\arrow[""{name=0, anchor=center, inner sep=0}, "s"{description}, from=9-7, to=9-13]
	\arrow[Rightarrow, no head, from=7-7, to=9-7]
	\arrow[Rightarrow, no head, from=7-13, to=9-13]
	\arrow["p"{description}, from=3-21, to=9-21]
	\arrow["{\rho_m}"{description}, from=1-5, to=1-7]
	\arrow["{\rho_1}"{description}, from=1-1, to=1-3]
	\arrow["{\rho_1'}"{description}, from=1-13, to=1-15]
	\arrow["{\rho_{m'}'}"{description}, from=1-17, to=1-19]
	\arrow[Rightarrow, no head, from=1-1, to=3-1]
	\arrow["\varphi"{description}, from=3-1, to=5-1]
	\arrow[Rightarrow, no head, from=1-3, to=3-3]
	\arrow["{\rho_1}"{description}, from=3-1, to=3-3]
	\arrow[Rightarrow, no head, from=1-5, to=3-5]
	\arrow["{\rho_m}"{description}, from=3-5, to=3-7]
	\arrow[Rightarrow, no head, from=1-15, to=3-15]
	\arrow[""{name=1, anchor=center, inner sep=0}, "\sigma"{description}, from=3-7, to=3-13]
	\arrow["{\rho_1'}"{description}, from=3-13, to=3-15]
	\arrow[Rightarrow, no head, from=1-17, to=3-17]
	\arrow["{\rho_{m'}'}"{description}, from=3-17, to=3-19]
	\arrow[Rightarrow, no head, from=1-19, to=3-19]
	\arrow["\psi"{description}, from=3-19, to=5-19]
	\arrow[""{name=2, anchor=center, inner sep=0}, "\tau"{description}, from=5-1, to=5-19]
	\arrow["{r_1}"{description}, from=7-1, to=7-3]
	\arrow["{r_m}"{description}, from=7-5, to=7-7]
	\arrow["{r_1'}"{description}, from=7-13, to=7-15]
	\arrow["{r_{m'}'}"{description}, from=7-17, to=7-19]
	\arrow[Rightarrow, no head, from=7-1, to=9-1]
	\arrow["{r_1}"{description}, from=9-1, to=9-3]
	\arrow[Rightarrow, no head, from=7-3, to=9-3]
	\arrow[Rightarrow, no head, from=7-5, to=9-5]
	\arrow["{r_m}"{description}, from=9-5, to=9-7]
	\arrow["{r_1'}"{description}, from=9-13, to=9-15]
	\arrow[Rightarrow, no head, from=7-15, to=9-15]
	\arrow[Rightarrow, no head, from=7-17, to=9-17]
	\arrow[Rightarrow, no head, from=7-19, to=9-19]
	\arrow["{r_{m'}'}"{description}, from=9-17, to=9-19]
	\arrow[""{name=3, anchor=center, inner sep=0}, "t"{description}, from=11-1, to=11-19]
	\arrow["f"{description}, from=9-1, to=11-1]
	\arrow["g"{description}, from=9-19, to=11-19]
	\arrow["\alpha"{description}, shorten >=7pt, Rightarrow, from=7-10, to=0]
	\arrow["{\mathrm{A}}"{description}, shorten >=7pt, Rightarrow, from=1-10, to=1]
	\arrow["{\Lambda/\mathrm{A}}"{description}, shorten <=9pt, shorten >=9pt, Rightarrow, dashed, from=1, to=2]
	\arrow["\beta"{description}, shorten <=9pt, shorten >=9pt, Rightarrow, from=0, to=3]
\end{tikzcd}
}

We call $\sigma$ the pushforward of $\pi_1,...,\pi_n$ along $\alpha$ and we write $\pushone{\alpha}{\pi_1,\dots,\pi_n}$.
\end{definition}

It makes sense to talk about \emph{the} pushforward since it is unique up to unique vertical invertible cell, where a vertical cell is one that lies over the identity cell.

\begin{prop}
The pushforward of a chain of horizontal morphisms along a cell is unique up to unique vertical invertible cell.
\end{prop}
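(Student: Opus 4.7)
The plan is to mimic the classical uniqueness-of-universals argument, adapted to the virtual-double-category setting. Suppose we are given two opcartesian cells $\mathrm{A}$ and $\mathrm{A}'$ in $\vdE$, both with source chain $\pi_1,\dots,\pi_n$, both lying over the same cell $\alpha : p_1,\dots,p_n \Rightarrow s$ of $\vdB$, with target horizontal morphisms $\sigma$ and $\sigma'$ respectively (so $p(\sigma) = p(\sigma') = s$). I want to produce a unique vertical invertible cell $\sigma \Rightarrow \sigma'$.

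First I would apply the opcartesian property of $\mathrm{A}$ to the cell $\mathrm{A}'$: note that $\mathrm{A}'$ lies over $\alpha = \id_s \circ \alpha$, so by the universal property there is a unique cell $\mathrm{A}'/\mathrm{A} : \sigma \Rightarrow \sigma'$ lying over $\id_s$ such that $(\mathrm{A}'/\mathrm{A}) \circ \mathrm{A} = \mathrm{A}'$. The crucial observation is that because the cell being factored ($\alpha$) and the cell already present ($\alpha$) coincide, the "leftover" part of the factorization necessarily lives over the horizontal identity cell on $s$, i.e.\ it is vertical. Symmetrically, using that $\mathrm{A}'$ is opcartesian, I obtain a unique vertical cell $\mathrm{A}/\mathrm{A}' : \sigma' \Rightarrow \sigma$ with $(\mathrm{A}/\mathrm{A}') \circ \mathrm{A}' = \mathrm{A}$.

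Next I would check that these two vertical cells are mutually inverse. Consider the vertical composite $(\mathrm{A}/\mathrm{A}') \bullet (\mathrm{A}'/\mathrm{A}) : \sigma \Rightarrow \sigma$. Using associativity and the two defining equations we compute
\[
\bigl((\mathrm{A}/\mathrm{A}') \bullet (\mathrm{A}'/\mathrm{A})\bigr) \circ \mathrm{A} \;=\; (\mathrm{A}/\mathrm{A}') \circ \bigl((\mathrm{A}'/\mathrm{A}) \circ \mathrm{A}\bigr) \;=\; (\mathrm{A}/\mathrm{A}') \circ \mathrm{A}' \;=\; \mathrm{A}.
\]
On the other hand, $\id_\sigma \circ \mathrm{A} = \mathrm{A}$ and $\id_\sigma$ also lies over $\id_s$. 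So both $(\mathrm{A}/\mathrm{A}') \bullet (\mathrm{A}'/\mathrm{A})$ and $\id_\sigma$ are vertical factorizations of $\mathrm{A}$ through $\mathrm{A}$ itself; by the \emph{uniqueness} clause of opcartesianness applied to $\mathrm{A}$, they agree. The symmetric argument using $\mathrm{A}'$ gives $(\mathrm{A}'/\mathrm{A}) \bullet (\mathrm{A}/\mathrm{A}') = \id_{\sigma'}$.

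Finally, the uniqueness of the comparison cell is immediate: any vertical cell $\theta : \sigma \Rightarrow \sigma'$ satisfying $\theta \circ \mathrm{A} = \mathrm{A}'$ must coincide with $\mathrm{A}'/\mathrm{A}$ by the uniqueness clause in the definition of opcartesian. The main subtlety, and the only point that requires care, is the first step: verifying that the factorization produced by the opcartesian property genuinely lands \emph{vertically}, i.e.\ that $p(\mathrm{A}'/\mathrm{A}) = \id_s$. This follows because in the factorization diagram the "outer" datum being factored through $\alpha$ is $\alpha$ itself, forcing the image of the factoring cell to be the identity cell on $s$ in $\vdB$; everything else is bookkeeping around the universal property.
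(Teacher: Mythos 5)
Your proof is correct and follows exactly the route the paper takes: the paper's own proof is a two-line sketch ("factor each opcartesian cell through the other; invertibility follows from uniqueness of factorisation"), which mirrors the detailed argument it gives earlier for uniqueness of composites, and your write-up simply fills in those details. The one point you flag as subtle — that the comparison cells land over $\id_s$ and are therefore vertical — is handled exactly as you describe, so no changes are needed.
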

\begin{proof}
It is similar to the proof of unicity of the composite or of unicity of pushforward in categories.
Given two opcartesian cells with the same domain and lying over the same cell, we can factor both through the other.
Then the fact that the cells given by factorisation are invertible come from the unicity of the factorisation.
\end{proof}

\begin{prop}
Opcartesian cells compose, i.e. if all the $\mathrm{A}_i$ and $\mathrm{B}$ are opcartesian then $\mathrm{B}(\mathrm{A}_1,\dots,\mathrm{A}_n)$ is opcartesian.
\end{prop}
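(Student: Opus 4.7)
The plan is to mimic the composition argument for universal cells given earlier for representable vdcs, as well as the composition argument for cartesian polymaps (Proposition~\ref{prop:cart_comp}): successively factor a given test cell through the opcartesian cells one at a time, using the uniqueness of each factorization to assemble a unique global factorization.

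First I would unpack the setup. Write $\mathrm{A}_i$ for the opcartesian cell over $\alpha_i$ with horizontal source $\pi_{i,1},\dots,\pi_{i,m_i}$ and horizontal target $\sigma_i$, and $\mathrm{B}$ for the opcartesian cell over $\beta$ with horizontal source $\sigma_1,\dots,\sigma_n$ and horizontal target $\tau$. The composite $\mathrm{B}(\mathrm{A}_1,\dots,\mathrm{A}_n)$ then lies in $\vdE$ over $\beta(\alpha_1,\dots,\alpha_n)$, with horizontal source the concatenation of the $\pi_{i,j}$ and horizontal target $\tau$. By the two coherent ways of arranging a vertical stack of cells (as used in the proof of composability of universal cells), we may also view $\mathrm{B}(\mathrm{A}_1,\dots,\mathrm{A}_n)$ as the iterated vertical pasting $\mathrm{B}\bullet\mathrm{A}_n\bullet\cdots\bullet\mathrm{A}_1$ with identity cells filling the unused slots.

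Now take any test cell $\Lambda$ in $\vdE$ whose image under $p$ is a factorization of $\beta(\alpha_1,\dots,\alpha_n)$ through some cell $\gamma$ in $\vdB$, in the sense required by the definition of opcartesian. Since $\mathrm{A}_1$ is opcartesian, after regarding the horizontal sources of the other $\mathrm{A}_i$'s (and any extra outer context of $\Lambda$) as part of the surrounding context, we get a unique factorization $\Lambda/\mathrm{A}_1$ of $\Lambda$ through $\mathrm{A}_1$. Its image under $p$ is still a factorization of $\beta(\alpha_2,\dots,\alpha_n)$ through $\gamma$ (with an enlarged surrounding context), so we can iterate: having formed $\Lambda_{i-1} := ((\Lambda/\mathrm{A}_1)/\cdots)/\mathrm{A}_{i-1}$, the opcartesian property of $\mathrm{A}_i$ produces a unique cell $\Lambda_i := \Lambda_{i-1}/\mathrm{A}_i$. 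Finally, $\Lambda_n$ lies over a factorization of $\beta$ through $\gamma$, and so factors uniquely through the opcartesian cell $\mathrm{B}$, yielding the desired $\Lambda_n/\mathrm{B}$. Gluing these factorizations back together gives $\Lambda$ as $\mathrm{B}(\mathrm{A}_1,\dots,\mathrm{A}_n)$ composed vertically with $\Lambda_n/\mathrm{B}$, and uniqueness at each stage forces uniqueness of the global factorization.

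The step I expect to be mildly tricky is the bookkeeping: the test cell $\Lambda$ can have arbitrary extra horizontal morphisms $\rho_j,\rho'_k$ and extra outer vertical cell $\beta$-style context, so I need to verify at each iteration that, after factoring through $\mathrm{A}_i$, the resulting cell still satisfies the hypothesis required to invoke the opcartesian property of $\mathrm{A}_{i+1}$ (namely that its $p$-image is a factorization through the remaining $\alpha_{i+1},\dots,\alpha_n$ and $\beta$). Once this is set up carefully — most cleanly by rewriting $\mathrm{B}(\mathrm{A}_1,\dots,\mathrm{A}_n)$ as a vertical stack with only one non-identity cell per layer, so each factorization step involves just a single opcartesian cell — the argument proceeds exactly as in the composition proofs for universal polymaps and for cartesian polymaps, and no genuinely new idea is needed.
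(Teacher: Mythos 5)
Your proposal matches the paper's proof: both rearrange $\mathrm{B}(\mathrm{A}_1,\dots,\mathrm{A}_n)$ into a vertical stack with one non-identity cell per layer and then factor the test cell successively through $\mathrm{A}_1,\dots,\mathrm{A}_n$ and finally $\mathrm{B}$, with uniqueness at each stage giving uniqueness of the global factorization. No gaps; this is essentially the same argument.
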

\begin{proof}
Similar to the proof of compositionality of universal cells: we first rearrange the composite so that we only have one cell per vertical layer and then we use the factorisation property.
It is summed up in the following diagram.

\rotatebox{90}{
\resizebox{\vsize}{!}{
\begin{tikzcd}[ampersand replacement=\&]
	\bullet \&\& \bullet \& \dots \& \bullet \&\& \bullet \&\& \bullet \& \dots \& \bullet \&\& \bullet \& \dots \& \bullet \&\& \bullet \& \dots \& \bullet \&\& \bullet \&\& \bullet \& \dots \& \bullet \&\& \bullet \\
	\\
	\bullet \&\& \bullet \& \dots \& \bullet \&\& \bullet \&\&\&\&\&\& \bullet \& \dots \& \bullet \&\& \bullet \& \dots \& \bullet \&\& \bullet \&\& \bullet \& \dots \& \bullet \&\& \bullet \\
	\& \vdots \&\& \vdots \&\& \vdots \&\&\&\& \vdots \&\&\&\& \vdots \&\& \vdots \&\& \vdots \&\& \vdots \&\& \vdots \&\& \vdots \&\& \vdots \\
	\bullet \&\& \bullet \& \dots \& \bullet \&\& \bullet \&\&\&\&\&\& \bullet \& \dots \& \bullet \&\& \bullet \& \dots \& \bullet \&\& \bullet \&\& \bullet \& \dots \& \bullet \&\& \bullet \\
	\\
	\bullet \&\& \bullet \& \dots \& \bullet \&\& \bullet \&\&\&\&\&\& \bullet \& \dots \& \bullet \&\&\&\&\&\& \bullet \&\& \bullet \& \dots \& \bullet \&\& \bullet \\
	\\
	\bullet \&\& \bullet \& \dots \& \bullet \&\& \bullet \&\&\&\&\&\&\&\&\&\&\&\&\&\& \bullet \&\& \bullet \& \dots \& \bullet \&\& \bullet \\
	\\
	\bullet \&\&\&\&\&\&\&\&\&\&\&\&\&\&\&\&\&\&\&\&\&\&\&\&\&\& \bullet \\
	\\
	\bullet \&\& \bullet \& \dots \& \bullet \&\& \bullet \&\& \bullet \& \dots \& \bullet \&\& \bullet \& \dots \& \bullet \&\& \bullet \& \dots \& \bullet \&\& \bullet \&\& \bullet \& \dots \& \bullet \&\& \bullet \\
	\\
	\bullet \&\& \bullet \& \dots \& \bullet \&\& \bullet \&\&\&\&\&\& \bullet \& \dots \& \bullet \&\& \bullet \& \dots \& \bullet \&\& \bullet \&\& \bullet \& \dots \& \bullet \&\& \bullet \\
	\& \vdots \&\& \vdots \&\& \vdots \&\&\&\& \vdots \&\&\&\& \vdots \&\& \vdots \&\& \vdots \&\& \vdots \&\& \vdots \&\& \vdots \&\& \vdots \\
	\bullet \&\& \bullet \& \dots \& \bullet \&\& \bullet \&\&\&\&\&\& \bullet \& \dots \& \bullet \&\& \bullet \& \dots \& \bullet \&\& \bullet \&\& \bullet \& \dots \& \bullet \&\& \bullet \\
	\\
	\bullet \&\& \bullet \& \dots \& \bullet \&\& \bullet \&\&\&\&\&\& \bullet \& \dots \& \bullet \&\&\&\&\&\& \bullet \&\& \bullet \& \dots \& \bullet \&\& \bullet \\
	\\
	\bullet \&\& \bullet \& \dots \& \bullet \&\& \bullet \&\&\&\&\&\&\&\&\&\&\&\&\&\& \bullet \&\& \bullet \& \dots \& \bullet \&\& \bullet \\
	\\
	\bullet \&\&\&\&\&\&\&\&\&\&\&\&\&\&\&\&\&\&\&\&\&\&\&\&\&\& \bullet
	\arrow["{\rho_1}"{description}, from=1-1, to=1-3]
	\arrow["{\pi_{1,1}}"{description}, color={rgb,255:red,54;green,130;blue,252}, from=1-7, to=1-9]
	\arrow["{\pi_{1,m_1}}"{description}, color={rgb,255:red,54;green,130;blue,252}, from=1-11, to=1-13]
	\arrow["{\pi_{n,1}}"{description}, from=1-15, to=1-17]
	\arrow["{\pi_{n,m_n}}"{description}, from=1-19, to=1-21]
	\arrow["{\rho_{k'}'}"{description}, from=1-25, to=1-27]
	\arrow[from=3-1, to=3-3]
	\arrow[""{name=0, anchor=center, inner sep=0}, "{\pushone{\alpha_1}{\pi_{1,1},\dots\pi_{1,m_1}}}"{description}, color={rgb,255:red,54;green,130;blue,252}, from=3-7, to=3-13]
	\arrow[from=3-15, to=3-17]
	\arrow[from=3-19, to=3-21]
	\arrow["{\rho_1'}"{description}, from=1-21, to=1-23]
	\arrow[from=3-21, to=3-23]
	\arrow["{\rho_k}"{description}, from=1-5, to=1-7]
	\arrow[from=3-5, to=3-7]
	\arrow[from=3-25, to=3-27]
	\arrow[Rightarrow, no head, from=1-1, to=3-1]
	\arrow[Rightarrow, no head, from=1-3, to=3-3]
	\arrow[Rightarrow, no head, from=1-5, to=3-5]
	\arrow[color={rgb,255:red,54;green,130;blue,252}, Rightarrow, no head, from=1-7, to=3-7]
	\arrow[color={rgb,255:red,54;green,130;blue,252}, Rightarrow, no head, from=1-13, to=3-13]
	\arrow[Rightarrow, no head, from=1-15, to=3-15]
	\arrow[Rightarrow, no head, from=1-17, to=3-17]
	\arrow[Rightarrow, no head, from=1-19, to=3-19]
	\arrow[Rightarrow, no head, from=1-21, to=3-21]
	\arrow[Rightarrow, no head, from=1-23, to=3-23]
	\arrow[Rightarrow, no head, from=1-25, to=3-25]
	\arrow[Rightarrow, no head, from=1-27, to=3-27]
	\arrow[Rightarrow, no head, from=3-1, to=5-1]
	\arrow[Rightarrow, no head, from=3-3, to=5-3]
	\arrow[from=5-1, to=5-3]
	\arrow[Rightarrow, no head, from=3-5, to=5-5]
	\arrow[Rightarrow, no head, from=3-7, to=5-7]
	\arrow[from=5-5, to=5-7]
	\arrow[color={rgb,255:red,54;green,130;blue,252}, Rightarrow, no head, from=3-13, to=5-13]
	\arrow[from=5-7, to=5-13]
	\arrow[color={rgb,255:red,54;green,130;blue,252}, Rightarrow, no head, from=3-15, to=5-15]
	\arrow[Rightarrow, no head, from=3-17, to=5-17]
	\arrow[color={rgb,255:red,54;green,130;blue,252}, from=5-15, to=5-17]
	\arrow[Rightarrow, no head, from=3-19, to=5-19]
	\arrow[color={rgb,255:red,54;green,130;blue,252}, from=5-19, to=5-21]
	\arrow[from=5-21, to=5-23]
	\arrow[Rightarrow, no head, from=3-21, to=5-21]
	\arrow[Rightarrow, no head, from=3-23, to=5-23]
	\arrow[Rightarrow, no head, from=3-25, to=5-25]
	\arrow[Rightarrow, no head, from=3-27, to=5-27]
	\arrow[from=5-25, to=5-27]
	\arrow[Rightarrow, no head, from=5-1, to=7-1]
	\arrow[Rightarrow, no head, from=5-3, to=7-3]
	\arrow[from=7-1, to=7-3]
	\arrow[Rightarrow, no head, from=5-5, to=7-5]
	\arrow[Rightarrow, no head, from=5-7, to=7-7]
	\arrow[from=7-5, to=7-7]
	\arrow[color={rgb,255:red,54;green,130;blue,252}, from=7-7, to=7-13]
	\arrow[Rightarrow, no head, from=5-13, to=7-13]
	\arrow[color={rgb,255:red,54;green,130;blue,252}, Rightarrow, no head, from=5-15, to=7-15]
	\arrow[""{name=1, anchor=center, inner sep=0}, "{\pushone{\alpha_n}{\pi_{n,1},\dots,\pi_{n,m_n}}}"{description}, color={rgb,255:red,54;green,130;blue,252}, from=7-15, to=7-21]
	\arrow[color={rgb,255:red,54;green,130;blue,252}, Rightarrow, no head, from=5-21, to=7-21]
	\arrow[Rightarrow, no head, from=5-23, to=7-23]
	\arrow[from=7-21, to=7-23]
	\arrow[Rightarrow, no head, from=5-25, to=7-25]
	\arrow[Rightarrow, no head, from=5-27, to=7-27]
	\arrow[from=7-25, to=7-27]
	\arrow[from=9-1, to=9-3]
	\arrow[from=9-5, to=9-7]
	\arrow[""{name=2, anchor=center, inner sep=0}, "{\pushone{\beta}{\pushone{\alpha_1}{\pi_{1,1},\dots,\pi_{1,m_1}},\dots,\pushone{\alpha_n}{\pi_{n,1},\dots,\pi_{n,m_n}}}}"{description}, color={rgb,255:red,54;green,130;blue,252}, from=9-7, to=9-21]
	\arrow[from=9-21, to=9-23]
	\arrow[from=9-25, to=9-27]
	\arrow[Rightarrow, no head, from=7-1, to=9-1]
	\arrow[Rightarrow, no head, from=7-3, to=9-3]
	\arrow[Rightarrow, no head, from=7-5, to=9-5]
	\arrow[color={rgb,255:red,54;green,130;blue,252}, Rightarrow, no head, from=7-7, to=9-7]
	\arrow[color={rgb,255:red,54;green,130;blue,252}, Rightarrow, no head, from=7-21, to=9-21]
	\arrow[Rightarrow, no head, from=7-23, to=9-23]
	\arrow[Rightarrow, no head, from=7-25, to=9-25]
	\arrow[Rightarrow, no head, from=7-27, to=9-27]
	\arrow[""{name=3, anchor=center, inner sep=0}, "\xi"{description}, from=11-1, to=11-27]
	\arrow["\varphi"{description}, from=9-1, to=11-1]
	\arrow["\psi"{description}, from=9-27, to=11-27]
	\arrow["{r_1}"{description}, from=13-1, to=13-3]
	\arrow["{r_k}"{description}, from=13-5, to=13-7]
	\arrow["{p_{1,1}}"{description}, from=13-7, to=13-9]
	\arrow["{p_{1,m_1}}"{description}, from=13-11, to=13-13]
	\arrow["{p_{n,1}}"{description}, from=13-15, to=13-17]
	\arrow["{p_{n,m_n}}"{description}, from=13-19, to=13-21]
	\arrow["{r_1'}"{description}, from=13-21, to=13-23]
	\arrow["{r_{k'}'}"{description}, from=13-25, to=13-27]
	\arrow[from=15-1, to=15-3]
	\arrow[from=15-5, to=15-7]
	\arrow[""{name=4, anchor=center, inner sep=0}, "{s_1}"{description}, from=15-7, to=15-13]
	\arrow[from=15-15, to=15-17]
	\arrow[from=15-19, to=15-21]
	\arrow[from=15-21, to=15-23]
	\arrow[from=15-25, to=15-27]
	\arrow[Rightarrow, no head, from=13-1, to=15-1]
	\arrow[Rightarrow, no head, from=13-3, to=15-3]
	\arrow[Rightarrow, no head, from=13-5, to=15-5]
	\arrow[Rightarrow, no head, from=13-13, to=15-13]
	\arrow[Rightarrow, no head, from=13-7, to=15-7]
	\arrow[Rightarrow, no head, from=13-15, to=15-15]
	\arrow[Rightarrow, no head, from=13-17, to=15-17]
	\arrow[Rightarrow, no head, from=13-21, to=15-21]
	\arrow[Rightarrow, no head, from=13-23, to=15-23]
	\arrow[Rightarrow, no head, from=13-25, to=15-25]
	\arrow[Rightarrow, no head, from=13-27, to=15-27]
	\arrow[Rightarrow, no head, from=15-1, to=17-1]
	\arrow[from=17-1, to=17-3]
	\arrow[Rightarrow, no head, from=15-3, to=17-3]
	\arrow[Rightarrow, no head, from=15-5, to=17-5]
	\arrow[Rightarrow, no head, from=15-7, to=17-7]
	\arrow[from=17-5, to=17-7]
	\arrow[from=17-7, to=17-13]
	\arrow[Rightarrow, no head, from=15-13, to=17-13]
	\arrow[Rightarrow, no head, from=15-15, to=17-15]
	\arrow[from=17-15, to=17-17]
	\arrow[Rightarrow, no head, from=15-17, to=17-17]
	\arrow[Rightarrow, no head, from=13-19, to=15-19]
	\arrow[Rightarrow, no head, from=15-19, to=17-19]
	\arrow[Rightarrow, no head, from=15-21, to=17-21]
	\arrow[from=17-19, to=17-21]
	\arrow[Rightarrow, from=15-23, to=17-23]
	\arrow[from=17-21, to=17-23]
	\arrow[Rightarrow, no head, from=15-25, to=17-25]
	\arrow[Rightarrow, no head, from=15-27, to=17-27]
	\arrow[from=17-25, to=17-27]
	\arrow[from=19-1, to=19-3]
	\arrow[from=19-5, to=19-7]
	\arrow[from=19-7, to=19-13]
	\arrow[""{name=5, anchor=center, inner sep=0}, "{s_n}"{description}, from=19-15, to=19-21]
	\arrow[from=19-21, to=19-23]
	\arrow[from=19-25, to=19-27]
	\arrow[Rightarrow, no head, from=17-1, to=19-1]
	\arrow[Rightarrow, no head, from=17-3, to=19-3]
	\arrow[Rightarrow, no head, from=17-5, to=19-5]
	\arrow[Rightarrow, no head, from=17-7, to=19-7]
	\arrow[Rightarrow, no head, from=17-13, to=19-13]
	\arrow[Rightarrow, no head, from=17-15, to=19-15]
	\arrow[Rightarrow, no head, from=17-21, to=19-21]
	\arrow[Rightarrow, no head, from=17-23, to=19-23]
	\arrow[Rightarrow, no head, from=17-25, to=19-25]
	\arrow[Rightarrow, no head, from=17-27, to=19-27]
	\arrow[from=21-1, to=21-3]
	\arrow[from=21-5, to=21-7]
	\arrow[from=21-25, to=21-27]
	\arrow[""{name=6, anchor=center, inner sep=0}, "t"{description}, from=21-7, to=21-21]
	\arrow[from=21-21, to=21-23]
	\arrow[Rightarrow, no head, from=19-1, to=21-1]
	\arrow[Rightarrow, no head, from=19-3, to=21-3]
	\arrow[Rightarrow, no head, from=19-5, to=21-5]
	\arrow[Rightarrow, no head, from=19-7, to=21-7]
	\arrow[Rightarrow, no head, from=19-21, to=21-21]
	\arrow[Rightarrow, no head, from=19-23, to=21-23]
	\arrow[Rightarrow, no head, from=19-25, to=21-25]
	\arrow[Rightarrow, no head, from=19-27, to=21-27]
	\arrow["f"{description}, from=21-1, to=23-1]
	\arrow[""{name=7, anchor=center, inner sep=0}, "x"{description}, from=23-1, to=23-27]
	\arrow["g"{description}, from=21-27, to=23-27]
	\arrow["{\mathrm{A}_1}"{description}, color={rgb,255:red,54;green,130;blue,252}, shorten >=7pt, Rightarrow, from=1-10, to=0]
	\arrow["{\mathrm{A}_n}"{description}, color={rgb,255:red,54;green,130;blue,252}, shorten >=7pt, Rightarrow, from=5-18, to=1]
	\arrow["{\mathrm{B}}"{description}, color={rgb,255:red,54;green,130;blue,252}, shorten >=7pt, Rightarrow, from=7-14, to=2]
	\arrow["{(((\Lambda/\mathrm{A}_1)/\dots)/\mathrm{A}_n)/\mathrm{B}}"{description}, shorten <=9pt, shorten >=9pt, Rightarrow, from=2, to=3]
	\arrow["{\alpha_1}"{description}, shorten >=7pt, Rightarrow, from=13-10, to=4]
	\arrow["{\alpha_n}"{description}, shorten >=7pt, Rightarrow, from=17-18, to=5]
	\arrow["\beta"{description}, shorten >=7pt, Rightarrow, from=19-14, to=6]
	\arrow["\gamma"{description}, shorten <=9pt, shorten >=9pt, Rightarrow, from=6, to=7]
\end{tikzcd}
}
}
\end{proof}

\begin{definition}
A pushfibration of vdcs is a functor of vdcs $p \colon \vdC \to \vdD$ such that for any chain of horizontal morphisms $\pi_i \colon R_{i-1} \to R_i$ and any cell

\begin{tikzcd}[ampersand replacement=\&]
	{p(R_0)} \&\& {p(R_1)} \& \dots \& {p(R_{n-1})} \&\& {p(R_n)} \\
	\\
	{p(R_0)} \&\&\&\&\&\& {p(R_n)}
	\arrow["{p(\pi_1)}"{description}, from=1-1, to=1-3]
	\arrow["{p(\pi_n)}"{description}, from=1-5, to=1-7]
	\arrow[""{name=0, anchor=center, inner sep=0}, "s"{description}, from=3-1, to=3-7]
	\arrow[Rightarrow, no head, from=1-1, to=3-1]
	\arrow[Rightarrow, no head, from=1-7, to=3-7]
	\arrow["\alpha"{description}, shorten >=7pt, Rightarrow, from=1-4, to=0]
\end{tikzcd}

there is a horizontal morphism $\pushone{\alpha}{\pi_1,\dots,\pi_n}$ with an opcartesian cell lying over $\alpha$:

\begin{tikzcd}[ampersand replacement=\&]
	{R_0} \&\& {R_1} \& \dots \& {R_{n-1}} \&\& {R_n} \\
	\\
	{R_0} \&\&\&\&\&\& {R_n}
	\arrow["{\pi_1}"{description}, from=1-1, to=1-3]
	\arrow["{\pi_n}"{description}, from=1-5, to=1-7]
	\arrow[""{name=0, anchor=center, inner sep=0}, "{\pushone{\alpha}{\pi_1,\dots,\pi_n}}"{description}, from=3-1, to=3-7]
	\arrow[Rightarrow, no head, from=1-1, to=3-1]
	\arrow[Rightarrow, no head, from=1-7, to=3-7]
	\arrow[shorten >=7pt, Rightarrow, from=1-4, to=0]
\end{tikzcd}
\end{definition}

\begin{example}
A functor between the delooping of multicategories is a pushfibration of vdcs iff the corresponding functor between multicategories is a pushfibration of multicategories.
\end{example}

\begin{example}
We will see later that there is a vdc $\Dist_\ast$ of pointed distributors and that the forgetful functor $\Dist_\ast \to \Dist$ is a pushfibration of vdcs.
\end{example}

\begin{example}
Given a vdc \vdC and \vdX consisting of some of the objects, vertical morphisms, horizontal morphisms and cells of \vdC, not  necessarily forming a vdc.
We define the vdc $\vdC^{\vdX}$ with:
\begin{itemize}
\item objects are vertical morphisms $\varphi \colon R \to A$ in \vdX
\item vertical morphisms $f \colon (\varphi \colon R \to A) \to (\psi \colon S \to B)$ are vertical morphisms $f \colon A \to B$ in \vdC such that $\psi = f \circ \varphi$
\item horizontal morphisms $p \colon (\varphi_0 \colon R_0 \to A_0) \to (\varphi_1 \colon R_1 \to A_1)$ are cells in \vdX of the shape:
\[\begin{tikzcd}[ampersand replacement=\&]
	{R_0} \&\& {X_1} \& \dots \& {X_{n-1}} \&\& {R_1} \\
	\\
	{A_0} \&\&\&\&\&\& {A_1}
	\arrow["{\pi_1}"{description}, from=1-1, to=1-3]
	\arrow["{\pi_n}"{description}, from=1-5, to=1-7]
	\arrow["{\varphi_0}"{description}, from=1-1, to=3-1]
	\arrow[""{name=0, anchor=center, inner sep=0}, "{p_1}"{description}, from=3-1, to=3-7]
	\arrow["{\varphi_1}"{description}, from=1-7, to=3-7]
	\arrow["{\Pi_1}"{description}, shorten >=7pt, Rightarrow, from=1-4, to=0]
\end{tikzcd}\]
\item cells 
\[\begin{tikzcd}[ampersand replacement=\&]
	{\varphi_0} \&\& {\varphi_1} \& \dots \& {\varphi_{n-1}} \&\& {\varphi_n} \\
	\\
	{\psi_0} \&\&\&\&\&\& {\psi_1}
	\arrow["{\Pi_1}"{description}, from=1-1, to=1-3]
	\arrow["{\Pi_n}"{description}, from=1-5, to=1-7]
	\arrow["{f_0}"{description}, from=1-1, to=3-1]
	\arrow[""{name=0, anchor=center, inner sep=0}, "\Sigma"{description}, from=3-1, to=3-7]
	\arrow["{f_1}"{description}, from=1-7, to=3-7]
	\arrow["\alpha"{description}, shorten >=7pt, Rightarrow, from=1-4, to=0]
\end{tikzcd}\]
are cells $\alpha$ in \vdC
\[\begin{tikzcd}[ampersand replacement=\&]
	{R_0} \& \dots \& {R_1} \& \dots \& {R_{n-1}} \& \dots \& {R_n} \\
	\\
	{A_0} \&\& {A_1} \& \dots \& {A_{n-1}} \&\& {A_n} \\
	\\
	{B_0} \&\&\&\&\&\& {B_1} \\
	\\
	{S_0} \&\&\& \dots \&\&\& {S_1}
	\arrow["{\varphi_0}"{description}, from=1-1, to=3-1]
	\arrow[""{name=0, anchor=center, inner sep=0}, "{p_1}"{description}, from=3-1, to=3-3]
	\arrow["{\varphi_1}"{description}, from=1-3, to=3-3]
	\arrow[from=1-1, to=1-2]
	\arrow[from=1-2, to=1-3]
	\arrow[from=1-5, to=1-6]
	\arrow[from=1-6, to=1-7]
	\arrow["{\varphi_{n-1}}"{description}, from=1-5, to=3-5]
	\arrow["{\varphi_n}"{description}, from=1-7, to=3-7]
	\arrow[""{name=1, anchor=center, inner sep=0}, "{p_n}"{description}, from=3-5, to=3-7]
	\arrow["{f_0}"{description}, from=3-1, to=5-1]
	\arrow[""{name=2, anchor=center, inner sep=0}, "q"{description}, from=5-1, to=5-7]
	\arrow["{f_1}"{description}, from=3-7, to=5-7]
	\arrow["{\psi_0}"{description}, from=7-1, to=5-1]
	\arrow["{\psi_1}"{description}, from=7-7, to=5-7]
	\arrow[from=7-1, to=7-4]
	\arrow[from=7-4, to=7-7]
	\arrow["{\Pi_1}"{description}, shorten >=7pt, Rightarrow, from=1-2, to=0]
	\arrow["{\Pi_n}"{description}, shorten >=7pt, Rightarrow, from=1-6, to=1]
	\arrow["\alpha"{description}, shorten >=7pt, Rightarrow, from=3-4, to=2]
	\arrow["\Sigma"{description}, shorten >=7pt, Rightarrow, from=7-4, to=2]
\end{tikzcd}\]
s.t. $\Sigma = \alpha(\Pi_1,\dots,P_n)$, in particular the domain should also agree
\end{itemize}
Since the vertical morphisms and the cells are those of \vdC satisfying some preservation property, it suffices to check that this property is closed under identity and composition.
It is the case and $\vdC^{\vdX}$ is a vdc.
There is a functor of vdcs $\cU \colon \vdC^{\vdX} \to \vdC$ that takes the objects and horizontal morphisms of $\vdC^{\vdX}$ to their codomain and is the identity on the vertical morphisms and cells.
Given some $\Pi_1,\dots, \Pi_n$ horizontal morphisms in $\vdC^{\vdX}$ and a cell $\alpha$ in \vdC whose vertical parts are identities, $\pushone{\alpha}{\Pi_1,\dots,\Pi_n}$ exists iff $\alpha(\Pi_1,\dots,\Pi_n) \in \vdX$ in which case it is the pushforward.
In particular, $\cU \colon \vdC^{\vdX} \to \vdC$ is a pushfibration iff the horizontal morphisms of \vdX are closed under postcomposition by arbitrary cells
\[\begin{tikzcd}[ampersand replacement=\&]
	{A_0} \&\& {A_1} \& \dots \& {A_{n-1}} \&\& {A_n} \\
	\\
	{A_0} \&\&\&\&\&\& {A_n}
	\arrow["{p_1}"{description}, from=1-1, to=1-3]
	\arrow["{p_n}"{description}, from=1-5, to=1-7]
	\arrow[""{name=0, anchor=center, inner sep=0}, "q"{description}, from=3-1, to=3-7]
	\arrow[Rightarrow, no head, from=1-7, to=3-7]
	\arrow[Rightarrow, no head, from=1-1, to=3-1]
	\arrow["\alpha"{description}, shorten >=7pt, Rightarrow, from=1-4, to=0]
\end{tikzcd}\]
\end{example}

The unique factorisation property of a universal cell and an opcartesian one are alike.
The difference resides in the fact that the one for opcartesian cells depends on the existence of the factorisation in the base vdc (i.e. the codomain of the pushfibration).
In turn, if the factorisation in the base is trivial, then we should recover the same notions.

Let $! \colon \vdC \to \one$ be the unique functor from a vdc \vdC to the terminal one.

\begin{prop}
A cell $\alpha$ in \vdC is universal iff it is !-opcartesian.
\end{prop}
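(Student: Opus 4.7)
The plan is to observe that both sides of the equivalence are unique factorisation conditions on cells of the same shape, and that the !-opcartesian condition degenerates to the universal condition because the terminal vdc $\one$ carries no information.

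First I would note that both notions apply to cells of the same shape: namely cells $\mathrm{A} : \pi_1, \dots, \pi_n \Rightarrow \sigma$ whose vertical domain and codomain are identities. This is explicit in both definitions. So there is no shape mismatch to worry about.

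Next I would unpack the !-opcartesian condition. Fix a cell $\mathrm{A}$ in $\vdC$ with the above shape, and consider an arbitrary cell $\Lambda$ in $\vdC$ whose horizontal domain has the form $\rho_1, \dots, \rho_m, \pi_1, \dots, \pi_n, \rho_1', \dots, \rho_{m'}'$. To be a candidate for factorisation, $\Lambda$ must lie over some cell of the form $\beta \circ (!(\mathrm{A}))$ in $\one$. But in $\one$ there is exactly one vertical morphism and exactly one cell of each arity, so the image $!(\Lambda)$ automatically equals $\underline{k} \circ (\underline{n})$ for the unique suitable $\underline{k}$, and this already is the factorisation $!(\beta) \circ (!(\mathrm{A}))$ for the unique $\beta = \underline{k}$ of the appropriate type. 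Hence the hypothesis on $\Lambda$ is vacuous: every cell with the right horizontal domain qualifies.

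Then I would observe that the required factorisation $\Lambda/\mathrm{A}$ in $\vdE$ is constrained to lie over $\beta$ in $\vdB = \one$, but again this is automatic: every cell in $\vdC$ of the appropriate arity maps to the unique cell of that arity in $\one$. Thus the !-opcartesian condition simplifies to: for every cell $\Lambda$ in $\vdC$ with horizontal domain extending $\pi_1,\dots,\pi_n$ in the prescribed way, there exists a unique cell $\Lambda/\mathrm{A}$ in $\vdC$ with $\Lambda = (\Lambda/\mathrm{A})(\id,\dots,\id,\mathrm{A},\id,\dots,\id)$. Comparing with the definition of universal cell given earlier, this is exactly the universal property, proving both implications simultaneously. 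The argument is a pure unfolding of definitions, and there is no real obstacle; the only point that requires attention is confirming that the contexts $(\rho_i)$, $(\rho_j')$ and the outer vertical morphisms $f, g$ allowed in the two definitions match, which they do by direct inspection.
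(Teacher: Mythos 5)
Your proof is correct and follows essentially the same route as the paper's: both arguments observe that since $\one$ has a unique cell of each arity, the hypothesis of lying over a factorisation through $!(\mathrm{A})$ and the constraint on where the factorisation lives are both vacuous, so the two unique-factorisation conditions coincide. Nothing further is needed.
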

\begin{proof}
Consider a cell $\lambda$ in \vdC

\resizebox{\hsize}{!}{
\begin{tikzcd}[ampersand replacement=\&]
	{C_0} \&\& {C_1} \& \dots \& {C_{m-1}} \&\& {A_0} \&\& {A_1} \& \dots \& {A_{n-1}} \&\& {A_n} \&\& {D_1} \& \dots \& {D_{k-1}} \&\& {D_k} \\
	\\
	{B_0} \&\&\&\&\&\&\&\&\&\&\&\&\&\&\&\&\&\& {B_1}
	\arrow["{p_1}"{description}, from=1-7, to=1-9]
	\arrow["{p_n}"{description}, from=1-11, to=1-13]
	\arrow[""{name=0, anchor=center, inner sep=0}, "t"{description}, from=3-1, to=3-19]
	\arrow["{r_m}"{description}, from=1-5, to=1-7]
	\arrow["{r_0}"{description}, from=1-1, to=1-3]
	\arrow["{s_1}"{description}, from=1-13, to=1-15]
	\arrow["{s_k}"{description}, from=1-17, to=1-19]
	\arrow["f"{description}, from=1-1, to=3-1]
	\arrow["g"{description}, from=1-19, to=3-19]
	\arrow["\lambda"{description}, shorten >=7pt, Rightarrow, from=1-10, to=0]
\end{tikzcd}
}

We necessarily have that $!(\alpha) = \underline{n}$ and $!(\lambda) = \underline{m+n+k}$ since there is only one cell of each arity.
But for the same reason, $\underline{m+n+k} = \underline{m+1+k}(\id,\dots,\id,\underline{n},\id,\dots,\id)$
So $\lambda$ lies over a factorisation by the image of $\alpha$.

Now suppose that $\alpha$ is opcartesian, then we can uniquely factor $\lambda$ through it, so $\alpha$ is universal.
Conversely, if $\alpha$ is universal, we can uniquely factor $\lambda$ through it, and the factorisation necessarily lies other $\underline{m+1+k}$, so $\alpha$ is opcartesian.
\end{proof}

\begin{cor}
A vdc is representable iff it is pushfibred over $\one$.
\end{cor}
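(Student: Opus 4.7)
The plan is to derive this corollary as an immediate consequence of the preceding proposition identifying universal cells with $!$-opcartesian cells. The argument has almost no content beyond unpacking definitions, but it is worth laying out carefully because the matching of shapes has to be checked.

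First I would fix the vocabulary. Saying that $\vdC$ is representable means that for every finite chain of composable horizontal morphisms $\pi_1,\dots,\pi_n$ in $\vdC$ there exists a universal cell with domain $\pi_1,\dots,\pi_n$ and identity vertical boundaries. Saying that $! \colon \vdC \to \one$ is a pushfibration means that for every such chain in $\vdC$ and every cell in $\one$ over its image, there is an opcartesian lifting. Since $\one$ has exactly one cell of each arity $n$, namely $\underline{n}$, whose vertical boundaries are the unique identity, this condition reduces to the existence, for every chain $\pi_1,\dots,\pi_n$, of an opcartesian cell in $\vdC$ over $\underline{n}$. Such a cell necessarily has $\pi_1,\dots,\pi_n$ as horizontal domain and identities as vertical boundaries, so its shape matches exactly that of a universal cell.

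For the forward direction, suppose $\vdC$ is representable and let $\pi_1,\dots,\pi_n$ together with a cell in $\one$ over the image chain be given. As noted, this cell must be $\underline{n}$. By representability there is a universal cell over $\pi_1,\dots,\pi_n$, and by the preceding proposition any universal cell is $!$-opcartesian. So the universal cell supplies the required opcartesian lifting, whence $!$ is a pushfibration. For the backward direction, suppose $!$ is a pushfibration and let a chain $\pi_1,\dots,\pi_n$ be given; applying pushfibrancy to $\underline{n}$ produces an opcartesian cell over $\pi_1,\dots,\pi_n$, which is universal by the preceding proposition, so $\vdC$ admits a composite for this chain.

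There is really no obstacle here; the only point to double-check is that in both directions the boundaries line up: in the forward direction, that a universal cell lies over $\underline{n}$ (immediate since its vertical boundaries are identities and $!$ sends these to the unique identity of $\one$), and in the backward direction, that the opcartesian lifting of $\underline{n}$ has identity vertical boundaries (forced because $!$ is strict on vertical morphisms and the boundaries of $\underline{n}$ are identities). Once these checks are made the corollary is simply the conjunction of the two implications of the preceding proposition, packaged in terms of the functor $!$.
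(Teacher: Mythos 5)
Your proof is correct and follows exactly the route the paper intends: the corollary is an immediate consequence of the preceding proposition identifying universal cells with $!$-opcartesian cells, using that $\one$ has a unique cell $\underline{n}$ of each arity so the lifting data on both sides coincide. The boundary checks you include are sound and the paper simply leaves them implicit.
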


Now let us consider two functors of vdcs $p \colon \vdP \to \vdE$ and $q \colon \vdE \to \vdB$.

\begin{prop}
If a cell $\alpha$ in \vdP is $p$-opcartesian and $p(\alpha)$ is $q$-opcartesian, then $\alpha$ is $(q\circ p)$-opcartesian. 
\end{prop}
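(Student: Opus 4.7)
The plan is to mimic the proof of the analogous Proposition~\ref{prop:compcartfunc} for polycategories, unfolding the opcartesian factorization in two stages: first through $q$, then through $p$.

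First I would fix some cell $\Lambda$ in $\vdP$ witnessing a candidate factorization, i.e. a cell whose image $(q\circ p)(\Lambda)$ in $\vdB$ has the form $\beta(\mathrm{id},\dots,(q\circ p)(\alpha),\dots,\mathrm{id})$ for some cell $\beta$ in $\vdB$, matching the generic factorization pattern appearing in the definition of opcartesian cell. I need to produce a unique cell $\delta$ in $\vdP$ with $(q\circ p)(\delta)=\beta$ such that $\Lambda$ factors through $\alpha$ and $\delta$.

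The key step is to apply $p$ to $\Lambda$. By functoriality, $p(\Lambda)$ is a cell in $\vdE$ whose image under $q$ is $\beta(\mathrm{id},\dots,q(p(\alpha)),\dots,\mathrm{id})$, so $p(\Lambda)$ lies over a factorization of $q(p(\alpha))$ through $\beta$. Since $p(\alpha)$ is $q$-opcartesian, there is a unique cell $\gamma:=p(\Lambda)/p(\alpha)$ in $\vdE$ with $q(\gamma)=\beta$ and such that $p(\Lambda)$ is obtained by pasting $\gamma$ after $p(\alpha)$. Now observe that $\Lambda$ itself lies over a factorization of $p(\alpha)$ through $\gamma$ in $\vdE$ (via $p$). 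Since $\alpha$ is $p$-opcartesian, there is a unique cell $\delta:=\Lambda/\alpha$ in $\vdP$ with $p(\delta)=\gamma$ such that $\Lambda=\delta\bullet\alpha$ in the appropriate pasting sense. Then $(q\circ p)(\delta)=q(\gamma)=\beta$, as required.

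For uniqueness, any other $\delta'$ with $(q\circ p)(\delta')=\beta$ realizing the factorization would, after applying $p$, give a cell $p(\delta')$ factoring $p(\Lambda)$ through $p(\alpha)$ and lying over $\beta$; by uniqueness for the $q$-opcartesian $p(\alpha)$ we get $p(\delta')=\gamma$, and then by uniqueness for the $p$-opcartesian $\alpha$ we get $\delta'=\delta$. I do not anticipate any genuine obstacle here: the only thing requiring care is bookkeeping of the surrounding context lists $(\rho_i)$, $(r_i)$, $(\rho'_j)$, $(r'_j)$ and the side vertical morphisms $f,g,\varphi,\psi$, which simply thread through both factorization steps unchanged. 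No diagrammatic acrobatics are needed beyond the two successive applications of the opcartesian universal property, exactly as in the polycategorical case.
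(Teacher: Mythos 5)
Your proof is correct and follows essentially the same route as the paper: apply $p$ to the given cell, factor through $p(\alpha)$ using its $q$-opcartesianness, then factor the original cell through $\alpha$ using its $p$-opcartesianness, with uniqueness inherited from the two universal properties. The paper's own proof is just a terser version of this same two-stage argument (and likewise models it on the polycategorical Proposition on composition of cartesian polymaps).
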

\begin{proof}
Let's consider a cell $\lambda$ in \vdP such that its image $(q\circ p)(\lambda) = q(p(\lambda))$ factors through $(q \circ p)(\alpha) = q(p(\alpha))$.
Then since $p(\alpha)$ is $q$-opcartesian we can factor $p(\lambda)$ through it.
But since $p(\lambda)$ factors through $p(\alpha)$ and $\alpha$ is $p$-opcartesian we can factor $\lambda$ through $\alpha$.
\end{proof}

So pushfibrations compose.

\begin{cor}
If $p$ and $q$ are pushfibrations then $q\circ p$ is a pushfibration.
\end{cor}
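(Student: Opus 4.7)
The plan is to directly apply the preceding proposition, which has done essentially all the substantive work. Given chains of horizontal morphisms $\pi_i \colon R_{i-1} \to R_i$ in $\vdP$ and a cell $\alpha$ in $\vdB$ lying over the chain $(q \circ p)(\pi_1), \dots, (q \circ p)(\pi_n)$, I need to produce an opcartesian lift of $\alpha$ along $q \circ p$ with domain the $\pi_i$.

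First I would lift $\alpha$ through $q$. Since $q$ is a pushfibration and $\alpha$ lies over a chain of the form $q(p(\pi_1)), \dots, q(p(\pi_n))$, there exists a $q$-opcartesian cell $\tilde{\alpha}$ in $\vdE$ whose horizontal source is $p(\pi_1), \dots, p(\pi_n)$ and such that $q(\tilde{\alpha}) = \alpha$. Second, I would lift $\tilde{\alpha}$ through $p$: since $p$ is a pushfibration and $\tilde{\alpha}$ lies over $p(\pi_1), \dots, p(\pi_n)$, there exists a $p$-opcartesian cell $\tilde{\tilde{\alpha}}$ in $\vdP$ with horizontal source $\pi_1, \dots, \pi_n$ satisfying $p(\tilde{\tilde{\alpha}}) = \tilde{\alpha}$.

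Finally, I would invoke the preceding proposition: $\tilde{\tilde{\alpha}}$ is $p$-opcartesian and its image $p(\tilde{\tilde{\alpha}}) = \tilde{\alpha}$ is $q$-opcartesian, so $\tilde{\tilde{\alpha}}$ is $(q \circ p)$-opcartesian. Moreover, $(q \circ p)(\tilde{\tilde{\alpha}}) = q(\tilde{\alpha}) = \alpha$, so $\tilde{\tilde{\alpha}}$ is the desired opcartesian lift of $\alpha$, showing that $q \circ p$ is a pushfibration.

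There is no real obstacle here: the corollary is essentially immediate once the composition proposition is in hand. The only thing to verify is that the two successive lifts compose cleanly, which follows from strict preservation of horizontal sources by functors of virtual double categories and from the fact that $q \circ p$ is, by construction, a functor of vdcs (so it makes sense to speak of its opcartesian cells).
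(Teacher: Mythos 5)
Your proof is correct and follows exactly the same route as the paper: lift $\alpha$ through $q$ to a $q$-opcartesian cell, push the chain $\pi_1,\dots,\pi_n$ along that cell using the pushfibration $p$, and conclude by the preceding proposition that a $p$-opcartesian cell over a $q$-opcartesian cell is $(q\circ p)$-opcartesian. Nothing is missing.
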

\begin{proof}
Given a chain of horizontal morphisms $p_1,...,p_n$ in \vdP and a cell $\alpha$ in \vdB, since $q$ is a pushfibration there is a $q$-opcartesian cell from $p(p_1),\dots,p(p_n)$ to $\pushone{\alpha}{p(p_1),\dots,p(p_n))}$ which lies over $\alpha$.
But then we can push along this cell to get a $p$-opcartesian cell from $p_1,\dots,p_n$ to $\pushone{\pushone{\alpha}{p(p_1),\dots,p(p_n)}}{p_1,\dots,p_n}$.
But then since it is $p$-opcartesian over a $q$-opcartesian cell, it is $(q\circ p)$-opcartesian.
\end{proof}

In particular, a pushfibration over a representable vdc is representable.

\begin{cor}
If $p \colon \vdE \to \vdB$ is a pushfibration and \vdB is representable, then \vdE is representable.
\end{cor}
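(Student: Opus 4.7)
The plan is to leverage the two results that immediately precede this corollary: (i) a vdc is representable if and only if its unique functor to $\one$ is a pushfibration, and (ii) pushfibrations compose. Given these, the proof should be essentially a one-line observation, and I would present it as such.

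First, I would note that the unique functor $! \colon \vdE \to \one$ factors through $p$ as $\vdE \xrightarrow{p} \vdB \xrightarrow{!_{\vdB}} \one$, simply by uniqueness of the functor into the terminal vdc. Next, I would invoke the hypothesis that $\vdB$ is representable together with the characterisation of representability as being pushfibred over $\one$ to conclude that $!_{\vdB} \colon \vdB \to \one$ is a pushfibration. Combined with the hypothesis that $p$ is a pushfibration, the compositionality of pushfibrations (the previous corollary) gives that $!_{\vdE} = !_{\vdB} \circ p$ is itself a pushfibration. Applying the characterisation once more, in the opposite direction, yields that $\vdE$ is representable.

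There is essentially no obstacle here, since all the real work has already been done in the preceding propositions: the key content is the proof that opcartesian cells above opcartesian cells are opcartesian, which feeds into the compositionality corollary, and the identification of universal cells with $!$-opcartesian cells. The only minor care needed is to make explicit that the composite $!_{\vdB} \circ p$ is indeed the unique functor $\vdE \to \one$, so that producing composites of horizontal morphisms in $\vdE$ (via pushforwards along the unique cells $\underline{n}$ in $\one$) genuinely yields universal cells in $\vdE$ rather than merely pushforwards along something else. This is automatic from the universal property of $\one$ and requires no additional argument.
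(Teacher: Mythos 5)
Your proof is correct and follows exactly the same route as the paper: use the equivalence between representability and being pushfibred over $\one$, together with the fact that pushfibrations compose, to conclude that $!_{\vdE} = !_{\vdB} \circ p$ is a pushfibration. The paper's own proof is precisely this three-line argument, so nothing further is needed.
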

\begin{proof}
Since \vdB is representable $!_{\vdB} \colon \vdB \to \one$ is a pushfibration.
But then since $p$ is a pushfibration we have that $!_{\vdB}\circ p = !_{\vdE} \colon \vdE \to \one$ is also.
So $\vdE$ is representable.
\end{proof}

The identities and composition in $\vdE$ are given by pushing along their universal cells in $\vdB$.
In fact, one does not need for $p$ to be a pushfibration, but only that pushing along universal cells exist.

\begin{prop}
Consider a functor $\vdE \to \vdB$ such that $\vdB$ is representable and all opcartesian liftings of universal cells exists.
Then, it is a pushfibration iff for any $\pi$ in \vdE lying over $p$ and any unary cell $\alpha$ out of $p$, there is a cell $\mathrm{A}$ out $\pi$ lying over it:

\begin{tikzcd}[ampersand replacement=\&]
	{R_0} \&\& {R_1} \\
	\\
	{R_0} \&\& {R_1} \\
	\\
	{A_0} \&\& {A_1} \\
	\\
	{A_0} \&\& {A_1}
	\arrow[""{name=0, anchor=center, inner sep=0}, "p"{description}, from=5-1, to=5-3]
	\arrow[""{name=1, anchor=center, inner sep=0}, "s"{description}, from=7-1, to=7-3]
	\arrow[Rightarrow, no head, from=5-1, to=7-1]
	\arrow[Rightarrow, no head, from=5-3, to=7-3]
	\arrow[""{name=2, anchor=center, inner sep=0}, "\pi"{description}, from=1-1, to=1-3]
	\arrow[Rightarrow, no head, from=1-1, to=3-1]
	\arrow[Rightarrow, no head, from=1-3, to=3-3]
	\arrow[""{name=3, anchor=center, inner sep=0}, "{\pushone{\alpha}{\pi}}"{description}, from=3-1, to=3-3]
	\arrow["\alpha"{description}, shorten <=9pt, shorten >=9pt, Rightarrow, from=0, to=1]
	\arrow["{\mathrm{A}}"{description}, shorten <=9pt, shorten >=9pt, Rightarrow, from=2, to=3]
\end{tikzcd}

such that for any cell $\Lambda$

\begin{tikzcd}[ampersand replacement=\&]
	{S_0} \&\& {R_0} \&\& {R_1} \&\& {S_1} \\
	\\
	\\
	\\
	{T_0} \&\&\&\&\&\& {T_1} \\
	\\
	{B_0} \&\& {A_0} \&\& {A_1} \&\& {B_1} \\
	\\
	{B_0} \&\& {A_0} \&\& {A_1} \&\& {B_1} \\
	\\
	{C_0} \&\&\&\&\&\& {C_1}
	\arrow[""{name=0, anchor=center, inner sep=0}, "p"{description}, from=7-3, to=7-5]
	\arrow[""{name=1, anchor=center, inner sep=0}, "s"{description}, from=9-3, to=9-5]
	\arrow[Rightarrow, no head, from=7-3, to=9-3]
	\arrow[Rightarrow, no head, from=7-5, to=9-5]
	\arrow["r"{description}, from=7-1, to=7-3]
	\arrow["{r'}"{description}, from=7-5, to=7-7]
	\arrow["r"{description}, from=9-1, to=9-3]
	\arrow[Rightarrow, no head, from=7-1, to=9-1]
	\arrow[Rightarrow, no head, from=7-7, to=9-7]
	\arrow["{r'}"{description}, from=9-5, to=9-7]
	\arrow[""{name=2, anchor=center, inner sep=0}, "t"{description}, from=11-1, to=11-7]
	\arrow["f"{description}, from=9-1, to=11-1]
	\arrow["g"{description}, from=9-7, to=11-7]
	\arrow["\rho"{description}, from=1-1, to=1-3]
	\arrow[""{name=3, anchor=center, inner sep=0}, "\pi"{description}, from=1-3, to=1-5]
	\arrow["{\rho'}"{description}, from=1-5, to=1-7]
	\arrow["\psi"{description}, from=1-7, to=5-7]
	\arrow[""{name=4, anchor=center, inner sep=0}, "\tau"{description}, from=5-1, to=5-7]
	\arrow["\varphi"{description}, from=1-1, to=5-1]
	\arrow["\beta"{description}, shorten <=9pt, shorten >=9pt, Rightarrow, from=1, to=2]
	\arrow["\alpha"{description}, shorten <=9pt, shorten >=9pt, Rightarrow, from=0, to=1]
	\arrow["\Lambda"{description}, shorten <=17pt, shorten >=17pt, Rightarrow, from=3, to=4]
\end{tikzcd}

there is a unique factorisation through $\mathrm{A}$:

\begin{tikzcd}[ampersand replacement=\&]
	{S_0} \&\& {R_0} \&\& {R_1} \&\& {S_1} \\
	\\
	{S_0} \&\& {R_0} \&\& {R_1} \&\& {S_1} \\
	\\
	{T_0} \&\&\&\&\&\& {T_1} \\
	\\
	{B_0} \&\& {A_0} \&\& {A_1} \&\& {B_1} \\
	\\
	{B_0} \&\& {A_0} \&\& {A_1} \&\& {B_1} \\
	\\
	{C_0} \&\&\&\&\&\& {C_1}
	\arrow[""{name=0, anchor=center, inner sep=0}, "p"{description}, from=7-3, to=7-5]
	\arrow[""{name=1, anchor=center, inner sep=0}, "s"{description}, from=9-3, to=9-5]
	\arrow[Rightarrow, no head, from=7-3, to=9-3]
	\arrow[Rightarrow, no head, from=7-5, to=9-5]
	\arrow["r"{description}, from=7-1, to=7-3]
	\arrow["{r'}"{description}, from=7-5, to=7-7]
	\arrow["r"{description}, from=9-1, to=9-3]
	\arrow[Rightarrow, no head, from=7-1, to=9-1]
	\arrow[Rightarrow, no head, from=7-7, to=9-7]
	\arrow["{r'}"{description}, from=9-5, to=9-7]
	\arrow[""{name=2, anchor=center, inner sep=0}, "t"{description}, from=11-1, to=11-7]
	\arrow["f"{description}, from=9-1, to=11-1]
	\arrow["g"{description}, from=9-7, to=11-7]
	\arrow["\rho"{description}, from=1-1, to=1-3]
	\arrow[""{name=3, anchor=center, inner sep=0}, "\pi"{description}, from=1-3, to=1-5]
	\arrow["{\rho'}"{description}, from=1-5, to=1-7]
	\arrow[""{name=4, anchor=center, inner sep=0}, "\tau"{description}, from=5-1, to=5-7]
	\arrow[Rightarrow, no head, from=1-3, to=3-3]
	\arrow[Rightarrow, no head, from=1-5, to=3-5]
	\arrow[""{name=5, anchor=center, inner sep=0}, "{\pushone{\alpha}{\pi}}"{description}, from=3-3, to=3-5]
	\arrow["\rho"{description}, from=3-1, to=3-3]
	\arrow["{\rho'}"{description}, from=3-5, to=3-7]
	\arrow[Rightarrow, no head, from=1-1, to=3-1]
	\arrow[Rightarrow, no head, from=1-7, to=3-7]
	\arrow["\varphi"{description}, from=3-1, to=5-1]
	\arrow["\psi"{description}, from=3-7, to=5-7]
	\arrow["\beta"{description}, shorten <=9pt, shorten >=9pt, Rightarrow, from=1, to=2]
	\arrow["\alpha"{description}, shorten <=9pt, shorten >=9pt, Rightarrow, from=0, to=1]
	\arrow["{\mathrm{A}}"{description}, shorten <=9pt, shorten >=9pt, Rightarrow, from=3, to=5]
	\arrow["{\Lambda/\mathrm{A}}"{description}, shorten <=9pt, shorten >=9pt, Rightarrow, from=5, to=4]
\end{tikzcd}
\end{prop}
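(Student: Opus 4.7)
The forward direction is immediate: a pushfibration by definition pushes along cells of every arity, including unary ones, so the required factorisation is just a special case of the opcartesian property.

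For the converse, the plan is to decompose an arbitrary cell into a universal cell followed by a unary cell, lift both pieces to opcartesian cells in $\vdE$ by the two hypotheses, and then invoke the fact (proven earlier) that opcartesian cells compose. Fix a chain $\pi_1, \ldots, \pi_n$ in $\vdE$ with $p(\pi_i) = p_i$, and a cell $\alpha \colon p_1, \ldots, p_n \Rightarrow q$ in $\vdB$. Since $\vdB$ is representable, there is a universal cell $\mu \colon p_1, \ldots, p_n \Rightarrow p_1 \bullet \cdots \bullet p_n$, and by its universal property $\alpha$ factors uniquely as $\alpha = (\alpha/\mu) \circ \mu$, where $\alpha/\mu$ is a unary cell out of $p_1 \bullet \cdots \bullet p_n$.

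By the assumption on opcartesian liftings of universal cells, $\mu$ lifts to an opcartesian cell $M \colon \pi_1, \ldots, \pi_n \Rightarrow \Pi$, where $\Pi := \pushone{\mu}{\pi_1, \ldots, \pi_n}$ lives over $p_1 \bullet \cdots \bullet p_n$. By the unary hypothesis applied to $\Pi$ and $\alpha/\mu$, there is a further opcartesian cell $A \colon \Pi \Rightarrow \pushone{\alpha/\mu}{\Pi}$ lying over $\alpha/\mu$. The vertical composite $A \circ M$ lies over $(\alpha/\mu) \circ \mu = \alpha$, and by the compositionality of opcartesian cells (established earlier in this chapter), it is itself opcartesian. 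This produces the required pushforward $\pushone{\alpha}{\pi_1, \ldots, \pi_n} := \pushone{\alpha/\mu}{\Pi}$.

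The only subtle point is checking that the unary hypothesis is really enough to push along \emph{any} unary cell out of $\Pi$, even though $\Pi$ was itself obtained as a pushforward; but the hypothesis is stated for arbitrary horizontal morphisms $\pi$ in $\vdE$, so this is unproblematic. I expect no serious obstacle: both halves of the decomposition are handled by direct hypotheses, and the glue is supplied by the general compositionality of opcartesian cells, which does not require $p$ to already be a pushfibration but only that the individual pieces satisfy their respective universal properties. The main bookkeeping is ensuring that the factorisation $\alpha = (\alpha/\mu) \circ \mu$ really lives in the shape required by the definition of opcartesianness for the composite $A \circ M$, which is automatic because universal cells in $\vdB$ are (by Proposition relating universal cells to $!$-opcartesian ones) a special case of the factorisation situation that defines opcartesianness.
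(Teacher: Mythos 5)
Your proof is correct and follows essentially the same route as the paper: factor $\alpha$ through the universal cell of $\vdB$ as $(\alpha/\mu)\circ\mu$, lift the universal part by the standing assumption and the unary part by the hypothesis, and observe the composite is opcartesian over $\alpha$. The only difference is that you cite the earlier proposition that opcartesian cells compose, whereas the paper re-derives that fact for this particular composite by an explicit diagram chase; the content is the same.
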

\begin{proof}
If it is a pushfibration then this property follows directly from the existence of opcartesian morphisms and their universal property.

So let us suppose that this is true.
First, since the functor has all opcartesian liftings of universal cells, \vdE is representable with the composite in \vdE lying over the composite in \vdB (i.e. it corresponds to a strict functor of double categories).

We will write the universal cells $-\bullet-$:

\begin{tikzcd}[ampersand replacement=\&]
	\bullet \&\& \bullet \& \dots \& \bullet \&\& \bullet \\
	\\
	\bullet \&\&\&\&\&\& \bullet \\
	\\
	\bullet \&\& \bullet \& \dots \& \bullet \&\& \bullet \\
	\\
	\bullet \&\&\&\&\&\& \bullet
	\arrow["{\pi_1}"{description}, from=1-1, to=1-3]
	\arrow["{\pi_n}"{description}, from=1-5, to=1-7]
	\arrow[""{name=0, anchor=center, inner sep=0}, "{\pi_n \bullet\dots\bullet\pi_1}"{description}, from=3-1, to=3-7]
	\arrow[Rightarrow, no head, from=1-1, to=3-1]
	\arrow[Rightarrow, no head, from=1-7, to=3-7]
	\arrow["{p_1}"{description}, from=5-1, to=5-3]
	\arrow["{p_n}"{description}, from=5-5, to=5-7]
	\arrow[""{name=1, anchor=center, inner sep=0}, "{p_n\bullet\dots\bullet p_1}"{description}, from=7-1, to=7-7]
	\arrow[Rightarrow, no head, from=5-1, to=7-1]
	\arrow[Rightarrow, no head, from=5-7, to=7-7]
	\arrow["{-\bullet-}"{description}, shorten >=7pt, Rightarrow, from=1-4, to=0]
	\arrow["{-\bullet-}"{description}, shorten >=7pt, Rightarrow, from=5-4, to=1]
\end{tikzcd}

For any chain of horizontal morphisms $pi_i \colon R_{i-1} \to R_i$ and any cell $\alpha$ we want to prove that the following cell is opcartesian:

\begin{tikzcd}[ampersand replacement=\&]
	\bullet \&\& \bullet \& \dots \& \bullet \&\& \bullet \\
	\\
	\bullet \&\&\&\&\&\& \bullet \\
	\\
	\bullet \&\&\&\&\&\& \bullet \\
	\\
	\bullet \&\& \bullet \& \dots \& \bullet \&\& \bullet \\
	\\
	\bullet \&\&\&\&\&\& \bullet \\
	\\
	\bullet \&\&\&\&\&\& \bullet
	\arrow["{\pi_1}"{description}, from=1-1, to=1-3]
	\arrow["{\pi_n}"{description}, from=1-5, to=1-7]
	\arrow[""{name=0, anchor=center, inner sep=0}, "{\pi_n\bullet\dots\bullet\pi_1}"{description}, from=3-1, to=3-7]
	\arrow[Rightarrow, no head, from=1-1, to=3-1]
	\arrow[Rightarrow, no head, from=1-7, to=3-7]
	\arrow[""{name=1, anchor=center, inner sep=0}, "{\pushone{\alpha/(-\bullet-)}{\pi_n\bullet\dots\bullet\pi_1}}"{description}, from=5-1, to=5-7]
	\arrow[Rightarrow, no head, from=3-1, to=5-1]
	\arrow[Rightarrow, no head, from=3-7, to=5-7]
	\arrow["{p_1}"{description}, from=7-1, to=7-3]
	\arrow["{p_n}"{description}, from=7-5, to=7-7]
	\arrow[""{name=2, anchor=center, inner sep=0}, "{p_n\bullet\dots\bullet p_1}"{description}, from=9-1, to=9-7]
	\arrow[""{name=3, anchor=center, inner sep=0}, "s"{description}, from=11-1, to=11-7]
	\arrow[Rightarrow, no head, from=7-1, to=9-1]
	\arrow[Rightarrow, no head, from=7-7, to=9-7]
	\arrow[Rightarrow, no head, from=9-1, to=11-1]
	\arrow[Rightarrow, no head, from=9-7, to=11-7]
	\arrow["{-\bullet-}"{description}, shorten >=7pt, Rightarrow, from=1-4, to=0]
	\arrow["{\mathrm{A}}"{description}, shorten <=9pt, shorten >=9pt, Rightarrow, dashed, from=0, to=1]
	\arrow["{-\dots-}"{description}, shorten >=7pt, Rightarrow, from=7-4, to=2]
	\arrow["{\alpha/(-\bullet-)}"{description}, shorten <=9pt, shorten >=9pt, Rightarrow, from=2, to=3]
\end{tikzcd}

So let's consider a cell $\Lambda$ lying over a composition by $\alpha$:

\resizebox{\hsize}{!}{
\begin{tikzcd}[ampersand replacement=\&]
	{S_0} \&\& {S_1} \& \dots \& {S_{m-1}} \&\& {R_0} \&\& {R_1} \& \dots \& {R_{n-1}} \&\& {R_n} \&\& {S_1'} \& \dots \& {S_{m'-1}'} \&\& {S_{m'}'} \\
	\\
	{S_0} \&\&\&\&\&\&\&\&\&\&\&\&\&\&\&\&\&\& {S_{m'}'} \\
	\\
	{T_0} \&\&\&\&\&\&\&\&\&\&\&\&\&\&\&\&\&\& {T_1} \\
	\\
	{B_0} \&\& {B_1} \& \dots \& {B_{m-1}} \&\& {A_0} \&\& {A_1} \& \dots \& {A_{n-1}} \&\& {A_n} \&\& {B_1'} \& \dots \& {B_{m'-1}'} \&\& {B_{m'}'} \\
	\\
	{B_0} \&\& {B_1} \& \dots \& {B_{m-1}} \&\& {A_0} \&\&\&\&\&\& {A_n} \&\& {B_1'} \& \dots \& {B_{m'-1}'} \&\& {B_{m'}'} \\
	\\
	{C_0} \&\&\&\&\&\&\&\&\&\&\&\&\&\&\&\&\&\& {C_1}
	\arrow["{\pi_1}"{description}, from=1-7, to=1-9]
	\arrow["{\pi_n}"{description}, from=1-11, to=1-13]
	\arrow["{p_1}"{description}, from=7-7, to=7-9]
	\arrow["{p_n}"{description}, from=7-11, to=7-13]
	\arrow[""{name=0, anchor=center, inner sep=0}, "s"{description}, from=9-7, to=9-13]
	\arrow[Rightarrow, no head, from=7-7, to=9-7]
	\arrow[Rightarrow, no head, from=7-13, to=9-13]
	\arrow["{\rho_m}"{description}, from=1-5, to=1-7]
	\arrow["{\rho_1}"{description}, from=1-1, to=1-3]
	\arrow["{\rho_1'}"{description}, from=1-13, to=1-15]
	\arrow["{\rho_{m'}'}"{description}, from=1-17, to=1-19]
	\arrow[Rightarrow, no head, from=1-1, to=3-1]
	\arrow["\varphi"{description}, from=3-1, to=5-1]
	\arrow[Rightarrow, no head, from=1-19, to=3-19]
	\arrow["\psi"{description}, from=3-19, to=5-19]
	\arrow[""{name=1, anchor=center, inner sep=0}, "\tau"{description}, from=5-1, to=5-19]
	\arrow["{r_1}"{description}, from=7-1, to=7-3]
	\arrow["{r_m}"{description}, from=7-5, to=7-7]
	\arrow["{r_1'}"{description}, from=7-13, to=7-15]
	\arrow["{r_{m'}'}"{description}, from=7-17, to=7-19]
	\arrow[Rightarrow, no head, from=7-1, to=9-1]
	\arrow["{r_1}"{description}, from=9-1, to=9-3]
	\arrow[Rightarrow, no head, from=7-3, to=9-3]
	\arrow[Rightarrow, no head, from=7-5, to=9-5]
	\arrow["{r_m}"{description}, from=9-5, to=9-7]
	\arrow["{r_1'}"{description}, from=9-13, to=9-15]
	\arrow[Rightarrow, no head, from=7-15, to=9-15]
	\arrow[Rightarrow, no head, from=7-17, to=9-17]
	\arrow[Rightarrow, no head, from=7-19, to=9-19]
	\arrow["{r_{m'}'}"{description}, from=9-17, to=9-19]
	\arrow[""{name=2, anchor=center, inner sep=0}, "t"{description}, from=11-1, to=11-19]
	\arrow["f"{description}, from=9-1, to=11-1]
	\arrow["g"{description}, from=9-19, to=11-19]
	\arrow["\alpha"{description}, shorten >=7pt, Rightarrow, from=7-10, to=0]
	\arrow["\beta"{description}, shorten <=9pt, shorten >=9pt, Rightarrow, from=0, to=2]
	\arrow["\Lambda"{description}, shorten >=16pt, Rightarrow, from=1-10, to=1]
\end{tikzcd}
}

Then we can rewrite the bottom cell using the universal properties of $\bullet$:

\resizebox{\hsize}{!}{
\begin{tikzcd}[ampersand replacement=\&]
	{S_0} \&\& {S_1} \& \dots \& {S_{m-1}} \&\& {R_0} \&\& {R_1} \& \dots \& {R_{n-1}} \&\& {R_n} \&\& {S_1'} \& \dots \& {S_{m'-1}'} \&\& {S_{m'}'} \\
	\\
	\\
	\\
	\\
	\\
	\\
	\\
	\\
	\\
	{T_0} \&\&\&\&\&\&\&\&\&\&\&\&\&\&\&\&\&\& {T_1} \\
	\\
	{B_0} \&\& {B_1} \& \dots \& {B_{m-1}} \&\& {A_0} \&\& {A_1} \& \dots \& {A_{n-1}} \&\& {A_n} \&\& {B_1'} \& \dots \& {B_{m'-1}'} \&\& {B_{m'}'} \\
	\\
	\&\&\&\&\&\& {A_0} \&\&\&\&\&\& {A_n} \\
	\\
	{B_0} \&\& {B_1} \& \dots \& {B_{m-1}} \&\& {A_0} \&\&\&\&\&\& {A_n} \&\& {B_1'} \& \dots \& {B_{m'-1}'} \&\& {B_{m'}'} \\
	\\
	{B_0} \&\&\&\&\&\& {A_0} \&\&\&\&\&\& {A_n} \&\& {B_1'} \& \dots \& {B_{m-1}'} \&\& {B_{m'}'} \\
	\\
	{B_0} \&\&\&\&\&\& {A_0} \&\&\&\&\&\& {A_n} \&\&\&\&\&\& {B_{m'}'} \\
	\\
	{C_0} \&\&\&\&\&\&\&\&\&\&\&\&\&\&\&\&\&\& {C_1}
	\arrow["{\pi_1}"{description}, from=1-7, to=1-9]
	\arrow["{\pi_n}"{description}, from=1-11, to=1-13]
	\arrow["{p_1}"{description}, from=13-7, to=13-9]
	\arrow["{p_n}"{description}, from=13-11, to=13-13]
	\arrow[""{name=0, anchor=center, inner sep=0}, "s"{description}, from=17-7, to=17-13]
	\arrow["{\rho_m}"{description}, from=1-5, to=1-7]
	\arrow["{\rho_1}"{description}, from=1-1, to=1-3]
	\arrow["{\rho_1'}"{description}, from=1-13, to=1-15]
	\arrow["{\rho_{m'}'}"{description}, from=1-17, to=1-19]
	\arrow[""{name=1, anchor=center, inner sep=0}, "\tau"{description}, from=11-1, to=11-19]
	\arrow["{r_1}"{description}, from=13-1, to=13-3]
	\arrow["{r_m}"{description}, from=13-5, to=13-7]
	\arrow["{r_1'}"{description}, from=13-13, to=13-15]
	\arrow["{r_{m'}'}"{description}, from=13-17, to=13-19]
	\arrow["{r_1}"{description}, from=17-1, to=17-3]
	\arrow["{r_m}"{description}, from=17-5, to=17-7]
	\arrow["{r_1'}"{description}, from=17-13, to=17-15]
	\arrow[Rightarrow, no head, from=13-19, to=17-19]
	\arrow["{r_{m'}'}"{description}, from=17-17, to=17-19]
	\arrow[""{name=2, anchor=center, inner sep=0}, "t"{description}, from=23-1, to=23-19]
	\arrow[""{name=3, anchor=center, inner sep=0}, "{p_n\bullet\dots\bullet p_1}"{description}, from=15-7, to=15-13]
	\arrow[Rightarrow, no head, from=13-7, to=15-7]
	\arrow[Rightarrow, no head, from=13-13, to=15-13]
	\arrow[Rightarrow, no head, from=13-17, to=17-17]
	\arrow[Rightarrow, no head, from=13-15, to=17-15]
	\arrow[Rightarrow, no head, from=13-5, to=17-5]
	\arrow[Rightarrow, no head, from=13-3, to=17-3]
	\arrow[Rightarrow, no head, from=13-1, to=17-1]
	\arrow[Rightarrow, no head, from=15-13, to=17-13]
	\arrow[Rightarrow, no head, from=15-7, to=17-7]
	\arrow[""{name=4, anchor=center, inner sep=0}, "{r_m\bullet\dots\bullet r_1}"{description}, from=19-1, to=19-7]
	\arrow["s"{description}, from=19-7, to=19-13]
	\arrow["{r_1'}"{description}, from=19-13, to=19-15]
	\arrow["{r_{m'}'}"{description}, from=19-17, to=19-19]
	\arrow[Rightarrow, no head, from=17-1, to=19-1]
	\arrow[Rightarrow, no head, from=17-7, to=19-7]
	\arrow[Rightarrow, no head, from=17-13, to=19-13]
	\arrow[Rightarrow, no head, from=17-15, to=19-15]
	\arrow[Rightarrow, no head, from=17-17, to=19-17]
	\arrow[Rightarrow, no head, from=17-19, to=19-19]
	\arrow["{r_m\bullet\dots\bullet r_1}"{description}, from=21-1, to=21-7]
	\arrow[""{name=5, anchor=center, inner sep=0}, "s"{description}, from=21-7, to=21-13]
	\arrow[""{name=6, anchor=center, inner sep=0}, "{r_{m'}'\bullet\dots\bullet r_1'}"{description}, from=21-13, to=21-19]
	\arrow[Rightarrow, no head, from=19-19, to=21-19]
	\arrow[Rightarrow, no head, from=19-1, to=21-1]
	\arrow["f"{description}, from=21-1, to=23-1]
	\arrow["g"{description}, from=21-19, to=23-19]
	\arrow[Rightarrow, no head, from=19-7, to=21-7]
	\arrow[Rightarrow, no head, from=19-13, to=21-13]
	\arrow["\varphi"{description}, from=1-1, to=11-1]
	\arrow["\psi"{description}, from=1-19, to=11-19]
	\arrow["\Lambda"{description}, shorten >=21pt, Rightarrow, from=1-10, to=1]
	\arrow["{-\bullet-}"{description}, shorten >=7pt, Rightarrow, from=13-10, to=3]
	\arrow["{\alpha/(-\bullet-)}"{description}, shorten <=9pt, shorten >=9pt, Rightarrow, from=3, to=0]
	\arrow["{-\bullet-}"{description}, shorten >=7pt, Rightarrow, from=17-4, to=4]
	\arrow["{-\bullet-}"{description}, shorten >=7pt, Rightarrow, from=19-16, to=6]
	\arrow["{(\beta/(-\bullet-))/(-\bullet-)}"{description}, shorten <=9pt, shorten >=9pt, Rightarrow, from=5, to=2]
\end{tikzcd}
}

Then we can rearrange the inner cells in the bottom diagram to factor $\Lambda$ in the top diagram:

\resizebox{\hsize}{!}{
\begin{tikzcd}[ampersand replacement=\&]
	{S_0} \&\& {S_1} \& \dots \& {S_{m-1}} \&\& {R_0} \&\& {R_1} \& \dots \& {R_{n-1}} \&\& {R_n} \&\& {S_1'} \& \dots \& {S_{m'-1}'} \&\& {S_{m'}'} \\
	\\
	{S_0} \&\& {S_1} \& \dots \& {S_{m-1}} \&\& {R_0} \&\&\&\&\&\& {R_n} \&\& {S_1'} \& \dots \& {S_{m'-1}'} \&\& {S_{m'}'} \\
	\\
	{S_0} \&\&\&\&\&\& {R_0} \&\&\&\&\&\& {R_n} \&\& {S_1'} \& \dots \& {S_{m'-1}'} \&\& {S_{m'}'} \\
	\\
	{S_0} \&\&\&\&\&\& {R_0} \&\&\&\&\&\& {R_n} \&\&\&\&\&\& {S_{m'}'} \\
	\\
	{S_0} \&\&\&\&\&\& {R_0} \&\&\&\&\&\& {R_n} \&\&\&\&\&\& {S_{m'}'} \\
	\\
	{T_0} \&\&\&\&\&\&\&\&\&\&\&\&\&\&\&\&\&\& {T_1} \\
	\\
	{B_0} \&\& {B_1} \& \dots \& {B_{m-1}} \&\& {A_0} \&\& {A_1} \& \dots \& {A_{n-1}} \&\& {A_n} \&\& {B_1'} \& \dots \& {B_{m'-1}'} \&\& {B_{m'}'} \\
	\\
	{B_0} \&\& {B_1} \& \dots \& {B_{m-1}} \&\& {A_0} \&\&\&\&\&\& {A_n} \&\& {B_1'} \& \dots \& {B_{m'-1}'} \&\& {B_{m'}'} \\
	\\
	{B_0} \&\&\&\&\&\& {A_0} \&\&\&\&\&\& {A_n} \&\& {B_1'} \& \dots \& {B_{m'-1}'} \&\& {B_{m'}'} \\
	\\
	{B_0} \&\&\&\&\&\& {A_0} \&\&\&\&\&\& {A_n} \&\&\&\&\&\& {B_{m'}'} \\
	\\
	{B_0} \&\&\&\&\&\& {A_0} \&\&\&\&\&\& {A_n} \&\&\&\&\&\& {B_{m'}'} \\
	\\
	{C_0} \&\&\&\&\&\&\&\&\&\&\&\&\&\&\&\&\&\& {C_1}
	\arrow["{\pi_1}"{description}, from=1-7, to=1-9]
	\arrow["{\pi_n}"{description}, from=1-11, to=1-13]
	\arrow["{p_1}"{description}, from=13-7, to=13-9]
	\arrow["{p_n}"{description}, from=13-11, to=13-13]
	\arrow["{\rho_m}"{description}, from=1-5, to=1-7]
	\arrow["{\rho_1}"{description}, from=1-1, to=1-3]
	\arrow["{\rho_1'}"{description}, from=1-13, to=1-15]
	\arrow["{\rho_{m'}'}"{description}, from=1-17, to=1-19]
	\arrow[""{name=0, anchor=center, inner sep=0}, "\tau"{description}, from=11-1, to=11-19]
	\arrow["{r_1}"{description}, from=13-1, to=13-3]
	\arrow["{r_m}"{description}, from=13-5, to=13-7]
	\arrow["{r_1'}"{description}, from=13-13, to=13-15]
	\arrow["{r_{m'}'}"{description}, from=13-17, to=13-19]
	\arrow["{r_1'}"{description}, from=17-13, to=17-15]
	\arrow["{r_{m'}'}"{description}, from=17-17, to=17-19]
	\arrow[""{name=1, anchor=center, inner sep=0}, "t"{description}, from=23-1, to=23-19]
	\arrow[""{name=2, anchor=center, inner sep=0}, "{p_n\bullet\dots\bullet p_1}"{description}, from=15-7, to=15-13]
	\arrow[Rightarrow, no head, from=13-7, to=15-7]
	\arrow[Rightarrow, no head, from=13-13, to=15-13]
	\arrow[Rightarrow, no head, from=15-13, to=17-13]
	\arrow[Rightarrow, no head, from=15-7, to=17-7]
	\arrow["{r_m\bullet\dots\bullet r_1}"{description}, from=19-1, to=19-7]
	\arrow[""{name=3, anchor=center, inner sep=0}, "{p_n\bullet\dots\bullet p_1}"{description}, from=19-7, to=19-13]
	\arrow[Rightarrow, no head, from=17-1, to=19-1]
	\arrow[Rightarrow, no head, from=17-7, to=19-7]
	\arrow[Rightarrow, no head, from=17-13, to=19-13]
	\arrow[Rightarrow, no head, from=17-19, to=19-19]
	\arrow["{r_m\bullet\dots\bullet r_1}"{description}, from=21-1, to=21-7]
	\arrow[""{name=4, anchor=center, inner sep=0}, "s"{description}, from=21-7, to=21-13]
	\arrow["{r_{m'}'\bullet\dots\bullet r_1'}"{description}, from=21-13, to=21-19]
	\arrow[Rightarrow, no head, from=19-19, to=21-19]
	\arrow[Rightarrow, no head, from=19-1, to=21-1]
	\arrow["f"{description}, from=21-1, to=23-1]
	\arrow["g"{description}, from=21-19, to=23-19]
	\arrow[Rightarrow, no head, from=19-7, to=21-7]
	\arrow[Rightarrow, no head, from=19-13, to=21-13]
	\arrow[from=15-1, to=15-3]
	\arrow[from=15-5, to=15-7]
	\arrow[from=15-13, to=15-15]
	\arrow[from=15-17, to=15-19]
	\arrow[Rightarrow, no head, from=13-19, to=15-19]
	\arrow[Rightarrow, no head, from=15-19, to=17-19]
	\arrow[Rightarrow, no head, from=13-17, to=15-17]
	\arrow[Rightarrow, no head, from=15-17, to=17-17]
	\arrow[Rightarrow, no head, from=13-15, to=15-15]
	\arrow[Rightarrow, no head, from=15-15, to=17-15]
	\arrow[Rightarrow, no head, from=13-5, to=15-5]
	\arrow[Rightarrow, no head, from=13-3, to=15-3]
	\arrow[Rightarrow, no head, from=13-1, to=15-1]
	\arrow[Rightarrow, no head, from=15-1, to=17-1]
	\arrow[Rightarrow, no head, from=1-1, to=3-1]
	\arrow[Rightarrow, no head, from=1-3, to=3-3]
	\arrow[Rightarrow, no head, from=1-5, to=3-5]
	\arrow[color={rgb,255:red,5;green,87;blue,240}, Rightarrow, no head, from=1-7, to=3-7]
	\arrow[color={rgb,255:red,5;green,87;blue,240}, Rightarrow, no head, from=1-13, to=3-13]
	\arrow[Rightarrow, no head, from=1-15, to=3-15]
	\arrow[Rightarrow, no head, from=1-17, to=3-17]
	\arrow[Rightarrow, no head, from=1-19, to=3-19]
	\arrow[""{name=5, anchor=center, inner sep=0}, "{\pi_n\bullet\dots\bullet\pi_1}"{description}, color={rgb,255:red,5;green,87;blue,240}, from=3-7, to=3-13]
	\arrow["{\rho_1}"{description}, from=3-1, to=3-3]
	\arrow["{\rho_m}"{description}, from=3-5, to=3-7]
	\arrow["{\rho_1'}"{description}, from=3-13, to=3-15]
	\arrow["{p_n\bullet\dots\bullet p_1}"{description}, from=17-7, to=17-13]
	\arrow[""{name=6, anchor=center, inner sep=0}, from=17-1, to=17-7]
	\arrow[""{name=7, anchor=center, inner sep=0}, from=19-13, to=19-19]
	\arrow[""{name=8, anchor=center, inner sep=0}, "{\rho_m\bullet\dots\bullet\rho_1}"{description}, color={rgb,255:red,5;green,87;blue,240}, from=5-1, to=5-7]
	\arrow["{\rho_1'}"{description}, from=5-13, to=5-15]
	\arrow["{\rho_{m'}'}"{description}, from=5-17, to=5-19]
	\arrow[color={rgb,255:red,5;green,87;blue,240}, Rightarrow, no head, from=3-7, to=5-7]
	\arrow["{\pi_n\bullet\dots\bullet\pi_1}"{description}, from=5-7, to=5-13]
	\arrow[color={rgb,255:red,5;green,87;blue,240}, Rightarrow, no head, from=3-1, to=5-1]
	\arrow[Rightarrow, no head, from=3-13, to=5-13]
	\arrow[Rightarrow, no head, from=3-15, to=5-15]
	\arrow[Rightarrow, no head, from=3-17, to=5-17]
	\arrow["{\rho_{m'}'}"{description}, from=3-17, to=3-19]
	\arrow[Rightarrow, no head, from=3-19, to=5-19]
	\arrow["{\rho_m\bullet\dots\bullet\rho_1}"{description}, from=7-1, to=7-7]
	\arrow[""{name=9, anchor=center, inner sep=0}, "{\pi_n\bullet\dots\bullet\pi_1}"{description}, color={rgb,255:red,42;green,178;blue,55}, from=7-7, to=7-13]
	\arrow[""{name=10, anchor=center, inner sep=0}, "{\rho_{m'}'\bullet\dots\bullet\rho_1'}"{description}, color={rgb,255:red,5;green,87;blue,240}, from=7-13, to=7-19]
	\arrow[color={rgb,255:red,5;green,87;blue,240}, Rightarrow, no head, from=5-19, to=7-19]
	\arrow[color={rgb,255:red,5;green,87;blue,240}, Rightarrow, no head, from=5-13, to=7-13]
	\arrow[Rightarrow, no head, from=5-7, to=7-7]
	\arrow[Rightarrow, no head, from=5-1, to=7-1]
	\arrow["{\rho_m\bullet\dots\bullet\rho_1}"{description}, from=9-1, to=9-7]
	\arrow[""{name=11, anchor=center, inner sep=0}, "{\pushone{\alpha/(-\bullet-)}{\pi_n\bullet\dots\bullet\pi_1}}"{description}, color={rgb,255:red,42;green,178;blue,55}, from=9-7, to=9-13]
	\arrow["{\rho_{m'}'\bullet\dots\rho_1'}"{description}, from=9-13, to=9-19]
	\arrow[Rightarrow, no head, from=7-1, to=9-1]
	\arrow[color={rgb,255:red,42;green,178;blue,55}, Rightarrow, no head, from=7-7, to=9-7]
	\arrow[color={rgb,255:red,42;green,178;blue,55}, Rightarrow, no head, from=7-13, to=9-13]
	\arrow[Rightarrow, no head, from=7-19, to=9-19]
	\arrow["\varphi"{description}, from=9-1, to=11-1]
	\arrow["\psi"{description}, from=9-19, to=11-19]
	\arrow["{-\bullet-}"{description}, shorten >=7pt, Rightarrow, from=13-10, to=2]
	\arrow["{(\beta/(-\bullet-))/(-\bullet-)}"{description}, shorten <=9pt, shorten >=9pt, Rightarrow, from=4, to=1]
	\arrow["{-\bullet-}"{description}, color={rgb,255:red,5;green,87;blue,240}, shorten >=7pt, Rightarrow, from=1-10, to=5]
	\arrow["{-\bullet-}"{description}, shorten >=7pt, Rightarrow, from=15-4, to=6]
	\arrow["{-\bullet-}"{description}, shorten >=7pt, Rightarrow, from=17-16, to=7]
	\arrow["{\alpha/(-\bullet-)}"{description}, shorten <=9pt, shorten >=9pt, Rightarrow, from=3, to=4]
	\arrow["{-\bullet-}"{description}, color={rgb,255:red,5;green,87;blue,240}, shorten >=7pt, Rightarrow, from=3-4, to=8]
	\arrow["{-\bullet-}"{description}, color={rgb,255:red,5;green,87;blue,240}, shorten >=7pt, Rightarrow, from=5-16, to=10]
	\arrow["A"{description}, shorten <=9pt, shorten >=9pt, Rightarrow, from=9, to=11]
	\arrow["{(\Lambda/(-\bullet-)^3)/A}"{description}, shorten <=9pt, shorten >=9pt, Rightarrow, from=11, to=0]
\end{tikzcd}
}

where the blue lifting comes from the fact that $-\bullet-$ is an opcartesian lifting and the green lifting comes from our assumption.

Then we can rewrite the diagrams to get a cell into $\pushone{\alpha/(-\bullet-)}{\pi_n\bullet\dots\bullet\pi_1}$ lying over $\alpha$ followed by a cell lying over $\beta$:

\resizebox{\hsize}{!}{
\begin{tikzcd}[ampersand replacement=\&]
	{S_0} \&\& {S_1} \& \dots \& {S_{m-1}} \&\& {R_0} \&\& {R_1} \& \dots \& {R_{n-1}} \&\& {R_n} \&\& {S_1'} \& \dots \& {S_{m'-1}'} \&\& {S_{m'}'} \\
	\\
	{S_0} \&\& {S_1} \&\& {S_{m-1}} \&\& {R_0} \&\&\&\&\&\& {R_n} \&\& {S_1'} \& \dots \& {S_{m'-1}'} \&\& {S_{m'}'} \\
	\\
	{S_0} \&\& {S_1} \& \dots \& {S_{m-1}} \&\& {R_0} \&\&\&\&\&\& {R_n} \&\& {S_1'} \& \dots \& {S_{m'-1}'} \&\& {S_{m'}'} \\
	\\
	{S_0} \&\&\&\&\&\& {R_0} \&\&\&\&\&\& {R_n} \&\& {S_1'} \& \dots \& {S_{m'-1}'} \&\& {S_{m'}'} \\
	\\
	{S_0} \&\&\&\&\&\& {R_0} \&\&\&\&\&\& {R_n} \&\&\&\&\&\& {S_{m'}'} \\
	\\
	{T_0} \&\&\&\&\&\&\&\&\&\&\&\&\&\&\&\&\&\& {T_1} \\
	\\
	{B_0} \&\& {B_1} \& \dots \& {B_{m-1}} \&\& {A_0} \&\& {A_1} \& \dots \& {A_{n-1}} \&\& {A_n} \&\& {B_1'} \& \dots \& {B_{m'-1}'} \&\& {B_{m'}'} \\
	\\
	{B_0} \&\& {B_1} \&\& {B_{m-1}} \&\& {A_0} \&\&\&\&\&\& {A_n} \&\& {B_1'} \& \dots \& {B_{m'-1}'} \&\& {B_{m'}'} \\
	\\
	{B_0} \&\& {B_1} \& \dots \& {B_{m-1}} \&\& {A_0} \&\&\&\&\&\& {A_n} \&\& {B_1'} \&\& {B_{m'-1}'} \&\& {B_{m'}'} \\
	\\
	\\
	\\
	\\
	\\
	{C_0} \&\&\&\&\&\&\&\&\&\&\&\&\&\&\&\&\&\& {C_1}
	\arrow["{\pi_1}"{description}, from=1-7, to=1-9]
	\arrow["{\pi_n}"{description}, from=1-11, to=1-13]
	\arrow["{p_1}"{description}, from=13-7, to=13-9]
	\arrow["{p_n}"{description}, from=13-11, to=13-13]
	\arrow["{\rho_m}"{description}, from=1-5, to=1-7]
	\arrow["{\rho_1}"{description}, from=1-1, to=1-3]
	\arrow["{\rho_1'}"{description}, from=1-13, to=1-15]
	\arrow["{\rho_{m'}'}"{description}, from=1-17, to=1-19]
	\arrow[""{name=0, anchor=center, inner sep=0}, "\tau"{description}, from=11-1, to=11-19]
	\arrow["{r_1}"{description}, from=13-1, to=13-3]
	\arrow["{r_m}"{description}, from=13-5, to=13-7]
	\arrow["{r_1'}"{description}, from=13-13, to=13-15]
	\arrow["{r_{m'}'}"{description}, from=13-17, to=13-19]
	\arrow["{r_1'}"{description}, from=17-13, to=17-15]
	\arrow["{r_{m'}'}"{description}, from=17-17, to=17-19]
	\arrow[""{name=1, anchor=center, inner sep=0}, "t"{description}, from=23-1, to=23-19]
	\arrow[Rightarrow, no head, from=13-7, to=15-7]
	\arrow[Rightarrow, no head, from=13-13, to=15-13]
	\arrow[Rightarrow, no head, from=15-13, to=17-13]
	\arrow[Rightarrow, no head, from=15-7, to=17-7]
	\arrow[from=15-1, to=15-3]
	\arrow[from=15-5, to=15-7]
	\arrow[from=15-13, to=15-15]
	\arrow[from=15-17, to=15-19]
	\arrow[Rightarrow, no head, from=13-19, to=15-19]
	\arrow[Rightarrow, no head, from=15-19, to=17-19]
	\arrow[Rightarrow, no head, from=13-17, to=15-17]
	\arrow[Rightarrow, no head, from=15-17, to=17-17]
	\arrow[Rightarrow, no head, from=13-15, to=15-15]
	\arrow[Rightarrow, no head, from=15-15, to=17-15]
	\arrow[Rightarrow, no head, from=13-5, to=15-5]
	\arrow[Rightarrow, no head, from=13-3, to=15-3]
	\arrow[Rightarrow, no head, from=13-1, to=15-1]
	\arrow[Rightarrow, no head, from=15-1, to=17-1]
	\arrow[Rightarrow, no head, from=1-1, to=3-1]
	\arrow[Rightarrow, no head, from=1-3, to=3-3]
	\arrow[Rightarrow, no head, from=1-5, to=3-5]
	\arrow[color={rgb,255:red,5;green,87;blue,240}, Rightarrow, no head, from=1-7, to=3-7]
	\arrow[color={rgb,255:red,5;green,87;blue,240}, Rightarrow, no head, from=1-13, to=3-13]
	\arrow[Rightarrow, no head, from=1-15, to=3-15]
	\arrow[Rightarrow, no head, from=1-17, to=3-17]
	\arrow[Rightarrow, no head, from=1-19, to=3-19]
	\arrow[""{name=2, anchor=center, inner sep=0}, "{\pi_n\bullet\dots\bullet\pi_1}"{description}, color={rgb,255:red,5;green,87;blue,240}, from=3-7, to=3-13]
	\arrow["{\rho_1}"{description}, from=3-1, to=3-3]
	\arrow["{\rho_m}"{description}, from=3-5, to=3-7]
	\arrow["{\rho_1'}"{description}, from=3-13, to=3-15]
	\arrow[""{name=3, anchor=center, inner sep=0}, "s"{description}, from=17-7, to=17-13]
	\arrow["{\rho_1'}"{description}, from=5-13, to=5-15]
	\arrow["{\rho_{m'}'}"{description}, from=5-17, to=5-19]
	\arrow[color={rgb,255:red,42;green,178;blue,55}, Rightarrow, no head, from=3-7, to=5-7]
	\arrow[Rightarrow, no head, from=3-1, to=5-1]
	\arrow[color={rgb,255:red,42;green,178;blue,55}, Rightarrow, no head, from=3-13, to=5-13]
	\arrow[Rightarrow, no head, from=3-15, to=5-15]
	\arrow[Rightarrow, no head, from=3-17, to=5-17]
	\arrow["{\rho_{m'}'}"{description}, from=3-17, to=3-19]
	\arrow[Rightarrow, no head, from=3-19, to=5-19]
	\arrow[""{name=4, anchor=center, inner sep=0}, "{\rho_m\bullet\dots\bullet\rho_1}"{description}, color={rgb,255:red,5;green,87;blue,240}, from=7-1, to=7-7]
	\arrow[Rightarrow, no head, from=5-19, to=7-19]
	\arrow[Rightarrow, no head, from=5-13, to=7-13]
	\arrow[color={rgb,255:red,5;green,87;blue,240}, Rightarrow, no head, from=5-7, to=7-7]
	\arrow[color={rgb,255:red,5;green,87;blue,240}, Rightarrow, no head, from=5-1, to=7-1]
	\arrow["{\rho_m\bullet\dots\bullet\rho_1}"{description}, from=9-1, to=9-7]
	\arrow[""{name=5, anchor=center, inner sep=0}, "{\rho_{m'}'\bullet\dots\rho_1'}"{description}, color={rgb,255:red,5;green,87;blue,240}, from=9-13, to=9-19]
	\arrow[Rightarrow, no head, from=7-1, to=9-1]
	\arrow[color={rgb,255:red,5;green,87;blue,240}, Rightarrow, no head, from=7-19, to=9-19]
	\arrow["\varphi"{description}, from=9-1, to=11-1]
	\arrow["\psi"{description}, from=9-19, to=11-19]
	\arrow[from=17-1, to=17-3]
	\arrow[from=17-5, to=17-7]
	\arrow[Rightarrow, no head, from=15-5, to=17-5]
	\arrow[no head, from=15-3, to=17-3]
	\arrow["f"{description}, from=17-1, to=23-1]
	\arrow["g"{description}, from=17-19, to=23-19]
	\arrow[color={rgb,255:red,5;green,87;blue,240}, Rightarrow, no head, from=7-13, to=9-13]
	\arrow[Rightarrow, no head, from=7-7, to=9-7]
	\arrow[""{name=6, anchor=center, inner sep=0}, "{\pushone{\alpha/(-\bullet-)}{\pi_n\bullet\dots\bullet\pi_1}}"{description}, from=9-7, to=9-13]
	\arrow[""{name=7, anchor=center, inner sep=0}, "{\pushone{\alpha/(-\bullet-)}{\pi_n\bullet\dots\bullet\pi_1}}"{description}, color={rgb,255:red,42;green,178;blue,55}, from=5-7, to=5-13]
	\arrow["{\pushone{\alpha/(-\bullet-)}{\pi_n\bullet\dots\bullet\pi_1}}"{description}, from=7-7, to=7-13]
	\arrow[Rightarrow, no head, from=5-15, to=7-15]
	\arrow["{\rho_1'}"{description}, from=7-13, to=7-15]
	\arrow[Rightarrow, no head, from=5-17, to=7-17]
	\arrow["{\rho_{m'}'}"{description}, from=7-17, to=7-19]
	\arrow[Rightarrow, no head, from=3-3, to=5-3]
	\arrow["{\rho_1}"{description}, from=5-1, to=5-3]
	\arrow[Rightarrow, no head, from=3-5, to=5-5]
	\arrow["{\rho_m}"{description}, from=5-5, to=5-7]
	\arrow["{-\bullet-}"{description}, color={rgb,255:red,5;green,87;blue,240}, shorten >=7pt, Rightarrow, from=1-10, to=2]
	\arrow["\beta"{description}, shorten <=26pt, shorten >=26pt, Rightarrow, from=3, to=1]
	\arrow["\alpha"{description}, shorten >=16pt, Rightarrow, from=13-10, to=3]
	\arrow["{(\Lambda/(-\bullet-)^3)/A}"{description}, shorten <=9pt, shorten >=9pt, Rightarrow, from=6, to=0]
	\arrow["A"{description}, color={rgb,255:red,42;green,178;blue,55}, shorten <=9pt, shorten >=9pt, Rightarrow, from=2, to=7]
	\arrow["{-\bullet-}"{description}, color={rgb,255:red,5;green,87;blue,240}, shorten >=7pt, Rightarrow, from=7-16, to=5]
	\arrow["{-\bullet-}"{description}, color={rgb,255:red,5;green,87;blue,240}, shorten >=7pt, Rightarrow, from=5-4, to=4]
\end{tikzcd}
}

Furthermore, it is unique by uniqueness of the factorisations we used.
This shows that $\mathrm{A}(-\bullet-)$ is opcartesian.

\end{proof}

From the notion of pushfibration of vdcs we can derive a notion of pushfibration of double categories and of 2-pushfibration of bicategories.

\begin{definition}
Given a strict functor of double categories $p \colon \vdE \to \vdB$, a cell $\mathrm{A}$

\begin{tikzcd}[ampersand replacement=\&]
	{R_0} \&\& {R_1} \\
	\&\&\& \vdE \\
	{R_0} \&\& {R_1} \\
	{A_0} \&\& {A_1} \\
	\&\&\& \vdB \\
	{A_0} \&\& {A_1}
	\arrow[""{name=0, anchor=center, inner sep=0}, "\pi"{description}, from=1-1, to=1-3]
	\arrow[""{name=1, anchor=center, inner sep=0}, "\sigma"{description}, from=3-1, to=3-3]
	\arrow[Rightarrow, no head, from=1-1, to=3-1]
	\arrow[Rightarrow, no head, from=1-3, to=3-3]
	\arrow[""{name=2, anchor=center, inner sep=0}, "p"{description}, from=4-1, to=4-3]
	\arrow[""{name=3, anchor=center, inner sep=0}, "s"{description}, from=6-1, to=6-3]
	\arrow[Rightarrow, no head, from=4-1, to=6-1]
	\arrow[Rightarrow, no head, from=4-3, to=6-3]
	\arrow["p"{description}, from=2-4, to=5-4]
	\arrow["{\mathrm{A}}"{description}, shorten <=9pt, shorten >=9pt, Rightarrow, from=0, to=1]
	\arrow["\alpha"{description}, shorten <=9pt, shorten >=9pt, Rightarrow, from=2, to=3]
\end{tikzcd}

is \defin{opcartesian} if for any cell $\Lambda$ lying over a composite

\begin{tikzcd}[ampersand replacement=\&]
	{S_0} \&\& {R_0} \&\& {R_1} \&\& {S_1} \\
	\\
	\&\&\&\&\&\&\& \vdE \\
	\\
	{T_0} \&\&\&\&\&\& {T_1} \\
	\\
	{B_0} \&\& {A_0} \&\& {A_1} \&\& {B_1} \\
	\\
	{B_0} \&\& {A_0} \&\& {A_1} \&\& {B_1} \& \vdB \\
	\\
	{C_0} \&\&\&\&\&\& {C_1}
	\arrow[""{name=0, anchor=center, inner sep=0}, "p"{description}, from=7-3, to=7-5]
	\arrow[""{name=1, anchor=center, inner sep=0}, "s"{description}, from=9-3, to=9-5]
	\arrow[Rightarrow, no head, from=7-3, to=9-3]
	\arrow[Rightarrow, no head, from=7-5, to=9-5]
	\arrow["p"{description}, from=3-8, to=9-8]
	\arrow["{r_1}"{description}, from=7-5, to=7-7]
	\arrow["{r_0}"{description}, from=7-1, to=7-3]
	\arrow[from=9-1, to=9-3]
	\arrow[from=9-5, to=9-7]
	\arrow[Rightarrow, no head, from=7-1, to=9-1]
	\arrow[Rightarrow, no head, from=7-7, to=9-7]
	\arrow[""{name=2, anchor=center, inner sep=0}, "t"{description}, from=11-1, to=11-7]
	\arrow["f"{description}, from=9-1, to=11-1]
	\arrow["g"{description}, from=9-7, to=11-7]
	\arrow[""{name=3, anchor=center, inner sep=0}, "\pi"{description}, from=1-3, to=1-5]
	\arrow["{\rho_0}"{description}, from=1-1, to=1-3]
	\arrow["{\rho_1}"{description}, from=1-5, to=1-7]
	\arrow[""{name=4, anchor=center, inner sep=0}, "\tau"{description}, from=5-1, to=5-7]
	\arrow["\varphi"{description}, from=1-1, to=5-1]
	\arrow["\psi"{description}, from=1-7, to=5-7]
	\arrow["\alpha"{description}, shorten <=9pt, shorten >=9pt, Rightarrow, from=0, to=1]
	\arrow["\beta"{description}, shorten <=9pt, shorten >=9pt, Rightarrow, from=1, to=2]
	\arrow["\Lambda"{description}, shorten <=17pt, shorten >=17pt, Rightarrow, from=3, to=4]
\end{tikzcd}

there is a unique factorisation through $\mathrm{A}$

\begin{tikzcd}[ampersand replacement=\&]
	{S_0} \&\& {R_0} \&\& {R_1} \&\& {S_1} \\
	\\
	{S_0} \&\& {R_0} \&\& {R_1} \&\& {S_1} \& \vdE \\
	\\
	{T_0} \&\&\&\&\&\& {T_1} \\
	\\
	{B_0} \&\& {A_0} \&\& {A_1} \&\& {B_1} \\
	\\
	{B_0} \&\& {A_0} \&\& {A_1} \&\& {B_1} \& \vdB \\
	\\
	{C_0} \&\&\&\&\&\& {C_1}
	\arrow[""{name=0, anchor=center, inner sep=0}, "p"{description}, from=7-3, to=7-5]
	\arrow[""{name=1, anchor=center, inner sep=0}, "s"{description}, from=9-3, to=9-5]
	\arrow[Rightarrow, no head, from=7-3, to=9-3]
	\arrow[Rightarrow, no head, from=7-5, to=9-5]
	\arrow["p"{description}, from=3-8, to=9-8]
	\arrow["{r_1}"{description}, from=7-5, to=7-7]
	\arrow["{r_0}"{description}, from=7-1, to=7-3]
	\arrow[from=9-1, to=9-3]
	\arrow[from=9-5, to=9-7]
	\arrow[Rightarrow, no head, from=7-1, to=9-1]
	\arrow[Rightarrow, no head, from=7-7, to=9-7]
	\arrow[""{name=2, anchor=center, inner sep=0}, "t"{description}, from=11-1, to=11-7]
	\arrow["f"{description}, from=9-1, to=11-1]
	\arrow["g"{description}, from=9-7, to=11-7]
	\arrow[""{name=3, anchor=center, inner sep=0}, "\pi"{description}, from=1-3, to=1-5]
	\arrow["{\rho_0}"{description}, from=1-1, to=1-3]
	\arrow["{\rho_1}"{description}, from=1-5, to=1-7]
	\arrow[""{name=4, anchor=center, inner sep=0}, "\tau"{description}, from=5-1, to=5-7]
	\arrow["\varphi"{description}, from=3-1, to=5-1]
	\arrow["\psi"{description}, from=3-7, to=5-7]
	\arrow["{\rho_0}"{description}, from=3-1, to=3-3]
	\arrow[""{name=5, anchor=center, inner sep=0}, "\sigma"{description}, from=3-3, to=3-5]
	\arrow["{\rho_1}"{description}, from=3-5, to=3-7]
	\arrow[Rightarrow, no head, from=1-1, to=3-1]
	\arrow[Rightarrow, no head, from=1-3, to=3-3]
	\arrow[Rightarrow, no head, from=1-5, to=3-5]
	\arrow[Rightarrow, no head, from=1-7, to=3-7]
	\arrow["\alpha"{description}, shorten <=9pt, shorten >=9pt, Rightarrow, from=0, to=1]
	\arrow["\beta"{description}, shorten <=9pt, shorten >=9pt, Rightarrow, from=1, to=2]
	\arrow["{\mathrm{A}}"{description}, shorten <=9pt, shorten >=9pt, Rightarrow, from=3, to=5]
	\arrow["{\Lambda/\mathrm{A}}"{description}, shorten <=9pt, shorten >=9pt, Rightarrow, dashed, from=5, to=4]
\end{tikzcd}
\end{definition}

\begin{definition}
A \defin{pushfibration} of double categories is a functor of double categories $p \colon \vdE \to \vdB$ that is strict and such that for any horizontal morphism $\pi$ in \vdE and any cell $\alpha$ in \vdB with domain $p(\pi)$ (and vertical morphisms identities) there is an opcartesian cell $\pi \Rightarrow \pushone{\alpha}{\pi}$.
\end{definition}

Finally, by taking the vertical category to be discrete we get the notion of 2-pushfibration of bicategories.

\begin{definition}
Given a strict functor $p \colon \cE \to \cB$ of bicategories, a 2-morphism $\mathrm{A} \colon \varphi \Rightarrow \psi$ is opcartesian if any $\Lambda \colon \rho_1 \circ \varphi \circ \rho_0 \Rightarrow \tau$ lying over a factorisation by the image of $\varphi$ factors through it:

\begin{tikzcd}[ampersand replacement=\&]
	{S_0} \&\& {R_0} \&\&\& {R_1} \&\& {S_1} \& \cE \\
	\\
	\\
	{B_0} \&\& {A_0} \&\&\& {A_1} \&\& {B_1} \& \cB
	\arrow[""{name=0, anchor=center, inner sep=0}, "g"{description}, from=4-3, to=4-6]
	\arrow["p"{description}, from=1-9, to=4-9]
	\arrow["{r_0}"{description}, from=4-1, to=4-3]
	\arrow["{r_1}"{description}, from=4-6, to=4-8]
	\arrow[""{name=1, anchor=center, inner sep=0}, "\psi"{description}, from=1-3, to=1-6]
	\arrow["{\rho_1}"{description}, from=1-6, to=1-8]
	\arrow[""{name=2, anchor=center, inner sep=0}, "\varphi"{description}, curve={height=-30pt}, from=1-3, to=1-6]
	\arrow["{\rho_0}"{description}, from=1-1, to=1-3]
	\arrow[""{name=3, anchor=center, inner sep=0}, "\tau"{description}, curve={height=30pt}, from=1-1, to=1-8]
	\arrow[""{name=4, anchor=center, inner sep=0}, "f"{description}, curve={height=-30pt}, from=4-3, to=4-6]
	\arrow[""{name=5, anchor=center, inner sep=0}, "t"{description}, curve={height=30pt}, from=4-1, to=4-8]
	\arrow["{\mathrm{A}}", shorten <=4pt, shorten >=4pt, Rightarrow, from=2, to=1]
	\arrow["{\Lambda/\mathrm{A}}", shorten <=4pt, shorten >=4pt, Rightarrow, dashed, from=1, to=3]
	\arrow["\alpha", shorten <=4pt, shorten >=4pt, Rightarrow, from=4, to=0]
	\arrow["\beta", shorten <=4pt, shorten >=4pt, Rightarrow, from=0, to=5]
\end{tikzcd}

\end{definition}

\begin{definition}
A \defin{2-pushfibration} of bicategories is a strict 2-functor $p \colon \cE \to \cB$ such that for any morphism $\varphi$ in \cE and any 2-morphism $\alpha \colon p(\varphi) \Rightarrow g$ in \cB there is an opcartesian 2-morphism $\varphi \Rightarrow \pushone{\alpha}{\varphi}$ lying over $\alpha$.
\end{definition}

\begin{prop}
In a 2-pushfibration, pushforwards preserve composition, i.e.
\begin{itemize}
\item for any $\varphi_1 \colon R_0 \to R_1$, $\varphi_2 \colon R_1 \to R_2$, $\alpha_1 \colon p(\varphi_1) \Rightarrow g_1$, and $\alpha_2 \colon p(\varphi_2) \Rightarrow g_2$, we have \[\pushone{\alpha_2 \bullet \alpha_1}{\varphi_2 \bullet \varphi_1} \simeq \pushone{\alpha_2}{\varphi_2} \circ \pushone{\alpha_1}{\varphi_1}\]
\item for any $\varphi \colon R_0 \to R_1$, $\alpha \colon p(\varphi) \Rightarrow g$, and $\beta \colon g \Rightarrow h$, we have \[\pushone{\beta \circ \alpha}{\varphi} \simeq \pushone{\beta}{\pushone{\alpha}{\varphi}}\]
\end{itemize}
\end{prop}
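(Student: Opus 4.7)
The plan is to reduce both claims to a single key lemma: in a 2-pushfibration, the horizontal composite of opcartesian 2-morphisms (over a horizontal composite of 2-morphisms in the base) is opcartesian, and similarly the vertical composite of opcartesian 2-morphisms (over a vertical composite in the base) is opcartesian. Once this lemma is available, the two isomorphisms follow immediately from uniqueness of pushforwards up to unique invertible 2-morphism: we exhibit $\pushone{\alpha_2}{\varphi_2} \circ \pushone{\alpha_1}{\varphi_1}$ (resp.~$\pushone{\beta}{\pushone{\alpha}{\varphi}}$) as the codomain of an opcartesian 2-morphism lying over the corresponding composite, so it must agree with the ``official'' pushforward up to canonical iso.

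First I would handle the vertical case, which is the simpler of the two. Let $\mathrm{A}\colon \varphi\Rightarrow \pushone{\alpha}{\varphi}$ be opcartesian over $\alpha$ and $\mathrm{B}\colon \pushone{\alpha}{\varphi}\Rightarrow \pushone{\beta}{\pushone{\alpha}{\varphi}}$ be opcartesian over $\beta$. Their vertical composite lies over $\beta\circ\alpha$ by strictness of $p$. To see it is opcartesian, given any $\Lambda$ lying over $\rho_1\circ(\beta\circ\alpha)\circ\rho_0$ followed by some $\gamma$, apply associativity in the base to rebracket as $\rho_1\circ\beta\circ(\alpha\circ\rho_0)$ composed with $\gamma$, factor through $\mathrm{B}$ first, then through $\mathrm{A}$. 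Uniqueness of the resulting factorisation follows from uniqueness at each step. This mirrors the proof of Proposition on compositionality of opcartesian cells in vdcs and is essentially routine.

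Next I would handle the horizontal case, which is where the actual work lies. Given opcartesian $\mathrm{A}_i\colon\varphi_i\Rightarrow\pushone{\alpha_i}{\varphi_i}$ over $\alpha_i$ for $i=1,2$, one forms the horizontal composite $\mathrm{A}_2\bullet\mathrm{A}_1\colon \varphi_2\circ\varphi_1\Rightarrow \pushone{\alpha_2}{\varphi_2}\circ\pushone{\alpha_1}{\varphi_1}$, which lies over $\alpha_2\bullet\alpha_1$ by strictness of $p$. To verify opcartesianness, I would take an arbitrary $\Lambda\colon \rho_1\circ(\varphi_2\circ\varphi_1)\circ\rho_0\Rightarrow \tau$ lying over $\beta\circ(\alpha_2\bullet\alpha_1)\circ\eta$, use the interchange law to rewrite $\alpha_2\bullet\alpha_1$ as $(\id\circ\alpha_2)\bullet(\alpha_1\circ\id)$, and then factorise successively: first through the whiskered $\mathrm{A}_1$ (using its opcartesianness with suitable horizontal whiskers on both sides), then through the whiskered $\mathrm{A}_2$. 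Each factorisation is unique, so the whole factorisation is.

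The main obstacle will be bookkeeping the horizontal case rigorously. The opcartesianness of $\mathrm{A}_i$ only gives factorisations for 2-morphisms with domain of the shape $\rho'\circ\varphi_i\circ\rho$, and to apply it inside a composite $\varphi_2\circ\varphi_1$ one must whisker $\mathrm{A}_1$ with $\varphi_2$ and $\mathrm{A}_2$ with $\pushone{\alpha_1}{\varphi_1}$ (not with $\varphi_1$!), and invoke strict 2-functoriality of $p$ to ensure these whiskered cells still lie over the expected pieces of the base factorisation. A cleaner route, which I would likely adopt in the final write-up, is to observe that the statement and its proof are formally parallel to the compositionality of opcartesian cells in vdcs already established earlier in the thesis, once one regards the bicategory $p\colon\cE\to\cB$ as a strict functor of (representable) virtual double categories with discrete vertical category: the vertical and horizontal composites of opcartesian cells then become the two instances of cell composition in a vdc, and the isomorphisms of the proposition become the two coherence isos from the unicity-of-composites proposition.
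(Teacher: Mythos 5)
Your proposal is correct, and the route you settle on in your final paragraph — viewing $p$ as a strict functor of (representable) virtual double categories and invoking the already-established facts that opcartesian cells compose and that bicategorical composition is given by the universal (composition) cells of the underlying vdc — is exactly the paper's proof, which states precisely this in one sentence. The explicit whiskering argument you sketch first is a valid hands-on alternative, but the paper does not carry it out.
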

\begin{proof}
We use that opcartesian cells compose in a vdc together with the fact that composition of morphisms in the bicategory is given by a pushforward in the underlying vdc.
\end{proof}

\section{Pullback in the category of vdcs}

Let $\mathbf{VDC}$ denote the category of virtual double categories and functors between them.

Consider two functors $F \colon \vdA \to \vdC$ and $P \colon \vdB \to \vdC$.
We will write $\vdA \times_{\vdC} \vdB$ for the virtual double category consisting of pairs of objects, morphisms and cells in $\vdA$ and $\vdB$ whose image coincide.
We will write diagrams in \vdB that are sent to ones in \vdC has lying over them and those in \vdA sent to ones in \vdC as lying on their left:

\resizebox{\hsize}{!}{
\begin{tikzcd}[ampersand replacement=\&]
	\&\&\&\&\&\&\&\&\& {B_0} \&\& {B_1} \& \dots \& {B_{n-1}} \&\& {B_n} \\
	\&\&\&\&\&\&\&\&\&\&\&\&\&\&\&\&\& \vdB \\
	\&\&\&\&\&\&\&\&\& {B_0'} \&\&\&\&\&\& {B_1'} \\
	\\
	{A_0} \&\& {A_1} \& \dots \& {A_{n-1}} \&\& {A_n} \&\&\& {C_0} \&\& {C_1} \& \dots \& {C_{n-1}} \&\& {C_n} \\
	\&\&\&\&\&\&\&\&\&\&\&\&\&\&\&\&\& \vdC \\
	{A_0'} \&\&\&\&\&\& {A_1'} \&\&\& {C_0'} \&\&\&\&\&\& {C_1'} \\
	\&\&\& \vdA \&\&\&\&\&\&\&\&\& \vdC
	\arrow["{p_1}"{description}, from=5-1, to=5-3]
	\arrow["{p_n}"{description}, from=5-5, to=5-7]
	\arrow["{f_0}"{description}, from=5-1, to=7-1]
	\arrow["{f_1}"{description}, from=5-7, to=7-7]
	\arrow[""{name=0, anchor=center, inner sep=0}, "{p'}"{description}, from=7-1, to=7-7]
	\arrow["{r_1}"{description}, from=5-10, to=5-12]
	\arrow["{r_n}"{description}, from=5-14, to=5-16]
	\arrow[""{name=1, anchor=center, inner sep=0}, "{r'}"{description}, from=7-10, to=7-16]
	\arrow["{h_0}"{description}, from=5-10, to=7-10]
	\arrow["{h_1}"{description}, from=5-16, to=7-16]
	\arrow["{q_1}"{description}, from=1-10, to=1-12]
	\arrow["{q_n}"{description}, from=1-14, to=1-16]
	\arrow[""{name=2, anchor=center, inner sep=0}, "{q'}"{description}, from=3-10, to=3-16]
	\arrow["{g_0}"{description}, from=1-10, to=3-10]
	\arrow["{g_1}"{description}, from=1-16, to=3-16]
	\arrow["P"{description}, from=2-18, to=6-18]
	\arrow["F"{description}, from=8-4, to=8-13]
	\arrow["\alpha"{description}, shorten >=7pt, Rightarrow, from=5-4, to=0]
	\arrow["\gamma"{description}, shorten >=7pt, Rightarrow, from=5-13, to=1]
	\arrow["\beta"{description}, shorten >=7pt, Rightarrow, from=1-13, to=2]
\end{tikzcd}
}

\begin{prop}
The following diagram is a pullback, where the functors out of $\vdA \times_\vdC \vdB$ are the obvious projections.
\[
\begin{tikzcd}[ampersand replacement=\&]
	{\vdA \times_\vdC \vdB} \&\& \vdB \\
	\\
	\vdA \&\& \vdC
	\arrow["F"{description}, from=3-1, to=3-3]
	\arrow["P"{description}, from=1-3, to=3-3]
	\arrow[from=1-1, to=3-1]
	\arrow[from=1-1, to=1-3]
	\arrow["\lrcorner"{anchor=center, pos=0.125}, draw=none, from=1-1, to=3-3]
\end{tikzcd}
\]
\end{prop}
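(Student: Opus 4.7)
The plan is to verify the universal property of the pullback by an explicit construction, following the standard pattern for pullbacks in categories of structured objects. First I would make the pullback vdc $\vdA \times_\vdC \vdB$ precise: its objects are pairs $(A,B)$ with $F(A)=P(B)$; vertical morphisms are pairs $(f,g)$ of vertical morphisms with $F(f)=P(g)$; horizontal morphisms are pairs $(p,q)$ with $F(p)=P(q)$; and cells are pairs $(\alpha,\beta)$ of cells lying over a common cell in $\vdC$. Identities and cell composition are taken componentwise. The projections $\Pi_\vdA \colon \vdA \times_\vdC \vdB \to \vdA$ and $\Pi_\vdB \colon \vdA \times_\vdC \vdB \to \vdB$ are the obvious first and second projections on every piece of structure.

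Next I would verify that $\vdA \times_\vdC \vdB$ is indeed a virtual double category and that the projections are functors of vdcs. For the vdc axioms, unitality and associativity of cell composition in each component are inherited from $\vdA$ and $\vdB$, and since composition is defined componentwise the compatibility $F(\alpha(\alpha_1,\dots,\alpha_n)) = P(\beta(\beta_1,\dots,\beta_n))$ follows from functoriality of $F$ and $P$ applied to the hypothesis $F(\alpha_i)=P(\beta_i)$ and $F(\alpha)=P(\beta)$. The projections preserve all the structure strictly by construction. Commutativity of the square $F \circ \Pi_\vdA = P \circ \Pi_\vdB$ holds strictly because both sides send $(A,B)$ to the shared image $F(A)=P(B)$, and similarly for higher-dimensional data.

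For the universal property, suppose given a vdc $\vdX$ together with functors $G \colon \vdX \to \vdA$ and $H \colon \vdX \to \vdB$ with $F \circ G = P \circ H$. I would define $\langle G,H\rangle \colon \vdX \to \vdA \times_\vdC \vdB$ by pairing: on an object $X$ it yields $(G(X), H(X))$, which is well-defined since $F(G(X)) = P(H(X))$; on vertical morphisms, horizontal morphisms, and cells it acts analogously. Functoriality of $\langle G,H\rangle$ reduces to functoriality of $G$ and $H$ separately, again because everything in the pullback is composed componentwise. Commutativity $\Pi_\vdA \circ \langle G,H\rangle = G$ and $\Pi_\vdB \circ \langle G,H\rangle = H$ is immediate, and uniqueness follows from the fact that any functor $K \colon \vdX \to \vdA \times_\vdC \vdB$ factoring both projections is forced, piece by piece, to agree with $\langle G,H\rangle$ on objects, vertical morphisms, horizontal morphisms, and cells.

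There is no real conceptual obstacle here: the construction is the same pointwise pullback that works for the pullback of categories, lifted to the four-tiered structure of a vdc. The main work is bookkeeping — confirming that the componentwise operations respect the constraints $F(-)=P(-)$ at every layer and that cell composition in the pullback is well-defined. The slightly non-trivial point to keep track of is that in a vdc a cell has a whole list of horizontal source morphisms along with a single horizontal target, so when checking that $(\alpha,\beta)(\alpha_1,\beta_1),\dots,(\alpha_n,\beta_n))$ lies in the pullback one must invoke functoriality of both $F$ and $P$ on the full composite, but this is immediate from the definitions.
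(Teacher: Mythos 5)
Your proposal is correct and follows exactly the approach the paper intends: the paper defines $\vdA \times_\vdC \vdB$ as the componentwise pullback on objects, vertical and horizontal morphisms, and cells, and leaves the (routine) verification of the vdc axioms and the universal property implicit. Your write-up simply fills in that standard bookkeeping, so there is no substantive difference.
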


Now suppose that $\vdA, \vdB, \vdC$ are representable.
It doesn't follow necessarily that $\vdA \times_\vdC \vdB$ is.
Indeed, representability only assures that there are cells $F(p_n) \bullet \dots \bullet F(p_1) \Rightarrow F(p_n\bullet \dots \bullet p_1) = P(q_n, \dots, q_1) \Leftarrow P(q_n)\bullet\dots\bullet P(q_1)$.
So the obvious choice of composing pointwise does not work since the image in \vdC does not coincide.
One possibility to make it work would be to ask for both functors to be strict.
Instead, we will keep one functor lax and ask the second one to be a pushfibration, which in particular is strict.
In the following we will take $P$ to be a pushfibration.
Furthermore, we will denote $F_\bullet := F(\bullet)/\bullet \colon F(p_n) \bullet \dots \bullet F(p_1) \to F(p_n \bullet \dots \bullet p_1)$.
The idea is to define composition by pushing the composite in \vdB along $F_\bullet$:
\[
(p_n,q_n)\bullet\dots\bullet (p_1,q_1) := (p_n \bullet\dots \bullet p_1, \pushone{F_\bullet}{q_n \bullet \dots \bullet q_1)})
\]
The construction is summarised in the following diagram:

\resizebox{\hsize}{!}{
\begin{tikzcd}[ampersand replacement=\&]
	\&\&\&\&\&\&\&\&\& {B_0} \&\& {B_1} \& \dots \& {B_{n-1}} \&\& {B_n} \\
	\\
	\&\&\&\&\&\&\&\&\& {B_0} \&\&\&\&\&\& {B_n} \\
	\\
	\&\&\&\&\&\&\&\&\& {B_0} \&\&\&\&\&\& {B_n} \\
	\\
	{A_0} \&\& {A_1} \& \dots \& {A_{n-1}} \&\& {A_n} \&\&\& {F(A_0)} \&\& {F(A_1)} \& \dots \& {F(A_{m-1})} \&\& {F(A_n)} \\
	\\
	\&\&\&\&\&\&\&\&\& {F(A_0)} \&\&\&\&\&\& {F(A_n)} \\
	\\
	{A_0} \&\&\&\&\&\& {A_n} \&\&\& {F(A_0)} \&\&\&\&\&\& {F(A_n)}
	\arrow["{F(p_1)}"{description}, from=7-10, to=7-12]
	\arrow["{F(p_n)}"{description}, from=7-14, to=7-16]
	\arrow[Rightarrow, no head, from=7-10, to=9-10]
	\arrow[Rightarrow, no head, from=7-16, to=9-16]
	\arrow[""{name=0, anchor=center, inner sep=0}, "{F(p_n)\bullet\dots\bullet F(p_1)}"{description}, from=9-10, to=9-16]
	\arrow[""{name=1, anchor=center, inner sep=0}, "{F(p_n \bullet \dots \bullet p_1)}"{description}, from=11-10, to=11-16]
	\arrow[Rightarrow, no head, from=9-10, to=11-10]
	\arrow[Rightarrow, no head, from=9-16, to=11-16]
	\arrow["{p_n}"{description}, from=7-5, to=7-7]
	\arrow["{p_1}"{description}, from=7-1, to=7-3]
	\arrow[""{name=2, anchor=center, inner sep=0}, "{p_n\bullet\dots\bullet p_1}"{description}, from=11-1, to=11-7]
	\arrow[Rightarrow, no head, from=7-7, to=11-7]
	\arrow[Rightarrow, no head, from=7-1, to=11-1]
	\arrow[""{name=3, anchor=center, inner sep=0}, "{\pushone{F_\bullet}{q_n \bullet\dots\bullet q_1}}"{description}, from=5-10, to=5-16]
	\arrow[""{name=4, anchor=center, inner sep=0}, "{q_n \bullet \dots \bullet q_1}"{description}, from=3-10, to=3-16]
	\arrow["{q_1}"{description}, from=1-10, to=1-12]
	\arrow["{q_n}"{description}, from=1-14, to=1-16]
	\arrow[Rightarrow, no head, from=1-10, to=3-10]
	\arrow[Rightarrow, no head, from=3-10, to=5-10]
	\arrow[Rightarrow, no head, from=1-16, to=3-16]
	\arrow[Rightarrow, no head, from=3-16, to=5-16]
	\arrow["{F_\bullet}"{description}, shorten <=9pt, shorten >=9pt, Rightarrow, from=0, to=1]
	\arrow["\bullet"{description}, shorten >=7pt, Rightarrow, from=7-13, to=0]
	\arrow["\bullet"{description}, shorten >=16pt, Rightarrow, from=7-4, to=2]
	\arrow["\bullet"{description}, shorten >=7pt, Rightarrow, from=1-13, to=4]
	\arrow[shorten <=9pt, shorten >=9pt, Rightarrow, from=4, to=3]
\end{tikzcd}
}

By definition we have that $P(\pushone{F_\bullet}{q_n\bullet\dots\bullet q_1} = F(p_n \bullet\dots\bullet p_1)$ and we use the fact that $P$ is strict so $P(q_n \bullet\dots\bullet q_1) = P(q_n)\bullet\dots\bullet P(q_1) = F(p_n)\bullet\dots\bullet F(p_1)$

\begin{prop}
The cell $(p_1,q_1),\dots,(p_n,q_n) \Rightarrow (p_n \bullet\dots\bullet p_1, \pushone{F_\bullet}{q_n \bullet\dots\bullet q_1})$ defined above is universal in $\vdA \times_\vdC \vdB$.
\end{prop}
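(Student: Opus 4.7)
The plan is to verify the universal factorisation property directly, exploiting universality of the composite cells $\bullet_\vdA, \bullet_\vdB$ and opcartesianness of the pushforward cell, together with strictness of $P$.

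Suppose we are given a cell $(\alpha,\beta) \colon (p_1,q_1),\dots,(p_n,q_n) \Rightarrow (p',q')$ in $\vdA \times_\vdC \vdB$, i.e.\ a pair consisting of $\alpha \colon p_1,\dots,p_n \Rightarrow p'$ in $\vdA$ and $\beta \colon q_1,\dots,q_n \Rightarrow q'$ in $\vdB$ with $P(\beta) = F(\alpha)$. The first step is to factor the $\vdA$-component using the universality of $\bullet_\vdA$: this produces a unique $\alpha' := \alpha/\bullet_\vdA \colon p_n \bullet \dots \bullet p_1 \Rightarrow p'$ with $\alpha = \alpha' \circ \bullet_\vdA$. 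Applying $F$ and using functoriality, together with the factorisation $F(\bullet_\vdA) = F_\bullet \circ \bullet_\vdC$ (which is the very definition of $F_\bullet$), we obtain $F(\alpha) = F(\alpha') \circ F_\bullet \circ \bullet_\vdC$.

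Next, I factor the $\vdB$-component. Since $P$ is strict, the universal cell $\bullet_\vdB$ lies over $\bullet_\vdC$, and therefore by universality of $\bullet_\vdB$ we get a unique $\beta/\bullet_\vdB \colon q_n \bullet \dots \bullet q_1 \Rightarrow q'$ with $\beta = (\beta/\bullet_\vdB) \circ \bullet_\vdB$; this factor lies over $F(\alpha)/\bullet_\vdC = F(\alpha') \circ F_\bullet$. Now the opcartesian cell $\pushone{F_\bullet}{q_n\bullet\dots\bullet q_1}$ lies over $F_\bullet$, so by opcartesianness applied to $\beta/\bullet_\vdB$, there is a unique $\beta' \colon \pushone{F_\bullet}{q_n\bullet\dots\bullet q_1} \Rightarrow q'$ lying over $F(\alpha')$ such that $\beta/\bullet_\vdB = \beta' \circ \pushone{F_\bullet}{q_n\bullet\dots\bullet q_1}$. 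Because $P(\beta') = F(\alpha')$, the pair $(\alpha',\beta')$ is a legitimate cell in $\vdA \times_\vdC \vdB$, and by construction $(\alpha,\beta)$ equals its composite with the candidate universal cell.

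For uniqueness, any other factorisation $(\alpha'',\beta'')$ would in particular give an $\vdA$-factorisation of $\alpha$ through $\bullet_\vdA$ and a $\vdB$-factorisation of $\beta$ first through $\bullet_\vdB$ and then through the opcartesian cell $\pushone{F_\bullet}{q_n\bullet\dots\bullet q_1}$; each of these factorisations is unique, so $\alpha'' = \alpha'$ and $\beta'' = \beta'$. The main subtlety is simply bookkeeping: one must check at each stage that the $\vdB$-component lies over the correct cell in $\vdC$ so that the universal/opcartesian factorisation properties apply, which is why strictness of $P$ is essential—without it, $P(\bullet_\vdB)$ would only map to $\bullet_\vdC$ up to a coherence cell and the intermediate ``over'' conditions would fail to match.
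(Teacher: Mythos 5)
Your proof is correct and follows essentially the same route as the paper's: factor the $\vdA$-component through the universal cell $\bullet_\vdA$, then factor the $\vdB$-component through $\bullet_\vdB$ and the opcartesian cell $F_\bullet^\ast$ (the paper does this last part in one step by noting that opcartesian cells compose, so that $F_\bullet^\ast \circ \bullet_\vdB$ is itself opcartesian over $F_\bullet \circ \bullet_\vdC$). The only point to tighten is that universality in a vdc requires factoring cells whose horizontal domain contains $(p_1,q_1),\dots,(p_n,q_n)$ inside an arbitrary surrounding context with nontrivial vertical boundary, not just cells with that exact domain; your argument goes through verbatim in that generality since all the factorisation properties you invoke are stated with context.
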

\begin{proof}
Let $F_\bullet^\ast \colon q_n \bullet\dots\bullet q_1 \Rightarrow \pushone{F_\bullet}{q_n\bullet\dots\bullet q_1}$ denote the opcartesian cell in \vdB that we got from pushing along $F_\bullet$.

Now let's suppose that we have \[(\Lambda, Mu) \colon (r_1,s_1),\dots,(r_k,s_k),(p_1,q_1),\dots,(p_n,q_n),(r_1',s_1'),\dots,(r_{k'}',s_{k'}') \Rightarrow (t,u)\] in $\vdA \times_\vdC \vdB$.
We want to show that it can be uniquely factored through $(\bullet,F_\bullet^\ast \circ \bullet)$.

Since $\bullet$ is universal, we can factor $\Lambda$ through it to get a unique cell $\Lambda/\bullet$.
Now we have 
\begin{align*}
P(\mathrm{M}) &= F(\Lambda)\\
&= F(\Lambda/\bullet \circ \bullet)\\
&= F(\Lambda/\bullet) \circ F(\bullet)\\ 
&= F(\Lambda/\bullet) \circ F_\bullet \circ \bullet
\end{align*}

So $\mathrm{M}$ lies over a factorisation by $F_\bullet \circ \bullet$.
But since $P$ is strict and opcartesian cell compose we can factor $\mathrm{M}$ through the opcartesian cell $F_\bullet^\ast \circ \bullet$ to get a unique cell $(M/\bullet)/F_\bullet^\ast$ such that $P((M/\bullet)/F_\bullet^\ast) = F(\Lambda/\bullet)$.

This is summarised in figure \ref{fig:vdc_pull_comp}.

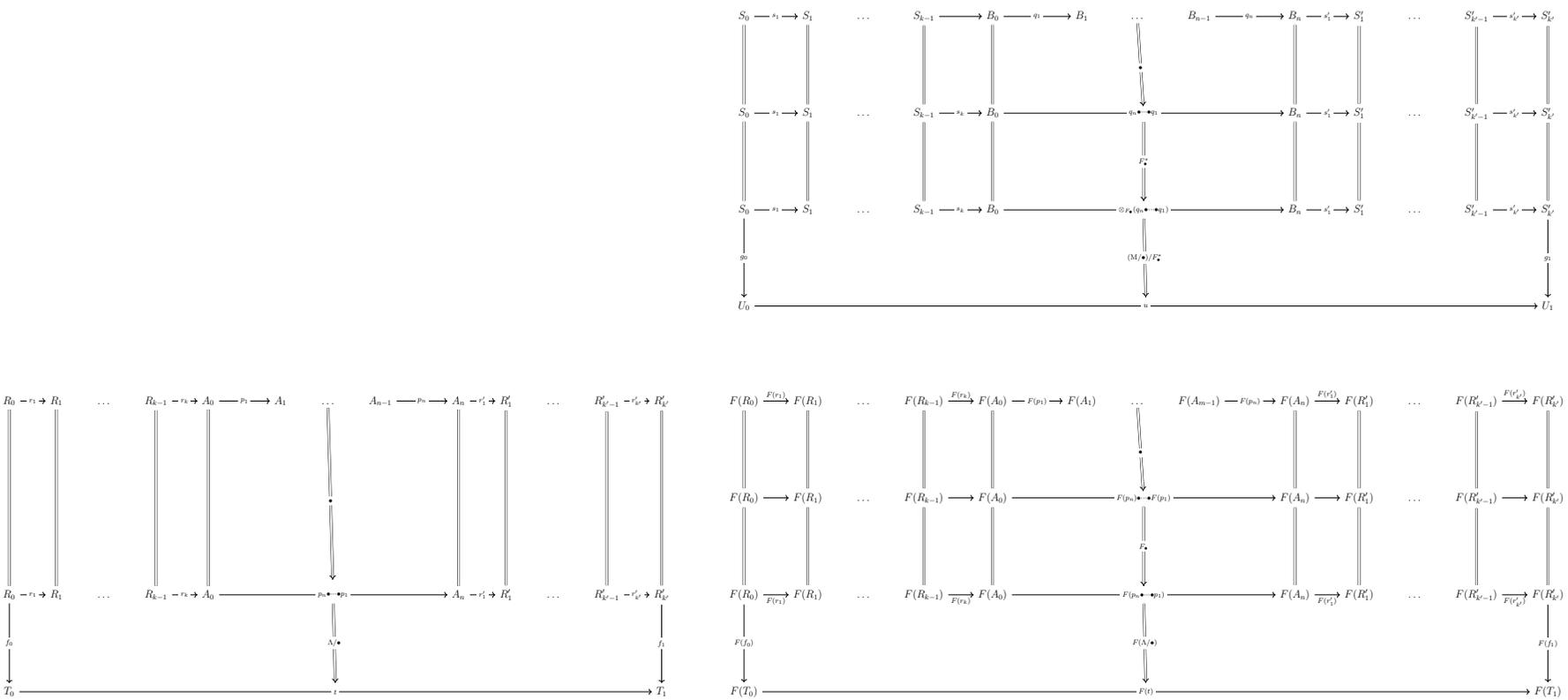
\begin{figure}
\rotatebox{90}{
\resizebox{\vsize}{!}{
\begin{tikzcd}[row sep= huge,ampersand replacement=\&]
	\&\&\&\&\&\&\&\&\&\&\&\&\&\&\&\& {S_0} \& {S_1} \& \dots \& {S_{k-1}} \& {B_0} \&\& {B_1} \& \dots \& {B_{n-1}} \&\& {B_n} \& {S_1'} \& \dots \& {S_{k'-1}'} \& {S_{k'}'} \\
	\\
	\&\&\&\&\&\&\&\&\&\&\&\&\&\&\&\& {S_0} \& {S_1} \& \dots \& {S_{k-1}} \& {B_0} \&\&\&\&\&\& {B_n} \& {S_1'} \& \dots \& {S_{k'-1}'} \& {S_{k'}'} \\
	\\
	\&\&\&\&\&\&\&\&\&\&\&\&\&\&\&\& {S_0} \& {S_1} \& \dots \& {S_{k-1}} \& {B_0} \&\&\&\&\&\& {B_n} \& {S_1'} \& \dots \& {S_{k'-1}'} \& {S_{k'}'} \\
	\\
	\&\&\&\&\&\&\&\&\&\&\&\&\&\&\&\& {U_0} \&\&\&\&\&\&\&\&\&\&\&\&\&\& {U_1} \\
	\\
	{R_0} \& {R_1} \& \dots \& {R_{k-1}} \& {A_0} \&\& {A_1} \& \dots \& {A_{n-1}} \&\& {A_n} \& {R_1'} \& \dots \& {R_{k'-1}'} \& {R_{k'}'} \&\& {F(R_0)} \& {F(R_1)} \& \dots \& {F(R_{k-1})} \& {F(A_0)} \&\& {F(A_1)} \& \dots \& {F(A_{m-1})} \&\& {F(A_n)} \& {F(R_1')} \& \dots \& {F(R_{k'-1}')} \& {F(R_{k'}')} \\
	\\
	\&\&\&\&\&\&\&\&\&\&\&\&\&\&\&\& {F(R_0)} \& {F(R_1)} \& \dots \& {F(R_{k-1})} \& {F(A_0)} \&\&\&\&\&\& {F(A_n)} \& {F(R_1')} \& \dots \& {F(R_{k'-1}')} \& {F(R_{k'}')} \\
	\\
	{R_0} \& {R_1} \& \dots \& {R_{k-1}} \& {A_0} \&\&\&\&\&\& {A_n} \& {R_1'} \& \dots \& {R_{k'-1}'} \& {R_{k'}'} \&\& {F(R_0)} \& {F(R_1)} \& \dots \& {F(R_{k-1})} \& {F(A_0)} \&\&\&\&\&\& {F(A_n)} \& {F(R_1')} \& \dots \& {F(R_{k'-1}')} \& {F(R_{k'}')} \\
	\\
	{T_0} \&\&\&\&\&\&\&\&\&\&\&\&\&\& {T_1} \&\& {F(T_0)} \&\&\&\&\&\&\&\&\&\&\&\&\&\& {F(T_1)}
	\arrow["{F(p_1)}"{description}, from=9-21, to=9-23]
	\arrow["{F(p_n)}"{description}, from=9-25, to=9-27]
	\arrow[Rightarrow, no head, from=9-21, to=11-21]
	\arrow[Rightarrow, no head, from=9-27, to=11-27]
	\arrow[""{name=0, anchor=center, inner sep=0}, "{F(p_n)\bullet\dots\bullet F(p_1)}"{description}, from=11-21, to=11-27]
	\arrow[""{name=1, anchor=center, inner sep=0}, "{F(p_n \bullet \dots \bullet p_1)}"{description}, from=13-21, to=13-27]
	\arrow[Rightarrow, no head, from=11-21, to=13-21]
	\arrow[Rightarrow, no head, from=11-27, to=13-27]
	\arrow["{p_n}"{description}, from=9-9, to=9-11]
	\arrow["{p_1}"{description}, from=9-5, to=9-7]
	\arrow[""{name=2, anchor=center, inner sep=0}, "{p_n\bullet\dots\bullet p_1}"{description}, from=13-5, to=13-11]
	\arrow[Rightarrow, no head, from=9-11, to=13-11]
	\arrow[Rightarrow, no head, from=9-5, to=13-5]
	\arrow[""{name=3, anchor=center, inner sep=0}, "{\pushone{F_\bullet}{q_n \bullet\dots\bullet q_1}}"{description}, from=5-21, to=5-27]
	\arrow[""{name=4, anchor=center, inner sep=0}, "{q_n \bullet \dots \bullet q_1}"{description}, from=3-21, to=3-27]
	\arrow["{q_1}"{description}, from=1-21, to=1-23]
	\arrow["{q_n}"{description}, from=1-25, to=1-27]
	\arrow[Rightarrow, no head, from=1-21, to=3-21]
	\arrow[Rightarrow, no head, from=3-21, to=5-21]
	\arrow[Rightarrow, no head, from=1-27, to=3-27]
	\arrow[Rightarrow, no head, from=3-27, to=5-27]
	\arrow["{r_k}"{description}, from=9-4, to=9-5]
	\arrow["{r_1}"{description}, from=9-1, to=9-2]
	\arrow["{r_1'}"{description}, from=9-11, to=9-12]
	\arrow["{r_{k'}'}"{description}, from=9-14, to=9-15]
	\arrow[""{name=5, anchor=center, inner sep=0}, "t"{description}, from=15-1, to=15-15]
	\arrow["{r_1}"{description}, from=13-1, to=13-2]
	\arrow["{r_k}"{description}, from=13-4, to=13-5]
	\arrow["{r_1'}"{description}, from=13-11, to=13-12]
	\arrow["{r_{k'}'}"{description}, from=13-14, to=13-15]
	\arrow[Rightarrow, no head, from=9-15, to=13-15]
	\arrow[Rightarrow, no head, from=9-14, to=13-14]
	\arrow[Rightarrow, no head, from=9-12, to=13-12]
	\arrow[Rightarrow, no head, from=9-4, to=13-4]
	\arrow[Rightarrow, no head, from=9-2, to=13-2]
	\arrow[Rightarrow, no head, from=9-1, to=13-1]
	\arrow["{f_1}"{description}, from=13-15, to=15-15]
	\arrow["{f_0}"{description}, from=13-1, to=15-1]
	\arrow["{F(r_k)}", from=9-20, to=9-21]
	\arrow["{F(r_1)}", from=9-17, to=9-18]
	\arrow["{F(r_1')}", from=9-27, to=9-28]
	\arrow["{F(r_{k'}')}", from=9-30, to=9-31]
	\arrow["{F(f_0)}"{description}, from=13-17, to=15-17]
	\arrow[""{name=6, anchor=center, inner sep=0}, "{F(t)}"{description}, from=15-17, to=15-31]
	\arrow["{F(f_1)}"{description}, from=13-31, to=15-31]
	\arrow["{F(r_1)}"', from=13-17, to=13-18]
	\arrow["{F(r_k)}"', from=13-20, to=13-21]
	\arrow["{F(r_1')}"', from=13-27, to=13-28]
	\arrow["{F(r_{k'}')}"', from=13-30, to=13-31]
	\arrow[from=1-20, to=1-21]
	\arrow["{s_1}"{description}, from=1-17, to=1-18]
	\arrow["{s_1'}"{description}, from=1-27, to=1-28]
	\arrow["{s_{k'}'}"{description}, from=1-30, to=1-31]
	\arrow[Rightarrow, no head, from=1-17, to=3-17]
	\arrow["{s_1}"{description}, from=3-17, to=3-18]
	\arrow[Rightarrow, no head, from=1-18, to=3-18]
	\arrow[Rightarrow, no head, from=1-20, to=3-20]
	\arrow["{s_k}"{description}, from=3-20, to=3-21]
	\arrow[Rightarrow, no head, from=1-28, to=3-28]
	\arrow["{s_1'}"{description}, from=3-27, to=3-28]
	\arrow[Rightarrow, no head, from=1-31, to=3-31]
	\arrow["{s_{k'}'}"{description}, from=3-30, to=3-31]
	\arrow[Rightarrow, no head, from=1-30, to=3-30]
	\arrow[Rightarrow, no head, from=3-17, to=5-17]
	\arrow["{s_1}"{description}, from=5-17, to=5-18]
	\arrow[Rightarrow, no head, from=3-18, to=5-18]
	\arrow[Rightarrow, no head, from=3-20, to=5-20]
	\arrow["{s_k}"{description}, from=5-20, to=5-21]
	\arrow["{s_1'}"{description}, from=5-27, to=5-28]
	\arrow[Rightarrow, no head, from=3-28, to=5-28]
	\arrow[Rightarrow, no head, from=3-31, to=5-31]
	\arrow["{s_{k'}'}"{description}, from=5-30, to=5-31]
	\arrow[Rightarrow, no head, from=3-30, to=5-30]
	\arrow["{g_0}"{description}, from=5-17, to=7-17]
	\arrow["{g_1}"{description}, from=5-31, to=7-31]
	\arrow[""{name=7, anchor=center, inner sep=0}, "u"{description}, from=7-17, to=7-31]
	\arrow[Rightarrow, no head, from=9-17, to=11-17]
	\arrow[from=11-17, to=11-18]
	\arrow[Rightarrow, no head, from=9-18, to=11-18]
	\arrow[Rightarrow, no head, from=9-20, to=11-20]
	\arrow[from=11-20, to=11-21]
	\arrow[from=11-27, to=11-28]
	\arrow[Rightarrow, no head, from=9-28, to=11-28]
	\arrow[Rightarrow, no head, from=9-31, to=11-31]
	\arrow[from=11-30, to=11-31]
	\arrow[Rightarrow, no head, from=9-30, to=11-30]
	\arrow[Rightarrow, no head, from=11-17, to=13-17]
	\arrow[Rightarrow, no head, from=11-18, to=13-18]
	\arrow[Rightarrow, no head, from=11-20, to=13-20]
	\arrow[Rightarrow, no head, from=11-31, to=13-31]
	\arrow[Rightarrow, no head, from=11-30, to=13-30]
	\arrow[Rightarrow, no head, from=11-28, to=13-28]
	\arrow["{F_\bullet}"{description}, shorten <=9pt, shorten >=9pt, Rightarrow, from=0, to=1]
	\arrow["\bullet"{description}, shorten >=7pt, Rightarrow, from=9-24, to=0]
	\arrow["\bullet"{description}, shorten >=16pt, Rightarrow, from=9-8, to=2]
	\arrow["\bullet"{description}, shorten >=7pt, Rightarrow, from=1-24, to=4]
	\arrow["{F_\bullet^\ast}"{description}, shorten <=9pt, shorten >=9pt, Rightarrow, from=4, to=3]
	\arrow["{\Lambda/\bullet}"{description}, shorten <=9pt, shorten >=9pt, Rightarrow, from=2, to=5]
	\arrow["{F(\Lambda/\bullet)}"{description}, shorten <=9pt, shorten >=9pt, Rightarrow, from=1, to=6]
	\arrow["{(\mathrm{M}/\bullet)/F_\bullet^\ast}"{description}, shorten <=9pt, shorten >=9pt, Rightarrow, from=3, to=7]
\end{tikzcd}
}}
\caption{Universality of composition in the pullback of vdcs}
\label{fig:vdc_pull_comp}
\end{figure}
\end{proof}

Furthermore, since composition on the left component in $\vdA \times_\vdC \vdB$ is defined as composition in \vdA, the projection is strict.
In general, this is not the case for the projection on \vdB.
However, it will have the same properties as $F$.

\begin{prop}
If $F$ is normal/pseudo/strict then the projection $\vdA \times_\vdC \vdB \to \vdB$ is normal/pseudo/strict.
\end{prop}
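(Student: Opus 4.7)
The plan is to observe that the projection $\pi_\vdB \colon \vdA \times_\vdC \vdB \to \vdB$ is already strict on vertical morphisms and cells (both are defined componentwise, and the second projection of a cell is literally a cell of $\vdB$), so the question reduces entirely to understanding the comparison cells for horizontal composites. By construction, the composite of $(p_1,q_1),\dots,(p_n,q_n)$ in $\vdA \times_\vdC \vdB$ is $(p_n \bullet \dots \bullet p_1,\pushone{F_\bullet}{q_n \bullet \dots \bullet q_1})$, so the comparison cell $\pi_\vdB(\bullet) \Rightarrow \bullet$ that measures laxity of the projection is exactly the $P$-opcartesian lift $F_\bullet^\ast \colon q_n \bullet \dots \bullet q_1 \Rightarrow \pushone{F_\bullet}{q_n \bullet \dots \bullet q_1}$ of the laxity comparison $F_\bullet$ of $F$. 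So the whole statement reduces to the slogan: the opcartesian lift of an identity (resp.\ invertible) cell along a pushfibration is an identity (resp.\ invertible) cell.

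For the strict case I would simply note that when $F_\bullet = \id$, the identity cell $\id_{q_n \bullet \dots \bullet q_1}$ trivially satisfies the opcartesian universal property over $\id_{F(p_n) \bullet \dots \bullet F(p_1)}$, so by uniqueness of pushforwards up to unique vertical invertible cell we can take $\pushone{F_\bullet}{q_n \bullet \dots \bullet q_1} = q_n \bullet \dots \bullet q_1$ with $F_\bullet^\ast = \id$. For the nullary case this gives strict preservation of units, and for general arity strict preservation of composition.

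For the pseudo and normal cases, the key lemma I would prove first (and which does most of the work) is that if $\alpha$ is an invertible cell in the base of a pushfibration and $\mathrm{A}$ is its opcartesian lift with domain $\pi$, then $\mathrm{A}$ is invertible. To produce an inverse, I factor the identity cell $\id_\pi$ — which lies over $\alpha^{-1} \circ \alpha = \id$, hence over a factorisation of $P(\id_\pi)$ through $\alpha$ — through the opcartesian cell $\mathrm{A}$, obtaining a unique cell $\mathrm{A}^{-1}$ lying over $\alpha^{-1}$ with $\mathrm{A}^{-1} \bullet \mathrm{A} = \id_\pi$. For the other side, both $\mathrm{A} \bullet \mathrm{A}^{-1}$ and $\id_{\pushone{\alpha}{\pi}}$ are cells lying over $\id$ whose vertical post-composition with $\mathrm{A}$ equals $\mathrm{A}$, so the uniqueness clause in the universal property of $\mathrm{A}$ forces them to coincide.

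Applying this lemma: when $F$ is pseudo, every $F_\bullet$ is invertible, so every $F_\bullet^\ast$ is invertible and the projection is pseudo; when $F$ is normal, the nullary $F_\bullet$ (i.e.\ the comparison $1_{F(A)} \Rightarrow F(1_A)$) is invertible, so the corresponding $F_\bullet^\ast$ is invertible and the projection is normal. The main obstacle, if any, is just being careful that the universal property of opcartesian cells is being invoked correctly in the non-identity vertical data — but since both $\mathrm{A} \bullet \mathrm{A}^{-1}$ and $\id$ are vertical cells (their horizontal boundaries are identities on objects of $\vdE$), the uniqueness step is immediate, and the rest is essentially bookkeeping.
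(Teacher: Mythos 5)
Your proposal is correct and follows essentially the same route as the paper: take the identity as the opcartesian lift when $F_\bullet$ is an identity, and when $F_\bullet$ is invertible, factor $\id_{q_n\bullet\dots\bullet q_1}$ (which lies over $F_\bullet^{-1}\circ F_\bullet = \id$) through $F_\bullet^\ast$ and use uniqueness of factorisation to get the two-sided inverse. The only cosmetic difference is that you package the invertibility step as a standalone lemma about opcartesian lifts of invertible cells, whereas the paper runs the same argument inline for $F_\bullet^\ast$.
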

\begin{proof}
The identity is opcartesian, if $F_\bullet$ is an identity we can choose the identity for $F_\bullet^\ast$ and then the composition in $\vdA \times_\vdC \vdB$ becomes $(\bullet,\bullet)$.
If $F_\bullet$ is invertible, then since $F_\bullet^{-1} \circ F_\bullet = \id_{F(p_n)\bullet\dots\bullet F(p_1)}$ we can factor the identity cell through $F_\bullet^\ast$: \[\id_{q_n \circ q_1} = (\id_{q_n \circ q_1}/F_\bullet^\ast) \circ F_\bullet^\ast\]
Then by postcomposing by $F_\bullet^\ast$ we get \[F_\bullet^\ast \circ (\id_{q_n \circ q_1}/F_\bullet^\ast) \circ F_\bullet^\ast = F_\bullet^\ast = \id_{\pushone{F_\bullet}{q_n\bullet\dots\bullet q_1}} \circ F_\bullet^\ast\]
But by unicity of the factorisation through a pushforward we get \[F_\bullet^\ast \circ (\id_{q_n \circ q_1}/F_\bullet^\ast) = \id_{\pushone{F_\bullet}{q_n\bullet\dots\bullet q_1}}\]
So $F_\bullet^\ast$ is invertible.
\end{proof}

Let us sum up what we just proved.

\begin{prop}
Let \vdA, \vdB, \vdC be representable vdcs, $F \colon \vdA \to \vdC$ a functor and $P \colon \vdB \to \vdC$ a pushfibration, then $\vdA \times_\vdC \vdB$ is representable, with projection on $\vdA$ strict and projection on $\vdB$ normal/pseudo/strict when $F$ is.
\[
\begin{tikzcd}[ampersand replacement=\&]
	{\vdA \times_\vdC \vdB} \&\&\& \vdB \\
	\\
	\\
	\vdA \&\&\& \vdC
	\arrow["{\text{lax/ps./nor./str.}}"{description}, from=4-1, to=4-4]
	\arrow["{\text{lax/ps./nor./str.}}"{description}, from=1-1, to=1-4]
	\arrow["{\text{pushfib.}}"{description}, from=1-4, to=4-4]
	\arrow["{\text{str}}"{description}, from=1-1, to=4-1]
	\arrow["\lrcorner"{anchor=center, pos=0.125}, draw=none, from=1-1, to=4-4]
\end{tikzcd}
\]
\end{prop}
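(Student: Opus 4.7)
The plan is to assemble the pieces already developed in the previous discussion into a coherent proof. Everything essential has been constructed: for any chain $(p_1,q_1),\dots,(p_n,q_n)$ of composable horizontal morphisms in $\vdA\times_\vdC\vdB$, we define the composite as $(p_n\bullet\dots\bullet p_1,\,\pushone{F_\bullet}{q_n\bullet\dots\bullet q_1})$, where $F_\bullet\colon F(p_n)\bullet\dots\bullet F(p_1)\to F(p_n\bullet\dots\bullet p_1)$ is the canonical laxity cell factoring $F(\bullet)$ through $\bullet$, and $F_\bullet^\ast$ is the corresponding $P$-opcartesian lift. The fact that $P$ is strict guarantees that $P(q_n\bullet\dots\bullet q_1)=P(q_n)\bullet\dots\bullet P(q_1)=F(p_n)\bullet\dots\bullet F(p_1)$, so $F_\bullet$ is indeed a cell in $\vdC$ whose domain agrees with the image of the composite in $\vdB$, making the pushforward well-defined and sitting over $F(p_n\bullet\dots\bullet p_1)$. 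The universality of the pair $(\bullet,\, F_\bullet^\ast\circ\bullet)$ in $\vdA\times_\vdC\vdB$ was already established in the preceding proposition.

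Next, I would verify that the projection $\pi_\vdA\colon\vdA\times_\vdC\vdB\to\vdA$ is strict. This follows immediately from the definition: the first component of the composite is literally $p_n\bullet\dots\bullet p_1$, which is the composite in $\vdA$, and similarly the first component of the identity in $\vdA\times_\vdC\vdB$ is the identity in $\vdA$. Hence $\pi_\vdA$ preserves composites and units on the nose.

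For the projection $\pi_\vdB\colon\vdA\times_\vdC\vdB\to\vdB$, one reads off the comparison cell as the opcartesian lift $F_\bullet^\ast\colon q_n\bullet\dots\bullet q_1\Rightarrow\pushone{F_\bullet}{q_n\bullet\dots\bullet q_1}$. When $F$ is normal, $F_\bullet$ restricted to the nullary case is an identity, so we can choose the identity lift $F_\bullet^\ast=\mathrm{id}$, making $\pi_\vdB$ normal. When $F$ is pseudo, $F_\bullet$ is invertible; by the argument sketched in the previous proposition (factoring $\mathrm{id}$ through $F_\bullet^\ast$, then using uniqueness of factorisation through the opcartesian cell), one shows $F_\bullet^\ast$ is invertible, so $\pi_\vdB$ is pseudo. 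When $F$ is strict, $F_\bullet$ is an identity and one can pick $F_\bullet^\ast=\mathrm{id}$ throughout, so $\pi_\vdB$ is strict.

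The main subtlety, rather than any hard calculation, is bookkeeping: one must check that the choices of $F_\bullet^\ast$ assemble into coherent unitors and associators in $\vdA\times_\vdC\vdB$, i.e.\ that the uniqueness clauses of opcartesian factorisation actually force the expected coherence. This is essentially routine given that universal cells compose and opcartesian cells compose, but it is the only place where one has to write a diagram chase rather than invoke a prior result directly.
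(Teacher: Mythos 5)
Your proposal is correct and matches the paper's treatment: in the source this proposition is explicitly introduced with ``Let us sum up what we just proved,'' so its proof is precisely the assembly of the preceding results (the universality of $(\bullet,\,F_\bullet^\ast\circ\bullet)$, strictness of the projection to $\vdA$ by construction, and the normal/pseudo/strict behaviour of the projection to $\vdB$ via the invertibility argument for $F_\bullet^\ast$) that you carry out.
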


We can first note that it is not necessary to assume that \vdB is representable since it follows from the fact that it is pushfibred over a representable vdc.
One could also ask when is the projection to \vdA pushfibred.
In fact, it is always the case.

\begin{prop}
Consider a functor of vdcs $F \colon \vdA \to \vdC$ and a pushfibration $P \colon \vdB \to \vdC$.
Then, the projection $\vdA \times_\vdC \vdB \to \vdA$ is a pushfibration with pushforwards:
\[
\pushone{\alpha}{(p_1,q_1),\dots,(p_n,q_n)} := (\alpha, \pushone{F(\alpha)}{q_1,...,q_n})
\]
\end{prop}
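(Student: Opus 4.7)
The plan is to verify directly that the claimed cell lies in $\vdA \times_\vdC \vdB$ and satisfies the universal property, exploiting strictness of $P$ and the $P$-opcartesian property of the lifting in $\vdB$. First, since $(p_i, q_i)$ are objects of the pullback, we have $F(p_i) = P(q_i)$, so $F(\alpha)$ is a cell in $\vdC$ from $P(q_1), \dots, P(q_n)$ to $F(s)$ (where $s$ is the target horizontal morphism of $\alpha$). Using the fact that $P$ is a pushfibration, we obtain an opcartesian cell $Q \colon q_1, \dots, q_n \Rightarrow \pushone{F(\alpha)}{q_1,\dots,q_n}$ in $\vdB$ lying over $F(\alpha)$. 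Since $P(Q) = F(\alpha)$, the pair $(\alpha, Q)$ is a well-defined cell in $\vdA \times_\vdC \vdB$ from $(p_1, q_1), \dots, (p_n, q_n)$ to the horizontal morphism $(s, \pushone{F(\alpha)}{q_1,\dots,q_n})$, and it lies over $\alpha$ under the projection $\pi_\vdA$.

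Next, I would verify opcartesianness. Consider any cell $(\Lambda, \mathrm{M})$ in $\vdA \times_\vdC \vdB$ lying over a factorisation $\beta(\dots, \alpha, \dots)$ of $\alpha$ through $\pi_\vdA$. Then $\Lambda = \beta(\dots, \alpha, \dots)$ in $\vdA$, and by functoriality $F(\Lambda) = F(\beta)(\dots, F(\alpha), \dots)$. Since $P(\mathrm{M}) = F(\Lambda)$ by the pullback condition, $\mathrm{M}$ lies in $\vdB$ over the factorisation $F(\beta)(\dots, F(\alpha), \dots)$ of $F(\alpha)$. By $P$-opcartesianness of $Q$ there is a unique cell $\mathrm{M}/Q$ in $\vdB$ with $P(\mathrm{M}/Q) = F(\beta)$ such that $\mathrm{M} = (\mathrm{M}/Q)(\dots, Q, \dots)$. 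Setting $(\Lambda, \mathrm{M})/(\alpha, Q) := (\beta, \mathrm{M}/Q)$ gives a valid cell of the pullback (since $P(\mathrm{M}/Q) = F(\beta)$), lying over $\beta$, and factoring $(\Lambda, \mathrm{M})$ through $(\alpha, Q)$ componentwise.

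Finally, uniqueness: if $(\beta', \mathrm{N})$ is another factorisation, then $\beta'$ must equal $\beta$ because the first component of the factorisation is determined by $\Lambda$ and the strict projection to $\vdA$ (composition in the pullback is componentwise in the $\vdA$-slot). For the second component, $\mathrm{N}$ must satisfy $\mathrm{M} = \mathrm{N}(\dots, Q, \dots)$ with $P(\mathrm{N}) = F(\beta)$, so by uniqueness of the $P$-opcartesian factorisation $\mathrm{N} = \mathrm{M}/Q$.

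I do not expect a significant obstacle here: the construction is essentially forced once one observes that $F(\alpha)$ is a valid cell to push along via $P$ and that the $\vdA$-component projection is strict. The only care needed is bookkeeping — tracking the compatibility condition $P(-) = F(-)$ across each step and checking that the factorisation produced really does satisfy this condition. This all follows mechanically from the strictness of $P$ and the defining property of $P$-opcartesian cells.
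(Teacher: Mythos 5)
Your proposal is correct and follows essentially the same route as the paper: construct the candidate pushforward by taking the $P$-opcartesian lift of $F(\alpha)$ in $\vdB$ (well-defined because $F(p_i)=P(q_i)$ in the pullback), then factor any cell $(\Lambda,\mathrm{M})$ lying over a factorisation of $\alpha$ by applying $P$-opcartesianness to $\mathrm{M}$, using strictness of $P$ and functoriality of $F$ to check the compatibility condition. Your explicit uniqueness paragraph is slightly more detailed than the paper's, but the argument is the same.
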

\begin{proof}
First, it is always strict since it is a projection.

Assume that we have $(p_i,q_i)$ horizontal morphisms in $\vdA \times_\vdC \vdB$ and $\alpha \colon p_1,\dots, p_n \Rightarrow q$ in \vdA.
Then, since $P$ is a pushfibration there is an opcartesian cell $F(\alpha)^\ast$ over $F(\alpha)$.
\resizebox{\hsize}{!}{
\begin{tikzcd}[ampersand replacement=\&]
	\&\&\&\&\&\&\&\& {B_0} \&\& {B_1} \& \dots \& {B_{n-1}} \&\& {B_n} \\
	\\
	\&\&\&\&\&\&\&\& {B_0} \&\&\&\&\&\& {B_n} \\
	\\
	{A_0} \&\& {A_1} \& \dots \& {A_{n-1}} \&\& {A_n} \&\& {F(A_0)} \&\& {F(A_1)} \& \dots \& {F(A_{n-1})} \&\& {F(A_n)} \\
	\\
	{A_0} \&\&\&\&\&\& {A_n} \&\& {F(A_0)} \&\&\&\&\&\& {F(A_n)}
	\arrow["{p_1}"{description}, from=5-1, to=5-3]
	\arrow["{p_n}"{description}, from=5-5, to=5-7]
	\arrow[""{name=0, anchor=center, inner sep=0}, "q"{description}, from=7-1, to=7-7]
	\arrow[Rightarrow, no head, from=5-1, to=7-1]
	\arrow[Rightarrow, no head, from=5-7, to=7-7]
	\arrow["{F(p_1)}"{description}, from=5-9, to=5-11]
	\arrow["{F(p_n)}"{description}, from=5-13, to=5-15]
	\arrow[""{name=1, anchor=center, inner sep=0}, "{F(q)}"{description}, from=7-9, to=7-15]
	\arrow[Rightarrow, no head, from=5-9, to=7-9]
	\arrow[Rightarrow, no head, from=5-15, to=7-15]
	\arrow["{r_0}"{description}, from=1-9, to=1-11]
	\arrow["{r_n}"{description}, from=1-13, to=1-15]
	\arrow[""{name=2, anchor=center, inner sep=0}, "{\pushone{F(\alpha)}{r_0,\dots,r_n}}"{description}, from=3-9, to=3-15]
	\arrow[Rightarrow, no head, from=1-9, to=3-9]
	\arrow[Rightarrow, no head, from=1-15, to=3-15]
	\arrow["\alpha"{description}, shorten >=7pt, Rightarrow, from=5-4, to=0]
	\arrow["{F(\alpha)}"{description}, shorten >=7pt, Rightarrow, from=5-12, to=1]
	\arrow["{F(\alpha)^\ast}"{description}, shorten >=7pt, Rightarrow, from=1-12, to=2]
\end{tikzcd}
}

We want to prove that $(\alpha, F(\alpha)^\ast)$ is an opcartesian cell relative to the projection functor.

Suppose that we have a cell in $\vdA \times_\vdC \vdB$ that lies over $\beta \circ \alpha$ in \vdA.
That means that it is of the form $(\beta \circ \alpha, \mathrm{M})$ with \[F(\beta \circ \alpha) = F(\beta) \circ F(\alpha) = P(\mathrm{M})\]
But since $F(\alpha^\ast)$ is opcartesian, we can uniquely factor $P(\mathrm{M})$ through it followed by a cell $\mathrm{M}/F(\alpha)^\ast$ lying over $F(\beta)$.
So $(\beta \circ \alpha, \mathrm{M}) = (\beta, P(\mathrm{M}/F(\alpha)^\ast) \circ (\alpha, \mathrm{M})$.
\end{proof}

In particular, when both \vdA and \vdC are representable, we get that \vdB and $\vdA \times_\vdC \vdB$ are with composites given by pushing along the universal cells. 
This recovers what we have done above.

Now, we want to find out the condition for which $\vdA \times_\vdC \vdB$ is (the underlying vdc of) a bicategory.
If \vdA is then the vertical morphisms in $\vdA \times_\vdC \vdB$ are of the form $(\id_{A},g)$ where $P(g) = \id_A$.
For it to be a bicategory, we will want that the only vertical morphisms in \vdB that lie over identities are identities.
Given an object $C \in \vdC$ we will call the vertical fibre over $C$ and write $P^v_C$ for the (categorical) fibre over the functor between the vertical categories $P \colon \vdV(\vdB) \to \vdV(\vdC)$.
In other words, it is the category consisting of objects above $C$ and vertical morphisms above $\id_C$.

\begin{definition}
For a functor of vdc $F \colon \vdA \to \vdB$ and an object $B$ in \vdB, the \defin{vertical fibre} $F_B^v$ is the fibre of the vertical functor associated to $F$ over $B$.
\end{definition}

\begin{prop}
\label{prop:vdc-pull-bicat}
If \vdA is (the underlying vdc of) a bicategory, \vdC is representable, $P$ is a pushfibration and all its vertical fibres are discrete, then $\vdA \times_\vdC \vdB$ is a bicategory.
\end{prop}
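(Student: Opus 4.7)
The plan is to verify the two conditions characterising a bicategory among virtual double categories: (i) the vertical category of $\vdA \times_\vdC \vdB$ is discrete, and (ii) $\vdA \times_\vdC \vdB$ is representable. Together these give that $\vdA \times_\vdC \vdB$ is the underlying vdc of the bicategory $\vdH(\vdA \times_\vdC \vdB)$.

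For (i), I would unfold the definition of the pullback. A vertical morphism in $\vdA \times_\vdC \vdB$ is a pair $(f, g)$ with $f$ vertical in \vdA, $g$ vertical in \vdB, and $F(f) = P(g)$. Since \vdA is a bicategory, its vertical category is discrete, forcing $f = \id_A$ for some object $A$. Then $P(g) = F(\id_A) = \id_{F(A)}$, so $g$ lies in the vertical fibre $P^v_{F(A)}$. By the hypothesis that all vertical fibres of $P$ are discrete, $g$ must itself be an identity. Hence every vertical morphism in the pullback is an identity.

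For (ii), I would first observe that \vdB is representable. This follows by composing pushfibrations: $P \colon \vdB \to \vdC$ is a pushfibration, and since \vdC is representable the unique functor $\vdC \to \one$ is a pushfibration; hence $\vdB \to \one$ is a pushfibration, i.e., \vdB is representable. With \vdA, \vdB, \vdC all representable, $F$ any functor, and $P$ a pushfibration, the earlier proposition on pullbacks of representable vdcs applies and yields representability of $\vdA \times_\vdC \vdB$, with composites given componentwise by $(p_n,q_n)\bullet\cdots\bullet(p_1,q_1) = (p_n\bullet\cdots\bullet p_1,\, \pushone{F_\bullet}{q_n\bullet\cdots\bullet q_1})$.

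Combining (i) and (ii), $\vdA \times_\vdC \vdB$ is a representable vdc with discrete vertical category, which by the previously recalled correspondence between bicategories and such vdcs means it is precisely the underlying vdc of the bicategory $\vdH(\vdA \times_\vdC \vdB)$. I do not expect a genuine obstacle here: the proof is essentially a direct diagram chase through the pullback definition, the only subtle point being to notice that the discreteness of the vertical fibres of $P$ is exactly what is needed to rule out nontrivial vertical morphisms in the second component.
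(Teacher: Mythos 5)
Your proof is correct and follows exactly the route the paper intends (the paper leaves this proposition without an explicit proof, but the discussion immediately preceding it sets up precisely your two steps): discreteness of the vertical category via discreteness of the vertical fibres of $P$, and representability via the earlier pullback proposition after noting that \vdB inherits representability from \vdC through the pushfibration $P$. No gaps.
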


Now suppose that \vdA is a category, so its only cells are identities.
Then the cells in $\vdA \times_\vdC \vdB$ are of the form $(\id_p,\beta)$ where $\beta$ is a cell that lies over the identity.

\begin{definition}
Given a functor of vdc $F \colon \vdA \to \vdB$ and a horizontal morphism $p \colon B_0 \to B_1$ in \vdB, the \defin{fibre over $p$} is the category $F_p$ whose:
\begin{itemize}
\item objects are horizontal morphisms $q \colon A_0 \to A_1$ such that $F(q) = F(p)$
\item morphisms are cells such that $F(\beta) = \id_p$
\[\begin{tikzcd}[ampersand replacement=\&]
	{A_0} \&\& {A_1} \\
	\\
	{A_0'} \&\& {B_1'}
	\arrow["{f_0}"{description}, from=1-1, to=3-1]
	\arrow["{f_1}"{description}, from=1-3, to=3-3]
	\arrow[""{name=0, anchor=center, inner sep=0}, "q"{description}, from=1-1, to=1-3]
	\arrow[""{name=1, anchor=center, inner sep=0}, "{q'}"{description}, from=3-1, to=3-3]
	\arrow["\beta"{description}, shorten <=9pt, shorten >=9pt, Rightarrow, from=0, to=1]
\end{tikzcd}\]
\end{itemize}
\end{definition}

\begin{prop}
\label{prop:vdc-pull-cat}
If \vdA is (the underlying horizontal vdc of) a category, \vdC is representable, $P$ is a pushfibration and all its vertical fibres and its fibres are discrete, then $\vdA \times_\vdC \vdB$ is a category.
\end{prop}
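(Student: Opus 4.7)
The plan is to reduce to the preceding Proposition~\ref{prop:vdc-pull-bicat} and then use the additional discreteness hypothesis to kill off the non-identity cells. Recall that a category, regarded as a vdc, is the same data as a bicategory whose horizontal 2-cells are all identities (and whose vertical category is discrete). So in particular \vdA satisfies the hypotheses of Proposition~\ref{prop:vdc-pull-bicat}, and applying that result we immediately conclude that $\vdA \times_\vdC \vdB$ is (the underlying vdc of) a bicategory: it is representable, it has discrete vertical category (since $P$ has discrete vertical fibres and the vertical morphisms of \vdA are identities), and composition of horizontal morphisms is strictly associative and unital.

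It remains to show that every cell in $\vdA \times_\vdC \vdB$ is an identity. A cell there is by construction a pair $(\alpha,\beta)$ with $\alpha$ a cell in \vdA, $\beta$ a cell in \vdB and $F(\alpha) = P(\beta)$. Since \vdA is a category, the only cells $\alpha$ are identities, so $\alpha = \id_p$ for some horizontal morphism $p$ in \vdA, whose vertical boundary is $(\id_{A_0}, \id_{A_1})$. In particular $F(\alpha) = \id_{F(p)}$ and the vertical boundary of $\beta$ is then a pair $(g_0, g_1)$ of vertical morphisms in \vdB with $P(g_i) = \id_{F(A_i)}$; that is, $g_0, g_1$ belong to the vertical fibres of $P$. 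By the discreteness of those fibres, $g_0$ and $g_1$ are identities.

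Hence $\beta$ is a cell whose vertical boundary consists of identities and which lies over $\id_{F(p)}$, i.e.\ $\beta$ is a morphism in the fibre $P_{F(p)}$ from its source horizontal morphism $q$ to its target. By discreteness of this fibre, $\beta$ must itself be an identity cell, $\beta = \id_q$ for some $q$ in \vdB lying over $F(p)$. Consequently $(\alpha,\beta) = (\id_p, \id_q) = \id_{(p,q)}$ is an identity cell in the pullback, so all cells in $\vdA \times_\vdC \vdB$ are identities. Combined with the first paragraph, this says exactly that $\vdA \times_\vdC \vdB$ is the underlying vdc of a category.

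There is no serious obstacle here; the entire content is bookkeeping once Proposition~\ref{prop:vdc-pull-bicat} is invoked. The only point that requires a little care is to check that the two discreteness hypotheses play complementary roles: the discreteness of vertical fibres is what is needed to trivialise vertical morphisms in the bicategory step, while the discreteness of the fibres $P_p$ (over horizontal morphisms) is what is needed to trivialise the horizontal 2-cells $\beta$, producing a category rather than a mere bicategory.
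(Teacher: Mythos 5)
Your proof is correct and follows essentially the same route as the paper, which only sketches the argument in the two sentences preceding the statement: reduce to Proposition~\ref{prop:vdc-pull-bicat}, observe that every cell of $\vdA \times_\vdC \vdB$ has the form $(\id_p,\beta)$ with $\beta$ lying over an identity, and then use discreteness of the vertical fibres and of the fibres of $P$ to force $\beta$ to be an identity. Your writeup is a faithful (and somewhat more careful) elaboration of exactly that bookkeeping.
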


In the next chapter we will look at the Bénabou-Grothendieck construction from this perspective, where the pushfibrations $\vdB \to \vdC$ will be $\Dist_\ast \to \Dist$, $\Span_\ast \to \Span$, $\Cat_\ast \to \Cat$, $\Cat^\op_\ast \to \Cat^\op$ and $\Adj_\ast \to \Adj$ and the functors $F$ will be lax/normal/pseudo.

In the meanwhile, let us look at the example $\vdC^{\vdX}$.

\begin{example}
Given a vdc $\vdC$ and $\vdX \subseteq \vdC$, not necessarily a vdc, if $\vdC$ is representable and the horizontal morphisms of \vdX are closed under postcomposition by the composition cells of \vdC then $\vdC^{\vdX}$ is representable.
In the following we will assume that \vdC is representable and the horizontal morphisms of \vdX are closed under postcomposition by cells of \vdC where the vertical morphisms are identities.
So $\cU \colon \vdC^{\vdX} \to \vdC$ is a pushfibration and $\vdC^{\vdX}$ is representable.
Furthermore, since the vertical morphisms and cells of $\vdC^{\vdX}$ are those in \vdC, its vertical fibres and its fibres are discrete.
So for a category \vdA and a functor $F \colon \vdA \to \vdC$, $\vdA \times_{\vdC} \vdC^{\vdX}$ is a category.
\end{example}

\part{Bénabou-Grothendieck correspondences}

In addition to generalising the inverse image, a reason why fibred categories are really useful is that they correspond to indexed categories.
To understand this, let us first take a look at how to define an indexed set.

Let us fix a set $I$ of indices.
An indexed set $(X_i)_{i \in I}$ can be described as a function $f_X \colon I \to \Set$ where $\Set$ is the (large) set of sets. $f_X$ assigns to each index $i \in I$ the set $f(i) := X_i$.
There are some inconveniences with this presentation.
First, we have to deal with some size issues to be able to talk about a set of sets.
One way to deal with this is by considering the index set as a functor $f_X \colon I \to \Set$ instead.
Here we implicitly consider the set $I$ as a discrete category.
But, this requires to go up one level in the hierarchy of $n$-categories.
Instead, one could define an indexed set $(X_i)_{i \in I}$ by staying at a set-theoretic level.
The idea is to build the set $X := \coprod_{i \in I} X_i$.
Then to keep track of the different sets, we define a function $p_X \colon X \to I$ that sends an object $x \in X$ to the index $i$ for which $x \in X_i$.
That way, we can recover $(X_i)_i := (p_X^{-1}(i))_i$.
Conversely, any function $p \colon X \to I$ between arbitrary sets $X,I$ defines an indexed set by $X_i := p^{-1}(i)$.
This gives an equivalence between functions $p \colon X \to I$ and functors $F \colon I \to \Set$.
 
This can be categorified.
Given a category $\cB$, by a category indexed over $\cB$ we mean a family of categories $(X_b)_{b \in \cB}$ such that any map $f \colon b \to b'$ in $\cB$ induces a reindexing functor $f^\ast \colon X_{b'} \to X_b$.
Furthermore, we want the reindexing to be pseudofunctorial.
The fact that we ask for the reindexing functor to go in the opposite direction can be justified by the fact that we want to collect indexed sets into a set-indexed category.
This is called the family of sets in \cite{Jacobs2001}. 
Let $\cB$ be $\Set$. 
To each set $I \in \Set$ we associate the category $X_I$ with objects $I$-indexed sets $(X_i)_{i \in I}$ and morphisms $(f_i)_{i \in I} \colon (X_i)_{i \in I} \to (Y_i)_{i \in I}$.
Now given a reindexing function $r \colon I \to J$ we get a functor $r^\ast \colon X_J \to X_I$ by sending $(X_j)_{j \in J}$ to $(X_{r_i})_{i \in I}$ and similarly for morphims.

So, a $\cB$-indexed category is a pseudofunctor $F \colon \cB^\op \to \Cat$.
Once again, to define indexed categories, we had to move one level up to 2-categories.
But, this can be turned into a categorical definition.
Given an indexed category, we can define a category $\int F$ that is obtained by adding all the categories $(F(b))_{b \in B}$ together, like we did for the indexed set.
This $\int F$ is called the Grothendieck construction.
It comes equipped with a functor $\int F \to \cB$.
However, not all functors arise in this way.
In fact, those that do are the fibrations, where the pullback functor correspond to the reindexing functor.
This gives a correspondence between fibrations $p \colon \cE \to \cB$ and pseudofunctor $F \colon \cB^\op \to \Cat$ that we will call the Grothendieck correspondence.
If we had chosen to define indexed categories such that the reindexing functor arises covariantly instead of contravariantly, this would have given us a pseudofunctor $F \colon \cB \to \Cat$.
These are in correspondence with opfibred categories.
When considering mere functors $p \colon \cE \to \cB$, we still get a family of categories $(p^{-1}(b))_{b \in \cB}$.
But instead of getting reindexing functor from a map $r \colon b \to b'$ we get a reindexing distributor $\delta(r) \colon (p^{-1}(b')) \xto p^{-1}(b)$.
Furthermore, this reindexing is not pseudofunctorial but lax normal functorial, i.e., the identity is preserves up to isomorphism but the associativity only in a lax way.
So we get a correspondence between functors $p \colon \cE \to \cB$ and lax normal functors $F \colon \cB \to \Dist$ that we call the B\'{e}nabou-Grothendieck correspondence since it generalises the Grothendieck correspondence and was originally described by B\'{e}nabou, see \cite{Benabou2000}.
We get back the covariant (resp. contravariant) Grothendieck correspondence by noticing that a functor is a fibration (resp. bifibration) iff the reindexing distributors $\delta(r) \colon (p^{-1}(b')) \xto p^{-1}(b)$ are representable by a functor $r^\ast$ (resp. corepresentable by a functor $r_\ast$) in a pseudofunctorial way.
Notice that we can put both Grothendieck correspondences together to get that a functor is a bifibration iff $\delta(r)$ is both representable by $r^\ast$ and $r_\ast$.
This means that $r_\ast \dashv r^\ast$ and so we get another correspondence, this time between bifibrations $p \colon \cE \to \cB$ and pseudofunctors $F \colon \cB \to \Adj$.

In chapter \ref{ch:bg-cat}, we first give an abstraction description of the Grothendieck construction making use of the notion of pushfibration of virtual double categories.
This can be used to describe variations of the B\'{e}nabou-Grothendieck correspondences and we illustrate it on some examples.
Then we give a polycategorical version of the B\'{e}nabou-Grothendieck correspondences.
More specifically, we show that functors of polycategories $p \colon \cE \to \cB$ correspond to lax normal functors of weak 2-polycategories $F \colon \cB \to \Dist$.
Then, we show that bifibrations of polycategories correspond to pseudofunctors $F \colon \cB \to \MVar$ where $\MVar$ is the 2-polycategory of categories and multivariable adjunctions.
Notice that we directly jumped from functors to bifibrations without characterising fibrations and opfibrations first.
This is because fibrations (resp. opfibrations) will correspond to functors such that for any polymap in the base $r \colon b_1,\dots, b_m \to b_1', \dots, b_n'$ the reindexing distributor $\delta(r) \colon p^{-1}(b_1'), \dots, p^{-1}(b_n') \xto p^{-1}(b_1),\dots, p^{-1}(b_m)$ is representable in each of its outputs (resp. each of its inputs).
But these do not compose.
For example, consider distributors $d \colon A \xto B$ and $d' \colon B, B' \xto C$ both representable in their inputs.
For $d'\circ d \colon A,B' \xto C$ to be representable in $B'$ we would need a functor $A^\op, C \to B'$.
Since $d'$ is representable in $B'$ we have a functor $B^\op, C \to B$.
But $d$ is only representable in $A$, given us a functor $B \to A$ to represent it but not one $A \to B$.
So we cannot get a sub-2-polycategory of \Dist{} by restricting to the distributors representable only in the inputs or only in the outputs.

Lastly, we should emphasise that the results in chapter \ref{ch:bg-poly}, and in particular the polycategorical Grothendieck correspondences, are conditioned on having a theory of weak 2-polycategories.
To the extent of our knowledge such a theory has not been carefully worked out yet.
We leave it as future work to craft this theory.
Meanwhile we will describe the properties we assume to hold for weak 2-polycategories after recalling the usual notion of categorical Grothendieck correspondences. 

The Grothendieck construction has been introduced by Grothendieck in \cite{Grothendieck1960} and its extension mentioned above by B\'{e}nabou (see \cite{Benabou2000}).
The fact that the Grothendieck construction can be obtained as a strict 2-pullback of bicategories via the 2-functor $\Cat_\ast \to \Cat$ seems to be a folklore result.
It is for example mentioned on the nlab page about the Grothendieck construction.
We couldn't find a reference in the literature.
Considering a pullback of virtual double categories instead to encompass versions of the Bénabou-Grothendieck construction is new.
The chapter on the polycategorical Bénabou-Grothendieck construction is also original work.

\chapter{The B\'{e}nabou-Grothendieck construction}
\label{ch:bg-cat}
%
%

\section{The B-G construction as a pullback}

In this section we want to recover the Bénabou construction as a pullback:
\[\begin{tikzcd}
	{\int F} && {\Dist_\ast} \\
	\\
	\cB && \Dist
	\arrow[from=1-1, to=3-1]
	\arrow["\cF"', from=3-1, to=3-3]
	\arrow["\cU", from=1-3, to=3-3]
	\arrow[from=1-1, to=1-3]
	\arrow["\lrcorner"{anchor=center, pos=0.125}, draw=none, from=1-1, to=3-3]
\end{tikzcd}\]

First, let define the virtual double categories involved.

\begin{definition}
\Dist{} is the vdc whose:
\begin{itemize}
\item objects are categories
\item vertical morphisms are functors,
\item horizontal morphisms are distributors $p \colon A \to B$, i.e. functors $p \colon A^\op \times B \to \Set$
\item a cell 
\[\begin{tikzcd}[ampersand replacement=\&]
	{A_0} \&\& {A_1} \& \dots \& {A_{n-1}} \&\& {A_n} \\
	\\
	{B_0} \&\&\&\&\&\& {B_1}
	\arrow["{p_1}"{description}, from=1-1, to=1-3]
	\arrow["{p_n}"{description}, from=1-5, to=1-7]
	\arrow[""{name=0, anchor=center, inner sep=0}, "q"{description}, from=3-1, to=3-7]
	\arrow["{F_0}"{description}, from=1-1, to=3-1]
	\arrow["{F_1}"{description}, from=1-7, to=3-7]
	\arrow["\alpha"{description}, shorten >=7pt, Rightarrow, from=1-4, to=0]
\end{tikzcd}\]
is a natural transformation \[\alpha \colon \int^{a_i \in A_i} p_1(-,a_1) \times p_2(a_1,a_2) \times \dots \times p_{n-1}(a_{n-2},a_{n-1}) \times p_n(a_{n-1},=) \to q(F_0(-),F_1(=))\]
\end{itemize}
\end{definition}

\begin{rk}
Alternatively, we could define a cell as a family of maps $p_1(a_0,a_1) \times \dots \times p_n(a_{n-1},a_n) \to q(F_0(a_0),F_1(a_n))$ which are natural in $a_0,a_n$ and dinatural in the other $a_i$.
It would have the benefice of not requiring to take the coend of $p_i$ which means less coherence to check.
It also makes for a more natural definition of cells in $\Dist_\ast$.
\end{rk}

We will sometimes write distributors with a dashed arrow $p \colon A \xto B$ to mark the difference with functors.
Any functor $F \colon A \to B$ gives rise to two distributors one contravariantly and the other covariantly: $B(F(-),=) \colon A \xto B$ and $B(-,F(=)) \colon B \xto A$.
Such a functor is said to represent the distributor.
A distributor $p \colon A \xto B$ is said to be representable in $A$ if there exists a functor $F \colon A \to B$ such that $p(-,=) \simeq B(F(-),=)$ naturally and representable in $B$ if there is a functor $G \colon B \to A$ such that $p(-,=) \simeq A(-,G(=))$ naturally.
In particular, if $p$ is representable both in $A$ and $B$ it is represented by a pair of adjoint functor $F \dashv G$.
There is a distributor $A(-,=) \colon A \xto A$, it is an horizontal identity in \vdC.

For one-object categories considered as the delooping of monoids, a distributor $p \colon \cB(A) \xto \cB(B)$ is a set equipped with a actions by $A$ and $B$, so it is a $(A,B)$-bimodule.
We can think of a distributor as generalising this by assigning to each pair of objects a set that is acted upon by composition of the morphisms in $A$ and $B$.
A cell is given by a natural transformation \[\alpha : \int^{a_i \in A_i} p_1(-,a_1) \times p_2(a_1,a_2)\times\dots\times p_n(a_{n-1},=) \to q(F_0(-),F_1(=))\]that is, a family of functions \[\alpha_{a_0,a_n} \colon \int^{a_i \in A_i} p_1(a_0,a_1) \times p_2(a_1,a_2)\times\dots\times p_n(a_{n-1},a_n) \to q(F_0(a_0),F_1(a_n))\] naturally.
In other words, they are functions such that \[\alpha(\varphi_1,\dots, \varphi_{i-1} \cdot f, \varphi_i, \dots, \varphi_n) = \alpha(\varphi_1,\dots, \varphi_{i-1}, f \cdot \varphi_i, \dots, \varphi_n)\] where $\cdot f$ is defined as $p_{i_1}(\id,f)$ and $f \cdot$ as $p_i(f,\id)$.
So the coend is there to equate the left and right action, similar to what the reflexive coequaliser does for bimodules.
It can be constructed explictly as a coequaliser of a coproduct, e.g. in the binary case:
\[\begin{tikzcd}[ampersand replacement=\&]
	{\coprod\limits_{f \colon b \to b'} p(a,b)\times p(b',c)} \&\&\& {\coprod\limits_{b\in B} p(a,b) \times p(b,c)} \&\& {\int^{b \in B}p(a,b) \times p(b,c)}
	\arrow["{\iota_b'(\id_{p(a,b)} \times p(f,\id_c))}", shift left=2, from=1-1, to=1-4]
	\arrow["{\iota_b(p(\id_a,f)\times\id_{p(b',c)}}"', shift right=2, from=1-1, to=1-4]
	\arrow[from=1-4, to=1-6]
\end{tikzcd}\]
where $\iota_b$ is the embedding into the $b$-th component of the coproduct.
The vdc \Dist{} is representable with composite given by the coend formula above.

\begin{definition}
$\Dist_\ast$ is the vdc whose:
\begin{itemize}
\item objects are pointed categories: pairs $(A,a)$ of a category $A$ and an object $a \in \Ob{A}$
\item vertical morphisms are functors that preserves the points
\item horizontal morphisms $p \colon (A,a) \to (B,b)$ are pointed distributors, i.e. pairs $(p,\varphi)$ of a distributor $p \colon A \to B$ and an element $\varphi \in p(a,b)$
\item cells are natural transformations:
\[\begin{tikzcd}[ampersand replacement=\&]
	{(A_0,a_0)} \&\& {(A_1,a_1)} \& \dots \& {(A_{n-1},a_{n-1})} \&\& {(A_n,a_n)} \\
	\\
	{(B_0,b_0)} \&\&\&\&\&\& {(B_1,b_1)}
	\arrow["{(p_1,\varphi_1)}"{description}, from=1-1, to=1-3]
	\arrow["{(p_n,\varphi_n)}"{description}, from=1-5, to=1-7]
	\arrow["{f_0}"{description}, from=1-1, to=3-1]
	\arrow["{f_1}"{description}, from=1-7, to=3-7]
	\arrow[""{name=0, anchor=center, inner sep=0}, "{(q,\psi)}"{description}, from=3-1, to=3-7]
	\arrow["\alpha"{description}, shorten >=8pt, Rightarrow, from=1-4, to=0]
\end{tikzcd}\]
such that $\alpha_{a_0,a_n}(\varphi_n \bullet \dots \bullet \varphi_1) = \psi$ where $\varphi_n, \bullet \dots \bullet \varphi_1$ is the image of $(\varphi_1,\dots,\varphi_n)$ through the embedding $p_1(a_0,a_1) \times \dots \times p_n(a_{n-1},a_n) \hookrightarrow \int^{x_i} p_1(a_0,x_1) \times \dots \times p_{n-1}(x_{n-1},a_n)$.
\end{itemize}
\end{definition}

$\Dist_\ast$ is representable with composite given by \[(\int^{x_i} p_1(a_0,x_1) \times \dots \times p_n(x_{n-1},a_n), \varphi_n \bullet \dots \bullet  \varphi_1)\]

\begin{prop}
The forgetful functor $\Dist_\ast \to \Dist$ is a pushfibration.
\end{prop}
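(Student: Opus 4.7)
The strategy is to produce an explicit opcartesian lift for every cell of $\Dist$ with identity vertical parts and every choice of points on its domain. Given composable pointed distributors $(p_i,\varphi_i)\colon (A_{i-1},a_{i-1})\to (A_i,a_i)$ and a cell $\alpha\colon p_1,\dots,p_n\Rightarrow q$ in $\Dist$, I propose
\[
\pushone{\alpha}{(p_1,\varphi_1),\dots,(p_n,\varphi_n)} \;:=\; (q,\psi), \qquad \psi := \alpha_{a_0,a_n}(\varphi_n\bullet\dots\bullet\varphi_1),
\]
with the opcartesian cell being $\alpha$ itself, now reinterpreted as a cell in $\Dist_\ast$. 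It is well-typed as a cell in $\Dist_\ast$ precisely because the point-compatibility condition $\alpha(\varphi_n\bullet\dots\bullet\varphi_1) = \psi$ is the definition of $\psi$.

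\textbf{Universal property, part one: existence and uniqueness of the underlying factorisation.} Suppose $\Lambda$ is a cell in $\Dist_\ast$ with source $(r_1,\sigma_1),\dots,(r_k,\sigma_k),(p_1,\varphi_1),\dots,(p_n,\varphi_n),(r'_1,\sigma'_1),\dots,(r'_{k'},\sigma'_{k'})$ and target $(t,\tau)$, whose underlying $\Dist$-cell $\cU(\Lambda)$ factors as $\beta\circ\alpha$ for some cell $\beta$ in $\Dist$. Since $\Dist$ is representable with horizontal composites given by coends, such a cell $\alpha$ (extended by horizontal identities on the ambient $r_i,r'_j$) admits unique factorisations of arbitrary cells lying above a composition with it; this is standard from the universal property of $\int^{x_i} p_1(a_0,x_1)\times\dots\times p_n(x_{n-1},a_n)$ together with the fact that horizontal identity cells are themselves universal. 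So $\beta$ exists and is unique in $\Dist$; since $\cU$ is strict, this is the only candidate for the factorisation in $\Dist_\ast$.

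\textbf{Universal property, part two: point preservation.} It remains to check that $\beta$ lifts to a cell in $\Dist_\ast$, i.e.\ that it preserves the chosen points. Unwinding the definition, point preservation for $\beta$ amounts to the equation
\[
\beta(\sigma_1\bullet\dots\bullet\sigma_k\bullet\psi\bullet\sigma'_1\bullet\dots\bullet\sigma'_{k'}) \;=\; \tau.
\]
Applying $\cU(\Lambda) = \beta\circ\alpha$ to the product of points $(\sigma_1,\dots,\sigma_k,\varphi_1,\dots,\varphi_n,\sigma'_1,\dots,\sigma'_{k'})$ and substituting $\alpha(\varphi_n\bullet\dots\bullet\varphi_1) = \psi$ yields exactly this equation, because $\Lambda$ itself preserves the points and therefore sends the source product to $\tau$.

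I expect the main subtlety to lie in correctly bookkeeping the coend composition $\bullet$ of points across the various ambient contexts and the whiskering by identity cells. Once that is done cleanly, the argument is essentially forced by the strictness of $\cU$ together with the representability of $\Dist$, and the opcartesian property follows at once from uniqueness of the $\Dist$-factorisation plus the observation that this factorisation automatically respects the points.
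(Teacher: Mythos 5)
Your proof is correct and follows essentially the same route as the paper: push the points forward along $\alpha$ via $\alpha_{a_0,a_n}(\varphi_n\bullet\dots\bullet\varphi_1)$, observe that the lift of the residual cell $\beta$ is forced since cells of $\Dist_\ast$ are just cells of $\Dist$ satisfying a property, and verify point-preservation of $\beta$ from that of $\Lambda$ (the paper additionally reduces to unary cells via representability, which you handle directly with the coend bookkeeping). The only blemish is the appeal in part one to "unique factorisations" through $\alpha$ coming from the coend's universal property — an arbitrary cell $\alpha$ is not universal in $\Dist$, and this is not needed: $\beta$ is given as part of the opcartesian test data, and uniqueness of its lift follows from faithfulness of $\cU$ alone, as your final clause of that paragraph already says.
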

\begin{proof}
Since it only forgets structure it is strict.

Furthermore, since both vdcs are representable, we just need to prove it for unary cells.

So consider the following data of a cell in \Dist{} together with points of its domain:
\[\begin{tikzcd}[ampersand replacement=\&]
	{(A,a)} \&\& {(B,b)} \\
	\\
	{A} \&\& B
	\arrow[""{name=0, anchor=center, inner sep=0}, "{(p,\varphi)}"{description}, from=1-1, to=1-3]
	\arrow[""{name=1, anchor=center, inner sep=0}, "q"{description}, from=3-1, to=3-3]
	\arrow[Rightarrow, no head, from=1-1, to=3-1]
	\arrow[Rightarrow, no head, from=1-3, to=3-3]
	\arrow["\alpha"{description}, shorten <=9pt, shorten >=9pt, Rightarrow, from=0, to=1]
\end{tikzcd}\]

We can complete it into a cell in $\Dist_\ast$:
\[\begin{tikzcd}[ampersand replacement=\&]
	{(A,a)} \&\& {(B,b)} \\
	\\
	{(A,a)} \&\& {(B,b)}
	\arrow[""{name=0, anchor=center, inner sep=0}, "{(p,\varphi)}"{description}, from=1-1, to=1-3]
	\arrow[""{name=1, anchor=center, inner sep=0}, "{(q,\alpha_{a,b}(\varphi))}"{description}, from=3-1, to=3-3]
	\arrow[Rightarrow, no head, from=1-1, to=3-1]
	\arrow[Rightarrow, no head, from=1-3, to=3-3]
	\arrow["\alpha"{description}, shorten <=9pt, shorten >=9pt, Rightarrow, from=0, to=1]
\end{tikzcd}\]

Now let prove that it is opcartesian.

Given a cell:
\[\begin{tikzcd}[ampersand replacement=\&]
	{(A_0,a_0)} \&\& {(A,a)} \&\& {(B,b)} \&\& {(B_1,b_1)} \\
	\\
	{(A_0,a_0)} \&\& {(A,a)} \&\& {(B,b)} \&\& {(B_1,b_1)} \\
	\\
	{(C_0,c_0)} \&\&\&\&\&\& {(C_1,c_1)}
	\arrow[""{name=0, anchor=center, inner sep=0}, "{(p,\varphi)}"{description}, from=1-3, to=1-5]
	\arrow[""{name=1, anchor=center, inner sep=0}, "{(q,\alpha_{a,b}(\varphi))}"{description}, from=3-3, to=3-5]
	\arrow[Rightarrow, no head, from=1-3, to=3-3]
	\arrow[Rightarrow, no head, from=1-5, to=3-5]
	\arrow["{(p_0,\varphi_0)}"{description}, from=1-1, to=1-3]
	\arrow["{(p_1,\varphi_1)}"{description}, from=1-5, to=1-7]
	\arrow[""{name=2, anchor=center, inner sep=0}, "{(r,\rho)}"{description}, from=5-1, to=5-7]
	\arrow[Rightarrow, no head, from=1-1, to=3-1]
	\arrow["{(p_0,\varphi_0)}"{description}, from=3-1, to=3-3]
	\arrow["{(p_1,\varphi_1)}"{description}, from=3-5, to=3-7]
	\arrow[Rightarrow, no head, from=1-7, to=3-7]
	\arrow["{F_0}"{description}, from=3-1, to=5-1]
	\arrow["{F_1}"{description}, from=3-7, to=5-7]
	\arrow["\alpha"{description}, shorten <=9pt, shorten >=9pt, Rightarrow, from=0, to=1]
	\arrow["\beta"{description}, shorten <=9pt, shorten >=9pt, Rightarrow, from=1, to=2]
\end{tikzcd}\]
such that $\beta \circ \alpha$ is in $\Dist_\ast$, we want to prove that $\beta$ is in $\Dist_\ast$.

First $F_0(a_0) = c_0$ and $F_1(b_1) = c_1$ since $\beta \circ \alpha$ is in $\Dist_\ast$.

Now we want to prove that $\beta_{a_0,b_1}(\alpha_{a,b}(\varphi)) = \rho$.
But since $\beta \circ \in \Dist_\ast$ we have $(\beta \circ \alpha)_{a_0,b_1}(\varphi)  = \rho$ and then we use the definition of composition.
\end{proof}

Since the vertical morphisms and the cells of $\Dist_\ast$ consists of vertical morphisms and cells in \Dist, the vertical fibres and the fibres of the forgetful functor are discrete.

Now let consider a functor of vdc $F \colon \vdB \to \Dist$.
We can form the pullback $\vdB \times_\Dist \Dist_\ast$.
It is given by:
\begin{itemize}
\item Objects $(B\in \Ob{\vdB}, b \in F(B))$
\item vertical morphisms $f \colon (B,b) \to (B',b')$ are vertical morphisms $f \colon B \to B'$ in \vdB such that $F(f)(b) = b'$
\item horizontal morphisms $(p, \varphi) \colon (B_0,b_0) \to (B_1,b_1)$ are given by pairs of a horizontal morphism $p \colon B_0 \to B_1$ in \vdB and an element $\varphi \in F(p)(b_0,b_1)$
\item cells the ones in \vdB such that $F(\alpha)_{b_0,b_n}(\varphi_n \bullet \dots \bullet \varphi_1) = \varphi'$.
\end{itemize}

\begin{definition}
We call the vdc $\vdB \times_\Dist \Dist_\ast$ the \defin{vdc of elements} of $F \colon \vdB \to \Dist$, or its \defin{Bénabou-Grothendieck construction} (B-G construction for short) and we denote it $\int F$.
\end{definition}

\begin{prop}
For any $F \colon \vdB \to \Dist$, $\int F \to \vdB$ is a pushfibration.
\end{prop}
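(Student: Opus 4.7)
The plan is to recognise this statement as a direct instance of the general result proved earlier in this chapter, namely that for any functor $F \colon \vdA \to \vdC$ and any pushfibration $P \colon \vdB \to \vdC$, the projection $\vdA \times_\vdC \vdB \to \vdA$ from the pullback is itself a pushfibration. Here we take $\vdA := \vdB$, $\vdC := \Dist$, and $P := \cU \colon \Dist_\ast \to \Dist$. Since $\int F$ was \emph{defined} as $\vdB \times_\Dist \Dist_\ast$, the projection $\int F \to \vdB$ coincides with the first projection from this pullback, so the conclusion will follow immediately once we justify the two hypotheses of the general result.

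First I would invoke the preceding proposition which establishes that $\cU \colon \Dist_\ast \to \Dist$ is a pushfibration: concretely, given a distributor $p \colon A \xto B$ equipped with a point $\varphi \in p(a,b)$ and any cell $\alpha \colon p \Rightarrow q$ in $\Dist$ lying over identities, the lift is obtained by equipping $q$ with the point $\alpha_{a,b}(\varphi) \in q(a,b)$, and this is opcartesian. Then I would cite the abstract pullback proposition to conclude that $\int F \to \vdB$ is a pushfibration, with pushforwards given explicitly by the formula
\[
\pushone{\alpha}{(p_1,\varphi_1),\dots,(p_n,\varphi_n)} = \bigl(\alpha,\, \pushone{F(\alpha)}{(p_1,\varphi_1),\dots,(p_n,\varphi_n)}\bigr),
\]
where the second component is the opcartesian lift in $\Dist_\ast$ of the cell $F(\alpha)$ in $\Dist$ over the chosen points.

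Unpacking the formula gives the expected concrete description: given horizontal morphisms $(p_i,\varphi_i) \colon (B_{i-1},b_{i-1}) \to (B_i,b_i)$ in $\int F$ and a cell $\alpha \colon p_1,\dots,p_n \Rightarrow q$ in $\vdB$, the pushforward is the horizontal morphism $(q, F(\alpha)_{b_0,b_n}(\varphi_n \bullet \cdots \bullet \varphi_1)) \colon (B_0,b_0) \to (B_n,b_n)$, and the lifted cell is $\alpha$ itself. One could verify the opcartesian property directly, but the abstract argument already encapsulates this verification through the composition of the strictness of $\cU$ with the universal property of its opcartesian lifts.

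There is no real obstacle here, since every ingredient has already been assembled: the representability of $\Dist$ and $\Dist_\ast$, the strictness of $\cU$, the discreteness of its vertical and ordinary fibres, and the pullback proposition. The only thing to keep in mind is that the general pullback result requires $P$ to be strict (which $\cU$ is, as it only forgets the chosen points) so that composition of opcartesian cells in $\Dist_\ast$ lying over composable cells in $\Dist$ still produces opcartesian cells, and this hypothesis is satisfied here with no further work.
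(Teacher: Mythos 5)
Your proposal is correct and takes essentially the same route as the paper: the paper's proof consists precisely of the one-line observation that $\cU \colon \Dist_\ast \to \Dist$ is a pushfibration, leaving implicit the appeal to the earlier proposition that the projection out of a pullback along a pushfibration is itself a pushfibration. Your explicit unpacking of the pushforward as $(q,\, F(\alpha)_{b_0,b_n}(\varphi_n \bullet \cdots \bullet \varphi_1))$ matches the construction; the only cosmetic remark is that the discreteness of the fibres of $\cU$ is not actually needed for this statement (it is used only later, for $\int F$ to be a bicategory or a category).
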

\begin{proof}
Since $\Dist_\ast \to \Dist$ is a pushfibration.
\end{proof}

\begin{prop}
For any $F \colon \cB \to \Dist$ functor of bicategory, $\int F$ is a bicategory and $\int F \to \cB$ is a 2-pushfibration.
\end{prop}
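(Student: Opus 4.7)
The plan is to apply the two general pullback results established earlier in this chapter, namely the proposition showing that the pullback of a pushfibration along a functor into a representable vdc yields a bicategory when the vertical fibres are discrete (Proposition \ref{prop:vdc-pull-bicat}), and the proposition showing that the projection from such a pullback is itself a pushfibration of vdcs. The main work will be unpacking the bicategorical statement from the virtual-double-categorical one.

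First I would verify the hypotheses of Proposition \ref{prop:vdc-pull-bicat} with $\vdA := \cB$ viewed as a representable vdc with discrete vertical category, $\vdC := \Dist$, $\vdB := \Dist_\ast$, $F := \cF$ and $P := \cU$. We have already shown that $\Dist$ is representable (with composites given by coends), that $\cU \colon \Dist_\ast \to \Dist$ is a pushfibration of vdcs (the key step being that opcartesian liftings are obtained by sending a pointed distributor $(p,\varphi)$ through a natural transformation $\alpha$ to $(q,\alpha_{a,b}(\varphi))$), and I noted right after that proof that the vertical fibres of $\cU$ are discrete since vertical morphisms and cells of $\Dist_\ast$ are simply those of $\Dist$ satisfying a preservation property. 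Hence Proposition \ref{prop:vdc-pull-bicat} applies and $\int F = \cB \times_\Dist \Dist_\ast$ is (the underlying vdc of) a bicategory.

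Next I would invoke the preceding proposition to conclude that $\int F \to \cB$ is a pushfibration of vdcs. I would then argue that, because both $\int F$ and $\cB$ are bicategories and the projection is strict (as the projection on the first factor is always strict), this pushfibration of vdcs coincides with a strict 2-functor of bicategories. The pushforward of a horizontal morphism $(p,\varphi)$ in $\int F$ along a 2-cell $\alpha \colon p \Rightarrow q$ in $\cB$ is $(q,\cF(\alpha)_{b_0,b_1}(\varphi))$, and the opcartesian universal property of this lifting translates directly into the universal property of a pushforward 2-cell in the bicategorical sense, because composition of horizontal morphisms in both $\int F$ and $\cB$ coincides with the representable-vdc composite and is preserved strictly by the projection.

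The main obstacle I anticipate is bookkeeping rather than mathematical depth: I need to make sure that the translation between ``pushfibration of vdcs between representable vdcs with discrete vertical categories'' and ``2-pushfibration of bicategories'' really goes through verbatim in this setting. This amounts to checking that the factorisation diagram defining an opcartesian cell in the vdc $\int F$, when restricted to cells whose vertical components are identities, reproduces the factorisation diagram defining an opcartesian 2-morphism in the bicategory $\int F$. Since the vertical category of $\int F$ has only identity 2-cells above identity vertical arrows (inherited from the discreteness of the vertical fibres of $\cU$ and the fact that $\cB$ is discrete in the vertical direction), every cell in the vdc $\int F$ lying over a unary 2-cell in $\cB$ is automatically of the required bicategorical shape, and the two notions of opcartesian coincide.
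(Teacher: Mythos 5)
Your proof is correct and follows essentially the same route as the paper: apply Proposition \ref{prop:vdc-pull-bicat} (using that $\cU \colon \Dist_\ast \to \Dist$ is a pushfibration with discrete vertical fibres) to get the bicategory structure on $\int F$, and the general fact that the first projection of such a pullback is a strict pushfibration to get the 2-pushfibration. The paper states this more tersely, while you spell out the translation from the vdc-level notion of opcartesian cell to the bicategorical one; that extra bookkeeping is accurate but not a different argument.
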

\begin{proof}
By prop \ref{prop:vdc-pull-bicat}, since $\Dist_\ast \to \Dist$ is a pushfibration and its vertical fibres are discrete we get a bicategory $\int F$ and the projection is a 2-pushfibration of bicategories.
Furthermore, since we don't use the vertical morphisms of $\Dist$, we can forget them and consider the bicategory of categories and distributors.
\end{proof}

\begin{prop}
For any (lax) functor $F \colon \cB \to \Dist$ where \cB is a category, $\int F$ is a category and $\int F \to \cB$ a functor.
\end{prop}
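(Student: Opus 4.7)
The plan is to invoke Proposition~\ref{prop:vdc-pull-cat} with $\vdA := \cB$, $\vdC := \Dist$, $\vdB := \Dist_\ast$, and $P := \cU \colon \Dist_\ast \to \Dist$. Three of the hypotheses are already in place: $\cB$ is a category by assumption, $\Dist$ is representable (composition of distributors is given by the coend formula), and $\cU$ is a pushfibration (by the proposition established just before). So the only thing that remains to check is that the vertical fibres and the (horizontal) fibres of $\cU$ are all discrete.

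For the vertical fibres, fix a category $A$ and consider the fibre over $A$ in the vertical functor $\Dist_\ast^v \to \Dist^v$, i.e.\ in the forgetful functor $\Cat_\ast \to \Cat$. Its objects are pointed categories $(A,a)$ with $a \in \Ob{A}$, and its morphisms are functors $f \colon (A,a) \to (A,a')$ with $\cU(f) = \id_A$, which forces $f = \id_A$ and therefore $a = f(a) = a'$. Hence the vertical fibre over $A$ is the discrete category on $\Ob{A}$.

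For the fibres, fix a distributor $p \colon A \to B$ in $\Dist$. An object of the fibre over $p$ is a pointed distributor $(q,\varphi)$ with $\cU(q,\varphi) = p$, i.e.\ a choice of points $a \in A$, $b \in B$ together with an element $\varphi \in p(a,b)$. A morphism in the fibre is a cell $\beta$ of $\Dist_\ast$ with $\cU(\beta) = \id_p$; unpacking, this means that its vertical parts must be $\id_A, \id_B$, its underlying natural transformation must be the identity of $p$, and the point-preservation condition $\beta_{a,b}(\varphi) = \varphi'$ degenerates to $\varphi = \varphi'$. So the only morphisms are identities, and the fibre is discrete.

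Proposition~\ref{prop:vdc-pull-cat} now yields that $\int F = \cB \times_{\Dist} \Dist_\ast$ is a category. The projection $\int F \to \cB$ is the left-hand projection of the pullback, which was already shown to be strict, and a strict functor of vdcs between categories is exactly a functor of categories; this gives the second half of the statement. The potentially tricky point was simply to keep the definitions of vertical fibre and fibre straight and notice that the pointedness data contributes no non-trivial morphisms over an identity, which is what forces discreteness.
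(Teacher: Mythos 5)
Your proof is correct and follows essentially the same route as the paper: the paper's own proof is a one-line appeal to Proposition~\ref{prop:vdc-pull-cat}, having asserted just beforehand that the vertical fibres and fibres of $\Dist_\ast \to \Dist$ are discrete because its vertical morphisms and cells are simply those of $\Dist$ satisfying a point-preservation property. You merely spell out that discreteness check in more detail, which is a faithful expansion rather than a different argument.
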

\begin{proof}
By \ref{prop:vdc-pull-cat}, since the forgetful functor is a pushfibration and its vertical fibres and fibres are discrete.
\end{proof}

Explicitly, the category $\int F$ is given by:
\begin{itemize}
\item its objects are $(B \in \Ob{\cC}, b \in F(B))$
\item its morphisms are $(f \colon B \to B', \varphi \in F(p)(b,b'))$
\item its identities are $(\id_B, \eta_{b,b}(\id_b))$ where $\eta \colon F(A)(-,--) \Rightarrow F(\id_A)(-,--)$ is the unit of the lax functor $F$
\item its compositions are $F(f')\bullet F(f), \mu_{f,f'}(\varphi'\bullet\varphi) \in F(f' \circ f)(b,b''))$ where $F(f') \bullet F(f)$ is the composition of distributors given by the coend formula, $\varphi' \bullet \varphi$ is the image of $(\varphi, \varphi')$ by the inclusion in $(F(f')\bullet F(f))(b,b'')$ and $\mu \colon (F(f') \bullet F(f)) \Rightarrow F(f' \circ f)$ comes from the laxity of $F$
\end{itemize}

Note, however that this construction does not provide an inverse to the fibre construction.
This is because the fibre construction is normal.
If we restrict yourselves to lax normal functor into $\Dist$ the we get the usual Bénabou-Grothendieck construction.

\begin{rk}
Consider $\vdX \subseteq \Dist$ with cells all the cells in \Dist of the shape:
\[\begin{tikzcd}[ampersand replacement=\&]
	\& {\mathbf{1}} \\
	A \&\& B
	\arrow["a"{description}, from=1-2, to=2-1]
	\arrow[""{name=0, anchor=center, inner sep=0}, "p"{description}, from=2-1, to=2-3]
	\arrow["b"{description}, from=1-2, to=2-3]
	\arrow["\varphi"{description}, shorten >=3pt, Rightarrow, from=1-2, to=0]
\end{tikzcd}\]
A vertical morphism $a \colon 1 \to A$ in \vdX is an object in $A$ and a cell such as above is an element $\varphi \in p(a,b)$.
Then, $\Dist^{\vdX}$ is $\Dist_\ast$.
So what we did here follow could have been deduced from the more general case of $\vdC^{\vdX} \to \vdC$.
However, we found it more instructive to spell out the proof for this particular case. 

\end{rk}

\section{Variants of the B-G construction}

In the above, instead taking the pullback of $\Dist_\ast \to \Dist$ one can consider other functors.

\subsection{Discrete B-G correspondence}

Let $\Rel\ast$ be the vdc of pointed relations:
\begin{itemize}
\item its objects are pairs $(A,a)$ of a set and an element of it
\item its vertical morphisms are functions preserving the elements
\item its horizontal morphisms $R \colon (A,a) \to (B,b)$ are relations such that $a R b$
\item its cells are cells in \Rel
\end{itemize}

$\Rel_\ast$ is representable with horizontal composition given by the one of \Rel.

\begin{prop}
The forgetful functor $\Rel_\ast \to \Rel$ is a pushfibration.
\end{prop}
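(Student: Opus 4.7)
The proof will closely mirror the one just given for $\Dist_\ast \to \Dist$. First, the forgetful functor $\Rel_\ast \to \Rel$ is strict, since it merely forgets pointing data. Second, both $\Rel$ and $\Rel_\ast$ are representable, so by the reduction established earlier in the chapter it suffices to exhibit opcartesian liftings of universal (composition) cells together with opcartesian liftings of unary cells whose vertical boundary consists of identities, and then check factorisation in the general shape.

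The key observation, which makes the argument considerably lighter than the $\Dist_\ast$ case, is that cells in $\Rel$ are propositional: a cell $r_1,\dots,r_n \Rightarrow s$ with vertical boundary $f,g$ exists iff $\forall a_0,a_n,\ (\exists a_1,\dots,a_{n-1}.\, \bigwedge_i a_{i-1} r_i a_i) \Rightarrow f(a_0)\, s\, g(a_n)$, and when it exists it is unique. Given a chain of pointed relations $R_i \colon (A_{i-1},a_{i-1}) \to (A_i,a_i)$ (so $a_{i-1} R_i a_i$ by definition of the pointing) and a cell $\alpha \colon R_1,\dots,R_n \Rightarrow s$ in $\Rel$ with identity vertical morphisms, I would point the codomain as $s \colon (A_0,a_0) \to (A_n,a_n)$; this is well-defined because the witnesses $a_{i-1} R_i a_i$ exist and $\alpha$ propositionally forces $a_0\, s\, a_n$. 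Similarly, the pushforward of a universal composition cell points the composite $R_n \bullet \dots \bullet R_1$ by the evident witness obtained from concatenating the pointings.

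The opcartesianness condition is then automatic. Suppose $\Lambda$ is a cell in $\Rel_\ast$ lying over a factorisation $\beta \circ \alpha$ of the image in $\Rel$. Because cells in $\Rel$ are unique when they exist, the factoring cell $\beta$ in $\Rel$ is already uniquely determined; the only additional data to check to lift it to $\Rel_\ast$ is that its vertical boundary preserves the induced pointings, and this follows directly from the pointing condition on $\Lambda$ itself. The same argument in the more general shape (non-identity vertical morphisms at the outer boundary) proceeds by pointing the new outer objects by the images $f(a_0), g(a_n)$ of the existing points, again using the propositional character of cells in $\Rel$ to secure both existence and uniqueness of the factorisation.

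\paragraph{Main obstacle.} There is no substantive obstacle, since $\Rel$ is locally propositional; the entire content of the proof reduces to pushing the witnesses through the relational inclusions encoded by cells. The only genuine care is in verifying, once, that the reduction to unary-plus-universal liftings (which was used in the $\Dist_\ast$ proof to justify restricting attention to unary cells) applies here, and in spelling out the pointing convention so that the forgetful functor is manifestly strict on both composites and identities.
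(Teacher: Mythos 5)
Your proof is correct and follows essentially the same route as the paper: the key point in both is that $\Rel$ is locally propositional (at most one cell over any given cell), so lifting a cell reduces to checking that the codomain relation is pointed by $a_0\, S\, a_n$, which holds because the pointings $a_{i-1}\,R_i\,a_i$ supply the witnesses required by the definition of cells in $\Rel$, and opcartesianness is then automatic. The extra scaffolding you invoke (reduction to unary and universal cells) is harmless but unnecessary, as the propositional argument handles arbitrary chains directly, which is what the paper does.
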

\begin{proof}
We start with the following data.
\[\begin{tikzcd}[ampersand replacement=\&]
	{(A_0,a_0)} \&\& {(A_1,a_1)} \& \dots \& {(A_{n-1},a_{n-1})} \&\& {(A_n,a_n)} \\
	\\
	{A_0} \&\&\&\&\&\& {A_n}
	\arrow[""{name=0, anchor=center, inner sep=0}, "S"{description}, from=3-1, to=3-7]
	\arrow["{R_1}"{description}, from=1-1, to=1-3]
	\arrow["{R_n}"{description}, from=1-5, to=1-7]
	\arrow[Rightarrow, no head, from=1-1, to=3-1]
	\arrow[Rightarrow, no head, from=1-7, to=3-7]
	\arrow["\alpha"{description}, shorten >=8pt, Rightarrow, from=1-4, to=0]
\end{tikzcd}\]
Since there is at most one cell in $\Rel_\ast$ over a cell in \Rel, we just need to prove that the codomain lift to $\Rel_\ast$, i.e. that $a_0 S a_n$ which is true because of the definition of cells in \Rel.
The fact that it is opcartesian follows from a similar argument.
\end{proof}

So we can define B-G construction for functors $F \colon \vdB \to \Rel$.
In the categorical case it gives a correspondence between functors whose fibres are posetal and functors $F \colon \vdB \to \Rel$.
Like in the usual B-G construction, pullbacks and pushforwards correspond to representability or the relations, i.e. that the relations are the graph of functions.
We call fibrations and opfibrations that whose fibres are discrete, discrete fibrations and discrete opfibrations.
They correspond under the discrete Grothendieck correspondence to presheaves and copresheaves.

\subsection{B-G in Span}

Let \cC be a category with finite limits.
There is a vdc $\Span_\ast(\cC)$ of pointed spans.
The objects, morphisms and cells are summed up in the following diagram:

\resizebox{\hsize}{!}{
\begin{tikzcd}[ampersand replacement=\&]
	{(A_0,a_0)} \&\& {(P_1,p_1)} \&\& {(A_1,a_1)} \& \dots \& {(A_{n-1},a_{n_1})} \&\& {(P_n,p_n)} \&\& {(A_n,a_n)} \\
	\&\&\&\&\& {(P_1 \times_{A_1} \dots \times_{A_{n-1}}P_n,(p_1,\dots,p_n))} \\
	\\
	{(B_1,b_1)} \&\&\&\&\& {(Q,q)} \&\&\&\&\& {(B_1,b_1)}
	\arrow[from=1-3, to=1-1]
	\arrow[from=1-3, to=1-5]
	\arrow[from=1-9, to=1-7]
	\arrow[from=1-9, to=1-11]
	\arrow["{f_0}"{description}, from=1-1, to=4-1]
	\arrow[from=4-6, to=4-1]
	\arrow[from=4-6, to=4-11]
	\arrow["{f_1}"{description}, from=1-11, to=4-11]
	\arrow[from=2-6, to=1-3]
	\arrow[from=2-6, to=1-9]
	\arrow["\alpha"{description}, from=2-6, to=4-6]
	\arrow["\lrcorner"{anchor=center, pos=0.125, rotate=135}, draw=none, from=2-6, to=1-6]
\end{tikzcd}
}

where pairs consists of objects of \cC and (global) elements of that object, e.g. $(A,a)$ is an object $A$ together with a map $a \colon 1 \to A$, and all the morphisms are asked to preserve the object, i.e. a morphism $f \colon (A,a) \to (B,b)$ should make the following diagram commute:
\begin{tikzcd}[ampersand replacement=\&]
	1 \\
	A \& B
	\arrow["a"{description}, from=1-1, to=2-1]
	\arrow["f"{description}, from=2-1, to=2-2]
	\arrow["b"{description}, from=1-1, to=2-2]
\end{tikzcd}

Furthermore the element $(p_1,\dots, p_n) \in P_1 \times \dots \times P_n$ is really $\langle p_1, \dots, p_n \rangle$:

\resizebox{\hsize}{!}{\begin{tikzcd}[ampersand replacement=\&]
	\&\&\& 1 \\
	\\
	\&\&\& {(P_1 \times_{A_1} \dots \times_{A_{n-1}}P_n,(p_1,\dots,p_n))} \\
	{(P_1,p_1)} \&\& {(A_1,a_1)} \& \dots \& {(A_{n-1},a_{n_1})} \&\& {(P_n,p_n)}
	\arrow[from=4-1, to=4-3]
	\arrow[from=4-7, to=4-5]
	\arrow[from=3-4, to=4-1]
	\arrow[from=3-4, to=4-7]
	\arrow["\lrcorner"{anchor=center, pos=0.125, rotate=-45}, draw=none, from=3-4, to=4-4]
	\arrow["{p_1}"{description}, from=1-4, to=4-1]
	\arrow["{p_n}"{description}, from=1-4, to=4-7]
	\arrow["{\langle p_1,\dots,p_n\rangle}"{description}, dashed, from=1-4, to=3-4]
\end{tikzcd}}

\begin{prop}
The forgetful functor $\Span_\ast(\cC) \to \Span(\cC)$ is a pushfibration.
\end{prop}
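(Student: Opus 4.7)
The proof will follow the template established for $\Dist_\ast \to \Dist$ in the previous proposition, with the universal property of the pullback in $\cC$ playing the role that the coend formula played in the distributor case.

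First I would check that the forgetful functor is strict. Given a composable chain of pointed spans $(P_i, p_i) \colon (A_{i-1}, a_{i-1}) \to (A_i, a_i)$, the compatibility $\pi_{A_i} \circ p_i = a_i = \pi_{A_i} \circ p_{i+1}$ ensures, by the universal property of the iterated pullback, a canonical morphism $\langle p_1, \dots, p_n \rangle \colon 1 \to P_1 \times_{A_1} \dots \times_{A_{n-1}} P_n$. This is the required point on the composite span, and it is manifestly compatible with the underlying composition in $\Span(\cC)$, so the forgetful functor preserves composites on the nose. Since both virtual double categories are representable and the functor is strict, by the characterisation established earlier it suffices to produce opcartesian lifts for unary cells (morphisms of spans whose vertical parts are identities).

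For the unary case, suppose we have a pointed span $(P, p) \colon (A_0, a_0) \to (A_1, a_1)$ and a morphism of spans $\alpha \colon P \to Q$ in $\cC$ commuting with the projections. I would equip $Q$ with the point $q := \alpha \circ p \colon 1 \to Q$. Compatibility of $(Q, q)$ as a pointed span from $(A_0, a_0)$ to $(A_1, a_1)$ is immediate from $\alpha$ preserving the projections, and $\alpha$ is then a cell in $\Span_\ast(\cC)$ by construction of $q$.

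For the opcartesian property, I would take any cell $\Lambda$ in $\Span_\ast(\cC)$ that lies over a factorisation $\beta \circ \alpha$ of its underlying cell in $\Span(\cC)$. Because cells in $\Span_\ast(\cC)$ are nothing but cells in $\Span(\cC)$ satisfying a point-preservation condition, the factorisation $\beta$ of the underlying cell is uniquely determined in $\cC$; what remains is to verify that $\beta$ itself preserves points, i.e.\ sends $q = \alpha \circ p$ to the distinguished point on the codomain of $\Lambda$. This is automatic: $\Lambda$ preserves points and $\Lambda = \beta \circ \alpha$ as morphisms in $\cC$, so $\beta(q) = \beta(\alpha(p)) = \Lambda(p)$ is the required point. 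I anticipate no serious obstacle; the argument reduces to standard manipulations of the universal property of pullbacks.
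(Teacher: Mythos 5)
Your proof is correct and takes essentially the same approach as the paper: point the codomain span by the image of the domain's point(s) under the cell, and observe that opcartesianness is automatic because a cell of $\Span_\ast(\cC)$ over a given cell of $\Span(\cC)$ is unique when it exists, point-preservation being a property rather than extra structure. The only cosmetic difference is that you reduce to unary cells via the representability characterisation (as the paper does for $\Dist_\ast \to \Dist$), whereas the paper's proof for $\Span_\ast(\cC) \to \Span(\cC)$ handles the $n$-ary cell directly by taking $\alpha(p_1,\dots,p_n)$ as the point.
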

\begin{proof}
We are given the following data:

\resizebox{\hsize}{!}{\begin{tikzcd}[ampersand replacement=\&]
	{(A_0,a_0)} \&\& {(P_1,p_1)} \&\& {(A_1,a_1)} \& \dots \& {(A_{n-1},a_{n_1})} \&\& {(P_n,p_n)} \&\& {(A_n,a_n)} \\
	\&\&\&\&\& {(P_1 \times_{A_1} \dots \times_{A_{n-1}}P_n,(p_1,\dots,p_n))} \\
	\\
	{(A_0,a_0)} \&\&\&\&\& Q \&\&\&\&\& {(A_n,a_n)}
	\arrow[from=1-3, to=1-5]
	\arrow[from=1-9, to=1-7]
	\arrow[from=2-6, to=1-3]
	\arrow[from=2-6, to=1-9]
	\arrow["\lrcorner"{anchor=center, pos=0.125, rotate=135}, draw=none, from=2-6, to=1-6]
	\arrow[from=1-9, to=1-11]
	\arrow[Rightarrow, no head, from=1-11, to=4-11]
	\arrow[from=4-6, to=4-11]
	\arrow["\alpha"{description}, from=2-6, to=4-6]
	\arrow[from=4-6, to=4-1]
	\arrow[from=1-3, to=1-1]
	\arrow[Rightarrow, no head, from=1-1, to=4-1]
\end{tikzcd}}
and we want an element of $Q$ that is the image of $(p_1,\dots,p_n)$ by $\alpha$, so we just have to take $\alpha(p_1,\dots,p_n)$.
That it is opcartesian follows directly from the definition.
\end{proof}

Then, for any functor $F \colon \vdB \to \Span(\cC)$ we can form the pullback along the forget functor $\Span_\ast(\cC) \to \Span(\cC)$.
It is given by:
\begin{itemize}
\item objects, pairs $(B \in \vdB, b \colon 1 \to F(B))$
\item vertical morphisms, $f \colon B \to B'$ vertical morphisms in \vdB such that $F(f) \circ b = b'$
\item horizontal morphisms, pairs $(P, p) \colon (B,b) \to (B',b')$ of an horizontal morphism $P \colon B \to B'$ in \vdB and a global element of $F(P)$ that is send to $b$ and $b'$ by the left and right leg of the span respectively
\item cells, cells in \vdB such that their image preserves the points
\end{itemize}

Similarly to before, all the conditions descend to bicategories, and even strict 2-categories.

For any lax functor $F \colon \cB \to \Span(\cC)$ from a category to the 2-category $\Span(\cC)$, one get its B-G construction, or category of elements:
\begin{itemize}
\item objects are pairs $(B \in \cB, b \colon 1 \to F(B))$
\item morphisms are pairs $(f,\varphi)$ where $f \colon B \to B'$ is a morphism in \cB and $\varphi$ is an element of $F(f)$ sent by left and right leg to $b$ and $b'$
\item the identity is $(\id_B,\eta(b))$ where $\eta$ comes from the laxity of $F$ and $\eta(b) := \eta \circ b \colon 1 \to F(\id_B)$
\item composition is $(f' \circ f, \mu(\varphi,\varphi'))$
\end{itemize}

In the following we will focus on the case $\cC = \Set$ and we will write $\Span$ for $\Span(\Set)$
Let consider a functor of categories $P \colon \cE \to \cB$ and define the lax functor $\partial p \colon \cB \to \Span$ by:
\begin{itemize}
\item for $b\in \Ob{\cB}$, $\partial p(b) := p^{-1}(b) = \set{e \in \Ob{\cE}}{p(e) = b}$
\item for $f \colon b \to b'$, $\partial p(f) := p^{-1}(b) \xleftarrow{s} \set{\varphi \colon e \to e'}{p(\varphi)=f} \xrightarrow{t} p^{-1}(b')$ where $s$ and $t$ are the source and target
\item $\eta \colon \id_{\partial p} \Rightarrow \partial p(\id)$ with components 
\[\begin{tikzcd}[ampersand replacement=\&]
	\&\& {p^{-1}(b)} \\
	\\
	{p^{-1}(b)} \&\& {\{\varphi \colon e \to e' | p(\varphi) = \id_b\}} \&\& {p^{-1}(b)}
	\arrow["s"{description}, from=3-3, to=3-1]
	\arrow["t"{description}, from=3-3, to=3-5]
	\arrow[Rightarrow, no head, from=1-3, to=3-1]
	\arrow[Rightarrow, no head, from=1-3, to=3-5]
	\arrow["{\eta_b}"{description}, from=1-3, to=3-3]
\end{tikzcd}\]
is given by $\eta_b(e) := \id_e$
\item for $f \colon b \to b'$ and $g \colon b' \to b''$,

\resizebox{\hsize}{!}{
\begin{tikzcd}[ampersand replacement=\&]
	{p^{-1}(b)} \&\& {\{\varphi \colon e \to e' ~|~ p(\varphi) = f\}} \&\& {p^{-1}(f)\times_{p^{-1}(b')}p^{-1}(g)} \&\& {\{\psi \colon e' \to e'' ~|~ p(\psi) = g\}} \&\& {p^{-1}(b'')} \\
	\\
	\&\&\&\& {\{\xi \colon e \to e'' ~|~ p(\xi) = g\circ f\}}
	\arrow[from=1-3, to=1-1]
	\arrow[from=1-3, to=1-5]
	\arrow[from=1-7, to=1-5]
	\arrow[from=1-7, to=1-9]
	\arrow[from=3-5, to=1-1]
	\arrow[from=3-5, to=1-9]
	\arrow["{\mu_{f,g}}"{description}, from=1-5, to=3-5]
\end{tikzcd}
}
is given by $\mu_{f,g}(\varphi,\psi) = \psi \circ \varphi$
\end{itemize}

\begin{prop}
For $p \colon \cE \to \cB$, $\partial p \colon \cB \to \Span$ is a lax functor and $\partial$ is inverse to the B-G construction $\int$.
\end{prop}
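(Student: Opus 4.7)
The plan is to verify the two parts of the statement separately: first that $\partial p$ is a lax functor for any functor $p\colon \cE \to \cB$, and second that the operations $\int$ and $\partial$ are mutually inverse up to isomorphism (modulo the earlier caveat that one should restrict $\int$ to lax normal functors if one wants an actual inverse).

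For lax functoriality of $\partial p$, I would first check that the assignment on spans is well-defined: given $f\colon b \to b'$, the set $\set{\varphi\colon e\to e'}{p(\varphi)=f}$ is equipped with source and target maps landing in $p^{-1}(b)$ and $p^{-1}(b')$ respectively because $p$ preserves source and target on the nose. The unit $\eta_b(e) := \id_e$ is a well-defined map of spans because $\id_e$ has $e$ as both source and target, and $p(\id_e) = \id_{p(e)} = \id_b$. The comultiplication $\mu_{f,g}(\varphi,\psi) := \psi \circ \varphi$ is well-defined by functoriality of $p$ (so $p(\psi\circ\varphi) = g \circ f$) and it is a morphism of spans because the source of $\psi\circ\varphi$ is the source of $\varphi$ and its target is the target of $\psi$. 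The coherence laws of a lax functor reduce directly to associativity and unitality of composition in $\cE$: $\mu_{g\circ f,h}\circ(\mu_{f,g}\times\id) = \mu_{f,h\circ g}\circ(\id\times\mu_{g,h})$ becomes $(\xi\circ\psi)\circ\varphi = \xi\circ(\psi\circ\varphi)$, and the two unit axioms become $\id_{e'}\circ\varphi = \varphi = \varphi\circ\id_e$.

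For the round-trip $\int\partial p \simeq \cE$, I would exhibit the isomorphism of categories over $\cB$ sending $(b,e)\mapsto e$ on objects and $(f,\varphi)\mapsto \varphi$ on morphisms. This is well-defined because an object of $\int\partial p$ over $b$ is by definition an element of $\partial p(b) = p^{-1}(b)$, and a morphism over $f$ is an element $\varphi \in \partial p(f)$, i.e.\ a morphism of $\cE$ with $p(\varphi)=f$, whose source and target match. The inverse sends $e\mapsto (p(e),e)$ and $\varphi\mapsto (p(\varphi),\varphi)$. Identities and composition are preserved because the identity in $\int\partial p$ is $(\id_b,\eta_b(e))=(\id_b,\id_e)$ and the composition is $(g\circ f,\mu_{f,g}(\varphi,\psi))=(g\circ f,\psi\circ\varphi)$, exactly matching the structure in $\cE$. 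Commutativity with the projection to $\cB$ is immediate.

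For the reverse round-trip $\partial\!\int F \simeq F$, given a lax functor $F\colon \cB \to \Span$, I would compute the fibres and spans directly: an object of $\int F$ over $b$ is a pair $(b,x)$ with $x\in F(b)$, so $(\partial\!\int F)(b) \cong F(b)$; a morphism in $\int F$ over $f$ is a pair $(f,\varphi)$ with $\varphi \in F(f)$ whose legs send it to $x$ and $x'$, giving $(\partial\!\int F)(f) \cong F(f)$ as spans. The unit of $\partial\!\int F$ on $(b,x)$ is $\id_{(b,x)} = (\id_b,\eta_F(b)(x))$, which transports under the isomorphism to $\eta_F(b)(x)$, and similarly composition transports to $\mu_F$, so the lax structure is preserved. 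The hard part will be pinning down the precise statement one wants: strictly speaking $\int$ is only inverse to $\partial$ after restricting to lax normal functors (as flagged in the preceding remark), so I would state the proposition as an equivalence between the category of functors $\cE \to \cB$ and the category of lax normal functors $\cB \to \Span$; checking naturality of these round-trip isomorphisms with respect to morphisms of functors (resp.\ lax transformations) is routine but needs to be written out carefully to close the equivalence.
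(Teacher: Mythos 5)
Your verification is correct and is the expected routine check; the paper actually omits the proof of this proposition entirely, so there is nothing to diverge from --- your argument (well-definedness of the spans, the unit and multiplication coming from identities and composition in $\cE$, coherence reducing to associativity and unitality, and the two explicit round-trip isomorphisms) is exactly what the surrounding definitions set up.

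One small correction to your closing hedge: the restriction to lax \emph{normal} functors is not needed in the $\Span$ setting, and the remark you cite is specifically about $\Dist$. For $\Dist$ the fibre $p^{-1}(b)$ is a category and $\partial p(\id_b)$ is forced to be its hom-distributor, so $\partial p$ is automatically normal and $\int$ cannot recover a non-normal $F$. For $\Span$ the fibre $\partial p(b)$ is a mere set of objects, and the vertical morphisms of $\cE$ are stored as the apex of the span $\partial p(\id_b)$, which is therefore generally \emph{not} the identity span --- so $\partial p$ is genuinely lax and non-normal, no information is lost, and the two round-trip isomorphisms you exhibit ($\int\partial p \cong \cE$ over $\cB$, and $(\partial\!\int F)(f)\cong F(f)$ compatibly with $\eta$ and $\mu$ for every $f$, identities included) already hold for arbitrary lax functors $F\colon \cB \to \Span$. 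So the proposition can be stated, as the paper does, without any normality qualification.
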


So we get a B-G correspondence for $\Span$.
Like for $\Dist$, it extends to a 2-equivalence of bicategories and the Grothendieck correspondences for fibrations can be recovered by factorising through $\Cat \to \Span$ by sending a functor to its graph.

\chapter{The polycategorical Bénabou-Grothendieck correspondence}
\label{ch:bg-poly}

\section{2-polycategories}

\subsection{About 2-polycategories}

As stated above, to express these correspondences we need some theory of weak 2-polycategory, where by 2-polycategory we mean that the 1-cells can have multiple inputs and outputs but not the 2-cells.
In the following we only assume that there are weak 2-polycategories $\Dist$ and $\MVar$ and that lax functors and pseudofunctors behave in the expected way.
More generally we suspect that weak 2-polycategories and $\ast$-autonomous bicategories will be connected in a way such that the results of this thesis can be relaxed to this setting.
In particular, any compact closed bicategory -- as defined by Mike Stay in \cite{Stay2016} -- should be a $\ast$-autonomous bicategory, and by extension a weak 2-polycategory.
This would entail that $\Dist$ is weak 2-polycategory.


\subsection{Distributors and multivariable adjunctions}

In this section we introduce the weak 2-polycategories $\Dist$ and $\MVar$.
We prove that a multivariable adjunction can be understood as a representable distributor.

\begin{definition}
  $\Dist$ is the weak 2-polycategory that has as objects categories, that has as polymaps $f : A_1,...,A_m \to B_1,...,B_n$ distributors $f : A_1 \times ... \times A_m \xslashedrightarrow[]{} B_1\times...\times B_n$ and that has as 2-cells natural transformations.
\end{definition}

\begin{definition}
  Given categories $A_1,...,A_m,B_1,...,B_n$, a \defin{$(m,n)$-adjunction} or \defin{multivariable adjunction} $(F_l)_{1\leq l \leq n} \dashv (G_k)_{1 \leq k \leq m} : A_1,...,A_m \to B_1,...,B_n$ consists of the following data:
  \begin{itemize}
  \item functors $F_l : \prod\limits_i A_i \times \prod\limits_{j \neq l} B_j^\op \to B_l$ for each $l$
  \item functors $G_k : \prod\limits_{i\neq k} A_i^{\op} \times \prod\limits_j B_j \to A_k$ for each $k$
  \item natural isomorphisms $B_l(F_l(a_1,...,a_m,b_1,...,b_n),b_l) \simeq A_k(a_k,G_k(a_1,...,a_m,b_1,...,b_n))$ for any $k,l$
    \end{itemize}
\end{definition}

\begin{example}
  A (1,1)-adjunction between $A,B$ is a pair of functor $F : A \to B$ and $G : B \to A$ such that $B(F(a),b) = A(a,G(b))$. It is just a usual adjunction.
\end{example}

\begin{example}
  Let $(\mathcal{C},\otimes,I)$ be a biclosed monoidal category.
  By definition $(A \otimes -)$ has a right adjoint $A \multimap -$ and $(- \otimes B)$ has a right adjoint $ - \multimapinv B$.
  We get three functors $\otimes : \mathcal{C} \times \mathcal{C} \to \mathcal{C}$, $\multimap : \mathcal{C}^{\op} \times \mathcal{C} \to \mathcal{C}$ and $\multimapinv : \mathcal{C}^{\op} \times \mathcal{C} \to \mathcal{C}$ such that \[\mathcal{C}(A \otimes B, C) \simeq \mathcal{C}(B, A \multimap C) \simeq \mathcal{C}(A, C \multimapinv B)\] This is an example of a $(2,1)$-adjunction $(\otimes) \dashv (\multimap,\multimapinv)$.
\end{example}

\begin{prop}
  A $(m,n)$-adjunction $(F_l)_{1\leq l \leq n} \dashv (G_k)_{1 \leq k \leq m} : A_1,...,A_m \to B_1,...,B_n$  is the same thing as a distributor $P : A_1 \times ... \times A_m \xslashedrightarrow{} B_1 \times ... \times B_n$ that is representable in each of its variables.
\end{prop}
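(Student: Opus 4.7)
The plan is to exhibit the bijective correspondence by unpacking what ``representable in each variable'' means and then matching it to the defining data of a multivariable adjunction. For the forward direction, given a multivariable adjunction $(F_l) \dashv (G_k)$, I will define
\[
P(a_1,\dots,a_m;\,b_1,\dots,b_n) := B_l\bigl(F_l(a_1,\dots,a_m,b_1,\dots,\widehat{b_l},\dots,b_n),\,b_l\bigr)
\]
for any choice of $l$, observing that the defining natural isomorphisms of the multivariable adjunction guarantee that any two such choices (including those using the $G_k$) yield canonically isomorphic functors $A_1^{\op}\times\cdots\times A_m^{\op}\times B_1\times\cdots\times B_n \to \Set$. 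The variance is automatic: $F_l$ is covariant in the $A_i$ and contravariant in the $B_j$ with $j\neq l$, and $B_l(-,b_l)$ flips the variance of its first slot, so $P$ is contravariant in each $A_i$ and covariant in each $B_j$. By construction $P$ is representable in $B_l$ via $F_l$, and the isomorphism $B_l(F_l(\vec a,\vec b_{-l}),b_l) \simeq A_k(a_k,G_k(\vec a_{-k},\vec b))$ exhibits representability in $A_k$.

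For the reverse direction, suppose $P$ is representable in each variable. For each $l$, representability in $B_l$ gives, by the usual parametric Yoneda argument, a functor $F_l \colon \prod_i A_i \times \prod_{j\neq l} B_j^{\op} \to B_l$ together with a natural isomorphism $P(\vec a;\vec b) \simeq B_l(F_l(\vec a,\vec b_{-l}),b_l)$; similarly for each $k$ we obtain $G_k$ with $P(\vec a;\vec b) \simeq A_k(a_k,G_k(\vec a_{-k},\vec b))$. Composing these two natural isomorphisms for each pair $(k,l)$ yields precisely the hom-set bijection required by the definition of a multivariable adjunction.

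The two constructions are mutually inverse up to canonical isomorphism: starting from an adjunction, the representing functors recovered from $P$ via Yoneda are the original $F_l$ and $G_k$ (up to unique natural isomorphism), and starting from a $P$ representable in each variable, the distributor rebuilt from the extracted $F_l$ is $P$ itself up to the chosen representation isomorphism.

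The main technical point I expect to handle carefully is the parametric form of Yoneda used to promote the pointwise representations into actual functors of the correct variance in \emph{all} the remaining variables, together with the verification that the resulting family of isomorphisms is natural in every slot simultaneously. Once that bookkeeping is done, everything reduces to iterated applications of the Yoneda lemma. The edge cases $m=0$ or $n=0$ reduce to the usual notion of (co)representable presheaf, and the case $m=n=1$ recovers the standard equivalence between adjunctions and two-sided representable distributors (profunctors with both a left and right adjoint representation), which is a known fact.
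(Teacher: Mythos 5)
Your proposal is correct and follows essentially the same route as the paper: build the distributor from any of the $F_l$ (or $G_k$) via $B_l(F_l(-),-)$ and use the defining isomorphisms of the multivariable adjunction to see all choices agree, then conversely extract the representing functors from a distributor representable in each variable and compose the representation isomorphisms. Your added remarks on variance and the parametric Yoneda bookkeeping only make explicit what the paper leaves implicit.
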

\begin{proof}
  From any of the $F_l$ we can define a distributor $P_l : A_1 \times ... \times A_m \xslashedrightarrow[]{} B_1 \times ... \times B_n$ representable in $B_l$ by $P_l(-,-) := B_l(F_l(-),-)$.
  Similarly we can get distributors representable in $A_k$ from the functors $G_k$ by $P^k(-,-) := A_k(-,G_k(-))$.
  But all of these distributors are naturally isomorphic by definition of a multivariable adjunction.

  Conversely given a distributor $P : A_1 \times ... \times A_m \xslashedrightarrow{} B_1 \times ... \times B_n$, representability in the $A_k$ and $B_l$ produce functors $G_k$ with natural isomorphisms $P(-,-) \simeq  A_k(-,G_k(-))$ and functors $F_l$ with natural isomorphisms $P(-,-) \simeq B_l(F_l(-),-)$. 
\end{proof}

Multivariable adjunctions compose giving a subpolycategory of \Dist{} that we will denote \MVar{}.
To see that they compose let consider the case of composition a $(1,2)$-adjunction with a $(2,1)$-adjunction to get a $(2,2)$-adjunction.
Let $P \colon A \xslashedrightarrow B,C$ and $Q \colon C, A' \xslashedrightarrow B'$ be distributors represented by $P_A, P_B, P_C, Q_C, Q_{A'}, Q_{B'}$.
Then $Q \circ P \colon A,A' \xslashedrightarrow B,B'$ is representable in each variable via the functors:
\begin{itemize}
\item $(Q \circ P)_A := P_A \circ Q_C$
\item $(Q \circ P)_B := P_B \circ Q_C^\op$
\item $(Q \circ P)_{A'} := Q_{A'} \circ P_c^\op$
\item $Q \circ P_{B'} := Q_{B'} \circ P_C$
\end{itemize}
where we have omitted the symmetry isomorphisms used to rearrange the inputs.
It can be checked that these functors represent $Q\circ P$.

\section{Polycategorical B\'{e}nabou-Grothendieck correspondence}
\label{ch:poly-Groth}

\subsection{Polycategorical Grothendieck correspondences}

We want to extend the categorical B\'{e}nabou-Grothendieck correspondences to polycategories as follows:
\begin{itemize}
\item Poly-refinement systems $\mathcal{E} \to \mathcal{B}$ $\longleftrightarrow$ lax normal functors $\mathcal{B}^\op \to \Dist$
\item Bifibrations $\mathcal{E} \to \mathcal{B}$ $\longleftrightarrow$ Pseudofunctors $\mathcal{B}^\op \to \MVar$ 
\end{itemize}
where $\Dist$ is the weak 2-polycategory of sets and multivariable distributors and $\MVar$ is the weak 2-polycategory of sets and multivariable adjunctions.
Like in the categorical case, $\MVar$ is a sub-2-polycategory of $\Dist$ consisting of distributors that are representable in each of their variables.
It is worth noting that for us $(0,0)$-adjunctions will be sets, in contrast to the original definition in \cite{Shulman2020} where they are taken to be trivial.
Shulman discusses both possibilities, but chooses the latter to turn $\MVar$ into a strict 2-polycategory, whereas the former fits more naturally in our framework at the price of $\MVar$ being a weak 2-polycategory.

\subsection{Fibres of a poly-refinement system and distributors between them}

In the following we fix a poly-refinement system $p : \mathcal{E} \to \mathcal{B}$.
We define a lax normal functor $\partial p : \mathcal{B}^\op \to \Dist$ by considering the fibres of $p$ like in the categorical case.
We will use the convention that for any $\Gamma = A_1,...,A_n$,  $p^{-1}(\Gamma) := p^{-1}(A_1) \times ... \times p^{-1}(A_m)$.
$\partial p$ assigns to each object its fibre $\partial p(B) := p^{-1}(B)$ which is a category.
The objects of $p(B)$ consist of objects in \cE whose image by $p$ is $B$.
Its morphisms are unary polymaps in \cE whose image by $p$ is $\id_B$.
To a polymap $f : \Delta \to \Gamma$ in $\mathcal{B}^\op$, that we will equivalently consider as a polymap $f : \Gamma \to \Delta$ in $\mathcal{B}$, is assigned a distributor between the fibres $\partial p(f) : p^{-1}(\Delta) \times p^{-1}(\Gamma)^\op \to \Set$.
This distributor consists of the set of polymaps lying over $f$ acted on by pre- and post-composition.

More precisely, given lists of objects in the fibres $\Pi = (R_1,...,R_m) \refs \Gamma = (A_1,...,A_m), \Sigma = (S_1,...,S_n) \refs \Delta = (B_1,...,B_n)$ we define the action of the distributor $\partial p(f)$ on these objects by $\partial p(f)(\Sigma, \Pi) := \set{\varphi : \Pi \to \Sigma}{p(\varphi) = f}$.
And given lists of polymaps in the fibre $\vect{\psi} = (\psi_i : R_i' \seq{\id_{A_i}} R_i)_{1 \leq i \leq m}$ and $\vect{\xi} = (\xi_j : S_j \seq{\id_{B_j}} S_j')_{1 \leq j \leq n}$ we get $\partial p(f)(\vect{\xi},\vect{\psi}) := \vect{\xi} \circ - \circ \vect{\psi}$.



This can be represented graphically.

\begin{center}\scalebox{0.7}{\tikzfig{ppco}}\end{center}

It can be noted that the polymaps in the fibre have one-object domain and codomain. This is because all the polymaps in the fibre lies over the identity polymap in the base.

This is summarized in the following definition.

\begin{definition}
  For a poly-refinement system $p : \mathcal{E} \to \mathcal{B}$ we define the lax normal functor $\partial p : \mathcal{B}^\op \to \Dist$ by:
  \begin{itemize}
  \item For any $B \in \mathcal{B}$, $\partial p(B) := \set{S \in \mathcal{E}}{p(S)=B}$
  \item For any $f : \Gamma \to \Delta$ in $\mathcal{B}$, $\partial p(f) : p^{-1}(\Delta) \xslashedrightarrow{} p^{-1}(\Gamma)$ defined by:
    \begin{itemize}
    \item For any $\Pi \refs \Gamma, \Sigma \refs \Delta$, $\partial p(f)(\Sigma, \Pi) := \set{\varphi : \Pi \to \Sigma}{p(\varphi)=f}$
    \item For any $\vect{\psi} = (\psi_i : R_i' \seq{\id_{A_i}} R_i)_{1 \leq i \leq m}$ and $\vect{\xi} = (\xi_j : S_j \seq{\id_{B_j}} S_j')_{1 \leq j \leq n}$, $\partial p(f)(\vect{\xi},\vect{\psi})= \vect{\xi} \circ - \circ \vect{\psi}$
    \end{itemize}
  \end{itemize}
\end{definition}

The proof that this defines a lax normal functor is similar to the categorical one with some extra bookkeeping because of the presence of contexts of inputs and outputs.

Now suppose that $p$ is a bifibration and consider a polymap $f : \Gamma_1, A, \Gamma_2 \to \Delta$.
We define a functor $\pullop{f}_A : p^{-1}(\Delta) \times p^{-1}(\Gamma_1)^\op \times p^{-1}(\Gamma_2)^\op \to p^{-1}(A)$ by sending any $\Sigma \refs \Delta, \Pi_i \refs \Gamma_i$ to $\pull{f}{\Pi_1}{\Pi_2}{\Sigma}$.
From lists of polymaps $\vect{\psi_1}, \vect{\psi_2}$ and $\vect{\xi}$ we get a polymap $\pull{f}{\Pi_1}{\Pi_2}{\Sigma} \to \pull{f}{\Pi_1'}{\Pi_2'}{\Sigma'}$ by using the factorisation property of $\pull{f}{\Pi_1'}{\Pi_2'}{\Sigma'}$.
It is represented in Figure \ref{fig:pullpull} where the two big blank boxes are the in-cartesian polymaps associated to the pullbacks.

\begin{figure}
  \centering
  \scalebox{0.6}{\tikzfig{pullfunct}}
  \caption{Polymap $\pull{f}{\Pi_1}{\Pi_2}{\Sigma} \to \pull{f}{\Pi_1'}{\Pi_2'}{\Sigma'}$}
  \label{fig:pullpull}
\end{figure}

By the universal property of the pullback we can link $\partial p(f)$ and $\pullop{f}_A$ in the following way.
\begin{align*}
  \partial p(f)(\Sigma,\Pi_1,-,\Pi_2 )&= \set{\varphi : \Pi_1, -, \Pi_2 \to \Sigma}{p(\varphi) = f}\\
                                      &= \set{\psi : - \to \pull{f}{\Pi_1}{\Pi_2}{\Sigma}}{p(\psi) = \id_A}\\
                                      &= Hom_{p^{-1}(A)}(-,\pullop{f}_A(\Sigma,\Pi_1,\Pi_2))
\end{align*}

This makes $\partial p(f)$ representable in $A$.
Since by definition of a bifibration we get such a pull-functor for each of the inputs of $f$ and some similar push-functors for the outputs this makes $\partial p(f)$ a multivariable adjunction.
Since it is true for any polymap $f$ in $\mathcal{B}^\op$ we get that $\partial p$ factors through $\MVar$.
Finally the fact that cartesian polymaps compose makes $\partial p : \mathcal{B}^\op \to \MVar$ a pseudofunctor.


\subsection{Polycategorical Grothendieck-B\'{e}nabou construction}

Conversely, given a lax normal functor $F : \mathcal{B}^\op \to \Dist$ we construct its polycategory of elements $\int F$.

\begin{definition}
  The \defin{polycategory of elements} $\int F$ has:
  \begin{itemize}
  \item for objects, pairs $(A,R)$ with $A \in \mathcal{B}$ and $R \in F(A)$
  \item for polymaps $(f,\varphi) : (\Gamma, \Pi) \to (\Delta, \Sigma)$, pairs of a polymap $f : \Gamma \to \Delta$ in $\mathcal{B}$ and an element $\varphi \in F(f)(\Sigma,\Pi)$
  \item for identities $(\id_A,\id_R)$
  \item for composition $(g, \psi) \circ_{(A,R)} (f,\varphi) = (g \circ_A f, \mu(\widetilde{(\varphi,\psi)}))$ where:
    \begin{itemize}
    \item $\widetilde{(\varphi,\psi)} \in (F(g) \circ_{F(A)} F(f))(\Sigma_1, \Sigma', \Sigma_2, \Pi_1',\Pi,\Pi_2')$ is the canonical element induced by the elements $\varphi \in F(f)(\Sigma_1, R, \Sigma_2, \Pi)$ and $\psi \in F(g)(\Sigma', \Pi_1', R, \Pi_2')$
    \item $\mu : F(g) \circ_{F(A)} F(f) \Rightarrow F(g \circ_A f)$ is the natural transformation giving lax functoriality of $F$
    \end{itemize}
  \end{itemize}
\end{definition}

The fact that this is a polycategory follows from the coherence laws of $F$.
Furthermore it can be proven that these constructions are inverse to each other using the same arguments as for the categorical constructions.

\section{Frobenius pseudomonoids and Classical Linear Logic}

Like in Section \ref{sect:frob} there are different ways to define a Frobenius pseudomonoid.
The most convenient in our case will be to think of those as (the image of) a pseudofunctor out of $\one$.

\begin{definition}
  A \defin{Frobenius pseudomonoid} in a 2-polycategory $\mathcal{C}$ is a pseudofunctor $F : \one \to \mathcal{C}$.
\end{definition}

Using the polycategorical Grothendieck correspondence we recover the result recently announced by Shulman that Frobenius pseudomonoids in $\MVar$ are equivalent to $\ast$-autonomous categories.

\begin{theorem}[Shulman \cite{Shulman2019}]
  There is a correspondence between Frobenius pseudomonoid and $\ast$-autonomous categories.
\end{theorem}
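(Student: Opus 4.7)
The plan is to derive this correspondence by specialising the polycategorical Bénabou-Grothendieck correspondence to the case where the base polycategory is $\one$, and then invoking the characterisation of $\ast$-autonomous categories as bifibrations over $\one$ from Theorem \ref{th:bif1}. More precisely, I would chain together three equivalences: $\ast$-autonomous categories $\simeq$ bifibrations $\mathcal{E}\to\one$ $\simeq$ pseudofunctors $\one^{\op}\to\MVar$ $\simeq$ Frobenius pseudomonoids in $\MVar$.

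First, I would observe that the terminal polycategory $\one$ is self-dual, i.e.\ $\one^{\op}=\one$, since it has a single object and a unique polymap of each arity/coarity, so reversing the direction of polymaps induces a trivial isomorphism. This makes the three equivalences above properly composable: a pseudofunctor $\one^{\op}\to\MVar$ is the same data as a pseudofunctor $\one\to\MVar$, which by the definition given just above the theorem is precisely a Frobenius pseudomonoid in $\MVar$. Second, I would apply Theorem \ref{th:bif1}, which provides the equivalence between $\ast$-autonomous categories and bifibrations over $\one$. Third, I would instantiate the polycategorical Grothendieck correspondence (the second bullet of Section \ref{ch:poly-Groth}) at $\mathcal{B}=\one$ to obtain the equivalence between bifibrations $\mathcal{E}\to\one$ and pseudofunctors $\one^{\op}\to\MVar$. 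Composing the three equivalences yields the desired correspondence.

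For the concrete translation in one direction, starting from a $\ast$-autonomous category $\mathcal{C}$ (viewed as the birepresentable polycategory $\mathcal{P}(\mathcal{C})$), the associated bifibration is $\mathcal{P}(\mathcal{C})\to\one$, and the induced pseudofunctor $F:\one\to\MVar$ sends the unique object $\ast$ to the category $\mathcal{C}$ (its fibre) and sends the generating polymap $\overline{(m,n)}:\ast^m\to\ast^n$ to the distributor $\mathcal{C}(-\otimes\dots\otimes-,-\parr\dots\parr-)$, which is representable in every argument by the tensor, par, and dual functors. In the other direction, a Frobenius pseudomonoid $F:\one\to\MVar$ has a single underlying category $F(\ast)$ equipped with, for each pair $(m,n)$, a multivariable adjunction that provides the interpretation of the $n$-ary tensor and par together with the linear duals; the Frobenius coherence ensures these satisfy the polycategorical composition laws, which via the birepresentable $\Leftrightarrow$ $\ast$-autonomous equivalence of Part I recover the full $\ast$-autonomous structure on $F(\ast)$.

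The main obstacle I anticipate is not the logical skeleton, which is a straightforward composition of previously established equivalences, but rather the coherence bookkeeping: one must check that the identifications $\one^{\op}=\one$ and the two correspondences compose up to the right level of 2-categorical equivalence, and in particular that the pseudofunctoriality condition on $F:\one\to\MVar$ matches exactly with the representability of \emph{all} cartesian liftings over every polymap $\overline{(m,n)}$ rather than only over some generating subfamily. This is where the unbiased universal-object characterisation of birepresentability from Part I plays a crucial role, because it guarantees that $\overline{(m,n)}$ already has enough structure to generate every cartesian lifting via composition of universal ones (Proposition \ref{prop:univ_comp} and Proposition \ref{prop:compcartfunc}), so no further generating data needs to be supplied or checked.
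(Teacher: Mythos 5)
Your proposal is correct and follows essentially the same route as the paper's own (two-sentence) proof: invoke the polycategorical Grothendieck correspondence to identify pseudofunctors $\one \to \MVar$ with bifibrations over $\one$, then apply Theorem \ref{th:bif1}. The extra observations you supply (self-duality of $\one$, the explicit form of the distributor $\mathcal{C}(-\otimes\dots\otimes-,-\parr\dots\parr-)$, and the role of Propositions \ref{prop:univ_comp} and \ref{prop:compcartfunc} in generating all cartesian liftings) are sensible elaborations of details the paper leaves implicit.
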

\begin{proof}
  Using the polycategorical Grothendieck correspondence, pseudofunctors $\one \to \MVar$ correspond to bifibrations $p : \mathcal{E} \to \one$. Then using theorem \ref{th:bif1} these correspond to representable $\ast$-polycategories.
\end{proof}

\begin{remark}
  Given a Frobenius monoid $(A, \overline{(m,n)_A})$ in $\mathcal{B}$ and a lax normal functor $F : \mathcal{B} \to \Dist$.
  The polyfiber of $A$ relatively to the functor $\int F$ such as defined in section \ref{sect:frob} is given by the image of $A$ (and the polymaps $\overline{(m,n)_A}$) by $F$.
  If $F$ is pseudo on these polymaps this forms a Frobenius pseudomonoid in $\Dist$.
  When the images of these polymaps are representable in all their variables this factors through $\MVar$ giving a $\ast$-representable polycategory.
  This is the another way of understanding the result \ref{cor:frob}. 
\end{remark}

\part{Conclusion}

\chapter*{Conclusion}
\label{ch:conc}

\section*{Conclusion and Further work}

In this thesis we developed the theory of bifibred polycategories and provided examples.
We started by considering the concept of representability.
Representability is a tool used to express some categorical structures as universal properties by changing the shape of the morphisms.
We considered three cases: multicategories, where the inputs of a morphism is a finite list of objects, polycategories where both the inputs and outputs of a morphisms are finite list of objects and virtual double categories where the morphisms, called cells, go from a list of horizontal maps to another horizontal map along some vertical maps.
There is a systematic way of producing such category-like structures called generalised multicategories.
The interested reader can find the theory of generalised multicategories in Leinster in \cite{Leinster2004} and \cite{CruttwellShulman2009}.
The main idea is to consider morphisms with shape $T(A) \to A$ for $T$ a monad on some category.
Categories are then given by taking $T$ to be the identity monad on \Set{}, multicategories by taking $T$ to be the list monad or free monoid monad on \Set{} and virtual double categories by taking $T$ to be the free-category monad on graphs \footnote{actually, Leinster use the term fc-multicategory instead of virtual double category for this reason}.
In fact, this can be abstracted further by letting $T$ be a monad on a virtual double category, which is what is done in \cite{CruttwellShulman2009}.
Then the generalised multicategories corresponding to a monad $T$ on a virtual double category \vdC, arrange themselves into a virtual double category.
This gives a formal way of dealing with generalised multicategories, but also functors between them.
This also provides a huge pool of examples: symmetric multicategories, cartesian multicategories, topological spaces, monads on \Set{} and functors are all generalised multicategories.
Representability with regards to the output can then be defined in this generalising setting.
For the examples above one gets, symmetric monoidal categories, cartesian categories, ordered compact Hausdorff spaces, categories with small products and fibrations.
Notice however that polycategories are not examples.
Indeed, for generalised multicategories morphisms are of the form $T(A) \to A$ where the monad $T$ where $T$ encode the shape of the input.
However there is always one output.
Polycategories have polymaps with multiple outputs.
To generalise them one would need for morphisms with shape $T(A) \to T'(A)$ where both the inputs and the outputs are parametrised by monads.
A systematic treatment of generalised polycategories is still lacking, although some basis for the theory has been proposed in first in \cite{Koslowski2005} and then in \cite{Garner2008}.
It would be interesting to develop more thoroughly the theory of generalised polycategories, in particular to find interesting examples and to consider representability in this setting.

In addition to spelling out representability for multicategories and polycategories, we have connected them to monoidal categories and $\ast$-autonomous categories, which are categorical models of IMLL and MLL respectively.
In fact, we have suggested in the introduction a more direct interpretation of the sequent calculi in multicategories and polycategories.
The idea being that the shape of the morphisms in multicategories and polycategories correspond to that of the sequents of IMLL and MLL, so they encode precisely the structure of the sequent calculi.
Then the connectives and their rules are interpreted by objects and maps having universal properties.
This idea can be extend to sequent calculi for other logics by considering generalised multicategories or generalised polycategories that encode the structure of the logic considered.
In \cite{HylandTasson2020}, the authors have constructed a 2-monad on \Cat{} that they call the linear-non-linear substitution monad.
The generalised multicategories for this monad should correspond to models of the intuitionistic linear logic, or more precisely of the linear-non-linear sequent calculus presented in \cite{Benton1994}.
Their motivation is to use this to study models of Differential Linear Logic (for more on the theory of DiLL and their models see \cite{Ehrhard2018,Bluteetal2020}
In parallel, in this article \cite{Shulman2021} LNL-polycategories are introduced.
Representability is introduced and it is shown that it captures all the connectives of linear logic.
When a systematic treatment of generalised polycategories is worked out, it should provide an interesting example.
Furthermore, extending the case study of Banach spaces to LNL-polycategories could help understand better LL and DiLL, for example by studying models coming from functional analysis such as those considered in Kerjean's work, see \cite{KerjeanTasson2015,DabrowskiKerjean2017,Kerjean2018,KerjeanLemay2019}
.

In the second part of this thesis, we introduced bifibrations of polycategories and pushfibrations of virtual double categories.
We show how to think of fibrationality as a parametrised version of representability.
It should be possible to extend pushfibrations to generalised multicategories where an element of $t \in T(A)$ is pushed along a morphism $F(t) \to s$.
However, extending fibrations and bifibrations of multicategories to generalised ones would be more tricky.
The issue is that when defining a fibration of multicategories, one has to specify an object from the domain in which to pull.
It is not clear how this idea of picking an element in a list could be generalised to other monads.
The same limitation applies to defining any fibrational property in the case of generalised polycategories.
The investigation of fibrations of generalised multicategories or polycategories is left to further work.

In the context of virtual double categories, pushfibrations only have pushforward of cells along identity maps.
This was enough for the applications that we had in mind, namely describing the B\'{e}nabou-Grothendieck construction as a pullback.
However, we could ask for more fibrational structures.
First, by considering pushing and pulling objects along vertical or horizontal maps: these should be a generalisation of 2-fibrations, introduced in \cite{Hermida1999} (although with one condition missing) and studied in \cite{Bakovic2011,Buckley2014}.
Second, one could also ask for the possibility of pulling along a cell in some specific horizontal morphism.
Just as a pullback in a multicategory is a parametrised version of an internal hom-object, a pullback along a cell in a virtual double category should be a parametrised version of an internal hom-morphism, i.e. a horizontal morphism that would internalise cells.
We believe that there should be a notion of fibration of virtual double categories with enough structure to do formal category theory fibrationally.
These could provide parametrised models for the formal logic studied recently in \cite{LicataNew2022}.

Finally, in the last part we studied the B\'{e}nabou-Grothendieck correspondence and extended it to polycategories.
For the polycategorical one we assumed the existence of weak 2-polycategories.
We think this is a subject worth investigating.
In addition to letting us put the B-G correspondence on firm ground, weak 2-polycategories would be interesting as models of MLL.
Indeed, the usual categorical semantics for logic provides models that are quotiented by $\beta\eta$-equivalence.
In this context, the computational aspect of proof theory and type theory is lost.
To address this issue, some research has been done in providing bicategorical models of logic where $\beta$-reduction and $\eta$-expension correspond to 2-morphisms.
We believe that weak 2-polycategories would provide a framework for extending such studies to classical MLL.
An idea that we would like to explore in that direction is the following.
Consider \Gph, the category of graphs and graphs morphisms.
The forgetful functor $\Cat \to \Gph$ has a left adjoint that builds the free-category on a graph.
This defines a monad on $\Gph$, the free-category monad.
Algebras for this monad correspond to categories.
A similar construction works for $\Cat$-$\Gph$ the category of $\Cat$-enriched graphs and their morphisms.
It as a free-2-category monad whose algebra corresponds to (strict) 2-categories.
Now there is a 2-category \Cat{}-\Gph{} and the free-2-category monad extends to a 2-monad on it.
Algebras for this 2-monad are still strict 2-categories, while pseudoalgebras are bicategories.
Now, we can consider a category \MultiGph{} of multigraphs.
It is possible to define the free-multicategory on a multigraph.
So we could consider the monad it induces on \MultiGph, enriched the setting over $\Cat$ and extend the monad to a bicategorical one and then consider its pseudoalgebras to get a notion of weak 2-multicategory.
There it is interesting to notice that a graph/multigraph is a span $C_0 \leftarrow C_1 \rightarrow C_0$/$\cL(C_0) \leftarrow C_1 \rightarrow C_0$ with $\cL$ the list monad on set.
The free-category and free-multicategory construction on \Gph{} and \MultiGph{} can be obtained from the free monoid construction on the vdc of $T$-spans.
So when this free monoid construction exists, it should be possible to adapt the strategy above to define weak generalised 2-multicategories.
The case of weak 2-polycategories could be done in an ad hoc way by describing the free polycategory construction on a polygraph or by considering the abstract setting of generalised polycategories.

Another line of research would be to use this work on concrete models for developing tools in applied category theory.
For example the operational reading that we gave in the example of Banach spaces can be used in other cases.
One can consider a polycategory \cP whose objects model systems and whoses polymaps model processes, for example by considering the underlying polycategory of a compact closed category and using the literature on ACT.
In such a polycategory, nullary polymaps $\cdot \to A$ are understood as states or values of $A$, co-nullary polymaps $A \to \cdot$ as measurements or tests on $A$ and nullary-co-nullary polymaps $\cdot \to \cdot$ as scalars or truth values.
Furthermore, a specific subset of scalars is fixed that correspond to accepted scalars.
Let $\cM \subseteq \cP(\cdot,\cdot)$ be the subset of accepted scalars.
Then, one can built a polycategory where an object is an object $A$ in \cP equipped with a choice of substates of $A$, $c_A \subseteq \cP(\cdot, A)$.
This can be regarded as considering only states with a certain property.
To each such choice corresponds a set $c_A^\ast := \set{\varphi \colon A \to \cdot}{\forall a \in A,\ \varphi(A) \in \cM}$ that tests for this property.
Then the polymaps $f\colon A_1,\dots,A_m \to B_1, \dots, B_n$ are asked to respect the properties, meaning that for all $a_i \in c_{A_i}$ and all $\varphi_j \in c_{B_j}^\ast$ we should have that \[(\varphi_1,\dots,\varphi_n)f(a_1,\dots,a_m) \in \cM\]
The forgetful functor - that only keeps the object and forgets the property - defines a bifibration.
It would be interesting to look at how to capture certain examples by adding some more structures/properties on this construction.
Banach spaces and contractive polymaps would be an example, where the choice of substates is defined by the unit ball of the norm.
The work by Kissinger and Uijlen on causal structure \cite{KissingerUijlen2017lics} is another.
In both cases, the category of systems and processes is compact closed while the category of substates is $\ast$-autonomous with $\otimes \neq \parr$.
Other interesting examples could be considered.
An application could be to use the connection between bifibrations and Hoare logic to adapt tools from Hoare logic to study logical properties of systems modeled by bifibrations of polycategories.

\backmatter

\nocite{*}

\bibliographystyle{plainurl}
\bibliography{thesis}

\end{document}